\numberwithin{section}{chapter}
\numberwithin{equation}{chapter}
\newtheorem{theorem}{Theorem}[section]
\newtheorem{thm}{Theorem}[section]
\newtheorem{proposition}[theorem]{Proposition}%[section]
\newtheorem{corollary}[theorem]{Corollary}%[section]
\newtheorem{lemma}[theorem]{Lemma}%[section]
\newtheorem{nono-lemma}{Lemma}[]
\theoremstyle{remark}
\newtheorem{remark}[theorem]{Remark}%[section]
\theoremstyle{definition}
\newtheorem{definition}[theorem]{Definition}%[section]
\newtheorem{example}[theorem]{Example}
\newcommand{\ran}{\mbox{ran}\,}
\newcommand{\bq}{\begin{equation}}
\newcommand{\eq}{\end{equation}}
\newcommand{\beqn}{\begin{eqnarray*}}
\newcommand{\eeqn}{\end{eqnarray*}}
\newcommand{\beq}{\begin{eqnarray}}
\newcommand{\eeq}{\end{eqnarray}}
\newcommand{\rar}{\rightarrow}
\newcommand{\bc}{\begin{centre}}
\newcommand{\ec}{\end{centre}}
\newcommand{\mf}{\mathfrak}
\newcommand{\sgn}{{\rm sgn\,}}
\newcommand{\ba}{\begin{array}}
\newcommand{\ea}{\end{array}}
\newcommand{\inp}[2]{\langle{#1},\,{#2} \rangle}
\newcommand*{\sib}[1]{\mathsf{sib}(#1)}
\newcommand*{\child}[1]{\mathsf{Chi}(#1)}
\newcommand*{\childn}[2]{{\mathsf{Chi}}^{\langle#1\rangle}(#2)}
\newcommand*{\childnt}[2]{{\mathsf{Chi}}^{\ll#1\gg}(#2)}
\newcommand*{\childi}[2]{{\mathsf{Chi}}_{#1}(#2)}
\newcommand*{\childki}[3]{{\mathsf{Chi}}_{#1}^{\langle#2\rangle}(#3)}
\newcommand*{\parentn}[2]{{\mathsf{par}}^{\langle#1\rangle}(#2)}
\newcommand*{\parentnt}[2]{{\mathsf{par}}^{\ll#1\gg}(#2)}
\newcommand*{\parenti}[2]{{\mathsf{par}}_{#1}(#2)}
\newcommand*{\parentki}[3]{{\mathsf{par}}_{#1}^{\langle#2\rangle}(#3)}
\newcommand*{\Ge}{\geqslant}
\newcommand*{\lambdab}{\boldsymbol\lambda}
\newcommand*{\thetab}{\boldsymbol\theta}
\newcommand*{\deltab}{\boldsymbol \delta}
\newcommand*{\Le}{\leqslant}
\newcommand*{\parent}[1]{\mathsf{par}(#1)}
\newcommand*{\rootb}{{\mathsf{root}}}
\begin{document}

\frontmatter

\title[Multishifts on Product of Directed Trees]
{Multishifts on Directed Cartesian Product of Rooted Directed Trees}

 %  \author[S. Chavan \\ D. Pradhan \\ S. Trivedi]{Sameer Chavan \\ Deepak Kumar Pradhan \\ Shailesh Trivedi}

\author{Sameer Chavan \and Deepak Kumar Pradhan \and Shailesh Trivedi
}

%\author{Sameer Chavan and Deepak Kumar Pradhan \\
%Department of Mathematics and Statistics\\
%Indian Institute of Technology Kanpur, India\\
%chavan@iitk.ac.in and dpradhan@iitk.ac.in
%\and
%%\author{
%Shailesh Trivedi\\
%School of Mathematics \\ Harish-Chandra Research Institute\\ 
%Chhatnag Road, Jhunsi, Allahabad 211019, India\\
%shaileshtrivedi@hri.res.in
%}

%\address[2][]{\addresses{\address{A}{B}}

\address{Department of Mathematics and Statistics\\
Indian Institute of Technology Kanpur, India}
   \email{chavan@iitk.ac.in}
   
 % \author[D. Pradhan]{Deepak Kumar Pradhan}  
   \email{dpradhan@iitk.ac.in}
   
 %  \author[S. Trivedi]{Shailesh Trivedi}
  
 \address{School of Mathematics \\ Harish-Chandra Research Institute\\ 
Chhatnag Road, Jhunsi, Allahabad 211019, India}
\email{shaileshtrivedi@hri.res.in}

\thanks{The work of the second author is supported through the NBHM Research Fellowship}

%   
%   \thanks{The research of the
%third and fourth authors was supported by the NCN
%(National Science Center), decision No.
%DEC-2013/11/B/ST1/03613.}
%   \subjclass[2010]{Primary 47B37, 47A10; Secondary 46E22, 47B38}
%\keywords{weighted shift, directed
%tree, multiplication operator, reproducing kernel of finite bandwidth, Hilbert space of holomorphic functions}

\date{}

\subjclass[2010]{Primary 47B37, 47A13; Secondary 47A10, 05C20.}
%    Recognition of the 2010 edition of the Mathematics Subject
%    Classification requires a version of amsbook.cls from July 2009
%    or later.  If "2010" is not recognized, please upgrade.

\keywords{directed tree, Cartesian product, tensor product, multishifts, circularity, wandering subspace, balanced multishifts, von Neumann inequality, subnormality}

\maketitle

\newpage

\dedicatory{\noindent {\it raah-e-mazm$\overline{\mbox{u}}$n-e-taaza band nahi$\tilde{\mbox{n}}$ \\ t$\overline{\mbox{a}}$ qay$\overline{\mbox{a}}$mat khul$\overline{\mbox{a}}$ hai baab-e-su$\d{\mbox{k}}$han} \\ \\ \hspace{35pt} {The pathways to new ideas will never close; the doors of wisdom (literature) will
always be open, till the very end.}
\footnote{Translated by Anant Dhavale}
\\ \\
\dedicatory{\\ \hspace{35pt}  -  \small{Wali Deccani (1667-1707)} } 
%\tableofcontents

%\setcounter{tocdepth}{3} 
\tableofcontents

\begin{abstract} 
We systematically develop the multivariable counterpart of the theory of weighted shifts on rooted directed trees. 
Capitalizing on the theory of product of directed graphs, we introduce and study the notion of multishifts on directed Cartesian product of rooted directed trees. This framework unifies the theory of weighted shifts on rooted directed trees and that of classical unilateral multishifts. Moreover, this setup brings into picture some new phenomena such as the appearance of system of linear equations in the eigenvalue problem for the adjoint of a multishift. In the first half of the paper, we focus our attention mostly on the multivariable spectral theory and function theory including finer 
analysis of various joint spectra and wandering subspace property for multishifts. In the second half,
we separate out two special classes of multishifts, which we refer to as torally balanced and spherically balanced multishifts. The classification of these two classes is closely related to toral and spherical polar decompositions of multishifts. Furthermore, we exhibit a family of spherically balanced multishifts on $d$-fold directed Cartesian product $\mathscr T$ of rooted directed trees. These multishifts turn out be
multiplication $d$-tuples $\mathscr M_{z, a}$ on certain reproducing kernel Hilbert spaces $\mathscr H_a$ of vector-valued holomorphic functions defined on the unit ball $\mathbb B^d$ in $\mathbb C^d$, which can be thought of as
tree analogs of the multiplication $d$-tuples acting on the reproducing kernel Hilbert spaces associated with the kernels $\frac{1}{(1-\inp{z}{{w}})^a}~(z, w \in \mathbb B^d, a \in \mathbb N).$ 
Indeed, the reproducing kernels associated with $\mathscr H_a$ are certain operator linear combinations of $\frac{1}{(1-\inp{z}{{w}})^a}$ and multivariable hypergeometric functions ${}_2F_1(\alpha_v+a+1, 1, \alpha_v+2, \cdot)$ defined on $\mathbb B^d \times \mathbb B^d$, where $\alpha_v$ denotes the depth of a branching vertex $v$ in $\mathscr T$.
We also classify joint subnormal and joint hyponormal multishifts within the class of spherically balanced multishifts.
\end{abstract}

\mainmatter

\chapter{Introduction}

The investigations in the present work are related to the idea of shifts associated with discrete structures (e.g. directed trees), recently boosted in the theory of Hilbert space operators (\cite{JJS}, \cite{BJBS-1}, \cite{JBS-1}, \cite{BJBS-2}, \cite{JBS-2}, \cite{JBS-3}, \cite{BJBS-3}, \cite{BJBS-4}, \cite{BJBS-5}, \cite{CT},\cite{MS}). The significantly large class of weighted shift operators on directed trees contains all classical weighted shifts and has an overlap with that of composition operators. This interplay of graph theory and operator theory provides illuminating examples exhibiting subtle phenomena such as existence of non-hyponormal operators generating Stieltjes moment sequences and triviality of the domain of integral powers of densely defined subnormal operators (\cite{JBS-1}, \cite{JBS-3}, \cite{BJBS-4}, \cite{BJBS-5}; refer also to \cite{KK}, \cite{KLP} for a systematic study of operator algebras associated with directed graphs). Further, this framework turns out to be a rich source of $k$-diagonal reproducing kernel Hilbert spaces of vector-valued holomorphic functions (\cite{AM}, \cite{CT}). 

The motivation for the present work primarily comes from the multivariable operator theory, where the main objectives of study have been function theory and spectral theory of classical multishifts (\cite{JL}, \cite{Cu-00}, \cite{Cu-0}, \cite{CuS}, \cite{At-0}, \cite{At-00}, \cite{Cu}, \cite{Cu-1}, \cite{At}, \cite{At-1}, \cite{MuS}, \cite{MV}, \cite{BM}, \cite{Ar}, \cite{AZ}, \cite{GHX}, \cite{GRS}, \cite{GR}, \cite{AT}, \cite{RS}, \cite{DE},  \cite{DE-1}, \cite{BV}, \cite{Kad}, \cite{Ka}, \cite{CY}, \cite{Hz}). This work is an effort to develop the theory of weighted shifts on directed trees in several variables by implementing the methods of graph theory (\cite{Sa}, \cite{We}, \cite{Mc}, \cite{Ha}, \cite{Feig}, \cite{IK}; refer also to \cite{ARS1}, \cite{ADV}, \cite{AV}, \cite{ARS2}, \cite{ARS3}, where Hardy-Besov spaces associated with certain trees have been introduced and studied). 
The well-established theory of product of directed graphs provides a foundation for the study of multishifts. Various notions of product of directed graphs  (e.g. Cartesian product, tensor product) lead naturally to interesting counterparts of classical shifts in one and several variables. One peculiar aspect of the directed Cartesian product of directed trees is that although its not a directed tree, it admits a directed semi-tree structure. Interestingly, there is a natural shift operator on any directed semi-tree \cite{MS}. Thus a single discrete structure gives rise to at least two distinct notions of shifts. This is not so easy to reveal in the classical case. 
%We would like to emphasize here that the set up of directed trees is quite in contrast with that of undirected trees. For example, tensor product of two undirected trees has exactly two components (Remark \ref{t-components}), whereas in the context of directed trees, we get countably many components, each of which itself is a directed tree (Theorem \ref{tensor-prop}). 
One of the advantages of this setup is that any disjoint decomposition of the set $V$ of vertices induces a natural decomposition of the associated unweighted Lebesgue space $l^2(V)$, which in turn decomposes the multishift $S_{\lambdab}$ into known objects like tuples of compact operators and classical multishifts. 
On the other hand, there are numerous ways of decomposing $V$ by defining equivalence relations on $V$ in terms of siblings, generations etc.
As evident, every set up has its own set of problems. Here also in the context of subnormality of multishifts, one can ask for finite and minimal subset $W$ of set $V$ of vertices with the following property: A multishift is joint subnormal if and only if its moments are completely monotone at every vertex from $W.$ The notion of joint branching index plays an important role in the affirmative answer of this problem.
%Although there are known examples of classical multishifts with nowhere dense Taylor spectra \cite{Cu-2}, the decompositions of multishifts obtained in this way provide a recipe to construct operator tuples of this kind in abundance.

In the remaining part of this section, we set some standard notations and also collect various notions and facts, which are central to the present text. 
For a set $X$, $\mbox{card}(X)$ denotes the cardinality of $X$. 
\index{$\mbox{card}(X)$}
We next recall that the symbol ${\mathbb N}$ stands for the set of nonnegative
integers and that ${\mathbb N}$ forms a semigroup under addition. For a positive integer $d,$ let
${\mathbb{N} }^d$ denote the $d$-fold Cartesian product ${\mathbb{N}} \times
\cdots \times { \mathbb{N}}$ of $\mathbb N$. 
\index{$\mathbb N^d$}
Then, for $\alpha =
(\alpha_1, \cdots, \alpha_d)$ and $\beta = (\beta_1, \cdots, \beta_d)$ in
${\mathbb{N}}^d$, we write $\alpha \leqslant \beta$
if $\alpha_j \leqslant \beta_j$ for all $j=1, \cdots, d$ 
and we also use $\alpha!:=\prod_{j=1}^d \alpha_j!$ and $|\alpha|:=\sum_{j=1}^d
\alpha_j$. 
Throughout this paper, we follow the conventions below: 
\beqn
\sum_{i=1}^{n-1}x_i = 0~\mbox{if~}n=1,\quad \prod_{j=0}^{n-1}y_j=1~\mbox{if~}n=0.
\eeqn  
More generally, the sum over empty set is understood to be $0$ while the product over empty set is always $1.$

Let $\mathbb C$ denote the field of complex numbers and let $\mathbb C^d$ denote the $d$-fold Cartesian product of $\mathbb C$.
\index{$\mathbb C^d$} 
We set $tz:=(tz_1, \cdots, tz_d)$ for $t \in \mathbb C$
and $z=(z_1, \cdots, z_d) \in \mathbb C^d.$ 
Whenever $\alpha \in \mathbb N^d$ or $z \in \mathbb C^d$, it is understood that $\alpha=(\alpha_1, \cdots, \alpha_d)$ and $z=(z_1, \cdots, z_d).$ The complex conjugate of $z \in \mathbb C^d$ is defined by $\overline{z}:=(\overline{z}_1, \cdots, \overline{z}_d)$.
We denote by $\mathbb B^d_r$ the open ball in $\mathbb C^d$ centered at the origin and of
radius $r > 0$:
$$
\mathbb B^d_r :=
\{ z= (z_1, \cdots, z_d)
\in \mathbb C^d : ~ \|z\|_2 < r\},$$ where $\|z\|_2 :=\sqrt{|z_1|^2 + \cdots + |z_d|^2}$ denotes the Euclidean norm of $z=(z_1, \cdots, z_d)$ in $\mathbb C^d.$
\index{$\mathbb B^d_r$}
The sphere $\{z \in \mathbb C^d : \|z\|_2=r\}$
centered at the origin and of radius $r > 0$ is denoted by $\partial
\mathbb B^d_r.$ 
\index{$\partial \mathbb B^d_r$}
For simplicity, the unit ball $\mathbb B^d_1$ and the
unit sphere $\partial \mathbb B^d_1$ are denoted respectively by
$\mathbb B^d$ and $\partial \mathbb B^d.$ 
\index{$\mathbb B^d$}
\index{$\partial \mathbb B^d$}
We denote by $\mathbb D^d_r$ the open polydisc centered at the origin and of
polyradius $r=(r_1, \cdots, r_d)$ with $r_1, \cdots, r_d > 0$:
\index{$\mathbb D^d_r$}
$$
\mathbb D^d_r :=
\{ z= (z_1, \cdots, z_d)
\in \mathbb C^d : ~ |z_1| < r_1, \cdots, |z_d| < r_d\}.$$ 
The $d$-torus $\{z \in \mathbb C^d : |z_1|=r_1, \cdots, |z_d|=r_d\}$
centered at the origin and of polyradius $r$ is denoted by $
\mathbb T^d_r.$ 
\index{$\mathbb T^d_r$}
Again, for simplicity, the unit polydisc $\mathbb D^d_1$ \index{$\mathbb D^d$}
and the
unit $d$-torus $\mathbb T^d_1$ \index{$\mathbb T^d$} are denoted respectively by
$\mathbb D^d$ and $\mathbb T^d.$ For a subset $X$ of $\mathbb C^d,$ the closure of $X$ in $\|\cdot\|_2$ is denoted by 
\index{$\mbox{cl}(X)$} 
$\mbox{cl}(X)$.

Let $\mathcal H$ be a complex Hilbert space.
The inner product  on $\mathcal H$ will be denoted by $\inp{\cdot}{\cdot}_{\mathcal H}$.
If no confusion is likely, then we suppress the suffix, and simply write the inner product as $\inp{\cdot}{\cdot}$.  
By a {\it subspace} of $\mathcal H$, we mean a closed linear manifold.
Let $W$ be a subset of $\mathcal H.$ Then $\mbox{span}\,W$ stands for the smallest linear manifold generated by $W.$
\index{$\mbox{span}\,W$}
In case $W$ is singleton $\{w\},$ we use the convenient notation $[w]$ in place of  $\mbox{span}\,\{w\}$.
\index{$[w]$}
By $\bigvee \{w : w \in W\},$ 
we understand the subspace generated by $W$.
\index{$\bigvee \{w : w \in W\}$}
The orthogonal complement of a subspace $\mathcal M$ in $\mathcal H$ will be denoted either by $\mathcal M^{\perp}$ or $\mathcal H \ominus \mathcal M$. \index{$\mathcal M^{\perp}=\mathcal H \ominus \mathcal M$}
The Hilbert space dimension of the subspace $\mathcal M$ is denoted by $\dim \mathcal M.$ \index{$\dim \mathcal M$}
For a subspace $\mathcal M$ of $\mathcal H,$ we use $P_{\mathcal M}$ to denote the orthogonal
projection of $\mathcal H$ onto $\mathcal M.$ 
\index{$P_{\mathcal M}$}
We use $I$ to denote the identity operator on $\mathcal H.$ If $\mathcal M$ is a subspace of $\mathcal H$, then we use $I|_{\mathcal M}$ to denote the identity operator on $\mathcal M.$
\index{$I\vert_{\mathcal M}$}
%For vectors $x, y \in \mathcal H,$ we use the notation $x \otimes y$ to denote the rank one operator given by
% $$x \otimes y (h) = \inp{h}{y}x,~h \in \mathcal H.$$ 

Unless stated otherwise, all the Hilbert spaces occurring in this paper are complex 
infinite-dimensional separable and for any such Hilbert space 
$\mathcal H$, ${B}({\mathcal H})$ denotes the Banach algebra of all bounded linear operators on $\mathcal H$ endowed with the operator norm. 
\index{${B}({\mathcal H})$}
For $A \in B(\mathcal H),$ the symbols $\ker A$ \index{$\ker A$}
and $\mbox{ran}\,A$ will stand for the kernel and 
the range of $A$ respectively.
\index{$\mbox{ran}\,A$}
The Hilbert space adjoint of $A$ will be denoted by $A^*.$ 
%By an $A$-invariant subspace $\mathcal M$, we mean a subspace $\mathcal M$ of $\mathcal H$ such that $A(\mathcal M) \subseteq \mathcal M.$ 

By a {\it commuting
$d$-tuple} $T$ on ${\mathcal H},$ we mean a tuple $(T_1, \cdots,
T_d)$ of commuting bounded linear operators $T_1, \cdots, T_d$ on
$\mathcal H.$ A commuting $d$-tuple $T$ on $\mathcal H$ is said to be {\it doubly commuting} if  
$T_iT_j^*=T_j^*T_i$ for all $i,j = 1,\cdots,d$ with $i \neq j$.
If $T =
(T_1, \cdots, T_d)$ is a commuting $d$-tuple on $\mathcal H$, then we set $T^*$
to denote $(T^*_1, \cdots, T^*_d)$ \index{$T^*$}
while $T^\alpha$ represents $T^{\alpha_1}_1 \cdots
T^{\alpha_d}_d$ for $\alpha = (\alpha_1,
\cdots, \alpha_d) \in {\mathbb{N}}^d$, where we use the convention that $A^0=I$ for $A \in B(\mathcal H)$. 
Also, we find it convenient to introduce the following operators on $B(\mathcal H)$. 
Given a commuting $d$-tuple $T$ on $\mathcal H,$ 
%define $Q_j : B(\mathcal H) \rar B(\mathcal H)$ by
%$$Q_j(X) := T^*_jXT_j~(X \in B(\mathcal H), ~j=1, \cdots, d).$$
define $Q_T :  B(\mathcal H) \rar B(\mathcal H)$ by $Q_T(X) :=\sum_{j=1}^d T^*_jXT_j$ for $X \in B(\mathcal H).$ Further,
the operator $Q^n_T$
is inductively defined for all $n \in \mathbb N$ through
the relations $Q^0_T(X):=X$ and $Q^n_T(X):=Q_T(Q^{n-1}_T(X))~(n \geqslant  1)$ for $X \in B(\mathcal H).$ 
\index{$Q^n_T$}
It is easy to see
that \beq \label{sp-gen-powers} Q^n_T(I)={\sum_{|\alpha|=n}\frac{n!}{\alpha!}{T^*}^{\alpha}T^{\alpha}}~(n \in \mathbb N).\eeq
We collect below some definitions required throughout this text. 

Let $T=(T_1, \cdots, T_d)$ be a commuting $d$-tuple on $\mathcal H.$ 
We say that $T$ is a 
\begin{enumerate}
\item[(i)] {\it toral contraction} if $Q_{T_j}(I) \leqslant I$ for $j=1, \cdots, d.$
\item[(ii)] {\it toral isometry} if $Q_{T_j}(I) = I$ for $j=1, \cdots, d.$
\item[(iii)] {\it toral left invertible} $d$-tuple if $Q_{T_j}(I)$ is invertible for $j=1, \cdots, d.$
\item[(iv)] {\it joint contraction} if 
$Q_T(I) \leqslant I.$ 
\item[(v)] {\it joint isometry} if 
$Q_T(I) = I.$ 
\item[(vi)] {\it joint left invertible} $d$-tuple if 
$Q_T(I)$ is invertible. 
\end{enumerate}

A commuting $d$-tuple $T=(T_1, \cdots, T_d)$ on $\mathcal H$ is {\it joint subnormal}
if there exist a Hilbert space ${\mathcal K}$ containing ${\mathcal
H}$ and a commuting $d$-tuple $N=(N_1, \cdots, N_d)$ of normal
operators $N_1, \cdots, N_d$ in ${B}({\mathcal K})$ such that $N_jh =
T_jh$ for every $h \in {\mathcal H}$ and $j=1, \cdots, d.$
It is well-known that toral isometry and joint isometry are joint subnormal \cite[Propositions 1 and 2]{At}.
The notion of joint subnormal tuples is closely related to a classical notion from abstract harmonic analysis, namely, the notion of completely monotone functions. 

Let $\phi$ be a real-valued map on $\mathbb{N}^d$. For $1 \leqslant j \leqslant d,$
define the difference operators $\nabla_j$ by $(\nabla_j{\phi})(\alpha):={\phi}(\alpha)-
{\phi}(\alpha + \epsilon_j) \ (\alpha \in \mathbb{N}^d),$ where $\epsilon_j$ is the $d$-tuple with $1$ in the $j^{\mbox{\tiny th}}$ entry and zeros elsewhere. 
\index{$\epsilon_j$}
The operator ${
\nabla}^\beta$ is inductively defined for every $ \beta \in \mathbb{N}^d$ through the
relations ${\nabla}^0{\phi}:=\phi$, ${\nabla}^{\beta +\epsilon_j} \phi :=\nabla_j (\nabla^\beta \phi)$.
\index{${\nabla}^\beta$}
A real-valued map $\phi
$ on $\mathbb{N}^d$ is said to be $\mathit{completely \; monotone}$ if $({\nabla}^\beta \phi)(\alpha) \geqslant  0$ for all $\alpha, \beta \in \mathbb{N}^d$. 
\begin{remark}
A toral contractive $d$-tuple $T$ on ${\mathcal H}$ is joint subnormal if and only if $\phi(\alpha):=\|T^{\alpha}h\|^2~(\alpha \in \mathbb N^d)$ is completely monotone for every $h \in \mathcal H$ (\cite[Theorem 4.4]{At-0}).
\end{remark}

A commuting $d$-tuple $T=(T_1, \cdots, T_d)$ is {\it joint hyponormal} if the $d
\times d$ matrix $([T^*_j, T_i])_{1 \leqslant i, j \leqslant d}$ is positive
definite, where $[A, B]$ stands for the commutator $AB-BA$ for $A$
and $B$ in $B(\mathcal H)$. 
\index{$[A, B]$}
A joint subnormal tuple is always
joint hyponormal \cite{At-00}, \cite{Cu-1}.

{\it For all notions introduced above, we skip the prefixes toral or joint in case the dimension $d$ is $1.$
Although it has been a common practice to use interchangeably joint isometry with spherical isometry, we do not follow this practice.}
%The reason for this is that it is likely to get mixed with somewhat constrained notion of spherical tuples as introduced in \cite{CY}.}

%Unless stated otherwise, all $d$-tuples $T$ consists of commuting, %bounded linear operators on $\mathcal H.$

%We recall that a $d$-tuple $T=(T_1, \cdots, T_d)$ is
%{\it commuting} if $T_i T_j = T_j T_i$ for all $i,j = 1, \cdots, d$.
%\begin{enumerate}
%\item[(i)] and
%\item[(ii)] {\it doubly commuting} if $T$ is commuting and
%$T_iT_j^*=T_j^*T_i$ for all $i,j = 1,\cdots,d$ with $i \neq j$.
%\end{enumerate} 

We briefly recall from \cite{CC} the definitions of toral and spherical Cauchy dual tuples.
Let $T = (T_1, \cdots, T_d)$ be a commuting $d$-tuple on $\mathcal H$.
Assume that $T$ is {toral left invertible}. 
%Let $Q_j(X) = T^*_jXT_j~(X \in B(\mathcal H), ~j=1, \cdots, d).$ 
We refer to the $d$-tuple $T^{\mathfrak{t}}=
(T^{\mathfrak{t}}_1, \cdots, T^{\mathfrak{t}}_d)$ as the \textit{
toral Cauchy dual} of $T$, where \index{$T^{\mathfrak{t}}$}
\beq \label{toral-dual} T^{\mathfrak{t}}_j :=
T_j(Q_{T_j}(I))^{-1}~(j=1, \cdots, d).\eeq
Note that $(T^{\mathfrak{t}})^{\mf t}=T.$ 
%For a commuting $d$-tuple $T=(T_1, \cdots, T_d)$ on $\mathcal H,$ let
%\beqn \label{sp-gen}
%Q_T(X) := \sum_{j=1}^dT^*_jXT_j~(X \in B(\mathcal H)).
%\eeqn
%The operator $Q^n_T$
%is inductively defined for all $n \in \mathbb N$ through
%the relations $Q^0_T(X)=X$ and $Q^n_T(X)=Q_T(Q^{n-1}_T(X))~(n \geqslant  1)$ for $X \in B(\mathcal H).$ 
%It is easy to see
%that \beq \label{sp-gen-powers} Q^n_T(I)={\sum_{|\alpha|=n}\frac{n!}{\alpha!}{T^*}^{\alpha}T^{\alpha}}~(n \in \mathbb N).\eeq

Assume that $T$ is {joint left-invertible}. We refer to the $d$-tuple $T^{\mathfrak{s}} = (T^{\mathfrak{s}}_1,\cdots,T^{\mathfrak{s}}_d)$ as the \textit{spherical Cauchy dual} of $T$, where \index{$T^{\mathfrak{s}}$}
\beq \label{spherical-dual} T^{\mathfrak{s}}_j :=
T_j(Q_T(I))^{-1}~(j=1, \cdots, d).\eeq
Note that $(T^{\mathfrak{s}})^{\mf s}=T.$

%For the definitions and the basic theory of 
%various joint spectra, the reader is referred to \cite{Cu}. For a commuting $d$-tuple $T = (T_1, %\cdots, T_d)$ on $\mathcal H$, we reserve 
%the symbols $\sigma(T)$, $\sigma_{p}(T)$, $\sigma_{l}(T)$, and $\sigma_e(T)$
%for the Taylor spectrum, point spectrum, left spectrum, and essential spectrum of $T$, respectively. 

\section{Multivariable Spectral Theory} \label{Spec-th}

In what follows, the multivariable spectral theory will be central to the investigations in this paper. Thus we find it necessary to include a brief account of Taylor's notion of invertibility and related concepts. We have freely drawn on \cite{Cu} and \cite{Ar1} throughout this paper, particularly, in the following discussion.

The 
classical spectral theory deals with the problem of finding $x \in \mathcal H$ such that $Tx=y$ for any given $y \in \mathcal H$, where $T \in B(\mathcal H)$. Note that
$T$ is (boundedly) invertible if and only if the above problem is solvable for every $y \in \mathcal H$ with unique $x \in \mathcal H$. Let us formulate an analog of the problem above for two operators $T_1, T_2 \in B(\mathcal H)$ such that $T_1T_2 = T_2T_1.$ One is interested in the notion of invertibility which will give ``unique" solution $(x_1, x_2)$ of the problem of finding $x_1, x_2 \in \mathcal H$ such that $T_1 x_1+ T_2 x_2 =y$ for every given $y \in \mathcal H.$ 
In this case we can not hope for uniqueness. Indeed, if $(x_1, x_2)$ is a solution of this problem, then for any $h \in \mathcal H$, if we set $x'_1 := x_1 - T_2h$ and $x'_2 := x_2 - T_1h$, then it is easy to check $(x'_1, x'_2)$ is a also solution. 
Following \cite{Ar1}, we will refer to $(x'_1, x'_2)$ as the {\it tautological perturbation} of the solution $(x_1, x_2).$ 
%Let us consider the above problem in several variables. Let $T_1, \cdots, T_d \in B(\mathcal H).$
\begin{remark}
There are no non-trivial tautological perturbations in case of single operator.
\end{remark}
One needs to determine what happens modulo tautological perturbations. 
This is where homology enters into the picture.

Given  a Hilbert space $\mathcal H,$ consider 
\beqn
\Lambda_0 &:=& \mathcal H, \\ \Lambda_1 &:= & \lbrace (h_1,h_2) : h_1, h_2 \in \mathcal H \rbrace,\\
\Lambda_2 &:=& \lbrace (h_{ij}) : h_{ij} \in \mathcal H~\mbox{for~}1\leqslant i,j \leqslant 2, (h_{ij})~\mbox{is skew symmetric} \rbrace. \eeqn Note that $\Lambda_2$ is  isometrically isomorphic to $\mathcal H \: \text{via} \: \left(\begin{array}{cc}
 0 & h\\
-h & 0
\end{array}\right) \rightsquigarrow h$. Consider the following short sequence 
\beq \label{K} K : \lbrace 0 \rbrace \xrightarrow{0} \Lambda_2 \xrightarrow{B_2} \Lambda_1 \xrightarrow{B_1} \Lambda_0 \xrightarrow{0} \lbrace 0 \rbrace, \eeq
where
\beqn B_2((h_{ij})) &:=& (h_{ij}) \begin{pmatrix}
T_1\\
T_2\\
\end{pmatrix} = (T_2 h_{12}, -T_1 h_{12}), \\ B_1(h_1,h_2) &:=& (h_1,h_2) \begin{pmatrix}
T_1\\
T_2\\
\end{pmatrix} = T_1 h_{1}+T_2 h_{2}.\eeqn
Note that $K$ is a complex, that is, $ B_1 \circ B_2 = 0.$

Let us examine the complex $K$ as given in \eqref{K}.
\begin{enumerate}
\item $T_1x_1 +T_2x_2 = y$ has a solution if and only if $B_1(x_1,x_2) = y$ has a solution if and only if $B_1$ is surjective.
\item Given a solution $(x_1,x_2) \in \Lambda_1$ of $T_1x_1+T_2x_2=y$ for a given $y \in \mathcal H$,  $(x_1',x_2')$ is also a solution if and only if $(x_1 - x_1', x_2-x_2') \in \ker B_1$.
\item $ \ker B_2 = \ker T_1 \cap \ker T_2$.
\item \eqref{K} is exact if and only if $T_1\mathcal H +T_2\mathcal H = \mathcal H$, $\ker T_1 \cap \ker T_2 = \lbrace 0 \rbrace$ and solution of $T_1x_1+T_2x_2=y$ for a given $y \in \mathcal H$ is unique upto tautological perturbations.
\end{enumerate}

%\subsection{Invertibility in $d$-dimensions}
Let us now see the Taylor invertibility in the general case.
For that purpose, consider the co-ordinate linear functionals $e_1,e_2, \cdots ,e_d$ on $\mathbb{C}^d$ with respect to the standard basis. Let $\Lambda^0(\mathbb{C}^d) := \mathbb{C}$ and let $\Lambda^1(\mathbb{C}^d)$ be the vector space with basis  $\{e_1, e_2, \cdots, e_d\}$. Given $ w,w' \in \Lambda^1(\mathbb{C}^d)$ 
let us define $w \wedge w'$ by  $$w \wedge w' (v_1, v_2):=  w(v_1)w'(v_2)-w(v_2)w'(v_1)~(v_1, v_2 \in \mathbb C^d).$$
Note that $w \wedge w = 0$ and $w \wedge w'= - w' \wedge w$.
Also note that any $2$-form is a linear combination of $e_1 \wedge e_2,\, e_1 \wedge e_3,\, \cdots, e_{d-1} \wedge e_d$ ($\binom{d}{d-2}$ elements).
One may now define inductively all higher ordered forms with the help of following definition of wedge product: Let $w$ (resp. $w'$) denote a $p$-form on $p$-fold Cartesian product ${\mathbb C^d}^{(p)}$ of $\mathbb C^d$ (resp. a $q$-form on ${\mathbb C^d}^{(q)}$). We define
the {\it wedge product} $w \wedge w'$ as the $(p+q)$-form on ${\mathbb C^d}^{(p+q)}$ given by
\beqn w \wedge w' (v) := \frac{1}{p!q!} \sum_{\sigma \in
\mathbb S_{p+q}} \sgn(\sigma)w\left(v_{\sigma(1)}, \cdots,
v_{\sigma(p)}\right) w'\left(v_{\sigma(p+1)}, \cdots,
v_{\sigma(p+q)}\right), \eeqn
where $\mathbb S_n$ denotes the group of permutations on $\{1, \cdots, n\}.$
For $i=1, \cdots, d,$ 
let $\Lambda^{i}(\mathbb{C}^d)$ be the vector space generated by $i$-forms.
We  define $\Lambda(\mathbb C^d)$ as the algebra over $\mathbb C$ consisting of $\Lambda^{i}(\mathbb{C}^d)~(i=1, \cdots, d)$ with identity $e_0$ defined by $e_0 \wedge w= w$, where the multiplication is the wedge product.
The vector space $\Lambda(\mathbb C^d)$ is $2^d$ dimensional, which can be endowed with an inner product $\inp{\cdot}{\cdot}_{\Lambda}$ so that $$\lbrace e_0 \rbrace \cup \lbrace e_{i_1 } \wedge e_{i_2 } \wedge e_{i_3 } \wedge \cdots \wedge e_{i_k } :  1 \Le i_1 \leqslant i_2 \leqslant i_3 \leqslant \cdots \leqslant i_k \Le d \rbrace$$ forms an orthonormal basis.

The finite dimensional Hilbert space $\Lambda(\mathbb C^d)$ admits natural operators, to be referred to as, 
{\it creation operators} $E_i : \Lambda \longrightarrow \Lambda $ defined by $E_i(w) := e_i \wedge w~(i=1, \cdots d)$, and $ E_0(w) := w$. It satisfies the
{\it anti-commutation relations} : $$E_iE_j +E_jE_i= 0~(1 \Le i, j \Le d).$$ 
%which follows from $$E_iE_j (w) = e_i \wedge  e_j \wedge w = - e_j \wedge e_i \wedge w= E_j E_i(w).$$
Let $T=(T_1, \cdots, T_d)$ be a commuting $d$-tuple on $\mathcal H$ and
let $\Lambda(\mathcal H) := \mathcal H \otimes_\mathbb{C} \Lambda(\mathbb C^d)$ be a Hilbert space endowed with the inner product $$\inp{x \otimes w}{y \otimes w'}_{\Lambda^d(\mathcal H)} := \inp{x}{y}_{\mathcal H}\inp{w}{w'}_{\Lambda}.$$
We set $\Lambda^i(\mathcal H) := \mathcal H \otimes_\mathbb{C} \Lambda^i(\mathbb C^d)$ for $i=0, \cdots, d.$
Consider the {\it boundary operator} $\partial_T : \Lambda(\mathcal H) \longrightarrow \Lambda (\mathcal H) $ given by 
$$ \partial_T(h\otimes w) := \sum_{i=1}^{d} T_i(h) \otimes E_i(w).$$
\index{$\partial_T$}
Note that $\partial_T$ is a bounded linear operator on $\Lambda(\mathcal H).$
Since $T$ is commuting and $E_1, \cdots, E_d$ are anti-commuting, $\partial_T^2=0.$ 
%This follows from $E_iE_j=-E_jE_i$ and
%\begin{equation*}
%\begin{split}
%\partial_T^2(h \otimes w)  
%                        =  \sum_{j=1}^{d}\sum_{i=1}^{d} T_j T_i (h) \otimes %E_jE_i(w).
%\end{split}
%\end{equation*}
%insert equation for adjoint of boundary operator 
This allows us to define the
{\it Koszul complex $K(T)$} associated with $T$ as
\beqn 
K(T): \lbrace 0 \rbrace \xrightarrow {0} \Lambda^0(\mathcal H) \xrightarrow {\partial_{T,0}} \Lambda^1 (\mathcal H) \xrightarrow {\partial_{T,1}} \Lambda^2(\mathcal H) \cdots  \Lambda^{d-1}(\mathcal H) \xrightarrow {\partial_{T,d-1}}
\Lambda^d(\mathcal H) \xrightarrow {~~0~~} \lbrace 0 \rbrace \eeqn
where $\partial_{T, i} := \partial_T |_{\Lambda^{i}(\mathcal H)}$ for $i=0, \cdots, d-1$.
\index{$K(T)$}
\begin{remark}
If $K(T)$ is exact, then
\begin{enumerate}
\item $\ker \partial_{T,0}=\{0\},$ that is, $\cap_{i=1}^d \ker T_i = \{0\}.$ 
\item $\mbox{ran}\, \partial_{T,p} $ is closed ($\Rightarrow \mbox{ran}\, \partial_{T,p}^*$ is closed).
%\item  $\partial_{T,p-1}\partial_{T,p-1}^* |_{ran(\partial_{T,p-1})}$ is one-one ($\because ran(\partial_{T,p-1}) = ker(\partial_{T,p-1}^*)^{\perp}$).?
%\item \marginpar{domain of $\partial^{*}_{T,p} is \Lambda_{p+1}?$} $D_{T,p} :=  \partial_{T,p}\partial_{T,p}^* + \partial_{T,p}^*\partial_{T,p} \hspace{5pt} : \Lambda^p(H) \longrightarrow \Lambda^p(H) $, then $D_{T,P}$ is one-one and has dense range.?
\item $\mbox{ran}\,\partial_{T,d-1}=\Lambda^d(\mathcal H),$ that is, $T_1\mathcal H + \cdots + T_d \mathcal H = \mathcal H.$
\end{enumerate}
\end{remark}

The {\it Taylor spectrum (or joint spectrum)} of $T$ is defined as 
\index{$\sigma(T)$}
$$ \sigma(T) := \lbrace \lambda \in \mathbb{C}^d : K(T - \lambda ) \text{\: is not exact}\rbrace.$$
We also define {\it point spectrum} of $T$ as \index{$\sigma_p(T)$} 
$$\sigma_p(T) := \{\lambda \in \mathbb C^d : \partial_{T-\lambda,0}~\mbox{is not one-to-one}\},$$
and {\it left spectrum} of $T$ as \index{$\sigma_l(T)$} 
$$\sigma_{l}(T) := \{\lambda \in \mathbb C^d : \partial_{T-\lambda,0}~\mbox{is not bounded from below}\}.$$ 
\begin{remark}
Note that $\sigma_p(T) \subseteq \sigma_{l}(T) \subseteq \sigma(T).$
\end{remark}

It turns out that the Taylor spectrum of $T$ is a nonempty compact subset of $\mathbb C^d$, which has spectral mapping property for polynomial mappings $p$ from $\mathbb C^d$ into $\mathbb C^{d'}$ for any positive integer $d'.$

The {\it spectral radius for the Taylor spectrum} $\sigma(T)$ of a commuting $d$-tuple $T$ on $\mathcal H$ is defined as \index{$r(T)$} 
\beqn
\label{def-sp-rad} 
r(T):=\max \{\|z\|_2 : z \in \sigma(T)\}.
\eeqn 
We recall 
the spectral radius formula for the Taylor spectrum $\sigma(T)$ of a commuting $d$-tuple $T$ (\cite{CZ}, \cite{MuS}) : 
\beq \label{sp-rad} r(T) = \lim_{n \rar \infty} \|Q^n_{T}(I)\|^{1/2n}. \eeq
In particular, $\sigma(T) \subseteq \{w \in \mathbb C^d : \|w\|_2 \leqslant r(T)\}.$ 
It is also known from \cite[Lemma 3.6]{CY} that the {\it inner radius} $m_{\infty}(T)$ for the left spectrum $\sigma_l(T)$ of $T$  is given by
\index{$m_{\infty}(T)$} 
\beq \label{l-sp-rad}
m_{\infty}(T) =  \sup_{n \geqslant  1} \inf_{\underset{\|h\|=1}{h \in \mathcal H}} \inp{Q^n_{T}(I)h}{h}^{1/2n}. 
\eeq
Here by inner radius $m_{\infty}(T)$, we mean the largest nonnegative number $r$ for which $$\sigma_l(T) \subseteq \{w \in \mathbb C^d : r \leqslant \|w\|_2 \leqslant r(T)\}.$$
For $k=0, \cdots, d$, let $H^k(T)$ denote the $k^{\mbox{\tiny th}}$ cohomology group appearing in the Koszul complex $K(T)$.
\index{$H^k(T)$} 
We say that $T$ is {\it Fredholm} if $H^k(T)$ is finite dimensional for every $k=0, \cdots, d.$
The {\it Fredholm index} $\mbox{ind}(T)$ of a Fredholm $d$-tuple $T$ is the Euler characteristic of $K(T)$ given by \index{$\mbox{ind}(T)$} 
\beq
\label{index} \mbox{ind}(T):= \sum_{k=0}^d (-1)^k \dim H^k(T).
\eeq
The {\it essential spectrum} of $T$ is defined as 
\index{$\sigma_e(T)$}
$$ \sigma_e(T) := \lbrace \lambda \in \mathbb{C}^d : T-\lambda\text{\: is not Fredholm}\rbrace.$$ 
Clearly, essential spectrum is a subset of the Taylor spectrum.
By Atkinson-Curto Theorem \cite{Cu-0}, $\sigma_e(T)=\sigma(\pi(T))$, where $\pi$ is Calkin map and $\pi(T):=(\pi(T_1), \cdots, \pi(T_d)).$ 
\index{$\pi(T)$}
In particular, essential spectrum is a nonempty compact set with polynomial spectral mapping property.

\section{Classical Multishifts}

For a given multisequence ${\bf w}=\left\{{w^{(j)}_\alpha} : 1 \leqslant j \leqslant d, ~\alpha \in {\mathbb
N}^d\right\}$ of complex numbers and an orthonormal basis $\{e_\alpha\}_{\alpha \in \mathbb N^d}$ of a Hilbert space $\mathcal H$, we define {\it $d$-variable
weighted shift} $S_{\bf w} = (S_1, \cdots, S_d)$ as
\index{$S_{\bf w}$}
\beqn S_je_\alpha \mathrel{\mathop:}= 
w^{(j)}_\alpha e_{\alpha + \epsilon_j}~(1 \leqslant j \leqslant d). \eeqn
For convenience, we refer to $S_{\bf w}$ as the {\it classical multishift}. 
%The notation $T : \{{w^{(j)}_n}\}_{ n \in \mathbb N^m}$ will mean
%that $T$ is the $m$-variable weighted shift tuple with weight
%multi-sequence $\left\{{w^{(j)}_n} : 1 \leqslant j \leqslant m, n \in {\mathbb
%N}^m\right\}$.
Notice that $S_j$ commutes with $S_k$ if and only if
$w^{(j)}_\alpha w^{(k)}_{\alpha + \epsilon_j}=w^{(k)}_\alpha w^{(j)}_{\alpha +
\epsilon_k}$ for all $\alpha \in {\mathbb N}^d.$ Moreover, $S_1, \cdots, S_d$ are bounded if and only if
\beq \label{bd-cla-s}
\sup \big\{|w^{(j)}_\alpha| : 1 \leqslant j
  \leqslant d, \alpha \in {\mathbb N}^d \big\} < \infty.
\eeq
In this text, we always assume that the multisequence
$\bf w$
consists of positive numbers and satisfies \eqref{bd-cla-s}. 

Let $S_{\bf w}$ be a classical multishift. Define $\gamma_\alpha := \|S^\alpha_{\bf w} e_{0}\|~(\alpha \in \mathbb N^d)$, where $0$ is the $d$-tuple in $\mathbb N^d$ with all entries being zero. Consider the Hilbert space $H^2(\gamma)$ of formal power series \index{$H^2(\gamma)$}
\beqn
f(z)=\sum_{\alpha \in \mathbb N^d}a_\alpha z^\alpha
\eeqn
such that
\beqn
\|f\|^2_{H^2(\gamma)}:=\sum_{\alpha \in \mathbb N^d}|a_\alpha|^2 \gamma^2_\alpha
< \infty.
\eeqn
It is worth noting that $S_{\bf w}$ is unitarily equivalent to the
$d$-tuple $M_z=(M_{z_1}, \cdots, M_{z_d})$ of multiplication by the
co-ordinate functions $z_1, \cdots, z_d$ on the corresponding space
$H^2(\gamma)$ (\cite[Proposition 8]{JL}).

%Let $S_{\bf w}$ be a toral left invertible classical multishift.
%Then the operator tuple $S^{\mathfrak{t}}_{\bf w}$ toral Cauchy dual of $S_{\bf w}$ is given
%by 
%\beqn 
%S^{\mathfrak{t}}_j e_\alpha := \frac{1}{w^{(j)}_\alpha} e_{\alpha + \epsilon_j}~(1 \leqslant j \leqslant
%d). 
%\eeqn 
%Note that $S^{\mathfrak{t}}_{\bf w}$ is also a commuting $d$-variable weighted shift with
%weight  multisequence 
%\begin{equation*}
%\Big\{\frac{1}{w^{(j)}_\alpha} : 1 \leqslant j  \leqslant d, ~\alpha \in {\mathbb{N}}^d\Big\}.
%\end{equation*}

%The spherical Cauchy dual $S^{\mathfrak s}_{\bf w}$ of a joint left invertible $S_{\bf w} :
%\{{w^{(j)}_\alpha}\}_{\alpha \in \mathbb N^d}$ is the $d$-variable weighted
%shift given by \beqn S^{\mathfrak{s}}_je_\alpha :=
%\frac{w^{(j)}_\alpha}{\delta_{\alpha, S_{\bf w}}} \;\, e_{\alpha + \epsilon_j}~(1 \leqslant j
%\leqslant d), \eeqn where
%\beqn
%\label{delta_n} \delta_{\alpha, S_{\bf w}} := {\sum_{j=1}^d
%\left(w^{(j)}_{\alpha}\right)^2} \enspace ~(\alpha \in {\mathbb{N}}^d).
%\eeqn
%It is easily seen that that $S^{\mathfrak{s}}_{\bf w}$ is commuting if and
%only if $\delta_{\alpha + \epsilon_j, S_{\bf w}} = \delta_{\alpha + \epsilon_k, S_{\bf w}}$
%for all $1 \leqslant j, k \leqslant d$ \cite[Section 6]{CC}.

Let us discuss some basic examples of classical multishifts.
%In view of the preceding discussion, it suffices to describe the reproducing kernel of the corresponding Hilbert spaces.
\begin{example} \label{1.1.1}
For integers $a, d>0$, let $\mathcal H_{a, d}$ be the reproducing kernel Hilbert space \index{$\mathcal H_{a, d}$}  of
holomorphic functions on the open unit ball $\mathbb B^d$ with 
\index{$\kappa_{\mathcal H_{a, d}}$} reproducing
kernel 
\beqn
\kappa_{\mathcal H_{a, d}}(z, w)=\frac{1}{(1- \inp{z}{w})^a}~(z, w \in
\mathbb B^d).
\eeqn
The multiplication $d$-tuple $M_{z, a}$ on $\mathcal H_{a, d}$ is unitarily
equivalent to the weighted shift $d$-tuple $S_{{\bf w}, a}$ with weight
\index{$S_{{\bf w}, a}$} multisequence
\beqn
\label{Sz-Be-Dru}
w^{(j)}_{\alpha, a}=\sqrt{\frac{\alpha_j + 1}{|\alpha|+a}}~(\alpha \in \mathbb
N^d, j=1, \cdots, d),
\eeqn
(see \cite[Proof of Lemma 4.4]{GR}).
The spaces $\mathcal H_{d, d}, \mathcal H_{d+1, d},
\mathcal H_{1, d}$ are commonly known as the {\it Hardy space} $H^2(\partial \mathbb B^d)$,
the {\it Bergman space} $A^2(\mathbb B^d)$, the {\it Drury-Arveson
space} $H^2_d$ respectively.
The associated classical multishifts $S_{\bf w, d},
S_{\bf w, d+1}, S_{\bf w, 1}$
are referred to as the {\it
Szeg\"o $d$-shift}, the {\it Bergman $d$-shift}, the
{\it Drury-Arveson $d$-shift} respectively.
\end{example}

%We will refer to the multishifts $M_{z, a}$ as {\it Agler-type multishifts}. 
For ready reference, we record the following proposition about various spectral parts of $S_{{\bf w}, a}$ (see \cite[Proposition 2.6]{GRS} and \cite[Theorem 3.4]{CY}).

\begin{proposition} \label{sp-th-classical}
Let $S_{{\bf w}, a}$ be as defined in Example \ref{1.1.1}. Then
\beqn \sigma(S_{{\bf w}, a})=\mbox{cl}({\mathbb B}^d), ~\sigma_p(S_{{\bf w}, a})=\emptyset, ~\sigma_p(S^*_{\bf w, a})=\mathbb B^d,~\sigma_e(S_{{\bf w}, a})=\partial{\mathbb B}^d=\sigma_l(S_{{\bf w}, a}). \eeqn
\end{proposition}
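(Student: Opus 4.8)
The plan is to compute all four spectral parts of $S_{{\bf w},a}$ by exploiting the unitary equivalence with the multiplication tuple $M_{z,a}$ on the reproducing kernel Hilbert space $\mathcal H_{a,d}$, and the explicit form of the weight multisequence $w^{(j)}_{\alpha,a}=\sqrt{(\alpha_j+1)/(|\alpha|+a)}$. First I would dispose of the point spectrum of $S_{{\bf w},a}$: since $S_{{\bf w},a}$ is a tuple of (joint) injective weighted shifts and $\partial_{S-\lambda,0}$ one-to-one fails only if $\bigcap_j \ker(S_j-\lambda_j)=\{0\}$ is violated, a direct computation on formal power series in $H^2(\gamma)$ shows no nonzero $f$ can satisfy $(M_{z_j}-\lambda_j)f=0$ for all $j$ (comparing lowest-order terms forces $f=0$), so $\sigma_p(S_{{\bf w},a})=\emptyset$. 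For $\sigma_p(S^*_{{\bf w},a})=\mathbb B^d$, the standard fact is that for a multiplication tuple on an RKHS the kernel functions $\kappa_{\mathcal H_{a,d}}(\cdot,w)$ are joint eigenvectors of $M_{z,a}^*$ with eigenvalue $\overline{w}$, valid precisely for $w\in\mathbb B^d$ where the kernel is defined; conversely a joint eigenvector of $S^*$ with eigenvalue $\mu$ forces $\|\mu\|_2<1$ because $S_{{\bf w},a}$ is a joint contraction (indeed $Q_{S_{{\bf w},a}}(I)\le I$ from the weights) that is not a joint isometry, and one checks membership of the candidate eigenvector in $H^2(\gamma)$ by summing the appropriate Gaussian-type series.

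Next I would handle the Taylor spectrum. The inclusion $\sigma(S_{{\bf w},a})\subseteq\mathrm{cl}(\mathbb B^d)$ follows from the spectral radius formula \eqref{sp-rad}: one computes $\|Q^n_{S_{{\bf w},a}}(I)\|=\sup_{|\alpha|=n}\frac{n!}{\alpha!}\gamma^2_{\alpha+?}/\gamma^2_?$ — more precisely $Q^n_{S}(I)e_\alpha=\big(\sum_{|\beta|=n}\frac{n!}{\beta!}\prod\cdots\big)e_\alpha$ which for these weights telescopes to $\frac{(|\alpha|+1)\cdots(|\alpha|+n)}{(|\alpha|+a)\cdots(|\alpha|+a+n-1)}\le 1$, giving $r(S_{{\bf w},a})\le 1$; taking $\alpha=0$ and $n\to\infty$ gives the value $1$. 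Since $\sigma_p(S^*_{{\bf w},a})=\mathbb B^d\subseteq\sigma(S_{{\bf w},a})$ and the Taylor spectrum is closed, we get $\mathrm{cl}(\mathbb B^d)\subseteq\sigma(S_{{\bf w},a})\subseteq\mathrm{cl}(\mathbb B^d)$, hence equality.

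For the left spectrum and essential spectrum I would argue $\sigma_l(S_{{\bf w},a})\subseteq\partial\mathbb B^d$ via the inner radius formula \eqref{l-sp-rad}: using the telescoped expression above, $\inf_{\|h\|=1}\inp{Q^n_S(I)h}{h}$ is attained (or approached) along $e_\alpha$ with $|\alpha|\to\infty$, where the ratio $\frac{(|\alpha|+1)\cdots(|\alpha|+n)}{(|\alpha|+a)\cdots(|\alpha|+a+n-1)}\to 1$, so taking $\sup_n$ after $\inf_h$ gives $m_\infty(S_{{\bf w},a})=1$, forcing $\sigma_l(S_{{\bf w},a})\subseteq\{\|w\|_2=1\}=\partial\mathbb B^d$. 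Combined with $\sigma_e(S_{{\bf w},a})\supseteq$ (the topological boundary considerations) one shows $\partial\mathbb B^d\subseteq\sigma_l\subseteq\sigma_e$: here I would invoke that $S_{{\bf w},a}$ is essentially normal with $\pi(S_{{\bf w},a})$ a tuple of commuting ``isometry-like'' elements whose Taylor spectrum is $\partial\mathbb B^d$ — concretely, the self-commutators $[S_j^*,S_i]$ are compact (the weights converge to the Szegő weights $\sqrt{\alpha_j+1}/\sqrt{|\alpha|+d}$ asymptotically in the relevant sense, or one checks directly that $I-Q_S(I)$ is compact), so by Atkinson–Curto $\sigma_e(S_{{\bf w},a})=\sigma(\pi(S_{{\bf w},a}))$, and the image tuple satisfies $\sum\pi(S_j)^*\pi(S_j)=I$ in the Calkin algebra, a spherical isometry whose Taylor spectrum lies in $\partial\mathbb B^d$; the reverse inclusion $\partial\mathbb B^d\subseteq\sigma_e$ comes from the fact that $\sigma(S_{{\bf w},a})=\mathrm{cl}(\mathbb B^d)$ while $S_{{\bf w},a}-\lambda$ is invertible (hence Fredholm of index $0$, in particular not contributing new essential spectrum) for $\|\lambda\|_2>1$, and for $\|\lambda\|_2<1$ one has $\lambda\in\sigma_p(S^*)$ with $M_{z,a}-\lambda$ bounded below but not onto, so $\lambda\notin\sigma_l$ yet $\lambda\in\sigma$ — thus $\sigma\setminus\sigma_l=\mathbb B^d$, forcing $\sigma_l=\partial\mathbb B^d$, and finiteness of $\dim H^k$ fails on the whole boundary. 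I expect the main obstacle to be the precise verification that the essential spectrum is \emph{all} of $\partial\mathbb B^d$ (equivalently that Fredholmness genuinely fails at every boundary point, not merely that $\sigma_l=\partial\mathbb B^d$), which requires either an index computation along the Koszul complex of $\pi(S_{{\bf w},a})$ or a careful comparison with the Szegő $d$-shift whose essential spectrum is classically known; the rest is bookkeeping with the explicit weights and the formulas \eqref{sp-rad}, \eqref{l-sp-rad}.
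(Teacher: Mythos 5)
First, note that the paper does not prove this proposition at all: it is recorded ``for ready reference'' with citations to \cite[Proposition 2.6]{GRS} and \cite[Theorem 3.4]{CY}, so there is no in-paper argument to compare yours against. Judged on its own terms, your outline has the right architecture (spectral radius formula for the outer bound, $\sigma_p(S^*_{{\bf w},a})=\mathbb B^d$ via kernel functions to fill the ball, inner radius for the left spectrum, essential normality plus Atkinson--Curto for the essential spectrum), but several of the supporting claims are wrong or unproved. Most concretely: $S_{{\bf w},a}$ is \emph{not} a joint contraction when $a<d$ --- one computes $\sum_{j}(w^{(j)}_{\alpha,a})^2=\tfrac{|\alpha|+d}{|\alpha|+a}$, which exceeds $1$ for the Drury--Arveson case $a=1$, $d\geqslant 2$ --- and correspondingly your ``telescoped'' expression for $\inp{Q^n_{S_{{\bf w},a}}(I)e_\alpha}{e_\alpha}$ should read $\prod_{p=0}^{n-1}\tfrac{|\alpha|+d+p}{|\alpha|+a+p}$, not $\prod_{p=0}^{n-1}\tfrac{|\alpha|+1+p}{|\alpha|+a+p}$; the claimed bound by $1$ fails for $a<d$. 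The conclusion $r(S_{{\bf w},a})=1=m_\infty(S_{{\bf w},a})$ survives because the $2n$-th root of that product still tends to $1$, but the argument as written does not establish it.

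The genuinely missing steps are the two lower bounds on the boundary. From $m_\infty=1$ and $r=1$ you only get $\sigma_l(S_{{\bf w},a})\subseteq\partial\mathbb B^d$; the step ``$\sigma\setminus\sigma_l=\mathbb B^d$, forcing $\sigma_l=\partial\mathbb B^d$'' is a non sequitur, and the asserted inclusion $\sigma_l\subseteq\sigma_e$ is false in general (a point with nontrivial but finite-dimensional joint kernel and closed ranges lies in $\sigma_l$ but not in $\sigma_e$). To finish one needs either the fact that the topological boundary of the Taylor spectrum lies in the left spectrum, or the $U(d)$-homogeneity of $S_{{\bf w},a}$, which makes $\sigma_l$ a nonempty closed $U(d)$-invariant subset of $\partial\mathbb B^d$ and hence all of it. Similarly, ``$\pi(S_{{\bf w},a})$ is a spherical isometry, so its Taylor spectrum lies in $\partial\mathbb B^d$'' is not automatic --- the Szeg\"o shift itself is a joint isometry with Taylor spectrum $\mbox{cl}(\mathbb B^d)$; what is actually needed is the containment $\sigma_e(T)\subseteq\{w:\|w\|_2^2\in\sigma_e(Q_T(I))\}$ for essentially normal $T$ (this is \cite[Lemma 3.5]{CY}, used later in the paper for $S_{\lambdab_{\mf C_a}}$), together with $\sigma_e(Q_{S_{{\bf w},a}}(I))=\{1\}$. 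Finally, $\partial\mathbb B^d\subseteq\sigma_e$ --- which you correctly flag as the main obstacle --- requires an actual argument, e.g.\ local constancy of the Fredholm index in \eqref{index}: the index is $(-1)^d$ on $\mathbb B^d$ (where $\dim\ker(S^*_{{\bf w},a}-\bar\omega)=1$ and the lower cohomology vanishes) and $0$ outside $\mbox{cl}(\mathbb B^d)$, so no boundary point can be Fredholm. As it stands the proposal is a plausible roadmap rather than a proof.
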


We will investigate later the so-called tree analogs of $S_{{\bf w}, a}$
(refer to Section 1.4).

\section{Weighted Shifts on Directed Trees}

In this section, we recall some basic concepts from the theory of directed graphs which will be frequently used in the subsequent chapters. The reader is referred to R. Diestel \cite{Di} for a detailed exposition on graph theory (refer also to \cite{JJS} for a brief account of the theory of directed trees).

A {\it directed graph} is a pair $\mathscr T= (V,\mathcal E)$, where  $V$ is a nonempty set and $\mathcal E$ is a nonempty subset of $V \times V \setminus \{(v,v): v \in V\}$. An element of $V$ (resp. $\mathcal E$) is called a {\it vertex} (resp. an {\it edge}) of $\mathscr T$.  A finite sequence $\{v_i\}_{i=1}^n$ of distinct vertices is said to be a {\it circuit} in $\mathscr T$ if $n \geqslant  2$, $(v_i,v_{i+1}) \in \mathcal E$ for all $1 \leqslant i \leqslant n-1$ and $(v_n,v_1) \in \mathcal E$. 
We say that two distinct vertices $u$ and $v$ of $\mathscr T$ are {\it connected by a path} if there exists a finite sequence $\{v_i\}_{i=1}^n$ of distinct vertices of $\mathscr T$ $(n \geqslant  2)$ such that $u=v_1$, $v_n=v$ and $(v_i,v_{i+1})$ or $(v_{i+1},v_i) \in \mathcal E$ for all $1 \leqslant i \leqslant n-1$.
A directed graph $\mathscr T$ is said to be {\it connected} if any two distinct vertices of $\mathscr T$ can be connected by a path in $\mathscr T.$  For a
subset $W$ of $V$, define 
$$\child{W} := \bigcup_{u\in W} \{v\in V
\colon (u,v) \in \mathcal E\}.$$ 
One may define inductively $\childn{n}{W}$ \index{$\childn{n}{W}$} for 
$n \in \mathbb N$ as follows: 
\beqn
\childn{n}{W}:= 
\begin{cases} W & ~\mbox{if }~n=0, \\
\child{\childn{n-1}{W}} & ~\mbox{if~} n \geqslant 
1. \end{cases}
\eeqn
%Let
%$$
%\des{W}=\bigcup_{n=0}^{\infty} \childn{n}{W}.$$
Given $v\in V$, we write $\child{v}:=\child{\{v\}}$,
$\childn{n}{v}:=\childn{n}{\{v\}}$. A member of $\child{v}$ is called a {\it child} of $v.$ 
%For a given vertex $v\in V$, if there exists a unique vertex
%$u \in V$ such that $(u,v)\in \mathcal E$, we say that $v$ has a {\em parent} $u$ and denote %it by $\parent{v}$. 
The {\it descendants} of a vertex $v \in V$ \index{$\mathsf{Des}(v)$} is given by
\beqn
\mathsf{Des}(v):=\bigcup_{n=0}^{\infty} \childn{n}{v}.
\eeqn
For a given vertex $v \in V,$ consider the set $\mathsf{Par}(v):=\{u \in V : (u, v) \in \mathcal E\}$ (set of ``generalized" parents). 
\index{$\mathsf{Par}(v)$}
If $\mathsf{Par}(v)$ is singleton, then the unique vertex in $\mathsf{Par}(v)$ is called the {\it parent} of $v$, 
\index{$\parent{v}$}
which we denote by $\parent{v}.$
Let the subset $\mathsf{Root}(\mathscr T)$ 
\index{$\mathsf{Root}(\mathscr T)$}
of $V$ be defined as
$$\mathsf{Root}(\mathscr T) := \{v \in V : \mathsf{Par}(v)  = \emptyset\}.$$
%Let the subset $\boldsymbol{\mathsf{Root}}(\mathscr T)$ of $V$ is defined as
%$$\mathsf{Root}(\mathscr T) := \{v \in V : \mathsf{Par}(v) = \emptyset\}.$$
Then an element of $\mathsf{Root}(\mathscr T)$ is called a {\it root} of $\mathscr T$. If $\mathsf{Root}(\mathscr T)$ is singleton, then its unique element 
is denoted  by $\mathsf{root}$. 
We set $V^\circ:=V \setminus \mathsf{Root}(\mathscr T)$. \index{$V^{\circ}$}
A directed graph $\mathscr T= (V,\mathcal E)$ is called a {\it directed tree} if 
$\mathscr T$ has no circuits, $\mathscr T$ is connected and
each vertex $v \in V^\circ$ has a unique parent. 
\begin{remark}
It is well-known that every directed tree has at most one root \cite[Proposition 2.1.1]{JJS} (see Figure 1.1). 
\end{remark}

The following example is borrowed from \cite[Chapter 6]{JJS}.

\begin{example} \label{T-n-k} For a positive integer $n_0$ and $k_0 \in \mathbb N,$ we define the directed tree 
\index{$\mathscr T_{n_0, k_0}$}
$\mathscr T_{n_0, k_0}=(V, \mathcal E)$ as follows:
\beqn
V &=& \{-1, \cdots, -k_0\} \cup \mathbb N, \\
\mathcal E &=& \{(j, j+1) : j=-k_0, \cdots, -1\} \cup \{(0, j) : j =1, \cdots, n_0\} \\ & \cup & \cup_{j=1}^{n_0} \{(j + (l-1)n_0, j+ln_0) : l \Ge 1\}. 
\eeqn
(see Figures 1.2 and 1.3 for the cases $(n_0, k_0)=(1, 0)$ and $(n_0, k_0)=(2, 0)$ respectively). 
\end{example}

A directed graph $\mathscr T$ is said to be
\begin{enumerate}
\item[(i)] {\it rooted} if it has a unique root.
%\item[(ii)] {\it rootless} if it has no root.
\item[(ii)]
{\it locally finite} if $\mbox{card}(\child u)$ is finite for all $u \in V.$ 
\item[(iii)]
{\it leafless} if every vertex has at least one child.
\end{enumerate}

\begin{figure}
\includegraphics[scale=.5]{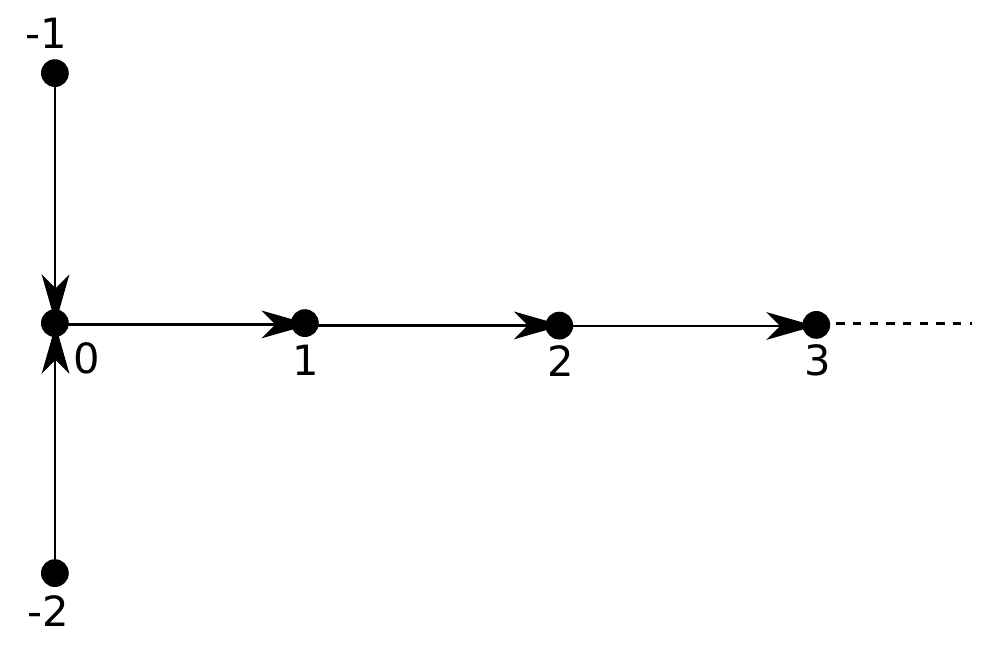} \caption{A Directed Graph which is not a Directed Tree}
\end{figure}

\begin{figure}
\includegraphics[scale=.5]{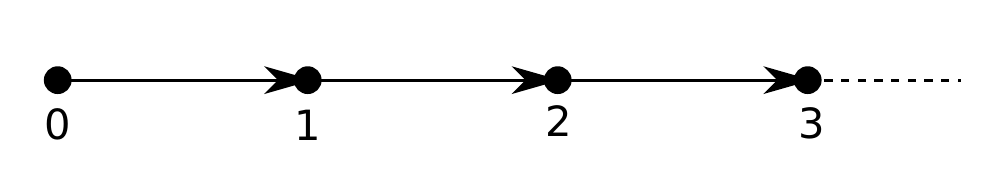} \caption{The Directed Tree $\mathscr T_{1, 0}$}
\end{figure}

\begin{figure}
\includegraphics[scale=.5]{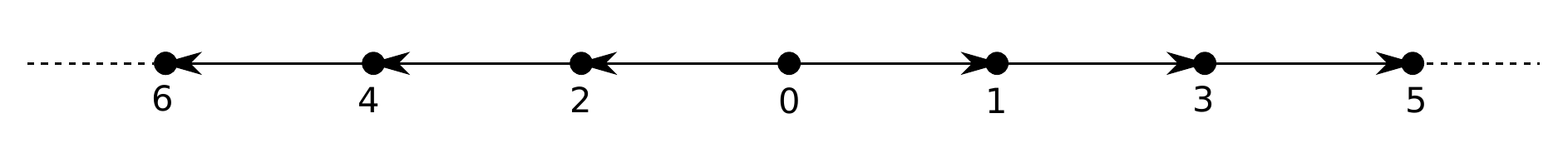} \caption{The Directed Tree $\mathscr T_{2, 0}$}
\end{figure}

Let $\mathscr T= (V,\mathcal E)$ be a directed tree and
let $l^2(V)$ stand for the Hilbert
space of square summable complex functions on $V$
equipped with the standard inner product. 
Note that
the set $\{e_u\}_{u\in V}$ is an
orthonormal basis of $l^2(V)$, where $e_u \in l^2(V)$
is the indicator function of $\{u\}$. Given a system
$\lambda = \{\lambda_v\}_{v\in V^{\circ}}$ of nonzero complex numbers, 
we define the {\em weighted shift operator} $S_{\lambda}$ 
\index{$S_{\lambda}$}
on ${\mathscr T}$
with weights $\lambda$ by
   \begin{align*}
   \begin{aligned}
{\mathscr D}(S_{\lambda}) & := \{f \in l^2(V) \colon
\varLambda_{\mathscr T} f \in l^2(V)\},
   \\
S_{\lambda} f & := \varLambda_{\mathscr T} f, \quad f \in {\mathscr
D}(S_{\lambda}),
   \end{aligned}
   \end{align*}
where $\varLambda_{\mathscr T}$ is the mapping defined on
complex functions $f$ on $V$ by
   \begin{align*}
(\varLambda_{\mathscr T} f) (v) :=
   \begin{cases}
\lambda_v \cdot f\big(\parent v\big) & \text{if } v\in
V^\circ,
   \\
   0 & \text{if } v \text{ is a root of } {\mathscr T}.
   \end{cases}
   \end{align*}
Unless stated otherwise, $\{\lambda_v\}_{v\in V^{\circ}}$ consists of nonzero complex numbers and $S_{\lambda}$ belongs to $B(l^2(V)).$ It may be concluded from \cite[Proposition 3.1.7]{JJS} that $S_\lambda$ is an injective weighted shift on $\mathscr T$ if and only if $\mathscr T$ is leafless. 

{\it In what follows, we always assume that all the directed trees considered in the remaining part of this paper are leafless.} 

Let $\mathscr T=(V, \mathcal E)$ be a rooted directed tree with root $\mathsf{root}$. Then
\beq \label{disjoint-0}
V = \bigsqcup_{n = 0}^{\infty} \childn{n}{\mathsf{root}}~(\mbox{disjoint union})
\eeq
(\cite[Corollary 2.1.5]{JJS}).
%We set $n_{\mathsf{root}}:=0$ and for
For $u\in V$, let $\alpha_u$ denote the unique integer in $\mathbb N$ 
(to be referred to as the {\it depth of $u$ in $\mathscr T$}) such that
$u \in \mathsf{Chi}^{\langle \alpha_u\rangle}(\mathsf{root})$. We use the convention that 
$\mathsf{Chi}^{\langle{j}\rangle}(\mathsf{root})=\emptyset$ if $j < 0.$ Similar convention holds for $\mathsf{par}$.
The {\it branching index} $k_{\mathscr T} \in \mathbb N \cup \{\infty\}$ of a rooted directed tree $\mathscr T$ is defined as  
\[k_\mathscr{T}:=\begin{cases}
 1+\sup\{\alpha_w:w\in V_{\prec}\}& \text{if} ~V_{\prec}~ \text{is nonempty},\\
 0 & \text{if $V_{\prec}$ is empty},
\end{cases}
\]
where $V_{\prec}:=\{u\in V: \mbox{card}(\mathsf{Chi}(u)) \geqslant 2\}$.
If $V_{\prec}$ is finite, then $k_\mathscr T$ is necessarily finite but converse is not true in general \cite[Remark 2]{CT}.

\begin{remark} Let $\mathscr T_{n_0, k_0}$ be as discussed in Example \ref{T-n-k}. Note that $\mathscr T_{n_0, k_0}$ is a locally finite, rooted directed tree with branching index 
%$k_0+1.$ 
\[k_\mathscr{T}=\begin{cases}
k_0+1& \text{if}~ n_0 \Ge 2,\\
 0 & \text{otherwise}.
\end{cases}
\]
\end{remark}

If $S_\lambda$ is a left-invertible weighted shift on a rooted directed tree, then $S_\lambda$ has wandering subspace property. 
This fact was recorded in \cite[Theorem 2.7(iii)]{CT}. But it turns out that this is a general nature of a (bounded) weighted shift on a rooted directed tree and the left-invertibility is no longer required. We illustrate this fact in the following proposition.  

\begin{proposition}\label{wandering-one}
Let $\mathscr T = (V, \mathcal E)$ be a rooted directed  tree and $S_\lambda \in B(l^2(V))$ be a weighted shift on $\mathscr T$. Set $E:= \ker S^*_{\lambda}$. Then
\beq \label{wandering-one-eq}
\bigvee_{k \in \mathbb N} S^k_\lambda (E) = l^2(V).
\eeq	
\end{proposition}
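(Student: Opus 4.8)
The plan is to show that the orthogonal complement of $\bigvee_{k\in\mathbb N} S^k_\lambda(E)$ is trivial, where $E=\ker S^*_\lambda$. Write $\mathcal M := \bigvee_{k\in\mathbb N} S^k_\lambda(E)$ and suppose $f \perp \mathcal M$. The key structural fact I would exploit is the disjoint decomposition $V = \bigsqcup_{n\ge 0}\childn{n}{\rootb}$ from \eqref{disjoint-0}, together with an explicit description of $E = \ker S^*_\lambda$. A standard computation with the weighted shift on a directed tree shows that $S^*_\lambda e_u = \sum_{v \in \child{u}} \overline{\lambda_v} e_{\parent v}$-type formulas collapse: in fact $\langle S_\lambda e_w, e_v\rangle = \lambda_v$ if $w = \parent v$ and $0$ otherwise, so $S^*_\lambda e_v = \overline{\lambda_v}\, e_{\parent v}$ for $v \in V^\circ$ and $S^*_\lambda e_{\rootb} = 0$. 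Consequently a vector $f = \sum_u \hat f(u) e_u$ lies in $E$ if and only if, for every $u \in V$, one has $\sum_{v \in \child u} \overline{\lambda_v}\,\hat f(v) = 0$; in particular $e_{\rootb} \in E$, and $E$ decomposes along the levels $\childn{n}{\rootb}$.

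Next I would argue by induction on the level $n$ that $f \perp \mathcal M$ forces $\hat f(u) = 0$ for all $u \in \childn{n}{\rootb}$. For the base case $n = 0$: since $e_{\rootb}\in E \subseteq \mathcal M$, we get $\hat f(\rootb) = \langle f, e_{\rootb}\rangle = 0$. For the inductive step, assume $\hat f$ vanishes on $\childn{0}{\rootb} \cup \cdots \cup \childn{n-1}{\rootb}$. Fix $u \in \childn{n-1}{\rootb}$ with $\child u$ nonempty (leaflessness guarantees this). Using $f \perp S_\lambda(E)$, i.e. $\langle f, S_\lambda g\rangle = \langle S^*_\lambda f, g\rangle = 0$ for all $g \in E$, one computes $S^*_\lambda f = \sum_{v \in V^\circ}\overline{\lambda_v}\,\hat f(v)\, e_{\parent v}$, whose $e_u$-coordinate is $\sum_{v \in \child u}\overline{\lambda_v}\,\hat f(v)$. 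The orthogonality relations produced by testing against the elements of $E$ supported on $\child u$ (these span, inside $\ell^2(\child u)$, exactly the orthogonal complement of the single vector $(\overline{\lambda_v})_{v\in\child u}$) together with the one relation $\sum_{v\in\child u}\overline{\lambda_v}\hat f(v)=0$ coming from $f \perp S_\lambda(E)$ — wait, more carefully: $f\perp\mathcal M$ gives $f \perp E$ hence, restricting to $\ell^2(\child u)$, the vector $(\hat f(v))_{v\in\child u}$ is orthogonal to $E \cap \ell^2(\child u)$, which is the orthocomplement of $\mathbb C\cdot(\overline{\lambda_v})_v$; so $(\hat f(v))_{v\in\child u}$ is a scalar multiple of $(\overline{\lambda_v})_v$. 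Then $f \perp S_\lambda(E)$, spelled out on the one remaining degree of freedom, kills that scalar, giving $\hat f(v) = 0$ for all $v \in \child u$. Ranging over all $u \in \childn{n-1}{\rootb}$ covers $\childn{n}{\rootb}$ and closes the induction.

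The main obstacle — and the place requiring genuine care rather than routine bookkeeping — is the inductive step: correctly identifying $E \cap \ell^2(\child u)$ and verifying that the constraints "$f \perp E$" and "$f \perp S_\lambda(E)$", when localized to a single sibling set $\child u$, together pin down $(\hat f(v))_{v\in\child u}$ to zero. One must be attentive to the possibility that $\child u$ is a singleton (then $E\cap\ell^2(\child u) = \{0\}$ and the argument degenerates pleasantly) versus a branching vertex, and one must make sure the "remaining degree of freedom" killed by $f\perp S_\lambda(E)$ is accounted for without double-counting. Since every vertex is reached at some finite level and $\{e_u\}_{u\in V}$ is an orthonormal basis of $\ell^2(V)$, vanishing of $\hat f$ at every level yields $f = 0$, so $\mathcal M = \ell^2(V)$, which is \eqref{wandering-one-eq}. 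Note that left-invertibility is never invoked — only leaflessness and boundedness of $S_\lambda$, exactly as the statement advertises.
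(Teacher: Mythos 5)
Your overall strategy --- pass to the orthogonal complement of $\mathcal M=\bigvee_{k}S^k_\lambda(E)$ and peel off the levels $\childn{n}{\rootb}$ one at a time --- is a legitimate dual of the paper's argument (the paper works primally: $e_{\rootb}\in\mathcal M$, and inductively $S_\lambda e_u=\Gamma_u\in\mathcal M$ together with $l^2(\child{u})\ominus[\Gamma_u]\subseteq E\subseteq\mathcal M$ forces $l^2(\child{u})\subseteq\mathcal M$). But your inductive step has a genuine gap. The two constraints you actually invoke, namely $f\perp E$ (which pins $f|_{\child{u}}$ to a multiple $c_u\Gamma_u$) and $f\perp S_\lambda(E)$, do not suffice to kill $c_u$. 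The point is that $f\perp S_\lambda(E)$ only says $S^*_\lambda f\perp E$; it does not give the coordinatewise vanishing $(S^*_\lambda f)(u)=\sum_{v\in\child{u}}\overline{\lambda_v}\hat f(v)=0$ that your argument needs. Localized to $\child{\parent{u}}$, what $f\perp S_\lambda(E)$ actually yields is that the vector $\big(c_{w}\|\Gamma_{w}\|^2\big)_{w\in\child{\parent{u}}}$ is a scalar multiple of $\Gamma_{\parent{u}}$ --- one relation coupling the scalars of all siblings of $u$, not the vanishing of each. Concretely, on $\mathscr T_{2,0}$ take $f=ae_3+be_4$ supported on level $2$: both children sets at level $1$ are singletons, so $f\perp E$ gives no constraint, and $f\perp S_\lambda(E)$ gives only the single relation coming from $S_\lambda(\overline{\lambda}_2e_1-\overline{\lambda}_1e_2)$; the second relation needed to force $a=b=0$ comes from $f\perp S^2_\lambda e_{\rootb}$, which your step never uses. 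A symptom of the same problem is that your inductive step never actually invokes the inductive hypothesis.

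The repair is to run the induction over all vectors of $\mathcal M^{\perp}$ simultaneously: since $\mathcal M$ is $S_\lambda$-invariant, $f\perp\mathcal M$ implies $S^*_\lambda f\perp\mathcal M$, and $S^*_\lambda f$ vanishes on levels $<n-1$ whenever $f$ vanishes on levels $<n$. Applying the inductive statement to $S^*_\lambda f$ gives $(S^*_\lambda f)(u)=0$ for every $u$ at level $n-1$, i.e. $\langle f|_{\child{u}},\Gamma_u\rangle=0$, which combined with $f|_{\child{u}}=c_u\Gamma_u$ yields $c_u\|\Gamma_u\|^2=0$ and hence $c_u=0$. (Equivalently, one must test $f$ against all of $S^k_\lambda(E)$ for $0\leqslant k\leqslant n$ and solve the resulting triangular system.) With that fix your argument closes; as written it does not. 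A minor side remark: $E\cap l^2(\child{u})$ is the orthocomplement of $\Gamma_u=(\lambda_v)_{v\in\child{u}}$, not of $(\overline{\lambda_v})_{v\in\child{u}}$ --- harmless once the weights are taken positive, but worth stating correctly since the proposition allows complex weights.
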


\begin{proof}
Set $M:= \bigvee_{k \in \mathbb N} S^k_\lambda (E)$. We claim that $e_v \in M$ for all $v \in \childn{n}{\mathsf{root}}$ and for all $n \in \mathbb N$. We prove this by induction on $n$. Recall from the \cite[Proposition 3.5.1(ii)]{JJS}, that
\beq \label{formula-k}
E=[e_{\mathsf{root}}] \oplus \bigoplus_{v \in 
V}\left(l^2(\mathsf{Chi}(v)) \ominus [\Gamma_v] \right),
\eeq
where
$\Gamma_v : \mathsf{Chi}(v) \rar \mathbb C$ 
\index{$\Gamma_v$}
is given by 
$\Gamma_v = \sum_{u \in \mathsf{Chi}(v)} \lambda_u e_u = S_\lambda e_v$. 
Clearly, $e_{\mathsf{root}} \in M$. Thus the claim holds true for $n = 0$. Suppose it is true for some $n \in \mathbb N$. That is, $e_u \in M$ for all $u \in \childn{n}{\mathsf{root}}$. Let $v \in \childn{n+1}{\mathsf{root}}$. Then $v \in \child{u}$ for some $u \in \childn{n}{\mathsf{root}}$. By the induction hypothesis, $e_u \in M$. Since $M$ is $S_\lambda$-invariant, $S_\lambda e_u = \Gamma_u \in M$ and hence $[\Gamma_u] \subseteq M$. Further, as $E \subseteq M$, $l^2(\child{u}) \ominus [\Gamma_u] \subseteq M$. Thus $l^2(\child{u}) \subseteq M$, which in turn implies that $e_v \in M$. Thus the claim stands verified. By \eqref{disjoint-0}, it follows that $e_v \in M$ for all $v \in V$, and hence, $l^2(V) \subseteq M$. Thus \eqref{wandering-one-eq} stands true.
\end{proof}
The argument above relies completely on the formula \eqref{formula-k} for the kernel of $S^*_{\lambda}.$ Clearly, this formula is associated with a system of linear equations corresponding to vertices from the branching set. In case of several variables, this correspondence becomes highly involved. This is one of the difficulties in the derivation of the wandering subspace property in several variables (see Theorem \ref{wandering}).

%\section{Tree Analog of the multiplication $d$-tuple $M_{z, a}$}
\section{Overture}

In this section, we briefly discuss some important aspects of this work. The exposition here is far from being complete, but it conveys some of the essential ideas presented in this text. 
%It is evident that distinct discrete structures $\mathscr T$ yield different multishifts.
Motivated by \cite[Question 4.7]{CY} about the classification of so-called spherical tuples of higher multiplicity, we construct tree analogs of the multiplication $d$-tuples $M_{z, a}$ as discussed in Example \ref{1.1.1}. We outline this construction as follows.

Consider the directed Caretsian product $\mathscr T=(V, \mathcal E)$ of locally finite, rooted directed trees $\mathscr T_1, \cdots, \mathscr T_d$ of finite joint branching index and let $S_{\lambdab_{\mf C_a}}$ denote the multishift on $\mathscr T$ with
weights given by
\beqn
\lambda^{(j)}_w = \sqrt{\frac{1}{{\mbox{card}(\childi{j}{v})}}} \sqrt{\frac{\alpha_{v_j}  + 1}{|\alpha_v| + a}}~\mbox{for~}w \in \childi{j}{v},~ v \in V~\mbox{and~} j=1, \cdots, d.
\eeqn
Here $a$ is a positive integer and $\alpha_u \in \mathbb N^d$ denotes the depth of $u \in V$ in $\mathscr T$.
It turns out that the multishift $S_{\lambdab_{\mf C_a}}$ is unitarily equivalent to multiplication $d$-tuple $\mathscr M_{z, a}$ acting on reproducing kernel Hilbert space $\mathscr H_{a, d}$ of $E$-valued holomorphic functions on the open unit ball in $\mathbb C^d$, where $E$ denotes the joint kernel of $S^*_{\lambdab_{\mf C_a}}$.  The associated reproducing kernel $\kappa_{\mathscr H_{a, d}} : \mathbb B^d \times \mathbb B^d \rar B(E)$ is given by
\beqn
 \kappa_{\mathscr H_{a, d}}(z, w) = \frac{1}{(1-\inp{z}{w})^a}\, P_{[e_\rootb]} + \sum_{\underset{F \neq \emptyset}{F  \in \mathscr{P}}} \sum_{u \in \Omega_F} \kappa_{u, F}(z, w),
\eeqn
where 
\beqn 
\kappa_{u, F}(z, w)=\sum_{\alpha \in \mathbb N^d}  \left(\frac{\alpha_{u}!}{(\alpha_{u}+\alpha)!} \right)\left({\prod_{j=0}^{|\alpha|-1}(|\alpha_u|+a + j)} \right) z^{\alpha} \overline{w}^{\alpha}\, P_{\mathcal L_{u, F}} 
\eeqn
with $P_{\mathcal M}$ being the orthogonal projection of $\mathcal H$ onto a subspace $\mathcal M$ of $\mathcal H$.
We refer the reader to Theorem \ref{S-c-a-kernel} for a precise statement.
\begin{remark}
In case $\mathscr T_j=\mathscr T_{1, 0}$ for $j=1, \cdots, d$, only first series in $\kappa_{\mathscr H_{a, d}}(z, w)$ survives, and hence we obtain the kernel $$\frac{I_{E}}{(1-\inp{z}{w})^a} ~(z, w \in \mathbb B^d).$$
\end{remark}
Let us try to understand the above formula for $\kappa_{\mathscr H_{a, d}}(z, w)$. The following decomposition of the joint kernel $E$ of $S^*_{\lambdab_{\mf C_a}}$ is useful in this regard:
%understanding the expression of $\kappa_{\mathscr H_{a, d}}(z, w)$.
\beqn 
E = [e_\rootb] \oplus \bigoplus_{\underset{F \neq \emptyset}{F  \in \mathscr{P}}} \bigoplus_{u \in \Omega_{F}} \mathcal L_{u, F}.
\eeqn 
Here $\mathscr P$ denotes the power set of $\{1, \cdots, d\}$ and $\Omega_F$ is a certain indexing set corresponding to $F \in \mathscr P.$ In particular, the first series appearing in $\kappa_{\mathscr H_{a, d}}(z, w)$ corresponds to $[e_{\rootb}]$ while $\kappa_{u, F}$ corresponds to $\mathcal L_{u, F}$, where $\mathcal L_{u, F}$ is a subspace associated with certain system of linear equations related to $S_{\lambdab_{\mf C_a}}$ (the reader is referred to Chapters 4 and 5 for a detailed discussion). 
It is worth noting that the spaces $\mathscr H_{a, d}$ are unnoticed even in dimension $d=1.$ Indeed, in this case, the reproducing kernel $\kappa_{\mathscr H_a} = \kappa_{\mathscr H_{a, 1}}$ takes a concrete form:
\beqn
\kappa_{\mathscr H_a}(z, w) &=&  \frac{1}{(1-z\overline{w})^a}\, P_{[e_\rootb]} \\  &+& 
\sum_{v \in V_{\prec}} \sum_{n=0}^{\infty}  \frac{(\alpha_v + n + a)! (\alpha_v +1)!}{(\alpha_v +a)! (\alpha_v + n+1)!}~{z^n \overline{w}^n}\, P_{l^2(\child{v}) \ominus [\Gamma_v]} ~(z, w \in \mathbb D),
\eeqn
where $V_{\prec}$ denotes the set of branching vertices of $V.$
One can rewrite this formula using the hypergeometric funcion ${}_2F_1(a, b, c, t)$ \cite[Pg 217]{SSV}:
\beqn
\kappa_{\mathscr H_a}(z, w) &=& {}_2F_1(a, 1, 1, z\overline{w}) ~P_{[e_\rootb]} \\ & + & \sum_{v \in V_{\prec}} {}_2F_1(\alpha_v+a+1, 1, \alpha_v+2, z\overline{w})~P_{l^2(\child{v}) \ominus [\Gamma_v]} ~(z, w \in \mathbb D).
\eeqn
An alternative verification of this formula (based on Shimorin's analytic model) will be given in Chapter 5.
\begin{remark}
We analyze below the cases in which $a=1$ and $a=2$ in the one-dimensional case. Note that $\kappa_{\mathscr H_1}$ is the Cauchy kernel $\frac{I_E}{1-z\overline{w}}$ while $\kappa_{\mathscr H_2}$ is given by
\beqn 
%\label{repkernel-1-dim} 
\kappa_{\mathscr{H}_2}(z,w) &=& \sum_{n=0}^\infty  (n+1)P_{[e_\rootb]}\, z^n \overline{w}^n  \\ & + &  \sum_{v \in V_{\prec}}  \sum_{n=0}^\infty \Big(\frac{\alpha_v + n + 2}{\alpha_v + 2}\Big)  z^n \overline{w}^n \, P_{l^2(\child{v}) \ominus [\Gamma_v]} ~  (z, w \in \mathbb{D}),
\eeqn
%where $\mathcal B_v$ is an orthonormal basis of $l^2(\child{v}) \ominus [\lambdab^v]$ for $v \in V_{\prec}$ (see \eqref{formula-k}).
Note that $\kappa_{\mathscr H_2}=\frac{1}{(1-z\overline{w})^2}$ in case $\mathscr T=\mathscr T_{1, 0}.$ 
%Further, it can be seen from \eqref{s-ca-moment-f-2} that $\mathscr H$ contains copies of weighted Bergman spaces
\end{remark}

%Let $\mathscr T$ be a directed Cartesian product of locally finite, rooted directed trees with finite joint branching index and let $S_{c_a, \lambdab}$ be the multishift on $\mathscr T.$
It turns out that $S_{\lambdab_{\mf C_a}}$ is finitely multicyclic, essentially normal $d$-tuple with Taylor spectrum being equal to the closed unit ball $\mbox{cl}(\mathbb B^d).$ 
However, we would like to emphasize here that $S_{\lambdab_{\mf C_a}}$ are, in general, not unitarily equivalent to orthogonal direct sums of any number of copies of the classical multishifts $S_{{\bf w}, a}$. For instance, in case $d=1$ and $a=2,$ the defect operator $I - 2 S_{{\bf w}, a}S^*_{{\bf w}, a} + S^2_{{\bf w}, a}S^{*2}_{{\bf w}, a}$ is always an orthogonal projection of rank $1$ \cite[Pg 618]{H}. On the other hand, if $v \in V^{\circ}$ is such that $\mbox{card}(\mathsf{sib}(\mathsf{par}(v))) =1$ and $s_v:=\frac{1}{\mbox{card}(\mathsf{sib}(v))} < 1$, then
\beqn
\inp{(I - 2 S_{\lambdab_{\mf C_a}}S^*_{\lambdab_{\mf C_a}} + S^2_{\lambdab_{\mf C_a}}S^{*2}_{\lambdab_{\mf C_a}})^je_v}{e_v} = (1-s_v)^j~\mbox{for~}j=1, 2,
\eeqn
which shows that $I - 2 S_{\lambdab_{\mf C_a}}S^*_{\lambdab_{\mf C_a}} + S^2_{\lambdab_{\mf C_a}}S^{*2}_{\lambdab_{\mf C_a}}$ is not even idempotent.

%\section{Outline of the Paper}

We conclude this chapter with a brief description of the layout
of the present work. In Chapter 2, we discuss the theory of product of directed graphs in the context of directed trees. The motivation for this chapter comes from the theory of multishifts with which we are primarily concerned. In particular, we pay attention to two important notions, namely, directed Cartesian product and tensor product of directed trees. We will see the significance of the notion of tensor product of directed trees in the context of so-called spherically balanced multishifts later in Chapter 5. 

In Chapter 3, we formally introduce the notion of multishifts $S_{\lambdab}$ on directed Cartesian product $\mathscr T$ of finitely many rooted directed trees. Apart from various elementary properties of multishifts, we reveal its relation with the shift operator arising from directed semi-tree structure of $\mathscr T$ as ensured in Chapter 2. 
The later half of this chapter deals with spectral properties of multishifts $S_{\lambdab}$ on $\mathscr T$.
A particular attention is given to circularity and analyticity of $S_{\lambdab}$. Indeed, $S_{\lambdab}$ turns out to be strongly circular and separately analytic.
These properties are then used to show that the point spectrum of $S_{\lambdab}$ is empty and the Taylor spectrum is Reinhardt. Further, we obtain a matrix decomposition of $2$-variable multishifts and discuss some of its consequences to spectral theory. In particular, we compute essential spectra for a family of multishifts.

Chapter 4 is devoted to the description of the joint kernel of $S^*_{\lambdab}$. This in turn relies on decompositions of vertex set of product of directed trees and that of the underlying Hilbert space. It turns out that the problem of computing the joint kernel $\ker S^*_{\lambdab}$ of $S^*_{\lambdab}$ is equivalent to solving a system of (possibly infinitely many) linear equations. We illustrate this with the help two instructive examples in which $\ker S^*_{\lambdab}$ is explicitly computed.
The description of $\ker S^*_{\lambdab}$ enables to derive the wandering subspace property for $S_{\lambdab}$ on $\mathscr T$. It is to be noted that the situation gets far simpler in case of either one variable weighted shifts or classical multishifts. As a consequence, we obtain a multivariable counterpart of Shimorin's model in this context, and use it to show that these multishifts belong to the Cowen-Douglas class.

In Chapter 5, we discuss two notions of balanced multishifts, namely, spherical and toral. We use the classification of torally balanced multishifts to obtain a local analog of von Neumann's inequality. 
The classification of spherically balanced multishifts is given in terms of certain integral representations. Unlike the classical case \cite{CK}, several Reinhardt measures appear in this characterization. 
In the classification of spherically balanced multishifts, the notion of tensor product $\mathscr T^{\otimes}$ of directed trees appears naturally. Indeed, various properties of $S_{\lambdab}$ on $\mathscr T$ are reflected in the corresponding properties of the one variable shift on the component of $\mathscr T^{\otimes}$ containing root. This correspondence allows us, in particular, to compute the spectral radius of Taylor spectrum and the inner spectral radius of left spectrum for $S_{\lambdab}.$  
In this chapter, we also discuss special classes of joint subnormal and joint hyponormal multishifts $S_{\lambdab}$ on $\mathscr T.$ In particular, we characterize these classes within the class of spherically balanced multishifts.
We illustrate these results with a family of examples which can be thought of as tree analogs of the multiplication tuples on the reproducing kernel Hilbert spaces associated with the kernels $\frac{1}{(1-\inp{z}{{w}})^a}~(z, w \in \mathbb B^d, a > 0).$ 
%We conclude the paper with some possible directions for future work and some unresolved problems pertaining to multishifts $S_{\lambdab}$ on $\mathscr T.$

\chapter{Product of Directed Trees}

In this chapter, we discuss two well-studied notions of product of directed trees, namely, the directed Cartesian product and the tensor product (\cite{Sa}, \cite{We}, \cite{Mc}, \cite{Ha}, \cite{Feig}). These notions can certainly be introduced in the general context of (directed) graphs. However, since the main objects of the present study are multishifts on product of directed trees, we confine ourselves to directed trees.

\section{Directed Cartesian Product of Directed Trees}

The definition of the directed Cartesian product of two directed graphs has been introduced and studied by G. Sabidussi \cite{Sa} (refer also to \cite{Feig}). This notion readily generalizes to the case of finitely many directed trees as given below. 

\begin{definition}
Let $d$ be a positive integer
and let $\mathscr T_j = (V_j, \mathcal E_j)~(j=1, \cdots, d)$ be a collection of directed trees. 
The {\it directed Cartesian product of $\mathscr T_1,$ $\cdots, \mathscr T_d$} is a directed graph $\mathscr T=(V, \mathcal E),$ 
\index{$\mathscr T=(V, \mathcal E)$}
where $V:=V_1 \times \cdots \times V_d$ and 
\beqn
\mathcal E := \Big\{(v, w) \in V \times V: 
~\mbox{there is a positive integer~} k \in \{1, \cdots, d\} \\ \mbox{such that}~ v_j = w_j~ \mbox{for}~ j \neq k~\mbox{and the edge}~ (v_k, w_k) \in \mathcal E_k \Big\},
\eeqn 
where we adhere to the convention that $v \in V = V_1 \times \cdots \times V_d$ is always understood as $v = (v_1, \cdots, v_d)$ with $v_j \in V_j$ for $j = 1, \cdots, d$. We sometimes use the notation $\mathscr T_1 \times \cdots \times \mathscr T_d$ for the directed Cartesian product $\mathscr T$ of $\mathscr T_1, \cdots, \mathscr T_d$.
\end{definition}
\begin{remark} \label{edge}
Note that $\mathcal E$ is precisely the collection of edges $(v, w)$ such that $w_k$ is a child of $v_k$ for some $k$ and $w_j=v_j$ for all $j \neq k.$
\end{remark}

\begin{remark}
In case $d \Ge 2$, $\mathscr T=(V, \mathcal E)$ is never a directed tree. Indeed, $\mbox{card}(\child{u} \cap \child{v}) \Ge  1$ for some $u, v \in V$ with $u \neq v$. This can be seen as follows.
Since $\mathscr T_1, \cdots, \mathscr T_d$ are leafless, for any $w=(w_1, \cdots, w_d) \in V$, consider $u=(u_1, w_2, \cdots, w_d)$ and $v=(w_1, u_2, w_3, \cdots, w_d)$, where  
 $u_j \in \child{w_j}$ for $j=1, 2.$ Note that $$(u_1, u_2, w_3, \cdots, w_d) \in \child{u} \cap \child{v}.$$
\end{remark}

\begin{remark}
%For $j=1, \cdots, d$, let $\mathscr T_j = (V_j, \mathcal E_j)$ be a rooted directed tree with root denoted by $\mathsf{root}_j$.
For $j=1, \cdots, d$, let $\mathscr T_j$ be a rooted directed tree with root denoted by $\mathsf{root}_j$.  Then the directed Cartesian product $\mathscr T$ of $\mathscr T_1, \cdots, \mathscr T_d$ is a rooted direct graph with root given by \index{$\mathsf{root}$}
${\mathsf{root}} =(\mathsf{root}_1, \cdots, \mathsf{root}_d) \in V$.
\end{remark}

%Recall that $\mathbb N^d = \mathbb N \times \cdots \times \mathbb N$ denotes the $d$-fold Cartesian product of the set of non-negative integers $\mathbb N$. 

We discuss below three basic examples of directed Cartesian product.
\begin{example} \label{classical} 
%Consider the directed tree $\mathscr T_{1, 0}$ with the set of vertices 
%$\mathbb{N}$ and the set of edges $\{(j, j+1) : j \in \mathbb N\$. 
Let $\mathscr T_{1, 0}$ be as discussed in Example \ref{T-n-k} and
let $\mathscr T_j = \mathscr T_{1, 0}$ for all $j=1, \cdots, d.$
The directed Cartesian product $\mathscr T=\mathscr T^d_{1, 0}$ of $\mathscr T_1, \cdots, \mathscr T_d$ is given by $\mathscr T=(V, \mathcal E)$, where $V = \mathbb N^d$ and
\beqn
\mathcal E &=& \Big\{(\alpha, \beta) \in \mathbb N^d \times \mathbb N^d: 
~\mbox{there is a positive integer~} k \in \{1, \cdots, d\} \\        & & \mbox{such that} ~ \alpha_j = \beta_j~ \mbox{for}~ j \neq k~\mbox{and}~ \beta_k= \alpha_k + 1\Big\} \\
&=& \big\{(\alpha, \alpha + \epsilon_j) \in \mathbb N^d \times \mathbb N^d: j=1, \cdots, d \big\}
\eeqn
(see Figure 2.1). 
\end{example}

The directed graph $\mathscr T$ discussed above is the {\it $d$-finite Bargmann graph} in disguise. The later one was introduced in \cite[Section 3]{MS}.

\begin{figure}
\includegraphics[scale=.5]{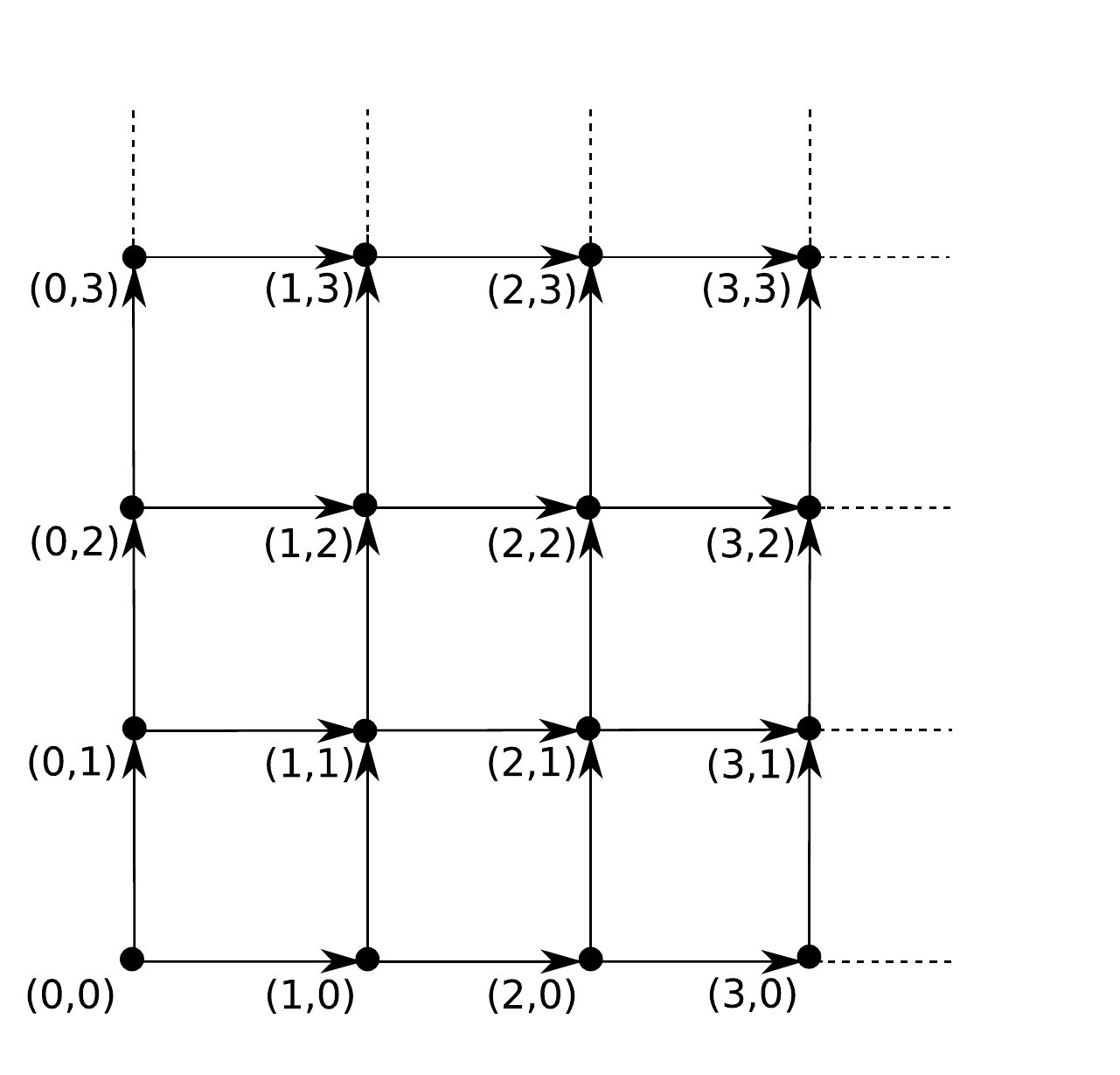} \caption{Directed Cartesian Product $\mathscr T = \mathscr T_{1, 0} \times \mathscr T_{1, 0}$}
\end{figure}

\begin{example}\label{classical-mix}
Let $\mathscr T_{1, 0}, \mathscr T_{2, 0}$ be as discussed in Example \ref{T-n-k}.
%Let $\mathscr T_{2, 0} = (V_1, \mathcal E_1)$ be a rooted directed tree, where $V_1 = \mathbb{N}$ and $\mathcal E_1 = \{(0,1), (0,2)\} \cup \{(n, n+2) : n \geqslant  1\}$. 
Then the directed Cartesian product $\mathscr T=\mathscr T_{2, 0} \times \mathscr T_{1, 0}$ of $\mathscr T_{2, 0}$ and $\mathscr T_{1, 0}$ is given by $\mathscr T = (V, \mathcal E)$, where $V = \mathbb N \times \mathbb N$ and $\big( (m,n), (k,l) \big) \in \mathcal E$ if and only if either $m = k$ and $l = n+1$, or $n = l$ and $$k = \begin{cases} m+2 &~ \mbox{if~} m \neq 0, \\ 1 \mbox{~or~} 2 &~ \mbox{otherwise} \end{cases}$$ (see Figure 2.2).
\end{example}

\begin{figure}
\includegraphics[scale=.5]{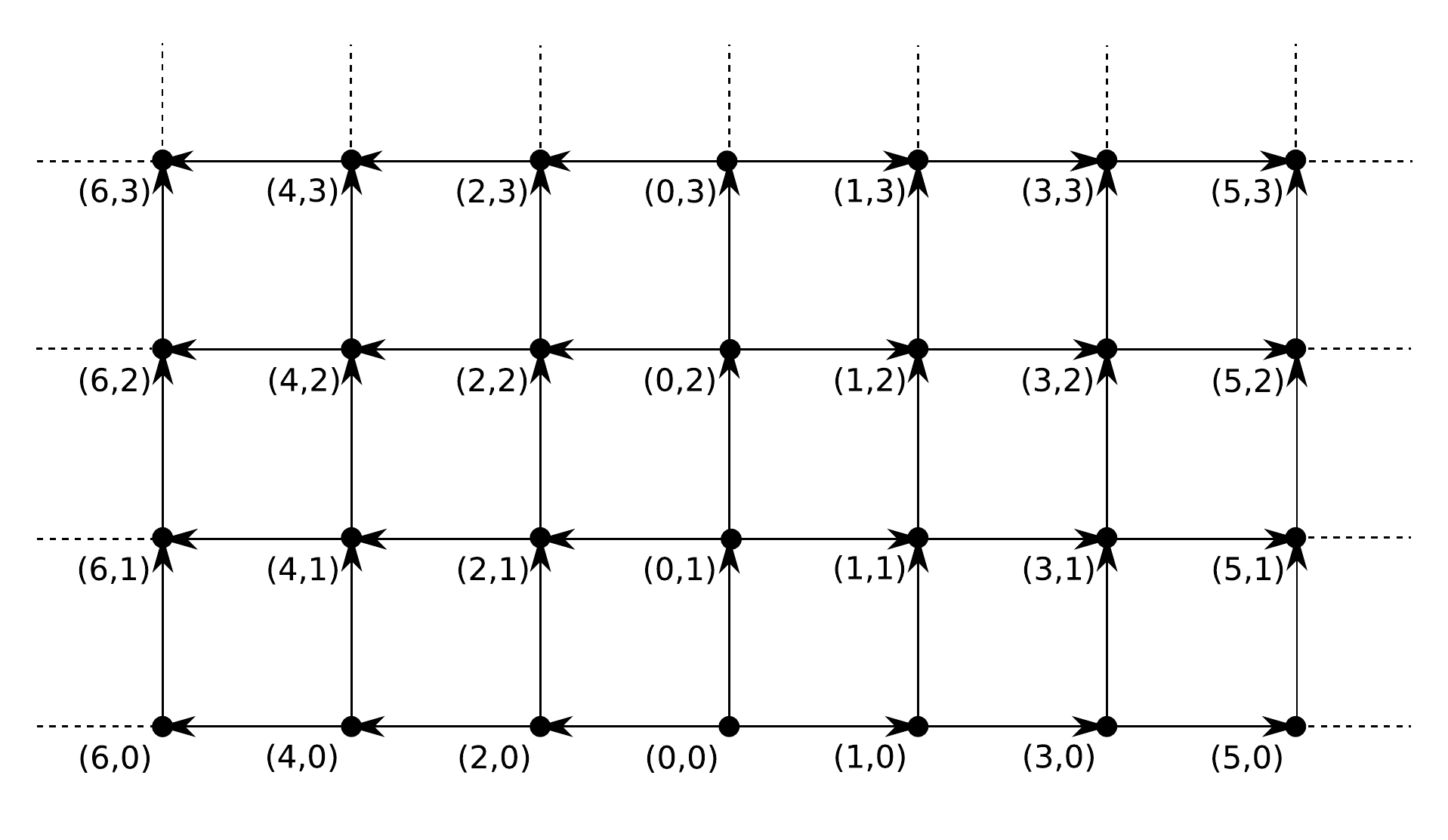} \caption{Directed Cartesian Product $\mathscr T = \mathscr T_{2, 0} \times \mathscr T_{1, 0}$}
\end{figure}

\begin{figure}
\includegraphics[scale=.4]{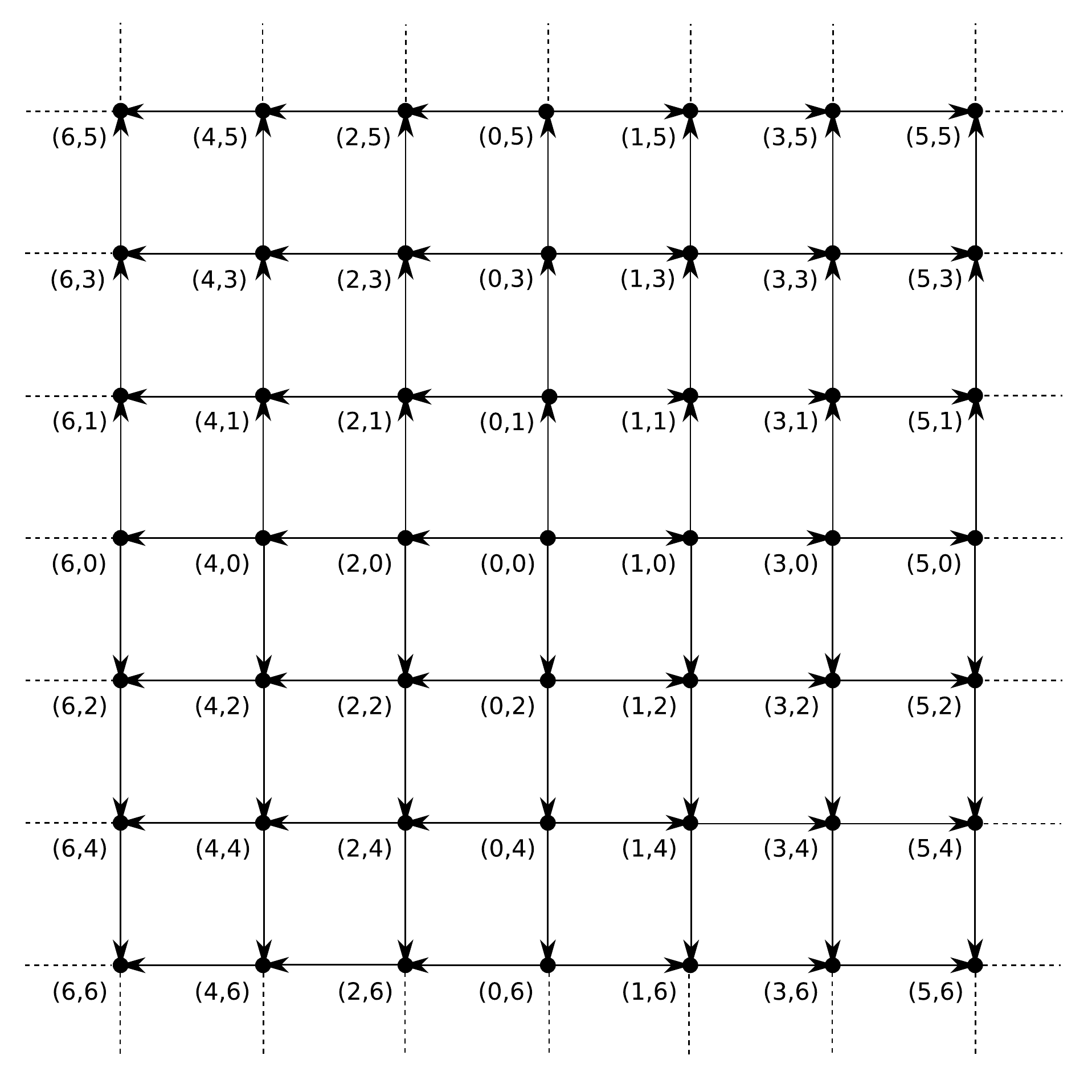} \caption{Directed Cartesian Product $\mathscr T = \mathscr T_{2, 0} \times \mathscr T_{2, 0}$} 
\end{figure}

\begin{example} \label{T1-T1}
Let $\mathscr T_{2, 0}$ be as discussed in Example \ref{T-n-k}. Then the directed Cartesian product of $\mathscr T_{2, 0}$ with itself is given by $\mathscr T = (V, \mathcal E)$, where $V = \mathbb N \times \mathbb N$ and $\big( (m,n), (k,l) \big) \in \mathcal E$ if and only if either $m = k$ and $$l = \begin{cases} n+2 &~ \mbox{if~} n \neq 0, \\ 1 \mbox{~or~} 2 &~ \mbox{otherwise,} \end{cases}$$ or $n = l$ and $$k = \begin{cases} m+2 &~ \mbox{if~} m \neq 0, \\ 1 \mbox{~or~} 2 &~ \mbox{otherwise} \end{cases}$$
(see Figure 2.3).
\end{example}

 %Set $V^{\circ} = V \setminus \{\rootb\}$. 

\begin{definition}
Let $\mathscr T=(V, \mathcal E)$ be the directed Cartesian product of directed trees $\mathscr T_1, \cdots, \mathscr T_d$. 
For $j = 1,\cdots, d$ and $v \in V$, we set 
\beqn \label{child-i}
\childi{j}{v}:=\big\{w \in V : w_j \in \child{v_j}\ \text{and}\ w_k=v_k\ \text{for}\ k \neq j\big\}.\eeqn
Further, for $W \subseteq V$, we define 
$$\mathsf{Chi}_j(W):=\bigcup_{w \in W} \childi{j}{w}.$$
For $k \in \mathbb N,$ we denote $\underbrace{\mathsf{Chi}_j \cdots \mathsf{Chi}_{j}}_{{k}~ \mbox{{\tiny times}}}(W)$ by 
\index{$\childki{j}{k}{W}$}
$\childki{j}{k}{W}$,
where we understand that $\childki{j}{0}{W} = W$.
Further, for $\alpha = (\alpha_1, \cdots, \alpha_d) \in \mathbb N^d$ and $W \subseteq V$, we define \index{$\childnt{\alpha}{W}$}
\beqn \label{child-m}
\childnt{\alpha}{W} := \mathsf{Chi}_1^{\langle \alpha_1 \rangle} \cdots \childki{d}{\alpha_d}{W}.\eeqn 
If $W=\{v\}$ for some $v \in V$, then we use the simpler notation $\childnt{\alpha}{v}$ for $\childnt{\alpha}{\{v\}}.$
\end{definition}

\begin{remark} \label{chi-j}
It may be concluded from Remark \ref{edge}
that $$\child v = \{w \in V : (v, w) \in \mathcal E\}  =
\bigcup_{j=1}^d \childi{j}{v}.$$ Further, for $j=1, \cdots, d$,
$\childnt{\epsilon_j}{v}=\childi{j}{v}$. 
\end{remark}

The following lemma enriches the directed graph $\mathscr T = (V,\mathcal E)$ with a tree-like structure.

\begin{lemma} \label{disjoint}
Let $\mathscr T = (V, \mathcal E)$ be the directed Cartesian product of directed trees $\mathscr T_1, \cdots, \mathscr T_d$. Then we have the following:
\begin{enumerate}
\item[(i)] 
%In particular,
$\mathsf{Chi}_j\childi{i}{v}=\mathsf{Chi}_i\childi{j}{v}$ for all $v \in V$ and $i,j =1, \cdots, d$. 
\item[(ii)] 
For each $\alpha \in \mathbb N^d$ and $v, w \in V$, $$\childnt{\alpha}{v} \cap \childnt{\alpha}{w} = \emptyset~\mbox{if~} v \neq w.$$
\item[(iii)]
For each $\alpha, \beta \in \mathbb N^d$ and $v \in V$, $$\childnt{\alpha}{v} \cap \childnt{\beta}{v} = \emptyset~\mbox{if}~ \alpha \neq \beta.$$
\item[(iv)] For any $n \in \mathbb N$ and $v \in V,$ $$\childn{n}{v} = \bigsqcup_{\underset{|\alpha|=n}{\alpha \in \mathbb N^d}} \childnt{\alpha}{v}.$$
\item[(v)]
For each $m, n \in \mathbb N$ and $v \in V$, $$\childn{m}{v} \cap \childn{n}{v} = \emptyset~\mbox{if}~ m \neq n.$$
\item[(vi)] If $\mathscr T_1, \cdots, \mathscr T_d$ are rooted directed trees, then $$\bigsqcup_{j \in \mathbb N} \childn{j}{\rootb} =V= \bigsqcup_{\alpha \in \mathbb N^d} \childnt{\alpha}{\rootb}.$$
\end{enumerate}
\end{lemma}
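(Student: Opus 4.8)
The plan is to reduce everything to one-dimensional facts about each directed tree $\mathscr T_j$, using the product structure of $\child{\cdot}$ recorded in Remark \ref{chi-j}, and then to assemble the multivariable statements by a clean bookkeeping argument on multi-indices. The whole lemma rests on part (i), which is essentially a commutativity statement for the operations $\mathsf{Chi}_i$ and $\mathsf{Chi}_j$; once (i) is in hand, the notation $\childnt{\alpha}{v}$ is well-defined independently of the order in which the $\mathsf{Chi}_j$'s are applied, and parts (ii)--(vi) follow by induction on $|\alpha|$ (or $n$).

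First I would prove (i). By definition, $w \in \childi{i}{\childi{j}{v}}$ iff there is some $u$ with $u_j \in \child{v_j}$, $u_k = v_k$ for $k \neq j$, and $w_i \in \child{u_i}$, $w_k = u_k$ for $k \neq i$. Since $i \neq j$ (the case $i=j$ being trivial), the only way these constraints are consistent is $w_i \in \child{v_i}$, $w_j \in \child{v_j}$, and $w_k = v_k$ for $k \notin \{i,j\}$, with $u$ forced to be $(v_1,\dots,w_j \text{ in slot } j,\dots,v_d)$. This description is symmetric in $i$ and $j$, giving (i). Iterating (i), one gets more generally that for a permutation $\pi$ of $\{1,\dots,d\}$, $\mathsf{Chi}_{\pi(1)}^{\langle\alpha_{\pi(1)}\rangle}\cdots\mathsf{Chi}_{\pi(d)}^{\langle\alpha_{\pi(d)}\rangle}(W)$ is independent of $\pi$; I would also note the explicit closed form
\[
\childnt{\alpha}{v} = \big\{ w \in V : w_j \in \childn{\alpha_j}{v_j}\ \text{in}\ \mathscr T_j\ \text{for all}\ j=1,\dots,d \big\},
\]
which is the real engine: it reduces membership in $\childnt{\alpha}{v}$ to a conjunction of one-dimensional membership conditions. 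For (ii), if $w \in \childnt{\alpha}{v}\cap\childnt{\alpha}{w'}$ then for each $j$, both $v_j$ and $w'_j$ are $\alpha_j$-fold ancestors of $w_j$ in the tree $\mathscr T_j$; but in a directed tree the $\alpha_j$-fold ancestor of a vertex is unique when it exists (this is \cite[Corollary 2.1.5]{JJS} or the uniqueness of parents applied $\alpha_j$ times), so $v_j = w'_j$ for all $j$, i.e. $v = w'$. Part (iii) is similar: if $\alpha_j \neq \beta_j$ for some $j$, then $w_j$ would lie in $\childn{\alpha_j}{v_j}$ and in $\childn{\beta_j}{v_j}$ in $\mathscr T_j$, contradicting the disjointness of generations in a rooted directed tree (the analog of \eqref{disjoint-0} for the subtree of descendants of $v_j$), so such $w$ cannot exist.

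For (iv), I would argue by induction on $n$ using Remark \ref{chi-j}: $\childn{n}{v} = \child{\childn{n-1}{v}} = \bigcup_{j=1}^d \mathsf{Chi}_j\big(\bigsqcup_{|\beta|=n-1}\childnt{\beta}{v}\big) = \bigcup_{j=1}^d\bigcup_{|\beta|=n-1}\childnt{\beta+\epsilon_j}{v}$, which as a set equals $\bigcup_{|\alpha|=n}\childnt{\alpha}{v}$; disjointness of this union is exactly (iii). One subtlety: to turn the union into a disjoint union one also needs that the same $\alpha$ is not produced from two different $(\beta,j)$ pairs in a way that causes overcounting at the \emph{set} level — but since we are taking a union of sets and (iii) guarantees the distinct-$\alpha$ pieces are pairwise disjoint, the displayed disjoint union is valid regardless. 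Part (v) then follows from (iii) and (iv): $\childn{m}{v}$ and $\childn{n}{v}$ are unions of $\childnt{\alpha}{v}$'s with $|\alpha| = m$ and $|\alpha| = n$ respectively, and no multi-index has two different total weights. Finally (vi): the first equality $\bigsqcup_{j}\childn{j}{\rootb} = V$ is \eqref{disjoint-0} applied to the rooted directed graph $\mathscr T$ (its validity for a rooted \emph{graph} of this type, not just a tree, needs the observation that every vertex $v=(v_1,\dots,v_d)$ is reached from $\rootb$ by a path of length $|\alpha_v|$ where $\alpha_{v,j}$ is the depth of $v_j$ in $\mathscr T_j$, and no shorter path exists); combining with (iv) for $v = \rootb$ gives the second equality, and disjointness across different $\alpha$ is (iii) again.

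\textbf{Main obstacle.} I expect the genuinely delicate point to be \emph{not} any single calculation but pinning down the closed-form description of $\childnt{\alpha}{v}$ and justifying that iterated applications of $\mathsf{Chi}_{j}$ in different slots genuinely commute past each other at the level of sets (part (i) and its iteration) — the induction in (i) has to track the auxiliary "intermediate" vertex $u$ carefully, and one must be sure the constraints for $i \neq j$ force $u$ uniquely rather than merely constraining it. Once that structural fact is nailed down, parts (ii)--(vi) are essentially a transcription of the one-variable disjointness facts for rooted directed trees (unique parents/ancestors, disjoint generations, $V_j = \bigsqcup_n \childn{n}{\rootb_j}$) through the product, and the only care needed is in the bookkeeping of multi-indices in (iv).
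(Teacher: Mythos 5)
Your proposal is correct and follows essentially the same route as the paper: (i) by the explicit two-coordinate description of $\mathsf{Chi}_j\childi{i}{v}$, the reduction of $\childnt{\alpha}{v}$ to coordinatewise conditions $w_j \in \childn{\alpha_j}{v_j}$ for (ii) and (iii) via uniqueness of iterated parents and disjointness of generations in each $\mathscr T_j$, (iv) by induction on $n$, and (v), (vi) as corollaries. The only cosmetic difference is that you make the closed-form description of $\childnt{\alpha}{v}$ and the restriction of \eqref{disjoint-0} to the descendant subtree explicit, which the paper leaves implicit.
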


\begin{proof}
Note that 
the conclusion in (i) follows from 
\beqn \mathsf{Chi}_j\childi{i}{v} = \begin{cases} 
\big\{u \in V: u_i \in \child{v_i}, u_j \in \child{v_j}\ \text{and}\ u_k=v_k\ \text{for}\ k \neq i,j\big\} & {i \neq j}, \\
\big\{u \in V: u_i \in \childn{2}{v_i}\ \text{and}\ u_k=v_k\ \text{for}\ k \neq i\big\}, & {i=j}. \end{cases}
\eeqn
for $i, j=1, \cdots, d$. 
To see (ii), let $u \in \childnt{\alpha}{v} \cap \childnt{\alpha}{w}$. Then $u_j \in \childn{\alpha_j}{v_j} \cap \childn{\alpha_j}{w_j}$ for all $j= 1, \cdots, d$. In other words, $\parentn{\alpha_j}{u_j} = v_j$ and $\parentn{\alpha_j}{u_j} = w_j$ for all $j= 1, \cdots, d$. Since parent is unique, it follows that $v_j = w_j$ for all $j= 1, \cdots, d$. Thus $v = w$. This proves (ii).

Let $u \in \childnt{\alpha}{v} \cap \childnt{\beta}{v}$. Then $u_j \in \childn{\alpha_j}{v_j} \cap \childn{\beta_j}{v_j}$ for all $j= 1, \cdots, d$. In other words, $\parentn{\alpha_j}{u_j} = v_j$ and $\parentn{\beta_j}{u_j} = v_j$ for all $j= 1, \cdots, d$. It may be now concluded from \eqref{disjoint-0} that $\alpha_j = \beta_j$ for all $j= 1, \cdots, d$. Thus $\alpha = \beta$. This proves (iii).

We prove (iv) by induction on $n$. By Remark \ref{chi-j}, $$\child{v} = \bigsqcup_{j=1}^d \childi{j}{v} = \bigsqcup_{j=1}^d \childnt{\epsilon_j}{v} = \bigsqcup_{\underset{|\alpha|=1}{\alpha \in \mathbb N^d}} \childnt{\alpha}{v}.$$ Thus the result holds for $n = 1$. Suppose it is true for some $n \geqslant  1$. Then
\beqn
\childn{n+1}{v} &=& \child{\bigsqcup_{\underset{|\alpha|=n}{\alpha \in \mathbb N^d}} \childnt{\alpha}{v}} = \bigsqcup_{\underset{|\alpha|=n}{\alpha \in \mathbb N^d}} \child{\childnt{\alpha}{v}}\\
&=& \bigsqcup_{\underset{|\alpha|=n}{\alpha \in \mathbb N^d}} \bigcup_{j=1}^d \childnt{\epsilon_j}{\childnt{\alpha}{v}} \\ &=& \bigcup_{j=1}^d \bigsqcup_{\underset{|\alpha|=n}{\alpha \in \mathbb N^d}} \childnt{\alpha + \epsilon_j}{v}
= \bigsqcup_{\underset{|\alpha|=n+1}{\alpha \in \mathbb N^d}} \childnt{\alpha}{v},
\eeqn
where the last union is disjoint in view of part (iii). Part (v) is now immediate from parts (iii) and (iv).

To see the last part, let $U:=\bigsqcup_{\alpha \in \mathbb N^d} \childnt{\alpha}{\rootb}$, $W:=\bigsqcup_{j\in \mathbb N} \childn{j}{\rootb}$. Clearly, $U$ and $W$ are subsets of $V.$ By (iv), $U = W$ and the unions in $U$ and $W$ are disjoint. Thus it suffices to check that $V \subseteq U.$
To this end, suppose that $v = (v_1, \cdots, v_d) \in V$. Then there exists $\alpha=(\alpha_1, \cdots, \alpha_d) \in \mathbb N^d$ such that $v_j \in \childn{\alpha_j}{\mathsf{root}_j}$ for $j=1, \cdots, d$. This implies that $v \in \mathsf{Chi}_1^{\langle \alpha_1 \rangle} \cdots \childki{d}{\alpha_d}{\rootb}= \childnt{\alpha}{\rootb}$. 
\end{proof}

%In this paper, we will be interested in the tree counterpart of classical unilateral multishifts. Hence all the directed tree discussed in the remaining part of this text will be rooted.

 \begin{definition} \label{depth-gen}
Let $\mathscr T = (V, \mathcal E)$ be the directed Cartesian product of rooted directed trees $\mathscr T_1, \cdots, \mathscr T_d$ and 
let $v \in V.$ The unique $\alpha_v \in \mathbb N^d$ 
\index{$\alpha_v$}
such that $$v \in \childnt{\alpha_v}{\rootb}$$ will be referred to as the {\it depth of $v$ in $\mathscr T$} (see Lemma \ref{disjoint}(vi)). The {\it $t^{\mbox{\tiny th}}$ generation} $\mathcal G_t$ of $\mathscr T$ is 
\index{$\mathcal G_t$}
defined as 
\beqn
\mathcal G_t :=\{v \in V : |\alpha_v|=t\}.
\eeqn
\end{definition}
\begin{remark}
Note that $\alpha_v = (\alpha_{v_1}, \cdots, \alpha_{v_d}),$ where $\alpha_{v_j}$ denotes the depth of the vertex $v_j \in V_j$ in the directed tree $\mathscr T_j$ for $j=1, \cdots, d.$ Note further that by Lemma \ref{disjoint}(iv),
$\mathcal G_t = \childn{t}{\rootb}.$
\end{remark}

Let us briefly discuss the notion of co-ordinate parent of a vertex in the directed Cartesian product of directed trees. 

\begin{definition}
Let $\mathscr T=(V, \mathcal E)$ be the directed Cartesian product of directed trees $\mathscr T_1, \cdots, \mathscr T_d$. 
For $j = 1,\cdots, d$ and $v \in V$, we 
set
\beqn \label{par-i}
\parenti{j}{v}:=\begin{cases} \big\{w \in V : w_j = \parent{v_j}\ \text{and}\ w_k=v_k\ \text{for}\ k \neq j\big\} &~ \mbox{if}~v_j \neq \mathsf{root}_j, \\
\emptyset &\mbox{otherwise}.\end{cases}\eeqn
Further, for $W \subseteq V$, we define 
$$\mathsf{par}_j(W):=\bigcup_{w \in W} \parenti{j}{w}.$$
For a positive integer $k,$ we denote 
\index{$\parentki{j}{k}{W}$}
$\underbrace{\mathsf{par}_j \cdots \mathsf{par}_j }_{{k}~ \mbox{{\tiny times}}}(W)$ by $\parentki{j}{k}{W}$.
Moreover, we understand $\parentki{j}{0}{W} = W$.

For $\alpha = (\alpha_1, \cdots, \alpha_d) \in \mathbb N^d$ and $W \subseteq V$, we define \index{$\parentnt{\alpha}{W}$}
$$\parentnt{\alpha}{W} := \mathsf{par}_1^{\langle \alpha_1 \rangle} \cdots \parentki{d}{\alpha_d}{W}.$$
In case $W=\{v\}$ for some $v \in V,$ we use $\parentnt{\alpha}{v}$ for $\parentnt{\alpha}{\{v\}}.$ 
\end{definition}

\begin{remark}
Note that
\beq \label{semi-parent}
\mathsf{Par}(v) = \bigsqcup_{j=1}^d \parenti{j}{v}.
\eeq 
In particular, $\mbox{card}(\mathsf{Par}(v))$ is at most $d.$
\end{remark}

\begin{remark} \label{child-par}
Note that $\mathsf{par}_j\parenti{i}{v}=\mathsf{par}_i\parenti{j}{v}$ for all $v \in V$. 
Further, for $i, j =1, \cdots, d$ with $i \neq j,$
$$\childi{i}{\parenti{j}{v}} = \parenti{j}{\childi{i}{v}}.$$
\end{remark}

Although the directed Cartesian product of directed trees need not be a directed tree, the following result shows that it has many structural similarities of directed tree. In particular, it always admits a directed semi-tree structure in the sense of \cite[Definition 2.6]{MS}. This has been observed in the special context of Example \ref{classical} in \cite[Lemmas 3.1, 3.2, 3.5(ii)]{MS}.

\begin{theorem} \label{semi-tree}
Let $\mathscr T=(V, \mathcal E)$ be the directed Cartesian product of directed trees $\mathscr T_1, \cdots, \mathscr T_d$. Then \begin{itemize}
\item[(i)] $\mathscr T$ has no circuits.
\item[(ii)] $\mathscr T$ is connected.
\item[(iii)] $\mathscr T$ can have at most one root.
\item[(iv)] $\mbox{card}(\child{u} \cap \child{v}) \leqslant 1$ for every $u, v \in V$ with $u \neq v$.
\end{itemize}
\end{theorem}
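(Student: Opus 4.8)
The plan is to verify the four assertions by reducing each to the corresponding well-known property of the component directed trees $\mathscr T_1, \dots, \mathscr T_d$, exploiting the coordinatewise description of edges given in Remark~\ref{edge}. Throughout I would use that a directed tree has no circuits, is connected, has a unique parent for every non-root vertex, and has at most one root \cite[Proposition~2.1.1]{JJS}.

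For (i), suppose toward a contradiction that $\{v^{(1)}, \dots, v^{(n)}\}$ is a circuit in $\mathscr T$, so $n \geqslant 2$, each $(v^{(i)}, v^{(i+1)}) \in \mathcal E$, and $(v^{(n)}, v^{(1)}) \in \mathcal E$. By Remark~\ref{edge}, each such edge changes exactly one coordinate and replaces it by a child in the relevant $\mathscr T_k$. Fix a coordinate $k$; the sequence of $k$-th coordinates $v^{(1)}_k, v^{(2)}_k, \dots, v^{(n)}_k, v^{(1)}_k$ is a closed walk in $\mathscr T_k$ in which each step is either ``stay put'' or ``move to a child.'' Consolidating consecutive repetitions, this would produce a genuine directed circuit in $\mathscr T_k$ unless the $k$-th coordinate never moves; but since some coordinate must move at least once around the full cycle (the $v^{(i)}$ are distinct, hence not all equal), we get a circuit in that $\mathscr T_k$, contradicting that $\mathscr T_k$ is a directed tree. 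The small technical point to handle carefully here is the consolidation step: one must argue that the resulting shortened closed walk of genuine child-steps still visits distinct vertices, which follows because in a directed tree the depth $\alpha_{(\cdot)}$ strictly increases along any directed path, so a directed closed walk of positive length is impossible outright — actually this gives the contradiction directly, no need to extract a simple circuit.

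For (ii), I would connect an arbitrary $v = (v_1, \dots, v_d)$ to a fixed base vertex (if $\mathscr T_j$ has a root $\mathsf{root}_j$ for all $j$, take $\mathsf{root} = (\mathsf{root}_1, \dots, \mathsf{root}_d)$; in general pick any vertex and argue componentwise) by concatenating paths one coordinate at a time: since each $\mathscr T_j$ is connected, there is an undirected path from $v_j$ to the target in $\mathscr T_j$, and lifting each of its steps to $\mathscr T$ while freezing the other coordinates gives an undirected path in $\mathscr T$ (using Remark~\ref{edge} and that edges in $\mathscr T$ are exactly coordinatewise child-edges). Concatenating over $j = 1, \dots, d$ connects $v$ to the base vertex, and hence any two vertices are connected.

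For (iii), given two roots $u, v$ of $\mathscr T$, the relation \eqref{semi-parent}, $\mathsf{Par}(v) = \bigsqcup_{j=1}^d \parenti{j}{v}$, shows that $v$ is a root of $\mathscr T$ iff $\parenti{j}{v} = \emptyset$ for every $j$, i.e. iff $v_j = \mathsf{root}_j$ for every $j$; since each $\mathscr T_j$ has at most one root, $u_j = v_j$ for all $j$, so $u = v$. For (iv), suppose $w \in \child{u} \cap \child{v}$ with $u \neq v$; by Remark~\ref{chi-j} there are indices $i, j$ with $w \in \childi{i}{u} \cap \childi{j}{v}$, so $w_k = u_k$ for $k \neq i$ and $w_k = v_k$ for $k \neq j$, while $w_i \in \child{u_i}$ and $w_j \in \child{v_j}$. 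If $i = j$, then $u$ and $v$ agree in all coordinates $\neq i$ and $u_i, v_i$ are both parents of $w_i$ in $\mathscr T_i$, hence equal since parents are unique, forcing $u = v$, a contradiction; so $i \neq j$, and then $u_i = w_i \in \child{u_i}$ would assert $u_i$ is its own child, impossible, so in fact $u_j = w_j$ and $v_i = w_i$ are the only consistent assignments, which pins down $u$ and $v$ uniquely and shows that for any $w$ there is at most one such pair $\{u, v\}$ — more importantly the argument shows $i = j$ is forced to give $u = v$, so distinct $u, v$ can share at most the single child $w$ determined by the two differing coordinates, giving $\mathrm{card}(\child u \cap \child v) \leqslant 1$. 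The main obstacle I anticipate is purely bookkeeping: phrasing the case analysis in (iv) cleanly, and in (i) making the ``a directed path in a tree has strictly increasing depth'' argument airtight in the rooted-versus-unrooted cases — but none of this is deep, so the bulk of the work is careful notation rather than any genuinely hard step.
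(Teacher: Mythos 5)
Parts (i)--(iii) of your proposal are correct and follow essentially the same route as the paper: your (i) reduces a circuit in $\mathscr T$ to a directed closed walk of positive length in some factor $\mathscr T_k$ (the paper phrases the same reduction as $v^{(1)}\in\childnt{\alpha}{v^{(1)}}$ with $|\alpha|=n$ via Lemma \ref{disjoint}(iv)); your (ii) is the same coordinate-by-coordinate concatenation of paths; your (iii) is in fact slightly more direct than the paper's appeal to Lemma \ref{disjoint}(vi), using only \eqref{semi-parent} together with uniqueness of the root in each factor.

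Part (iv), however, has a genuine gap. You fix a \emph{single} common child $w$ of $u\neq v$ and correctly derive that the indices satisfy $i\neq j$, that $w_k=u_k=v_k$ for $k\neq i,j$, and that $v_i=w_i\in\child{u_i}$, $u_j=w_j\in\child{v_j}$. This shows that $w$ determines the pair $(u,v)$ --- but the statement requires the reverse implication, namely that $(u,v)$ determines $w$. Your closing claim that $w$ is ``determined by the two differing coordinates'' is not justified: if $u$ and $v$ differ in exactly the coordinates $p$ and $q$, there are a priori \emph{two} candidate common children, one for each assignment of the roles of $i$ and $j$ to $p$ and $q$ (one candidate agrees with $u$ outside coordinate $p$ and with $v$ in coordinate $p$; the other agrees with $u$ outside coordinate $q$ and with $v$ in coordinate $q$). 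To close the argument you must start from two common children $s,w$ and compare their index pairs, which is exactly the case analysis carried out in the paper. The missing observation is that if both assignments produced common children, then $v_p\in\child{u_p}$ and $u_p\in\child{v_p}$ would hold simultaneously, giving a two-element circuit in $\mathscr T_p$; combined with the unique-parent argument in the case where the index pairs coincide, this forces $s=w$. The repair is one line, but as written your proof of (iv) establishes only the converse statement (``at most one pair $\{u,v\}$ per $w$'') rather than the claim. A minor additional slip: the phrase ``$u_i=w_i\in\child{u_i}$'' is not what your hypotheses give; from $w\in\childi{j}{v}$ with $j\neq i$ you get $w_i=v_i$, so the relation you want is $v_i=w_i\in\child{u_i}$.
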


\begin{proof}
To see (i), on contrary suppose that $\{v^{(i)}\}_{i=1}^n \subseteq V$ is a circuit in $\mathscr T$. Hence $(v^{(1)}, v^{(2)}) \in \mathcal E$ implies that $v^{(2)} \in \child{v^{(1)}}$. Similarly, $(v^{(2)}, v^{(3)}) \in \mathcal E$ implies that $v^{(3)} \in \child{v^{(2)}} \subseteq \childn{2}{v^{(1)}}$. Consequently, $v^{(n)} \in \childn{n-1}{v^{(1)}}$. Finally, $(v^{(n)}, v^{(1)}) \in \mathcal E$ implies that $v^{(1)} \in \childn{n}{v^{(1)}}$. By Lemma \ref{disjoint}(iv), $v^{(1)} \in \childnt{\alpha}{v^{(1)}}$ for some $\alpha \in \mathbb N^d$ with $|\alpha|=n.$ This means that $v^{(1)}_j \in \childn{\alpha_j}{v^{(1)}_j}$ for all $j=1, \cdots, d.$ Also, since $|\alpha|=n,$ at least one $\alpha_j$ is nonzero.
This is a contradiction to the fact that $\mathscr T_j$ being directed tree has no circuits. Thus $\mathscr T$ has no circuits.

To prove (ii), let $v, w \in V$. Since each $\mathscr T_j$ ($1 \leqslant j \leqslant d$) is connected, so for each $v_j$ and $w_j$ there is a finite sequence of vertices $\{u_{j, k}\}_{k=1}^{\alpha_j} \subseteq V_j$ such that $u_{j, 1}= v_j$, $u_{j, \alpha_j}= w_j$ and, $\big(u_{j, k}, u_{j, k+1} \big)$ or $\big(u_{j, k+1}, u_{j, k} \big) \in \mathcal E_j~(k=1, \cdots, \alpha_j-1)$. Let $\alpha = (\alpha_1, \cdots, \alpha_d)$. We construct a sequence $\{u^{(k)}\}_{k=1}^{|\alpha|}$ of vertices in $V$ as follows. Put 
\beqn u^{(1)} &=& v, ~u^{(2)} = \big(u_{1, 2},  v_2, \cdots, v_d \big), \cdots, u^{(\alpha_1)} = \big(u_{1, \alpha_1}, v_2, \cdots, v_d \big), \\ u^{(\alpha_1+1)} &=& \big(u_{1, \alpha_1}, u_{2, 2}, v_3, \cdots, v_d \big), \cdots, u^{(\alpha_1+\alpha_2)} = \big(u_{1, \alpha_1}, u_{2, \alpha_2}, v_3, \cdots, v_d \big), \cdots, \\ u^{(|\alpha|)} &=& \big(u_{1, \alpha_1}, u_{2, \alpha_2}, \cdots, u_{d, \alpha_d} \big) = w. \eeqn It is evident from the construction that $\big(u^{(j)}, u^{(j+1)} \big)$ or $\big(u^{(j+1)}, u^{(j)} \big) \in \mathcal E$. This proves that $\mathscr T$ is connected.

The part (iii) follows immediately from Lemma \ref{disjoint}(vi).
Indeed, suppose that $\mathscr T$ has a root $v \neq \rootb.$ By  Lemma \ref{disjoint}(vi), there exists some $\alpha \in \mathbb N^d \setminus \{0\}$ such that $v \in \childnt{\alpha}{\rootb}.$ But then $\mathsf{Par}(v) \neq \emptyset,$ which contradicts the definition of the root. 

%Then for all $u \in V$, $(u,v) \notin \mathcal E$ and $(u,w) \notin \mathcal E$. In particular, if $u := (u_1, v_2, \cdots, v_d)$, then $(u,v) \notin \mathcal E$ implies that $(u_1, v_1) \notin \mathcal E_1$. Similarly, if $u := (u_1, w_2, \cdots, w_d)$, then $(u,w) \notin \mathcal E$ implies that $(u_1, w_1) \notin \mathcal E_1$. 
%Since this holds for all $u_1 \in V_1$, it follows that both $v_1$ and $w_1$ are roots of $\mathscr T_1$. As $\mathscr T_1$ can have at most one root, we must have $v_1 = w_1 = \mathsf{root}_1$. In a similar manner, we can prove that $v_j = w_j = \mathsf{root}_j$ for $j = 2, \cdots, d$. Thus $v = w = \rootb$. This proves (iii).

Let $u, v \in V$ and $u \neq v$. Without loss of generality, we may assume that $u_1 \neq v_1$. Suppose that $s, w \in \child{u} \cap \child{v}$. Then $s \in \childi{i}{u} \cap \childi{k}{v}$ and $w \in \childi{j}{u} \cap \childi{l}{v}$ for some $1 \leqslant i,j,k,l \leqslant d$. Note that $i \neq k$. For if $i = k$, then $u_1 \neq v_1$ would imply that $\childi{i}{u} \cap \childi{k}{v} = \emptyset$. Similarly, $j \neq l$.
We treat only the case in which $i < k$ and $j < l.$
Since $s \in \childi{i}{u}$, $s = (u_1, \cdots, u_{i-1}, s_i, u_{i+1}, \cdots, u_d)$, and that $w \in \childi{j}{u}$ implies that $w = (u_1, \cdots, u_{j-1}, w_j, u_{j+1}, \cdots, u_d)$. 
%Further we have, $\parenti{k}{s} = v=\parenti{l}{w}$. 
Let $\parent{u_k} =  \hat{u}_k$ and $\parent{u_l} = \hat{u}_l$. Since $i \neq k$ and $j \neq l$, it follows from $\parenti{k}{s} = v=\parenti{l}{w} $ that
\beq\label{semi-eq}
(u_1, \cdots, s_i, \cdots, \hat{u}_k, \cdots, u_d) = v=(u_1, \cdots, w_j, \cdots, \hat{u}_l, \cdots, u_d).
\eeq
In case $i = j$, we obtain $s_i=w_i$ in view of \eqref{semi-eq}. But then as $s, w \in \childi{i}{u}$, we must have $s = w$. Let $i \neq j$. Then from \eqref{semi-eq}, either $s_i = u_i$ or $s_i = \hat{u}_l$. As $s \in \childi{i}{u}$, we must have $s_i = \hat{u}_l$, and hence $i=l$. Therefore, $s = \parenti{i}{u}$. Thus $s \in \childi{i}{u} \cap \parenti{i}{u}$. This is not possible. Hence the case $i \neq j$ can not occur. This proves that $s = w$ and hence (iv) stands verified.
\end{proof}

%\begin{example} In case 
%$\mbox{card}(\child{u} \cap \child{v}) \leqslant 1$ for every $u, v \in V,$ $\mathscr T=(V, \mathcal E)$ is a directed semi-tree in the sense of \cite{MS}. For instance, this happens if $\mathscr T$ is the Cartesian product as discussed in Example \ref{classical}. Indeed, let $n = (n_1, \cdots, n_d), m = (m_1, \cdots, m_d) \in V$ and $n \neq m$. Without loss of generality we may assume that $n_1 \neq m_1$. Suppose that $n+\epsilon_i, n+\epsilon_j \in \child{n} \cap \child{m}$ for $i \neq j$. Then $n+\epsilon_i = m+\epsilon_k$ and $n+\epsilon_j = m+\epsilon_l$ for some $k,l$, $1 \leqslant k,l \leqslant d$. Since $i \neq j$, we must have $k \neq l$. Further, $m+\epsilon_k-\epsilon_i = m+\epsilon_l-\epsilon_j$. This is possible if and only if $i=j$ and $k =l$. Thus we arrive at a contradiction. Hence $\mbox{card}(\child{m} \cap \child{n}) \leqslant 1$ for all $m,n \in V$.
%\end{example}

\begin{definition}\label{branching-vertex} 
Let $\mathscr T = (V, \mathcal E)$ be the directed Cartesian product of directed trees $\mathscr T_1, \cdots, \mathscr T_d$. A vertex $v=(v_1, \cdots, v_d) \in V$ is called a {\it branching vertex} of $\mathscr T$ if $\text{card}(\child {v_j}) \geqslant  2$ for all $j=1, \cdots, d$. The set of all branching vertices of $\mathscr T$ is denoted by $V_\prec$.
\end{definition}
\begin{remark}
If $V_\prec^{(j)}$ is the set of branching vertices 
\index{$V_\prec$}
of $\mathscr T_j$, then 
$$V_\prec = V_\prec^{(1)} \times \cdots \times V_\prec^{(d)}.$$
\end{remark}

\begin{proposition}\label{Vprec}
Let $\mathscr T = (V, \mathcal E)$ be the directed Cartesian product of rooted directed trees $\mathscr T_1, \cdots, \mathscr T_d$. 
If $\mathscr T_j$ has finite branching index $k_{\mathscr T_j}$ for $j=1, \cdots, d$, then for $k_{\mathscr T} = (k_{\mathscr T_1}, \cdots, k_{\mathscr T_d})$, one has
$$\childnt{k_{\mathscr T}}{V_\prec} \cap V_\prec = \emptyset.$$
\end{proposition}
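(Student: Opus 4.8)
The plan is to reduce the claim to the corresponding one-variable fact about the branching index of each tree $\mathscr T_j$. Recall that for a rooted directed tree $\mathscr T_j$ with finite branching index $k_{\mathscr T_j}$, every branching vertex has depth strictly less than $k_{\mathscr T_j}$ (by the very definition of $k_{\mathscr T_j}$ as $1+\sup\{\alpha_w : w \in V_\prec^{(j)}\}$). Consequently, if $u \in \childn{k_{\mathscr T_j}}{V_\prec^{(j)}}$ then $u$ sits at depth at least $k_{\mathscr T_j}$, hence $u$ cannot itself be a branching vertex of $\mathscr T_j$; that is, $\childn{k_{\mathscr T_j}}{V_\prec^{(j)}} \cap V_\prec^{(j)} = \emptyset$. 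This is the scalar ingredient, and it is essentially immediate from the definitions given in Section~1.3.

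Next I would translate the multivariable operator $\childnt{k_{\mathscr T}}{\cdot}$ into coordinatewise terms. By the definition of $\childnt{\alpha}{W}$ as the composition $\mathsf{Chi}_1^{\langle k_{\mathscr T_1}\rangle}\cdots\childki{d}{k_{\mathscr T_d}}{W}$, together with the commutation relations in Lemma~\ref{disjoint}(i) and the description of $\childi{j}{v}$ acting only on the $j$-th coordinate, one checks that for $v \in V$,
\beq \nonumber
\childnt{k_{\mathscr T}}{v} = \childn{k_{\mathscr T_1}}{v_1} \times \cdots \times \childn{k_{\mathscr T_d}}{v_d},
\eeq
where on the right-hand side $\childn{k_{\mathscr T_j}}{v_j}$ denotes iterated children in the tree $\mathscr T_j$. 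Since $V_\prec = V_\prec^{(1)} \times \cdots \times V_\prec^{(d)}$ by the Remark following Definition~\ref{branching-vertex}, taking the union over $v \in V_\prec$ yields
\beq \nonumber
\childnt{k_{\mathscr T}}{V_\prec} = \childn{k_{\mathscr T_1}}{V_\prec^{(1)}} \times \cdots \times \childn{k_{\mathscr T_d}}{V_\prec^{(d)}}.
\eeq

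Finally I would intersect with $V_\prec$. A vertex $w=(w_1,\dots,w_d)$ lies in $\childnt{k_{\mathscr T}}{V_\prec} \cap V_\prec$ if and only if, for every $j$, $w_j \in \childn{k_{\mathscr T_j}}{V_\prec^{(j)}}$ and $w_j \in V_\prec^{(j)}$; but we have already seen each of these coordinatewise intersections is empty, so no such $w$ exists. Hence $\childnt{k_{\mathscr T}}{V_\prec} \cap V_\prec = \emptyset$, as desired. The only point requiring a little care — and the one I would write out carefully — is the coordinatewise factorization of $\childnt{k_{\mathscr T}}{v}$: one must verify by induction (using Lemma~\ref{disjoint}(i) to rearrange the $\mathsf{Chi}_j$'s and the fact that $\mathsf{Chi}_j$ touches only the $j$-th coordinate) that applying $k_{\mathscr T_j}$ copies of $\mathsf{Chi}_j$ genuinely produces the full set of $k_{\mathscr T_j}$-th descendants in $\mathscr T_j$ in the $j$-th slot while leaving the other slots fixed, with the resulting set being exactly the Cartesian product. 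Everything else is bookkeeping.
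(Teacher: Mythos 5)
Your proposal is correct and follows essentially the same route as the paper: reduce to the one-variable observation that a vertex in $\childn{k_{\mathscr T_j}}{V_\prec^{(j)}}$ has depth at least $k_{\mathscr T_j}$ and hence cannot be a branching vertex of $\mathscr T_j$, then apply this coordinatewise. The coordinatewise factorization of $\childnt{k_{\mathscr T}}{V_\prec}$ that you propose to verify carefully is used implicitly (and without comment) in the paper's proof, so your extra care there is harmless but not a departure.
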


\begin{proof}
Assume that each $\mathscr T_j$ has finite branching index $k_{\mathscr T_j}$ and let $v \in \childnt{k_{\mathscr T}}{V_\prec}$. Then $v \in \mathsf{Chi}_1^{\langle k_{\mathscr T_1} \rangle} \cdots \childki{d}{k_{\mathscr T_d}}{w}$ for some $w \in V_\prec$. That is, $v = (v_1, \cdots, v_d)$ such that $v_j \in \childn{k_{\mathscr T_j}}{w_j}$ with $w_j \in V_\prec^{(j)}$. But then $v_j \notin V_\prec^{(j)}$. Hence $v \notin V_\prec$. This shows that 
$\childn{k_{\mathscr T}}{V_\prec} \cap V_\prec = \emptyset.$
\end{proof}

The multiindex $k_{\mathscr T} \in \mathbb N^d$ appearing in Proposition \ref{Vprec} will be referred to as the {\it joint branching index} of $\mathscr T$. 
\index{$k_{\mathscr T}$}
Also, we say that {\it $\mathscr T$ has finite joint branching index} if $|k_{\mathscr T}|$ is finite.

We conclude this section with a brief discussion on the notion of siblings of a vertex. 

Let $\mathscr T = (V, \mathcal E)$ be the directed Cartesian product of rooted directed trees $\mathscr T_1, \cdots, \mathscr T_d$.
For $u \in V$ and $j = 1, \cdots, d$, we set \beqn \mathsf{sib}_j(u) := \begin{cases} \childi{j}{\parenti{j}{u}} & \mbox{if~}u_j \neq \mathsf{root}_j, \\ 
\emptyset & \mbox{otherwise}.
\end{cases} \eeqn
For $W \subseteq V$, we define 
\index{$\mathsf{sib}_j(W)$}
$\mathsf{sib}_j(W) := \displaystyle\bigcup_{u \in W} \mathsf{sib}_j(u)$.
\begin{remark} \label{rmk-sib}
Let $1 \leqslant i,  j \leqslant d$ and $v \in V.$ Then $\mathsf{sib}_i \mathsf{sib}_j (v) = \mathsf{sib}_j \mathsf{sib}_i (v)$. Further,
$\mathsf{sib}_i \mathsf{sib}_i (v) = \mathsf{sib}_i (v).$
\end{remark}

For future reference, we record the following simple yet useful observation.
\begin{proposition} \label{sib-id}
Let $\mathscr T = (V,\mathcal E)$ be the directed Cartesian product of directed trees $\mathscr T_1, \cdots, \mathscr T_d$. Then 
\beqn 
\mbox{card}(\mathsf{sib}_i(v)) \mbox{card}(\mathsf{sib}_j(\parenti{i}{v})) =  \mbox{card}(\mathsf{sib}_j(v)) \mbox{card}(\mathsf{sib}_i(\parenti{j}{v}))
\eeqn
for every $v \in \childi{i}{\childi{j}{w}}, w \in V, \mbox{and~}i, j = 1, \cdots, d.$ 
\end{proposition}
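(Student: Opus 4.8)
The plan is to unwind both sides of the claimed identity directly in terms of cardinalities of children-sets in the component trees $\mathscr T_1, \dots, \mathscr T_d$, using the fact that a vertex $v = (v_1, \dots, v_d) \in V$ lies in $\childi{i}{\childi{j}{w}}$ for some $w$. First I would dispose of the trivial case $i = j$: then $\mathsf{sib}_i = \mathsf{sib}_j$ and $\parenti{i}{v} = \parenti{j}{v}$, so both sides coincide term by term. So assume $i \neq j$. Since $v \in \childi{i}{\childi{j}{w}}$, we have $v_i \in \child{w_i}$ and $v_j \in \child{w_j}$, and in particular $v_i \neq \mathsf{root}_i$ and $v_j \neq \mathsf{root}_j$; moreover $\parent{v_i} = w_i$ and $\parent{v_j} = w_j$. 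Thus all four ``$\mathsf{sib}$'' sets appearing in the statement are nonempty and genuinely of the childi-of-parenti form.

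Next I would compute each factor explicitly. By definition, $\mathsf{sib}_i(v) = \childi{i}{\parenti{i}{v}}$, and since $\parenti{i}{v}$ is the singleton $\{(v_1, \dots, \parent{v_i}, \dots, v_d)\}$, we get $\mbox{card}(\mathsf{sib}_i(v)) = \mbox{card}(\child{\parent{v_i}})$ — a number depending only on the tree $\mathscr T_i$ and the vertex $\parent{v_i} = w_i$. Similarly $\mbox{card}(\mathsf{sib}_j(\parenti{i}{v}))$: the vertex $\parenti{i}{v}$ has $j$-th coordinate equal to $v_j$ (since $i \neq j$), so its $j$-th sibling count is $\mbox{card}(\child{\parent{v_j}}) = \mbox{card}(\child{w_j})$. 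Hence the left-hand side equals $\mbox{card}(\child{w_i}) \cdot \mbox{card}(\child{w_j})$. Running the same computation on the right-hand side: $\mbox{card}(\mathsf{sib}_j(v)) = \mbox{card}(\child{\parent{v_j}}) = \mbox{card}(\child{w_j})$, and $\mbox{card}(\mathsf{sib}_i(\parenti{j}{v})) = \mbox{card}(\child{\parent{v_i}}) = \mbox{card}(\child{w_i})$, since the $i$-th coordinate of $\parenti{j}{v}$ is again just $v_i$. So the right-hand side also equals $\mbox{card}(\child{w_i}) \cdot \mbox{card}(\child{w_j})$, and the identity follows.

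The key observation making all of this work is that for $i \neq j$ the operations $\parenti{i}{\cdot}$ and $\parenti{j}{\cdot}$ act on disjoint coordinates, so taking the $j$-th sibling set of $\parenti{i}{v}$ only sees the (unchanged) $j$-th coordinate $v_j$, whose parent is $w_j$; this is exactly the content of Remark \ref{child-par} ($\mathsf{par}_j\parenti{i}{v} = \mathsf{par}_i\parenti{j}{v}$ and $\childi{i}{\parenti{j}{v}} = \parenti{j}{\childi{i}{v}}$). I do not anticipate a genuine obstacle here — the only point requiring a little care is verifying that all the parents invoked actually exist, i.e. that $v_i \neq \mathsf{root}_i$ and $v_j \neq \mathsf{root}_j$, which is guaranteed by the hypothesis $v \in \childi{i}{\childi{j}{w}}$; once that is in place the proof is a direct bookkeeping of cardinalities, both sides reducing to the symmetric product $\mbox{card}(\child{\parent{v_i}})\,\mbox{card}(\child{\parent{v_j}})$.
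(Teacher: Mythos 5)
Your proof is correct and follows essentially the same route as the paper's: the paper's one-line argument for $i\neq j$ is precisely your key observation that $\mbox{card}(\mathsf{sib}_j(\parenti{i}{v}))=\mbox{card}(\mathsf{sib}(v_j))=\mbox{card}(\mathsf{sib}_j(v))$ (and symmetrically in $i$), so the two products agree factor by factor; you merely make this explicit by evaluating everything as $\mbox{card}(\child{w_i})\,\mbox{card}(\child{w_j})$.
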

\begin{proof}
Clearly, the identity holds for $i=j.$ In case $i \neq j,$
the conclusion follows from the fact that $\mbox{card}(\mathsf{sib}_j(\parenti{i}{v}))=\mbox{card}(\mathsf{sib}(v_j))=\mbox{card}(\mathsf{sib}_j(v))$.
\end{proof}

\section{Tensor Product of Rooted Directed Trees}

In this section, we discuss another notion of the product of two directed trees to be referred to as {tensor product}. This notion was introduced by P. Weichsel \cite{We} for undirected graphs, and later, it was extended to directed graphs by M. McAndrew \cite{Mc} (refer also to \cite{Ha}). 
In the literature, the tensor product is also known as categorical product, Kronecker product, cardinal product, weak direct product and even Cartesian product. We would like to emphasize that the notions of Cartesian product and tensor product, as discussed in this text, are conceptually different. 

The definition of the tensor product of two directed graphs extends naturally in context of finitely many directed trees.

\begin{definition}
Let $d$ be a positive integer
and let $\mathscr T_j = (V_j, \mathcal E_j)~(j=1, \cdots, d)$ be a collection of rooted directed trees. 
The {\it tensor product of $\mathscr T_1, \cdots, \mathscr T_d$} is a directed graph 
\index{$\mathscr T^{\otimes}=(V, \mathcal E^{\otimes})$}
$\mathscr T^{\otimes}=(V, \mathcal E^{\otimes}),$ where $V:=V_1 \times \cdots \times V_d$ and 
\beqn
\mathcal E^{\otimes} := \big\{(\mf v, \mf w) \in V \times V: 
(\mf v_j, \mf w_j) \in \mathcal E_j \mbox{~for all~}j = 1, \cdots, d \big\}.
\eeqn 
\end{definition}
{\bf Caution.} Since the set of vertices of the tensor product is same as that of directed Cartesian product, we use scripted letters throughout the text to distinguish the vertices (except the root) of the tensor product from those of directed Cartesian product.
\begin{remark} \label{edge-tensor}
Note that $(\mf v, \mf w) \in \mathcal E^{\otimes}$ if and only if $\mf w \in \mathsf{Chi}_1 \cdots \childi{d}{\mf v}$. Thus $\child{\mf v} = \mathsf{Chi}_1 \cdots \childi{d}{\mf v}$ (with reference to the directed graph $\mathscr T^{\otimes}$). This should be compared with the expression for $\child{\cdot}$ (with reference to the directed graph $\mathscr T$) as given in Remark \ref{chi-j}.
\end{remark}

Let $\mathscr T^{\otimes} = (V, \mathcal E^{\otimes})$ be the tensor product of rooted directed trees $\mathscr T_1, \cdots, \mathscr T_d$. Define two vertices $\mf v, \mf w \in V$ to be equivalent if either $\mf v=\mf w$ or $\mf v$ and $\mf w$ can be connected by a path in $\mathscr T^{\otimes}.$ Note that this defines an equivalence relation.
A {\it component} of $\mathscr T^{\otimes}$ is an equivalence class corresponding to the above equivalence relation.
It turns out that each component of $\mathscr T^{\otimes}$ is a rooted directed tree. We illustrate this in more detail in the following theorem.

\begin{theorem}\label{tensor-prop}
Let $\mathscr T^{\otimes} = (V, \mathcal E^{\otimes})$ be the tensor product of rooted directed trees $\mathscr T_1, \cdots, \mathscr T_d$ and let $\mathscr T^{\otimes}_{\rootb}=(V^{\otimes}, \mathcal F)$ denote the (unique) component of $\mathscr T^\otimes$ 
\index{$\mathscr T^{\otimes}_{\rootb}$}
that contains $\rootb$. Let $\mathscr T$ be the directed Cartesian product of $\mathscr T_1, \cdots, \mathscr T_d$.
Set \index{$\mathsf{Root}^\otimes$}
\beqn
\mathsf{Root}^\otimes := \{\mf v \in V : \mf v_j = \mathsf{root}_j~\mbox{for at least one~} j = 1, \cdots, d\}.
\eeqn
Then the following statements are true.
\begin{enumerate}
\item[(i)] If $\mf v \in \mathsf{Root}^\otimes$, then there does not exist any $\mf u \in V$ such that $(\mf u,\mf v) \in \mathcal E^{\otimes}$.
\item[(ii)] For each $\mf v \in V \setminus \mathsf{Root}^\otimes$, there is a unique $\mf u \in V$ such that $(\mf u,\mf v) \in \mathcal E^{\otimes}$. In other words, each $\mf v \in V \setminus \mathsf{Root}^\otimes$ has a parent.
\item[(iii)] $\mathscr T^{\otimes}$ has no circuits.
\item[(iv)] No two distinct vertices in $\mathsf{Root}^\otimes$ can be connected by a path in $\mathscr T^{\otimes}$.  
\item[(v)] There is a bijective correspondence between the collection of components of $\mathscr T^{\otimes}$ and the elements of $\mathsf{Root}^\otimes$. In particular, $\mathscr T^{\otimes}$ contains countably many components.
\item[(vi)] Each component is a rooted directed tree with root coming from $\mathsf{Root}^\otimes$.
\item[(vii)] $\mathscr T^{\otimes}$ is locally finite if and only if $\mathscr T$ is locally finite.
\item[(viii)] $\mathscr T^{\otimes}$ is leafless. 
%if and only if each $\mathscr T_j$, $1 \leqslant j \leqslant d$, is leafless.
\item[(ix)] If $\mathscr T$ is of finite joint branching index $k_{\mathscr T}=(k_{\mathscr T_1}, \cdots, k_{\mathscr T_d})$, then the branching index $k_{\mathscr T^{\otimes}_{\rootb}}$ of $\mathscr T^{\otimes}_{\rootb}$ is given by $k_{\mathscr T^{\otimes}_{\rootb}} = \max\{k_{\mathscr T_j} : 1 \leqslant j \leqslant d\}$. 
\item[(x)] For each $v \in V$, there exists $\mf v \in V^{\otimes}$ such that $|\alpha_v| = \alpha_{\mf v},$ where $\alpha_v$ is the depth of $v$ in $\mathscr T$ and $\alpha_{\mf v}$ is the depth of $\mf v$ in the directed tree $\mathscr T^{\otimes}_{\rootb}$. 
\end{enumerate}
\end{theorem}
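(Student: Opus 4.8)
The plan is to prove the ten assertions in the order stated, the recurring observation being that if $(\mf u,\mf v)\in\mathcal E^{\otimes}$ then $\mf v_j\in\child{\mf u_j}$ for \emph{every} $j$, so by the disjoint decomposition \eqref{disjoint-0} applied in each $\mathscr T_j$ the depth of $\mf v_j$ in $\mathscr T_j$ is one more than that of $\mf u_j$; in particular the scalar $\sum_{j=1}^{d}\alpha_{\mf v_j}$ strictly increases along every edge of $\mathscr T^{\otimes}$. Parts (i) and (ii) fall straight out of the definition of $\mathcal E^{\otimes}$: if $\mf v_j=\mathsf{root}_j$ for some $j$ then no $\mf u$ can satisfy $(\mf u_j,\mf v_j)\in\mathcal E_j$, which is (i); if $\mf v_j\ne\mathsf{root}_j$ for all $j$ then $(\mathsf{par}(\mf v_1),\dots,\mathsf{par}(\mf v_d))$ is the one and only vertex with an edge into $\mf v$, by uniqueness of parents in each $\mathscr T_j$, which is (ii). Part (iii) is then immediate: a circuit would force the scalar $\sum_j\alpha_{\mf v_j}$ to increase around the loop and yet return to its original value.

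The structural heart is (iv)--(vi). For (iv), suppose $\mf v\ne\mf w$ in $\mathsf{Root}^{\otimes}$ were joined by a path $\mf v=v^{(1)},\dots,v^{(n)}=\mf w$. By (i) neither endpoint receives an edge, so the first step $v^{(1)}\to v^{(2)}$ runs \emph{forward} along an edge of $\mathscr T^{\otimes}$ and the last step $v^{(n-1)}\to v^{(n)}$ runs \emph{backward}. A forward step immediately followed by a backward step would make $v^{(i+1)}$ receive edges from the two distinct vertices $v^{(i)}$ and $v^{(i+2)}$, contradicting the uniqueness of parents in (ii); hence no forward step is followed by a backward one, so once the walk turns forward it stays forward, contradicting that its last step is backward. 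For (v), iterating (ii) produces from any $\mf v\in V\setminus\mathsf{Root}^{\otimes}$ a parent chain along which the scalar $\sum_j\alpha_{\mf v_j}$ strictly decreases, hence a chain that terminates, necessarily in $\mathsf{Root}^{\otimes}$; so every component meets $\mathsf{Root}^{\otimes}$, and by (iv) in exactly one point, whence $\mf v\mapsto(\text{the component of }\mf v)$ is a bijection of $\mathsf{Root}^{\otimes}$ onto the set of components, countable since $V$ is. Part (vi) is the verification of the three tree axioms for a component $C$: absence of circuits is (iii) restricted to $C$, connectedness is the definition of a component, and the parent axiom holds because the unique $\mathscr T^{\otimes}$-parent furnished by (ii) already lies in $C$ and is the only $C$-parent, while $C\cap\mathsf{Root}^{\otimes}$ (a single vertex, by the argument in (v)) is exactly the set of parentless vertices of $C$; in particular the root of $\mathscr T^{\otimes}_{\rootb}$ is $\rootb$.

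For (vii) and (viii), Remark \ref{edge-tensor} gives $\mbox{card}(\child{\mf v})=\prod_{j=1}^{d}\mbox{card}(\child{\mf v_j})$ in $\mathscr T^{\otimes}$, whereas $\child v=\bigsqcup_{j=1}^{d}\childi j v$ with $\mbox{card}(\childi j v)=\mbox{card}(\child{v_j})$ gives $\mbox{card}(\child v)=\sum_{j=1}^{d}\mbox{card}(\child{v_j})$ in $\mathscr T$; both are finite for all vertices precisely when every $\mathscr T_j$ is locally finite, which is (vii), and leaflessness of the factors makes each factor $\ge 1$, so both the product and the sum are $\ge 1$, which is (viii). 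Parts (ix) and (x) rest on the lemma, read off from (vi) by following the unique directed root-path in $\mathscr T^{\otimes}_{\rootb}$ (each step of which raises every coordinate's depth by one), that $V^{\otimes}=\{v\in V:\alpha_{v_1}=\cdots=\alpha_{v_d}\}$ and that a vertex of $V^{\otimes}$ has depth in $\mathscr T^{\otimes}_{\rootb}$ equal to this common coordinate-depth; the reverse inclusion comes from splicing the ancestral chains of the factors, and leaflessness guarantees $\childn n{\mathsf{root}_j}\ne\emptyset$ for every $n$. Then (x) follows by choosing, for a given $v\in V$, any $\mf v_j\in\childn{|\alpha_v|}{\mathsf{root}_j}$ for $j=1,\dots,d$. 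For (ix), $\mf v\in V^{\otimes}$ is a branching vertex of $\mathscr T^{\otimes}_{\rootb}$ iff $\mbox{card}(\child{\mf v})=\prod_j\mbox{card}(\child{\mf v_j})\ge 2$, i.e.\ iff $\mf v_j\in V_{\prec}^{(j)}$ for some $j$, and such a $\mf v$ has depth $\alpha_{\mf v_j}$; taking the supremum over all branching $\mf v$ and using finiteness of the joint branching index (so each nonempty $\{\alpha_w:w\in V_{\prec}^{(j)}\}$ has a maximum, which one extends to a branching vertex of $\mathscr T^{\otimes}_{\rootb}$ of the same depth by filling the remaining coordinates with arbitrary vertices of that depth) yields $k_{\mathscr T^{\otimes}_{\rootb}}=\max_{j}k_{\mathscr T_j}$, the case of all $V_{\prec}^{(j)}$ empty reducing to $0=\max_j 0$.

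The step I expect to be the main obstacle is (iv): it is the only place where the disconnectedness of the tensor product must be controlled precisely, and the short parity argument ``no forward step is followed by a backward step'' is exactly what makes the clean component picture of (v)--(vi) possible. Part (ix) is the most computational, its one subtlety being the bookkeeping when some, but not all, of the factor trees have no branching vertices.
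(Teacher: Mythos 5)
Your proposal is correct and follows essentially the same route as the paper: (i)--(ii) from the definition of $\mathcal E^{\otimes}$, the forward-only path argument for (iv), the terminating parent chain for (v) (the paper instead writes the endpoint explicitly as $\mathsf{par}_1^{\langle k\rangle}\cdots\parentki{d}{k}{\mf v}$ with $k=\min_j k_j$, which is the same thing), the product formula $\mbox{card}(\child{\mf v})=\prod_j\mbox{card}(\child{\mf v_j})$ for (vii)--(viii), and the identification $V^{\otimes}=\bigsqcup_k\mathsf{Chi}_1^{\langle k\rangle}\cdots\childki{d}{k}{\rootb}$ for (ix)--(x). The only deviations are cosmetic --- your depth-sum monotonicity argument for (iii) in place of the paper's projection onto a factor tree, and your explicit filling-in of the details for (ix) and (x) that the paper leaves to the reader.
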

\begin{remark} \label{t-components}
The conclusion of (v) above is in contrast with the situation occurring in the case of undirected trees, where the tensor product of two undirected trees has exactly two components (see \cite[Theorem 5.29]{IK}).
\end{remark}
\begin{proof}
Let $\mf v \in \mathsf{Root}^\otimes$. Then for some $j = 1, \cdots, d$, $\mf v_j = \mathsf{root}_j$. Now, if $\mf u \in V$ such that $(\mf u,\mf v) \in \mathcal E^{\otimes}$, then $\mf u_j = \parent{\mf v_j}$, which is not possible. This proves (i).

To see (ii), let $\mf v \in V \setminus \mathsf{Root}^\otimes$. Then $\mf v_j \neq \mathsf{root}_j$ for all $j = 1, \cdots, d$. Consider $\mf u \in V$ with $\mf u_j = \parent{\mf v_j}$ for all $j = 1, \cdots, d$. Then $(\mf u,\mf v) \in \mathcal E^{\otimes}$. This proves the existence as well as the uniqueness of $\mf u$.  
%let there exist $u, w \in V$ such that both $(u,v)$ and $(w,v)$ belong to $\mathcal E^{\otimes}$. Then $u_j = \parent{v_j} = w_j$ for each $j = 1, \cdots, d$. This implies that $u = w$. Thus (ii) stands verified.

To see (iii), suppose there is a finite sequence $\{\mf w^{(i)}\}_{i=1}^n ~ (n \geqslant  2)$ of distinct vertices such that $(\mf w^{(i)}, \mf w^{(i+1)}) \in \mathcal E^\otimes$ for all $1 \leqslant i \leqslant n-1$ and $(\mf w^{(n)}, \mf w^{(1)}) \in \mathcal E^\otimes$. Then $\mf w^{(1)}_j \in \childn{n}{\mf w^{(1)}_j}$ for all $1 \leqslant j \leqslant d$. This is a contradiction to the fact that $\mathscr T_j$ has no circuits.

Let $\mf u$ and $\mf v$ be two distinct vertices in $\mathsf{Root}^\otimes$. Suppose there exists a finite sequence $\{\mf w^{(i)}\}_{i=1}^n \subseteq V~(n \geqslant  2)$ of distinct vertices such that $\mf w^{(1)} = \mf u$, $\mf w^{(n)} = \mf v$ and $(\mf w^{(i)}, \mf w^{(i+1)})$ or $(\mf w^{(i+1)}, \mf w^{(i)}) \in \mathcal E^\otimes~(i=1, \cdots, n-1)$. By (i), $(\mf w^{(2)}, \mf w^{(1)})$ can not belong to $\mathcal E^\otimes$. Hence, we must have $(\mf w^{(1)}, \mf w^{(2)}) \in \mathcal E^\otimes$. Thus $\mf w^{(2)}$ belongs to $V \setminus \mathsf{Root}^\otimes$. Next, if 
$(\mf w^{(3)}, \mf w^{(2)}) \in \mathcal E^\otimes$, then by (ii), $\mf w^{(3)} = \mf w^{(1)}$. This contradicts that the vertices $\mf w^{(1)}, \cdots, \mf w^{(n)}$ are distinct. Thus $(\mf w^{(2)}, \mf w^{(3)}) \in \mathcal E^\otimes$. By arguing similarly, one can see that $(\mf w^{(i)}, \mf w^{(i+1)}) \in \mathcal E^\otimes$ for all $1 \leqslant i \leqslant n-1$. Thus we get $\mf v_j \in \childn{n-1}{\mf u_j}$ for each $j = 1, \cdots, d$. Hence $\mf v \notin \mathsf{Root}^\otimes$, which is a contradiction. This proves (iv).

Suppose that $\mf v \in V$ belongs to some component $\mathcal C$. There exist  $k_1, \cdots, k_d \in \mathbb N$ such that $\mf v_j \in \childn{k_j}{\mathsf{root}_j}$ for $j=1, \cdots, d$. Let $k = \min\{k_j : 1 \leqslant j \leqslant d\}$. Then $\mf u = \mathsf{par}_1^{\langle k \rangle} \cdots \parentki{d}{k}{\mf v} \in \mathsf{Root}^\otimes$, and since $\mathcal C$ is connected, $\mf u \in \mathcal C$. Thus each component contains an element from $\mathsf{Root}^\otimes$. Further, (iv) implies that each component contains at most one element from $\mathsf{Root}^\otimes$. Clearly, as each element of $\mathsf{Root}^\otimes$ belongs to some component, it follows that the correspondence between the collection of components of $\mathscr T^{\otimes}$ and the elements of $\mathsf{Root}^\otimes$ is bijective. This completes the verification of (v).

Let $\mathcal C$ be any component of $\mathscr T^{\otimes}$. From (v), there exists a unique $\mf v \in \mathsf{Root}^\otimes$ such that $\mf v \in \mathcal C$. Further, (i) implies that $\mf v$ has no parent, in particular, in the subgraph $\mathcal C$. Thus $\mf v$ is a root for $\mathcal C$. Clearly, $\mathcal C$ is connected, and by (ii), each $\mf u \in \mathcal C$, with $\mf u \neq \mf v$, has a parent. 
Also, by (iii), $\mathcal C$ has no circuits. This proves (vi).   

Let $\mf v$ be a vertex in $V$. By Remark \ref{edge-tensor}, 
$\child{\mf v}=\mathsf{Chi}_1 \cdots \childi{d}{\mf v}$. It follows that
\beqn
\mbox{card}(\child{\mf v})=\prod_{j=1}^d\mbox{card}(\childi{j}{\mf v})=\prod_{j=1}^d\mbox{card}(\child{\mf v_j}).
\eeqn
%$\mbox{card}(\child{v})$ is finite if and only if $\mathsf{Chi}_1 \cdots \childi{d}{v}$ is finite. Since, by Lemma \ref{disjoint}(i),   $\mathsf{Chi}_j\childi{i}{v}=\mathsf{Chi}_i\childi{j}{v}$ for all $i,j =1, \cdots, d$, it follows that $\mathsf{Chi}_1 \cdots \childi{d}{v}$ is finite if and only if $\mbox{card}(\childi{j}{v}) = \mbox{card}(\child{v_j})$ is finite for all $j =1, \cdots, d$. 
Thus $\mathscr T^{\otimes}$ is locally finite if and only if $\mathscr T$ is locally finite.

The proof of (viii) is an obvious consequence of the fact (observed in Remark \ref{edge-tensor}) that $\child{\mf v} = \mathsf{Chi}_1 \cdots \childi{d}{\mf v}$ for all $\mf v \in V$. 

To see (ix), first note that 
$V^{\otimes}=\sqcup_{k=0}^{\infty} \childn{k}{\rootb}=
\sqcup_{k=0}^{\infty} \mathsf{Chi}^{\langle k \rangle}_1 \cdots \childki{d}{k}{\rootb}$. Hence,
if $\mf v$ is any vertex of $\mathscr T^{\otimes}_{\rootb}$, then there exists a unique $k \in \mathbb N$ such that $\mf v_j \in \childn{k}{\mathsf{root}_j}$ for all $1 \leqslant j \leqslant d$. Thus $\alpha_\mf v = \alpha_{\mf v_j}$ for all $1 \leqslant j \leqslant d$. Next, observe that $\mbox{card}(\child{\mf v}) \geqslant  2$ if and only if there exists a positive integer $j~ (1 \leqslant j \leqslant d)$ such that $\mbox{card}(\child{\mf v_j}) \geqslant  2$. With these two observations it is easy to see that $$\sup\{\alpha_\mf v : \mbox{card}(\child{\mf v}) \geqslant  2 \} = \max_{1 \leqslant j \leqslant d} \sup\{\alpha_{\mf v_j} : \mbox{card}(\child{\mf v_j}) \geqslant  2 \}.$$
This implies that $k_{\mathscr T^{\otimes}_{\rootb}} = \max\{k_{\mathscr T_j} : 1 \leqslant j \leqslant d\}$. We leave the verification of the last part to the reader.
\end{proof} 

%The component of $\mathscr T^{\otimes}$ that contains $\rootb$ will be denoted by $\mathscr T^{\otimes}_{\rootb}$. 
We will see later that certain weighted shifts acting on the rooted directed tree $\mathscr T^{\otimes}_{\rootb}$ arise naturally in an integral representation of so-called spherically balanced multishifts originating from directed Cartesian product of directed trees.
\begin{remark}
Note that $\mathscr T^{\otimes}_{\rootb}$ is an isomorphic invariant for $\mathscr T^{\otimes}$. In fact, let $\{\tilde{\mathscr T}_j : 1 \leqslant j \leqslant d\}$ be a collection of rooted directed trees with roots $\tilde{\mathsf{root}_j}$ respectively and let $\tilde{\mathscr T}^\otimes$ be the tensor product of $\tilde{\mathscr T}_1, \cdots, \tilde{\mathscr T}_d$. Let $\tilde{\mathscr T}^{\otimes}_{\tilde{\rootb}}$ be the component of $\tilde{\mathscr T}^\otimes$ containing $\tilde{\rootb}$. Suppose that $\phi_j$ is an isomorphism between $\mathscr T_j$ and $\tilde{\mathscr T}_j$. Then the map $(\mf v_1, \cdots, \mf v_d) \mapsto (\phi_1(\mf v_1), \cdots, \phi_d(\mf v_d))$ defines an isomorphism between $\mathscr T^\otimes$ and $\tilde{\mathscr T}^\otimes$, and hence, $\mathscr T^{\otimes}_{\rootb}$ and $\tilde{\mathscr T}^{\otimes}_{\tilde{\rootb}}$ become isomorphic.      
\end{remark}

For future reference, we find it necessary to describe $\mathscr T^{\otimes}_{\rootb}$ in the context of Examples \ref{classical}, \ref{classical-mix}, \ref{T1-T1} (see Figures 2.4, 2.5, 2.6 respectively).
In this regard,
the reader is advised to recall the definition of $\mathscr T_{n_0, k_0}$ as given in Example \ref{T-n-k}.
\begin{example} \label{tensor-c}
Let $\mathscr T_{1, 0}$ be the tree as discussed in Example \ref{classical}. Let $d = 2$ and $\mathscr T_j = \mathscr T_{1, 0}$ for $j = 1, 2$. Then
$$\mathsf{Root}^\otimes = \big\{(i,0), (0,j) : i,j \geqslant  0 \big\}.$$
For $i \geqslant  1$, the components $\mathcal C_{(i,0)} = (V_{(i,0)}, \mathcal E_{(i,0)})$ containing $(i,0)$ are given by $$V_{(i,0)} = \{(i+k,k) : k \geqslant  0\} \mbox{~and~} \child{(i+k,k)} = \{(i+k+1,k+1)\}$$ for all $k \geqslant  0$. Similar description of $\mathcal C_{(0,j)}$ is obtained for $j \geqslant  1$. Further, the rooted directed tree $\mathscr T^{\otimes}_{\rootb}$, with set of vertices $V^{\otimes}$, is given by
$${V^{\otimes}} = \{(k,k) : k \geqslant  0\} \mbox{~and~} \child{(k,k)} = \{(k+1,k+1)\}$$ for all $k \geqslant  0$. 
Note that $\mathscr T^{\otimes}_{\rootb}$ is isomorphic to $\mathscr T_{1, 0}$ via the isomorphism $(k, k) \mapsto k.$
\end{example}

\begin{figure} \label{ten-1}
\includegraphics[scale=.5]{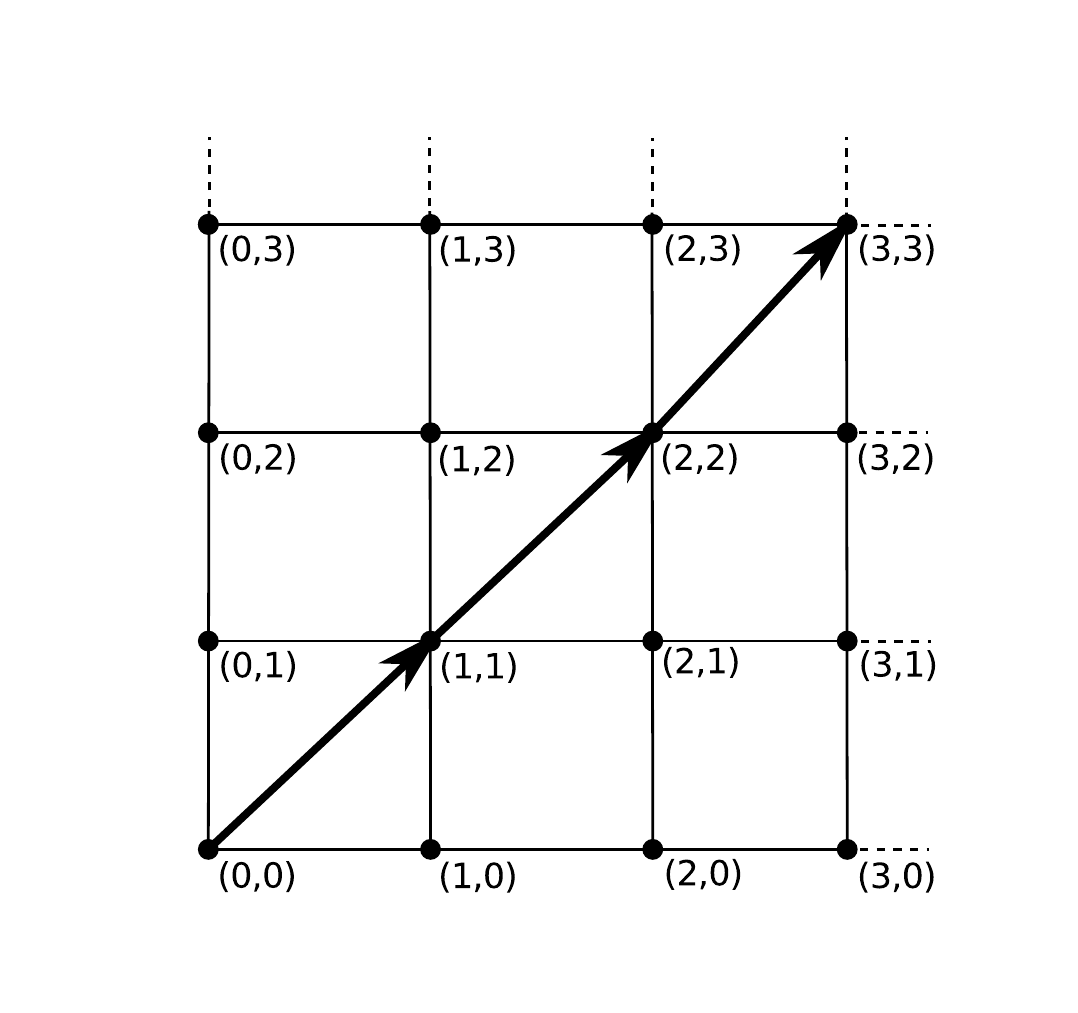} \caption{In $\mathscr T_{1, 0} \otimes \mathscr T_{1, 0}$, $\mathscr T^{\otimes}_{\rootb}$ is represented with bold-faced edges}
\end{figure}

\begin{figure} \label{tensor-2}
\includegraphics[scale=.4]{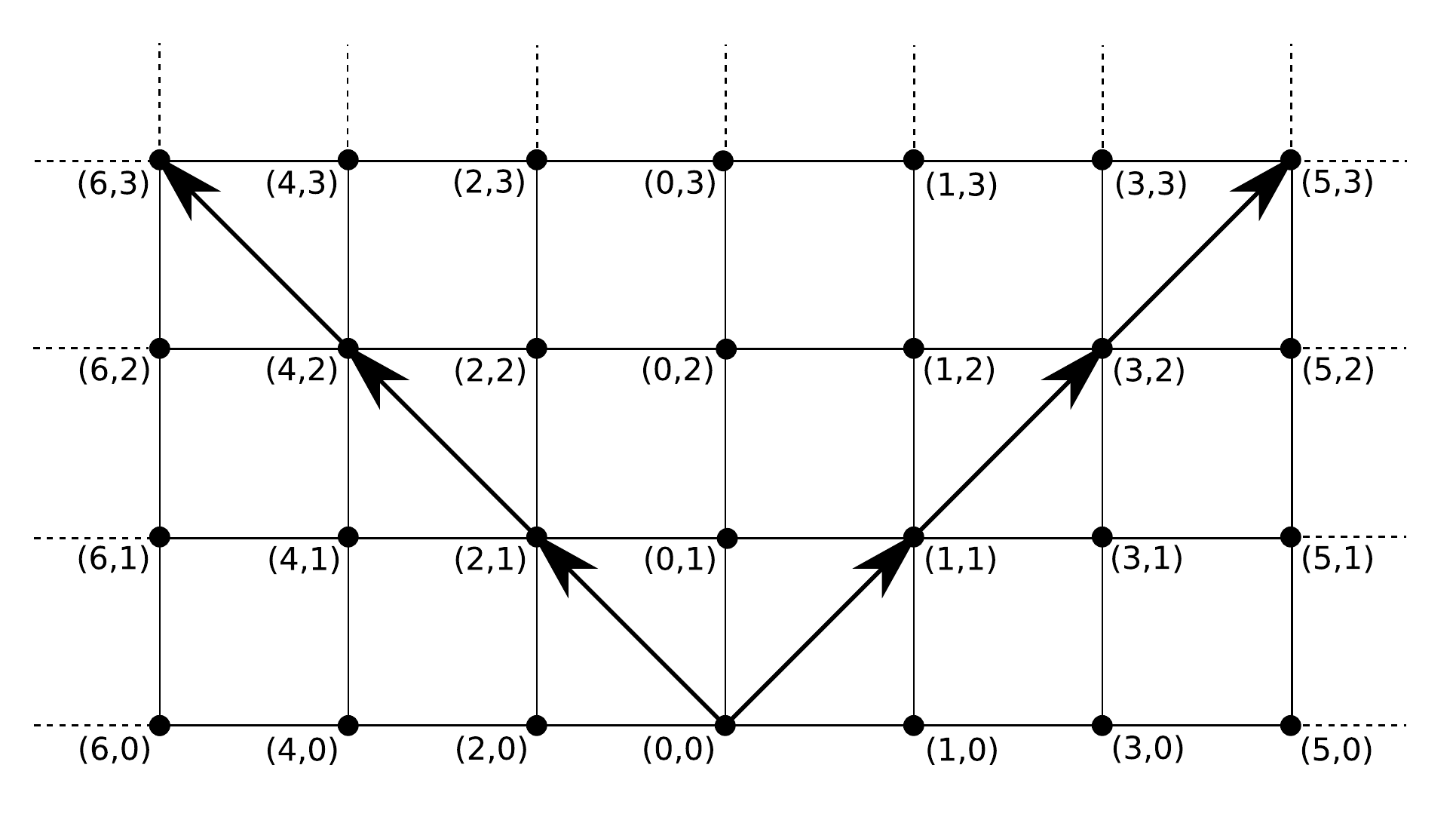} \caption{In $ \mathscr T_{2, 0} \otimes \mathscr T_{1, 0}$, $\mathscr T^{\otimes}_{\rootb}$ is represented with bold-faced edges}
\end{figure}

\begin{figure} \label{tensor-3}
\includegraphics[scale=.4]{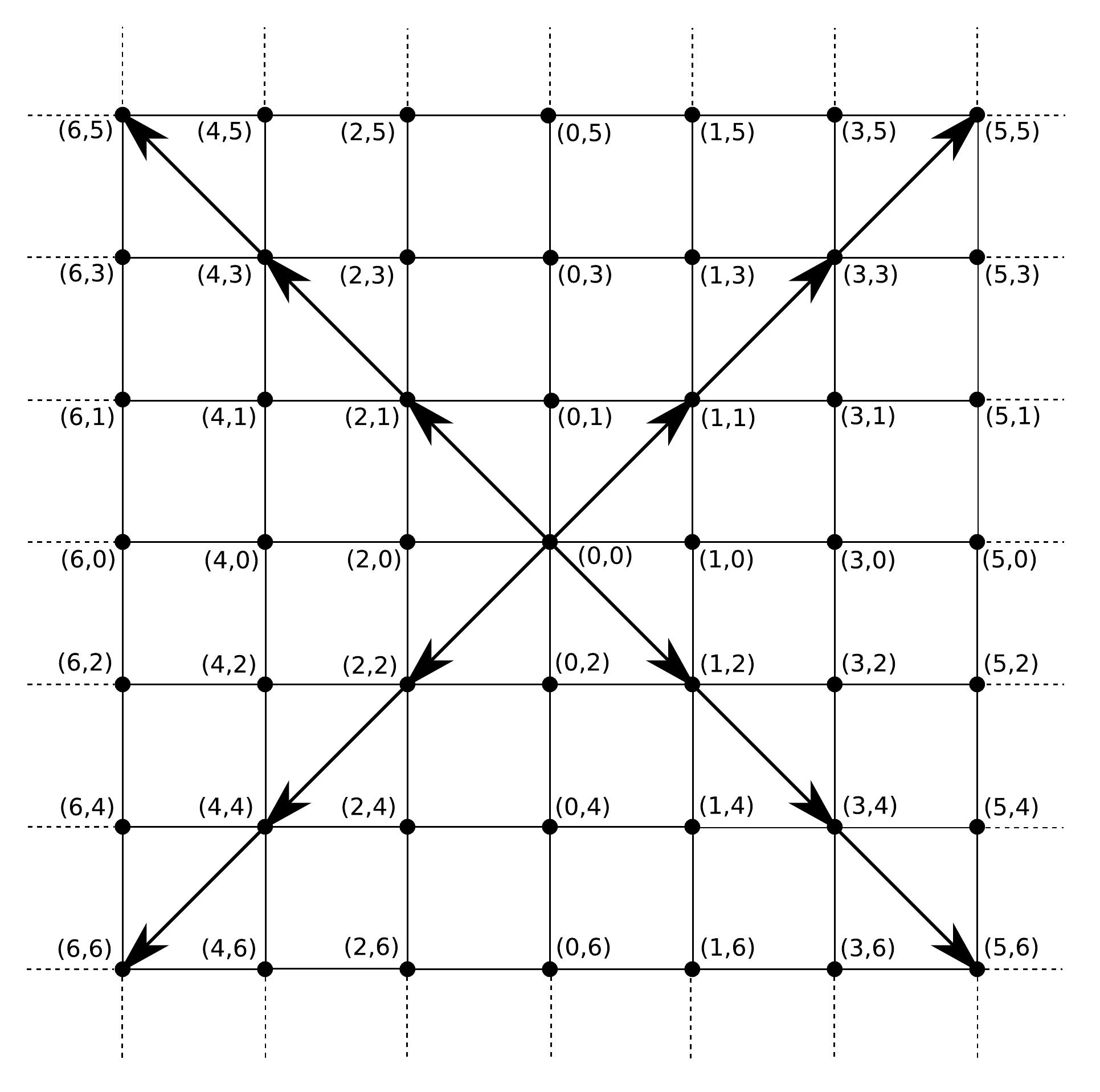} \caption{In $ \mathscr T_{2, 0} \otimes \mathscr T_{2, 0}$, $\mathscr T^{\otimes}_{\rootb}$ is represented with bold-faced edges}
\end{figure}

\begin{example} \label{tensor-m}
Let $\mathscr T_1=\mathscr T_{2, 0}$, $\mathscr T_2=\mathscr T_{1, 0}$ (see Example \ref{classical-mix}). Then
$$\mathsf{Root}^\otimes = \big\{(i,0), (0,j) : i,j \geqslant  0 \big\}.$$
For $i \geqslant  1$, the components $\mathcal C_{(i,0)} = (V_{(i,0)}, \mathcal E_{(i,0)})$ containing $(i,0)$ are given by $$V_{(i,0)} = \{(i+2k,k) : k \geqslant  0\} \mbox{~and~} \child{(i+2k,k)} = \{(i+2k+2,k+1)\}$$ for all $k \geqslant  0$. Further, for $j \geqslant  1$, the components $\mathcal C_{(0,j)} = (V_{(0,j)}, \mathcal E_{(0,j)})$ containing $(0,j)$ are given by 
$$V_{(0,j)} = \{(0,j), (1,j+1), (2, j+1)\} \cup \{(2k+1, j+k+1), (2k, j+k) : k \geqslant  1\}$$
and $\child{(0,j)} = \{(1,j+1), (2, j+1)\}$, $\child{(k,l)} = \{(k+2, l+1)\}$ for all $k, l \geqslant  1$. Moreover, the rooted directed tree $\mathscr T^{\otimes}_{\rootb}$, with set of vertices $V^{\otimes}$, is given by
$$V^{\otimes} = \{(2k+1, k+1), (2k, k) : k \geqslant  0\}$$
and $\child{(0,0)} = \{(1,1), (2, 1)\}$, $\child{(k,l)} = \{(k+2, l+1)\}$ for all $k, l \geqslant  1$.
Note that $\mathscr T^{\otimes}_{\rootb}$ is isomorphic to $\mathscr T_{2, 0}$ via the isomorphism $(k, l) \mapsto k.$
\end{example}

\begin{example} \label{tensor-T1-T1}
Let $\mathscr T_1=\mathscr T_{2, 0}=\mathscr T_2$. Then
$$\mathsf{Root}^\otimes = \big\{(i,0), (0,j) : i,j \geqslant  0 \big\}.$$
For $j \geqslant  1$, the components $\mathcal C_{(0,j)} = (V_{(0,j)}, \mathcal E_{(0,j)})$ containing $(0,j)$ are given by 
$$V_{(0,j)} = \{(0,j), (1,j+2), (2, j+2)\} \cup \{(2k+1, j+2k+2), (2k, j+2k) : k \geqslant  1\}$$
and $\child{(0,j)} = \{(1,j+2), (2, j+2)\}$, $\child{(k,l)} = \{(k+2, l+2)\}$ for all $k, l \geqslant  1$. Similar description for $\mathcal C_{(i,0)}$ is obtained for all $i \geqslant  1$. Further, the rooted directed tree $\mathscr T^{\otimes}_{\rootb}$, with set of vertices $V^{\otimes}$, is given by
$$V^{\otimes} = \{(k,k) : k \geqslant  0\} \cup \{(2k-1, 2k), (2k, 2k-1) : k \geqslant  1\}$$
and $\child{(0,0)} = \{(1,1), (1,2), (2, 1), (2,2)\}$, $\child{(k,l)} = \{(k+2, l+2)\}$ for all $k, l \geqslant  1$.
It can be seen that $\mathscr T^{\otimes}_{\rootb}$ is isomorphic to the rooted directed tree $\mathscr T_{4, 0}$.
\end{example}

We visit the above examples once again in the context of spherically balanced multishifts in Chapter 5.

\chapter{Multishifts on Product of Rooted Directed Trees}

In this chapter, we introduce and study the notion of multishifts on directed Cartesian product of finitely many rooted directed trees. In particular, we discuss some basic properties of multishifts such as boundedness, commutativity, circularity and analyticity. These are then used to describe various spectral parts of $S_{\lambdab}$ including the Taylor spectrum. 

In this paper, we are interested in the tree counterpart of classical unilateral multishifts. Hence all the directed trees discussed in the remaining part of this text will be rooted.

\section{Definition and Elementary Properties}
\label{Section3.2}

Let $\mathscr T_j = (V_j, \mathcal E_j)~(j=1, \cdots, d)$ be rooted directed trees and
let $\mathscr T = (V,\mathcal E)$ be the directed Cartesian product of $\mathscr T_1, \cdots, \mathscr T_d$. 
For a vertex $v \in V,$ let $e_v : V \rar \mathbb C$ denote the indicator function of the set $\{v\}.$
Consider the complex Hilbert space $l^2(V)$ of square summable complex functions on $V$
\index{$l^2(V)$}
equipped with the standard inner product. Note that $l^2(V)$ admits the orthonormal basis $\{e_v : v \in V\}$. We always assume that $\mbox{card}(V)=\aleph_0.$
For a nonempty subset $W$ of $V$, $l^2(W)$ may be considered as a subspace of $l^2(V)$. Indeed, if one sets $\tilde{f}=f$ on $W$ and $0$ otherwise, then the mapping $U : l^2(W) \rar l^2(V)$ given by $Uf = \tilde{f}$ is an isometric homomorphism.
\begin{remark}
Consider the the category $\mathcal T$ of the directed Cartesian products of finitely many directed trees with morphisms being directed graph homomorphisms (or directed graph isomorphisms).
Note that $l^2$ defines a {\it covariant functor} from $\mathcal T$ into the category $\mathcal C$ of Hilbert spaces with bounded linear operators (resp. unitaries) as morphisms. Indeed, any graph homomorphism (resp.  isomorphism) $\phi$ induces a bounded linear operator (resp. unitary) $l^2(\phi)$ given by $$l^2(\phi)(e_v)=e_{\phi(v)},$$
which satisfies $l^2(\phi \circ \psi)=l^2(\phi)\circ l^2(\psi)$.
\end{remark}

\begin{definition} \label{multishift-dfn}
Given a system
$\lambdab=\{\lambda^{(j)}_v : v \in V^{\circ}, ~ j=1, \cdots, d\}$ of nonzero complex numbers, we define
the {\it multishift $S_{\lambdab}$ on $\mathscr T$} 
with weights $\lambdab$ as the $d$-tuple of operators 
\index{$S_{\lambdab}$}
$S_1, \cdots, S_d$ on $l^2(V)$ given by 
   \begin{align*}
   \begin{aligned}
{\mathscr D}(S_{j}) & := \{f \in l^2(V) \colon
\varLambda^{(j)}_{\mathscr T} f \in l^2(V)\},
   \\
S_{j} f & := \varLambda^{(j)}_{\mathscr T} f, \quad f \in {\mathscr
D}(S_{j}),
   \end{aligned}
   \end{align*}
where $\varLambda^{(j)}_{\mathscr T}$ is the mapping defined on
complex functions $f$ on $V$ by
   \begin{align*}
(\varLambda^{(j)}_{\mathscr T} f) (v) :=
   \begin{cases}
\lambda^{(j)}_v \cdot f\big(\parenti{j}{v}\big) & \text{if } v_j \in
V^\circ_j,
   \\
   0 & \text{if } v_j \text{ is a root of } {\mathscr T_j}.
   \end{cases}
   \end{align*}
\end{definition}   
{\it We note here that {not} all weights $\lambda^{(j)}_v$ in the system $\lambdab$ are used in the above definition. For instance, if $v \in \childi{1}{\rootb}$ then $\lambda^{(2)}_v$ will not appear in the definition of $S_{\lambdab}$.}
%However, for the sake of convenience, we use the same system $\lambdab$ to denote the weights of $S_{\lambdab}.$ Further, 
%The weights appearing in the definition of $S_{\lambdab}$ are always assumed to be non-zero.}
\begin{remark}
If $e_v \in {\mathscr D}(S_{j})$, then
\beq\label{Si}
S_j e_v = \sum_{w \in \childi{j}{v}} \lambda^{(j)}_w e_w.
\eeq
\end{remark}

\begin{example}[Classical Multishifts] \label{classical-m} Consider the directed Cartesian product $\mathscr T$ of $d$ copies of $\mathscr T_{1, 0}$ as discussed in Example \ref{classical}. Assume that $S_j$ is bounded for $j=1, \cdots, d.$ Then 
\beqn
S_j e_{\alpha} = \sum_{\beta \in \childi{j}{\alpha}} \lambda^{(j)}_\beta e_\beta = \lambda^{(j)}_{\alpha + \epsilon_j} e_{\alpha + \epsilon_j}.
\eeqn
If one sets $w^{(j)}_{\alpha}:=\lambda^{(j)}_{\alpha + \epsilon_j}$, then $S_{\lambdab}$ is nothing but the classical multishift $S_{\bf w}$ with weight multisequence $\{w^{(j)}_{\alpha} : \alpha \in \mathbb N^d, j=1, \cdots, d\}.$ 
\end{example}

\begin{lemma} \label{bddness}
Let $\mathscr T = (V,\mathcal E)$ be the directed Cartesian product of rooted directed trees $\mathscr T_1, \cdots, \mathscr T_d$ and let
$S_{\lambdab}=(S_1, \cdots, S_d)$ be a multishift on $\mathscr T$. Then, for $j=1, \cdots, d$, the following statements hold:
\begin{enumerate}
\item[(i)]  
$S_j$ is a bounded linear operator on $l^2(V)$ if and only if
\beqn
\sup_{v \in V} \sum_{w \in \childi{j}{v}}  |\lambda^{(j)}_w|^2 < \infty.
\eeqn
\item[(ii)] 
$S_{j}$ is injective. 
%if and only if $\mathscr T_j$ is leafless. 
\end{enumerate}
\end{lemma}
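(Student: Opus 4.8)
\textbf{Proof plan for Lemma \ref{bddness}.}

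The plan is to compute the action of $S_j$ on the orthonormal basis $\{e_v : v \in V\}$, exploit the disjointness results established in Lemma \ref{disjoint}, and reduce both claims to elementary facts about weighted shifts. First I would observe that, by \eqref{Si} and the hypothesis that all weights $\lambda^{(j)}_w$ are nonzero, whenever $e_v \in \mathscr D(S_j)$ one has
\begin{align*}
\|S_j e_v\|^2 = \Big\| \sum_{w \in \childi{j}{v}} \lambda^{(j)}_w e_w \Big\|^2 = \sum_{w \in \childi{j}{v}} |\lambda^{(j)}_w|^2,
\end{align*}
the last equality because the vertices in $\childi{j}{v}$ are distinct, so $\{e_w : w \in \childi{j}{v}\}$ is an orthonormal set. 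Set $M_j := \sup_{v \in V} \sum_{w \in \childi{j}{v}} |\lambda^{(j)}_w|^2$.

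For part (i), the ``only if'' direction is immediate: if $S_j$ is bounded with norm $\|S_j\|$, then each $e_v$ lies in $\mathscr D(S_j)$ and $\sum_{w \in \childi{j}{v}} |\lambda^{(j)}_w|^2 = \|S_j e_v\|^2 \leqslant \|S_j\|^2$, so $M_j \leqslant \|S_j\|^2 < \infty$. For the ``if'' direction, assume $M_j < \infty$. The key structural input is that the sets $\{\childi{j}{v} : v \in V\}$ are pairwise disjoint and their union is contained in $V$ — indeed $\childi{j}{v} = \childnt{\epsilon_j}{v}$ by Remark \ref{chi-j}, and Lemma \ref{disjoint}(ii) (with $\alpha = \epsilon_j$) gives disjointness. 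Hence for $f = \sum_{v \in V} f(v) e_v \in l^2(V)$, the function $\varLambda^{(j)}_{\mathscr T} f$ is supported on the disjoint union $\bigsqcup_{v} \childi{j}{v}$, and on $\childi{j}{v}$ it equals $\sum_{w \in \childi{j}{v}} \lambda^{(j)}_w f(v) e_w$. Therefore
\begin{align*}
\|\varLambda^{(j)}_{\mathscr T} f\|^2 = \sum_{v \in V} |f(v)|^2 \sum_{w \in \childi{j}{v}} |\lambda^{(j)}_w|^2 \leqslant M_j \sum_{v \in V} |f(v)|^2 = M_j \|f\|^2,
\end{align*}
which shows $f \in \mathscr D(S_j)$ and $\|S_j f\| \leqslant \sqrt{M_j}\,\|f\|$. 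Thus $\mathscr D(S_j) = l^2(V)$ and $S_j \in B(l^2(V))$ with $\|S_j\| = \sqrt{M_j}$; linearity of $S_j$ is clear from the definition of $\varLambda^{(j)}_{\mathscr T}$.

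For part (ii), injectivity, I would again use the disjointness of the family $\{\childi{j}{v}\}_v$ together with the fact that every child set $\childi{j}{v}$ is nonempty (the trees are leafless, so $\child{v_j} \neq \emptyset$, forcing $\childi{j}{v} \neq \emptyset$) and that all the weights are nonzero. Concretely, suppose $S_j f = 0$ for some $f \in l^2(V)$. Fix $v \in V$ and pick any $w \in \childi{j}{v}$; since the sets $\childi{j}{u}$ are disjoint across $u$, $w$ belongs to $\childi{j}{u}$ only for $u = v$, so evaluating $S_j f = \varLambda^{(j)}_{\mathscr T} f$ at $w$ gives $0 = (\varLambda^{(j)}_{\mathscr T} f)(w) = \lambda^{(j)}_w f(v)$, and since $\lambda^{(j)}_w \neq 0$ we get $f(v) = 0$. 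As $v$ was arbitrary, $f = 0$. I do not anticipate a serious obstacle here; the only point needing care is the bookkeeping that each vertex $w$ with $w_j \in V_j^\circ$ lies in exactly one set $\childi{j}{v}$ (namely for $v = \parenti{j}{w}$), which is precisely Lemma \ref{disjoint}(ii) specialized to $\alpha = \epsilon_j$ — the mild subtlety being to make sure one correctly identifies $\childi{j}{v}$ with $\childnt{\epsilon_j}{v}$ before invoking it.
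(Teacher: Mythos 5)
Your proposal is correct and follows essentially the same route as the paper: both reduce everything to the identity $\|S_jf\|^2=\sum_{v\in V}|f(v)|^2\sum_{w\in\childi{j}{v}}|\lambda^{(j)}_w|^2$, obtained from \eqref{Si} together with the orthogonality/disjointness coming from Lemma \ref{disjoint}(ii), and then derive (i) from the supremum condition and (ii) from the weights being nonzero and the trees being leafless. The only cosmetic difference is that in (ii) you argue by pointwise evaluation at a child rather than by the norm identity, which is an equivalent bookkeeping of the same facts.
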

\begin{proof}
By Lemma \ref{disjoint}(ii),  $\{e_w\}_{w \in \childi{j}{v}}$ is orthogonal for every $v \in V$ and $j=1, \cdots, d.$ The first part now follows from \eqref{Si}.
To see (ii), suppose that $S_jf=0$ for some $f \in l^2(V).$ 
Then
\beqn
\sum_{v \in V} |f(v)|^2 \sum_{w \in \childi{j}{v}} |\lambda^{(j)}_w|^2 = \|S_jf\|^2=0.
\eeqn 
Since $\lambdab$ consists of nonzero complex numbers, the above equality holds if and only if either $f(v)=0$ or $\childi{j}{v}=\emptyset.$ However,
 by assumption $\mathscr T_1, \cdots, \mathscr T_d$ are leafless, and hence $f(v) =0$ for all $v \in V.$
\end{proof}

{\it 
Unless stated otherwise, $S_{j}$ belongs to $B(l^2(V))$ for every $j=1, \cdots, d.$ }
%We also assume throughout this text that each $\mathscr T_j$ is leafless.} 

If $S_{\lambdab}=(S_1, \cdots, S_d)$ is the multishift on $\mathscr T$, then the Hilbert space adjoint $S_j^*$ of $S_j$ is given by
\beqn\label{Si*}
S_j^* e_v = \overline{\lambda}_v^{(j)} e_{\parenti{j}{v}}\ \text{for all}\ v \in V.
\eeqn
%where $\overline{z}$ denotes the complex conjugate of $z \in \mathbb C.$
\begin{remark} \label{0-eigen}
Note that $S^*_je_{\rootb}=0$ for all $j=1, \cdots, d.$ In particular, $0$ belongs to the point spectrum of $S^*_{\lambdab}.$
\end{remark}

In the following proposition, we collect several elementary properties of the multishift $S_{\lambdab}$.

\begin{proposition}\label{shift-prop}
Let $\mathscr T = (V,\mathcal E)$ be the directed Cartesian product of rooted directed trees $\mathscr T_1, \cdots, \mathscr T_d$ and 
let $S_{\lambdab}$ be a multishift on $\mathscr T$. For $j=1, \cdots, d$, $w \in V$,  let $\beta (j, w, 0):=1$ 
\index{$\beta (j, w, n)$}
and 
\beqn \beta (j, w, n) := \lambda_w^{(j)} \lambda_{\parenti{j}{w}}^{(j)} \cdots \lambda_{\parentki{j}{n-1}{w}}^{(j)}~(n \geqslant  1).\eeqn 
Also, let $\alpha^{(0)}=0 \in \mathbb N^d$ and $\alpha^{(j)}=(\alpha_1, \cdots, \alpha_{j}, 0, \cdots, 0) \in \mathbb N^d$ for $j = 1, \cdots, d.$ 
Then 
the following statements are true:
\begin{enumerate}
\item[(i)] $S_{\lambdab}$ is commuting if and only if for all $i,j =1, \cdots, d$ and for all $v \in V$,
\beq \label{commuting} \lambda^{(j)}_u \lambda^{(i)}_{\parenti{j}{u}}=\lambda^{(i)}_u \lambda^{(j)}_{\parenti{i}{u}}\ \text{for all}\ u \in \mathsf{Chi}_j\childi{i}{v}.\eeq
\item[(ii)] $S_{\lambdab}$ is doubly commuting if and only if \eqref{commuting} holds and for all $v \in V$ and $i,j=1, \cdots, d$ with $i \neq j$, the following condition holds:
\beq \label{d-commuting} \overline{\lambda}_v^{(j)}\lambda_{\parenti{j}{u}}^{(i)} = \lambda_u^{(i)}\overline{\lambda}_u^{(j)}\
\text{for all}~ u \in \childi{i}{v}.\eeq
\end{enumerate}
If, in addition, $S_{\lambdab}$ is commuting then the following statements hold true:
\begin{enumerate}
\item[(iii)] For all $1 \leqslant i,j \leqslant d$, for all $v \in V$ and for all $n \geqslant  1$,
\beq\label{beta-property}
\beta(j, \parenti{i}{v}, n) \lambda^{(i)}_v = \beta(j, v, n) \lambda^{(i)}_{\parentki{j}{n}{v}}.
\eeq
\item[(iv)] For $\alpha = (\alpha_1, \cdots, \alpha_d) \in \mathbb N^d$ and for all $v \in V$,
\beq \label{S-powers}
S^{\alpha}_{\lambdab} e_v = \sum_{w \in \childnt{\alpha}{v}} \prod_{j=1}^d \beta\big(j, \parentnt{\alpha^{(j-1)}}{w}, \alpha_j\big)e_w.
\eeq
\item[(v)] 
For $\alpha = (\alpha_1, \cdots, \alpha_d) \in \mathbb N^d$ and for all $v \in V$,
\beqn \label{S*-powers}
S^{*\alpha}_{\lambdab} e_v 
= \prod_{j=1}^d \overline{\beta} \big(j, \parentnt{{\alpha}^{(j-1)}}{v}, \alpha_j\big)e_{\parentnt{\alpha}{v}}, 
\eeqn
where $\overline{\beta}(\cdot) = \overline{\beta(\cdot)}.$
%$\tilde{\alpha}^{(j)}=(0, \cdots, 0, \alpha_{j+1}, \cdots, \alpha_{d}) \in \mathbb N^d$ for $1 \leqslant j \leqslant d-1$ and $\tilde{\alpha}^{(d)}=0 \in \mathbb N^d$. 
\item[(vi)] For $\alpha = (\alpha_1, \cdots, \alpha_d) \in \mathbb N^d$ and for all $v \in V$,
\beqn \label{S*S-powers}
S^{*\alpha}_{\lambdab} S^{\alpha}_{\lambdab} e_v = \sum_{w \in \childnt{\alpha}{v}} \prod_{j=1}^d \big|\beta \big(j, \parentnt{\alpha^{(j-1)}}{w}, \alpha_j\big)\big|^2e_v.
\eeqn
\item[(vii)] The multishift $S_{\lambdab}$ is toral left invertible if and only if $$\inf_{1 \leqslant j \leqslant d} \ \inf_{v \in V}\sum_{w \in \childi{j}{v}} |\lambda^{(j)}_w|^2 > 0.$$
\item[(viii)] The multishift  $S_{\lambdab}$ is joint left invertible if and only if
$$\inf_{v \in V}\sum_{j=1}^d \sum_{w \in \childi{j}{v}}|\lambda^{(j)}_w|^2 > 0.$$
\item[(ix)] If $\alpha \neq \beta$ in $\mathbb N^d$, then $\inp{S^{\alpha}_{\lambdab} e_v}{S^{\beta}_{\lambdab}e_v}=0$ for every $v \in V.$ 
\item[(x)] If $v \neq w$ in $V$, then $\inp{S^{\alpha}_{\lambdab} e_v}{S^{\alpha}_{\lambdab}e_w}=0$ for every $\alpha \in \mathbb N^d.$ 
\end{enumerate}
\end{proposition}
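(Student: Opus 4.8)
\textbf{Proof proposal for Proposition \ref{shift-prop}.}

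The plan is to establish the ten items essentially by reducing every computation to the action of $S_j$ and $S_j^*$ on basis vectors, together with the disjointness facts from Lemma \ref{disjoint} and the elementary identities about $\childi{j}{\cdot}$, $\parenti{j}{\cdot}$ in Remarks \ref{child-par} and \ref{chi-j}. First, for (i) and (ii) I would compute $S_iS_je_v$ and $S_jS_ie_v$ directly from \eqref{Si}: applying \eqref{Si} twice gives $S_iS_je_v=\sum_{u\in\mathsf{Chi}_i\childi{j}{v}} \lambda^{(j)}_{\parenti{i}{u}}\lambda^{(i)}_u e_u$, and Lemma \ref{disjoint}(i) says the index set $\mathsf{Chi}_i\childi{j}{v}=\mathsf{Chi}_j\childi{i}{v}$ is the same for both orders; comparing coefficients of the (orthonormal, hence linearly independent) vectors $e_u$ yields \eqref{commuting}. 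For double commutativity one additionally needs $S_i S_j^* = S_j^* S_i$ for $i\neq j$; I would compute both sides on $e_v$ using \eqref{Si} and \eqref{Si*}, keeping careful track of whether $v_i,v_j$ are roots, and match coefficients to extract \eqref{d-commuting} (again invoking Remark \ref{child-par} so that $\childi{i}{\parenti{j}{v}}=\parenti{j}{\childi{i}{v}}$ makes the two index sets agree).

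For (iii) I would argue by induction on $n$. The base case $n=1$ is exactly \eqref{commuting} read at $u=v$ (after noting $\parentki{j}{1}{v}=\parenti{j}{v}$ and relabelling). For the inductive step, unfold $\beta(j,\parenti{i}{v},n+1)=\lambda^{(j)}_{\parenti{i}{v}}\,\beta(j,\parenti{j}{\parenti{i}{v}},n)$ and use Remark \ref{child-par} ($\parenti{j}{\parenti{i}{v}}=\parenti{i}{\parenti{j}{v}}$) together with the induction hypothesis applied at the vertex $\parenti{j}{v}$; the $n=1$ identity then pushes the remaining $\lambda^{(i)}$ factor through. This is the combinatorial heart of the argument and I expect it to be the main obstacle: all the bookkeeping of which coordinate-parent is applied in which order has to be done consistently, and the telescoping of the $\beta$-products must be tracked precisely.

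Items (iv) and (v) then follow by iterating \eqref{Si} and \eqref{Si*} coordinate by coordinate. For (v): since $S_j^*$ only moves the $j$-th coordinate to its parent, $S^{*\alpha}_{\lambdab}e_v=S_1^{*\alpha_1}\cdots S_d^{*\alpha_d}e_v$ collapses to a single basis vector $e_{\parentnt{\alpha}{v}}$ with scalar coefficient $\prod_{j=1}^d\overline{\beta}(j,\parentnt{\alpha^{(j-1)}}{v},\alpha_j)$ — here the nesting $\alpha^{(j-1)}$ appears exactly because after applying $S_1^{*\alpha_1}\cdots S_{j-1}^{*\alpha_{j-1}}$ the relevant vertex is $\parentnt{\alpha^{(j-1)}}{v}$; one verifies the single-variable fact $S_j^{*n}e_w=\overline{\beta}(j,w,n)e_{\parentki{j}{n}{w}}$ by an easy induction from \eqref{Si*}. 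For (iv), $S^{\alpha}_{\lambdab}e_v=S_1^{\alpha_1}\cdots S_d^{\alpha_d}e_v$ is a sum over $w\in\childnt{\alpha}{v}$ (using Lemma \ref{disjoint}(iv) and the definition of $\childnt{\alpha}{\cdot}$), and tracing which vertex each $S_j^{\alpha_j}$ acts on produces the coefficient $\prod_{j=1}^d\beta(j,\parentnt{\alpha^{(j-1)}}{w},\alpha_j)$; here commutativity is what lets us fix the order of the $S_j$'s without loss of generality. Item (vi) is immediate by combining (iv) and (v): apply $S^{*\alpha}_{\lambdab}$ term-by-term to \eqref{S-powers}, noting $\parentnt{\alpha}{w}=v$ for $w\in\childnt{\alpha}{v}$ and that the coefficient picked up is the squared modulus.

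Finally, (vii) and (viii) come from the formulas $Q_{S_j}(I)e_v=\big(\sum_{w\in\childi{j}{v}}|\lambda^{(j)}_w|^2\big)e_v$ and $Q_{S_{\lambdab}}(I)e_v=\big(\sum_{j=1}^d\sum_{w\in\childi{j}{v}}|\lambda^{(j)}_w|^2\big)e_v$, both obtained from \eqref{Si}--\eqref{Si*} and the orthogonality in Lemma \ref{disjoint}(ii); a diagonal positive operator on $l^2(V)$ is invertible precisely when its eigenvalues are bounded below by a positive constant, which gives the stated infima. Items (ix) and (x) are read off from (iv): two vectors $S^{\alpha}_{\lambdab}e_v$ and $S^{\beta}_{\lambdab}e_v$ are supported on $\childnt{\alpha}{v}$ and $\childnt{\beta}{v}$ respectively, which are disjoint when $\alpha\neq\beta$ by Lemma \ref{disjoint}(iii); similarly $\childnt{\alpha}{v}\cap\childnt{\alpha}{w}=\emptyset$ when $v\neq w$ by Lemma \ref{disjoint}(ii), giving orthogonality in both cases. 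Throughout, the only genuinely delicate point is the index-chasing in (iii)–(v); everything else is a matter of assembling \eqref{Si}, \eqref{Si*} and the disjointness lemmas.
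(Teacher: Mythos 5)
Your proposal is correct and follows essentially the same route as the paper: parts (i)--(ii) by comparing coefficients on basis vectors via Lemma \ref{disjoint}(i) and Remark \ref{child-par}, part (iii) by iterating \eqref{commuting}, parts (iv)--(vi) by tracking the action coordinate-wise, and (vii)--(x) from the diagonal form of $Q_{S_j}(I)$ and the disjointness statements of Lemma \ref{disjoint}. The only cosmetic difference is in (iv), where you read the coefficient off directly by applying $S_d^{\alpha_d},\dots,S_1^{\alpha_1}$ in the fixed order of the definition, whereas the paper inducts on $|\alpha|$, peels off an arbitrary $S_i$, and reconciles the result using \eqref{beta-property}; both are sound.
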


\begin{proof}
Let $i,j =1, \cdots, d$ and $v \in V$. Then
\beq\label{SjSi}
S_j S_i e_v = S_j \sum_{w \in \childi{i}{v}}\lambda_w^{(i)} e_w &=& \sum_{w \in \childi{i}{v}} \sum_{u \in \childi{j}{w}}\lambda_w^{(i)}\lambda_u^{(j)}e_u
\nonumber\\
&=& \sum_{w \in \childi{i}{v}} \sum_{u \in \childi{j}{w}}\lambda_{\parenti{j}{u}}^{(i)}\lambda_u^{(j)}e_u.
\eeq
By symmetry,
\beq\label{SiSj}
S_i S_j e_v 
= \sum_{w \in \childi{j}{v}} \sum_{u \in \childi{i}{w}}\lambda_u^{(i)}\lambda_{\parenti{i}{u}}^{(j)}e_u.
\eeq 
%\beq\label{SiSj}
%S_i S_j e_v = \sum_{w \in \childi{j}{v}}\lambda_w^{(j)} S_i e_w &=& \sum_{w \in \childi{j}{v}}\lambda_w^{(j)} \sum_{u \in \childi{i}{w}}\lambda_u^{(i)}e_u \nonumber\\
%&=& \sum_{u \in \mathsf{Chi}_j\childi{i}{v}}\lambda_u^{(i)}\lambda_{\parenti{i}{u}}^{(j)}e_u.
%\eeq 
By Lemma \ref{disjoint}(i), $\mathsf{Chi}_j\childi{i}{v}=\mathsf{Chi}_i\childi{j}{v}$. Hence,
by evaluating at $u \in \mathsf{Chi}_j\childi{i}{v}$, we may infer from equations \eqref{SjSi} and \eqref{SiSj} that $S_{\lambdab}$ is commuting if and only if \eqref{commuting} holds.
To see (ii), note that
$$S_iS_j^* e_v = \sum_{w \in \childi{i}{\parenti{j}{v}}} \overline{\lambda}_v^{(j)} \lambda_w^{(i)} e_w\ \text{and}\ S_j^*S_i e_v = \sum_{w \in \childi{i}{v}} \lambda_w^{(i)} \overline{\lambda}_w^{(j)} e_{\parenti{j}{w}}.$$
By Remark \ref{child-par}, $\childi{i}{\parenti{j}{v}} = \parenti{j}{\childi{i}{v}}$. Therefore, by arguing as above,  $S_iS_j^* e_v = S_j^*S_i e_v$ for all $v \in V$ if and only if \eqref{d-commuting} holds. This proves (i) and (ii). Also, (iii) may be deduced by repeated applications of \eqref{commuting}.
%To see (iii), let $n \geqslant  1$. Then
%\beqn
%\beta(j, \parenti{i}{v}, n) \lambda^{(i)}_v &=& \lambda^{(i)}_v \lambda_{\parenti{i}{v}}^{(j)} \lambda_{\parenti{j}{{\parenti{i}{v}}}}^{(j)} \cdots \lambda_{\parentki{j}{n-1}{{\parenti{i}{v}}}}^{(j)}\\
%&\overset{\eqref{commuting}}=& \lambda^{(j)}_v \lambda_{\parenti{j}{v}}^{(i)} \lambda_{\parenti{j}{{\parenti{i}{v}}}}^{(j)} \cdots \lambda_{\parentki{j}{n-1}{{\parenti{i}{v}}}}^{(j)}\\
%&\overset{\eqref{commuting}}=& \lambda^{(j)}_v \lambda_{\parenti{j}{v}}^{(j)} \lambda^{(i)}_{\parentki{j}{2}{v}}\lambda_{\parentki{j}{2}{{\parenti{i}{v}}}}^{(j)} \cdots \lambda_{\parentki{j}{n-1}{{\parenti{i}{v}}}}^{(j)}\\
%&\cdots&\\
%&=&\lambda_v^{(j)} \lambda_{\parenti{j}{v}}^{(j)} \cdots \lambda_{\parentki{j}{n-1}{v}}^{(j)} \lambda^{(i)}_{\parentki{j}{n}{v}}=\beta(j, v, n) \lambda^{(i)}_{\parentki{j}{n}{v}}.
%\eeqn

We prove (iv) by induction on $|\alpha|$ for $\alpha \in \mathbb{N}^d$. In case $|\alpha| = 0$, it is easily verified that \eqref{S-powers} holds. Let $n \in \mathbb N$ and suppose that \eqref{S-powers} holds for all $\alpha \in \mathbb N^d$ with $|\alpha| = n$. Let $\alpha \in \mathbb N^d$ and $|\alpha| = n+1$. Then $\alpha =\gamma + \epsilon_i$ for some $1 \leqslant i \leqslant d$ and some $\gamma \in \mathbb N^d$ with $|\gamma| = n$. Therefore, for $v \in V$,
\beqn
S^{\alpha}_{\lambdab} e_v &=& S^{\epsilon_i}_{\lambdab} S^{\gamma}_{\lambdab} e_v = S_i \sum_{w \in \childnt{\gamma}{v}} \prod_{j=1}^d \beta\big(j, \parentnt{\gamma^{(j-1)}}{w}, \gamma_j\big)e_w\\
&=& \sum_{w \in \childnt{\gamma}{v}} \prod_{j=1}^d \beta\big(j, \parentnt{\gamma^{(j-1)}}{w}, \gamma_j\big) \sum_{u \in \childi{i}{w}} \lambda^{(i)}_u e_u\\
&=& \sum_{u \in \childnt{\alpha}{v}} \prod_{j=1}^d \beta\big(j, \parentnt{\gamma^{(j-1)}}{\parenti{i}{u}}, \gamma_j\big) \lambda^{(i)}_u e_u.
\eeqn
In view of $\childi{i}{\childnt{\gamma}{v}}=\childnt{\alpha}{v}$, the last equality may be justified by pointwise evaluation.
It now suffices to check that 
\beq \label{id}
\prod_{j=1}^d \beta\big(j, \parentnt{\gamma^{(j-1)}}{\parenti{i}{u}}, \gamma_j\big) \lambda^{(i)}_u  = \prod_{j=1}^d \beta\big(j, \parentnt{\alpha^{(j-1)}}{{u}}, \alpha_j\big).
\eeq
This follows from repeated applications of \eqref{beta-property}. Indeed,
\beqn
\beta(1, \parenti{i}{u}, \gamma_1)\lambda^{(i)}_u &\overset{\eqref{beta-property}}= & \beta(1, u, \gamma_1)\lambda^{(i)}_{\parentki{1}{\gamma_1}{u}}, \\ 
\beta(2, \parentki{1}{\gamma_1}{\parenti{i}{u}}, \gamma_2)\lambda^{(i)}_{\parentki{1}{\gamma_1}{u}} &\overset{\eqref{beta-property}}= &
\beta(2, \parentki{1}{\gamma_1}{u}, \gamma_2)
\lambda^{(i)}_{\mathsf{par}^{\langle \gamma_2 \rangle}_2 \parentki{1}{\gamma_1}{u}}.
\eeqn
Continuing in this way and using the facts that $\gamma+ \epsilon_i = \alpha$ and $\parenti{i}{\parenti{j}{u}} = \parenti{j}{\parenti{i}{u}}$, we obtain \eqref{id}. This proves (iv).
The proof of (v) is along the lines of (iv) and hence we skip the details. 
Note that (vi) is a consequence of (iv) and (v).
We leave the routine verifications of (vii) and (viii) to the reader. Finally, (ix) may be concluded from \eqref{S-powers} and Lemma \ref{disjoint}(iii) while (x) is an immediate consequence of  \eqref{S-powers} and Lemma \ref{disjoint}(ii). 
\end{proof}

\begin{corollary}
Let $\mathscr T = (V,\mathcal E)$ be the directed Cartesian product of rooted directed trees $\mathscr T_1, \cdots, \mathscr T_d$ and
let $S_{\lambdab}$ be a commuting multishift on $\mathscr T$. Then
$S_{\lambdab}$ is unitarily equivalent to $S_{|\lambdab|}$, where $$|\lambdab|=\{|\lambda^{(j)}_v| : v \in V^{\circ}, ~ j=1, \cdots, d\}.$$ 
\end{corollary}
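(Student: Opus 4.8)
The plan is to exhibit an explicit diagonal unitary on $l^2(V)$ that intertwines $S_{\lambdab}$ with $S_{|\lambdab|}$. Look for unimodular scalars $\{c_v\}_{v\in V}$ and put $U e_v := c_v e_v$; then $U$ extends to a unitary operator on $l^2(V)$ with $U^* e_v = \overline{c_v}\,e_v$, and from \eqref{Si} one gets, for each $j=1,\dots,d$ and $v\in V$,
\[
U S_j U^* e_v \;=\; \overline{c_v}\sum_{w \in \childi{j}{v}} \lambda^{(j)}_w\, c_w\, e_w .
\]
Since $S_{\lambdab}$, $S_{|\lambdab|}$ and $U$ are all bounded, it suffices to make the right-hand side equal $\sum_{w\in\childi{j}{v}}|\lambda^{(j)}_w|\,e_w$ for every $j$ and $v$, i.e. to arrange
\[
\overline{c_v}\,\lambda^{(j)}_w\,c_w \;=\; |\lambda^{(j)}_w| \qquad\text{whenever } w\in\childi{j}{v}.
\]
Writing $u^{(j)}_w := |\lambda^{(j)}_w|/\lambda^{(j)}_w$ (a unimodular number, as each $\lambda^{(j)}_w\neq 0$) and recalling that $w\in\childi{j}{v}$ means $v=\parenti{j}{w}$ with $w_j\in V_j^\circ$, the requirement becomes the cocycle-type condition
\[
c_{\rootb}=1,\qquad c_w = c_{\parenti{j}{w}}\,u^{(j)}_w \quad\bigl(w\in V,\ j \text{ with } w_j\neq\mathsf{root}_j\bigr).
\]

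Next I would construct such a family $\{c_v\}$ by induction on the generation $|\alpha_v|$, using Lemma \ref{disjoint}(vi) (which guarantees every vertex is reached from $\rootb$). Set $c_{\rootb}=1$. Assume $c_w$ has been defined consistently for all $w$ with $|\alpha_w|<n$, meaning $c_w = c_{\parenti{j}{w}} u^{(j)}_w$ for every admissible $j$. For $v$ with $|\alpha_v|=n$, pick any $j$ with $v_j\neq\mathsf{root}_j$ and set $c_v := c_{\parenti{j}{v}}\,u^{(j)}_v$; the only point is independence of the choice of $j$. If $i\neq j$ with $v_i\neq\mathsf{root}_i$ and $v_j\neq\mathsf{root}_j$, put $x := \parenti{i}{\parenti{j}{v}} = \parenti{j}{\parenti{i}{v}}$, the two expressions being equal by Remark \ref{child-par}; as $|\alpha_x|=n-2$, the inductive hypothesis gives $c_{\parenti{j}{v}} = c_x\,u^{(i)}_{\parenti{j}{v}}$ and $c_{\parenti{i}{v}} = c_x\,u^{(j)}_{\parenti{i}{v}}$, so the two candidate values of $c_v$ coincide if and only if
\[
u^{(i)}_{\parenti{j}{v}}\,u^{(j)}_v \;=\; u^{(j)}_{\parenti{i}{v}}\,u^{(i)}_v .
\]

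This identity is the crux of the argument and the place where commutativity is used. Since $\parenti{j}{v}\in\childi{i}{x}$ we have $v\in\childi{j}{\parenti{j}{v}}\subseteq\mathsf{Chi}_j\childi{i}{x}$, so Proposition \ref{shift-prop}(i), i.e. \eqref{commuting} applied with $x$ in place of $v$ and $v$ in place of $u$, yields $\lambda^{(j)}_v\,\lambda^{(i)}_{\parenti{j}{v}} = \lambda^{(i)}_v\,\lambda^{(j)}_{\parenti{i}{v}}$; taking moduli gives $|\lambda^{(j)}_v|\,|\lambda^{(i)}_{\parenti{j}{v}}| = |\lambda^{(i)}_v|\,|\lambda^{(j)}_{\parenti{i}{v}}|$, and dividing the modulus relation by the original one gives exactly the displayed identity for the $u$'s. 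This closes the induction, producing $\{c_v\}$ with $|c_v|=1$ (each is a product of unimodular numbers), hence a unitary $U$ with $U S_j U^* = (S_{|\lambdab|})_j$ for all $j$. Finally one notes $S_{|\lambdab|}$ is itself a commuting multishift on $\mathscr T$: its weights $|\lambda^{(j)}_v|$ are nonzero and satisfy the same bound as in Lemma \ref{bddness}(i), and \eqref{commuting} is stable under taking absolute values. The only genuine obstacle is the well-definedness of $\{c_v\}$ just handled; the rest is routine bookkeeping with \eqref{Si}.
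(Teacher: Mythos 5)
Your proof is correct and follows essentially the same route as the paper: a diagonal unitary $Ue_v=c_ve_v$ built recursively from the root, with well-definedness of $c_v$ reduced to the identity obtained by dividing the commutation relation \eqref{commuting} by its modulus. The paper phrases this additively in terms of the arguments $\vartheta_v$ of the $c_v$ (and treats only $d=2$ for simplicity), but the argument is the same.
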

\begin{proof}
The idea of the proof is a combination of ideas from \cite[Corollary 2]{JL} and \cite[Theorem 3.2.1]{JJS}.
For the sake of simplicity, we treat the case $d=2.$
By Proposition \ref{shift-prop}(i),
for all $i,j =1, 2$ and for all $v \in V$,
\beq \label{arg-c} \arg^{(j)}_u + \arg^{(i)}_{\parenti{j}{u}}=\arg^{(i)}_u + \arg^{(j)}_{\parenti{i}{u}}\ \text{for all}\ u \in \mathsf{Chi}_j\childi{i}{v},\eeq
where $\arg^{(j)}_v$ denotes the principal argument of ${\lambda^{(j)}_v}$.
For a subset $\{\vartheta_v\}_{v \in V}$ of the real line $\mathbb R$, define the unitary operator $U_{\vartheta} : l^2(V) \rar l^2(V)$ 
by
\beqn
U_{\vartheta}e_v = \exp(i\vartheta_v)e_v,~v \in V.
\eeqn
Let $(T_1, T_2)$ denote the commuting $2$-tuple $S_{|\lambdab|}.$ With these notations, the system $S_jU_{\vartheta}=U_{\vartheta}T_j~(j=1, 2)$ is equivalent to the system
\beq \label{mod-system}
\vartheta_w - \vartheta_{\parenti{j}{w}} = \arg^{(j)}_w, ~w \in \childi{j}{V}~\mbox{and} ~j=1, 2.
\eeq
We will show that the above system has a solution. To see that, let $\vartheta_{\rootb}=0.$ 
Its clear that $\vartheta_w$ can be defined recursively using \eqref{mod-system}.
To see that $\vartheta_w$ is well-defined,  
it suffices to check that
\beqn
\arg^{(1)}_w + \vartheta_{\parenti{1}{w}} = \arg^{(2)}_w + \vartheta_{\parenti{2}{w}},~w \in \childi{1}{V} \cap \childi{2}{V}. 
\eeqn
whenever \eqref{mod-system} holds for $\parenti{j}{w}~(j=1, 2).$ Note that
\beqn
\arg^{(2)}_w + \vartheta_{\parenti{2}{w}} &\overset{\eqref{mod-system}}=& 
\arg^{(2)}_w + (\vartheta_{\parenti{1}{\parenti{2}{w}}} + \arg^{(1)}_{\parenti{2}{w}}) \\ &\overset{\eqref{arg-c}}=& \arg^{(1)}_w + (\vartheta_{\parenti{2}{\parenti{1}{w}}} + \arg^{(2)}_{\parenti{1}{w}}) \\ &\overset{\eqref{mod-system}}=& \arg^{(1)}_w + \vartheta_{\parenti{1}{w}}
\eeqn 
This completes the proof.
\end{proof}

{\it From here onwards, we assume that the weights from
$\lambdab$ appearing in the definition of $S_{\lambdab}$ are always positive.}

Given a positive integer $d,$ \index{$\mathcal H^{\oplus{d}}$}
we set $$\mathcal H^{\oplus{d}} := \underbrace{\mathcal H \oplus \cdots \oplus \mathcal H}_{d~\mbox{\tiny times}}.$$
For a commuting $d$-tuple $T=(T_1, \cdots, T_d)$ on $\mathcal H,$ consider the linear transformation 
\index{$D_T$}
$D_T : \mathcal H \rar \mathcal H^{\oplus{d}}$ given by $$D_Th:=(T_1h, \cdots, T_dh)~\mbox{for}~ h \in \mathcal H.$$
Note that the kernel of $D_T$ is precisely the joint kernel 
\index{$\ker T=\ker D_T$}
$\ker T:=\bigcap_{j=1}^d \ker T_j$ of $T.$ As per requirement, we use notations $\ker D_T$ and $\ker T$ interchangeably.

\begin{corollary} \label{dense} 
Let $\mathscr T = (V,\mathcal E)$ be the directed Cartesian product of rooted directed trees $\mathscr T_1, \cdots, \mathscr T_d$ and
let $S_{\lambdab}=(S_1, \cdots, S_d)$ be a commuting multishift on $\mathscr T$. For $i \in \mathbb N$, let $T^{(i)}$ denote the commuting $d$-tuple $(S^{*i}_1, \cdots, S^{*i}_d)$. Then $\bigcup_{i \in \mathbb N} \ker D_{T^{(i)}}$ is dense in $l^2(V).$
\end{corollary}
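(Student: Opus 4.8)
The plan is to show that each basis vector $e_v$ lies in $\bigcup_{i \in \mathbb N} \ker D_{T^{(i)}}$, which clearly suffices since $\{e_v : v \in V\}$ is an orthonormal basis of $l^2(V)$. Fix $v \in V$ and let $\alpha_v = (\alpha_{v_1}, \cdots, \alpha_{v_d}) \in \mathbb N^d$ be the depth of $v$ in $\mathscr T$ (Definition \ref{depth-gen}); set $N := |\alpha_v| + 1$, or more crudely $N := \max_{1 \le j \le d}\alpha_{v_j} + 1$. The claim is that $e_v \in \ker D_{T^{(N)}}$, i.e. $S_j^{*N} e_v = 0$ for every $j = 1, \cdots, d$.

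First I would invoke the adjoint formula from Proposition \ref{shift-prop}(v) (or simply iterate $S_j^* e_w = \overline{\lambda}^{(j)}_w e_{\parenti{j}{w}}$ from the displayed formula for $S_j^*$): applying $S_j^*$ repeatedly to $e_v$ either produces a scalar multiple of $e_{\parentki{j}{k}{v}}$ as long as $\parentki{j}{k}{v}$ is nonempty, or yields $0$ once we try to take the co-ordinate parent past the root $\mathsf{root}_j$. Since $v_j \in \childn{\alpha_{v_j}}{\mathsf{root}_j}$ in the tree $\mathscr T_j$, the $j$-th co-ordinate parent operation $\parenti{j}{\cdot}$ can be applied at most $\alpha_{v_j}$ times before reaching a vertex whose $j$-th component is $\mathsf{root}_j$; one further application sends $e_v$ to $0$ because of the convention $\parentki{j}{m}{\cdot} = \emptyset$ once the $j$-th component is a root (equivalently the second branch in the definition of $\varLambda^{(j)}_{\mathscr T}$, so that $S_j^* e_w = 0$ when $w_j = \mathsf{root}_j$). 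Hence $S_j^{*N} e_v = 0$ for every $j$, so $e_v \in \ker D_{T^{(N)}} \subseteq \bigcup_{i} \ker D_{T^{(i)}}$.

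Finally, the density assertion follows: the union $\bigcup_{i \in \mathbb N} \ker D_{T^{(i)}}$ is a linear subspace (indeed it is increasing in $i$, since $S_j^{*i} e_v = 0$ forces $S_j^{*(i+1)}e_v = 0$) containing every $e_v$, hence it contains $\sspan\{e_v : v \in V\}$, which is dense in $l^2(V)$. I do not anticipate a genuine obstacle here; the only point requiring a little care is to phrase correctly that iterating $S_j^*$ along the $j$-th co-ordinate eventually hits the root of $\mathscr T_j$ and then vanishes — this is exactly the content of Proposition \ref{shift-prop}(v) combined with the disjoint-layer decomposition $V_j = \bigsqcup_n \childn{n}{\mathsf{root}_j}$ recalled in \eqref{disjoint-0}, so one may simply cite those.
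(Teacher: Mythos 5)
Your argument is correct and is essentially the paper's own proof: both reduce to showing each $e_v$ lies in some $\ker D_{T^{(i)}}$, invoke Proposition \ref{shift-prop}(v) (equivalently, iteration of $S_j^*e_w=\overline{\lambda}^{(j)}_we_{\parenti{j}{w}}$ until the $j$-th coordinate reaches $\mathsf{root}_j$) to conclude $S_j^{*i}e_v=0$ once $i$ exceeds the relevant depth, and use that the kernels increase with $i$ so the union is a dense linear manifold. Your bound $N=\max_j\alpha_{v_j}+1$ is in fact slightly sharper than the paper's $i>|\alpha_v|$, but the argument is the same.
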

\begin{proof} Since $\{\ker D_{T^{(i)}}\}_{i \in \mathbb N}$ is a monotonically increasing sequence of subspaces of $l^2(V),$ it suffices to show that $\mathcal M:=\bigcup_{i \in \mathbb N} \ker D_{T^{(i)}}$ contains $e_v$ for every $v \in V.$ For any $v \in V,$ 
by Proposition \ref{shift-prop}(v), $e_v \in \ker D_{T^{(i)}}$ for all $i >  |\alpha_v|,$ where $\alpha_v$ is the depth of $v$ in $\mathscr T.$
\end{proof}

%For any non-empty subset $W$ of $V$, we identify the Hilbert space $l^2(W)$ as a closed linear subspace of $l^2(V)$ by identifying each $f \in l^2(W)$ with the function $\tilde{f} \in l^2(V)$ which extends $f$ and vanishes on the set $V \setminus W$.

The following proposition is motivated by the description of kernel of the adjoint of weighted shift on directed tree as given in \cite{JJS}.
\begin{proposition} \label{kerSi*}
Let $\mathscr T = (V,\mathcal E)$ be the directed Cartesian product of rooted directed trees $\mathscr T_1, \cdots, \mathscr T_d$ and let $S_{\lambdab}=(S_1, \cdots, S_d)$ be a commuting multishift on $\mathscr T$. Then, for $j=1, \cdots, d,$ we have
\beq \label{kerSi*-eq} 
\ker S_j^* = \bigoplus_{\underset{v_j \in V_\prec^{(j)}}{v \in V}} \Big\{l^2(\childi{j}{v}) \ominus [\Gamma^{(j)}_v]  \Big\} \oplus \bigvee \big\{e_v : v \in V\ \text{and}\ v_j = \mathsf{root}_j \big\},\eeq
where 
\index{$\Gamma^{(j)}_v$}
$\Gamma^{(j)}_v : \childi{j}{v} \rar \mathbb{C}$ is given by $\Gamma^{(j)}_v (u) = \lambda^{(j)}_u (= (S_j e_v)(u))$.
\end{proposition}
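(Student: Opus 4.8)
The plan is to compute $\ker S_j^*$ directly by testing an arbitrary $f \in l^2(V)$ against the explicit action of $S_j$, and to exploit the orthogonal decomposition of $l^2(V)$ induced by the fibers $\childi{j}{v}$ together with the vertices whose $j$-th coordinate is a root of $\mathscr T_j$. First I would record the formula $S_j^* e_v = \lambda^{(j)}_v\, e_{\parenti{j}{v}}$ for $v$ with $v_j \in V_j^\circ$ and $S_j^* e_v = 0$ when $v_j = \mathsf{root}_j$ (the adjoint formula displayed just before Remark~\ref{0-eigen}, using that the weights are now positive). Equivalently, for $f \in l^2(V)$,
\beqn
(S_j^* f)(v) = \sum_{u \in \childi{j}{v}} \lambda^{(j)}_u\, f(u) = \inp{f|_{\childi{j}{v}}}{\,\Gamma^{(j)}_v}_{l^2(\childi{j}{v})} \quad (v \in V),
\eeqn
since $\Gamma^{(j)}_v = \sum_{u \in \childi{j}{v}} \lambda^{(j)}_u e_u = S_j e_v$. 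Thus $f \in \ker S_j^*$ if and only if $f|_{\childi{j}{v}} \perp \Gamma^{(j)}_v$ in $l^2(\childi{j}{v})$ for every $v \in V$.

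Next I would set up the partition of $V$ that makes the right-hand side of \eqref{kerSi*-eq} an orthogonal decomposition. By Lemma~\ref{disjoint}, for fixed $j$ the sets $\childi{j}{v}$, as $v$ ranges over the vertices with $v_j \in V_j^\circ \setminus \{$leaves$\}$ — which, since $\mathscr T_j$ is leafless, is all $v$ with $v_j$ arbitrary — together with the set $R_j := \{v \in V : v_j = \mathsf{root}_j\}$, cover $V$; moreover $\childi{j}{v} \cap \childi{j}{w} = \emptyset$ for $v \neq w$, and these fibers are disjoint from $R_j$. Concretely, every $w \in V$ with $w_j \neq \mathsf{root}_j$ lies in exactly one fiber $\childi{j}{\parenti{j}{w}}$, and every $w$ with $w_j = \mathsf{root}_j$ lies in $R_j$. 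Hence
\beqn
l^2(V) = \Big(\bigoplus_{v \in V} l^2(\childi{j}{v})\Big) \oplus l^2(R_j),
\eeqn
and the condition $f|_{\childi{j}{v}} \perp \Gamma^{(j)}_v$ for all $v$ together with no constraint on $f|_{R_j}$ translates exactly into $f \in \bigoplus_v \big(l^2(\childi{j}{v}) \ominus [\Gamma^{(j)}_v]\big) \oplus l^2(R_j)$. Finally I would note that when $v_j \notin V_\prec^{(j)}$ the fiber $\childi{j}{v}$ is a singleton $\{w\}$ (as $\mathscr T_j$ is leafless, $\child{v_j}$ has exactly one element), so $\Gamma^{(j)}_v$ spans all of $l^2(\childi{j}{v})$ and $l^2(\childi{j}{v}) \ominus [\Gamma^{(j)}_v] = \{0\}$; thus the direct sum over all $v$ collapses to the direct sum over those $v$ with $v_j \in V_\prec^{(j)}$, and $l^2(R_j) = \bigvee\{e_v : v_j = \mathsf{root}_j\}$, yielding precisely \eqref{kerSi*-eq}.

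There is no serious obstacle here; this is essentially a coordinate-wise transcription of the directed-tree computation \cite[Proposition 3.5.1]{JJS} via the fiber structure established in Lemma~\ref{disjoint}. The one point demanding a little care is the bookkeeping that the indexing family $\{\childi{j}{v} : v \in V\} \cup \{R_j\}$ really is a partition of $V$ — in particular that distinct $v$ give disjoint fibers (Lemma~\ref{disjoint}(ii) applied with $\alpha = \epsilon_j$, using $\childnt{\epsilon_j}{v} = \childi{j}{v}$ from Remark~\ref{chi-j}) and that the fibers exhaust $V \setminus R_j$ — after which the orthogonal decomposition and the kernel characterization follow immediately. I would also remark that commutativity of $S_{\lambdab}$ is not actually needed for \eqref{kerSi*-eq}; it is assumed only to keep the standing hypotheses uniform.
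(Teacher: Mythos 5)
Your proof is correct and is essentially the argument the paper has in mind: the paper disposes of this proposition with a one-line citation to the one-variable kernel description of Jab{\l}o\'nski--Jung--Stochel (Proposition 3.5.1(ii)) applied fiber-wise, and your decomposition $l^2(V) = \bigoplus_{v \in V} l^2(\childi{j}{v}) \oplus l^2(\{w : w_j = \mathsf{root}_j\})$ together with the orthogonality test $f|_{\childi{j}{v}} \perp \Gamma^{(j)}_v$ is exactly that argument written out in the product setting. Your side observations --- that singleton fibers (those with $v_j \notin V_\prec^{(j)}$, which exist since $\mathscr T_j$ is leafless) contribute nothing, and that commutativity of $S_{\lambdab}$ is not actually used --- are both accurate.
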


\begin{proof}
The result follows from \cite[Proposition 3.5.1(ii)]{JJS} and \cite[eq.(4)]{CT}.
\end{proof}
\begin{remark} \label{inf-dim}
Note that $\ker S^*_j$ is infinite dimensional whenever $d > 1$. 
\end{remark}

%From the view point of spectral theory, one would like 
It is desirable to have a description similar to \eqref{kerSi*-eq} for the joint kernel of $S^*_{\lambdab}.$ The following example shows that the situation is more intriguing than it seems.

\begin{example}
Let $\mathscr T=(V, \mathcal E)$ be the directed Cartesian product of two rooted directed trees $\mathscr T_1, \mathscr T_2$. Assume that there exists a vertex
$v= (v_1, v_2) \in V$ such that $v_1$ has two children, say $\dot{v_{1}}$ and $\ddot{v_1}$, and $v_2$ has only one child $\dot{v_2}$. 
Then 
\beqn
\child{v} = \{(\dot{v_{1}},v_2),(\ddot{v_1}, v_2), (v_1,\dot{v_2})\},
\eeqn
and hence $l^2(\child{v})$ is $3$-dimensional.
Let
$f \in l^{2}(\child{v}) \ominus [\Gamma^{(1)}_{v}, \Gamma^{(2)}_{v}]$, and write $$f = \alpha e_{(\dot{v_{1}},v_2)} + \beta e_{(\ddot{v_1}, v_2)} + \gamma e_{(v_1,\dot{v_2})}$$ for some scalars $\alpha, \beta, \gamma \in \mathbb C.$ 
We claim that $$l^{2}(\child{v})  \ominus [\Gamma^{(1)}_{v}, \Gamma^{(2)}_{v}] \nsubseteq E,$$ 
where $E$ denotes the joint kernel of $S^*_{\lambdab}.$ 
Assume to the contrary that $f \in E.$
Note that
$S_{2}^{*}f =0$ implies $$\alpha \lambda_{(\dot{v_{1}},v_2)}^{(2)} e_{(\dot{v_{1}},\mathsf{par} (v_2))}+ \beta \lambda^{(2)}_{({\ddot{v_{1}}},v_2)} e_{(\ddot{v_{1}},\mathsf{par} (v_2))} + \gamma \lambda_{ (v_1,\dot{v_2})}^{(2)} e_{(v_1,v_2)} =0,$$ which is true only if $\alpha=\beta=\gamma=0$, that is, $f=0.$ 
On the other hand, $[\Gamma^{(1)}_{v}, \Gamma^{(2)}_{v}]$ is at most two dimensional (as $\Gamma^{(1)}_{v}, \Gamma^{(2)}_{v}$ could be linearly dependent), and hence  
$$\dim(l^{2}(\child{v}) \ominus [\Gamma^{(1)}_{v}, \Gamma^{(2)}_{v}]) \geqslant  1.$$ Thus the claim stands verified.
\end{example} 

As evident from the preceding discussion, the exact description of the joint kernel of $S^*_{\lambdab}$ is not as simple as in the case $d=1$, and hence
we postpone it to Chapter 4. 
For the time being, let us see that the joint kernel of $S^*_{\lambdab}$ can be finite dimensional in many interesting situations (cf. Remark \ref{inf-dim}).
 
\begin{proposition} \label{joint-k}
Let $\mathscr T = (V,\mathcal E)$ be the directed Cartesian product of rooted directed trees $\mathscr T_1, \cdots, \mathscr T_d$. Let $S_{\lambdab}=(S_1, \cdots, S_d)$ be a commuting multishift on $\mathscr T$ and let $E$ denote the joint kernel of $S^*_{\lambdab}$. Then
\beq
\label{joint-k-eq}
\bigvee_{j=1}^d \bigoplus_{{v \in D_j}} \Big\{l^2(\childi{j}{v}) \ominus [\Gamma^{(j)}_v] \Big\} \oplus [e_{\rootb}] \notag \\  \subseteq E \subseteq \bigvee \big\{e_v : v \in F_1 \times \cdots \times F_d \big\},
\eeq
where $D_j:=\{v \in V : v_j \in V^{(j)}_{\prec}~\mbox{and~}v_i=\mathsf{root}_i~\mbox{for~}i \neq j\}$, 
$\Gamma^{(j)}_v : \childi{j}{v} \rar \mathbb{C}$ is given by $\Gamma^{(j)}_v (u) = \lambda^{(j)}_u$,
and   
$F_j:=\child{V^{(j)}_{\prec}} \cup \{\mathsf{root}_j\}~(j=1, \cdots, d).$
\end{proposition}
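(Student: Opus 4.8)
The plan is to establish the two inclusions in \eqref{joint-k-eq} separately, both times tracking carefully how $S_j^*$ acts on basis vectors via the formula $S_j^* e_v = \overline{\lambda}_v^{(j)} e_{\parenti{j}{v}}$ (with the convention $S_j^* e_v = 0$ when $v_j = \mathsf{root}_j$). For the right-hand inclusion $E \subseteq \bigvee\{e_v : v \in F_1 \times \cdots \times F_d\}$, I would first observe that $l^2(V) = \bigoplus_{v \in V} [e_v]$ and that for $f = \sum_{v} f(v) e_v \in E$ one has $S_j^* f = 0$ for every $j$. Fix $j$ and a vertex $u \in V$ with $u_j \notin \mathsf{Root}(\mathscr T_j)$; then the coefficient of $e_u$ in $S_j^* f$ is $\sum_{w \in \childi{j}{u}} \overline{\lambda}_w^{(j)} f(w)$, and this must vanish. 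I would argue that if $v \notin F_1 \times \cdots \times F_d$, then for some coordinate $j$ the vertex $v_j$ is neither a root nor a child of a branching vertex of $\mathscr T_j$; hence $\parenti{j}{v}$ is defined and $v_j$ is the unique child of $\mathsf{par}(v_j)$ in $\mathscr T_j$ (i.e.\ $\childi{j}{\parenti{j}{v}} = \{v\}$). The vanishing of the $e_{\parenti{j}{v}}$-coefficient of $S_j^* f$ then forces $\overline{\lambda}_v^{(j)} f(v) = 0$, so $f(v) = 0$ since the weights are nonzero. This gives the right inclusion.

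For the left-hand inclusion, I would simply check that each listed summand lies in $\ker S_i^*$ for every $i = 1, \dots, d$. That $[e_{\rootb}] \subseteq E$ is Remark \ref{0-eigen}. For a fixed $j$ and $v \in D_j$, take $f \in l^2(\childi{j}{v}) \ominus [\Gamma^{(j)}_v]$. Applying $S_j^*$: since $f$ is supported on $\childi{j}{v}$ and is orthogonal to $\Gamma^{(j)}_v$, we get $S_j^* f = \langle f, \Gamma^{(j)}_v\rangle\, e_v = 0$ exactly as in the one-variable computation underlying Proposition \ref{kerSi*}. For $i \neq j$, I need $S_i^* f = 0$ as well; here the key point is that $v \in D_j$ means $v_i = \mathsf{root}_i$ for all $i \neq j$, and every $w \in \childi{j}{v}$ also satisfies $w_i = v_i = \mathsf{root}_i$ (changing only the $j$-th coordinate), so $S_i^* e_w = 0$ for each such $w$, whence $S_i^* f = 0$. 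Thus $f \in E$. Since these subspaces are mutually orthogonal (they are supported on disjoint vertex sets, using Lemma \ref{disjoint} and the definition of $D_j$), their span, together with $[e_{\rootb}]$, is contained in $E$.

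The main obstacle I anticipate is the combinatorial bookkeeping in the right-hand inclusion: one must identify precisely which vertices $v$ are killed, and the cleanest route is the observation that $v \notin F_1 \times \cdots \times F_d$ iff there is a coordinate $j$ with $v_j \notin \child{V^{(j)}_\prec} \cup \{\mathsf{root}_j\}$, which in a directed tree is equivalent to saying $v_j$ has a parent whose only child is $v_j$ — this is what makes the single-term coefficient equation $\overline{\lambda}_v^{(j)} f(v) = 0$ available. I would make sure to justify this equivalence cleanly (a vertex of $\mathscr T_j$ that is not a root and not a child of a branching vertex has a parent with exactly one child, namely itself), perhaps isolating it as a small remark, and then the rest is routine. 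A secondary point worth stating explicitly is orthogonality of the summands on the left, which follows because $D_j$-indexed families live over pairwise disjoint sets $\childi{j}{v}$ and these are all disjoint from $\{\rootb\}$.
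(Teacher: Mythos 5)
Your proposal is correct and follows essentially the same route as the paper: the left inclusion is verified summand by summand exactly as in the text (using $v_i=\mathsf{root}_i$ for $i\neq j$ to kill $S_i^*$ and orthogonality to $\Gamma^{(j)}_v$ to kill $S_j^*$), and your right inclusion — extracting the single-term coefficient of $e_{\parenti{j}{v}}$ when $v_j$ is the unique child of its parent — is the same observation the paper phrases via $\mathrm{card}(\mathsf{sib}_j(v))=1$. No gaps.
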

\begin{proof}
To see the first inclusion, let $f \in l^2(\childi{j}{v}) \ominus [\Gamma^{(j)}_v]$ for some $v \in D_j$ and for a fixed $j=1, \cdots, d.$
Thus $f = \sum_{u \in \childi{j}{v}}f(u)e_u$ satisfies $\inp{f}{\Gamma^{(j)}_v}=0.$
Now, for any $i \neq j,$
\beqn
S^*_if = \sum_{u \in \childi{j}{v}}f(u)\lambda^{(i)}_u
e_{\parenti{i}{u}} = 0
\eeqn
since $v \in D_j.$ Further, 
\beqn
S^*_jf = \sum_{u \in \childi{j}{v}}f(u)\lambda^{(j)}_u
e_{\parenti{j}{u}} =\sum_{u \in \childi{j}{v}}f(u)\lambda^{(j)}_u
e_{v}=\inp{f}{\Gamma^{(j)}_v}e_v= 0,
\eeqn
where we used $\inp{f}{\Gamma^{(j)}_v}=\sum_{u \in \childi{j}{v}}f(u)\lambda^{(j)}_u.$

To see the second inclusion,
%for $v \in V,$ set $\mathsf{sib}_j(v):=\childi{j}{\parenti{j}{v}}$ for $j=1, \cdots, d.$
let $f \in E$ be such that $f = \sum_{v \in V}f(v)e_v.$ Then, for $j=1, \cdots, d$,
\beqn
S^*_jf &=& \sum_{v \in V}f(v) \lambda^{(j)}_v e_{\parenti{j}{v}} \\
&=& \sum_{\underset{\mbox{\tiny card}(\mathsf{sib}_j(v))=1}{v \in V}}f(v) \lambda^{(j)}_v e_{\parenti{j}{v}} + 
\sum_{\underset{\mbox{\tiny card}(\mathsf{sib}_j(v))\geqslant  2}{v \in V}}f(v) \lambda^{(j)}_v e_{\parenti{j}{v}}.
\eeqn
Note that $e_{\parenti{j}{v}} \perp e_{\parenti{j}{w}}$ if $v \neq w$ and $\mbox{card}(\mathsf{sib}_j(v))=1=\mbox{card}(\mathsf{sib}_j(w))$.  
Since $S^*_jf=0$, we obtain 
$f(v)=0$ for every $v \in V$ such that $\mbox{card}(\mathsf{sib}_j(v))=1.$
Thus $f(v) \neq 0$ implies that either $\mbox{card}(\mathsf{sib}_j(v))$ is $0$ or bigger than $1$ for all $j=1, \cdots, d.$
However, $\mbox{card}(\mathsf{sib}_j(v))\geqslant  2$ if and only if $v_j \in \child{V^{(j)}_{\prec}}$. Further, $\mbox{card}(\mathsf{sib}_j(v))= 0$ if and only if $v_j=\mathsf{root}_j$. 
This completes the proof. 
\end{proof}

\begin{corollary} \label{dimE-bound}
Let $\mathscr T, S_{\lambdab}$ and $E$ be as in the preceding proposition. If $\mathscr T$ is locally finite with finite set of branching vertices, then $E$ is finite dimensional. Moreover, 
\beq \label{dim-formula}
1 + \sum_{j=1}^d \sum_{v_j \in V^{(j)}_\prec}\big( \mbox{card} \big( \child{v_j}\big) - 1 \big) \, \leqslant \, \dim E \, \leqslant \, \prod_{j=1}^d (\mbox{card}\big(\child{V^{(j)}_\prec}\big) + 1). 
\eeq
%where $a_j :=\mbox{card}\big(\child{V^{(j)}_\prec}\big) + 1$ for $j = 1, \cdots, d$.
%$$a_j = \begin{cases} 
%\mbox{card}\big(\child{V^{(j)}_\prec}\big) & \mbox{~if~} \mathsf{root}_j \in V^{(j)}_\prec, \\ 
%\mbox{card}\big(\child{V^{(j)}_\prec}\big) + 1 & \mbox{~otherwise.} \end{cases}$$
\end{corollary}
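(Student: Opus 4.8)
The plan is to extract both estimates directly from the two-sided inclusion furnished by Proposition \ref{joint-k}. Writing
\[
\mathcal K := \Big(\bigvee_{j=1}^d \bigoplus_{v \in D_j} \big(l^2(\childi{j}{v}) \ominus [\Gamma^{(j)}_v]\big)\Big) \oplus [e_\rootb], \qquad \mathcal F := \bigvee\big\{e_v : v \in F_1 \times \cdots \times F_d\big\},
\]
that proposition gives $\mathcal K \subseteq E \subseteq \mathcal F$, hence $\dim \mathcal K \Le \dim E \Le \dim \mathcal F$, and it remains only to evaluate $\dim \mathcal F$ and $\dim \mathcal K$.

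For the upper bound, $\mathcal F$ is generated by the orthonormal family $\{e_v : v \in F_1 \times \cdots \times F_d\}$, so $\dim \mathcal F = \mbox{card}(F_1 \times \cdots \times F_d) = \prod_{j=1}^d \mbox{card}(F_j)$. Since $F_j = \child{V^{(j)}_\prec} \cup \{\mathsf{root}_j\}$ and $\mathsf{root}_j$ --- having empty set of parents --- belongs to no set of the form $\child{u}$, this union is disjoint and $\mbox{card}(F_j) = \mbox{card}(\child{V^{(j)}_\prec}) + 1$. When $\mathscr T$ is locally finite with finitely many branching vertices, $\child{V^{(j)}_\prec}$ is a finite union of finite sets, hence finite; thus $\dim \mathcal F < \infty$, which proves finite dimensionality of $E$ and the right-hand inequality in \eqref{dim-formula}.

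For the lower bound the decisive point is that $\mathcal K$ is genuinely an orthogonal direct sum over all pairs $(j, v)$ with $1 \Le j \Le d$ and $v \in D_j$, together with the line $[e_\rootb]$. First, for $v \in D_j$ we have $v = (\mathsf{root}_1, \dots, \mathsf{root}_{j-1}, v_j, \mathsf{root}_{j+1}, \dots, \mathsf{root}_d)$ with $v_j \in V^{(j)}_\prec$, so $v \mapsto v_j$ is a bijection $D_j \to V^{(j)}_\prec$, and $\childi{j}{v} = \{(\mathsf{root}_1, \dots, w_j, \dots, \mathsf{root}_d) : w_j \in \child{v_j}\}$ has $\mbox{card}(\child{v_j})$ elements. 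Next, the supports $\childi{j}{v}$ attached to distinct admissible pairs are pairwise disjoint: for a fixed $j$ this is Lemma \ref{disjoint}(ii) applied to $\childi{j}{v} = \childnt{\epsilon_j}{v}$; for $j \neq j'$, a common element $w$ of $\childi{j}{v}$ and $\childi{j'}{v'}$ would satisfy both $w_{j'} = \mathsf{root}_{j'}$ and $w_{j'} \in \child{v'_{j'}}$, which is impossible since a root is a child of nothing --- and the same observation shows $\rootb \notin \childi{j}{v}$. Consequently the spaces $l^2(\childi{j}{v})$ and $[e_\rootb]$ are mutually orthogonal, $\mathcal K$ is their orthogonal direct sum, and because each $\Gamma^{(j)}_v$ is a nonzero function (the weights being positive) we get $\dim\big(l^2(\childi{j}{v}) \ominus [\Gamma^{(j)}_v]\big) = \mbox{card}(\child{v_j}) - 1$. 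Summing over $(j,v)$ and invoking the bijection $D_j \cong V^{(j)}_\prec$ yields $\dim \mathcal K = 1 + \sum_{j=1}^d \sum_{v_j \in V^{(j)}_\prec}(\mbox{card}(\child{v_j}) - 1)$, i.e. the left-hand inequality.

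The argument is thus almost entirely a dimension count layered on Proposition \ref{joint-k}; the only step requiring genuine care is the cross-coordinate disjointness of the supports $\childi{j}{v}$, and even that reduces to the single structural fact that roots have no parents, so I anticipate no real obstacle.
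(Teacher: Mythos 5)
Your proof is correct and follows exactly the route the paper intends: the paper's own proof is the one-line remark that the bounds are ``obvious from \eqref{joint-k-eq}'', i.e.\ from the two-sided inclusion of Proposition \ref{joint-k}, and you have simply supplied the dimension count (including the one point genuinely needing care, the mutual orthogonality of the summands $l^2(\childi{j}{v})$ across different $j$ and $v$) that the paper leaves to the reader.
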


\begin{proof}
The proof is obvious from \eqref{joint-k-eq}.
\end{proof}
\begin{remark}
In Example \ref{classical}, the formula \eqref{dim-formula} holds with equalities at all places (with $\dim\, E =1$). 
%Similar conclusion holds for Example \ref{classical-mix} (with $\dim\, E =2$). 
On the other hand, in Example \ref{classical-mix}, equality holds only at left end of \eqref{dim-formula} (with $\dim\, E =2$). Further, in  Example \ref{T1-T1}, equality may or may not hold even at left end of \eqref{dim-formula} (with $\dim\, E =3$ or $4$). The last two assertions may be concluded from Examples \ref{j-k-mixed} and \ref{j-k-T1T1}.
\end{remark}

\begin{corollary} \label{dimE-finite} 
Let $\mathscr T = (V,\mathcal E)$ be the directed Cartesian product of rooted directed trees $\mathscr T_1, \cdots, \mathscr T_d$. Let $S_{\lambdab}$ be a commuting multishift on $\mathscr T$ and let $E$ denote the joint kernel of $S^*_{\lambdab}$. Then $E$ is finite dimensional if and only if $\mathscr T$ is locally finite with finite joint branching index.
\end{corollary}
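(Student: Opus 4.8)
The plan is to read off both implications from the two-sided estimate \eqref{joint-k-eq} of Proposition \ref{joint-k}, together with two elementary facts. First, $\mathscr T$ is locally finite if and only if each factor $\mathscr T_j$ is locally finite, since $\mbox{card}(\child v)=\sum_{j=1}^d\mbox{card}(\childi{j}{v})=\sum_{j=1}^d\mbox{card}(\child{v_j})$ by Lemma \ref{disjoint}(iv). Second, in a locally finite rooted directed tree every $\childn{n}{\mathsf{root}_j}$ is finite (induction on $n$), so the set $V_\prec^{(j)}$ of branching vertices of $\mathscr T_j$ is finite if and only if $k_{\mathscr T_j}$ is finite; indeed, $V_\prec^{(j)}\subseteq\bigcup_{n=0}^{k_{\mathscr T_j}-1}\childn{n}{\mathsf{root}_j}$ whenever $k_{\mathscr T_j}<\infty$.

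Assume first that $\mathscr T$ is locally finite with finite joint branching index. Then each $\mathscr T_j$ is locally finite and each $k_{\mathscr T_j}$ is finite, so each $V_\prec^{(j)}$ is finite, hence so is $F_j=\child{V_\prec^{(j)}}\cup\{\mathsf{root}_j\}$. The right-hand inclusion in \eqref{joint-k-eq} then yields $\dim E\le\mbox{card}(F_1\times\cdots\times F_d)=\prod_{j=1}^d\mbox{card}(F_j)<\infty$. (Equivalently, since the hypotheses force $\mathscr T$ to have only finitely many branching vertices, one may simply invoke Corollary \ref{dimE-bound}.)

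For the converse I argue the contrapositive using the left-hand inclusion in \eqref{joint-k-eq}. If $\mathscr T$ is not locally finite, choose $j$ and $u\in V_j$ with $\mbox{card}(\child u)=\infty$; then $u\in V_\prec^{(j)}$, the vertex $v$ with $v_j=u$ and $v_i=\mathsf{root}_i$ for $i\ne j$ lies in $D_j$, and $E\supseteq l^2(\childi{j}{v})\ominus[\Gamma^{(j)}_v]$, which is infinite dimensional because $\childi{j}{v}$ is infinite. If $\mathscr T$ is locally finite but $k_{\mathscr T_j}=\infty$ for some $j$, then $V_\prec^{(j)}$---and hence $D_j$, which is in bijection with $V_\prec^{(j)}$ via $v\mapsto v_j$---is infinite; the summands $l^2(\childi{j}{v})\ominus[\Gamma^{(j)}_v]$ ($v\in D_j$) in \eqref{joint-k-eq} are pairwise orthogonal, because distinct vertices of the directed tree $\mathscr T_j$ have disjoint sets of children and so the sets $\childi{j}{v}$ are pairwise disjoint, and each summand is nonzero since $\childi{j}{v}$ has at least two elements while $\Gamma^{(j)}_v$ spans only a line; their orthogonal direct sum therefore embeds in $E$ and is infinite dimensional. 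In either case $\dim E=\infty$, which is the contrapositive of the assertion. No serious difficulty arises; the only point needing a little care is the equivalence ``$\mathscr T$ locally finite with finite joint branching index'' $\iff$ ``$\mathscr T$ locally finite with each $V_\prec^{(j)}$ finite'', which is precisely what makes both outer bounds of \eqref{joint-k-eq} finite in the first case and what exhibits an infinite-dimensional subspace of $E$ in the second.
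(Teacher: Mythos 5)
Your proof is correct and follows essentially the same route as the paper: both directions are read off from the two-sided inclusion \eqref{joint-k-eq} of Proposition \ref{joint-k}, using the finiteness of the sets $F_j$ for the upper bound and the infinite orthogonal sum over $D_j$ (or an infinite-dimensional single summand $l^2(\childi{j}{v})\ominus[\Gamma^{(j)}_v]$) for the lower bound. Your explicit remark that, under local finiteness, finiteness of $V_\prec^{(j)}$ is equivalent to finiteness of $k_{\mathscr T_j}$ is a point the paper leaves implicit, but it is the same argument.
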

\begin{proof} 
If $E$ is infinite dimensional then by \eqref{joint-k-eq}, the cardinality of $F_j=\child{V^{(j)}_{\prec}} \cup \{\mathsf{root}_j\}$ must be infinite for some $j=1, \cdots, d.$ It follows that either $\mathscr T_j$ is not locally finite or $V^{(j)}_{\prec}$ is infinite. To see the converse, suppose that $\mathscr T_j$ is either not locally finite or of infinite branching index for some $j=1, \cdots, d$.
%Then the infinite dimensional subspace $\mathcal M$ of $l^2(V)$ given by
%\beqn
%\mathcal M=l^2\Big(\cup_{k=0}^{\infty}\childnt{k\epsilon_j}{\rootb}\Big)
%\eeqn
%is invariant for $S^*_{j}$. Also, since $S^*_i|_{\mathcal M}=0$ for $i \neq j$, it follows that $\ker {S^*_j}|_{\mathcal M}=\ker S^*_{\lambdab}|_\mathcal M \subseteq E.$ 
By Proposition \ref{joint-k}, $$\mathcal M:=\bigoplus_{{v \in D_j}} \Big\{l^2(\childi{j}{v}) \ominus [\Gamma^{(j)}_v] \Big\} \subseteq E,$$
where $D_j=\{v \in V : v_j \in V^{(j)}_{\prec}~\mbox{and~}v_i=\mathsf{root}_i~\mbox{for~}i \neq j\}$. 
Note that $l^2(\childi{j}{v}) \ominus [\Gamma^{(j)}_v]$ is nonzero for every $v \in D_j.$ 
If $\mathscr T_j$ is not locally finite, then $l^2(\childi{j}{v})$ is infinite dimensional for some $v \in D_j.$ If $\mathscr T_j$ is of infinite branching index, then $D_j$ is infinite. In any case, $\mathcal M$ and hence $E$ is infinite dimensional.
\end{proof}

%\section{A Single Variable Shift associated with $S_{\lambdab}$}
We have already seen in Theorem \ref{semi-tree} that the directed Cartesian product of directed trees admits a directed semi-tree structure. Since there is a notion of shifts $\mathscr S_{\deltab}$ on directed semi-trees \cite{MS}, it is thus natural to reveal the relation between $\mathscr S_{\deltab}$ on directed semi-tree $\mathscr T$ and the multishift $S_{\lambdab}$. To see this, let us recall the notion of shift on a directed semi-tree from \cite[Section 5]{MS}. 

Let $\mathscr T = (V,\mathcal E)$ be the directed Cartesian product of rooted directed trees $\mathscr T_1, \cdots, \mathscr T_d$. 
Given a system 
${\deltab} = \{\delta_{(u, v)}: (u, v) \in \mathcal E\}$ of positive numbers, 
define the {\em weighted shift operator} $\mathscr S_{\deltab}$ on ${\mathscr T}$ \index{$\mathscr S_{\deltab}$}
with weights $\deltab$ by
   \begin{align*}
   \begin{aligned}
{\mathscr D}(\mathscr S_{\deltab}) & := \{f \in l^2(V) \colon
\varDelta_{\mathscr T} f \in l^2(V)\},
   \\
\mathscr S_{\deltab} f & := \varDelta_{\mathscr T} f, \quad f \in {\mathscr
D}(\mathscr S_{\deltab}),
   \end{aligned}
   \end{align*}
where $\varDelta_{\mathscr T}$ is the mapping defined on
complex functions $f$ on $V$ by
   \begin{align*}
(\varDelta_{\mathscr T} f) (v) :=
   \begin{cases} \sum_{u \in \mathsf{Par}(v)} \delta_{(u, v)}f(u)
& \text{if } v \in
V \setminus \{\rootb\},
   \\
   0 & \text{otherwise}.
   \end{cases}
   \end{align*}
Let us see the precise relation between $S_{\lambdab}$ and $\mathscr S_{\deltab}$.

\begin{proposition} Let $\mathscr S_{\deltab}$ be the weighted shift operator on the directed Cartesian product of rooted directed trees $\mathscr T_1, \cdots, \mathscr T_d.$  
Set $\lambda^{(j)}_v := \delta_{(u, v)}$ for $v \in \childi{j}{u},$ and let $S_{\lambdab}=(S_1, \cdots, S_d)$ be the multishift on $\mathscr T$ (possibly unbounded). Then $\cap_{j=1}^d\mathscr D(S_{j})\subseteq \mathscr D(\mathscr S_{\deltab}).$ If, in addition, $S_{1}, \cdots, S_d$ are bounded linear operators on $l^2(V)$, then $\mathscr S_{\deltab}$ is bounded. In this case,
$\mathscr S_{\deltab}= \sum_{j=1}^dS_j.$
\end{proposition}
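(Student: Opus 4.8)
The plan is to reduce all three assertions to a single pointwise identity between the defining mappings: namely, that $\varDelta_{\mathscr T} f = \sum_{j=1}^d \varLambda^{(j)}_{\mathscr T} f$ as complex functions on $V$, for every complex function $f$ on $V$. Once this is established, the domain inclusion and the boundedness statement follow at once.

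First I would fix $f$ and a vertex $v \in V$ and compute $(\varDelta_{\mathscr T} f)(v)$. If $v = \rootb$, both sides vanish by definition (note $\rootb_j = \mathsf{root}_j$ for every $j$, so each $(\varLambda^{(j)}_{\mathscr T} f)(\rootb) = 0$). If $v \neq \rootb$, I would invoke the decomposition $\mathsf{Par}(v) = \bigsqcup_{j=1}^d \parenti{j}{v}$ from \eqref{semi-parent} to split the (finite) sum $\sum_{u \in \mathsf{Par}(v)} \delta_{(u,v)} f(u)$ into $d$ pieces. For each $j$, the set $\parenti{j}{v}$ is empty when $v_j = \mathsf{root}_j$ and is the singleton $\{\parenti{j}{v}\}$ otherwise; in the latter case $v \in \childi{j}{\parenti{j}{v}}$, so by the prescribed choice $\lambda^{(j)}_v = \delta_{(\parenti{j}{v}, v)}$ the $j$-th summand equals $\lambda^{(j)}_v f(\parenti{j}{v}) = (\varLambda^{(j)}_{\mathscr T} f)(v)$, which also matches the convention that $(\varLambda^{(j)}_{\mathscr T} f)(v) = 0$ when $v_j$ is a root of $\mathscr T_j$. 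Summing over $j$ yields $(\varDelta_{\mathscr T} f)(v) = \sum_{j=1}^d (\varLambda^{(j)}_{\mathscr T} f)(v)$, as desired.

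Next, the domain inclusion: if $f \in \bigcap_{j=1}^d \mathscr D(S_j)$, then $\varLambda^{(j)}_{\mathscr T} f \in l^2(V)$ for each $j$ by definition of $\mathscr D(S_j)$, hence the finite sum $\varDelta_{\mathscr T} f = \sum_{j=1}^d \varLambda^{(j)}_{\mathscr T} f$ also lies in $l^2(V)$, which is precisely the statement $f \in \mathscr D(\mathscr S_{\deltab})$. Finally, if $S_1, \dots, S_d \in B(l^2(V))$, then $\mathscr D(S_j) = l^2(V)$ for every $j$, so by the inclusion just proved $\mathscr D(\mathscr S_{\deltab}) = l^2(V)$; and for $f \in l^2(V)$ the pointwise identity gives $\mathscr S_{\deltab} f = \varDelta_{\mathscr T} f = \sum_{j=1}^d \varLambda^{(j)}_{\mathscr T} f = \sum_{j=1}^d S_j f$, so $\mathscr S_{\deltab} = \sum_{j=1}^d S_j$, which is bounded as a finite sum of bounded operators.

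There is no deep difficulty here; the only point requiring care is the bookkeeping of conventions — the value $0$ assigned to $\varLambda^{(j)}_{\mathscr T} f$ at vertices $v$ with $v_j = \mathsf{root}_j$, and the fact (noted after Definition \ref{multishift-dfn}) that the weights $\lambda^{(j)}_v$ with $v_j = \mathsf{root}_j$ never enter the definition of $S_{\lambdab}$ — so that the asserted pointwise identity genuinely holds at every vertex, including $\rootb$ and vertices with some coordinate equal to a root. The decomposition \eqref{semi-parent} of $\mathsf{Par}(v)$ is exactly what makes the single sum defining $\varDelta_{\mathscr T}$ split into the $d$ coordinate contributions.
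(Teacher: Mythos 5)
Your proof is correct, and the core of it --- the pointwise identity $(\varDelta_{\mathscr T} f)(v) = \sum_{j=1}^d (\varLambda^{(j)}_{\mathscr T} f)(v)$ obtained by splitting the sum over $\mathsf{Par}(v)$ via the disjoint decomposition \eqref{semi-parent} --- is exactly the computation the paper performs, so the domain inclusion is handled identically. The one place you diverge is the boundedness step: the paper concludes that $\mathscr S_{\deltab}$ is bounded by noting that it is everywhere defined and always closed (citing \cite[Proposition 5.1]{MS}), i.e.\ by an appeal to the closed graph theorem, whereas you observe directly that once $S_1,\cdots,S_d \in B(l^2(V))$ the pointwise identity forces $\mathscr S_{\deltab} f = \sum_{j=1}^d S_j f$ for \emph{every} $f \in l^2(V)$, so $\mathscr S_{\deltab}$ coincides with a finite sum of bounded operators and is therefore bounded with no external input. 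Your route is the more elementary and self-contained of the two; the paper's route has the mild advantage of not needing to re-examine the defining mappings at that stage, since closedness of $\mathscr S_{\deltab}$ is already available from the cited reference. Both are valid, and your careful bookkeeping at $\rootb$ and at vertices with some coordinate equal to $\mathsf{root}_j$ is exactly the point that needs checking.
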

\begin{proof}
Let $f \in \cap_{j=1}^d\mathscr D(S_{j}).$ 
%Set $\lambda^{(j)}_v := \delta_{(u, v)}$ if $u = \parenti{j}{v},$ that is, $v \in \childi{j}{u}.$ 
Then 
\beqn (\mathscr S_{\deltab}f)(v) &=& \sum_{u \in \mathsf{Par}(v)} \delta_{(u, v)}f(u)~\overset{\eqref{semi-parent}}  =~ \sum_{j=1}^d \sum_{u = \mathsf{par}_j(v)} \delta_{(u, v)}f(u) \\ &=& \sum_{j=1}^d \lambda^{(j)}_{v} f(\mathsf{par}_j(v))  ~=~ \sum_{j=1}^d(S_jf)(v). \eeqn
This shows that $f \in \mathscr D(\mathscr S_{\deltab}).$ If $S_{1}, \cdots, S_d$ are bounded on $l^2(V)$, then $\mathscr D(\mathscr S_{\deltab})=l^2(V)$. However, $\mathscr S_{\deltab}$ is always closed \cite[Proposition 5.1]{MS}, and hence $\mathscr S_{\deltab}$ is a bounded linear operator in this case.
This completes the proof.
\end{proof}

The above result shows that $\sum_{j=1}^dS_j$ can be realized as a shift  on $\mathscr T$ endowed with the directed semi-tree structure.

%\chapter{Spectral Properties of Multishifts}

In the remaining part of this chapter, we obtain some basic properties of multishifts $S_{\lambdab}$ on $\mathscr T.$ These include circularity and analyticity. We also obtain a matrix decomposition for $S_{\lambdab}$ in dimension $d=2$.
All these results are then used to examine various spectral parts of $S_{\lambdab}$ such as point spectrum, Taylor spectrum, and essential spectrum. The discussion to follow relies heavily on the multivariable spectral theory as expounded in \cite{Cu}. 

\section{Strong Circularity and Taylor Spectrum}

Let $T=(T_1, \cdots, T_d)$ be a commuting $d$-tuple on $\mathcal H$. We say that $T$ is {\it circular} if for every $\theta=(\theta_1, \cdots, \theta_d) \in \mathbb R^d$, there exists a unitary operator $\varGamma_{\theta}$ on $\mathcal H$ such that $$\varGamma^*_{\theta} T_j \varGamma_{\theta} = \text{exp}(i\theta_j) T_j\ \text{for all}\ j=1, \cdots, d.$$
We say that $T$ is {\it strongly circular} if in addition $\varGamma_{\theta}$ can be chosen to be a strongly continuous unitary representation of $\mathbb R^d$ in the following sense: For every $h \in \mathcal H$, the function
$\theta \longmapsto \varGamma_{\theta}h$ is continuous on $\mathbb R^d$.

The above notion in dimension $d=1$ has been introduced and studied in \cite{AHHK}. These operators have been studied considerably thereafter (refer to \cite{Ge}, \cite{Ml}, \cite{S}, \cite{BM-1}). The fact that any classical multishift is polycircular is first obtained in \cite[Corollary 3]{JL}. This may also be deduced from \cite[Lemma 2.14]{CY}. 

The following generalizes \cite[Corollary 3]{JL}. Unlike the method of proof of \cite[Theorem 3.3.1]{JJS}, where the unitary $\varGamma_{\theta}$ comes from solution of a system of equations, our proof exhibits a formula for $\varGamma_{\theta}$. 
\begin{proposition}\label{circular}
Let $\mathscr T = (V,\mathcal E)$ be the directed Cartesian product of rooted directed trees $\mathscr T_1, \cdots, \mathscr T_d$ and let $S_{\lambdab}$ be a commuting multishift on $\mathscr T$. Then $S_{\lambdab}$ is strongly circular.
\end{proposition}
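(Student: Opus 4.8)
**The plan is to construct $\varGamma_\theta$ explicitly as a diagonal unitary on the orthonormal basis $\{e_v : v \in V\}$, guided by the depth multi-index $\alpha_v \in \mathbb N^d$.**

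First I would set, for each $\theta = (\theta_1,\dots,\theta_d) \in \mathbb R^d$,
\[
\varGamma_\theta e_v := \exp\big(i\,\langle \theta, \alpha_v\rangle\big)\, e_v \quad (v \in V),
\]
where $\langle \theta, \alpha_v\rangle = \sum_{k=1}^d \theta_k (\alpha_v)_k$ and $\alpha_v$ is the depth of $v$ in $\mathscr T$ as in Definition~\ref{depth-gen} (recall $\alpha_v = (\alpha_{v_1},\dots,\alpha_{v_d})$). Each $\varGamma_\theta$ is clearly a well-defined unitary operator on $l^2(V)$ since it is diagonal with unimodular entries in the basis $\{e_v\}$, and $\varGamma_\theta^* e_v = \exp(-i\langle\theta,\alpha_v\rangle)e_v$.

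Next I would verify the intertwining relation $\varGamma_\theta^* S_j \varGamma_\theta = \exp(i\theta_j)\, S_j$ for each $j = 1,\dots,d$. It suffices to check this on basis vectors $e_v$. By \eqref{Si}, $S_j e_v = \sum_{w \in \childi{j}{v}} \lambda^{(j)}_w e_w$, and for every $w \in \childi{j}{v}$ one has $\alpha_w = \alpha_v + \epsilon_j$ (the depth in the $j$-th coordinate increases by one, all others unchanged — this follows from the definition of $\childi{j}{v}$ together with Lemma~\ref{disjoint}). Therefore
\[
\varGamma_\theta^* S_j \varGamma_\theta e_v = \exp(i\langle\theta,\alpha_v\rangle)\, \varGamma_\theta^* \sum_{w \in \childi{j}{v}} \lambda^{(j)}_w e_w = \exp(i\langle\theta,\alpha_v\rangle) \sum_{w \in \childi{j}{v}} \exp(-i\langle\theta,\alpha_v+\epsilon_j\rangle)\lambda^{(j)}_w e_w = \exp(-i\theta_j) S_j e_v,
\]
which gives $\varGamma_\theta^* S_j \varGamma_\theta = \exp(-i\theta_j) S_j$; replacing $\theta$ by $-\theta$ (or adjusting the sign convention in the definition of $\varGamma_\theta$) yields the circularity relation as stated. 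So $S_{\lambdab}$ is circular.

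Finally, for strong circularity I would check that $\theta \mapsto \varGamma_\theta$ is a strongly continuous representation of $\mathbb R^d$. The representation property $\varGamma_{\theta}\varGamma_{\theta'} = \varGamma_{\theta+\theta'}$ is immediate from the definition since $\exp(i\langle\theta,\alpha_v\rangle)\exp(i\langle\theta',\alpha_v\rangle) = \exp(i\langle\theta+\theta',\alpha_v\rangle)$. For strong continuity, fix $h = \sum_{v\in V} c_v e_v \in l^2(V)$; then
\[
\|\varGamma_\theta h - \varGamma_{\theta_0} h\|^2 = \sum_{v \in V} |c_v|^2\, \big|\exp(i\langle\theta,\alpha_v\rangle) - \exp(i\langle\theta_0,\alpha_v\rangle)\big|^2,
\]
and each summand tends to $0$ as $\theta \to \theta_0$ while being dominated by $4|c_v|^2$; by dominated convergence the sum tends to $0$. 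This establishes strong circularity. The only mild subtlety — and the one point worth stating carefully rather than the main obstacle — is the claim $\alpha_w = \alpha_v + \epsilon_j$ for $w \in \childi{j}{v}$, which should be recorded explicitly (it is essentially Remark after Definition~\ref{depth-gen} combined with Remark~\ref{chi-j}); everything else is routine.
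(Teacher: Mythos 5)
Your proof is correct and follows essentially the same route as the paper: the same diagonal unitary $\varGamma_\theta e_v = \exp(\pm i\,\alpha_v\cdot\theta)\,e_v$ built from the depth, the same intertwining computation via $\alpha_w=\alpha_v+\epsilon_j$ for $w\in\childi{j}{v}$, and the same verification of strong continuity (the paper splits off a finite set of vertices rather than invoking dominated convergence, but the argument is the same in substance, up to your sign convention).
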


\begin{proof}
Let $\theta=(\theta_1, \cdots, \theta_d) \in \mathbb R^d$. 
%By Lemma \ref{disjoint}(vi), for each $v \in V$, there is a unique $\alpha_v = (\alpha_1, \cdots, \alpha_d) \in \mathbb{N}^d$ such that $v \in \childnt{\alpha_v}{\rootb}$. 
For $f = \sum_{v \in V} f(v) e_v \in l^2(V)$, define $\varGamma_{\theta} : l^2(V) \rar l^2(V)$ by \beqn \label{U-theta} \varGamma_{\theta} f := \sum_{v \in V} \text{exp}(-i\alpha_v\cdot \theta) f(v) e_v,\eeqn where $\alpha_v$ is the depth of $v$ in $\mathscr T$ and $\alpha \cdot \theta := \sum_{j=1}^d \alpha_j \theta_j$ for $\alpha \in \mathbb N^d$. Clearly, $\varGamma_{\theta}$ is unitary with inverse $\varGamma_{-\theta}$. 
Note that $\alpha_w = \alpha_v + \epsilon_j$ if $w \in \childi{j}{v}$ for any $v \in V$ and $j=1, \cdots, d.$
It follows that
\beqn
(\varGamma^*_{\theta} S_j \varGamma_{\theta}) f &=& (\varGamma^*_{\theta} S_j) \sum_{v \in V} \text{exp}(-i\alpha_v\cdot \theta) f(v) e_v\\
&=& \varGamma^*_{\theta} \Big( \sum_{v \in V} \text{exp}(-i\alpha_v\cdot \theta) f(v) \sum_{w \in \childi{j}{v}} \lambda^{(j)}_w e_w\Big) \\
&=& \sum_{v \in V} \text{exp}(-i\alpha_v \cdot \theta) f(v) \sum_{w \in \childi{j}{v}} \lambda^{(j)}_w \text{exp}(i \alpha_w \cdot \theta) e_w \\
&=& \sum_{v \in V} \text{exp}(-i\alpha_v\cdot \theta) f(v) \sum_{w \in \childi{j}{v}} \lambda^{(j)}_w \text{exp}(i(\alpha_v+\epsilon_j) \cdot \theta) e_w\\
&=& \sum_{v \in V} \text{exp}(i \epsilon_j \cdot \theta)  f(v) \sum_{w \in \childi{j}{v}} \lambda^{(j)}_w e_w\\
%&=& \text{exp}(i\theta_j) \sum_{v \in V} f(v) S_j e_v\\
&=& \text{exp}(i\theta_j) S_j f.
\eeqn 
Now we show that for any $f \in l^2(V),$ $\theta \longmapsto \varGamma_\theta f$ is continuous. Let $\{\theta^{(n)}\}_{n=1}^\infty$ be a sequence in $\mathbb R^d$ which converges to $\theta$. Then for $f = \sum_{v \in V} f(v) e_v \in l^2(V)$, 
$$\|(\varGamma_{\theta^{(n)}} - \varGamma_\theta) f\|^2 = \sum_{v \in V} \big|\exp(-i\alpha_v \cdot \theta^{(n)}) - \exp(-i\alpha_v \cdot \theta)\big|^2 |f(v)|^2 .$$
Let $\epsilon > 0$. Since $f \in l^2(V)$, there is a finite subset $W$ of $V$ such that $\displaystyle\sum_{v \in V \setminus W} |f(v)|^2 < \epsilon$. 
Further, as $\theta \mapsto \exp(-i\alpha_u \cdot \theta)$ is continuous for each $u \in W$, there exists a positive integer $n(u)$ such that $\big|\exp(-i\alpha_u \cdot \theta^{(n)}) - \exp(-i \alpha_u \cdot \theta)\big|^2 < \epsilon$ for all $n \geqslant  n(u)$. Let $n(0) := \max\{n(u) : u \in W\} < \infty.$ Then, {for all} $n \geqslant  n(0)$,
\beqn
\|(\varGamma_{\theta^{(n)}} - \varGamma_\theta) f\|^2 
&=& \sum_{v \in W} \big|\exp(-i\alpha_v \cdot \theta^{(n)}) - \exp(-i\alpha_v \cdot \theta)\big|^2 |f(v)|^2\\
&+& \sum_{v \in V \setminus W} \big|\exp(-i\alpha_v \cdot \theta^{(n)}) - \exp(-i\alpha_v \cdot \theta)\big|^2 |f(v)|^2\\
&<& \epsilon^2 \Big(\sum_{v \in W} |f(v)|^2 \Big) + 4\epsilon. %\\ & \leqslant & \epsilon(\epsilon \|f\|^2 + 4).  
\eeqn
This completes the proof.
\end{proof}

The following is immediate from spectral mapping property of Taylor spectrum and the preceding proposition.
\begin{corollary} \label{c-symmetry}
The Taylor spectrum  of a commuting multishift $S_{\lambdab}$ has poly-circular symmetry, that is, $\zeta\cdot w\in
\sigma(S_{\lambdab})$ for any $w\in \sigma(S_{\lambdab})$ and any $\zeta\in \mathbb T^d$. In particular, the Taylor spectrum of $S_{\lambdab}$ coincides with that of $S^*_{\lambdab}$.
\end{corollary}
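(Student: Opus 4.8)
The plan is to feed Proposition \ref{circular} into the spectral mapping machinery recalled in Section \ref{Spec-th}. Fix $\zeta = (\zeta_1, \dots, \zeta_d) \in \mathbb T^d$ and write $\zeta_j = \exp(i\theta_j)$ for some $\theta = (\theta_1, \dots, \theta_d) \in \mathbb R^d$. By Proposition \ref{circular}, $S_{\lambdab}$ is strongly circular, so there is a unitary $\varGamma_\theta$ on $l^2(V)$ with $\varGamma_\theta^* S_j \varGamma_\theta = \exp(i\theta_j)\, S_j$ for every $j = 1, \dots, d$. Since unitarily equivalent commuting $d$-tuples have the same Taylor spectrum, this gives $\sigma(\zeta_1 S_1, \dots, \zeta_d S_d) = \sigma(S_{\lambdab})$.

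Next I would invoke the polynomial spectral mapping property of the Taylor spectrum for the linear self-map $p$ of $\mathbb C^d$ defined by $p(w) := (\zeta_1 w_1, \dots, \zeta_d w_d) = \zeta \cdot w$. This yields $\sigma(\zeta_1 S_1, \dots, \zeta_d S_d) = p(\sigma(S_{\lambdab})) = \{\zeta \cdot w : w \in \sigma(S_{\lambdab})\}$. Comparing with the identity of the previous paragraph gives $\{\zeta \cdot w : w \in \sigma(S_{\lambdab})\} = \sigma(S_{\lambdab})$ for every $\zeta \in \mathbb T^d$; in particular $\zeta \cdot w \in \sigma(S_{\lambdab})$ whenever $w \in \sigma(S_{\lambdab})$ and $\zeta \in \mathbb T^d$, which is the claimed poly-circular symmetry.

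For the last assertion I would use the standard duality $\sigma(S^*_{\lambdab}) = \{\overline{w} : w \in \sigma(S_{\lambdab})\}$ (coordinatewise conjugation), which holds because the Koszul complex of $S_{\lambdab} - w$ is exact if and only if its adjoint complex, namely the Koszul complex of $S^*_{\lambdab} - \overline{w}$ up to reindexing, is exact. It therefore suffices to show that $\sigma(S_{\lambdab})$ is invariant under $w \mapsto \overline{w}$. Given $w \in \sigma(S_{\lambdab})$, put $\zeta_j := \overline{w_j}/w_j$ if $w_j \neq 0$ and $\zeta_j := 1$ otherwise; then $\zeta \in \mathbb T^d$ and $\zeta \cdot w = \overline{w}$, so the poly-circular symmetry just established gives $\overline{w} \in \sigma(S_{\lambdab})$. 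Hence $\sigma(S^*_{\lambdab}) = \sigma(S_{\lambdab})$.

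I do not expect a genuine obstacle here: both the unitary invariance and the polynomial spectral mapping property of the Taylor spectrum are already recorded in Section \ref{Spec-th}, and Proposition \ref{circular} supplies the required unitary explicitly. The only point demanding a little care is to note that $p$ above is a bona fide polynomial map of $\mathbb C^d$ into itself, so that the version of the spectral mapping theorem quoted in the text applies verbatim.
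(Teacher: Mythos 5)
Your argument is correct and follows exactly the route the paper intends: the paper derives this corollary in one line from Proposition \ref{circular} together with the polynomial spectral mapping property of the Taylor spectrum, and your proposal simply fills in the details of that derivation (unitary invariance of $\sigma$, the linear map $p(w)=\zeta\cdot w$, and the standard identity $\sigma(S^*_{\lambdab})=\{\overline{w} : w\in\sigma(S_{\lambdab})\}$). No gaps.
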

\begin{remark}
Note that point spectrum, left spectrum and essential spectrum do also have polycircular symmetry.
\end{remark}

%\begin{proposition}
%Every multishift $S_{\lambdab}$ is unitarily equivalent to the multishift $S_{|\lambdab|}$, where $|\lambdab|:=\{|\lambda^{(j)}_v| : i=1, \cdots, d\}$.
%\end{proposition}
%\begin{proof}
%???
%\end{proof}

A special case of the following result, in which $\mathscr T$ is the directed Cartesian product of $\mathscr T_{1, 0}$ with itself, has been obtained in \cite{CY}.
\begin{proposition}\label{connected}
Let $\mathscr T = (V,\mathcal E)$ be the directed Cartesian product of rooted directed trees $\mathscr T_1, \cdots, \mathscr T_d$ and let $S_{\lambdab}$ be a commuting multishift on $\mathscr T$.  Then the Taylor spectrum of $S_{\lambdab}$ is connected.
\end{proposition}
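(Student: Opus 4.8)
The plan is to exploit the strong circularity of $S_{\lambdab}$ established in Proposition \ref{circular} together with a standard connectedness argument for Taylor spectra of circular tuples, combined with the fact that $0 \in \sigma(S_{\lambdab})$. First I would recall from Corollary \ref{c-symmetry} that $\sigma(S_{\lambdab})$ is invariant under the $\mathbb T^d$-action $w \mapsto \zeta \cdot w$ for $\zeta \in \mathbb T^d$. Next, I would observe that $0 \in \sigma(S_{\lambdab})$: indeed, by Remark \ref{0-eigen} we have $0 \in \sigma_p(S^*_{\lambdab})$, so $S_{\lambdab} - 0 = S_{\lambdab}$ is not Taylor-invertible (its Koszul complex fails exactness at the top degree, since $\mbox{ran}\,\partial_{S_{\lambdab},d-1}$ being all of $\Lambda^d(l^2(V))$ would force $\cap_j \ker S^*_j = \{0\}$). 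Hence the origin lies in $\sigma(S_{\lambdab})$.

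The key step is then to upgrade $\mathbb T^d$-symmetry to a full scaling argument. The plan is to show that $\sigma(S_{\lambdab})$ is invariant under multiplication by scalars $t \in [0,1]$, i.e. if $w \in \sigma(S_{\lambdab})$ then $tw \in \sigma(S_{\lambdab})$ for all $t \in [0,1]$. This is where I would use separate analyticity of $S_{\lambdab}$ (mentioned in the Overture as a property of multishifts, proved in Chapter 3): for a circular and suitably analytic tuple, the Taylor spectrum is a Reinhardt set that is moreover "complete" in the sense that it is starlike with respect to the origin along each coordinate ray, and in fact starlike with respect to $0$ overall. Concretely, one argues that $S_{\lambdab} - \lambda$ being Taylor-invertible for some $\lambda$ implies $S_{\lambdab} - t\lambda$ is Taylor-invertible for $|t| \le 1$; the cleanest route is via the resolvent / analytic functional calculus, using that the function $t \mapsto (S_{\lambdab} - t\lambda)$ stays in the Taylor-invertible locus on a connected set containing $t=0$ precisely when $0 \notin \sigma(S_{\lambdab})$ — but since $0 \in \sigma(S_{\lambdab})$, one instead runs the argument in the other direction: the complement of $\sigma(S_{\lambdab})$ is the unbounded component. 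More carefully, I would invoke that for circular tuples the resolvent set (complement of $\sigma$) is a Reinhardt domain, and a Reinhardt domain in $\mathbb C^d$ whose complement contains the origin and is $\mathbb T^d$-invariant and "logarithmically convex"–type starshaped has connected complement; equivalently $\sigma(S_{\lambdab})$ is a union of the origin's orbit closure and is itself connected because every point is joined to $0$ by the path $t \mapsto tw$, $t\in[0,1]$, all of which lies in $\sigma(S_{\lambdab})$.

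To make the scaling rigorous I would proceed as follows: fix $w \in \sigma(S_{\lambdab})$; by $\mathbb T^d$-symmetry the whole torus orbit $\{\zeta\cdot w : \zeta\in\mathbb T^d\}$ lies in $\sigma(S_{\lambdab})$; then for $t \in (0,1)$ one shows $tw \in \sigma(S_{\lambdab})$ by a deformation/holomorphy argument on the Koszul complex — the map $\lambda \mapsto \partial_{S_{\lambdab}-\lambda}$ is operator-valued holomorphic, the Taylor-invertible set is open, and if $tw$ were in the resolvent set while $w$ is not, one uses the structure of separately analytic shifts (each $S_j$ is "analytic" so that $\bigcap_n \mbox{ran}\, S_j^n = \{0\}$) to derive a contradiction with the spectral radius formula \eqref{sp-rad} and the inner radius formula \eqref{l-sp-rad}: these show $\sigma(S_{\lambdab})$ sits in a ball and, combined with circularity, that it is a "complete" circular set. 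Finally, connectedness follows: $\sigma(S_{\lambdab}) = \bigcup_{w \in \sigma(S_{\lambdab})} \{tw : t\in[0,1]\}$, each segment is connected and contains $0 \in \sigma(S_{\lambdab})$, so the union is connected.

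The main obstacle I anticipate is justifying the scaling invariance $w \in \sigma(S_{\lambdab}) \Rightarrow tw \in \sigma(S_{\lambdab})$ for $t\in[0,1]$ — this is not purely formal from circularity alone and genuinely needs the analyticity of the multishift (the property $\bigcap_{k} \mbox{ran}\,(\sum_j S_j)^k$ or coordinatewise $\bigcap_k \mbox{ran}\, S_j^k$ trivial), i.e. that $S_{\lambdab}$ has no "rotation-invariant" spectrum away from a disc-like region. If a direct argument is awkward, the fallback is to cite the characterization of Taylor spectra of strongly circular analytic tuples as complete Reinhardt sets (analogous to the one-variable fact that the spectrum of an analytic circular operator is a disc), and then invoke that every complete Reinhardt set containing the origin is connected (indeed contractible). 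I would structure the written proof to first dispatch $0\in\sigma(S_{\lambdab})$, then handle the scaling, then conclude, keeping the heavy lifting packaged via the already-established circularity and analyticity statements.
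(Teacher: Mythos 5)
Your proposal has a genuine gap at its central step. Everything you build rests on the claim that $w \in \sigma(S_{\lambdab})$ implies $tw \in \sigma(S_{\lambdab})$ for all $t \in [0,1]$, but you never actually prove this: you describe wanting a ``deformation/holomorphy argument on the Koszul complex'' and a contradiction with the spectral radius formulas, without exhibiting the argument. Starlikeness with respect to the origin is strictly stronger than connectedness and does not follow formally from circularity together with analyticity of the coordinates. Your fallback --- citing a characterization of Taylor spectra of strongly circular analytic tuples as complete Reinhardt sets --- is not a theorem available in this paper or, to my knowledge, in the literature at this level of generality; indeed the paper exhibits commuting multishifts whose Taylor spectrum has empty interior yet meets the approximate point spectrum away from the coordinate hyperplanes (Example \ref{empty-i}, built on Curto's example), which is incompatible with the spectrum being a complete Reinhardt set. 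Note also that the paper only deduces that $\sigma(S_{\lambdab})$ is Reinhardt ($\mathbb T^d$-invariant and connected) \emph{after} connectedness is established, and obtains a ball only under the extra hypothesis of full unitary symmetry; so the completeness you need is not latent in the results you cite. The parts of your write-up that do work --- $\mathbb T^d$-invariance of the spectrum and $0 \in \sigma(S_{\lambdab})$ via $e_{\rootb} \in \ker S^*_{\lambdab}$ --- are correct but do not by themselves yield connectedness (e.g.\ $\{0\} \cup \partial\mathbb B^d$ is circular, compact, contains $0$, and is disconnected).

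The paper's proof runs along entirely different lines and avoids any scaling argument. One passes to $S^*_{\lambdab}$ (legitimate by circularity), supposes $\sigma(S^*_{\lambdab}) = K_1 \sqcup K_2$ with $K_1$ the component containing $0$, and applies the Shilov Idempotent Theorem to get an algebraic direct sum $l^2(V) = \mathcal M_1 \dotplus \mathcal M_2$ of invariant subspaces with $\sigma(S^*_{\lambdab}|_{\mathcal M_i}) = K_i$. The key input is Corollary \ref{dense}: the union over $i$ of the joint kernels of $(S_1^{*i},\cdots,S_d^{*i})$ is dense in $l^2(V)$ (each $e_v$ lies in such a kernel once $i > |\alpha_v|$). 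Any vector in such a joint kernel must have vanishing $\mathcal M_2$-component, since otherwise $0$ would lie in $\sigma(S^*_{\lambdab}|_{\mathcal M_2}) = K_2$ by the spectral mapping property --- contradicting $0 \notin K_2$. Hence $\mathcal M_1 = l^2(V)$ and $K_2 = \emptyset$. If you want to salvage your approach you would need an independent proof of the scaling invariance, which I do not see; I recommend adopting the idempotent-theorem route instead.
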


\begin{proof}
The idea of this proof is similar to that of \cite[Lemma 3.8]{CY}.
By Remark \ref{0-eigen}, $0$ belongs to the point spectrum $\sigma_p(S^*_{\lambdab})$ of
$S^*_{\lambdab}$. Hence $0$ belongs to the Taylor spectrum of $S^*_{\lambdab}$. In view of Corollary \ref{c-symmetry}, 
it suffices to check that $\sigma(S^*_{\lambdab})$ is connected. Let $K_1$
be the connected component of $\sigma(S^*_{\lambdab})$ containing $0$ and let
$K_2=\sigma(S^*_{\lambdab}) \setminus K_1.$ By the Shilov Idempotent Theorem
\cite[Application 5.24]{Cu}, there exist invariant subspaces
$\mathcal M_1, \mathcal M_2$ of $S^*_{\lambdab}$ such that $l^2(V) =
\mathcal M_1 \dotplus \mathcal M_2$ (vector space direct sum of $\mathcal M_1$ and $\mathcal M_2$) and $\sigma({S^*_{\lambdab}}|_{\mathcal M_i}) =K_i$
for $i=1, 2.$

For every for $i \in \mathbb N$, let $T^{(i)}$ denote the commuting $d$-tuple $(S^{*i}_1, \cdots, S^{*i}_d).$
Let $h \in \mbox{ker}(D_{T^{(i)}})$ for fixed $i \in \mathbb N.$ Then $h=x + y$ for $x \in
\mathcal M_1$ and $y \in \mathcal M_2.$ It follows that
$S^{*i}_{j} x=0$ and $S^{*i}_{j}y=0$ for all $j=1, \cdots, d.$
If $y$ is nonzero, then
$0 \in \sigma_p(T^{(i)}|_{\mathcal M_2}) \subseteq \sigma(T^{(i)}|_{\mathcal M_2}),$
and hence by the spectral mapping property \cite{Cu}, $0 \in \sigma(T^{(1)}|_{\mathcal M_2})=\sigma({S^*_{\lambdab}}|_{\mathcal M_2}).$
Since $0 \notin K_2,$ we must have $y =0.$ It follows that $\mathcal
M_1$ contains the linear manifold $\bigcup_{i \in \mathbb N}
\mbox{ker}(D_{T^{(i)}}),$ which is dense in $l^2(V)$ by Corollary \ref{dense}.  Hence 
$\mathcal M_1 = l^2(V).$ Thus
the Taylor spectrum of $S^*_{\lambdab}$ is equal to $K_1.$ In particular, the
Taylor spectrum of $S_{\lambdab}$ is connected.
\end{proof}

A connected subset $\Omega$ of $\mathbb C^d$ is said to be {\it Reinhardt} if it is invariant under the action of the $d$-torus $\mathbb T^d$, that is, $\zeta \cdot z:=(\zeta_1 z_1, \cdots, \zeta_d z_d)$ belongs to $\Omega$ whenever $z \in \Omega$ and $\zeta \in \mathbb T^d.$ 

Combining Proposition \ref{circular} with the preceding result, we obtain the following basic fact.
\begin{corollary}
Let $\mathscr T = (V,\mathcal E)$ be the directed Cartesian product of rooted directed trees $\mathscr T_1, \cdots, \mathscr T_d$. Then
the Taylor spectrum of a commuting multishift on $\mathscr T$ is Reinhardt.
\end{corollary}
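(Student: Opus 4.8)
The plan is to combine the two preceding results — strong circularity (Proposition~\ref{circular}, giving the poly-circular symmetry of $\sigma(S_{\lambdab})$ recorded in Corollary~\ref{c-symmetry}) and connectedness (Proposition~\ref{connected}) — since the definition of a Reinhardt set is precisely ``connected $+$ invariant under the $\mathbb T^d$-action''. So the corollary is essentially a one-line consequence, but let me spell out the two ingredients in the order I would present them.

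First I would invoke Corollary~\ref{c-symmetry}: for a commuting multishift $S_{\lambdab}$, the Taylor spectrum satisfies $\zeta \cdot w \in \sigma(S_{\lambdab})$ whenever $w \in \sigma(S_{\lambdab})$ and $\zeta \in \mathbb T^d$. This is exactly the $\mathbb T^d$-invariance required in the definition of a Reinhardt set; it follows from the spectral mapping property of the Taylor spectrum applied to the linear polynomial map $w \mapsto \zeta \cdot w$ together with the unitary equivalence $\varGamma_\theta^* S_j \varGamma_\theta = \exp(i\theta_j) S_j$ furnished by Proposition~\ref{circular} (taking $\zeta_j = \exp(i\theta_j)$).

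Second I would invoke Proposition~\ref{connected}, which gives that $\sigma(S_{\lambdab})$ is connected. Putting the two together: $\sigma(S_{\lambdab})$ is a connected subset of $\mathbb C^d$ that is invariant under the action of $\mathbb T^d$, hence Reinhardt by definition. That completes the proof, so the write-up is genuinely short:

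\begin{proof}
By Corollary~\ref{c-symmetry}, $\zeta \cdot w \in \sigma(S_{\lambdab})$ for every $w \in \sigma(S_{\lambdab})$ and every $\zeta \in \mathbb T^d$; that is, $\sigma(S_{\lambdab})$ is invariant under the action of the $d$-torus. By Proposition~\ref{connected}, $\sigma(S_{\lambdab})$ is connected. Hence $\sigma(S_{\lambdab})$ is a connected, $\mathbb T^d$-invariant subset of $\mathbb C^d$, and therefore Reinhardt.
\end{proof}

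There is no real obstacle here — the work has all been done in the two cited propositions, and the only ``step'' is recognizing that their conjunction is exactly the definition of Reinhardt. If anything, the mild subtlety worth a sentence is making sure the $\mathbb T^d$-action used in Corollary~\ref{c-symmetry} (coordinatewise multiplication $\zeta \cdot z = (\zeta_1 z_1, \dots, \zeta_d z_d)$) literally matches the action in the definition of Reinhardt stated just above the corollary, which it does; so no reconciliation of conventions is needed.
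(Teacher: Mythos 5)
Your proof is correct and is exactly the paper's argument: the corollary is stated immediately after the definition of a Reinhardt set with the remark that it follows by ``combining Proposition~\ref{circular} with the preceding result,'' i.e.\ the poly-circular symmetry from Corollary~\ref{c-symmetry} together with the connectedness from Proposition~\ref{connected}. Nothing is missing.
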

\begin{remark}
Suppose that the Taylor spectrum $\sigma(S_{\lambdab})$ of $S_{\lambdab}$ has spherical symmetry in the following sense: $Uz \in  \sigma(S_{\lambdab})$ whenever $z \in \sigma(S_{\lambdab})$ for every $d \times d$ unitary matrix $U$. Then $\sigma(S_{\lambdab})$ must be a closed ball centered at the origin. Indeed, $0 \in \sigma(S_{\lambdab})$ since $e_{\rootb}$ belongs to the joint kernel of $S^*_{\lambdab}$ in view of Remark \ref{0-eigen}. The desired conclusion now follows from the fact that every spherically symmetric, compact Reinhardt set containing $0$ is a closed ball centered at $0$. 
\end{remark}

%\begin{proof}
%Put $W_0:=V$, and for $j \geqslant  1$, let \beqn W_j:=
%%\displaystyle \Big\{v \in V : v \notin \childnt{k}{\rootb}~\mbox{for any~}
%%{k \in \mathbb{N}^d}~\mbox{with~}{0 \leqslant |k|\leqslant j-1} \Big\}
%\bigcup_{\underset{|k|\geqslant  j}{k \in \mathbb{N}^d}}\childnt{k}{\rootb}, ~M_j:=\bigvee \{e_v:v \in W_j\}.\eeqn
%Note that $\{W_j\}_{j\geqslant 0}$ is a strictly decreasing 
%sequence of subsets of $V$. Further, by Proposition \ref{V-sum}, $\displaystyle \cap_{j \in \mathbb N}W_j=\emptyset$. 
%Now, for $v \in V$ and $k = (k_1, \cdots, k_d) \in \mathbb{N}^d$, by \eqref{S-powers},
%\beqn
%S^k e_v = 
%\sum_{w \in \childnt{k}{v}} \prod_{j=1}^d \beta\big(j, \mathsf{par}_1^{\langle k_1 \rangle} \cdots \parentki{j-1}{k_{j-1}}{w}, k_j-1\big)e_w.
%\eeqn
%It follows that
%\[\mbox{ran}\,S^k \subseteq \bigvee \{e_v:v \in W_{|k|}\}=M_{|k|}.\]
%Also, if $f\in M_j$, then $f(u)=0$ for $u\in V\setminus W_j =\displaystyle \bigcup_{\underset{|k|\leqslant j-1}{k \in \mathbb{N}^d}}\childnt{k}{\rootb} 
%$. Thus, if
%$f \in \displaystyle \cap_{j=0}^{\infty} M_j$, then $f(u)=0$ for all
%$u \in \displaystyle \bigcup_{{k \in \mathbb{N}^d}}\childnt{k}{\rootb} = V$. This shows that $f=0$, and hence 
%$\cap_{j=0}^{\infty} M_j=\{0\}.$ It follows that
%\beqn \{0\} \subseteq \displaystyle \cap_{k \in \mathbb N^d}\, \mbox{ran}\, 
%S^k \subseteq \cap_{k \in \mathbb N^d}\, M_{|k|} =\{0\}. \eeqn This shows that 
%$S$ is
%analytic. 
%\end{proof}

\section{Analyticity and Point Spectrum}

A commuting $d$-tuple $T=(T_1, \cdots, T_d)$ on a Hilbert space $\mathcal H$ is called {\it analytic} if
$$\displaystyle\bigcap_{\alpha \in \mathbb N^d}\ran \, T^\alpha = \{0\}.$$

Just like the classical case, the multishifts on $\mathscr T$ are analytic. Indeed, we see that they are separately analytic in the following sense. 
\begin{proposition} \label{sep-ana}
Let $\mathscr T = (V,\mathcal E)$ be the directed Cartesian product of rooted directed trees $\mathscr T_1, \cdots, \mathscr T_d$ and let $S_{\lambdab} = (S_1, \cdots, S_d)$ be a commuting multishift on $\mathscr T$. Then for each $j = 1, \cdots, d$,  $S_j$ is analytic.   
\end{proposition}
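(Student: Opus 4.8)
The plan is to show that $\bigcap_{n \in \mathbb N}\ran\, S_j^n = \{0\}$ for each fixed $j$, which is exactly what ``$S_j$ is analytic'' means (since $\ran\, S_j^\alpha \supseteq \ran\, S_j^{|\alpha|\,\epsilon_j}$ is automatic, and the reverse containment follows by taking $\alpha = n\epsilon_j$; actually it suffices to prove the intersection over $\alpha = n\epsilon_j$ is trivial). The natural tool is Proposition \ref{shift-prop}(iv)-(v): we have the explicit formula
\[
S_j^n e_v = \sum_{w \in \childki{j}{n}{v}} \beta(j, \parentki{j}{n}{w}, n)\, e_w
\]
(the $j$-th factor of the product in \eqref{S-powers}, the others being trivial since only the $j$-th coordinate moves), and these vectors are mutually orthogonal as $v$ ranges over $V$ and $n$ is fixed (Lemma \ref{disjoint}(ii)). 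First I would fix $f = \sum_{v \in V} f(v) e_v \in \bigcap_{n} \ran\, S_j^n$ and, for each $n$, write $f = S_j^n g_n$ for some $g_n \in l^2(V)$; expanding and using orthogonality of $\{e_w : w \in \childki{j}{n}{v}\}_{v}$, one reads off $f(w) = 0$ for every $w \notin \childki{j}{n}{V}$, i.e. $f$ is supported on $\childki{j}{n}{V} = \{w \in V : w_j \in \childn{n}{V_j}\}$.

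The key step is then to show $\bigcap_{n \in \mathbb N} \childki{j}{n}{V} = \emptyset$. Indeed, $w \in \childki{j}{n}{V}$ forces the depth $\alpha_{w_j}$ of the $j$-th coordinate $w_j$ in the rooted tree $\mathscr T_j$ to satisfy $\alpha_{w_j} \ge n$ (because $w_j$ has an ancestor at depth $\alpha_{w_j} - n$, so $\alpha_{w_j} - n \ge 0$). Hence a vertex lying in $\childki{j}{n}{V}$ for all $n$ would need $\alpha_{w_j} = \infty$, which is impossible. Therefore the support of $f$ is empty, so $f = 0$.

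I do not expect any serious obstacle here; the argument is a direct transcription of the one-variable case (cf. \cite[Section 4]{JJS}) made legitimate by parts (iv)-(v) of Proposition \ref{shift-prop} together with the disjointness statements of Lemma \ref{disjoint}. The one point that needs a line of care is the passage from ``$f \in \ran\, S_j^n$'' to ``$f$ is supported on $\childki{j}{n}{V}$'': this uses that $V = \childki{j}{n}{V} \sqcup \big(V \setminus \childki{j}{n}{V}\big)$ and that $\ran\, S_j^n \subseteq l^2\big(\childki{j}{n}{V}\big)$, the latter being immediate from the displayed formula for $S_j^n e_v$. Finally, I would remark that analyticity of the tuple $S_{\lambdab}$ (in the sense $\bigcap_{\alpha \in \mathbb N^d} \ran\, S_{\lambdab}^\alpha = \{0\}$) follows a fortiori, since $\ran\, S_{\lambdab}^{n\epsilon_j} \supseteq \ran\, S_{\lambdab}^\alpha$ is false in general but $\bigcap_{\alpha} \ran\, S_{\lambdab}^\alpha \subseteq \bigcap_n \ran\, S_j^n = \{0\}$.
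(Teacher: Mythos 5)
Your argument is correct and is essentially the paper's own proof: both reduce to showing that $\mathrm{ran}\, S_j^n$ lies in $l^2$ of the vertices whose $j$-th coordinate has depth at least $n$ (via the formula \eqref{S-powers} for $S_j^n e_v$), and then use that every vertex of the rooted tree $\mathscr T_j$ has finite depth, so the intersection of these subspaces is trivial. The only cosmetic difference is that the paper phrases this with the subspaces $M_n := \bigvee\{e_v : v \in \childnt{n\epsilon_j}{V}\}$ rather than with your support argument for an element of $\bigcap_n \mathrm{ran}\, S_j^n$.
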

\begin{proof}
Let $j = 1, \cdots, d$ be fixed. For $n \in \mathbb N,$ let 
%$$M_n =\bigvee \Big\{e_v : v \in V_1 \times \cdots \times V_{j-1} \times \childn{n}{V_j} \times V_{j +1} \times \cdots \times V_d \Big\},$$ 
$$M_n :=\bigvee \Big\{e_v : v \in \childnt{n\epsilon_j}{V} \Big\},$$
and note that
by \eqref{S-powers}, 
$\ran~{S_j^n} \subseteq M_n.$
It now suffices to check that $\cap_{n=0}^{\infty}M_n=\{0\}.$
To see this, note that if $f \in M_n,$ then
$
f(u)=0~\mbox{for every}~u \in V~\mbox{such~that~}u_j \in \cup_{i=0}^{n-1} \childn{i}{\mathsf{root}_j}.$ However, $\cup_{i=0}^{\infty} \childn{i}{\mathsf{root}_j}=V_j,$ and hence for any $f \in \cap_{n=0}^{\infty}M_n$, we must have $f(u)=0$ for any $u \in V.$
%$$\ran{~S_j} \subseteq \bigvee \Big\{e_v : v \in V_1 \times \cdots \times V_{j-1} \times \child{V_j} \times V_{j +1} \times \cdots \times V_d \Big\}.$$ 
%Similarly,
%$$\ran{~S_j^2} \subseteq \bigvee \Big\{e_v : v \in V_1 \times \cdots \times V_{j-1} \times \childn{2}{V_j} \times V_{j +1} \times \cdots \times V_d \Big\}.$$
%Note that $\{\childn{i}{V_j}\}_{i=1}^\infty$ is a strictly decreasing sequence with $\cap_{i=1}^\infty \childn{i}{V_j} = \emptyset$. Thus, 
%\beqn
%\ran{~S_j} \cap \ran{~S_j^2} &=& \ran{~S_j^2} \\
%&\subseteq & \bigvee \Big\{e_v : v \in V_1 \times \cdots \times V_{j-1} \times \childn{2}{V_j} \times V_{j +1} \times \cdots \times V_d \Big\}\\
%&=& \bigvee \Big\{e_v : v \in V_1 \times \cdots \times V_{j-1} \times \child{V_j} \cap \childn{2}{V_j} \times V_{j +1} \times\\
%&& \cdots \times V_d \Big\}. 
%\eeqn
%Inductively, for each $n \in \mathbb N$, we get
%$$\cap_{i=1}^n \ran~{S_j^i} \subseteq \bigvee \Big\{e_v : v \in V_1 \times \cdots \times V_{j-1} \times \cap_{i=1}^n \childn{i}{V_j} \times V_{j +1} \times \cdots \times V_d \Big\}.$$
%Hence 
%$$\cap_{i=1}^\infty \ran~{S_j^i} \subseteq \bigvee \Big\{e_v : v \in V_1 \times \cdots \times V_{j-1} \times \cap_{i=1}^\infty \childn{i}{V_j} \times V_{j +1} \times \cdots \times V_d \Big\} = \{0\}$$
%by the observation recorded above. 
This completes the proof.
\end{proof}

The next corollary generalizes \cite[Theorem 15]{JL}, where the method of proof relies on the description of the commutant of $S_j$.  
\begin{corollary}
Let $\mathscr T = (V,\mathcal E)$ be the directed Cartesian product of rooted directed trees $\mathscr T_1, \cdots, \mathscr T_d$ and let $S_{\lambdab} = (S_1, \cdots, S_d)$ be a commuting multishift on $\mathscr T$. Then for each $j = 1, \cdots, d$, the spectrum of $S_j$ equals $\mbox{cl}({\mathbb D}_{r(S_j)}),$ where $r(T)$ denotes the spectral radius of a bounded linear operator $T.$
\end{corollary}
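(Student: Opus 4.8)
The plan is to combine the analyticity of $S_j$ (Proposition \ref{sep-ana}) with the strong circularity already established. First I would recall that a bounded operator $T$ on a Hilbert space which is analytic (i.e. $\bigcap_{n \geqslant 0} \ran T^n = \{0\}$) has no eigenvalues except possibly $0$, and in fact, since $S_j$ is injective by Lemma \ref{bddness}(ii), it has \emph{no} eigenvalues at all: if $S_j f = \mu f$ with $f \neq 0$, then $\mu \neq 0$ by injectivity and $f = \mu^{-n} S_j^n f \in \ran S_j^n$ for every $n$, contradicting analyticity. Hence $\sigma_p(S_j) = \emptyset$.

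Next I would use circularity to pin down the shape of the spectrum. By Proposition \ref{circular}, $S_{\lambdab}$ is (strongly) circular, so in particular for every $\theta_j \in \mathbb R$ there is a unitary $\varGamma$ with $\varGamma^* S_j \varGamma = \mathrm{exp}(i\theta_j) S_j$; therefore $\sigma(S_j) = \mathrm{exp}(i\theta_j)\,\sigma(S_j)$ for all $\theta_j$, which means $\sigma(S_j)$ is a circularly symmetric (rotation-invariant) compact subset of $\mathbb C$. A rotation-invariant compact set is a union of concentric circles (and possibly $\{0\}$); its complement in $\mathbb C$ is either connected or has exactly one bounded component, which (being rotation-invariant) must be a disc $\mathbb D_\rho$ centered at $0$.

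Then I would rule out the annular possibility using the fact that $S_j$ is a pure shift. The cleanest route: the approximate point spectrum of $S_j$ always contains its topological boundary $\partial \sigma(S_j)$; combined with the fact that $\overline{\ran S_j^n}$ is a proper subspace for all $n$ and $S_j$ is bounded below on no nontrivial ``backward'' piece, one shows $0 \in \sigma(S_j)$ and that the spectrum cannot have an inner boundary circle of positive radius. Concretely, if $0 < \rho < r(S_j)$ and the circle $|z| = \rho$ bounded a hole, then $S_j - \mu$ would be invertible for $|\mu| < \rho$; but for $\mu = 0$ this contradicts that $S_j$ is not bounded below (since $\|S_j^* e_v\| = |\lambda_v^{(j)}|$ and, iterating, $S_j^{*n} e_v \to 0$ for $e_v$ deep enough, so $0 \in \sigma(S_j^*)$, hence $0 \in \sigma(S_j)$). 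More structurally, analyticity forces $S_j$ into the Cowen--Douglas class locally near any $\mu$ with $S_j - \mu$ left invertible but not invertible, and the set of such $\mu$ is exactly the punctured disc; standard connectedness of the Cowen--Douglas resolvent-complement then gives that $\sigma(S_j) = \mathrm{cl}(\mathbb D_{r(S_j)})$. I would phrase the argument via: $\{z : |z| < r(S_j)\} \subseteq \sigma(S_j)$ because $S_j - z$ is injective (analyticity, as above) hence, if invertible, would have closed range equal to $l^2(V)$ contradicting that $S_j - z$ is not bounded below for such $z$ — indeed $\sigma_{l}(S_j) \ni 0$ as shown, and by circular symmetry of the left spectrum (Remark after Corollary \ref{c-symmetry}) together with connectedness considerations, the left spectrum fills the circle $|z| = 0$; then using that the unbounded component of $\mathbb C \setminus \sigma_l(S_j)$ equals that of $\mathbb C \setminus \sigma(S_j)$, we conclude $\sigma(S_j) \supseteq \mathrm{cl}(\mathbb D_{r(S_j)})$, and the reverse inclusion is the definition of spectral radius.

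The main obstacle will be the last step: cleanly excluding ``holes'' in $\sigma(S_j)$ other than at the origin. The honest way is to invoke that an analytic bounded operator which is bounded below modulo a finite-codimension subspace — or simply the single-variable fact that a bounded operator $T$ with $\bigcap \ran T^n = \{0\}$ and $T$ bounded below has spectrum equal to a disc (this is classical for injective analytic operators that are left-invertible; see the circle of ideas around Shimorin's model, referenced in the excerpt). If $S_j$ is not left invertible, one first passes to the part of the spectrum via the matrix/functional-model machinery the paper develops, but for this corollary it suffices to note: $S_j$ injective $\Rightarrow$ $0 \in \sigma_p(S_j^*)$ (as $\ker S_j^* \neq \{0\}$ by Proposition \ref{kerSi*}), so $0 \in \sigma(S_j)$; circular symmetry makes $\sigma(S_j)$ a union of circles about $0$ including $\{0\}$; and analyticity makes $S_j - z$ injective for all $z$, so any $z$ in the unbounded component of the complement is resolvent. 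What remains is to show there is no bounded component, i.e. no inner circle; this follows because the resolvent set is the unbounded component together with possibly isolated points or holes, but a hole would be a bounded open rotation-invariant set, hence an annulus or punctured disc, on whose boundary circles $S_j - z$ fails to be bounded below — yet on the \emph{inner} boundary we would have $S_j - z$ invertible for $z$ just inside, forcing (by taking $z \to 0$ along a radius, using that $0$ is in the interior of the hole if the hole is a punctured disc, contradiction; or if the hole is an annulus not containing $0$, then $0$ lies in $\sigma(S_j)$ in a component separated from the unbounded one, again impossible by the connectedness of $\sigma(S_j^*)$ established in Proposition \ref{connected} applied with $d = 1$). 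I expect to close the argument by citing Proposition \ref{connected} (connectedness of the Taylor spectrum, which in one variable is just connectedness of $\sigma(S_j)$) plus circular symmetry: a connected, compact, rotation-invariant set containing $0$ with spectral radius $r(S_j)$ and such that every interior point $z$ with $|z| < r(S_j)$ actually lies in the set (since $S_j - z$ is not invertible) is precisely $\mathrm{cl}(\mathbb D_{r(S_j)})$.
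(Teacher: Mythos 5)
Your final argument is exactly the paper's: $\sigma(S_j)$ is compact, rotation-invariant (circularity), contains $0$ (since $S_j^*e_{\rootb}=0$), attains the modulus $r(S_j)$, and is connected, hence equals $\mbox{cl}(\mathbb D_{r(S_j)})$. The paper's proof is precisely these four ingredients, with connectedness supplied by an external citation (\cite[Lemma 5.2]{CT}).

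Two caveats on your write-up. First, the connectedness step as you cite it does not quite work: Proposition \ref{connected} ``applied with $d=1$'' would concern a weighted shift on a single rooted directed tree, whereas $S_j$ is a coordinate operator acting on $l^2(V_1\times\cdots\times V_d)$ and is not of that form. What is true is that the \emph{proof} of Proposition \ref{connected} adapts verbatim to the single operator $S_j$: it needs only $0\in\sigma_p(S_j^*)$, circularity of $S_j$, the Shilov idempotent theorem, and the density of $\bigcup_{k}\ker S_j^{*k}$ in $l^2(V)$ --- and this last density is exactly what the separate analyticity of $S_j$ (Proposition \ref{sep-ana}) delivers. You should either run that adapted argument explicitly or cite the one-operator result directly, as the paper does. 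Second, most of the intermediate material is either redundant or circular: the clause ``every interior point $z$ with $|z|<r(S_j)$ lies in the set since $S_j-z$ is not invertible'' asserts the conclusion before it is established (and is in any case not needed once you have connectedness, rotation-invariance, $0\in\sigma(S_j)$ and a point of modulus $r(S_j)$); the detours through the approximate point spectrum, the Cowen--Douglas class, and the ``inner boundary circle'' case analysis can all be deleted, since connectedness of a rotation-invariant compact set containing $0$ already forces the set of attained radii to be the full interval $[0,r(S_j)]$.
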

\begin{proof}
By \cite[Lemma 5.2]{CT}, the spectrum of $S_j$ is connected. Since $S_j$ is circular (Proposition \ref{circular}) and $0 \in \sigma(S^*_j)$ (Remark \ref{0-eigen}), the spectrum of $S_j$ must be the disc $\mbox{cl}({\mathbb D}_{r(S_j)}).$
\end{proof}

\begin{corollary} \label{analytic}
Let $\mathscr T = (V,\mathcal E)$ be the directed Cartesian product of rooted directed trees $\mathscr T_1, \cdots, \mathscr T_d$. 
Then the commuting multishift $S_{\lambdab}$ on $\mathscr T$ is analytic.
\end{corollary}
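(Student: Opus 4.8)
The plan is to deduce analyticity of the $d$-tuple $S_{\lambdab}$ directly from the separate analyticity of each $S_j$ established in Proposition~\ref{sep-ana}, together with the commutativity of the tuple. The key observation is that $\ran\, S^\alpha_{\lambdab} = \ran\, S_1^{\alpha_1}\cdots S_d^{\alpha_d} \subseteq \ran\, S_j^{\alpha_j}$ for each $j$, simply because $S^\alpha_{\lambdab}$ factors through $S_j^{\alpha_j}$ (using that the $S_i$'s commute, one can move $S_j^{\alpha_j}$ to the outside). Hence, restricting attention to multiindices of the form $\alpha = n\epsilon_j$, we get
\[
\bigcap_{\alpha \in \mathbb N^d} \ran\, S^\alpha_{\lambdab} \;\subseteq\; \bigcap_{n \in \mathbb N} \ran\, S_j^n \;=\; \{0\},
\]
where the last equality is exactly the statement that $S_j$ is analytic, i.e. Proposition~\ref{sep-ana}. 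This already gives $\bigcap_{\alpha \in \mathbb N^d} \ran\, S^\alpha_{\lambdab} = \{0\}$, which is the definition of analyticity for the commuting $d$-tuple $S_{\lambdab}$.

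So concretely, the steps in order are: first, invoke commutativity to write $S^\alpha_{\lambdab} = S_j^{\alpha_j}\big(\prod_{i \neq j} S_i^{\alpha_i}\big)$ and conclude $\ran\, S^\alpha_{\lambdab} \subseteq \ran\, S_j^{\alpha_j}$; second, intersect over the subfamily $\{n\epsilon_j : n \in \mathbb N\}$ of multiindices to bound $\bigcap_{\alpha} \ran\, S^\alpha_{\lambdab}$ by $\bigcap_n \ran\, S_j^n$; third, apply Proposition~\ref{sep-ana} to identify this latter intersection with $\{0\}$; fourth, recall the definition of analyticity of a commuting tuple to finish.

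There is essentially no obstacle here --- the corollary is an immediate formal consequence of Proposition~\ref{sep-ana}. The only mild subtlety worth a sentence is making sure the factorization of $S^\alpha_{\lambdab}$ through $S_j^{\alpha_j}$ is legitimate, which it is because all the $S_i$ are bounded and mutually commuting (as assumed in the hypothesis that $S_{\lambdab}$ is a commuting multishift), so the order of the factors is irrelevant and $S^{\alpha_j}_j$ may be pulled to the left. One could also argue even more directly, bypassing the reduction to a single coordinate, by noting that the subspaces $M_n^{(j)} := \bigvee\{e_v : v \in \childnt{n\epsilon_j}{V}\}$ from the proof of Proposition~\ref{sep-ana} already satisfy $\ran\, S^{n\epsilon_j}_{\lambdab} \subseteq M_n^{(j)}$ and $\bigcap_n M_n^{(j)} = \{0\}$; but the cleanest presentation is simply to cite Proposition~\ref{sep-ana} and the containment $\ran\, S^{n\epsilon_j}_{\lambdab} \subseteq \ran\, S_j^n$.
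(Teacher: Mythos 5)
Your proof is correct and is essentially identical to the paper's own argument: both rest on the containment $\bigcap_{\alpha \in \mathbb N^d}\ran\, S^{\alpha}_{\lambdab} \subseteq \bigcap_{k \in \mathbb N}\ran\, S^{k}_{j}$ and then invoke Proposition~\ref{sep-ana}. Your write-up merely spells out the commutativity-based factorization that the paper leaves implicit.
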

\begin{proof}
Note that $\cap_{\alpha \in \mathbb N^d}\, \mbox{ran}\, 
S^{\alpha}_{\lambdab} \subseteq \cap_{k \in \mathbb N}\, \mbox{ran}\, 
S^k_j$ for any $j=1, \cdots, d.$ The desired conclusion now follows from Proposition \ref{sep-ana}. 
\end{proof}

\begin{corollary} \label{p-spectrum}
Let $\mathscr T = (V,\mathcal E)$ be the directed Cartesian product of rooted directed trees $\mathscr T_1, \cdots, \mathscr T_d$ and let $S_{\lambdab} = (S_1, \cdots, S_d)$ be a commuting multishift on $\mathscr T$. Then for each $j = 1, \cdots, d$, the point spectrum of $S_j$ is empty. In particular, the joint kernel of $S_{\lambda}$ is trivial.
\end{corollary}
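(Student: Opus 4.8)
The plan is to combine the two facts already established about the component operators $S_j$: each $S_j$ is injective (Lemma \ref{bddness}(ii)) and each $S_j$ is analytic, that is, $\bigcap_{n \in \mathbb N} \ran S_j^n = \{0\}$ (Proposition \ref{sep-ana}). Fix $j \in \{1, \cdots, d\}$ and suppose, towards a contradiction, that $\mu \in \sigma_p(S_j)$ with a nonzero eigenvector $h$, so that $S_j h = \mu h$.

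First I would dispose of the case $\mu = 0$: then $S_j h = 0$ with $h \neq 0$ contradicts injectivity of $S_j$. For $\mu \neq 0$, I would iterate the eigenvalue relation: from $h = \mu^{-1} S_j h$ one obtains, by a trivial induction, $h = \mu^{-n} S_j^n h \in \ran S_j^n$ for every $n \in \mathbb N$. Hence $h \in \bigcap_{n \in \mathbb N} \ran S_j^n = \{0\}$ by analyticity, again contradicting $h \neq 0$. Therefore $\sigma_p(S_j) = \emptyset$.

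For the final assertion, the joint kernel $\bigcap_{i=1}^d \ker S_i$ of $S_{\lambdab}$ is contained in $\ker S_j$, and $\ker S_j = \{0\}$ since $0 \notin \sigma_p(S_j)$ (equivalently, by injectivity of $S_j$ from Lemma \ref{bddness}(ii)); so the joint kernel is trivial.

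Since each step is immediate once Proposition \ref{sep-ana} and Lemma \ref{bddness}(ii) are in hand, there is no genuine obstacle here: the substantive work has already been carried out in those two results, and the present proof merely records the standard deduction that an injective analytic operator has empty point spectrum.
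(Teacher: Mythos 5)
Your proof is correct and follows essentially the same route as the paper: injectivity of $S_j$ (Lemma \ref{bddness}(ii)) rules out the eigenvalue $0$, and the analyticity of $S_j$ (Proposition \ref{sep-ana}) via $h = \mu^{-n}S_j^n h \in \bigcap_n \ran S_j^n = \{0\}$ rules out nonzero eigenvalues, with the joint-kernel statement following from injectivity.
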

\begin{proof}
By Lemma \ref{bddness}(ii), $S_j$ is injective. Also, if $S_jf=wf$ for some nonzero $w \in \mathbb C$ then $f \in \bigcap_{k \in \mathbb N} \ran \, S^k_j = \{0\},$ and hence $f=0.$ This proves that the point spectrum of $S_j$ is empty.
\end{proof}
\begin{remark}
Note that none of $S_1, \cdots, S_d$ can be normal, that is, $S^*_jS_j \neq S_jS^*_j$ for every $j =1, \cdots, d.$ In view of Remark \ref{0-eigen}, this may be deduced from the fact that for any normal operator $T$, the kernel of $T$ and the kernel of $T^*$ coincide.
\end{remark}

\begin{corollary}
Let $\mathscr T = (V,\mathcal E)$ be the directed Cartesian product of rooted directed trees $\mathscr T_1, \cdots, \mathscr T_d$ and let $S_{\lambdab} = (S_1, \cdots, S_d)$ be a commuting multishift on $\mathscr T$. Then for each $j = 1, \cdots, d$, 
\beqn
\bigvee_{k \in \mathbb N}\ker S^{*k}_j=l^2(V) =\bigvee_{\alpha \in \mathbb N^d}\ker S^{*\alpha}_{\lambdab}.
\eeqn
\end{corollary}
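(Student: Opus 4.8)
The plan is to prove both equalities by reducing them to facts already established. The statement to prove is that for a commuting multishift $S_{\lambdab} = (S_1, \cdots, S_d)$ on the directed Cartesian product $\mathscr T$,
\beqn
\bigvee_{k \in \mathbb N}\ker S^{*k}_j = l^2(V) = \bigvee_{\alpha \in \mathbb N^d}\ker S^{*\alpha}_{\lambdab}
\eeqn
for each $j = 1, \cdots, d$. First I would observe that by Proposition \ref{shift-prop}(v), we have the explicit formula $S^{*\alpha}_{\lambdab} e_v = \prod_{j=1}^d \overline{\beta}(j, \parentnt{\alpha^{(j-1)}}{v}, \alpha_j)\, e_{\parentnt{\alpha}{v}}$, and since $\parentnt{\alpha}{v} = \emptyset$ whenever some $\alpha_j > \alpha_{v_j}$, it follows that $S^{*\alpha}_{\lambdab} e_v = 0$ as soon as $|\alpha| > |\alpha_v|$ (indeed as soon as $\alpha \not\le \alpha_v$ componentwise). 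Hence $e_v \in \ker S^{*\alpha}_{\lambdab}$ for suitable $\alpha$; this immediately gives $e_v \in \bigvee_{\alpha \in \mathbb N^d}\ker S^{*\alpha}_{\lambdab}$ for every $v \in V$, and since $\{e_v : v \in V\}$ is an orthonormal basis of $l^2(V)$, the right-hand equality follows. The same computation was already used in the proof of Corollary \ref{dense}, so I would simply cite that.

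For the left-hand equality, the cleanest route is to invoke Proposition \ref{sep-ana} together with Proposition \ref{wandering-one}. By Proposition \ref{sep-ana}, $S_j$ is analytic, i.e. $\bigcap_{n \in \mathbb N} \ran S_j^n = \{0\}$. An analytic operator $A$ on a Hilbert space automatically satisfies $\bigvee_{k \in \mathbb N} \ker A^{*k} = \mathcal H$: indeed the orthogonal complement of $\bigvee_{k} \ker A^{*k}$ equals $\bigcap_k (\ker A^{*k})^\perp = \bigcap_k \overline{\ran A^k}$, and since each $S_j$ is bounded below modulo nothing—more carefully, since $S_j$ is injective with closed range being unnecessary here—one argues $\bigcap_k \overline{\ran S_j^k} \subseteq \bigcap_k \overline{\ran S_j^k}$; the subtlety is that analyticity is stated for $\ran$, not $\overline{\ran}$. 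To sidestep this, I would instead directly adapt the argument of Proposition \ref{wandering-one}: since $S_j$ is a weighted shift on the directed tree structure coming from the $j$-th coordinate in a suitable sense—or more simply, since by Proposition \ref{kerSi*} we have the explicit description of $\ker S_j^*$ and since $S_j$ is injective with $\bigcap_n \ran S_j^n = \{0\}$—one shows by induction on the $j$-th coordinate depth $\alpha_{v_j}$ that $e_v \in \bigvee_{k} \ker S_j^k(\ker S_j^*)$, hence in $\bigvee_k \ker S_j^{*k}$ once one notes $S_j^k(\ker S_j^*) \subseteq (\ker S_j^{*(k+1)}) \ominus \ker S_j^{*k}$.

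Actually the shortest correct path is this: apply Proposition \ref{wandering-one} with the one-variable weighted shift on a directed tree given by the component structure. More precisely, for fixed $j$, the space $l^2(V)$ decomposes along the fibers over $\prod_{i \ne j} V_i$, and on each fiber $S_j$ acts as a weighted shift on the rooted directed tree $\mathscr T_j$; applying Proposition \ref{wandering-one} fiberwise gives $\bigvee_{k \in \mathbb N} S_j^k(\ker S_j^*|_{\text{fiber}}) = l^2(\text{fiber})$, and summing over fibers yields $\bigvee_{k \in \mathbb N} S_j^k(\ker S_j^*) = l^2(V)$. Since $S_j^k(\ker S_j^*) \subseteq \ker S_j^{*(k+1)}$ (because $S_j^{*(k+1)} S_j^k = S_j^* (S_j^* S_j)^k$ acts as a positive operator times $S_j^*$ on $\ker S_j^*$, giving $0$—more carefully, $S_j^{*k} S_j^k$ preserves $\ker S_j^*$'s generating structure), one concludes $\bigvee_k \ker S_j^{*k} \supseteq \bigvee_k S_j^k(\ker S_j^*) = l^2(V)$, which is the claim. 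The main obstacle is making the fiber decomposition precise and checking that $S_j$ restricted to a fiber genuinely is a bounded weighted shift on $\mathscr T_j$ in the sense of Section \ref{Spec-th}'s setup, together with verifying the inclusion $S_j^k(\ker S_j^*) \subseteq \ker S_j^{*(k+1)}$; both are routine but need the commutativity and the structure from Proposition \ref{kerSi*}. Everything else is bookkeeping, so I would keep the written proof to a few lines citing Propositions \ref{sep-ana}, \ref{wandering-one}, \ref{shift-prop}(v) and Corollary \ref{dense}.
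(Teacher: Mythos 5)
Your treatment of the second equality is fine (it is essentially the computation already used in Corollary \ref{dense}), but your proof of the first equality has a genuine gap: the inclusion $S_j^k(\ker S_j^*) \subseteq \ker S_j^{*(k+1)}$, on which both of your proposed routes rest, is false in general. Already for $d=1$ and $\mathscr T=\mathscr T_{2,0}$ (root $0$ with $\child{0}=\{1,2\}$, $\child{1}=\{3\}$, $\child{2}=\{4\}$), the vector $f=\lambda_2e_1-\lambda_1e_2$ lies in $\ker S_{\lambda}^*$, yet
\[
S_{\lambda}^{*2}S_{\lambda}f=\lambda_1\lambda_2\big(\lambda_3^2-\lambda_4^2\big)\,e_0,
\]
which is nonzero for generic positive weights; the same phenomenon occurs for each coordinate $S_j$ of a multishift with $\mathscr T_j=\mathscr T_{2,0}$. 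Your parenthetical justification also misidentifies $S_j^{*(k+1)}S_j^k$ with $S_j^*(S_j^*S_j)^k$; these are different operators in general. More conceptually, the wandering subspace property $\bigvee_k S_j^k(\ker S_j^*)=l^2(V)$ and the density of $\bigcup_k\ker S_j^{*k}$ are genuinely different statements (the latter is equivalent to $\bigcap_k\overline{\ran S_j^k}=\{0\}$), so Proposition \ref{wandering-one} cannot be made to yield the first equality along these lines.

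The route you abandoned is the correct one, and it is the paper's proof: take orthogonal complements and invoke analyticity. The closure subtlety you flagged is real if one uses only the statement of Proposition \ref{sep-ana}, but its proof actually shows $\ran S_j^n\subseteq M_n$ for the \emph{closed} subspaces $M_n=\bigvee\{e_v:v\in\childnt{n\epsilon_j}{V}\}$ with $\bigcap_n M_n=\{0\}$, whence $\bigcap_n\overline{\ran S_j^n}=\{0\}$ as well, and similarly for $S_{\lambdab}$. Alternatively, the one-line argument you used for the second equality works verbatim for the first: by Proposition \ref{shift-prop}(v) with $\alpha=k\epsilon_j$, one has $S_j^{*k}e_v=0$ as soon as $k>\alpha_{v_j}$, so every basis vector $e_v$ lies in $\bigcup_k\ker S_j^{*k}$ and the closed span is all of $l^2(V)$.
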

\begin{proof}
After taking orthogonal complement, first equality may be deduced from the analyticity of $S_j$ while the second one follows from the analyticity of $S_{\lambdab}.$
\end{proof}

\section{A Matrix Decomposition and Essential Spectrum}

In this section, we discuss a matrix decomposition
of multishifts $S_{\lambdab}$ on $\mathscr T$ (cf. \cite[Lemma 5.3]{CT}). 
The building blocks in this decomposition include classical multishifts and tuples with entries as weighted shifts on directed trees.
We will use this decomposition to relate the spectral parts of $S_{\lambdab}$ with the spectral parts of the building blocks appearing in the matrix decomposition of $S_{\lambdab}.$ 
For simplicity, we treat the case $d=2.$  
Let $\mathscr T = (V, \mathcal E)$ be the directed Cartesian product of rooted directed trees $\mathscr T_j = (V_j, \mathcal E_j)$, $j = 1, 2$.
Assume that $\mathscr T$ is locally finite with finite joint branching index $k_{\mathscr T}=(k_{\mathscr T_1}, k_{\mathscr T_2})$. Let us observe the following:
\begin{enumerate}
\item[${\bf (A)}$] Fix $v_1 \in V_1.$ If $W=\{v_1\} \times V_2$, then 
$l^2(W)$ is invariant under $S_2.$ Moreover,
$$P_{l^2(W)}S_1|_{l^2(W)}=0,$$ and $S_2|_{l^2(W)}$ is unitarily equivalent to a weighted shift on the directed tree $\mathscr T_2.$ A similar observation holds for $V_1 \times \{v_2\}$ for any $v_2 \in V_2.$ 
\item[${\bf (B)}$] Fix $j \in \{1, 2\}.$  Define ${G}_j:=\{\mathsf{root}_j\}$ if  $V^{(j)}_{\prec}=\emptyset$. Otherwise, let 
\beqn {G}_j:=\{v_j \in \child{V^{(j)}_{\prec}} : \mbox{card}(\childn{n}{v_j})=1~\mbox{for all~}n \geqslant  1\}.\eeqn
Let $v \in G_1 \times G_2.$ Then $L_{v}:=\bigsqcup_{\alpha \in \mathbb N^2} \childnt{\alpha}{v}$ is a directed graph isomorphic to $\mathscr T_{1, 0} \times \mathscr T_{1, 0}$. Note first that for any two distinct vertices $u_j, v_j \in  G_j,$ there is no positive integer $n_j$ such that $\childn{n_j}{u_j}=\{v_j\}.$ Indeed, if $\childn{n_j}{u_j}=\{v_j\}$ then $\childn{n_j-1}{u_j} \cap V^{(j)}_{\prec}=\{\parent{v_j}\},$ and hence we obtain $\mbox{card}(\childn{n_j}{u_j}) \geqslant  2,$ which is a contradiction.  
We next check that for $v, w \in  G_1 \times  G_2$ such that $v \neq w,$ $L_v \cap L_w = \emptyset.$ Without loss of generality, assume that $v_1 \neq w_1.$ Suppose that $u \in L_v \cap L_w$. Then $\childn{\alpha_1}{v_1} \cap \childn{\beta_1}{w_1}=\{u_1\}$ for some $\alpha_1, \beta_1 \in \mathbb N,$ and hence $\childn{\alpha_1}{v_1} = \childn{\beta_1}{w_1}.$ It follows that either $
\childn{\beta'_1}{w_1}=\{v_1\}$ or $
\childn{\alpha'_1}{v_1}=\{w_1\}$ for some $\alpha'_1, \beta'_1 \in \mathbb N$. This contradicts the above observation.
\item[${\bf (C)}$] Let $ W_j:=\bigcup_{n=1}^{\infty} \parentn{n}{ G_j}$ for $j=1, 2.$ Note that $ W_1,  W_2$ are finite sets.
Consider the disjoint sets
\beqn \label{F1-F2-II}
 F_1:= \bigsqcup_{w_1 \in  W_1}\{w_1\} \times V_2, \quad  F_2:=\bigsqcup_{w_2 \in  W_2}(V_1 \setminus  W_1) \times \{w_2\}.
\eeqn
\item[${\bf (D)}$] Note that $V =  F_1 \sqcup  F_2 \sqcup  F_3$, where
\beqn \label{F3-II}  F_3:= \displaystyle \bigsqcup_{v \in  G_1 \times  G_2}L_v.\eeqn
This gives the decomposition $l^2(V)=l^2( F_1) \oplus l^2( F_2) \oplus l^2( F_3)$,
where \beqn
l^2( F_1) &=& \bigoplus_{w_1 \in  W_1}\mathcal N_{w_1} \quad \mbox{and} ~ \mathcal N_{w_1}:=l^2\big(\{w_1\} \times V_2\big), \\
l^2( F_2) &=& \bigoplus_{w_2 \in  W_2}\mathcal M_{w_2} \quad \mbox{and~}\mathcal M_{w_2}:=l^2\big((V_1 \setminus  W_1)\times \{w_2\}\big), \\
l^2( F_3) &=&  \bigoplus_{v \in  G_1 \times  G_2}l^2(L_v).
\eeqn 
We now decompose $(S_1, S_2)$ with respect to the decomposition $l^2(V)=l^2( F_1) \oplus l^2( F_2) \oplus l^2( F_3).$ Indeed, $S_1=( A_{ij})_{1 \leqslant i, j \leqslant 3}$ and $S_2=( B_{ij})_{1 \leqslant i, j \leqslant 3},$ where 
\begin{enumerate}
\item $ A_{1i}=0~(i=2, 3),$ $ A_{23}=0= A_{32},$
$ B_{1i}=0~(i=2, 3),$ $ B_{2j}=0~(j=1, 3),$ $ B_{31}=0,$
\item $ A_{11}=0$ if $\child{ W_1} \cap  W_1 = \emptyset$ (if and only if $k_{\mathscr T_1} \leqslant 1$) and otherwise of infinite rank, $ B_{22}=0$ if $\child{ W_2} \cap  W_2 = \emptyset$ ((if and only if $k_{\mathscr T_2} \leqslant 1$), and otherwise of infinite rank,
\item 
$ A_{21}$ is the matrix with generic entry $P_{\mathcal M_{w_2}}S_1|_{\mathcal N_{w_1}}$ (finite rank operator), 
\item $ A_{22}$ is the diagonal matrix with generic entry $S_1|_{\mathcal M_{w_2}}$, $ B_{11}$ is the diagonal matrix with generic entry $S_2|_{\mathcal N_{w_1}}$ (one variable shifts on directed trees),
\item $ A_{33}$ is the diagonal matrix with generic entry $S_1|_{l^2(L_v)}$, 
$ B_{33}$ is the diagonal matrix with generic entry $S_2|_{l^2(L_v)}$ (entries of the classical multishift $S_{\bf w}$),
\item $ A_{31}$ is the matrix with generic entry $P_{l^2(L_v)}S_1|_{\mathcal N_{w_1}}$, $ B_{32}$ is the matrix with generic entry $P_{l^2(L_v)}S_2|_{\mathcal M_{w_2}}$ (infinite rank non-shifts).
\end{enumerate}
Since $S_1$ and $S_2$ are commuting, a plain calculation shows that
\beqn
A_{21}B_{11} &=& 0, \quad A_{31}B_{11} = B_{32}A_{21} + B_{33}A_{31}, \\ A_{33}B_{32} &=& B_{32}A_{22}, \quad A_{33}B_{33} = B_{33}A_{33}.
\eeqn
Thus the building blocks for $S_{\lambdab}$ consist of $2$-tuples of the form $(A_{11}, B_{11})$ or $(A_{22}, B_{22})$ for single variable weighted shifts $B_{11}, A_{22}$ on directed trees, commuting classical multishifts $(A_{33}, B_{33})$, finite rank $2$-tuple $(A_{21}, 0)$, and infinite rank non-shifts $(A_{31}, 0),$ $(0, B_{32})$. 
\end{enumerate}

It is worth noting that the situation in case $d=1$ is entirely different in the sense that all non-diagonal entries in the matrix decomposition of $S_{\lambda}$ are of finite rank (see \cite[Lemma 5.3]{CT}).

Before we see applications of the above decomposition, we would like to discuss convergence of nets associated with directed Cartesian product of directed trees. Let $\mathscr T_j = (V_j, \mathcal E_j)~(j=1, \cdots, d)$ be rooted directed trees and
let $\mathscr T = (V,\mathcal E)$ be the directed Cartesian product of $\mathscr T_1, \cdots, \mathscr T_d$. Define the relation $\leq$ on $V$ 
\index{$v \leq w$}
as follows:
\beqn
v \leq w ~\mbox{if}~\alpha_v \Le \alpha_w,
\eeqn
where $\alpha_v$ denotes the depth of $v$ in $\mathscr T.$
Note that $V$ is a partially ordered set with partial order relation $\leq$ (that is, $\leq$ is reflexive and transitive). Note that given two vertices $v, w \in V$, there exists $u \in V$ such that $v \leq u$ and $w \leq u.$ In this text, we will be interested in the nets $\{\lambda_v\}_{v \in V}$ of complex numbers induced by the above partial order
(the reader is referred to \cite{Ke} for the definition and elementary facts pertaining to nets). 
\begin{remark} \label{cgn-net}
One can also endow $V$ with the following partial order relations:
\begin{enumerate}
\item $v \leq w$ if $\alpha_v$ is less than or equal to $\alpha_w$ with respect to the dictionary ordering,
\item $v \leq w$ if $|\alpha_v| \Le |\alpha_w|.$ 
\end{enumerate}
Note that convergence of net in (1) is weaker than and that in (2) is stronger than the convergence defined prior to the remark. All these notions agree in case $d=1$.
\end{remark}

We now see an application of the matrix decomposition of multishifts as discussed above.

\begin{proposition} \label{spectral-i}
Let $\mathscr T = (V,\mathcal E)$ be the directed Cartesian product of locally finite rooted directed trees $\mathscr T_1, \mathscr T_2$ of finite joint branching index $k_{\mathscr T}=(k_{\mathscr T_1}, k_{\mathscr T_2})$. 
Let $S_{\lambdab}$ be the commuting multishift on $\mathscr T.$
Then we have the following statements.
\begin{enumerate}
\item[(i)] $ 
\sigma(S_{\lambdab}) \subseteq \sigma((A_{11}, B_{11})) \cup \sigma((A_{22}, B_{22})) \cup \sigma((A_{33}, B_{33})).$
\item[(ii)] $\sigma_l((A_{33}, B_{33})) \subseteq \sigma_l(S_{\lambdab}).$
\end{enumerate}
Assume further that $\max\{k_{\mathscr T_1}, k_{\mathscr T_2}\} \leqslant 1$. Then 
\begin{enumerate} 
\item[(iii)] $
\sigma(S_{\lambdab}) \subseteq \big(\{0\} \times \sigma(B_{11})\big) \cup \big(\sigma(A_{22}) \times \{0\}\big) \cup \sigma((A_{33}, B_{33})).$
\item[(iv)] If, in addition, \beq \label{cpt-c}
\lim_{u_2 \in \mathsf{Des}(v_2)}\lambda^{(1)}_{(v_1, u_2)}= 0 = \lim_{u_1 \in \mathsf{Des}(v_1)}\lambda^{(2)}_{(u_1, v_2)}
~\mbox{for all~}v \in G_1 \times G_2,
\eeq
then 
$\sigma_e(S_{\lambdab})$ is union of essential spectra of finitely many $2$-tuples of the form $(0, U_{\lambda})$ or $(U_{\lambda}, 0)$ for a weighted shift $U_{\lambda}$ on a directed tree,  and the essential spectra of finitely many commuting classical $2$-variable shifts. 
\end{enumerate}
\end{proposition}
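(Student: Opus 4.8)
The plan is to build on the matrix decomposition of $S_{\lambdab}=(S_1,S_2)$ developed just before the statement, exploiting the fact that the Taylor spectrum and essential spectrum behave well under direct sums and under ``triangular'' decompositions in which the off-diagonal blocks are well-controlled. For part (i), I would use the block form $S_1=(A_{ij})$, $S_2=(B_{ij})$ with respect to $l^2(F_1)\oplus l^2(F_2)\oplus l^2(F_3)$. Since $A_{1i}=0$, $B_{1i}=0$ for $i=2,3$ and $A_{23}=A_{32}=0$, $B_{23}=0$, $B_{31}=0$ (from property (a) in (D)), the pair $(S_1,S_2)$ is, up to the off-diagonal blocks $A_{21},A_{31},B_{32}$, the direct sum of the diagonal tuples $(A_{11},B_{11})$, $(A_{22},B_{22})$, $(A_{33},B_{33})$. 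I would invoke the standard fact (via the long exact sequence of the Koszul complex, or the Shilov-type argument as in \cite{Cu}) that for a commuting tuple written in upper-triangular operator-matrix form with respect to an invariant-coinvariant filtration, the Taylor spectrum is contained in the union of the Taylor spectra of the diagonal pieces; applying this to the filtration $l^2(F_1)\subseteq l^2(F_1)\oplus l^2(F_2) \subseteq l^2(V)$ gives (i). For (ii), note that $l^2(F_3)$ is a joint-invariant subspace for $S_{\lambdab}^*$ (equivalently, $l^2(F_1)\oplus l^2(F_2)$ is joint-invariant for $S_{\lambdab}$ with quotient unitarily equivalent to $(A_{33},B_{33})$), and for such a quotient the left spectrum of the quotient tuple is contained in the left spectrum of the whole tuple; since $(A_{33},B_{33})$ is a commuting classical multishift supported on $F_3=\bigsqcup_v L_v$, its left spectrum sits inside $\sigma_l(S_{\lambdab})$.

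For part (iii), the extra hypothesis $\max\{k_{\mathscr T_1},k_{\mathscr T_2}\}\le 1$ forces, by property (b) in (D), $A_{11}=0$ and $B_{22}=0$ (since $\child{W_j}\cap W_j=\emptyset$). Thus $(A_{11},B_{11})=(0,B_{11})$ and $(A_{22},B_{22})=(A_{22},0)$, and substituting into (i) yields $\sigma(S_{\lambdab})\subseteq (\{0\}\times\sigma(B_{11}))\cup(\sigma(A_{22})\times\{0\})\cup\sigma((A_{33},B_{33}))$; here I use that the Taylor spectrum of $(0,N)$ for a single operator $N$ equals $\{0\}\times\sigma(N)$ and similarly for $(M,0)$, which follows by direct inspection of the Koszul complex (the boundary operator splits). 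Note $B_{11}$ is the diagonal (direct sum) of one-variable weighted shifts $S_2|_{\mathcal N_{w_1}}$ on the directed trees $\mathscr T_2$, and $A_{22}$ the direct sum of shifts $S_1|_{\mathcal M_{w_2}}$ on $\mathscr T_1$; finiteness of $W_1,W_2$ keeps these finite direct sums.

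For part (iv), I would pass to the Calkin algebra via the Atkinson--Curto Theorem: $\sigma_e(S_{\lambdab})=\sigma(\pi(S_1),\pi(S_2))$. The point of hypothesis \eqref{cpt-c} is that the off-diagonal non-shift blocks $A_{31}$ (with generic entry $P_{l^2(L_v)}S_1|_{\mathcal N_{w_1}}$) and $B_{32}$ (with generic entry $P_{l^2(L_v)}S_2|_{\mathcal M_{w_2}}$) become \emph{compact}: their nonzero matrix entries are governed by the weights $\lambda^{(1)}_{(v_1,u_2)}$ resp. $\lambda^{(2)}_{(u_1,v_2)}$ restricted to descendants, which tend to $0$ along the relevant net by \eqref{cpt-c}, so each such weighted-shift-type operator between the infinitely many summands is a norm-limit of finite-rank operators. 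The finite-rank block $A_{21}$ is automatically compact. Hence in the Calkin algebra the tuple $(\pi(S_1),\pi(S_2))$ becomes block-diagonal: it is the direct sum (over the finitely many $w_1\in W_1$, $w_2\in W_2$, and the finitely many ``classes'' of $L_v$'s after grouping unitarily equivalent ones) of images of $2$-tuples of the form $(0,U_\lambda)$, $(U_\lambda,0)$ for weighted shifts $U_\lambda$ on directed trees, and classical $2$-variable shifts $(A_{33},B_{33})$ on the copies of $\mathscr T_{1,0}\times\mathscr T_{1,0}$; $\mathscr T$ being locally finite with finite joint branching index guarantees there are only finitely many summands up to unitary equivalence and that the $W_j$ are finite. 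Since the Taylor spectrum of a (finite) direct sum is the union, $\sigma_e(S_{\lambdab})=\sigma(\pi(S_{\lambdab}))$ is the asserted finite union of essential spectra.

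The main obstacle I anticipate is the compactness argument in (iv): one must check carefully that $A_{31}$ and $B_{32}$, viewed as operators on the infinite Hilbert-space direct sums $l^2(F_3)=\bigoplus_v l^2(L_v)$ versus $\bigoplus_{w_1}\mathcal N_{w_1}$ and $\bigoplus_{w_2}\mathcal M_{w_2}$, are genuinely compact and not merely ``entrywise small''. The key estimate is that a single block $P_{l^2(L_v)}S_1|_{\mathcal N_{w_1}}$ is a weighted shift whose weights are a sub-collection of $\{\lambda^{(1)}_{(v_1,u_2)} : u_2\in\mathsf{Des}(v_2)\}$, and \eqref{cpt-c} says these go to $0$; combined with local finiteness (so each vertex has finitely many children, bounding the ``width'' of the shift) one gets that each block is compact and, since the off-diagonal structure connects each $\mathcal N_{w_1}$ to only finitely many $L_v$ in a neighborhood-controlled way, the full operator $A_{31}$ is a norm-convergent sum of compacts. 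A secondary subtlety is justifying the upper-triangular spectral inclusion in (i) at the level of Taylor spectrum rather than just ordinary spectrum; this is classical (see \cite{Cu}) but I would state it cleanly as a lemma about commuting tuples with a joint-invariant subspace, using that exactness of the Koszul complexes of the restriction and the quotient tuples implies exactness for the whole.
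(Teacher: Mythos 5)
Your treatment of (i), (iii) and (iv) follows the paper's route: (i) and (iii) come from the block structure in $\bf (D)$ together with Curto's union bound for the Taylor spectrum of a commuting tuple with a joint invariant subspace, and (iv) is obtained by observing that \eqref{cpt-c} makes $A_{31}$ and $B_{32}$ compact (while $A_{21}$ is finite rank), so that $S_{\lambdab}$ is a commuting compact perturbation of a finite orthogonal direct sum of tuples $(0,U_\lambda)$, $(U_\lambda,0)$ and classical $2$-variable shifts; Atkinson--Curto and the additivity of the essential spectrum over orthogonal direct sums (via the Koszul boundary operators) then finish the argument, exactly as in the paper. Your worry about $A_{31}$, $B_{32}$ being ``genuinely compact'' resolves even more simply than you suggest: $W_1$, $W_2$ and $G_1\times G_2$ are finite, so each of these operators is a finite matrix of blocks, each block being a weighted-shift-type operator whose weights tend to $0$ by \eqref{cpt-c}.

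Part (ii), however, contains a genuine error. You assert that $l^2(F_3)$ is jointly invariant for $S^*_{\lambdab}$, equivalently that $l^2(F_1)\oplus l^2(F_2)$ is jointly invariant for $S_{\lambdab}$; this is false, since $A_{31}=P_{l^2(F_3)}S_1|_{l^2(F_1)}$ is a nonzero (indeed infinite rank) block in general. The triangularity goes the other way: because $A_{13}=A_{23}=0$ and $B_{13}=B_{23}=0$, it is $l^2(F_3)$ that is jointly invariant for $S_{\lambdab}$, and $(A_{33},B_{33})$ is the \emph{restriction} $S_{\lambdab}|_{l^2(F_3)}$, not a quotient. Moreover, the general principle you invoke --- that the left spectrum of a quotient tuple is contained in the left spectrum of the whole tuple --- is false: for the unilateral shift $T$ and $\mathcal M=\ran T$, the quotient is the zero operator on a one-dimensional space, whose left spectrum $\{0\}$ is not contained in $\sigma_l(T)=\mathbb T$. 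The correct (and intended) argument is the dual one: if $S_{\lambdab}-\omega$ is bounded below on $l^2(V)$, so is its restriction to the invariant subspace $l^2(F_3)$, whence $\sigma_l((A_{33},B_{33}))\subseteq\sigma_l(S_{\lambdab})$. The same orientation slip appears harmlessly in (i), where you take the filtration $l^2(F_1)\subseteq l^2(F_1)\oplus l^2(F_2)$; the invariant subspaces are in fact $l^2(F_3)\subseteq l^2(F_2)\oplus l^2(F_3)$, but the union bound for the Taylor spectrum is insensitive to which complement you index by, so (i) survives.
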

%\begin{corollary}
%Let $\mathscr T = (V,\mathcal E)$ be the directed Cartesian product of rooted, locally finite directed trees $\mathscr T_1$ and $\mathscr T_2$. Let $S_{\lambdab}=(S_1, S_2)$ be a commuting multishift on $\mathscr T$ such that
%\beq \label{cpt-c}
%\lim_{u_2 \in \mathsf{Des}(v_2)}\lambda^{(1)}_{(v_1, u_2)}= 0 = \lim_{u_1 \in \mathsf{Des}(v_1)}\lambda^{(2)}_{(u_1, v_2)}
%~\mbox{for all~}v \in G_1 \times G_2.
%\eeq
%If $\max\{k_{\mathscr T_1}, k_{\mathscr T_2}\} \leqslant 1$, then $S_{\lambdab}$ is a commuting compact perturbation of orthogonal direct sum of finitely many $2$-tuples of the form $(0, U_{\lambda})$ or $(U_{\lambda}, 0)$ for a single variable weighted shift $U_{\lambda}$ on a directed tree,  and finitely many classical $2$-variable shifts.  
%\end{corollary}
\begin{proof}
Note that (i) and (iii) are particular consequences of part (b) of $\bf (D)$ of the previous decomposition and \cite[Lemmas 4.4 and 4.5]{Cu-00}.
To see (ii), note that if any commuting $d$-tuple $T$ on $\mathcal H$ is bounded below then so is its restriction to any joint invariant subspace $\mathcal M$ of $\mathcal H.$ Applying this fact to $T:=S_{\lambdab}-\omega~(\omega \in \mathbb C^2)$ and $\mathcal M := l^2(F_3)$ yields the conclusion in (ii).

To see the remaining part, 
assume further that \eqref{cpt-c} holds.
We first note that $S_{\lambdab}$ is a commuting compact perturbation of orthogonal direct sum of finitely many $2$-tuples of the form $(0, U_{\lambda})$ or $(U_{\lambda}, 0)$ for a single variable weighted shift $U_{\lambda}$ on a directed tree,  and finitely many commuting classical $2$-variable shifts. 
This may be drawn once we observe that $A_{31}=P_{l^2(L_v)}S_1|_{\mathcal N_{w_1}}$ and $B_{32}=P_{l^2(L_v)}S_2|_{\mathcal M_{w_2}}$) are compact for every $v \in G_1 \times G_2$. But this follows from \eqref{cpt-c}.
To complete the proof, in view of Atkinson-Curto Theorem \cite[Theorem 2]{Cu-0}, we need the fact that the essential spectrum $\sigma_e(A \oplus B)$ of orthogonal direct sum of $A$ and $B$ is union of $\sigma_e(A)$ and $\sigma_e(B)$, where $A$ and $B$ denote commuting $d$-tuples of bounded linear operators on $\mathcal H$ and $\mathcal K$ respectively. We include elementary verification of this fact.
Note that the boundary operators $\partial_{A\oplus B}$ appearing in the Koszul complex of $A \oplus B$ are orthogonal direct sum of boundary operators $\partial_A$ and $\partial_B$ appearing in the Koszul complexes of $A$ and $B$ respectively (refer to Section \ref{Spec-th}). That is, $\partial_{A \oplus B} + \partial^*_{A \oplus B} = (\partial_{A} + \partial^*_A) \oplus (\partial_{B} + \partial^*_B)$. On the other hand, by \cite[Theorem 6.2]{Cu}, a $d$-tuple $T$ is Fredholm if and only if $\partial_T + \partial^*_T$ is Fredholm. The desired conclusion is now immediate. 
\end{proof}

%We record the following basic fact for ready reference.
%\begin{lemma}
%Let $A$ and $B$ denote commuting $d$-tuples of bounded linear operators on $\mathcal H$ and $\mathcal K$ respectively. 
%Then the essential spectrum $\sigma_e(A \oplus B)$ of orthogonal direct sum of $A$ and $B$ is union of $\sigma_e(A)$ and $\sigma_e(B)$.
%\end{lemma}
%\begin{proof}
%Note that the boundary operators $D_{A\oplus B}$ appearing in the Koszul complex of $A \oplus B$ are orthogonal direct sum of boundary operators $D_A$ and $D_B$ appearing in the Koszul complexes of $A$ and $B$ respectively. That is, $D_{A \oplus B} + D^*_{A \oplus B} = (D_{A} + D^*_A) \oplus (D_{B} + D^*_B)$. On the other hand, by \cite[Theorem 6.2]{Cu}, a $d$-tuple $T$ is Fredholm if and only if $D_T + D^*_T$ is Fredholm. Now the desired conclusion follows from the same conclusion in the case $d=1.$
%\end{proof}

We illustrate the previous result with the help of an example.
\begin{example}
Let $\mathscr T=\mathscr T_{2, 0} \times \mathscr T_{1, 0}$ be as discussed in Example \ref{classical-mix}. 
Note that $G_1=\{1, 2\}$, $G_2= \{0\},$ $W_1=\{0\},$ $W_2=\emptyset,$ $F_1 = \{0\} \times V_2,$ $F_2 = \emptyset,$ $F_3= L_{(1, 0)} \cup L_{(2, 0)}$.

Let $S_{\lambdab}$ be a multishift on $\mathscr T$ with weights $\lambdab$ such that 
\beqn
\lim_{k \rar \infty} \lambda^{(1)}_{(1, k)}=  \lim_{k \rar \infty} \lambda^{(1)}_{(2, k)}=0.
\eeqn
%By the preceding proposition, $S_{\lambdab}$ is a commuting compact perturbation of $(0, U_{\lambda})$, $S_{\bf w^{(1)}}$, $S_{\bf w^{(2)}}$, where $U_{\lambda}$ is the classical $1$-variable shift with weights $\{\lambda^{(2)}_{(0, n)} : n \in \mathbb N\},$ $S_{\bf w^{(1)}}$ is the classical $2$-variable shift with weights ${\bf w^{(1)}}=\{\lambda^{(j)}_{(2k+1, l)} : k, l \in \mathbb N, j=1, 2\},$ and
%$S_{\bf w^{(2)}}$ is the classical $2$-variable shift with weights ${\bf w^{(2)}}=\{\lambda^{(j)}_{(2k, l)} : k, l \in \mathbb N, j=1, 2\}.$ 
By the above result, the essential spectrum $\sigma_e(S_{\lambdab})$ of $S_{\lambdab}$ is equal to the union of essential spectra of $(0, U_{\lambda}), S_{\bf w^{(1)}}, S_{\bf {w^{(2)}}}$. In particular, this is applicable to the commuting multishift $S_{\lambdab}=(S_1, S_2)$ with weights given by
\beqn
\lambda^{(1)}_{(m, n)} =\frac{1}{\sqrt{\mbox{card}(\mathsf{sib}_1(m, n))}} \sqrt{\frac{[\frac{m}{2}]}{[\frac{m}{2}]+n}}, ~\lambda^{(2)}_{(m, n)} = \sqrt{\frac{n}{[\frac{m}{2}]+n}},
\eeqn 
where $m, n \in  \mathbb N,$ and $$\left[\frac{m}{2}\right]=\begin{cases}\frac{m}{2}~& \mbox{if}~m~\mbox{is an even integer} \\
\frac{m+1}{2}~& \mbox{otherwise.}\end{cases}$$
Note that none of $S_1, S_2$ is compact. Further, $U_{\lambda}$ is the unilateral unweighted shift with essential spectrum the unit circle $\mathbb T$ (refer to \cite{S}). 
By spectral mapping property, the essential spectrum of $(0, U_{\lambda})$ is $\{0\} \times \mathbb T.$
Further, $S_{\bf w^{(1)}}$ is the $2$-variable classical multishift with weights
\beqn
w^{(1)}_{(2k+1, l)} = \sqrt{\frac{k+1}{k+l+1}},~w^{(2)}_{(2k-1, l+1)} = \sqrt{\frac{l+1}{k+l+1}}~(k \geqslant  1, l \geqslant  0).
\eeqn
It is easy to see that $S_{\bf w^{(1)}}$ is unitarily equivalent to the Drury-Arveson $2$-shift, and hence the essential spectrum of $S_{\bf w^{(1)}}$ equals the unit sphere $\partial \mathbb B^2$ in $\mathbb C^2$ (Proposition \ref{sp-th-classical}). Similarly, the essential spectrum of $S_{\bf w^{(2)}}$ is equal to $\partial \mathbb B^2$. It follows that 
$\sigma_e(S_{\lambdab}) = \partial \mathbb B^2.$ Also, since the left-spectrum of Drury-Arveson shift is the unit sphere  (Proposition \ref{sp-th-classical}), it may be concluded from Proposition \ref{spectral-i}(ii) that $\sigma_l(S_{\lambdab})$ contains $\partial \mathbb B^2$. 
%Let us see that 
%$\sigma_l(S_{\lambdab})=\partial \mathbb B^2$. Note that
%\beqn
%I - 2Q_{S_{\lambdab}}(I)+Q^2_{S_{\lambdab}}(I)=0.
%\eeqn
\end{example}
\begin{remark}
We will see later in Chapter 5 that $\sigma_l(S_{\lambdab})$ is contained in the unit sphere in $\mathbb C^2$ (see \eqref{sp-inclusion}).
\end{remark}

%\section{Multishifts with Nowhere Dense Taylor Spectra}

We conclude this section with a recipe to construct non-compact multishifts $S_{\lambdab}$ on $\mathscr T$ having Taylor spectra with empty interior. One such family of classical multishifts has been exhibited in \cite[Example 2]{Cu-2}. 
We now capitalize on it to construct examples of multishifts $S_{\lambdab}$ on $\mathscr T_{2, 0} \times \mathscr T_{2, 0}$ with Taylor spectra of empty interior.
\begin{example} \label{empty-i}
Let $\mathscr T=\mathscr T_{2, 0} \times \mathscr T_{2, 0}$ be as discussed in Example \ref{T1-T1}. 
Note that $G_1=\{1, 2\}=G_2,$ $W_1=\{0\}=W_2,$ $F_1 = \{0\} \times V_2,$ $F_2 = V^{\circ}_1 \times \{0\},$ $F_3= L_{(1, 1)} \cup L_{(1, 2)} \cup L_{(2, 1)} \cup L_{(2, 2)}$. By Proposition \ref{spectral-i}, we have
\beqn
\sigma(S_{\lambdab}) \subseteq \big(\{0\} \times \sigma(B_{11})\big) \cup \big(\sigma(A_{22}) \times \{0\} \big) \cup \sigma((A_{33}, B_{33})).
\eeqn
We now choose weights of $S_{\lambdab}$, so that all four classical multishift appearing in $(A_{33}, B_{33})$ are unitarily equivalent to some classical multishift with Taylor spectrum of empty interior. Since $\big(\{0\} \times \sigma(B_{11})\big) \cup \big(\sigma(A_{22}) \times \{0\} \big),$ being contained in $\big(\{0\} \times \mathbb D_r\big) \cup \big(\mathbb D_s \times \{0\}\big)$ for some $r, s >0$, has always empty interior, we conclude that $\sigma(S_{\lambdab})$ has empty interior.
%Let $S_{\lambdab}$ be a multishift on $\mathscr T$. We now choose weights $\lambdab$ so that 
%\beqn
%\lim_{k \rar \infty} \lambda^{(1)}_{(2k+1, 0)} = 0 =  \lim_{k \rar \infty} \lambda^{(2)}_{(1, k)}, ~\lim_{k \rar \infty} \lambda^{(1)}_{(2k, 0)} = 0 =  \lim_{k \rar \infty} \lambda^{(2)}_{(2, k)}.
%\eeqn
%By the preceding proposition, $S_{\lambdab}$ is a commuting compact perturbation of $(0, U_{\lambda})$, $S_{\bf w^{(1)}}$, $S_{\bf w^{(2)}}$, where $U_{\lambda}$ is the classical $1$-variable shift with weights $\{\lambda^{(2)}_{(0, n)} : n \in \mathbb N\},$ $S_{\bf w^{(1)}}$ is the classical $2$-variable shift with weights ${\bf w^{(1)}}=\{\lambda^{(j)}_{(2k+1, l)} : k, l \in \mathbb N, j=1, 2\},$ and
%$S_{\bf w^{(2)}}$ is the classical $2$-variable shift with weights ${\bf w^{(2)}}=\{\lambda^{(j)}_{(2k, l)} : k, l \in \mathbb N, j=1, 2\}.$ By Atkinson-Curto Theorem \cite[Theorem 2]{Cu}, the essential spectrum $\sigma_e(S_{\lambdab})$ of $S_{\lambdab}$ is equal to that of $(0, U_{\lambda}) \oplus S_{\bf w^{(1)}} \oplus S_{\bf {w^{(2)}}}$.  
\end{example}

%This yields the decomposition of $S_{\bf w^{(i)}}=(S^{(i)}_{1}, S^{(i)}_2)$ on $l^2(L(i, 0))$ as follows:
%\beq \label{deco}
%S^{(i)}_{1}=\left[\begin{array}{ccc}
%A^{(i)}_1 & 0 & 0       \\
%0 & 0 & 0     \\
%0 & A^{(i)}_2 & A^{(i)}_3     
%\end{array}\right] ~\mbox{on~}l^2(L_{(i, 0)}) = l^2(X_i) \oplus  l^2(Y_i) \oplus l^2(Z_i),
%\eeq
%where $A^{(i)}_1=S^{(i)}_{1}|_{l^2(X_i)}$ (a unilateral shift on $X_i$),~$A^{(i)}_2 = P_{l^2(Z_i)}S^{(i)}_1|_{l^2(Y_i)}$, $A^{(i)}_3 =  S^{(i)}_{1}|_{l^2(Z_i)}$ (first entry of classical multishift on $Z_i$).
%Similarly, we obtain
%\beq \label{deco}
%S^{(i)}_{2}=\left[\begin{array}{ccc}
%0 & 0 & 0       \\
%B^{(i)}_1 & B^{(i)}_2 & 0     \\
%B^{(i)}_3 & 0 & B^{(i)}_4     
%\end{array}\right] ~\mbox{on~}l^2(L_{(i, 0)}) = l^2(X_i) \oplus  l^2(Y_i) \oplus l^2(Z_i),
%\eeq
%where $B^{(i)}_1=P_{l^2(Y_i)}S^{(i)}_{2}|_{l^2(X_i)}$ (a rank one operator),~$B^{(i)}_2 = S^{(i)}_2|_{l^2(Y_i)}$ (a unilateral shift on $Y_i$), $B^{(i)}_3 =  P_{l^2(Z_i)}S^{(i)}_{2}|_{l^2(X_i)}$, $B^{(i)}_4 = S^{(i)}_2|_{l^2(Z_i)}$ (second entry of classical multishift on $Z_i$). 

%(0, B_11), (A_33, B_33), (A_31, 0)

\chapter{Wandering Subspace Property}

Let $T=(T_1, \cdots, T_d)$ be a commuting $d$-tuple on a Hilbert space $\mathcal H.$ A subspace $\mathcal W$ of $\mathcal H$ is said to be a {\it wandering subspace} for $T$ if $T^{\alpha}\mathcal W$ is orthogonal to $\mathcal W$ for every $\alpha \in \mathbb N^d \setminus \{0\}.$
Note that the joint kernel $E=\cap_{j=1}^d \ker T^*_j$ of $T^*$ is always a wandering subspace for $T.$ 
Following \cite[Definition 2.4]{Sh}, we say that $T$ possesses {\it wandering subspace property} 
\index{$[E]_T$}
if $\mathcal H =[E]_T$, 
where 
\beqn
[E]_T := \bigvee_{\alpha \in \mathbb N^d} T^{\alpha}E.
\eeqn  

The main result of this chapter ensures the wandering subspace property for multishift $S_{\lambdab}$ on $\mathscr T$ under some modest assumptions. Unlike the cases either of classical multishifts or of one variable weighted shifts on rooted directed trees, this fact lies deeper. 

\begin{thm} \label{wandering}
Let $\mathscr T = (V,\mathcal E)$ be the directed Cartesian product of locally finite, rooted directed trees $\mathscr T_1, \cdots, \mathscr T_d$ and let $S_{\lambdab}$ be a commuting multishift on $\mathscr T$. Then $S_{\lambdab}$ possesses wandering subspace property.
%\beq \label{wanderingeq}
%\bigvee_{\alpha \in \mathbb N^d} S^{\alpha}_{\lambdab}(E) = l^2(V).
%\eeq
\end{thm}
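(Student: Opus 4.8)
The plan is to prove that $[E]_{S_{\lambdab}} = l^2(V)$, where $E = \bigcap_{j=1}^d \ker S_j^*$, by showing that $e_v \in [E]_{S_{\lambdab}}$ for every $v \in V$. Since by Lemma \ref{disjoint}(vi) we have $V = \bigsqcup_{\alpha \in \mathbb N^d} \childnt{\alpha}{\rootb}$, it suffices to prove by induction on $|\alpha_v|$ that $e_v \in M$ for all $v$, where $M := [E]_{S_{\lambdab}}$. The base case $v = \rootb$ is immediate since $e_{\rootb} \in E \subseteq M$ (see Proposition \ref{joint-k}). For the inductive step, I would fix $n \in \mathbb N$, assume $e_u \in M$ for all $u$ with $|\alpha_u| \le n$, and take $v$ with $|\alpha_v| = n+1$. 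The difficulty, already flagged in the text right after Proposition \ref{wandering-one}, is that in several variables the joint kernel $E$ is \emph{not} a simple orthogonal sum of local pieces $l^2(\childi{j}{\cdot}) \ominus [\Gamma^{(j)}_{\cdot}]$; there is genuine coupling coming from the system of linear equations attached to the branching vertices (as illustrated by the $3$-dimensional example and Proposition \ref{joint-k}).

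The key observation I would exploit is the following \emph{staircase/telescoping} argument along generations. Fix $v$ with $|\alpha_v| = n+1$. Since $\mathscr T_1, \dots, \mathscr T_d$ are leafless, $v \in \childi{j}{u}$ for some $j$ and some $u \in V$ with $|\alpha_u| = n$; by the inductive hypothesis $e_u \in M$, and since $M$ is $S_{\lambdab}$-invariant, $S_j e_u = \sum_{w \in \childi{j}{u}} \lambda^{(j)}_w e_w \in M$, i.e. $[\Gamma^{(j)}_u] \subseteq M$. This handles the ``radial'' direction but leaves the transverse directions inside $l^2(\childi{j}{u})$. To capture those, I would argue \emph{simultaneously over all siblings}: consider the finite-dimensional subspace $N_u := l^2(\child u) = \bigoplus_{j=1}^d l^2(\childi{j}{u})$ (using local finiteness so this is finite dimensional, together with Lemma \ref{disjoint}(iv) for $n=1$ that $\child u = \bigsqcup_{j} \childi j u$ is a disjoint union — note the intersections with the distinguished $\childnt{\epsilon_j}{u}$ are the $\childi j u$). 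The span of $\{S_j e_u : 1 \le j \le d\}$ is $[\Gamma^{(1)}_u, \dots, \Gamma^{(d)}_u] \subseteq N_u \cap M$. The remaining task is to show $N_u \ominus [\Gamma^{(1)}_u, \dots, \Gamma^{(d)}_u] \subseteq M$, and for this I would use that $E$ contains the ``transverse'' kernel contributions: by the first inclusion in Proposition \ref{joint-k} (applied after translating the branching analysis to the vertex $u$, via the local structure of $S^*_{\lambdab}$ around $u$ as in Proposition \ref{kerSi*}), the orthocomplement of $[\Gamma^{(j)}_u]$ inside $l^2(\childi j u)$ lies in $\ker S_j^*$, and one checks these pieces are also annihilated by $S_i^*$ for $i \ne j$ \emph{unless} $u$ is itself a branching vertex of the relevant coordinate trees, in which case the coupled linear system enters.

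Thus the crux — and the step I expect to be the main obstacle — is resolving the coupled system at branching vertices: showing that even though $l^2(\child u) \ominus [\Gamma^{(1)}_u, \dots, \Gamma^{(d)}_u]$ need not sit inside $E$, every such vector is nonetheless a finite linear combination of vectors of the form $S^{\alpha}_{\lambdab} \xi$ with $\xi \in E$ and $|\alpha|$ small, using the descendants of $u$ one generation further down. Concretely, I would run a secondary downward induction: a transverse vector $f \in l^2(\childi j u)$ with $S_j^* f = 0$ but $S_i^* f \ne 0$ for some $i$ can be corrected by subtracting $S_i$ applied to an appropriate element of $l^2(\parenti i {\childi j u})$, pushing the obstruction back toward vertices already known (by the primary induction on $|\alpha_v|$) to lie in $M$; iterating, the obstruction telescopes away after finitely many steps because $\mathscr T$ has — for the purposes of this local analysis — finitely many branching configurations to traverse, and the leafless hypothesis guarantees the needed children exist at each stage. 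Assembling: $N_u \subseteq M$, hence $l^2(\childi j u) \subseteq M$, hence $e_v \in M$; the induction closes and $l^2(V) = [E]_{S_{\lambdab}}$. I would remark that when $d = 1$ or when $\mathscr T$ is a product of copies of $\mathscr T_{1,0}$ the coupled system is trivial and the argument collapses to the one in Proposition \ref{wandering-one} or the classical case respectively, explaining why the theorem ``lies deeper'' in general.
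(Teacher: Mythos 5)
Your outer framework (induction on the generation, using the $S_{\lambdab}$-invariance of $M:=[E]_{S_{\lambdab}}$ to get $[\Gamma^{(j)}_u]\subseteq M$, and the reduction of everything to the structure of the joint kernel near branching vertices) is the right starting point, but the step you yourself identify as the crux is where the proposal has a genuine gap, and the repair you sketch does not work. The unit of analysis $N_u=l^2(\child{u})$ attached to a \emph{single} parent $u$ is the wrong one: the contributions to $E$ at generation $n+1$ do not decompose over individual parents. By the analysis of Chapter 4 they live on the sibling classes $\mathsf{sib}_F(w)$, and $\mathsf{sib}_F(w)$ is in general contained in no single $\child{u}$. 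Already for $\mathscr T_{2,0}\times\mathscr T_{2,0}$ (Example \ref{j-k-T1T1}) the kernel piece $\mathcal L_{(1,1),\{1,2\}}$ is supported on $\{(1,1),(1,2),(2,1),(2,2)\}$, which meets the children of $(1,0)$, of $(2,0)$, of $(0,1)$ and of $(0,2)$ simultaneously; so the inclusion you need, $N_u\ominus[\Gamma^{(1)}_u,\cdots,\Gamma^{(d)}_u]\subseteq M$, cannot be extracted from $E$ together with data at the single vertex $u$. Your proposed correction --- subtracting $S_ig$ with $g$ supported on $\parenti{i}{\childi{j}{u}}=\childi{j}{\parenti{i}{u}}$ --- does not stay inside $l^2(\childi{j}{u})$: the vector $S_ig$ is supported on $\childi{j}{\mathsf{sib}_i(u)}$, so the ``obstruction'' is spread over the whole sibling class rather than cancelled, and the iteration never closes at the level of one parent. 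Finally, the termination you invoke (``finitely many branching configurations to traverse'') is neither justified nor available: the theorem is asserted for arbitrary locally finite rooted trees, which may have infinitely many branching vertices.

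The missing ingredient is a structural identification of the orthocomplement of the kernel piece inside each \emph{sibling class}, which is exactly what Lemma \ref{kernel-perp} provides:
\[
l^2(\mathsf{sib}_F(u))\ominus\mathcal L_{u, F}=\bigvee\big\{S_ie_{v_G|\parent{u_i}} : v_G\in\mathsf{sib}_{F, G}(u)~\mbox{with}~G=F\setminus\{i\},\ i\in F\big\},
\]
so that each $l^2(\mathsf{sib}_F(u))$ splits as a piece of $E$ plus the span of $S_i$ applied to basis vectors strictly closer to the root. With this lemma in hand an induction closes --- the paper runs a double induction on $\mbox{card}(F)$ and on the depth in a single coordinate, and your induction on $|\alpha_v|$ would also work provided the inductive step is carried out sibling-class by sibling-class rather than parent by parent. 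Without this lemma, or an equivalent resolution of the coupled system \eqref{system-main} across an entire class $\mathsf{sib}_F(u)$, the argument is incomplete precisely at the point where, as the remark after Proposition \ref{wandering-one} warns, the several-variable case genuinely differs from the one-variable one.
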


The proof of Theorem \ref{wandering}, as presented below, relies heavily on the analysis of the joint kernel of $S^*_{\lambdab}$ carried out in the next section.  
Before we start preparing for the proof of this result, we would like to discuss Shimorin's approach to the wandering subspace property for left invertible analytic operators.
Note that the wandering subspace property for a left invertible analytic operator $T$ on $\mathcal H$ is a simple consequence of the duality relation \beq \label{d-formula} \Big(\bigcap_{k \in \mathbb N}T'^k\mathcal H\Big)^{\perp}= [\ker T^*]_T,\eeq 
which, in turn, relies on the identity
\beq
\label{S-formula}
I - T^n{T'^{*}}^n = \sum_{k=0}^{n-1}T^k(I-TT'^{*}){T'^{*}}^k,
\eeq
where $T'=T(T^*T)^{-1}$ denotes the Cauchy dual of $T$ and $I-TT'^{*}$ is the orthogonal projection $P_{\ker T^*}$ onto $\ker T^*.$ In order not to distract the reader from the main line of development, we have relegated the discussion on some of the difficulties arising in finding a multivariable counterpart of Shimorin's approach to the Appendix.

%\chapter{The Joint Kernel of $S^*_{\lambdab}$}

\section{The Joint Kernel and a System of Linear Equations}
%\label{section5.1}

%For $u \in V$ and $j = 1, \cdots, d$, set $\mathsf{sib}_j(u) := \childi{j}{\parenti{j}{u}}$. For $W \subseteq V$, we define $\mathsf{sib}_j(W) = \displaystyle\bigcup_{u \in W} \mathsf{sib}_j(u)$.

In this section, we show that finding the joint kernel of $S^*_{\lambdab}$ is equivalent to solving  certain system of linear equations. 
This information is then used to derive wandering subspace property for $S_{\lambdab}$. 

We now introduce a framework suitable for decomposing the joint kernel of $S^*_{\lambdab}$ into smaller subspaces of $l^2(V)$. These subspaces are induced by a system of linear equations arising from the action of $S^*_{\lambdab}$.

For a set $A,$ let $\mathscr P(A)$ 
\index{$\mathscr P(A)$}
denote the collection of all subsets of $A.$ In case $A = \{1, \cdots, d\},$ we sometimes use the simpler notation $\mathscr P$ 
\index{$\mathscr P(\{1, \cdots, d\})=\mathscr P$}
for 
$\mathscr P(\{1, \cdots, d\})$.

Let $\mathscr T = (V,\mathcal E)$ be the directed Cartesian product of rooted directed trees $\mathscr T_1, \cdots, \mathscr T_d$.
Consider the set-valued function $\Phi : \mathscr P \rar \mathscr P(V)$ given by 
\index{$\Phi_F$}
$\Phi(F)=\Phi_F$, where
\beq \label{phi-F-eqn}
\Phi_F := \{v \in V : v_j \in V^{\circ}_j~\mbox{if~}j \in F,~\mbox{and}~v_j =\mathsf{root}_j~\mbox{if}~j \notin F\},~ F \in \mathscr P.
\eeq
Note that $\Phi_F \cap \Phi_G = \emptyset$ if $F \neq G.$ Further, if $v \in V$ then $v \in \Phi_F$ for $$F=\{j \in \{1, \cdots, d\} : v_j \neq \mathsf{root}_j\}.$$ This shows that \beq \label{V-phi-F} V  = \displaystyle \bigsqcup_{F \in \mathscr{P}} \Phi_F.\eeq 
Let $F:= \{i_1, \cdots, i_k \} \subseteq \{1, \cdots, d\}$ be fixed.
For $u \in \Phi_F$, define 
\index{$\mathsf{sib}_F(u)$}
\beq \label{sib}
\mathsf{sib}_F(u) := \mathsf{sib}_{i_1}  \mathsf{sib}_{i_2} \cdots \mathsf{sib}_{i_k}(u).\eeq
As a convention, we set $\mathsf{sib}_{\,\emptyset}(u)=\{u\}$ for all $u \in V.$ 
%\begin{remark}
%\label{sib-card}
%Note that 
%$\mbox{card}(\mathsf{sib}_F(u))=\prod_{j \in F}\mbox{card}(\mathsf{sib}_j(u)).$
%\end{remark}

%Observe the following: $u \in \mathsf{sib}_F(u), ~u \in \mathsf{sib}_F(v)$ if and only if $v \in \mathsf{sib} _F(u)$, and 
%$u \in \mathsf{sib}_F(v), v \in \mathsf{sib}_F(w)$ implies that $u \in \mathsf{sib} _F(w)$.
Define a relation $\sim$ on $\Phi_F$ by $u \sim v$ if $u \in \mathsf{sib}_F(v)$, and note that $\sim$ is an equivalence relation on $\Phi_F$. Moreover, for any $u \in \Phi_F,$ the equivalence class containing $u$ is precisely $\mathsf{sib}_F(u)$. 
An application of axiom of choice \cite{Hal} allows us to form a set $\Omega_F$ (to be referred to as an {\it indexing set corresponding to $F$}) 
\index{$\Omega_F$}
by picking up exactly one element from each of the equivalence classes $\mathsf{sib}_F(u)$.
Thus we have the disjoint union \beq \label{phi-F} \Phi_F = \displaystyle \bigsqcup_{u \in \Omega_F} \mathsf{sib}_F(u).\eeq
This combined with \eqref{V-phi-F} yields
\beqn \label{V-sib-F-u}
V  = \displaystyle \bigsqcup_{F \in \mathscr{P}} \bigsqcup_{u \in \Omega_F} \mathsf{sib}_F(u).
\eeqn
%This yields
%\beq \label{decom-1}
%l^2(V) = \bigoplus_{F  \in \mathscr{P}}l^2(\Phi_F).
%\eeq
As a consequence, we obtain the following decomposition of $l^2(V).$
\begin{proposition} \label{decom}
Let $\mathscr T = (V,\mathcal E)$ be the directed Cartesian product of rooted directed trees $\mathscr T_1, \cdots, \mathscr T_d$. Then
\beqn
l^2(V) = \bigoplus_{F  \in \mathscr{P}}\bigoplus_{u \in \Omega_{F}} l^{2}(\mathsf{sib}_{F}(u)),
\eeqn 
where $\Omega_F$ is the indexing set corresponding to $F$ and $\mathsf{sib}_{F}(u)$ is given by \eqref{sib}.
\end{proposition}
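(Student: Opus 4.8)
The plan is to show that the right-hand side is an internal orthogonal direct sum which exhausts $l^2(V)$. The decomposition is really just a ``bookkeeping'' statement: we already have, from \eqref{V-phi-F} and \eqref{phi-F}, the disjoint union
\beqn
V = \bigsqcup_{F \in \mathscr P} \Phi_F = \bigsqcup_{F \in \mathscr P} \bigsqcup_{u \in \Omega_F} \mathsf{sib}_F(u),
\eeqn
so the proof is essentially a matter of transporting this partition of the index set $V$ to the Hilbert space $l^2(V)$ via the canonical orthonormal basis $\{e_v : v \in V\}$.

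First I would recall the elementary fact that whenever $V = \bigsqcup_{i \in I} V_i$ is a disjoint decomposition of $V$ into nonempty subsets, one has the orthogonal decomposition $l^2(V) = \bigoplus_{i \in I} l^2(V_i)$, where each $l^2(V_i)$ is viewed as the closed subspace of $l^2(V)$ spanned by $\{e_v : v \in V_i\}$ (this is the isometric embedding noted just before Definition \ref{multishift-dfn}). Indeed, the subspaces $l^2(V_i)$ are pairwise orthogonal because distinct $V_i$ share no basis vectors, and their closed linear span is all of $l^2(V)$ because it contains every $e_v$, $v \in V$, and $\{e_v\}_{v \in V}$ is an orthonormal basis. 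Then I would apply this twice: once to the outer partition $V = \bigsqcup_{F \in \mathscr P} \Phi_F$ from \eqref{V-phi-F}, giving $l^2(V) = \bigoplus_{F \in \mathscr P} l^2(\Phi_F)$, and once to the inner partition $\Phi_F = \bigsqcup_{u \in \Omega_F} \mathsf{sib}_F(u)$ from \eqref{phi-F}, giving $l^2(\Phi_F) = \bigoplus_{u \in \Omega_F} l^2(\mathsf{sib}_F(u))$. Substituting the second into the first yields the claimed formula. (Since $\mathscr P$ is finite and each $\Omega_F$ is at most countable, there are no convergence subtleties beyond the standard Hilbert-space direct sum.)

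I should also make sure the ingredients \eqref{V-phi-F} and \eqref{phi-F} are genuinely in place: \eqref{V-phi-F} is proved in the paragraph preceding the proposition from the observation that $v \in \Phi_F$ for $F = \{j : v_j \neq \mathsf{root}_j\}$ together with the disjointness $\Phi_F \cap \Phi_G = \emptyset$ for $F \neq G$; and \eqref{phi-F} follows from the fact that $\sim$ (defined by $u \sim v$ iff $u \in \mathsf{sib}_F(v)$) is an equivalence relation on $\Phi_F$ with equivalence classes $\mathsf{sib}_F(u)$, so that $\Omega_F$, a transversal chosen via the axiom of choice, yields the disjoint union. One small point worth verifying explicitly is that $\sim$ really is symmetric and transitive: symmetry and transitivity both come down to Remark \ref{rmk-sib}, namely that the operators $\mathsf{sib}_i$ commute and are idempotent ($\mathsf{sib}_i \mathsf{sib}_i = \mathsf{sib}_i$), which forces $\mathsf{sib}_F \mathsf{sib}_F = \mathsf{sib}_F$ and hence the class of $u$ equals $\mathsf{sib}_F(u)$.

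Honestly, there is no serious obstacle here — the result is a structural corollary of the set-partition identities already established, and the only ``work'' is invoking the standard correspondence between partitions of $V$ and orthogonal decompositions of $l^2(V)$. If anything, the one place to be careful is bookkeeping: confirming that no $\mathsf{sib}_F(u)$ is empty (true, since it contains $u$) and that the double index $(F,u)$ ranges over a genuine partition, so that the two-stage direct sum is legitimate. Thus the proof will be short, citing \eqref{V-phi-F}, \eqref{phi-F}, and the orthonormality of $\{e_v\}_{v \in V}$.
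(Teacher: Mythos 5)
Your proposal is correct and follows exactly the route the paper intends: the paper states the proposition as an immediate consequence of the partition $V = \bigsqcup_{F \in \mathscr{P}} \bigsqcup_{u \in \Omega_F} \mathsf{sib}_F(u)$ established just beforehand, transported to $l^2(V)$ via the orthonormal basis $\{e_v\}_{v\in V}$. Your extra care about the equivalence relation and non-emptiness of the classes is sound but adds nothing beyond what the paper already records in Remark \ref{rmk-sib} and the surrounding discussion.
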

In order to describe the joint kernel $E$ of $S^*_{\lambdab}$, one needs to understand the subspace $l^{2}(\mathsf{sib}_{F}(u))$. Before that, let us see some definitions.
%\begin{proof}
%As a consequence,
%we have \beq \label{decom-2} l^2(\Phi_F) = \bigoplus_{u \in \Omega_{F}} l^{2}(\mathsf{sib}_{F}(u)).\eeq 
%By combining \eqref{decom-1} and \eqref{decom-2}, we have the desired decomposition. 
%\end{proof} 
\begin{definition} \label{dfn4.1.2}
For $F \in \mathscr P$ and $v = (v_1,\cdots, v_d) \in V$, let $v_F$ denote the $d$-tuple with $j^{\mbox{\tiny th}}$ 
\index{$v_F$}
coordinate given by $$ (v_F)_j := \begin{cases} 
v_j & \mbox{~if~} j \in F,  \\
\mathsf{root}_j & \mbox{~if~} j \notin F. \end{cases}$$ 
Further, for fixed $1 \leqslant i \leqslant d$ such that $i \notin F,$ and $u_i \in V_i$, we
define $v_F \vert u_i$ 
\index{$v_F \vert u_i$}
to be $(w_1, \cdots, w_d)$, where  
\beqn
w_j = \begin{cases} 
u_i & \mbox{~if~} j=i,  \\
(v_F)_j & \mbox{~otherwise}. \end{cases}
\eeqn
\end{definition}
\begin{remark} Note that $v_F$ is obtained from $v$ by replacing $j^{\mbox{\tiny th}}$ coordinate by $\mathsf{root}_j$ whenever $j \notin F.$ On the other hand, $v_F|u_i$ is obtained from $v_F$ by replacing its $i^{\mbox{\tiny th}}$ coordinate by $u_i.$ 
\end{remark}
For subsets $F, G$ of $\{1, \cdots, d\}$ such that $G \subseteq F,$ and $u \in \Phi_F,$ \index{$\mathsf{sib}_{F, G}(u)$}
we define 
\beq \label{sib-F-G-u}
\mathsf{sib}_{F, G}(u):=\{v_G : v \in \mathsf{sib}_F(u)\}. 
\eeq
\begin{remark}
Note that different vertices $v$ in $\mathsf{sib}_F(u)$ may correspond to single $v_G \in \mathsf{sib}_{F, G}(u).$
\end{remark}
%In case $G=F \setminus \{i\}$ for some $i \in F$, we use the simpler notation $\mathsf{sib}_{F, i}(u)$ in place of $\mathsf{sib}_{F, G}(u)$.

\begin{lemma} \label{sib_F}
Let $F \in \mathscr P$ and let $i \in F.$
Let $\Omega_F$ be the indexing set corresponding to $F$ and let $\mathsf{sib}_{F}(u)$ be given by \eqref{sib}.
For $u \in \Omega_{F}$ and $G:=F \setminus \{i\}$, we have the following:
\begin{enumerate}
\item[(i)] $
\mathsf{sib}_F (u) = \displaystyle \bigsqcup_{v_G \in \mathsf{sib}_{F, G}(u)} \mathsf{sib}_i (v_G|u_i).$
\item[(ii)] For all $v_G \in \mathsf{sib}_{F, G}(u)$, $\displaystyle \mbox{card}(\mathsf{sib}_i (v_G|u_i))$ is constant.
\item[(iii)] $\mbox{card}(\mathsf{sib}_{F, G}(u))= \displaystyle \prod_{j \in F, j \neq i} \mbox{card}(\mathsf{sib}_j(u))$.
\item[(iv)] $\mbox{card}(\mathsf{sib}_F(u))=\displaystyle \prod_{j \in F} \mbox{card}(\mathsf{sib}_j(u)).$
\end{enumerate}
\end{lemma}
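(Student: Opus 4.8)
The plan is to prove the four assertions in order, using the disjointness facts from Lemma~\ref{disjoint} together with the elementary combinatorial properties of siblings recorded in Remark~\ref{rmk-sib}, Proposition~\ref{sib-id}, and Remark~\ref{child-par}. The backbone is part~(i): once the decomposition $\mathsf{sib}_F(u)=\bigsqcup_{v_G\in\mathsf{sib}_{F,G}(u)}\mathsf{sib}_i(v_G|u_i)$ is established, parts (ii)--(iv) follow by counting, and (iv) is obtained from (i)--(iii) by induction on $\mathrm{card}(F)$.

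First I would prove (i). Write $F=\{i_1,\dots,i_k\}$ with $i=i_1$, say, and recall from \eqref{sib} that $\mathsf{sib}_F(u)=\mathsf{sib}_{i_1}\mathsf{sib}_{i_2}\cdots\mathsf{sib}_{i_k}(u)$; by the commutativity of the $\mathsf{sib}_j$ operators (Remark~\ref{rmk-sib}) I may pull $\mathsf{sib}_i$ to the outside and write $\mathsf{sib}_F(u)=\mathsf{sib}_i\big(\mathsf{sib}_G(u)\big)$ where $\mathsf{sib}_G(u)=\bigcup_{i_l\in G}\cdots$. The key observation is that since $G=F\setminus\{i\}$ does not involve the $i^{\mbox{\tiny th}}$ coordinate, every $w\in\mathsf{sib}_G(u)$ has $w_i=u_i$, so $w=w_G|u_i=v_G|u_i$ with $v_G:=w_G\in\mathsf{sib}_{F,G}(u)$ (here I use that $u\in\Phi_F$ forces $u_i\in V_i^\circ$, so $\mathsf{sib}_i$ is nonempty and the ``$|u_i$'' slot is unambiguous). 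Thus $\mathsf{sib}_F(u)=\bigcup_{v_G\in\mathsf{sib}_{F,G}(u)}\mathsf{sib}_i(v_G|u_i)$. Disjointness of the union: if $v_G,v_G'\in\mathsf{sib}_{F,G}(u)$ with $v_G\ne v_G'$, then any element of $\mathsf{sib}_i(v_G|u_i)$ agrees with $v_G$ in every coordinate $j\ne i$, hence two such sets meeting would force $v_G=v_G'$. This is the step I expect to be the main obstacle, mostly because of bookkeeping: one must be careful that $\mathsf{sib}_{F,G}(u)$, defined via $\{v_G:v\in\mathsf{sib}_F(u)\}$ in \eqref{sib-F-G-u}, coincides with $\{w_G:w\in\mathsf{sib}_G(u)\}$ — this requires checking that applying $\mathsf{sib}_i$ and then projecting to coordinates in $G$ returns exactly $\mathsf{sib}_G(u)$ projected, which uses $\childi{i}{\parenti{j}{v}}=\parenti{j}{\childi{i}{v}}$ from Remark~\ref{child-par} to commute the $i$-operation past the $G$-operations at the level of coordinates.

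Next, (ii): for $v_G\in\mathsf{sib}_{F,G}(u)$ one has $\mathsf{sib}_i(v_G|u_i)=\childi{i}{\parenti{i}{v_G|u_i}}$, and its cardinality is $\mathrm{card}(\child{u_i})$ since the $i^{\mbox{\tiny th}}$ coordinate of $v_G|u_i$ is $u_i$ regardless of the choice of $v_G$; so the count is the constant $\mathrm{card}(\mathsf{sib}_i(u))$ (note $\mathsf{sib}_i(v_G|u_i)$ and $\mathsf{sib}_i(u)$ have the same size because they are built from the same $i^{\mbox{\tiny th}}$-coordinate sibling set). Then (iii) follows from (i) applied iteratively: peeling off one index of $G$ at a time and invoking (ii) at each stage gives $\mathrm{card}(\mathsf{sib}_{F,G}(u))=\prod_{j\in G}\mathrm{card}(\mathsf{sib}_j(u))=\prod_{j\in F,\,j\ne i}\mathrm{card}(\mathsf{sib}_j(u))$; alternatively one can argue directly that the $G$-coordinates of elements of $\mathsf{sib}_F(u)$ range independently over $\mathsf{sib}_j(u)$ for $j\in G$, using Proposition~\ref{sib-id} to see the counts multiply consistently. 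Finally (iv) is immediate from (i), (ii), (iii): $\mathrm{card}(\mathsf{sib}_F(u))=\mathrm{card}(\mathsf{sib}_{F,G}(u))\cdot\mathrm{card}(\mathsf{sib}_i(u))=\big(\prod_{j\in F,\,j\ne i}\mathrm{card}(\mathsf{sib}_j(u))\big)\cdot\mathrm{card}(\mathsf{sib}_i(u))=\prod_{j\in F}\mathrm{card}(\mathsf{sib}_j(u))$, with the base case $F=\emptyset$ given by the convention $\mathsf{sib}_{\,\emptyset}(u)=\{u\}$.
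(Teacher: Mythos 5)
Your proof is correct and follows essentially the same route as the paper: establish the disjoint decomposition in (i) by matching the coordinates indexed by $G$ (the paper argues directly with elements of $\mathsf{sib}_F(u)$ and $\mathsf{sib}_{F,G}(u)$ rather than factoring through $\mathsf{sib}_G(u)$, but the content is identical), and then count to get (ii)--(iv). One small slip: in (ii) the cardinality of $\mathsf{sib}_i(v_G|u_i)=\childi{i}{\parenti{i}{v_G|u_i}}$ is $\mathrm{card}(\child{\parent{u_i}})=\mathrm{card}(\mathsf{sib}(u_i))$, not $\mathrm{card}(\child{u_i})$; your subsequent identification of the constant value with $\mathrm{card}(\mathsf{sib}_i(u))$ is nevertheless the correct conclusion.
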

\begin{proof}
Let $v_G, w_G \in \mathsf{sib}_{F, G}(u)$ such that $v_G \neq w_G$. Then there exists $j \in G$ such that $v_j \neq w_j$. Suppose that $\eta \in \mathsf{sib}_i (v_G|u_i) \cap \mathsf{sib}_i (w_G|u_i)$. Then $v_j = \eta_j = w_j$, which is a contradiction. Hence  $\mathsf{sib}_i (v_G|u_i) \cap \mathsf{sib}_i (w_G|u_i) = \emptyset$ if $v_G \neq w_G$. Next, observe that $\mathsf{sib}_i (v_G|u_i) \subseteq \mathsf{sib}_F (u)$ for all $v_G \in \mathsf{sib}_{F, G}(u)$. 
To see the other inclusion in (i), note that if $w \in \mathsf{sib}_F(u)$ then $w \in \mathsf{sib}_i (w_G|u_i).$ This completes the proof of the first part.
The second part follows from the fact that $\mbox{card}(\mathsf{sib}_i (v_G|u_i))=\mbox{card}(\mathsf{sib}(u_i))$ while the third part follows from \eqref{sib-F-G-u}. The last part is immediate from \eqref{sib}.
%We leave the last part to readers.
\end{proof}

%The following is immediate from Proposition \ref{decom} and the preceding lemma.
%\begin{corollary} \label{decom}
%Let $\mathscr T = (V,\mathcal E)$ be the directed Cartesian product of rooted directed trees $\mathscr T_1, \cdots, \mathscr T_d$. For $1 \leqslant i \leqslant d,$ 
%\beqn
%l^2(V) = \bigoplus_{F  \in \mathscr{P}}\bigoplus_{u \in \Omega_{F}} \bigoplus_{v_G \in \mathsf{sib}_{F, G}(u)} l^2(\mathsf{sib}_i (v_G|u_i)),
%\eeqn 
%where $\Omega_F$ is the indexing set corresponding to $F$ and $\mathsf{sib}_{F, G}(u)$ is given by \eqref{sib-F-G-u}.
%\end{corollary}

The following lemma describes the action of $S^*_{\lambdab}$ on $l^2(\mathsf{sib}_F(u)).$
\begin{lemma}
Let $\mathscr T = (V,\mathcal E)$ be the directed Cartesian product of rooted directed trees $\mathscr T_1, \cdots, \mathscr T_d$ and let $S_{\lambdab} = (S_1, \cdots, S_d)$ be a commuting multishift on $\mathscr T$. 
Let $F \in \mathscr P$, $i \in F$ and $G:= F \setminus \{i\}$.
If $u \in \Phi_{F}$,    
then for any $f \in l^2(\mathsf{sib}_F (u)),$ 
\beq \label{S*i-eq}
S_i^*(f) = \sum_{ v_{G} \in \mathsf{sib}_{F, G}(u)} \Big( \sum_{w \in \mathsf{sib}_i(v_{G} | u _i)} f(w) \lambda^{(i)}_w \Big) e_{\parenti{i}{v_{G}|u_i}}.          
\eeq         
%where the sum is independent of choice of $u_i \in  \mathsf{sib}_{F, \{i\}}(u)$. 
\end{lemma}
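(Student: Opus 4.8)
The plan is to compute $S_i^*f$ directly from its action on basis vectors and then reorganize the resulting sum according to the decomposition of $\mathsf{sib}_F(u)$ furnished by Lemma \ref{sib_F}(i). First I would write $f = \sum_{w \in \mathsf{sib}_F(u)} f(w) e_w$, which is legitimate since $f \in l^2(\mathsf{sib}_F(u))$. Applying the adjoint formula $S_i^* e_w = \overline{\lambda}^{(i)}_w e_{\parenti{i}{w}}$ (here the weights are positive, so $\overline{\lambda}^{(i)}_w = \lambda^{(i)}_w$), I get $S_i^* f = \sum_{w \in \mathsf{sib}_F(u)} f(w)\lambda^{(i)}_w e_{\parenti{i}{w}}$. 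Note that each term makes sense because $i \in F$ forces $w_i \in V^\circ_i$ for every $w \in \mathsf{sib}_F(u) \subseteq \Phi_F$, so $\parenti{i}{w}$ is a genuine vertex.

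Next I would use Lemma \ref{sib_F}(i), namely $\mathsf{sib}_F(u) = \bigsqcup_{v_G \in \mathsf{sib}_{F,G}(u)} \mathsf{sib}_i(v_G|u_i)$ with $G = F \setminus \{i\}$, to split the index set of the sum. This rewrites $S_i^* f = \sum_{v_G \in \mathsf{sib}_{F,G}(u)} \sum_{w \in \mathsf{sib}_i(v_G|u_i)} f(w)\lambda^{(i)}_w e_{\parenti{i}{w}}$. The key observation is that for $w$ ranging over the single equivalence class $\mathsf{sib}_i(v_G|u_i) = \childi{i}{\parenti{i}{v_G|u_i}}$, all such $w$ share the same $i$-th coordinate parent, so $\parenti{i}{w}$ is constant and equal to $\parenti{i}{v_G|u_i}$. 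Indeed, $\mathsf{sib}_i$ of a vertex is by definition the set of $i$-children of its $i$-parent, and these all have the same $i$-parent. Hence the inner sum collapses: $\sum_{w \in \mathsf{sib}_i(v_G|u_i)} f(w)\lambda^{(i)}_w e_{\parenti{i}{w}} = \big(\sum_{w \in \mathsf{sib}_i(v_G|u_i)} f(w)\lambda^{(i)}_w\big) e_{\parenti{i}{v_G|u_i}}$, which yields exactly \eqref{S*i-eq}.

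The only point requiring a little care — and what I would flag as the main (minor) obstacle — is verifying that $\parenti{i}{v_G|u_i}$ is indeed well-defined and that it is the common value of $\parenti{i}{w}$ over the class $\mathsf{sib}_i(v_G|u_i)$; this needs $(v_G|u_i)_i = u_i \in V^\circ_i$, which holds since $u \in \Phi_F$ and $i \in F$. I would also note in passing that the vectors $e_{\parenti{i}{v_G|u_i}}$ for distinct $v_G \in \mathsf{sib}_{F,G}(u)$ are pairwise orthogonal (their differing $G$-coordinates persist under $\parenti{i}{\cdot}$ since $i \notin G$), so the right-hand side of \eqref{S*i-eq} is an orthogonal expansion; this is not needed for the identity itself but will be convenient in the subsequent analysis of $\ker S_i^*$ restricted to $l^2(\mathsf{sib}_F(u))$. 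Otherwise the proof is a routine reindexing, so I would keep it short.
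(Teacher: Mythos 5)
Your argument is correct and matches the paper's proof essentially verbatim: both expand $f$ over the decomposition $\mathsf{sib}_F(u) = \bigsqcup_{v_G} \mathsf{sib}_i(v_G|u_i)$ from Lemma \ref{sib_F}(i), apply $S_i^*$ termwise, and collapse the inner sum using $\parenti{i}{\mathsf{sib}_i(v)} = \parenti{i}{v}$. Your additional remarks on well-definedness and orthogonality are fine but not needed.
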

\begin{proof} Let $u \in \Phi_{F}$.
By (i) of the preceding lemma, we obtain the orthogonal decomposition 
\beqn \label{l-2-sib-F}
l^2(\mathsf{sib}_F (u)) = \bigoplus_{v_{G} \in \mathsf{sib}_{F, G}(u)}l^2(\mathsf{sib}_i (v_G|u_i)).
\eeqn
Let $f \in 
l^2(\mathsf{sib}_F (u)).$ Then $f = \displaystyle \sum_{v_{G} \in \mathsf{sib}_{F, G}(u)}\sum_{w \in \mathsf{sib}_i(v_G | u _i)} f(w)e_w \in l^2(V).$ It follows that
\beqn
S_i^*(f) &=& \sum_{ v_G \in \mathsf{sib}_{F, G}(u)} \sum_{w \in \mathsf{sib}_i(v_G | u _i)} f(w) \lambda^{(i)}_w e_{ \parenti{i}{w}}\\ &=& \sum_{ v_G \in \mathsf{sib}_{F, G}(u)} \Big(\sum_{w \in \mathsf{sib}_i(v_G | u _i)} f(w) \lambda^{(i)}_w \Big)e_{\parenti{i}{v_G|u_i}}, 
\eeqn
where we used the fact that $\parenti{i}{\mathsf{sib}_i(v)}=\parenti{i}{v}$ for any $v \in V$.
\end{proof}
%\begin{lemma}
%%If $v_G, w_G \in \mathsf{sib}_{F, G}(u)$ with $v_G \neq w_G$ then $$\mathsf{sib}_i(v_G | u _i) \cap \mathsf{sib}_i(w_G | u _i) =\emptyset.$$ Further, 
%For any $i \in F$ and $G=F \setminus \{i\},$
%\beqn
%\mathsf{sib}_{F}(u) = \bigsqcup_{v_G \in \mathsf{sib}_{F, G}(u)}\mathsf{sib}_i(v_G | u _i) .
%\eeqn
%\end{lemma}
%\begin{proof}
%...
%\end{proof}

%\section{System of Linear Equations}

Let $i \in F$ be fixed and let $G:=F \setminus \{i\}$.
In view of \eqref{S*i-eq},
finding solution of $S^*_i (f) =0,$ $f \in l^2(\mathsf{sib}_{F}(u))$ amounts to solve the following system of $N_{i, u, F}$ equations in $M_{i, u, F}$ unknowns:
\beq \label{system}
\boxed{\sum_{w \in \mathsf{sib}_i(v_G | u_i)} f(w) \lambda^{(i)}_w =0,~v_G \in \mathsf{sib}_{F, G}(u),}
\eeq
where, in view of Lemma \ref{sib_F}, $N_{i, u, F}, M_{i, u, F}  \in \mathbb N \cup \{\infty\}$ are given by
\beqn N_{i, u, F} &=& \mbox{card}(\mathsf{sib}_{F, G}(u))=\prod_{j \in F, j \neq i} \mbox{card}(\mathsf{sib}_j(u)), \\ M_{i, u, F} &=& \mbox{card}(\mathsf{sib}_i(v_G | u _i))N_{i, u, F}  =\mbox{card}(\mathsf{sib}_F(u))=\prod_{j \in F} \mbox{card}(\mathsf{sib}_j(u)).\eeqn
Note that $M_{u, F}:=M_{i, u, F}$ is independent of $i.$ Further, by Lemma \ref{sib_F}(i), the set of unknowns in the system \eqref{system} is equal to $\{f(w) : w \in \mathsf{sib}_F(u)\}$ for each $i \in F.$ Thus varying $i$ over $F$, we get the following system of $N(u, F):=\sum_{i \in F}N_{i, u, F}$ equations in $M_{u, F}$ number of unknowns:
\beq \label{system-main} 
\boxed{\sum_{w \in \mathsf{sib}_i(v_G | u_i)} f(w) \lambda^{(i)}_w =0,~i \in F~\mbox{and~}v_G \in \mathsf{sib}_{F, G}(u).} 
\eeq

%Let $L_{u, F}$ denote the associated linear transformation (possibly unbounded) in $l^2(\mathsf{sib}_F (u))$. 
%By Proposition \ref{decom}, the joint kernel $E$ of $S^*_{\lambdab}$ is given by
%\beq \label{j-kernel}
%E = [e_\rootb] \oplus \bigoplus_{\underset{F \neq \emptyset}{F  \in \mathscr{P}}} \bigoplus_{u \in \Omega_{F}} \ker L_{u, F}.
%\eeq 
Let $\mathcal L_{u, F}$ \index{$\mathcal L_{u, F}$} denote the linear manifold of $l^2(\mathsf{sib}_F (u))$ given by 
\beq \label{L-u-F-subspace}
\mathcal L_{u, F} :=\{f \in l^2(\mathsf{sib}_F (u)) : f \mbox{~is a solution of~}\eqref{system-main}\}.
\eeq
If $\mathscr T_1, \cdots, \mathscr T_d$ are locally finite then $\mathcal L_{u, F}$ is a subspace. In this case,
by Proposition \ref{decom}, the joint kernel $E$ of $S^*_{\lambdab}$ is given by
\beq \label{j-kernel}
E = [e_\rootb] \oplus \bigoplus_{\underset{F \neq \emptyset}{F  \in \mathscr{P}}} \bigoplus_{u \in \Omega_{F}} \mathcal L_{u, F}.
\eeq 
\begin{remark}
Let us discuss the system \eqref{system-main} in following special cases:
\begin{enumerate}
\item In case $S_{\lambdab}$ is the classical multishift $S_{\bf w}$, the system \eqref{system-main} has only trivial solution, and hence $E=[e_{0}].$
\item In case $d=1,$ $\mathscr P=\{\emptyset, \{1\}\},$ and hence the system \eqref{system-main} takes the form 
\beqn
{\sum_{w \in \mathsf{sib}(u)} f(w) \lambda_w =0~(u \in V^{\circ}).} 
\eeqn
However, linear equations associated with vertices outside $\child{V_{\prec}}$ have trivial solutions, and hence
\beqn
E  = [e_\rootb] \oplus \bigoplus_{u \in \child{V_{\prec}}} \mathcal L_{u, \{1\}}.
\eeqn
This expression should be compared with \eqref{formula-k}. 
\end{enumerate}
\end{remark}
To understand the above description of the joint kernel of $S^*_{\lambdab},$ we include a couple of instructive examples.

\begin{example} \label{j-k-mixed}
Let $\mathscr T$ be the directed Cartesian product of rooted directed trees $\mathscr T_1=\mathscr T_{2, 0}, \mathscr T_2=\mathscr T_{1, 0}$ as described in Example \ref{classical-mix}. Note that $$\mathscr P=\{\emptyset, \{1\}, \{2\}, \{1, 2\}\}.$$
It follows from \eqref{phi-F-eqn} that 
\beqn
\Phi_{\emptyset} & = & \{(0, 0)\}, \Phi_{\{1\}}=\{(i, 0) : i \geqslant  1\}, \\ \Phi_{\{2\}}& = & \{(0, j) : j \geqslant  1\}, \Phi_{\{1, 2\}}=\{(i, j) : i, j \geqslant  1\}.
\eeqn
Let us now understand $\mathsf{sib}_F(u)$ for $F \in \mathscr P$ and $u \in \Phi_F.$ 
By convention, $\mathsf{sib}_{\emptyset}((0, 0))=\{(0, 0)\}.$
Note that
\beqn
\mathsf{sib}_{\{1\}}((1, 0)) &=& \{(1, 0), (2, 0)\} = \mathsf{sib}_{\{1\}}((2, 0)),~ \mathsf{sib}_{\{1\}}((i, 0))=\{(i, 0)\}~(i \geqslant  3),\\
\mathsf{sib}_{\{2\}}((0, j)) &=& \{(0, j)\}~\mbox{for all~} j \geqslant  1,~ \mbox{and}\\
\mathsf{sib}_{\{1, 2\}}((i, j)) &=& \begin{cases} 
\{(1, j), (2, j)\} &~\mbox{if~}i \in \{1, 2\}~\mbox{and~}j \geqslant  1, \\
\{(i, j)\} &~\mbox{if~}i \geqslant  3, j \geqslant 1. \end{cases}
\eeqn
One may form $\Omega_F$ by picking up one element from each of the equivalence classes $\mathsf{sib}_F(u)$ as follows:
\beqn
&& \Omega_{\emptyset}=\{(0, 0)\}, \Omega_{\{1\}} = \{(1, 0)\}\cup\{(i, 0) : i \geqslant  3\}, \Omega_{\{2\}}=\{(0, j) : j \geqslant  1\}, \\
&& \Omega_{\{1, 2\}}=\{(1, j), (i, j) : i \geqslant  3, j \geqslant  1\}.
\eeqn
Let us calculate $\mathsf{sib}_{F, G}(u)$ for possible choices of $F, G,$ and $u \in \Omega_F.$
If $F=\{1\}$, then $G = \emptyset$. In this case, 
\beqn
\mathsf{sib}_{\{1\}, \emptyset}(1, 0)=\{(0, 0)\} = \mathsf{sib}_{\{1\}, \emptyset}(i, 0)~(i \geqslant  3).
\eeqn 
This together with \eqref{system} yields the following equations:
\beqn
f(1, 0) \lambda^{(1)}_{(1, 0)} + f(2, 0) \lambda^{(1)}_{(2, 0)} &=& 0, \\
f(i, 0) \lambda^{(1)}_{(i, 0)} &=& 0~( i \geqslant  3).
\eeqn
In case $F=\{2\},$ 
$G=\emptyset$ and $\mathsf{sib}_{\{2\}, \emptyset}(0, j)=\{(0, 0)\}$ for $j \geqslant 1,$ and hence
we obtain the equations
\beqn
f(0,j) \lambda^{(2)}_{(0,j)} = 0~(j \geqslant  1).
\eeqn
In case $F = \{1,2\}$, $G = \{1\}$ or $\{2\}$. Then for all $i \geqslant  3$ and $j \geqslant  1$,
\beqn
\mathsf{sib}_{\{1, 2\}, \{2\}}(1, j) = \{(0, j)\},~
\mathsf{sib}_{\{1, 2\}, \{1\}}(1, j) = \{(1, 0), (2, 0)\}, \\ \mathsf{sib}_{\{1, 2\}, \{2\}}(i, j) = \{(0, j)\},  
~\mathsf{sib}_{\{1, 2\}, \{1\}}(i, j) = \{(i, 0)\}.
\eeqn
This gives following equations for $i \geqslant  3$ and $j \geqslant  1$,
\beqn
f(1,j) \lambda^{(1)}_{(1,j)} + f(2,j) \lambda^{(1)}_{(2,j)} = 0,\\
f(1,j) \lambda^{(2)}_{(1,j)} = 0, ~ f(2,j) \lambda^{(2)}_{(2,j)} = 0,\\
f(i,j) \lambda^{(1)}_{(i,j)} = 0, ~ f(i,j) \lambda^{(2)}_{(i,j)} = 0.
\eeqn
Solving above, we get that $f(1,0) = \alpha \lambda^{(1)}_{(2,0)}$, $f(2,0) = -\alpha \lambda^{(1)}_{(1,0)}$ for $\alpha \in \mathbb C$, $f(i,j) = 0$ for all $i,j \geqslant  1$, $f(0,j) = 0$ for all $j \geqslant  1$ and $f(i,0) = 0$ for all $i \geqslant  3$. Thus,
\beqn
E = [e_{\rootb}] \oplus [\lambda^{(1)}_{(2,0)} e_{(1,0)} - \lambda^{(1)}_{(1,0)} e_{(2,0)}].
\eeqn
\end{example}

The situation in the preceding example resembles more like the situation occurring in dimension $d=1$ (cf. \eqref{formula-k}). We see below an example which gives an idea of the complicacies which can occur in  dimension more than $1$.
\begin{example} \label{j-k-T1T1}
Consider the directed Cartesian product $\mathscr T=(V, \mathcal E)$ of the directed tree $\mathscr T_{2, 0}$ with itself (see Example \ref{T1-T1}). Note that $$\mathscr P=\{\emptyset, \{1\}, \{2\}, \{1, 2\}\}.$$
It follows that 
\beqn
\Phi_{\emptyset} & = & \{(0, 0)\}, \Phi_{\{1\}}=\{(i, 0) : i \geqslant  1\}, \\ \Phi_{\{2\}}& = & \{(0, i) : i \geqslant  1\}, \Phi_{\{1, 2\}}=\{(i, j) : i, j \geqslant  1\}.
\eeqn
Let us now understand $\mathsf{sib}_F(u)$ for $F \in \mathscr P$ and $u \in \Phi_F.$ 
By convention, $\mathsf{sib}_{\emptyset}((0, 0))=\{(0, 0)\}.$
Note that
\beqn
\mathsf{sib}_{\{1\}}((1, 0)) = \{(1, 0), (2, 0)\} = \mathsf{sib}_{\{1\}}((2, 0)), \mathsf{sib}_{\{1\}}((i, 0))=\{(i, 0)\}~(i \geqslant  3).
\eeqn
Similarly, 
\beqn
\mathsf{sib}_{\{2\}}((0, 1)) = \{(0, 1), (0, 2)\} = \mathsf{sib}_{\{2\}}((0, 2)), \mathsf{sib}_{\{2\}}((0, i))=\{(0, i)\}~(i \geqslant  3).
\eeqn
Further,
\beqn
\mathsf{sib}_{\{1, 2\}}((i, j)) =\begin{cases} \{(1, 1), (1, 2), (2, 1), (2, 2)\}& ~\mbox{if~}i, j \in \{1, 2\}, \\ 
\{(1, j), (2, j)\}&~\mbox{if~}i \in \{1, 2\}~\mbox{and~}j \geqslant  3, \\
\{(i, 1), (i, 2)\}&~\mbox{if~}i \geqslant  3 ~\mbox{and~}j \in \{1, 2\},\\
\{(i, j)\}&~\mbox{if~}i, j \geqslant  3. \end{cases}
\eeqn
%(see Figure 4.1).

One may form $\Omega_F$ by picking up one element from each of the equivalence classes $\mathsf{sib}_F(u)$ as follows:
\beqn
&& \Omega_{\emptyset}=\{(0, 0)\}, \Omega_{\{1\}} = \{(1, 0)\}\cup\{(i, 0) : i \geqslant  3\}, \Omega_{\{2\}}=\{(0, 1)\}\cup\{(0, j) : j \geqslant  3\}, \\
&& \Omega_{\{1, 2\}}=\{(1, 1)\}\cup\{(i, 1), (1, j), (i, j) : i, j \geqslant  3\}.
\eeqn
Let us calculate $\mathsf{sib}_{F, G}(u)$ for possible choices of $F, G,$ and $u \in \Omega_F.$
If $F=\{1\}$, then $G = \emptyset$. In this case, 
\beqn
\mathsf{sib}_{\{1\}, \emptyset}(1, 0) = \{(0, 0)\}, \mathsf{sib}_{\{1\}, \emptyset}(i, 0)=\{(0, 0)\}~(i \geqslant  3).
\eeqn 
This yields the following equation:
\beqn
\sum_{w \in \mathsf{sib}_1(v_{\emptyset} |1)} f(w) \lambda^{(1)}_w &=& 0,~v_{\emptyset} \in \mathsf{sib}_{\{1\}, \emptyset}(1, 0) = \{(0, 0)\},\\
\sum_{w \in \mathsf{sib}_1(v_{\emptyset} |i)} f(w) \lambda^{(1)}_w &=& 0,~v_{\emptyset} \in \mathsf{sib}_{\{1\}, \emptyset}(i, 0) = \{(0, 0)\}~( i \geqslant  3) 
\eeqn
which is same as 
\beqn
f(1, 0) \lambda^{(1)}_{(1, 0)} + f(2, 0) \lambda^{(1)}_{(2, 0)} &=& 0, \\
f(i, 0) \lambda^{(1)}_{(i, 0)} &=& 0~( i \geqslant  3).
\eeqn
Similarly, in case $F=\{2\},$ we obtain the equations
%\beqn
%\mathsf{sib}_{\{2\}, \emptyset}(0, 1) = \{(0, 1), (0, 2)\}, \mathsf{sib}_{\{2\}, \emptyset}(0, j)=\{(0, j)\}~(j \geqslant  3).
%\eeqn 
\beqn
f(0, 1) \lambda^{(2)}_{(0, 1)} + f(0, 2) \lambda^{(2)}_{(0, 2)} &=& 0, \\
f(0, j) \lambda^{(2)}_{(0, j)} &=& 0~( j \geqslant  3).
\eeqn
In case $F=\{1, 2\}$ then either $G=\{1\}$ or $\{2\},$ and hence for $i, j \geqslant  3,$
\beqn
\mathsf{sib}_{\{1, 2\}, \{2\}}(1, 1) = 
\{(0, 1), (0, 2)\}, ~\mathsf{sib}_{\{1, 2\}, \{1\}}(1, 1) = \{(1, 0), (2, 0)\} \\
\mathsf{sib}_{\{1, 2\}, \{2\}}(i, 1) = \{(0, 1), (0, 2)\}, 
~\mathsf{sib}_{\{1, 2\}, \{1\}}(i, 1) = \{(i, 0)\}, \\
\mathsf{sib}_{\{1, 2\}, \{2\}}(1, j) = \{(0, j)\},~
\mathsf{sib}_{\{1, 2\}, \{1\}}(1, j) = \{(1, 0), (2, 0)\}, \\ \mathsf{sib}_{\{1, 2\}, \{2\}}(i, j) = \{(0, j)\},  
~\mathsf{sib}_{\{1, 2\}, \{1\}}(i, j) = \{(i, 0)\}.
\eeqn
Thus we obtain the equations for $i, j \geqslant  3,$
\beqn
\sum_{w \in \mathsf{sib}_1(v_{\{2\}} | 1)} f(w) \lambda^{(1)}_w &=& 0,~v_{\{2\}} \in \{(0, 1), (0, 2)\}, 
\\
\sum_{w \in \mathsf{sib}_2(v_{\{1\}} | 1)} f(w) \lambda^{(2)}_w &=& 0,~v_{\{1\}} \in \{(1, 0), (2, 0)\}, \\
\sum_{w \in \mathsf{sib}_1(v_{\{2\}} | i)} f(w) \lambda^{(1)}_w &=& 0,~v_{\{2\}} \in \{(0, 1), (0, 2)\}, \\
\sum_{w \in \mathsf{sib}_2(v_{\{1\}} | 1)} f(w) \lambda^{(2)}_w &=& 0,~v_{\{1\}} \in \{(i, 0)\}, \\
\sum_{w \in \mathsf{sib}_1(v_{\{2\}} | 1)} f(w) \lambda^{(1)}_w &=& 0,~v_{\{2\}} \in \{(0, j)\} \\
\sum_{w \in \mathsf{sib}_2(v_{\{1\}} | j)} f(w) \lambda^{(2)}_w &=& 0,~v_{\{1\}} \in \{(1, 0), (2, 0)\}, \\
\sum_{w \in \mathsf{sib}_1(v_{\{2\}} | i)} f(w) \lambda^{(1)}_w &=& 0,~v_{\{2\}} \in \{(0, j)\}, \\
 \sum_{w \in \mathsf{sib}_2(v_{\{1\}} | j)} f(w) \lambda^{(2)}_w &=& 0,~v_{\{1\}} \in \{(i, 0)\},
\eeqn
which is same as
\beqn
f(1, 1) \lambda^{(1)}_{(1, 1)} + f(2, 1) \lambda^{(1)}_{(2, 1)} &=& 0, ~
f(1, 2) \lambda^{(1)}_{(1, 2)} + f(2, 2) \lambda^{(1)}_{(2, 2)} = 0. \\
f(1, 1) \lambda^{(2)}_{(1, 1)} + f(1, 2) \lambda^{(2)}_{(1, 2)} &=& 0, ~
f(2, 1) \lambda^{(2)}_{(2, 1)} + f(2, 2) \lambda^{(2)}_{(2, 2)} = 0 \\
f(i, 1) \lambda^{(1)}_{(i, 1)} &=& 0, ~
f(i, 2) \lambda^{(1)}_{(i, 2)} = 0 \\
f(i,1) \lambda^{(2)}_{(i,1)} + f(i,2) \lambda^{(2)}_{(i,2)} &=& 0 \\
f(1,j) \lambda^{(1)}_{(1,j)} + f(2,j) \lambda^{(1)}_{(2,j)} &=& 0 \\
f(1, j) \lambda^{(2)}_{(1, j)} &=& 0, ~
f(2, j) \lambda^{(2)}_{(2, j)} = 0  \\
f(i, j) \lambda^{(1)}_{(i, j)} &=& 0 \\
f(i, j) \lambda^{(2)}_{(i, j)} &=& 0.
\eeqn
Let $W:=\{(i, j) \in V : i \geqslant  3~\mbox{or~}j\geqslant  3\} \cup \{(0, 0)\}.$ 
Then $f \in E \ominus [e_\rootb]$ if and only if $f \in l^2(V \setminus W)$ satisfies the following systems of equations:
\beqn
L_{(1, 0), \{1\}}[f(1, 0), f(2, 0)]^{\mathsf{T}}&=& 0, \\ L_{(0, 1), \{2\}}[f(0, 1), f(0, 2)]^{\mathsf{T}}&=& 0, \\
L_{(1, 1), \{1, 2\}}[f(1, 1), f(2, 1), f(1, 2), f(2, 2)]^{\mathsf{T}}&=& 0,
\eeqn
where 
$X^{\mathsf{T}}$ denotes the transpose of a column vector $X$. Here $L_{(1, 0), \{1\}} = [\lambda^{(1)}_{(1, 0)}, \lambda^{(1)}_{(2, 0)}]$,
$L_{(0, 1), \{2\}} = [\lambda^{(2)}_{(0, 1)}, \lambda^{(2)}_{(0, 2)}]$, and 
\beqn 
L_{(1, 1), \{1, 2\}}=\left[\begin{array}{cccc}
\lambda^{(1)}_{(1, 1)} & \lambda^{(1)}_{(2, 1)} & 0 & 0      \\ \\
0 & 0 & \lambda^{(1)}_{(1, 2)} & \lambda^{(1)}_{(2, 2)}   \\  \\
\lambda^{(2)}_{(1, 1)} & 0 & \lambda^{(2)}_{(1, 2)} & 0   \\  \\
0 & \lambda^{(2)}_{(2, 1)} & 0 & \lambda^{(2)}_{(2, 2)}  
\end{array}\right].
\eeqn
Note that the rank of $L_{(1, 1), \{1, 2\}}$ is at least $3.$
By Schur's formula \cite[Theorem 1.1]{Z}, the determinant of $L_{(1, 1), \{1, 2\}}$ is zero if and only if $$\lambda^{(1)}_{(1, 2)}\lambda^{(1)}_{(2, 1)}\lambda^{(2)}_{(1, 1)}\lambda^{(2)}_{(2, 2)} = \lambda^{(1)}_{(1, 1)}\lambda^{(1)}_{(2, 2)}\lambda^{(2)}_{(1, 2)}\lambda^{(2)}_{(2, 1)}.$$
Thus any $f \in E $ takes the form
\beqn f &=& f(0, 0)e_{(0, 0)} + \sum_{v \in V \setminus W}f(v)e_v \\
&=& f(0, 0)e_{(0, 0)} + f(1, 0)(e_{(1, 0)} + a_1 e_{(2, 0)}) \\ &+& f(0, 1)(e_{(0, 1)} + a_2 e_{(0, 2)}) + g_{(1, 1)},
\eeqn 
where $g_{(1, 1)}$ is given by
\beqn g_{(1, 1)} = \begin{cases} 
0 & \mbox{if}~\mbox{rank}\,L_{(1, 1), \{1, 2\}}=4, \\  f(1, 1)(e_{(1, 1)} + b_1 e_{(2, 1)} + b_2 e_{(1, 2)}  + b_3 e_{(2, 2)}) & ~\mbox{otherwise.} \end{cases}
\eeqn
Further,  the scalars $a_1, a_2, b_1, b_2, b_3$ are given by
\beqn
a_1 &=& -\frac{\lambda^{(1)}_{(1, 0)}}{\lambda^{(1)}_{(2, 0)}}, \quad
a_2 = -\frac{\lambda^{(2)}_{(0, 1)}}{\lambda^{(2)}_{(0, 2)}}, \\
b_1 &=& -\frac{\lambda^{(1)}_{(1, 1)} }{\lambda^{(1)}_{(2, 1)}}, \quad b_2=-\frac{\lambda^{(2)}_{(1, 1)}}{\lambda^{(2)}_{(1, 2)}}, \quad
b_3=\frac{\lambda^{(2)}_{(2, 1)}}{\lambda^{(2)}_{(2, 2)}}\frac{\lambda^{(1)}_{(1, 1)} }{\lambda^{(1)}_{(2, 1)}}.
\eeqn
%for some scalars $a_1, a_2, b_1, b_2, b_3$ in $\mathbb C$. 
%These scalars are governed by
%\beqn
%\lambda^{(1)}_{(1, 0)} + a_1\lambda^{(1)}_{(2, 0)}=0, \quad
%\lambda^{(2)}_{(0, 1)} + a_2\lambda^{(2)}_{(0, 2)}=0
%\eeqn
%\uwam{specify conditions on $a_i$ and $b_i$'s}
Thus the dimension of $E$ is either $3$ or $4.$
\end{example}

%\section{Wandering Subspace Property}
%\section{Main Theorem}

As a key step in the proof of Theorem \ref{wandering}, we need to understand the orthogonal complement of the subspace $\mathcal L_{u, F}$ of $l^2(\mathsf{sib}_{F}(u))$. 
%It would be helpful to the reader to invoke all notations introduced in the previous section.
\begin{lemma} \label{kernel-perp}
Let $\mathscr T = (V,\mathcal E)$ be the directed Cartesian product of locally finite, rooted directed trees $\mathscr T_1, \cdots, \mathscr T_d$ and let $S_{\lambdab} = (S_1, \cdots, S_d)$ be a commuting multishift on $\mathscr T$. 
Let $\mathcal L_{u, F}$ be the subspace as given in \eqref{L-u-F-subspace}. Then
\beqn
l^2(\mathsf{sib}_{F}(u)) &\ominus & \mathcal L_{u, F} \\ &=& \bigvee \{S_i e_{v_{G}|\parent{u_i}} : v_{G} \in \mathsf{sib}_{F, G}(u)~\mbox{with}~G=F \setminus \{i\}, i\in F\}.
\eeqn
\end{lemma}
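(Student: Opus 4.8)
The statement to prove is that the orthogonal complement of $\mathcal L_{u,F}$ inside $l^2(\mathsf{sib}_F(u))$ is spanned by the vectors $S_i e_{v_G|\parent{u_i}}$ as $i$ ranges over $F$, $G = F\setminus\{i\}$, and $v_G$ ranges over $\mathsf{sib}_{F,G}(u)$. My plan is to identify each such generating vector explicitly, observe that it is a vector of $l^2(\mathsf{sib}_F(u))$, and then read off the equations defining $\mathcal L_{u,F}$ as precisely the orthogonality relations against these generators.

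\textbf{Step 1: identify the generators.} First I would compute $S_i e_{v_G|\parent{u_i}}$ using \eqref{Si}. Recall from Definition \ref{dfn4.1.2} that $v_G|u_i$ has $i^{\mathrm{th}}$ coordinate $u_i$ and $j^{\mathrm{th}}$ coordinate $(v_G)_j$ for $j\ne i$; since $G = F\setminus\{i\}$ and $v_G\in\mathsf{sib}_{F,G}(u)$, replacing the $i^{\mathrm{th}}$ coordinate by $\parent{u_i}$ gives a vertex whose $i^{\mathrm{th}}$ co-ordinate parent makes sense (as $u_i\in V_i^{\circ}$ because $i\in F$ and $u\in\Phi_F$). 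By \eqref{Si},
\[
S_i e_{v_G|\parent{u_i}} = \sum_{w\in\childi{i}{v_G|\parent{u_i}}} \lambda^{(i)}_w e_w = \sum_{w\in\mathsf{sib}_i(v_G|u_i)} \lambda^{(i)}_w e_w,
\]
where the last equality uses that $\childi{i}{\parenti{i}{v_G|u_i}} = \mathsf{sib}_i(v_G|u_i)$ and that $v_G|\parent{u_i} = \parenti{i}{v_G|u_i}$ (as a single vertex, since the $i^{\mathrm{th}}$ coordinate has a unique parent). Then I would note that each $w\in\mathsf{sib}_i(v_G|u_i)$ lies in $\mathsf{sib}_F(u)$ by Lemma \ref{sib_F}(i), so indeed $S_i e_{v_G|\parent{u_i}} \in l^2(\mathsf{sib}_F(u))$.

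\textbf{Step 2: match with the defining equations.} Next, for $f = \sum_{w\in\mathsf{sib}_F(u)} f(w) e_w \in l^2(\mathsf{sib}_F(u))$, I would compute
\[
\inp{f}{S_i e_{v_G|\parent{u_i}}} = \sum_{w\in\mathsf{sib}_i(v_G|u_i)} f(w)\,\overline{\lambda^{(i)}_w} = \sum_{w\in\mathsf{sib}_i(v_G|u_i)} f(w)\,\lambda^{(i)}_w,
\]
the last step using the standing assumption that the weights $\lambda^{(j)}_v$ are positive reals. Comparing with the system \eqref{system-main}, this is exactly the left-hand side of the equation indexed by $(i, v_G)$. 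Hence $f\in\mathcal L_{u,F}$, i.e. $f$ solves \eqref{system-main}, if and only if $f\perp S_i e_{v_G|\parent{u_i}}$ for every $i\in F$ and every $v_G\in\mathsf{sib}_{F,G}(u)$. Since $\mathcal L_{u,F}$ is a subspace (local finiteness is assumed), this says precisely that $\mathcal L_{u,F}$ is the orthogonal complement in $l^2(\mathsf{sib}_F(u))$ of the subspace spanned by all these generators, which gives the claimed formula upon taking complements.

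\textbf{Main obstacle.} The computations above are essentially bookkeeping; the only genuine point requiring care is verifying that the generating set is well-defined and lands inside $l^2(\mathsf{sib}_F(u))$ — specifically, checking that $v_G|\parent{u_i}$ makes sense (i.e. $u_i\ne\mathsf{root}_i$, which holds since $i\in F$), that $\childi{i}{v_G|\parent{u_i}} = \mathsf{sib}_i(v_G|u_i)$, and that the supports of these vectors, over varying $v_G$ and $i$, all sit inside $\mathsf{sib}_F(u)$ via Lemma \ref{sib_F}(i). Once the indexing is pinned down correctly using Definition \ref{dfn4.1.2} and Lemma \ref{sib_F}, the equivalence "$f$ solves \eqref{system-main}" $\Leftrightarrow$ "$f$ is orthogonal to every generator" is immediate, and the lemma follows by taking orthogonal complements within the finite- or infinite-dimensional space $l^2(\mathsf{sib}_F(u))$.
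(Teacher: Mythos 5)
Your proposal is correct and follows essentially the same route as the paper: compute $S_i e_{v_G|\parent{u_i}} = \sum_{w \in \mathsf{sib}_i(v_G|u_i)} \lambda^{(i)}_w e_w$, use Lemma \ref{sib_F}(i) to see these vectors live in $l^2(\mathsf{sib}_F(u))$, and identify the orthogonality relations $\inp{f}{S_i e_{v_G|\parent{u_i}}} = 0$ with the defining system \eqref{system-main} of $\mathcal L_{u,F}$. Your additional care about well-definedness of $v_G|\parent{u_i}$ (using $u_i \neq \mathsf{root}_i$ since $i \in F$) is a point the paper leaves implicit but is handled correctly.
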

\begin{proof} For $v_{G} \in \mathsf{sib}_{F, G}(u)$, 
note that 
$$S_i e_{v_{G}|\parent{u_i}}=\sum_{w \in \childi{i}{v_G | \parent{u_i}}} \lambda^{(i)}_w e_w = \sum_{w \in \mathsf{sib}_i(v_G | u_i)} \lambda^{(i)}_w e_w.$$
Thus, for $f \in l^2(\mathsf{sib}_{F}(u)),$ by Lemma \ref{sib_F}(i), 
\beqn \inp{f}{S_i e_{v_{G}|\parent{u_i}}} &=& \Big\langle {\sum_{\eta_G \in \mathsf{sib}_{F, G}(u)} \sum_{w \in \mathsf{sib}_i(\eta_G | u_i)} f(w) e_w},~{\sum_{w \in \mathsf{sib}_i(v_G | u_i)} \lambda^{(i)}_w e_w}\Big \rangle \\ &=& \sum_{w \in \mathsf{sib}_i(v_G | u_i)} f(w)\lambda^{(i)}_w.\eeqn In particular,
$f \in l^2(\mathsf{sib}_{F}(u))$ is orthogonal to $S_i e_{v_{G}|\parent{u_i}}$ for every $v_{G} \in \mathsf{sib}_{F, G}(u)$ and $i \in F$ if and only if
$f$ satisfies the system \eqref{system-main}. The latter one holds if and only if $f \in \mathcal L_{u, F}.$ This yields the desired formula.
%\beqn
%\bigvee \{S_i e_{(v_{G}|\parent{u_i})} : v_{G} \in \mathsf{sib}_{F, G}(u)~\mbox{with}~G=F \setminus \{i\}, i\in F\} = (\ker L_{u, F})^{\perp}.
%\eeqn
%This completes the proof.
\end{proof}

%Note that $f \in l^2(\mathsf{sib}_{F}(u))$ can be written as
%$$f=\sum_{v_G \in \mathsf{sib}_{F, G}(u)} \sum_{w_i \in \mathsf{sib}(u_i)} f((v_G|w_i)) e_{(v_G|w_i)}.$$ 
%Define now a linear transformation $L_{u, F} : l^2(\mathsf{sib}_{F}(u)) \rar l^2(\mathsf{sib}_{F}(u))$ by
%\beqn
%L_{u, F}(e_{(v_G|w_i)})=
%%\sum_{v_G \in \mathsf{sib}_{F, G}(u)} \sum_{w_i \in \mathsf{sib}(u_i)} f((v_G|w_i)) \lambda^{(i)}_{(v_G|w_i)} e_{(v_G|w_i)}.
%\sum_{v_i \in \mathsf{sib}(w_i)} \lambda^{(i)}_{(v_G|v_i)} e_{(v_G|v_i)}
%\eeqn 
%%for $v_G \in \mathsf{sib}_{F, G}(u)$ and $w_i \in \mathsf{sib}(u_i).$
%\begin{lemma}
%Let $f \in l^2(\mathsf{sib}_{F}(u)).$ Then $f$ is a solution of the system \eqref{system-main} if and only if $f$ is orthogonal to the range of $L_{u, F}.$
%\end{lemma}
%\begin{proof}
%
%It follows that $f$ is a solution of \eqref{system} if and only if
%\beqn 
%\sum_{w_i \in \mathsf{sib}(u_i)} f((v_G|w_i)) \lambda^{(i)}_{(v_G|w_i)} =0,~v_G \in \mathsf{sib}_{F, G}(u),
%\eeqn
%which is same as 
%\beqn 
%\inp{f}{L_{u, F}(e_{(v_G|w_i)})} &=& \Big \langle \sum_{w_i \in \mathsf{sib}(u_i)} f((v_G|w_i)) e_{(v_G|w_i)}, \sum_{w_i \in \mathsf{sib}(u_i)} \lambda^{(i)}_{(v_G|w_i)}e_{(v_G|w_i)} \Big \rangle \\ &=& 0,~v_G \in \mathsf{sib}_{F, G}(u).
%\eeqn
%This completes the proof.
%\end{proof}

We are now ready to complete the derivation of the wandering subspace property for $S_{\lambdab}$. 
\begin{proof}[Proof of Theorem \ref{wandering}]
Let $E$ denote the joint kernel of $S^*_{\lambdab}$.
Since $e_{\rootb} \in E,$
it is enough to see that for every nonempty $F \subseteq \{1, \cdots, d\},$ $$l^2(V_F) \subseteq [E]_{S_{\lambdab}}=\bigvee_{\alpha \in \mathbb N^d}\{S^{\alpha}_{\lambda}f : f \in E\},$$ where $V_F = \bigsqcup_{G \in \mathscr P(F)}\Phi_G.$ Fix a nonempty subset $F$ of $\{1, \cdots, d\}$. For $l = 0, \cdots, \mbox{card}(F),$ set
$$\mathscr F_l := \{ e_v \in l^2(V) : v \in \Phi_G~\mbox{with~} G \in \mathscr P(F)~\mbox{and~}\mbox{card}(G) \leqslant l\}.$$
Since $\bigvee \mathscr F_{\tiny \mbox{card}(F)} = l^2(V_F),$
it suffices to check that $$\mathscr F_{l-1} \subseteq [E]_{S_{\lambdab}} \Longrightarrow \mathscr F_{l} \subseteq [E]_{S_{\lambdab}},~ l=1, \cdots, \mbox{card}(F).$$ 
To this end, fix $1 \leqslant l \leqslant \mbox{card}(F)$, and assume that  
$\mathscr F_{l-1} \subseteq [E]_{S_{\lambdab}}$. Let $G \in \mathscr P(F)$ with $\mbox{card}(G)=l$. In particular, \beq \label{G-i} e_{v_{G \setminus \{i\}}} \in [E]_{S_{\lambdab}}~\mbox{for every}~i \in G ~\mbox{and~}v \in \Phi_G.\eeq
We must check that $e_v \in [E]_{S_{\lambdab}}$ for every $v \in \Phi_G.$

We prove by induction on $k \in \mathbb N$ the following statement: For every $i \in G$ and $v \in \Phi_G$, 
$e_{v_{G \setminus \{i\}}|w_i} \in [E]_{S_{\lambdab}}$ 
for all $w_i \in \childn{k}{\mathsf{root}_i}$. 
In view of \eqref{G-i}, this statement holds trivially for $k=0$ since $v_{G \setminus \{i\}}|\mathsf{root}_i=v_{G \setminus \{i\}}.$ Let us assume the inductive statement for an integer $k \geqslant  0$ and let $w_i \in \childn{k+1}{\mathsf{root}_i}$. 
By induction hypothesis, $e_{v_{G \setminus \{i\}|\parent{w_i}}} \in [E]_{S_{\lambdab}}$ for every $i \in G$ and $v \in \Phi_G$. It follows from Lemma \ref{kernel-perp} with $u:= v_{G \setminus \{i\}}|{w_i}$ that
\beqn
l^2(\mathsf{sib}_G(u)) \ominus \mathcal L_{u, G}  &=&
\bigvee \Big\{S_i e_{v_{G \setminus \{i\}}|\parent{w_i}} : v_{G \setminus \{i\}} \in \mathsf{sib}_{G, G \setminus \{i\}}(u), i \in G \Big\} \\ &\subseteq & [E]_{S_{\lambdab}}.
\eeqn
%\beqn
%l^2(\mathsf{sib}_{G}(u)) \ominus \ker L_{u, G} = \bigvee \{S_i e_{(v_{G \setminus \{i\}}|\parent{u_i})} : v_{G \setminus \{i\}} \in \mathsf{sib}_{G, G \setminus \{i\}}(u),~ i\in G\}.
%\eeqn
But we already know that $\mathcal L_{u, G} \subseteq E,$ and hence 
\beqn
e_{v_{G \setminus \{i\}}|w_i} \in l^2(\mathsf{sib}_G(u)) = \mathcal L_{u, G} \oplus (\mathcal L_{u, G})^{\perp} \subseteq [E]_{S_{\lambdab}}.
\eeqn 
This completes the proof of induction on $k \in \mathbb N.$

To complete the proof, let $v \in \Phi_G.$ 
%By \eqref{phi-F}, there exists a unique $u \in \Omega_G$ such that $v \in \mathsf{sib}_G(u)$. 
Thus $v=(v_1, \cdots, v_d)$ with $v_j \in V^{\circ}_j$ for $j \in G$ and $v_j=\mathsf{root}_j$ for $j \notin G.$ 
Since $v_i \in \childn{\alpha_{v_i}}{\mathsf{root}_i}$, we obtain
$e_{v}=e_{v_{G \setminus \{i\}}|v_i} \in [E]_{S_{\lambdab}}$. This completes the proof of the theorem.
\end{proof}

In the remaining part of this section, we discuss some immediate consequences of Theorem \ref{wandering}.

Let $T=(T_1, \cdots, T_d)$ be a commuting $d$-tuple on
a Hilbert space $\mathcal H.$ A subspace $\mathcal M$ of
$\mathcal H$ is said to be {\it cyclic} for $T$ if \beqn \mathcal H
= \bigvee \{T^\alpha h : h \in \mathcal M,~ \alpha \in \mathbb N^d\}. \eeqn We
say that $T$ is {\it finitely multicyclic} if there exists a finite dimensional 
cyclic subspace for $T.$

\begin{corollary} \label{f-cyclic}
Let $\mathscr T = (V,\mathcal E)$ be the directed Cartesian product of locally finite, rooted directed trees $\mathscr T_1, \cdots, \mathscr T_d$ and let $S_{\lambdab}$ be a commuting multishift on $\mathscr T$. 
If $\mathscr T$ has finite joint branching index, 
then $S_{\lambdab}$ is finitely multicyclic with cyclic subspace the joint kernel $E$ of $S^*_{\lambdab}$.
In this case, $\dim{\ker({S^*_{\lambda}-\omega})} \leqslant \dim E$ for every $\omega \in \mathbb C^d.$ 
\end{corollary}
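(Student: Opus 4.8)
The plan is to deduce both claims from Theorem~\ref{wandering} together with the analyticity established in Corollary~\ref{analytic}. First I would settle finite multicyclicity: by Theorem~\ref{wandering}, $l^2(V) = [E]_{S_{\lambdab}} = \bigvee_{\alpha \in \mathbb N^d} S^{\alpha}_{\lambdab}E$, which is exactly the statement that $E$ is a cyclic subspace for $S_{\lambdab}$. It then remains only to observe that $E$ is finite dimensional, and this is precisely Corollary~\ref{dimE-finite} (or its quantitative form Corollary~\ref{dimE-bound}), which applies since $\mathscr T$ is locally finite with finite joint branching index. Hence $S_{\lambdab}$ admits a finite dimensional cyclic subspace, i.e.\ it is finitely multicyclic.

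For the eigenspace bound, fix $\omega = (\omega_1, \cdots, \omega_d) \in \mathbb C^d$ and consider $N := \ker(S^*_{\lambdab} - \omega) = \bigcap_{j=1}^d \ker(S^*_j - \overline{\omega}_j I)$. The idea is to show $N$ embeds (as a vector space, via a bounded injective map) into $E$. For $h \in N$ one has $S^*_j h = \overline{\omega}_j h$ for each $j$, so $h \in \bigcap_k \ran((S^*_{\lambdab})^{*k}\cdot)^{\perp}$\,-type considerations are not quite what I want; instead I would use the standard pairing $E = \ker S^*_{\lambdab}$ and the cyclicity $l^2(V) = [E]_{S_{\lambdab}}$. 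The cleanest route: since $[E]_{S_{\lambdab}} = l^2(V)$, the map $P_E|_{N} : N \to E$ given by $h \mapsto P_E h$ is injective. Indeed, if $h \in N$ and $P_E h = 0$, then $h \perp E$, hence $h \perp S^{\alpha}_{\lambdab}E$ is \emph{not} immediate, so one argues: $\langle h, S^{\alpha}_{\lambdab}f\rangle = \langle S^{*\alpha}_{\lambdab}h, f\rangle = \overline{\omega}^{\alpha}\langle h, f\rangle$ for all $f \in E$ (using $S^*_j h = \overline{\omega}_j h$ and commutativity). If $P_E h = 0$ then $\langle h, f \rangle = 0$ for all $f \in E$, so $\langle h, S^{\alpha}_{\lambdab}f\rangle = 0$ for all $\alpha \in \mathbb N^d$ and all $f \in E$; since these vectors span $l^2(V)$ by Theorem~\ref{wandering}, $h = 0$. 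Thus $P_E|_N$ is injective and $\dim N \leqslant \dim E$.

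I expect the only real subtlety to be making the last displayed identity $\langle h, S^{\alpha}_{\lambdab}f\rangle = \overline{\omega}^{\alpha}\langle h, f\rangle$ fully rigorous, which amounts to an easy induction on $|\alpha|$ using $S^{*\alpha}_{\lambdab}h = \overline{\omega}^{\alpha}h$ (a consequence of the $S^*_j h = \overline{\omega}_j h$ and the fact that the $S_j$ commute, hence the $S^*_j$ commute). Everything else is bookkeeping: Theorem~\ref{wandering} supplies the cyclicity, Corollary~\ref{dimE-finite} supplies finiteness of $\dim E$ under the stated hypotheses, and the projection argument converts cyclicity into the dimension inequality. No delicate estimate is needed; the ``hard part'' (the wandering subspace property itself) has already been done in Theorem~\ref{wandering}.

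\begin{proof}
By Theorem~\ref{wandering}, $l^2(V) = [E]_{S_{\lambdab}} = \bigvee_{\alpha \in \mathbb N^d} S^{\alpha}_{\lambdab}E$, so $E$ is a cyclic subspace for $S_{\lambdab}$. Since $\mathscr T$ is locally finite with finite joint branching index, $E$ is finite dimensional by Corollary~\ref{dimE-finite}. Hence $S_{\lambdab}$ is finitely multicyclic with cyclic subspace $E$.

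To see the last assertion, fix $\omega = (\omega_1, \cdots, \omega_d) \in \mathbb C^d$ and set $N := \ker(S^*_{\lambdab} - \omega) = \bigcap_{j=1}^d \ker(S^*_j - \overline{\omega}_j I)$. Since $S_1, \cdots, S_d$ commute, so do $S^*_1, \cdots, S^*_d$, and an easy induction on $|\alpha|$ shows that
\beqn
S^{*\alpha}_{\lambdab} h = \overline{\omega}^{\alpha} h \quad \text{for all } h \in N,~ \alpha \in \mathbb N^d.
\eeqn
Consequently, for $h \in N$, $f \in E$ and $\alpha \in \mathbb N^d$,
\beqn
\inp{h}{S^{\alpha}_{\lambdab} f} = \inp{S^{*\alpha}_{\lambdab} h}{f} = \overline{\omega}^{\alpha} \inp{h}{f}.
\eeqn
Now consider the linear map $P_E|_N : N \rar E$. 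If $h \in N$ satisfies $P_E h = 0$, then $\inp{h}{f} = 0$ for every $f \in E$, and hence $\inp{h}{S^{\alpha}_{\lambdab} f} = \overline{\omega}^{\alpha}\inp{h}{f} = 0$ for all $\alpha \in \mathbb N^d$ and $f \in E$. Since $\bigvee_{\alpha \in \mathbb N^d} S^{\alpha}_{\lambdab} E = l^2(V)$ by Theorem~\ref{wandering}, it follows that $h = 0$. Thus $P_E|_N$ is injective, and therefore $\dim N \leqslant \dim E$.
\end{proof}
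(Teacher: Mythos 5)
Your proof is correct. The first assertion is handled exactly as in the paper: Theorem~\ref{wandering} gives $l^2(V)=[E]_{S_{\lambdab}}$ and Corollary~\ref{dimE-finite} (the paper cites the quantitative Corollary~\ref{dimE-bound}) gives $\dim E<\infty$. For the eigenspace bound you take a dual route to the paper's. The paper re-expands $S^{\alpha}_{\lambdab}$ in powers of $S_{\lambdab}-\overline{\omega}$ to get $\bigvee_{\alpha}(S_{\lambdab}-\overline{\omega})^{\alpha}(E)=l^2(V)$, applies the projection $P_{\omega}$ onto the eigenspace, and uses $P_{\omega}(S_j-\overline{\omega}_j)=0$ to kill every term with $\alpha\neq 0$, concluding $\ker(S^*_{\lambdab}-\omega)=P_{\omega}E$; you instead show that $P_E$ restricted to the eigenspace is injective, via the identity $\inp{h}{S^{\alpha}_{\lambdab}f}=\overline{\omega}^{\alpha}\inp{h}{f}$ and the density of $\bigvee_{\alpha}S^{\alpha}_{\lambdab}E$. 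These are adjoint formulations of the same fact (surjectivity of $P_{\omega}|_E$ versus injectivity of $P_E|_{N}$), and both rest on Theorem~\ref{wandering} as the sole substantive input; your version avoids the binomial re-expansion entirely and is, if anything, a little cleaner. The only cosmetic discrepancy is that you read $\ker(S^*_{\lambdab}-\omega)$ as $\bigcap_j\ker(S^*_j-\overline{\omega}_jI)$ rather than $\bigcap_j\ker(S^*_j-\omega_jI)$; since the claim is quantified over all $\omega\in\mathbb C^d$, this is harmless.
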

\begin{proof}
Assume that $\mathscr T$ has finite joint branching index.
The first part follows from Theorem \ref{wandering} and Corollary \ref{dimE-bound}. The idea of proof of second part seems to be known (see, for instance, \cite{He} for the case $d=1$).  Assume that $\mathscr T$ has finite joint branching index and let $\omega \in \mathbb C^d$.  
By Theorem \ref{wandering}, \beqn 
\bigvee_{\alpha \in \mathbb N^d} S^{\alpha}_{\lambdab}(E) = l^2(V).
\eeqn
Since $S^{\alpha}_{\lambdab}$ is a finite linear combination of terms of the form $(S_{\lambdab} - \omega)^{\beta}$ for $\beta \in \mathbb N^d$, we must have
\beqn 
\bigvee_{\alpha \in \mathbb N^d} (S_{\lambdab}-\omega)^{\alpha}(E) = l^2(V).
\eeqn
If $P_{\omega}$ denotes the orthogonal projection of $l^2(V)$ onto $\ker(S^*_{\lambdab}-\omega)$, then $P_{\omega}(S_j-\overline{\omega}_j)=0$ for any $j=1, \cdots, d.$ It follows that
\beqn
\ker(S^*_{\lambdab}-\omega) &=& P_{\omega}l^2(V) = P_{\omega} \bigvee_{\alpha \in \mathbb N^d} (S_{\lambdab}-\overline{\omega})^{\alpha}(E)  \\ &=& P_{\omega}E + P_{\omega} \Big( \bigvee_{\underset{\alpha \neq 0}{\alpha \in \mathbb N^d}} (S_{\lambdab}-\overline{\omega})^{\alpha}(E) \Big) \\ &=& P_{\omega}E +
\bigvee_{j=1}^d \bigvee_{\underset{\alpha_j \neq 0}{\alpha \in \mathbb N^d}} P_{\omega}(S_j-\overline{\omega}_j)^{\alpha_j}
 (S_{\lambdab}-\overline{\omega})^{\alpha-\alpha_j\epsilon_j}(E) = P_{\omega}E,
\eeqn
since $P_{\omega}(S_j-\overline{\omega}_j)^{k}=0$ for $k \neq 0$ and $j=1, \cdots, d$. Hence the dimension of $\ker(S^*_{\lambdab}-\omega)$ is at most $\dim E.$
\end{proof}

%\chapter{Analytic Model}

Recall that $l^2_{\mathcal M}(\mathbb N^d)$ is defined as the Hilbert space of square-summable multisequence $\{h_{\alpha}\}_{\alpha \in \mathbb N^d}$ in $\mathcal M$, where $\mathcal M$ is a nonzero complex
Hilbert space. If $\{W^{(j)}_\alpha\}_{\alpha \in \mathbb N^d} \subseteq
B(\mathcal M)$ for $j=1, \cdots, d$, then the linear operator $W_j$
in $l^2_{\mathcal M}(\mathbb N^d)$ is defined by $W_j(h_\alpha)_{\alpha \in \mathbb N^d}  =(k_{\alpha})_{\alpha \in \mathbb N^d}$ for $(h_\alpha)_{\alpha \in \mathbb N^d} \in \mathcal D,$ where 
\beqn
k_{\alpha} = \begin{cases}  W^{(j)}_{\alpha - \epsilon_j}h_{\alpha - \epsilon_j} & \mbox{if~}\alpha_j \geqslant 1, \\
0 & \mbox{if~}\alpha_j=0 \end{cases}
\eeqn
and $\mathcal D:=\{(h_\alpha)_{\alpha \in \mathbb N^d} \in l^2_{\mathcal M}(\mathbb N^d) : (k_\alpha)_{\alpha \in \mathbb N^d} \in  l^2_{\mathcal M}(\mathbb N^d)\}$. 
%   \begin{align} \label{opvuws}
%W_j(h_\alpha)_{\alpha \in \mathbb N^d}  = {(\hspace{-4mm}\underbrace{W^{(j)}_{\alpha}h_{\alpha}}_{(\alpha + \epsilon_j)\mbox{th}~\mbox{place}}\hspace{-4mm})_{{\alpha \in \mathbb N^d}}},
%\quad \oplus_{\alpha \in \mathbb N^d}h_\alpha \in l^2_{\mathcal M}(\mathbb N^d).
%   \end{align}
If we use the convention that $W^{(j)}_{\alpha}=0=h_{\alpha}$ whenever $\alpha_j < 0$, then the definition of $W_j$ can be rewritten as $W_j(h_\alpha)_{\alpha \in \mathbb N^d}  =(W^{(j)}_{\alpha - \epsilon_j}h_{\alpha - \epsilon_j} )_{\alpha \in \mathbb N^d}$. 
We refer to the $d$-tuple $W=(W_1, \cdots, W_d)$ as an {\em operator valued multishift with operator weights $\{W^{(j)}_\alpha: {\alpha \in \mathbb N^d}, j=1, \cdots, d\}$} (for one variable counter-part of operator valued multishift, the reader is referred to \cite{L}). Note that $W$ is the classical multishift $S_{\bf w}$ in case $\mathcal M$ is the one dimensional complex Hilbert space.
%Putting $\mathcal M=\C$, we arrive at the well-known
%notion of a unilateral weighted shift in $\ell^2$. Let
%$W$ be as in \eqref{opvuws}. Given integers $m\Ge n
%\Ge 0$, we set (cf.\ \cite[p.\ 409]{Ja-3})
%   \begin{align} \label{wmndef}
%W_{m,n} =
%   \begin{cases}
%W_{m-1} \cdots W_n & \text{if } m>n,
%   \\[.5ex]
%I & \text{if } m=n.
%   \end{cases}
%   \end{align}
%It follows from \eqref{opvuws} that
%   \begin{align} \label{aopws}
%W^*(h_0, h_1, \ldots) &= (W_0^*h_1, W_1^*h_2, \ldots),
%\quad (h_0, h_1, \ldots) \in \ell^2_{\mathcal M},
%   \\   \label{aopws2}
%W^*W(h_0, h_1, \ldots) &= (W_0^*W_0h_0, W_1^*W_1h_1,
%\ldots), \quad (h_0, h_1, \ldots) \in \ell^2_{\mathcal
%M}.
%   \end{align}

\begin{corollary}
Let $\mathscr T = (V,\mathcal E)$ be the directed Cartesian product of locally finite, rooted directed trees $\mathscr T_1, \cdots, \mathscr T_d$.
Let $S_{\lambdab}$ be a toral left invertible multishift on $\mathscr T$ and let $E$ denote the joint kernel of $S^*_{\lambdab}$.
%Let $U_{\theta}$ be the unitary as given by \eqref{U-theta} in the proof of Theorem \ref{circular}. If $U_{\theta}f=f$ %for every $f \in E$ and every $\theta \in \mathbb R^d$ 
If the multisequence $\{S^{\alpha}_{\lambdab}E\}_{\alpha \in \mathbb N^d}$ of linear manifolds of $l^2(V)$ is mutually orthogonal, then $S_{\lambdab}$ is unitarily equivalent to a commuting operator valued multishift $W$ on $l^2_{E}(\mathbb N^d)$ with invertible weights. 
\end{corollary}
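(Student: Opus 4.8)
The plan is to build the unitary directly from the wandering subspace decomposition. By Theorem \ref{wandering}, $S_{\lambdab}$ possesses the wandering subspace property, so $l^2(V) = \bigvee_{\alpha \in \mathbb N^d} S^{\alpha}_{\lambdab}(E)$. Under the standing hypothesis that the family $\{S^{\alpha}_{\lambdab}E\}_{\alpha \in \mathbb N^d}$ is mutually orthogonal, this union is in fact an \emph{orthogonal} direct sum:
\beq \label{prop-decomp}
l^2(V) = \bigoplus_{\alpha \in \mathbb N^d} \overline{S^{\alpha}_{\lambdab}(E)}.
\eeq
The first step is to check that on each summand $S^{\alpha}_{\lambdab}$ acts injectively with closed range (so that $\overline{S^{\alpha}_{\lambdab}(E)} = S^{\alpha}_{\lambdab}(E)$ is genuinely a copy of $E$): toral left invertibility of $S_{\lambdab}$ gives, for each $j$, a constant $c_j > 0$ with $S^*_jS_j \geqslant c_j I$, and by Proposition \ref{shift-prop}(ix)--(x) together with \eqref{S*S-powers} one sees $S^{\alpha}_{\lambdab}$ is bounded below on $E$ (indeed on all of $l^2(V)$ in the directions appearing in $\alpha$, and $E$ is a wandering subspace so distinct monomials stay orthogonal). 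This makes $S^{\alpha}_{\lambdab}|_E : E \to S^{\alpha}_{\lambdab}(E)$ a bounded invertible linear bijection.

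The second step is to use \eqref{prop-decomp} to manufacture the unitary $U : l^2(V) \to l^2_E(\mathbb N^d)$. For $h \in l^2(V)$, write $h = \sum_{\alpha \in \mathbb N^d} S^{\alpha}_{\lambdab}(h_\alpha)$ with uniquely determined $h_\alpha \in E$ (uniqueness because $S^{\alpha}_{\lambdab}|_E$ is injective and the sum is orthogonal), and set $Uh := (\tilde h_\alpha)_{\alpha \in \mathbb N^d}$ where $\tilde h_\alpha := \big(P_E (S^{\alpha}_{\lambdab})^*S^{\alpha}_{\lambdab}|_E\big)^{1/2} h_\alpha$; equivalently, let $R_\alpha := \big(P_E (S^{\alpha}_{\lambdab})^*S^{\alpha}_{\lambdab}|_E\big)^{1/2} \in B(E)$ be the positive square root of the (invertible, by Step 1) operator $h_\alpha \mapsto P_E (S^{\alpha}_{\lambdab})^*S^{\alpha}_{\lambdab} h_\alpha$ on $E$, so that $\|S^{\alpha}_{\lambdab} h_\alpha\| = \|R_\alpha h_\alpha\|$. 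Then $U$ is isometric because $\|h\|^2 = \sum_\alpha \|S^{\alpha}_{\lambdab} h_\alpha\|^2 = \sum_\alpha \|R_\alpha h_\alpha\|^2 = \|Uh\|^2$, and surjective because each $R_\alpha$ is invertible and \eqref{prop-decomp} exhausts $l^2(V)$; so $U$ is unitary.

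The third step is to compute $U S_j U^{-1}$ and identify it as an operator valued multishift. For $h = \sum_\alpha S^{\alpha}_{\lambdab}(h_\alpha)$ one has $S_j h = \sum_\alpha S^{\alpha + \epsilon_j}_{\lambdab}(h_\alpha) = \sum_{\beta} S^{\beta}_{\lambdab}(h_{\beta - \epsilon_j})$ (with the convention $h_{\beta-\epsilon_j}=0$ if $\beta_j = 0$), so the $\beta$-component of $S_j h$ in the decomposition is $h_{\beta - \epsilon_j}$. Transporting by $U$, the $\beta$-entry of $U S_j h$ is $R_\beta h_{\beta - \epsilon_j} = R_\beta R_{\beta - \epsilon_j}^{-1}(\tilde h_{\beta - \epsilon_j})$. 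Hence $U S_j U^{-1} = W_j$ where $W_j$ is the operator valued multishift on $l^2_E(\mathbb N^d)$ with weights $W^{(j)}_\alpha := R_{\alpha + \epsilon_j} R_\alpha^{-1} \in B(E)$. These weights are invertible since each $R_\alpha$ is. Commutativity of $W = (W_1, \dots, W_d)$ is automatic, being conjugate to the commuting tuple $S_{\lambdab}$; alternatively it amounts to the cocycle-type identity $W^{(i)}_{\alpha + \epsilon_j} W^{(j)}_{\alpha} = W^{(j)}_{\alpha + \epsilon_i} W^{(i)}_{\alpha}$, which follows from $R_{\alpha + \epsilon_i + \epsilon_j}$ being well-defined independently of the order of the two shifts. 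Boundedness of $W$ follows from boundedness of $S_{\lambdab}$ via the unitary equivalence. This proves the claim. I expect the main technical obstacle to be verifying carefully that the coefficients $h_\alpha$ are well-defined and that the orthogonal sum \eqref{prop-decomp} is an honest Hilbert-space direct sum with closed summands — i.e. the interplay between toral left invertibility (which controls each coordinate direction separately) and the combined estimate needed to conclude $S^{\alpha}_{\lambdab}$ is bounded below on $E$ for a general multi-index $\alpha$; this is where \eqref{S*S-powers} and the orthogonality hypothesis must be combined with some care.
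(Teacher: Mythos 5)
Your proposal is correct and follows essentially the same route as the paper: the wandering subspace property (Theorem \ref{wandering}) plus the orthogonality hypothesis give $l^2(V)=\bigoplus_{\alpha}S^{\alpha}_{\lambdab}(E)$, toral left invertibility makes each summand a closed copy of $E$, and the unitary is assembled slot by slot so that conjugating $S_j$ produces an operator valued multishift with invertible weights. The only difference is cosmetic: the paper takes arbitrary isometric isomorphisms $U_{\alpha}\colon S^{\alpha}_{\lambdab}(E)\to E$ (using only that $\dim S^{\alpha}_{\lambdab}(E)=\dim E$) and gets weights $W^{(j)}_{\alpha}=U_{\alpha+\epsilon_j}S_j|_{S^{\alpha}_{\lambdab}(E)}U_{\alpha}^{-1}$, whereas you make the canonical choice $U_{\alpha}=R_{\alpha}(S^{\alpha}_{\lambdab}|_E)^{-1}$, which specializes those weights to the explicit formula $W^{(j)}_{\alpha}=R_{\alpha+\epsilon_j}R_{\alpha}^{-1}$.
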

\begin{proof} 
%Suppose that $U_{\theta}f=f$ for every $f \in E$ and every $\theta \in \mathbb R^d$. By Theorem \ref{circular},  $S_{j}U_{\theta}=\exp(i \theta_j)U_{\theta} S_j$ for $j=1, \cdots, d$ and every $\theta \in \mathbb R^d.$ Let $f, g \in E$ and $\alpha, \beta \in \mathbb N^d.$ Then
%\beqn
%\inp{S^{\alpha}_{\lambdab}f}{S^{\beta}_{\lambdab}g} = \inp{U_{\theta}S^{\alpha}_{\lambdab}f}{U_{\theta}S^{\beta}_{\lambdab}g}  =
%%\exp(i (\alpha - \beta)\cdot \theta)
%%\inp{S^{\alpha}_{\lambdab}U_{\theta}f}{S^{\beta}_{\lambdab}U_{\theta}g} \\ &=& 
%\exp(i (\alpha - \beta)\cdot \theta)
%\inp{S^{\alpha}_{\lambdab}f}{S^{\beta}_{\lambdab}g},
%\eeqn 
%which implies that $\inp{S^{\alpha}_{\lambdab}f}{S^{\beta}_{\lambdab}g}  = 0$ whenever $\alpha \neq \beta.$ 
%This shows that
The proof relies on the technique employed in \cite[Theorem 3.3]{ACJS}.
Assume that the multisequence $\{S^{\alpha}_{\lambdab}E\}_{\alpha \in \mathbb N^d}$ is mutually orthogonal.
Since the
operator $S_{\lambdab}$ is toral left invertible, $S^{\alpha}_{\lambdab}$ is left invertible for
every $\alpha \in \mathbb N^d$. Hence
we deduce that $\mathcal M_{\alpha}:=S_{\lambdab}^\alpha(E)~( \alpha \in \mathbb N^d)$ is
a subspace of $l^2(V)$ and $\dim\, \mathcal M_{\alpha} = \dim\, E$ for every $\alpha \in \mathbb N^d.$
%Hence for every
%$\alpha \in \mathbb N^d$, the Hilbert spaces $\mathcal %M_{\alpha}$ and
%$\mathcal M_0$ are isometrically isomorphic. 
For $\alpha \in
\mathbb N^d$, let $U_{\alpha} \colon \mathcal M_\alpha \to E$ be
any isometric isomorphism. 
By Theorem \ref{wandering},
$\displaystyle l^2(V) =
\bigoplus_{\alpha \in \mathbb N^d} S_{\lambdab}^\alpha(E)=\bigoplus_{\alpha \in \mathbb N^d} \mathcal M_\alpha$.  
We can now define
the isometric isomorphism $U :  l^2(V) \to
l^2_{E}(\mathbb N^d)$ by
   \begin{align*}
U(\oplus_{\alpha \in \mathbb N^d} h_\alpha) := (U_{\alpha}h_{\alpha})_{\alpha \in \mathbb N^d},~  \oplus_{\alpha \in \mathbb N^d} h_\alpha \in
l^2(V).
  \end{align*}
Consider the operator
valued multishift $W$ on $l^2_{\mathcal M_0}(\mathbb N^d)$ with weights
$\{W^{(j)}_{\alpha}:=U_{\alpha + \epsilon_j} S_j|_{\mathcal M_{\alpha}} U_{\alpha}^{-1}\}_{\alpha \in \mathbb N^d}
\subseteq B(E)$ for $j=1, \cdots, d$. 
%Then
%   \begin{align*}
%VS_{j}(\oplus_{\alpha \in \mathbb N^d} h_\alpha) & = 
%V(\oplus_{\alpha \in \mathbb N^d} \hspace{-3mm} \underbrace{S_j|_{\mathcal M_{\alpha}}h_\alpha}_{(\alpha + \epsilon_j)\mbox{th}~\mbox{place}}\hspace{-3mm}) =
% (\underbrace{V_{\alpha + \epsilon_j}S_j|_{\mathcal M_{\alpha}}h_\alpha}_{(\alpha + \epsilon_j)\mbox{th}~\mbox{place}})_{\alpha \in \mathbb N^d} \\
%& = 
% (\hspace{-2mm} \underbrace{W^{(j)}_{\alpha}V_{\alpha}h_\alpha}_{(\alpha + \epsilon_j)\mbox{th}~\mbox{place}}\hspace{-2mm})_{\alpha \in \mathbb N^d} = W_jV(\oplus_{\alpha \in \mathbb N^d} h_\alpha)
%   \end{align*}
Then for $\oplus_{\alpha \in \mathbb N^d} h_\alpha \in l^2(V)$ with at most finitely many nonzero terms $h_{\alpha}$,
   \begin{align*}
US_{j}(\oplus_{\alpha \in \mathbb N^d} h_\alpha) & = 
U(\oplus_{\alpha \in \mathbb N^d} {S_j|_{\mathcal M_{\alpha}}h_\alpha}) =
 ({U_{\alpha}}S_j|_{\mathcal M_{\alpha- \epsilon_j}}h_{\alpha- \epsilon_j})_{\alpha \in \mathbb N^d} \\
& = 
 ({W^{(j)}_{\alpha -  \epsilon_j}U_{\alpha -  \epsilon_j}h_{\alpha -  \epsilon_j}})_{\alpha \in \mathbb N^d} = W_jU(\oplus_{\alpha \in \mathbb N^d} h_\alpha).
   \end{align*}
   This shows that $US_jU^*$ agrees with $W_j$ on a dense linear manifold of $l^2_{E}(\mathbb N^d)$, and hence $W_j$ must be a bounded linear operator on $l^2_{E}(\mathbb N^d)$.
  Since $S_{\lambdab}$ is commuting, so is $W.$ Finally, since $S_{\lambdab}$ is toral left-invertible, each $W^{(j)}_{\alpha}$ is invertible in $B(E)$.
\end{proof}
\begin{remark}
The converse of the above result is trivially true. 
We note further that in case $E$ is finite dimensional, the conclusion of the corollary holds without the assumption of toral left invertibility of $S_{\lambdab}$. A slight modification of the argument above together with the injectivity of $S_1, \cdots, S_d$, as ensured by Corollary \ref{p-spectrum}, yields the desired conclusion.
%If one takes $d=1$ then for any directed tree with a branch, there exists $f \in E$ such that $U_{\theta}f \neq f$ for any $\theta \in \mathbb R \setminus \{0\}.$ Indeed, for any element in $E$ of the form $\alpha e_v + \beta e_w$ (so that $v, w$ have same parent), by \eqref{U-theta},
%$U_{\theta}(\alpha e_v + \beta e_w)=\alpha e_v + \beta e_w~\mbox{for some}~\theta \in \mathbb R \setminus \{0\}$ if and only if $\alpha = 0 = \beta.$ 
\end{remark}

%\begin{proposition}\label{wandering}
%Let $\mathscr T_i = (V_i,\mathcal E_i)$, $1 \leqslant i \leqslant n$, be a collection of rooted directed trees and  $\mathscr T = (V,\mathcal E)$ be the directed Cartesian product  of $\mathscr T_i$'s. Let $S=(S_1, \cdots, S_d)$ be a commuting multishift on $\mathscr T$ and $E = \cap_{i=1}^n \ker S_i^*$ be the joint kernel  of $S$. Then 
%\beq \label{wanderingeq}
%\bigvee_{K \geqslant  0} S^K(E) = l^2(V).
%\eeq
%\end{proposition}
%
%\begin{proof}
%???
%\end{proof}

\section{Multishifts admitting Shimorin's Analytic Model}
In this section, we discuss a subclass of multishifts $S_{\lambdab}$ on $\mathscr T$ satisfying a kernel condition (cf. \cite[Equation (5.3)]{CC}). In particular, we obtain an analytic model for this class and discuss its applications to multivariable spectral theory. 
It turns out that Shimorin's analytic model \cite{Sh} does not naturally extend to all toral left invertible multishifts. Indeed, the toral left invertible multishifts admitting Shimorin's model must belong to the aforementioned class (see Remark \ref{k-c-rem}).
Before we introduce this class, we need a lemma pertaining to toral Cauchy dual of multishifts.

\begin{lemma}\label{Cauchy-dual}
Let $\mathscr T = (V,\mathcal E)$ be the directed Cartesian product of rooted directed trees $\mathscr T_1, \cdots, \mathscr T_d$. 
Let $S_{\lambdab}=(S_1, \cdots, S_d)$ be a toral left invertible multishift on $\mathscr T$. Then the toral Cauchy dual $S^{\mf t}_{\lambdab} = (S_1^{\mf t}, \cdots, S_d^{\mf t})$ of $S_{\lambdab}$ is given by 
$$S_j^{\mf t} e_v = \frac{1}{\|S_j e_v\|^2} \sum_{w \in \childi{j}{v}} \lambda_w^{(j)} e_w\ \text{for all}\ v \in V~\mbox{and}~j=1, \cdots, d.$$
In particular, $S^{\mf t}_{\lambdab}$ is a multishift with weights 
\beqn
\Big\{\frac{\lambda^{(j)}_w}{\|S_j e_{{v}}\|^2} : w \in \childi{j}{v}, ~v \in V, ~ j=1, \cdots, d \Big\}.
\eeqn
%Moreover, $S^{\mf t}_{\lambdab}$ is commuting if and only
%\beq \label{t-commuting} \|S_j e_{\parenti{j}{v}}\|^2 \|S_i e_{\mathsf{par}_i{\parenti{j}{v}}}\|^2=\|S_i e_{\parenti{i}{v}}\|^2 \|S_j e_{\mathsf{par}_j{\parenti{i}{v}}}\|^2\eeq
%for all $v \in V$ and $i, j=1, \cdots, d.$
\end{lemma}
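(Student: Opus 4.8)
The plan is to compute $S_j^{\mf t}$ directly from the definition \eqref{toral-dual}, namely $S_j^{\mf t} = S_j (Q_{S_j}(I))^{-1}$, by evaluating both sides on the orthonormal basis vectors $e_v$. First I would observe that $Q_{S_j}(I) = S_j^* S_j$ acts diagonally on the basis $\{e_v\}_{v \in V}$: using \eqref{Si} and the adjoint formula $S_j^* e_w = \overline{\lambda}^{(j)}_w e_{\parenti{j}{w}}$, together with the orthogonality of $\{e_w : w \in \childi{j}{v}\}$ from Lemma \ref{disjoint}(ii), one gets
\[
S_j^* S_j e_v = S_j^* \sum_{w \in \childi{j}{v}} \lambda_w^{(j)} e_w = \sum_{w \in \childi{j}{v}} |\lambda_w^{(j)}|^2 e_v = \|S_j e_v\|^2 \, e_v .
\]
So $Q_{S_j}(I) e_v = \|S_j e_v\|^2 e_v$, and since $S_{\lambdab}$ is toral left invertible, Proposition \ref{shift-prop}(vii) guarantees $\inf_{v \in V}\|S_j e_v\|^2 > 0$, so $Q_{S_j}(I)$ is boundedly invertible with $(Q_{S_j}(I))^{-1} e_v = \|S_j e_v\|^{-2} e_v$.

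Next I would simply compose:
\[
S_j^{\mf t} e_v = S_j (Q_{S_j}(I))^{-1} e_v = \frac{1}{\|S_j e_v\|^2} S_j e_v = \frac{1}{\|S_j e_v\|^2} \sum_{w \in \childi{j}{v}} \lambda_w^{(j)} e_w ,
\]
which is precisely the asserted formula. To conclude that $S^{\mf t}_{\lambdab}$ is itself a multishift on $\mathscr T$, I would note that its weights, read off from the coefficient of $e_w$ for $w \in \childi{j}{v}$, are $\lambda_w^{(j)}/\|S_j e_{v}\|^2$, and since each $\|S_j e_v\|^2 = \sum_{w \in \childi{j}{v}} |\lambda_w^{(j)}|^2$ is a fixed positive number depending only on $v$ (and $j$), these new weights are again nonzero complex numbers indexed in the right way; by Lemma \ref{bddness}(i) boundedness of $S_j^{\mf t}$ follows from toral left invertibility of $S_j$ (equivalently, one already knows $S_j^{\mf t} \in B(l^2(V))$ because $(Q_{S_j}(I))^{-1}$ is bounded). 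This matches the format of Definition \ref{multishift-dfn}.

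There is essentially no serious obstacle here; the only point requiring a little care is the bookkeeping that the quantity $\|S_j e_v\|^2$ depends on the vertex $v$ (the parent, in coordinate $j$) rather than on the individual child $w \in \childi{j}{v}$, so that the weight attached to the edge into $w$ is $\lambda_w^{(j)}/\|S_j e_{\parenti{j}{w}}\|^2$; writing it as in the statement with $w \in \childi{j}{v}$ keeps this straight. I would also remark in passing that the identity $(S^{\mf t}_{\lambdab})^{\mf t} = S_{\lambdab}$, already noted after \eqref{toral-dual}, can be checked immediately from this explicit formula, since applying the construction twice multiplies the weight by $\|S_j e_v\|^2 / \|S_j^{\mf t} e_v\|^2$ and one verifies $\|S_j^{\mf t} e_v\|^2 = \|S_j e_v\|^{-2}$.
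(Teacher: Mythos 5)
Your proof is correct and is exactly the argument the paper intends: the paper's own proof consists of the single line ``This follows from the definition of $S_j^{\mf t}$'', and your computation (diagonality of $Q_{S_j}(I)=S_j^*S_j$ with eigenvalue $\|S_je_v\|^2$, invertibility via Proposition \ref{shift-prop}(vii), then composition with $S_j$) is the standard way to fill in that one line. Your closing remark that $\|S_j^{\mf t}e_v\|^2=\|S_je_v\|^{-2}$, hence $(S^{\mf t}_{\lambdab})^{\mf t}=S_{\lambdab}$, also checks out.
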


\begin{proof}
This follows from the definition of $S_j^{\mf t}$ (see \eqref{toral-dual}). 
%while the second follows from \eqref{commuting}.
\end{proof}

\begin{definition} \label{kernel-c}
%Let $\mathscr T = (V,\mathcal E)$ be the directed Cartesian product of rooted directed trees $\mathscr T_1, \cdots, \mathscr T_d$. 
Let $T=(T_1, \cdots, T_d)$ be a toral left invertible commuting $d$-tuple on $\mathcal H$ and let $E$ denote the joint kernel of $T^*$. Let $T^{\mf t}$ be the toral Cauchy dual of $T$.  
We say that $T$ satisfies kernel condition $(\mf K)$ 
\index{$(\mf K)$}
if
\beqn  E \subseteq \ker T_j^* {T^{\mf t}}^{\alpha}_{[j]}~\mbox{ for all~} j = 1, \cdots, d~\mbox{and for all}~ \alpha \in \mathbb{N}^d,\eeqn 
where ${T^{\mf t}}_{[j]}^{\alpha}:=\prod_{i \neq j}{T^{\mf t}_i}^{\alpha_i}$ for $\alpha \in \mathbb N^d$ and $j=1, \cdots, d.$
\end{definition}
\begin{remark} \label{ex-k-c}
Note that the kernel condition $(\mf K)$ is satisfied in any one of the following cases:
\begin{enumerate}
\item[(i)] the dimension $d=1$.
\item[(ii)] $T$ is {doubly commuting}.
\item[(iii)] $T$ is a commuting operator valued multishift $W$.
\end{enumerate}
In case dimension $2,$ $S_{\lambdab}$ satisfies kernel condition $(\mf K)$ if and only if 
\beq \label{k-c-two} E \subseteq \big(\ker S_1^* {S^{{\mf t}^{\alpha_2}}_2}\big) \bigcap \big(\ker S_2^* {S^{{\mf t}^{\alpha_1}}_1}\big)~ \mbox{for all}~ (\alpha_1, \alpha_2) \in \mathbb{N}^2. \eeq
In general, $S_{\lambdab}$ does not satisfy the kernel condition $(\mf K).$ Indeed,  a rather tedious calculation shows that for $S_{\lambdab}$ on $\mathscr T_{2, 0} \times \mathscr T_{2, 0}$, as discussed in Example \ref{T1-T1}, $f:=\lambda^{(1)}_{(2, 0)}e_{(1, 0)} - \lambda^{(1)}_{(1, 0)}e_{(2, 0)} \in E$ does not belong to $\ker S^*_1S^{\mf t}_2$ for suitable choice of weights $\lambdab$.
\end{remark}

%\begin{example}
%Let $\mathscr T$ be the directed Cartesian product of  $\mathscr T_{2,0}$ and $\mathscr T_{1,0}$ as described in Example \ref{classical-mix}. Then by  Example \ref{j-k-mixed}, the joint kernel  $E$ of $S^*_{\lambdab}$ is given by 
%\beqn
%E = [e_{\rootb}] \oplus [\lambda^{(1)}_{(2,0)} e_{(1,0)} - \lambda^{(1)}_{(1,0)} e_{(2,0)}].
%\eeqn
%Now suppose that $\lambda^{(2)}_{(0,1)} \neq \lambda^{(2)}_{(1,1)}$. Then
%\beqn
%S_1^* S^{{\mf t}^2}_2 \Big(\lambda^{(1)}_{(2,0)} e_{(1,0)} - \lambda^{(1)}_{(1,0)} e_{(2,0)} \Big) &=& \left( \frac{\lambda^{(1)}_{(2,0)} \lambda^{(1)}_{(1,2)}}{\lambda^{(2)}_{(1,1)} \lambda^{(2)}_{(1,2)}} - \frac{\lambda^{(1)}_{(1,0)} \lambda^{(1)}_{(2,2)}}{\lambda^{(2)}_{(2,1)} \lambda^{(2)}_{(2,2)}} \right) e_{(0,2)}\\
%&=& \frac{\lambda^{(2)}_{(2,1)} \lambda^{(1)}_{(2,0)} \lambda^{(1)}_{(1,2)} \lambda^{(2)}_{(2,2)} - \lambda^{(2)}_{(1,2)} \lambda^{(1)}_{(1,0)} \lambda^{(1)}_{(2,2)} \lambda^{(2)}_{(1,1)}}{\lambda^{(2)}_{(1,1)} \lambda^{(2)}_{(1,2)} \lambda^{(2)}_{(2,1)} \lambda^{(2)}_{(2,2)}} e_{(0,2)}\\
%&\overset{\eqref{commuting}}=& \frac{\lambda^{(1)}_{(2,1)} \lambda^{(2)}_{(0,1)} \lambda^{(1)}_{(1,2)} \lambda^{(2)}_{(2,2)} - \lambda^{(1)}_{(1,2)} \lambda^{(1)}_{(1,0)} \lambda^{(1)}_{(2,2)} \lambda^{(2)}_{(1,1)}}{\lambda^{(2)}_{(1,1)} \lambda^{(2)}_{(1,2)} \lambda^{(2)}_{(2,1)} \lambda^{(2)}_{(2,2)}} e_{(0,2)}
%\eeqn
%
%\end{example}

The following provides a multivariable counterpart of \cite[Theorem 2.2]{CT}.
\begin{theorem}\label{model}
Let $\mathscr T = (V,\mathcal E)$ be the directed Cartesian
product of locally finite, rooted directed trees $\mathscr T_1, \cdots, \mathscr T_d$ and let $S_{\lambdab}=(S_1, \cdots, S_d)$ be a toral left invertible multishift on $\mathscr T$. Let $E$ be the joint kernel of $S^*_{\lambdab}$. 
Assume that the toral Cauchy dual $S^{\mf t}_{\lambdab}=(S^{\mf t}_1, \cdots, S^{\mf t}_d)$ of $S_{\lambdab}$ is commuting and let $$r:=(r(S^{\mf t}_1)^{-1},\cdots, r(S^{\mf t}_d)^{-1}),$$
where $r(T)$ denotes the spectral radius of a bounded linear operator $T.$
If $S_{\lambdab}$ satisfies the kernel condition $(\mf K)$,
then there exist a reproducing kernel Hilbert space $\mathscr H$ of $E$-valued holomorphic functions defined on the polydisc $\mathbb D^d_r$ and a unitary $U : l^2(V) \rar \mathscr H$ such that $U S_j = \mathscr M_{z_j} U$ for $j=1, \cdots, d$. If, in addition, $\mathscr T$ has finite joint branching index $k_{\mathscr T}$, then the reproducing kernel $\kappa$ of $\mathscr H$ is given by
\beq \label{kappa}
\kappa_\mathscr H(z, w)=\sum_{\underset{\underset{{j=1, \cdots, d}}{|\alpha_j - \beta_j| \leqslant k_{\mathscr T_j}}}{\alpha, \beta \in \mathbb N^d}} P_E {S^{{\mf t}^{*\alpha}}_{\lambdab}}{S^{{\mf t}^{\beta}}_{\lambdab}}|_E ~z^{\alpha} \overline{w}^{\beta} \quad (z, w \in \mathbb D^d_r).
\eeq
\end{theorem}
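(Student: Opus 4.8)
The plan is to build the unitary $U$ in two steps, first establishing the orthogonality of the summands $\{S_{\lambdab}^{\alpha}(E)\}_{\alpha}$ under the kernel condition $(\mf K)$, and then transporting $S_{\lambdab}$ to a concrete function space. First I would observe that, by Theorem \ref{wandering}, $S_{\lambdab}$ has the wandering subspace property, so $l^2(V) = \bigvee_{\alpha \in \mathbb N^d} S_{\lambdab}^{\alpha}E$; the key point is that the kernel condition $(\mf K)$ forces these subspaces to be \emph{mutually orthogonal}. The heuristic is that $(\mf K)$ is exactly the multivariable replacement for the one-variable Shimorin identity \eqref{S-formula}: in one variable left-invertibility alone gives $S^k(E) \perp S^l(E)$ via $I - T T'^{*} = P_{\ker T^*}$, whereas in several variables one needs to commute the $S_j^*$'s past the Cauchy duals $S_i^{\mf t}$, and $(\mf K)$ says precisely that $E \subseteq \ker S_j^* (S^{\mf t})^{\alpha}_{[j]}$ for all $\alpha$, which is what makes the cross terms $\inp{S_{\lambdab}^{\alpha} h}{S_{\lambdab}^{\beta} h'}$ vanish for $\alpha \neq \beta$. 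I would prove this by induction on $|\alpha - \beta|$ (coordinatewise), peeling off one $S_j$ at a time, using that $S_j^{\mf t} = S_j (Q_{S_j}(I))^{-1}$ and that $P_E = I - \sum$(stuff) expresses the projection onto $E$ in terms of the $S_j, S_j^{\mf t}$. Once orthogonality is in hand, the previous corollary (the operator-valued multishift corollary) applies verbatim: $S_{\lambdab}$ is unitarily equivalent to a commuting operator-valued multishift $W$ on $l^2_E(\mathbb N^d)$.

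Next I would convert $W$ on $l^2_E(\mathbb N^d)$ into a reproducing kernel Hilbert space of $E$-valued holomorphic functions, mimicking the passage from $S_{\bf w}$ to $H^2(\gamma)$ recalled in Section 1.2. Concretely, to $f = (h_\alpha)_{\alpha} \in l^2_E(\mathbb N^d)$ associate the formal power series $\hat f(z) = \sum_{\alpha} (U_\alpha^{-1} h_\alpha) z^\alpha$ with $E$-valued coefficients, where $U_\alpha : S_{\lambdab}^\alpha(E) \to E$ are the isometries from the preceding corollary. One checks $W_j$ becomes multiplication $\mathscr M_{z_j}$, and the radius of convergence is controlled by the spectral radii $r(S_j^{\mf t})$, so the series converge on $\mathbb D^d_r$ with $r = (r(S_1^{\mf t})^{-1}, \cdots, r(S_d^{\mf t})^{-1})$; here one uses the spectral radius formula \eqref{sp-rad} and the fact that $S_j^{\mf t}$ and $S_j$ have reciprocal-type spectral behaviour via \eqref{toral-dual}. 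The reproducing property follows from summability: $\inp{\hat f(w)}{e}_E = \inp{f}{\kappa_{\mathscr H}(\cdot, w) e}$ for $e \in E$, which pins down $\kappa_{\mathscr H}$.

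For the explicit formula \eqref{kappa}, assuming finite joint branching index, I would compute $\kappa_{\mathscr H}(z, w) = \sum_{\alpha, \beta} P_E S_{\lambdab}^{\mf t \, *\alpha} S_{\lambdab}^{\mf t \, \beta}|_E \, z^\alpha \overline{w}^\beta$ directly from the definition of the reproducing kernel of a space built out of the wandering-subspace decomposition (the coefficient of $z^\alpha \overline w^\beta$ is the compression to $E$ of $(\text{frame operator})^{-1}$-type data, which for a left-invertible analytic tuple is exactly $S_{\lambdab}^{\mf t \, *\alpha} S_{\lambdab}^{\mf t \, \beta}$). The only nontrivial reduction is the support restriction $|\alpha_j - \beta_j| \le k_{\mathscr T_j}$: I would show $P_E S_{\lambdab}^{\mf t \, *\alpha} S_{\lambdab}^{\mf t \, \beta}|_E = 0$ whenever some $|\alpha_j - \beta_j| > k_{\mathscr T_j}$, using Proposition \ref{Vprec} ($\childnt{k_{\mathscr T}}{V_\prec} \cap V_\prec = \emptyset$) together with Proposition \ref{joint-k} locating $E$ inside $\bigvee\{e_v : v \in F_1 \times \cdots \times F_d\}$ with $F_j = \child{V^{(j)}_\prec} \cup \{\mathsf{root}_j\}$; a vertex $v$ in that set, pushed down by more than $k_{\mathscr T_j}$ steps in the $j$-th coordinate and then back up a different amount, lands outside the support of $E$ because past depth $k_{\mathscr T_j}$ the tree $\mathscr T_j$ has no further branching, so the relevant weighted-shift coefficients collapse to those of $\mathscr T_{1,0}$ and the adjoint kills it.

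\textbf{Main obstacle.} The hard part will be the orthogonality step, i.e.\ showing that $(\mf K)$ genuinely implies $S_{\lambdab}^\alpha(E) \perp S_{\lambdab}^\beta(E)$ for $\alpha \ne \beta$. The subtlety is that, unlike in one variable, $P_E$ does not have a clean closed form and the $S_j^{\mf t}$ only commute by hypothesis; one must carefully track how $S_j^*$ interacts with products $S_{i_1}^{\mf t} \cdots S_{i_m}^{\mf t}$ applied to $E$, and the induction on $|\alpha - \beta|$ has to be set up so that at each stage the vector being tested still lies in a subspace annihilated by the appropriate $S_j^* (S^{\mf t})_{[j]}^\gamma$. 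The secondary obstacle is justifying the convergence/holomorphy of $\hat f$ on precisely $\mathbb D^d_r$ and not a smaller polydisc, which requires the sharp growth estimate $\|U_\alpha^{-1}\| \asymp \|S_{\lambdab}^{\mf t \, \alpha}|_E\|$ and the multivariable spectral radius formula.
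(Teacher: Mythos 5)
There is a genuine gap, and it sits at the foundation of your argument. Your plan rests on the claim that the kernel condition $(\mf K)$ forces the subspaces $\{S^{\alpha}_{\lambdab}(E)\}_{\alpha \in \mathbb N^d}$ to be mutually orthogonal, after which you invoke the operator-valued multishift corollary. This claim is false. Already in dimension $d=1$ the condition $(\mf K)$ is vacuous (Remark \ref{ex-k-c}(i)), yet for the tree $\mathscr T_{2,0}$ with $E = [e_0] \oplus (l^2(\{1,2\}) \ominus [\Gamma_0])$ and $f = \lambda_2 e_1 - \lambda_1 e_2 \in E$, one computes $\inp{S^2_{\lambda}e_0}{S_{\lambda}f} = \lambda_1\lambda_2(\lambda_3^2 - \lambda_4^2)$, which is nonzero for generic weights; so $S_{\lambda}(E) \not\perp S^2_{\lambda}(E)$. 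The wandering subspace property only gives $S^{\alpha}_{\lambdab}(E) \perp E$ for $\alpha \neq 0$, not pairwise orthogonality of the images. Your plan is also internally inconsistent: mutual orthogonality (of the images under $S^{\mf t}_{\lambdab}$, which is the same issue) would make every off-diagonal compression $P_E S^{{\mf t}^{*\alpha}}_{\lambdab}S^{{\mf t}^{\beta}}_{\lambdab}|_E$ with $\alpha \neq \beta$ vanish and hence force the kernel \eqref{kappa} to be diagonal, whereas the theorem (and the subsequent corollary for $\mathscr T_1 = \mathscr T_{2,0}$) exhibits genuinely nonzero terms $z^{\alpha}\overline{w}^{\alpha+\epsilon_1}$. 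Everything downstream of the orthogonality step — the reduction to an operator-valued multishift on $l^2_E(\mathbb N^d)$ and the passage to a diagonal-type function space — therefore collapses.

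The paper's proof needs no orthogonality at all. It defines Shimorin's map directly, $U_f(z) := \sum_{\alpha} (P_E S^{{\mf t}^{*\alpha}}_{\lambdab} f)\, z^{\alpha}$, proves injectivity of $U$ from the wandering subspace property of the Cauchy dual $S^{\mf t}_{\lambdab}$ (which is itself a multishift on $\mathscr T$ with the same joint kernel $E$, so Theorem \ref{wandering} applies to it), and makes $U$ unitary by transporting the inner product from $l^2(V)$. The kernel condition $(\mf K)$ enters exactly where you do not use it: in the verification of the intertwining $US_j = \mathscr M_{z_j}U$, where it kills the terms of $\sum_{\alpha}(P_E S^{{\mf t}^{*\alpha}}_{\lambdab}S_j f)z^{\alpha}$ with $\alpha_j = 0$ (and Remark \ref{k-c-rem} shows $(\mf K)$ is in fact necessary for this relation). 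Your final paragraph on the support restriction $|\alpha_j - \beta_j| \leqslant k_{\mathscr T_j}$ — tracking depths and using Proposition \ref{joint-k} to locate $E$ inside $\bigvee\{e_v : v \in F_1 \times \cdots \times F_d\}$ — is essentially the paper's argument and is fine, but it cannot rescue the main construction.
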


\begin{proof}
The proof relies on Shimorin's technique as presented in \cite{Sh} and the wandering subspace property of $S_{\lambdab}$ as obtained in Theorem \ref{wandering}. Assume that $S_{\lambdab}$ satisfies the kernel condition $(\mf K)$.
For $f \in l^2(V)$, define $$U_f(z) := \sum_{\alpha \in \mathbb{N}^d} (P_E S^{{\mf t}^{*\alpha}}_{\lambdab} f) z^\alpha, \quad z \in \mathbb C^d.$$ Then the power series $U_f$ converges absolutely on the polydisc $\mathbb D^d_r$ of polyradius $r.$ Let $\mathscr H$ denote the complex vector space of $E$-valued holomorphic functions of the form $U_f$. Thus $U : l^2(V) \rar \mathscr H$ defines a map from $l^2(V)$ onto $\mathscr H$ given by $U(f)=U_f$. Now we show that $U$ is injective.

To this end, let $U_f = 0$ for some $f \in l^2(V)$. Then $\sum_{\alpha \in \mathbb{N}^d} (P_E S^{{\mf t}^{*\alpha}}_{\lambdab} f) z^\alpha = 0$ which implies that $P_E S^{{\mf t}^{*\alpha}}_{\lambdab} f = 0$ for all $\alpha \in \mathbb{N}^d$. Note that $S^{\mf t}_{\lambdab}$ is also a multishift and the joint-kernel of $S^{\mf t}_{\lambdab}$ is equal to $E$. Hence by Theorem \ref{wandering}, we get that $\bigvee_{\alpha \in \mathbb N^d} {S^{\mf t^\alpha}_{\lambdab}}(E) = l^2(V).$ By taking orthogonal complement on both sides, we get 
$\bigcap_{\alpha \in \mathbb N^d} ({S^{\mf t^\alpha}_{\lambdab}}(E))^{\perp} = \{0\}$. 
It is easy to see that $({S^{\mf t^\alpha}_{\lambdab}}(E))^{\perp}=\ker P_E S^{{\mf t}^{*\alpha}}_{\lambdab}$ for any $\alpha \in \mathbb N^d.$ Hence
$\bigcap_{\alpha \in \mathbb N^d} \ker P_E S^{{\mf t}^{*\alpha}}_{\lambdab} = \{0\}$. Since $P_E S^{{\mf t}^{*\alpha}}_{\lambdab} f = 0$ for all $\alpha \in \mathbb{N}^d,$ we must have $f \in \bigcap_{\alpha \in \mathbb N^d} \ker P_E S^{{\mf t}^{*\alpha}}_{\lambdab}$. This shows that $f = 0$, and hence $U$ is injective.

We now define the inner product on $\mathscr H$ as $\langle U_f, U_g \rangle = \langle f, g \rangle_{l^2(V)}$ for all $f, g \in l^2(V)$. Then $\mathscr H$ becomes a Hilbert space and $U$ a unitary. Also, for $f \in l^2(V)$, 
\beq \label{intertwining}
(U S_j f)(z) &=& \sum_{\alpha \in \mathbb{N}^d} (P_E S^{{\mf t}^{*\alpha}}_{\lambdab} S_j f) z^\alpha \nonumber \\
&=&\sum_{\underset{\alpha_j = 0}{\alpha \in \mathbb{N}^d}} (P_E S^{{\mf t}^{*\alpha}}_{\lambdab} S_j f) z^\alpha + \sum_{\underset{\alpha_j \geqslant  1}{\alpha \in \mathbb{N}^d}} (P_E S^{{\mf t}^{*\alpha}}_{\lambdab} S_j f) z^\alpha \nonumber \\
&=& \sum_{\underset{\alpha_j = 0}{\alpha \in \mathbb{N}^d}} (P_E S^{{\mf t}^{*\alpha}}_{\lambdab{[j]}} S_j f) z^\alpha + \sum_{\alpha \in \mathbb{N}^d} (P_E S^{{\mf t}^{*\alpha + \epsilon_j}}_{\lambdab} S_j f) z^{\alpha + \epsilon_j} \notag \\ &=& \sum_{\alpha \in \mathbb{N}^d} (P_E S^{{\mf t}^{*\alpha + \epsilon_j}}_{\lambdab} S_j f) z^{\alpha + \epsilon_j},
\eeq
where we used the kernel condition $(\mf K)$ to get the last equality. Since the toral Cauchy dual $S^{\mf t}_{\lambdab}$ is commuting and $S^{{\mf t}*}_jS_j=I$, the sum on the right hand side of \eqref{intertwining} is equal to $z_j \sum_{\alpha \in \mathbb{N}^d} (P_E S^{{\mf t}^{*\alpha}}_{\lambdab} f) z^\alpha = z_j U_f(z)$. Thus we get $U S_j = \mathscr M_{z_j} U$.

We skip the verification of \beqn 
\kappa_\mathscr H(z, w) &=& \sum_{{\alpha, \beta \in \mathbb N^d}} P_E S^{{\mf t}^{*\alpha}}_{\lambdab}S^{{\mf t}^{\beta}}_{\lambdab}|_E z^{\alpha} \overline{w}^{\beta} \\ &=& P_E \prod_{i=1}^d(I-z_iS^{\mf t^*}_i)^{-1}\prod_{j=1}^d(I-\bar{w}_jS^{\mf t}_j)^{-1}|_E,
\eeqn
for $z, w \in \mathbb D^d_r,$ since it is along the lines of \cite[Proposition 2.13]{Sh}.
It is now easy to see that
\beqn
\inp{U_f}{\kappa_\mathscr H(\cdot, w)g}_{\mathscr H} = \inp{U_f(w)}{g}~(f, g \in E, ~w \in \mathbb D^d_r).
\eeqn
Thus $\mathscr H$ is a reproducing kernel Hilbert space with kernel $\kappa.$ 

Assume further that $\mathscr T$ has finite joint branching index $k_{\mathscr T}$.
To check that $\kappa$ has the form given in \eqref{kappa}, let $\alpha, \beta \in \mathbb N^d$ be such that $|\alpha_j - \beta_j| > k_{\mathscr T_j}$ for some $j=1, \cdots, d.$
In view of Proposition \ref{joint-k}, it suffices to check that $P_E S^{{\mf t}^{*\alpha}}_{\lambdab}S^{{\mf t}^{\beta}}_{\lambdab}e_v=0$ for all $v \in F_1 \times \cdots \times F_d,$ where $F_j:=\child{V^{(j)}_{\prec}} \cup \{\mathsf{root}_j\}~(j=1, \cdots, d).$ To see this, let $v \in F_1 \times \cdots \times F_d.$ 
Since the depth $\alpha_v$ of $v$ equals $(\alpha_{v_1}, \cdots, \alpha_{v_d})$ with $\alpha_{v_j}$ being the depth of $v_j$ in $\mathscr T_j$,  we obtain
$0 \leqslant \alpha_{v_j} \leqslant k_{\mathscr T_j}$ for every $j=1, \cdots, d.$ 
An application of Proposition \ref{shift-prop}(vi) shows that 
\beqn
S^{{\mf t}^{*\alpha}}_{\lambdab}S^{{\mf t}^{\beta}}_{\lambdab}e_v = \sum_{u \in \parentnt{\alpha}{\childnt{\beta}{v}}}\gamma_u e_u
\eeqn
for some scalars $\gamma_u \in \mathbb C.$
It follows that $\alpha_u = \alpha_v + \beta - \alpha$ for $u \in \parentnt{\alpha}{\childnt{\beta}{v}}.$ 
Note that $\beta_j - \alpha_j = \alpha_{u_j} - \alpha_{v_j} \geqslant  \alpha_{u_j} - k_{\mathscr T_j}$, and hence 
$$\beta_j - \alpha_j + |\beta_j - \alpha_j| > \beta_j - \alpha_j +  k_{\mathscr T_j}  \geqslant  \alpha_{u_j} \geqslant  0.$$
This shows that $\beta_j - \alpha_j >  0$, and consequently,
$$\alpha_{u_j} = \alpha_{v_j} + \beta_j - \alpha_j =\alpha_{v_j} + |\alpha_j - \beta_j| > k_{\mathscr T_j}.$$ Thus $u \notin   F_1 \times \cdots \times F_d$, and hence by Proposition \ref{joint-k}, $P_E S^{{\mf t}^{*\alpha}}_{\lambdab}S^{{\mf t}^{\beta}}_{\lambdab}e_v=0$.
\end{proof}
\begin{remark} \label{k-c-rem}
The kernel condition $(\mf K)$ is used only in obtaining the intertwining relation $U S_j = \mathscr M_{z_j} U$.
Conversely, if one assumes the above intertwining relation then the calculations in \eqref{intertwining} shows that the kernel condition $(\mf K)$ is also necessary. 
\end{remark}

It is interesting to know the maximum value of $r$ for which $\kappa_\mathscr H(z, w)$ converges on $\mathbb D^d_r \times \mathbb D^d_r$. Unfortunately, we do not know this even in the one dimensional case (see \cite[Equation (2.3)]{CT}). Before we proceed to the next result, it is convenient to introduce some terminology.

Let $S_{\lambdab}$ be a toral left invertible multishift on $\mathscr T$.
We refer to the pair 
\index{$(\mathscr M_z, \kappa_\mathscr H)$}
$(\mathscr M_z, \kappa_\mathscr H)$ as {\it Shimorin's analytic model}.

If one relaxes the toral left invertibility of $S_{\lambdab}$ then it may happen that the interior of the Taylor spectrum of $S_{\lambdab}$ is empty (Example \ref{empty-i}). This is not possible otherwise.

\begin{corollary}
If $S_{\lambdab}$ has Shimorin's analytic model, then the polydisc $\mathbb D^d_r$ is contained in the point spectrum of $S^*_{\lambdab}$, where $r:=(r(S^{\mf t}_1)^{-1},\cdots, r(S^{\mf t}_d)^{-1}).$
\end{corollary}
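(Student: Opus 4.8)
The plan is to exploit the reproducing kernel Hilbert space model obtained in Theorem \ref{model}: once $S_{\lambdab}$ has Shimorin's analytic model, $S_{\lambdab}$ is unitarily equivalent to the multiplication tuple $\mathscr M_z = (\mathscr M_{z_1}, \cdots, \mathscr M_{z_d})$ on the reproducing kernel Hilbert space $\mathscr H$ of $E$-valued holomorphic functions on the polydisc $\mathbb D^d_r$. It therefore suffices to show that every point $w \in \mathbb D^d_r$ is an eigenvalue of $\mathscr M_z^*$. This is the standard fact that for a reproducing kernel Hilbert space, the adjoint of the multiplication operator acts on kernel sections as $\mathscr M_{z_j}^* \kappa_\mathscr H(\cdot, w) g = \overline{w}_j \, \kappa_\mathscr H(\cdot, w) g$ for $g \in E$.

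First I would recall from the construction in Theorem \ref{model} the explicit formula $\kappa_\mathscr H(z, w) = P_E \prod_{i=1}^d (I - z_i S^{\mf t^*}_i)^{-1} \prod_{j=1}^d (I - \overline{w}_j S^{\mf t}_j)^{-1}|_E$, valid for $z, w \in \mathbb D^d_r$, which converges since $r = (r(S^{\mf t}_1)^{-1}, \cdots, r(S^{\mf t}_d)^{-1})$. Then for fixed $w \in \mathbb D^d_r$ and $g \in E$ nonzero, I would verify that the function $z \mapsto \kappa_\mathscr H(z, w) g$ is a nonzero element of $\mathscr H$ (nonzero because its value at $z = 0$ is $P_E \prod_j (I - \overline{w}_j S^{\mf t}_j)^{-1}|_E\, g$, and the operator $\prod_j(I - \overline{w}_j S^{\mf t}_j)^{-1}$ is invertible with $P_E$ restricted to $E$ acting as identity on the constant term, so the value at the origin is $g \neq 0$). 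Using the intertwining relation $U S_j = \mathscr M_{z_j} U$, it then follows that $U^*\big(\kappa_\mathscr H(\cdot, w) g\big)$ is a nonzero eigenvector of $S^*_j$ with eigenvalue $\overline{w}_j$ for each $j$, hence a joint eigenvector: $S^*_{\lambdab}\, U^*(\kappa_\mathscr H(\cdot, w)g) = \overline{w}\, U^*(\kappa_\mathscr H(\cdot, w)g)$. Therefore $w \in \sigma_p(S^*_{\lambdab})$. Since $w \in \mathbb D^d_r$ was arbitrary, $\mathbb D^d_r \subseteq \sigma_p(S^*_{\lambdab})$.

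I expect the only mild subtlety to be the verification that the reproducing property indeed gives $\mathscr M_{z_j}^* \kappa_\mathscr H(\cdot, w) g = \overline{w}_j \kappa_\mathscr H(\cdot, w) g$; this follows from the identity $\inp{\mathscr M_{z_j} F}{\kappa_\mathscr H(\cdot, w) g}_{\mathscr H} = \inp{(\mathscr M_{z_j} F)(w)}{g} = \inp{w_j F(w)}{g} = \overline{w}_j \inp{F(w)}{g} = \overline{w}_j \inp{F}{\kappa_\mathscr H(\cdot, w) g}_{\mathscr H}$ for all $F \in \mathscr H$, together with the already-established fact (stated at the end of the proof of Theorem \ref{model}) that $\mathscr H$ is a reproducing kernel Hilbert space with kernel $\kappa_\mathscr H$ and that $\mathscr M_{z_j}$ is bounded on $\mathscr H$. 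The nondegeneracy of the kernel section — equivalently, that $P_E|_E = I|_E$, which holds since $E$ is exactly the joint kernel and $P_E$ is the orthogonal projection onto $E$ — is what guarantees the eigenvector is nonzero, so there is no real obstacle here; the work is essentially bookkeeping once Theorem \ref{model} is in hand.
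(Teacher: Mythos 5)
Your proposal is correct and is essentially the argument the paper intends: the corollary is stated as an immediate consequence of Theorem \ref{model}, and the paper's own proof of the analogous Corollary \ref{p-spec-S-c-a} uses exactly the same kernel-section eigenvector identity $\mathscr M_{z_j}^*\kappa_{\mathscr H}(\cdot,w)g=\overline{w}_j\,\kappa_{\mathscr H}(\cdot,w)g$ transported through the unitary $U$. Your extra check that $\kappa_{\mathscr H}(\cdot,w)g$ is nonzero (via $\kappa_{\mathscr H}(0,w)g=g$, using that $E$ is wandering for $S^{\mf t}_{\lambdab}$) is a welcome detail the paper leaves implicit, and the only cosmetic slip is that your eigenvector has eigenvalue $\overline{w}$ rather than $w$, which is harmless since $\mathbb D^d_r$ is conjugation-invariant.
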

\begin{remark}
Since $\mbox{cl}({\sigma_p(S^*_{\lambdab})}) \subseteq \sigma(S^*_{\lambdab})=\sigma(S_{\lambdab})$, we must have 
\beqn
\Big(r(S^{\mf t}_1)^{-2} + \cdots +  r(S^{\mf t}_d)^{-2}\Big)^{\frac{1}{2}} \Le r(S_{\lambdab}),
\eeqn
where $r(S_{\lambdab})$ is the spectral radius of $S_{\lambdab}.$
\end{remark}

Let us analyze Theorem \ref{model} in case $S_{\lambdab}$ is a doubly commuting toral isometry.
\begin{corollary}
Let $\mathscr T = (V,\mathcal E)$ be the directed Cartesian
product of locally finite, rooted directed trees $\mathscr T_1, \cdots, \mathscr T_d$ and let $S_{\lambdab}$ be a toral isometry multishift on $\mathscr T$ and let $E$ denote the joint kernel of $S^*_{\lambdab}$. Then $S_{\lambdab}$ is doubly commuting if and only if $S_{\lambdab}$ is unitarily equivalent to the multiplication $d$-tuple $\mathscr M_z$ on the $E$-valued Hardy space of the unit polydisc $\mathbb D^d$. In particular, \beqn 
\kappa_\mathscr H(z, w)=\prod_{j=1}^d \frac{I_E}{1-z_j\overline{w}_j}~(z, w \in \mathbb D^d),
\eeqn
where $I_E$ denotes the identity operator.
\end{corollary}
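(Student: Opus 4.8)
The plan is to invoke Theorem \ref{model} with $S_{\lambdab}$ a toral isometry, so the toral Cauchy dual is trivial and the kernel condition becomes the only real hypothesis. First I would record the easy direction: if $S_{\lambdab}$ is unitarily equivalent to the multiplication $d$-tuple $\mathscr M_z$ on the $E$-valued Hardy space $H^2_E(\mathbb D^d)$, then $\mathscr M_z$ is visibly doubly commuting (the coordinate multiplications on a tensor-product Hardy space satisfy $M_{z_i}M_{z_j}^* = M_{z_j}^* M_{z_i}$ for $i \neq j$), and doubly commuting is preserved under unitary equivalence; moreover the reproducing kernel of $H^2_E(\mathbb D^d)$ is the stated product of Cauchy kernels, so nothing more is needed there. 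Also note that for a toral isometry $Q_{S_j}(I)=I$ for each $j$, hence $S_j$ is left invertible with $\|S_j e_v\|^2$ involving the relevant weight sums equal to $1$, and by Lemma \ref{Cauchy-dual} the toral Cauchy dual $S^{\mf t}_{\lambdab}$ coincides with $S_{\lambdab}$ itself. In particular $r(S^{\mf t}_j)=r(S_j)$; but since $S_j$ is an isometry (in the toral sense $S_j^*S_j=I$) its spectral radius is $1$, so the polyradius $r=(1,\dots,1)$ and $\mathbb D^d_r=\mathbb D^d$.

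For the forward direction, assume $S_{\lambdab}$ is doubly commuting. By Remark \ref{ex-k-c}(ii), a doubly commuting tuple satisfies the kernel condition $(\mf K)$, so all hypotheses of Theorem \ref{model} are met: $S^{\mf t}_{\lambdab}=S_{\lambdab}$ is commuting, $S_{\lambdab}$ is toral left invertible, and $(\mf K)$ holds. Theorem \ref{model} then produces a reproducing kernel Hilbert space $\mathscr H$ of $E$-valued holomorphic functions on $\mathbb D^d$ and a unitary $U : l^2(V) \rar \mathscr H$ intertwining $S_j$ with $\mathscr M_{z_j}$. It remains to compute the kernel and identify $\mathscr H$ with the Hardy space. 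Using the kernel formula
\[
\kappa_\mathscr H(z, w)= P_E \prod_{i=1}^d(I-z_iS^{\mf t^*}_i)^{-1}\prod_{j=1}^d(I-\bar{w}_jS^{\mf t}_j)^{-1}\Big|_E
\]
from the proof of Theorem \ref{model}, together with $S^{\mf t}_j = S_j$, I would expand each geometric series and use double commutativity of $S_{\lambdab}$ to reorder the factors, writing the product as $\sum_{\alpha,\beta} P_E {S_{\lambdab}^{*\alpha}} S_{\lambdab}^{\beta}|_E\, z^\alpha \overline{w}^\beta$. For a toral isometry one has $S_j^* S_j = I$ for each $j$ and, because the $S_j$ doubly commute, $S_i^* S_j = S_j S_i^*$ for $i\neq j$; iterating gives $P_E S_{\lambdab}^{*\alpha} S_{\lambdab}^{\beta}|_E = 0$ unless $\alpha=\beta$, in which case it collapses to $P_E S_{\lambdab}^{*\alpha} S_{\lambdab}^{\alpha}|_E = P_E|_E = I_E$ (here one uses that $E \subseteq \ker S_j^* S_i$ reasoning is not even needed — the cancellation $S_j^* S_j = I$ suffices, and the cross terms vanish because they reduce to $S_k^*$ applied to a vector in $E = \bigcap \ker S_k^*$). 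Hence $\kappa_\mathscr H(z,w) = \sum_{\alpha} z^\alpha \overline w^\alpha I_E = \prod_{j=1}^d \frac{I_E}{1 - z_j\overline w_j}$, which is exactly the reproducing kernel of $H^2_E(\mathbb D^d)$; since a reproducing kernel Hilbert space is determined by its kernel, $\mathscr H = H^2_E(\mathbb D^d)$ and $U S_j U^* = M_{z_j}$.

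I expect the main obstacle to be the careful bookkeeping in the kernel computation: justifying the termwise reordering of the double product (absolute convergence on $\mathbb D^d$, which Theorem \ref{model} already guarantees since $r=(1,\dots,1)$), and verifying rigorously that $P_E S_{\lambdab}^{*\alpha} S_{\lambdab}^{\beta}|_E$ vanishes for $\alpha \neq \beta$. The cleanest way to see the latter is by induction on $|\alpha|+|\beta|$: if some coordinate $j$ has $\beta_j \ge 1$, peel off one $S_j$ on the right and commute it (using $S_i^* S_j = S_j^* {}^{-1}\!$... better: use $S_j^* S_j = I$ when $\alpha_j\ge 1$ to reduce $(\alpha_j,\beta_j)$ to $(\alpha_j-1,\beta_j-1)$, and when $\alpha_j = 0$ observe $S_i^* S_j = S_j S_i^*$ lets us push all adjoints to the right until one of them hits $E$ and kills the expression). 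A short lemma stating $S_{\lambdab}^{*\alpha} S_{\lambdab}^{\beta} = S_{\lambdab}^{\beta-\alpha}$ (with the convention that negative components force the operator to be zero on $E$) for a doubly commuting toral isometry would make this transparent; then restricting to $E$ and projecting gives $\delta_{\alpha\beta} I_E$ immediately. Everything else is routine.
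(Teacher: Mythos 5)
Your proposal is correct and follows essentially the same route as the paper: invoke Theorem \ref{model} (with $S^{\mf t}_{\lambdab}=S_{\lambdab}$ and the kernel condition $(\mf K)$ supplied by double commutativity via Remark \ref{ex-k-c}), reduce the kernel formula to $P_E S_{\lambdab}^{*\alpha}S_{\lambdab}^{\beta}|_E=\delta_{\alpha\beta}I_E$, and conclude by uniqueness of the reproducing kernel Hilbert space. You in fact supply more detail than the paper does, which simply asserts the $\delta_{\alpha\beta}$ identity and leaves the converse to the reader; your verification of that identity and of the converse are both sound.
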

\begin{proof}
Assume that $S_{\lambdab}$ is doubly commuting.
Since $S_{\lambdab}$ is a toral isometry, $S^{\mf t}_{\lambdab}=S_{\lambdab}.$ Thus
\beqn
P_E S^{{\mf t}^{*\alpha}}_{\lambdab}S^{{\mf t}^{\beta}}_{\lambdab}|_E = \delta_{\alpha \beta}I_E~(\alpha, \beta \in \mathbb N^d),
\eeqn
where $\delta_{\alpha \beta}$ denotes the Kronecker delta. It now follows from \eqref{kappa} that $\kappa$ has the desired form. The conclusion can now be drawn from Theorem \ref{model} and the fact that  the reproducing kernel uniquely determines the reproducing kernel Hilbert space \cite{Aro}. We leave the converse to the interested reader.
\end{proof}

%\uwam{k-diagonal kernels must be computed in some examples}

Now we discuss a large class of multishifts $S_{\lambdab}$ (not covered by Remark \ref{ex-k-c}), which always satisfy the kernel condition $(\mf K)$.
\begin{corollary}
Let $\mathscr T_1=(V_1, \mathcal E_1)$ be a locally finite, rooted directed tree and let $\mathscr T_2=\mathscr T_{1, 0}$ be the rooted directed trees as described in Example \ref{classical}. Consider the directed Cartesian product $\mathscr T$ of $\mathscr T_1$ and $\mathscr T_2.$ Let $S_{\lambdab}=(S_1, S_2)$ be a toral left invertible multishift on $\mathscr T$ and let $E$ be the joint kernel of $S^*_{\lambdab}$. 
Assume that the toral Cauchy dual $S^{\mf t}_{\lambdab}=(S^{\mf t}_1, S^{\mf t}_2)$ of $S_{\lambdab}$ is commuting and let $r:=(r(S^{\mf t}_1)^{-1},r(S^{\mf t}_2)^{-1}),$
where $r(T)$ denotes the spectral radius of a bounded linear operator $T.$
Then $S_{\lambdab}$ has Shimorin's analytic model $(\mathscr M_z, \kappa_\mathscr H).$ If, in addition, $\mathscr T_1$ has finite branching index $k_{\mathscr T_1}$, then the reproducing kernel $\kappa$ of $\mathscr H$ is given by
\beqn
\kappa_\mathscr H(z, w) &=& \sum_{\alpha \in \mathbb N^2} P_E S^{{\mf t}^{*\alpha}}_{\lambdab}S^{{\mf t}^{\alpha}}_{\lambdab}|_E ~z^{\alpha} \overline{w}^{\alpha} \\ &+& 
\sum_{\underset{\alpha_2=\beta_2}{\underset{0 < |\alpha_1 - \beta_1| \leqslant k_{\mathscr T_1}}{\alpha, \beta \in \mathbb N^2}}} P_E S^{{\mf t}^{*\alpha}}_{\lambdab}S^{{\mf t}^{\beta}}_{\lambdab}|_E ~z^{\alpha} \overline{w}^{\beta} \quad (z, w \in \mathbb D^2_r).
\eeqn
\end{corollary}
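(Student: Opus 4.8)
The plan is to reduce this corollary to Theorem \ref{model}. The only thing to check is that for this particular two-fold directed Cartesian product, where the second factor is $\mathscr T_{1,0}$, any toral left invertible multishift $S_{\lambdab}=(S_1,S_2)$ automatically satisfies the kernel condition $(\mf K)$; once that is done, Theorem \ref{model} gives the analytic model $(\mathscr M_z,\kappa_\mathscr H)$ on the polydisc $\mathbb D^2_r$ and the formula \eqref{kappa} for $\kappa$, and the stated formula is just \eqref{kappa} rewritten in this setting. So first I would recall from Remark \ref{ex-k-c} that in dimension $2$ the kernel condition $(\mf K)$ amounts to \eqref{k-c-two}, namely $E \subseteq (\ker S_1^* S_2^{\mf t^{\alpha_2}}) \cap (\ker S_2^* S_1^{\mf t^{\alpha_1}})$ for all $(\alpha_1,\alpha_2)\in\mathbb N^2$.

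Next I would verify the two inclusions. The point is that the second tree $\mathscr T_2=\mathscr T_{1,0}$ has no branching vertices, so $\mbox{card}(\childi{2}{v})=1$ for every $v\in V$; equivalently $S_2$ acts (up to weights) like a one-variable unilateral shift in the second coordinate and $S_2 e_v = \lambda^{(2)}_{v+\epsilon_2}e_{v+\epsilon_2}$ in the notation of Example \ref{classical-m}. Consequently, by Lemma \ref{Cauchy-dual}, $S_2^{\mf t}$ is again a multishift with $S_2^{\mf t}e_v$ supported on the single vertex of $\childi{2}{v}$. For the inclusion $E\subseteq \ker S_2^*S_1^{\mf t^{\alpha_1}}$: by Proposition \ref{shift-prop}(v) applied to the multishift $S^{\mf t}_{\lambdab}$, the vector $S_1^{\mf t^{\alpha_1}}e_v$ is a scalar multiple of $e_{\parentnt{\alpha_1\epsilon_1}{v}}$, i.e.\ it only shifts the first coordinate; then $S_2^*S_1^{\mf t^{\alpha_1}}e_v$ is a scalar multiple of $e_{\parenti{2}{\,\cdot\,}}$, and one checks using Proposition \ref{joint-k} (the description of where $E$ lives, via siblings) that applying $S_2^*$ annihilates every basis vector $e_w$ with $w\in E$ after shifting in the first coordinate, because such $w$ has $w_2$ either a root of $\mathscr T_2$ or with $\mbox{card}(\mathsf{sib}_2(w))\geqslant 2$ — but $\mathscr T_2$ is leafless with no branching, so $w_2=\mathsf{root}_2$ always, and $S_2^*$ kills $e_{(\cdot,\mathsf{root}_2)}$. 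For the inclusion $E\subseteq \ker S_1^* S_2^{\mf t^{\alpha_2}}$: since $S_2^{\mf t^{\alpha_2}}e_v$ shifts only the second coordinate (to the unique $\alpha_2$-fold descendant in $\mathscr T_2$), and $S_1^*$ acts in the first coordinate, the commutation $\parenti{1}{\childi{2}{\cdot}}=\childi{2}{\parenti{1}{\cdot}}$ (Remark \ref{child-par}) lets me pull $S_1^*$ past the second-coordinate shift, reducing to $S_1^* e_v$ for $v$ in the support of an element of $E$; and again Proposition \ref{joint-k} forces $v_1$ to be either a root of $\mathscr T_1$ or in $\child{V^{(1)}_{\prec}}$, in the latter case with the weighted-sum-zero condition from \eqref{system-main} that makes $S_1^*$ annihilate the relevant piece of $E$. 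I would organize this as a short lemma before invoking Theorem \ref{model}.

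With $(\mf K)$ established, I apply Theorem \ref{model}: there is a reproducing kernel Hilbert space $\mathscr H$ of $E$-valued holomorphic functions on $\mathbb D^2_r$ with $r=(r(S^{\mf t}_1)^{-1},r(S^{\mf t}_2)^{-1})$ and a unitary $U:l^2(V)\rar\mathscr H$ with $US_j=\mathscr M_{z_j}U$, so $S_{\lambdab}$ has Shimorin's analytic model. For the kernel formula, assume in addition $\mathscr T_1$ has finite branching index $k_{\mathscr T_1}$; then $\mathscr T$ has finite joint branching index $k_{\mathscr T}=(k_{\mathscr T_1},k_{\mathscr T_2})$ with $k_{\mathscr T_2}=0$ (Remark after Example \ref{T-n-k}, since $\mathscr T_2=\mathscr T_{1,0}$ has $n_0=1$). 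Plugging $k_{\mathscr T_2}=0$ into \eqref{kappa}, the constraint $|\alpha_j-\beta_j|\leqslant k_{\mathscr T_j}$ forces $\alpha_2=\beta_2$, so the sum in \eqref{kappa} splits into the diagonal part ($\alpha=\beta$) and the part with $\alpha_2=\beta_2$ and $0<|\alpha_1-\beta_1|\leqslant k_{\mathscr T_1}$, which is exactly the displayed formula. I expect the main obstacle to be the bookkeeping in the kernel-condition verification: one has to track carefully how $S_1^*$ and $S_2^*$ interact with the first- and second-coordinate shifts coming from $S_1^{\mf t}, S_2^{\mf t}$, and combine this with the explicit description of $E$ in terms of siblings from Proposition \ref{joint-k} (and, for the nontrivial branching part, the linear system \eqref{system-main}); everything else is a direct citation of Theorem \ref{model} plus the computation $k_{\mathscr T_2}=0$.
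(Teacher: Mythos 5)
Your overall architecture coincides with the paper's: describe the joint kernel $E$, verify the kernel condition $(\mf K)$ in the two-variable form \eqref{k-c-two}, invoke Theorem \ref{model}, and set $k_{\mathscr T_2}=0$ to collapse \eqref{kappa} to the displayed formula. The easy inclusion $E\subseteq \ker S_2^*S_1^{{\mf t}^{\alpha_1}}$ and the last two steps are sound (modulo a small slip: $S_1^{{\mf t}^{\alpha_1}}e_v$ is supported on $\childnt{\alpha_1\epsilon_1}{v}$, the first-coordinate descendants, not on $\parentnt{\alpha_1\epsilon_1}{v}$; all that matters is that the second coordinate remains $\mathsf{root}_2$, so $S_2^*$ annihilates the result).

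The gap is in the other inclusion, $E\subseteq\ker S_1^*S_2^{{\mf t}^{\alpha_2}}$, which is the heart of the proof. You assert that the support-level commutation $\parenti{1}{\childi{2}{\cdot}}=\childi{2}{\parenti{1}{\cdot}}$ lets you ``pull $S_1^*$ past the second-coordinate shift, reducing to $S_1^*e_v$.'' That reduction fails: $S_1^*$ and $S_2^{\mf t}$ do not commute unless $S_{\lambdab}$ is doubly commuting, which is not assumed, and the support identity controls only where vectors live, not their coefficients. Concretely, for $f=\sum_{u\in\child{w}}f((u,0))\,e_{(u,0)}$ in the branching piece of $E$ one computes
\[
S_1^*S_2^{{\mf t}^{\alpha_2}}f=\Big(\sum_{u\in\child{w}}\frac{f((u,0))}{\prod_{k=1}^{\alpha_2}\lambda^{(2)}_{(u,k)}}\,\lambda^{(1)}_{(u,\alpha_2)}\Big)e_{(w,\alpha_2)},
\]
and the defining relation $\sum_{u}f((u,0))\lambda^{(1)}_{(u,0)}=0$ does not by itself annihilate this, since the coefficients $\lambda^{(1)}_{(u,\alpha_2)}\big/\prod_{k}\lambda^{(2)}_{(u,k)}$ are not the $\lambda^{(1)}_{(u,0)}$. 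What is needed --- and what the paper supplies by repeated application of the commutativity relation \eqref{commuting} --- is the identity
\[
\frac{\lambda^{(1)}_{(u,\alpha_2)}}{\prod_{k=1}^{\alpha_2}\lambda^{(2)}_{(u,k)}}=\frac{\lambda^{(1)}_{(u,0)}}{\prod_{k=1}^{\alpha_2}\lambda^{(2)}_{(w,k)}},
\]
which exhibits the new linear functional as a $u$-independent multiple of the old one and hence shows it kills $f$. Without this weight computation the kernel condition is not established and Theorem \ref{model} cannot be applied; you flag the ``bookkeeping'' as the main obstacle, but the sketch as written replaces it with a commutation step that is false at the operator level.
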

\begin{proof} 
We first compute the joint kernel $E$ of $S^*_{\lambdab}$.
The argument is similar to that of Example \ref{j-k-mixed}.
 Note that $\mathscr P=\{\emptyset, \{1\}, \{2\}, \{1, 2\}\}.$
Also, 
\beqn
\Phi_{\emptyset} & = & \{(\rootb_1, 0)\}, \Phi_{\{1\}}=\{(v, 0) : v \in V^{\circ}_1\}, \\ \Phi_{\{2\}}& = & \{(\rootb_1, j) : j \geqslant  1\}, \Phi_{\{1, 2\}}=\{(v, j) : v \in V^{\circ}_1, ~j \geqslant  1\}.
\eeqn
Note further that
\beqn
\mathsf{sib}_{\emptyset}((\rootb_1, 0)) &=&  \{(\rootb_1, 0)\}, \\
\mathsf{sib}_{\{1\}}((v, 0)) &=& \{(w, 0) : w \in \mathsf{sib}(v)\}~(v \in V^{\circ}_1),\\
\mathsf{sib}_{\{2\}}((\rootb_1, j)) &=& \{(\rootb_1, j)\}~\mbox{for all~} j \geqslant  1,~ \mbox{and}\\
\mathsf{sib}_{\{1, 2\}}((v, j)) &=&\{(w, j) : w \in \mathsf{sib}(v)\}~(v \in V^{\circ}_1, ~j \Ge 1). 
\eeqn
Let us form $\Omega_F$ by picking up one element from each of the equivalence classes $\mathsf{sib}_F(u)$ for every $F \in \mathscr P$. 
%as follows: $\Omega_{\emptyset} = \{(\rootb_1, 0)\} $, and
%\beqn
%\Omega_{\emptyset} &=& \{(\rootb_1, 0)\} \\
%\Omega_{\{1\}} &=& \bigcup\{(w, 0) : w \in \mathsf{sib}(v)~\mbox{is a fixed child~}v \in \child{V^{(1)}_{\prec}}\}  \\ && \bigcup\,  \{(w, 0) : w \in \child{V_1 \setminus V^{(1)}_{\prec}}\}, \\ \Omega_{\{2\}} &=& 
%\{(\rootb_1, j) : j \geqslant  1\}, \\
%\Omega_{\{1, 2\}} &=& \{(w, j) : w \in \child{V^{(1)}_{\prec}}~\mbox{is a fixed child},~j \Ge 1\}  \\  && \bigcup\,   \{(w, j) : w \in \child{V_1 \setminus V^{(1)}_{\prec}}, j \Ge 1\}. 
%\eeqn
We next calculate $\mathsf{sib}_{F, G}(u)$ for possible choices of $F, G,$ and $u \in \Omega_F.$
If $F=\{1\}$, then $G = \emptyset$. In this case, 
\beqn
\mathsf{sib}_{\{1\}, \emptyset}(v, 0)=\{(\rootb_1, 0)\}~ (v \in  V^{\circ}_1).
\eeqn 
This together with \eqref{system} yields the following equations:
\beqn
\sum_{w \in \child{v}} f(w, 0) \lambda^{(1)}_{(w, 0)} &=& 0~(v \in V^{(1)}_{\prec}), \\
f(w, 0) \lambda^{(1)}_{(w, 0)} &=& 0~(w \in \child{V_1 \setminus V^{(1)}_{\prec}}).
\eeqn
In case $F=\{2\},$ 
$G=\emptyset$ and $\mathsf{sib}_{\{2\}, \emptyset}(\rootb_1, j)=\{(\rootb_1, 0)\}$ for $j \geqslant 1,$ and hence
we obtain the equations
\beqn
f(\rootb_1,j) \lambda^{(2)}_{(\rootb_1, j)} = 0~(j \geqslant  1).
\eeqn
In case $F = \{1,2\}$, $G = \{1\}$ or $\{2\}$. Then for all $w \in V^{\circ}_1$ and $j \geqslant  1$,
\beqn
\mathsf{sib}_{\{1, 2\}, \{2\}}(w, j) = \{(\rootb_1, j)\},~
\mathsf{sib}_{\{1, 2\}, \{1\}}(w, j) = \{(u, 0) : u \in \mathsf{sib}(w)\}.
\eeqn
This gives following equations for $j \geqslant  1$,
\beqn
\sum_{u \in \child{w}} f(u,j) \lambda^{(1)}_{(u,j)} &=& 0~(w \in V^{(1)}_{\prec}),~
f(w,j) \lambda^{(1)}_{(w,j)} = 0~(w \in \child{V_1 \setminus V^{(1)}_{\prec}}), \\
f(w,j) \lambda^{(2)}_{(w,j)} &=& 0~(w \in \child{V^{(1)}_{\prec}}), \\
f(w,j) \lambda^{(1)}_{(w,j)} &=& 0, ~ f(w,j) \lambda^{(2)}_{(w,j)} = 0~(w \in \child{V_1 \setminus V^{(1)}_{\prec}}).
\eeqn
Solving all the above equations, we get that 
\beqn
E = [e_{(\rootb_1, 0)}] \bigoplus_{w \in V^{(1)}_{\prec}} \Big(l^2(\child{w} \times \{0\}) \ominus [\Gamma^{(1)}_{(w, 0)}]\Big),
\eeqn
where $\Gamma^{(1)}_{(w, 0)} : \childi{1}{(w, 0)} \rar \mathbb{C}$ defined as $\Gamma^{(1)}_{(w, 0)} (u, 0) = \lambda^{(1)}_{(u, 0)}.$

We next check that $S_{\lambdab}$ satisfies the kernel condition $(\mf K)$. Since $E \subseteq \ker S_2^* {S^{{\mf t}^{\alpha_1}}_1}$ for all $\alpha_1 \in \mathbb{N},$
in view of \eqref{k-c-two}, it suffices to check that $$E \subseteq \ker S_1^* {S^{{\mf t}^{\alpha_2}}_2}~\mbox{for all}~\alpha_2 \in \mathbb{N}.$$
Clearly, $e_{(\rootb_1, 0)} \in \ker S_1^* {S^{{\mf t}^{\alpha_2}}_2}.$ For $w \in V^{(1)}_{\prec}$, let $f = \sum_{u \in \child{w}}f((u, 0))e_{(u, 0)} \in l^2(\child{w} \times \{0\})$ be such that 
\beq \label{Gamma-ortho} \sum_{u \in \child{w}}f((u, 0))\lambda^{(1)}_{(u, 0)}=0~(w \in V^{(1)}_{\prec}). \eeq Note that
\beqn
S_1^* {S^{{\mf t}^{\alpha_2}}_2}f &=& S_1^*\sum_{u \in \child{w}} \frac{f((u, 0))}{\prod_{k=1}^{\alpha_2}\lambda^{(2)}_{(u, k)}}e_{(u, \alpha_2)} \\ &=& \left(\sum_{u \in \child{w}} \frac{f((u, 0))}{\prod_{k=1}^{\alpha_2}\lambda^{(2)}_{(u, k)}}\lambda^{(1)}_{(u, \alpha_2)}\right)e_{(w, \alpha_2)}.
\eeqn
Thus $S_1^* {S^{{\mf t}^{\alpha_2}}_2}f =0$ if and only if 
\beqn
\sum_{u \in \child{w}} \frac{f((u, 0))}{\prod_{k=1}^{\alpha_2}\lambda^{(2)}_{(u, k)}}\lambda^{(1)}_{(u, \alpha_2)}=0.
\eeqn
Note that any general solution of \eqref{Gamma-ortho} is of the form $$f=\sum_{\underset{u \neq v}{u \in \child{w}}}f((u, 0))\left(e_{(u, 0)} - {e_{(v, 0)}}\frac{\lambda^{(1)}_{(u, 0)}}{\lambda^{(1)}_{(v, 0)}}\right)$$ for some fixed $v \in \child{w}$. 
It follows that \beqn
\sum_{u \in \child{w}} \frac{f((u, 0))}{\prod_{k=1}^{\alpha_2}\lambda^{(2)}_{(u, k)}}\lambda^{(1)}_{(u, \alpha_2)} = \sum_{\underset{u \neq v}{u \in \child{w}}}f((u, 0))\left(\frac{\lambda^{(1)}_{(u, \alpha_2)}}{\prod_{k=1}^{\alpha_2}\lambda^{(2)}_{(u, k)}} - \frac{\lambda^{(1)}_{(u, 0)}}{\lambda^{(1)}_{(v, 0)}}\frac{\lambda^{(1)}_{(v, \alpha_2)}}{\prod_{k=1}^{\alpha_2}\lambda^{(2)}_{(v, k)}}\right), 
%\\ = \sum_{\underset{u \neq v}{u \in \child{w}}} \frac{f((u, 0))}{\prod_{k=1}^{\alpha_2}\lambda^{(2)}_{(u, k)}\lambda^{(1)}_{(v, 0)}\prod_{k=1}^{\alpha_2}\lambda^{(2)}_{(v, k)}}\left({\lambda^{(1)}_{(u, \alpha_2)}}{\lambda^{(1)}_{(v, 0)}\prod_{k=1}^{\alpha_2}\lambda^{(2)}_{(v, k)}} - \lambda^{(1)}_{(u, 0)}{\lambda^{(1)}_{(v, \alpha_2)}}{\prod_{k=1}^{\alpha_2}\lambda^{(2)}_{(u, k)}}\right)
\eeqn
and hence it suffices to see that for every $u \in \child{w} \setminus \{v\},$ $$\frac{\lambda^{(1)}_{(u, \alpha_2)}}{\prod_{k=1}^{\alpha_2}\lambda^{(2)}_{(u, k)}} - \frac{\lambda^{(1)}_{(u, 0)}}{\lambda^{(1)}_{(v, 0)}}\frac{\lambda^{(1)}_{(v, \alpha_2)}}{\prod_{k=1}^{\alpha_2}\lambda^{(2)}_{(v, k)}}=0.$$ However, by repeated applications of \eqref{commuting}, we obtain
\beqn
{\lambda^{(1)}_{(u, \alpha_2)}}\Big({\lambda^{(1)}_{(v, 0)}}{\prod_{k=1}^{\alpha_2}\lambda^{(2)}_{(v, k)}} \Big) 
&=& {\lambda^{(1)}_{(u, \alpha_2)}}\Big({\prod_{k=1}^{\alpha_2}\lambda^{(2)}_{(w, k)}} {\lambda^{(1)}_{(v, \alpha_2)}}\Big), \\ 
{\lambda^{(1)}_{(v, \alpha_2)}}\Big({\lambda^{(1)}_{(u, 0)}} {\prod_{k=1}^{\alpha_2}\lambda^{(2)}_{(u, k)}}\Big) &=& {\lambda^{(1)}_{(v, \alpha_2)}}\Big( {\prod_{k=1}^{\alpha_2}\lambda^{(2)}_{(w, k)}}{\lambda^{(1)}_{(u, \alpha_2)}}\Big),
\eeqn
which shows that $S_{\lambdab}$ satisfies the kernel condition $(\mf K).$ The desired conclusion now follows from Theorem \ref{model} once we observe that $k_{\mathscr T_2}=0.$
\end{proof}

\begin{remark}
We discuss two special cases of the preceding corollary.
\begin{enumerate}
\item[(i)] In case $\mathscr T_1$ is $\mathscr T_{1, 0}$, $S_{\lambdab}$ is nothing but the classical multishift and the associated kernel $\kappa_\mathscr H(z, w)$ is diagonal. 
\item[(ii)] In case $\mathscr T_1$ is $\mathscr T_{2, 0}$,  $k_{\mathscr T_1}=1,$ and hence the kernel $\kappa(z, w)$ is given by 
\beqn
\kappa_\mathscr H(z, w) &=& \sum_{\alpha \in \mathbb N^2} P_E S^{{\mf t}^{*\alpha}}_{\lambdab}S^{{\mf t}^{\alpha}}_{\lambdab}|_E ~z^{\alpha} \overline{w}^{\alpha} + \sum_{\alpha \in \mathbb N^2} P_E S^{{\mf t}^{*\alpha}}_{\lambdab}S^{{\mf t}^{\alpha + \epsilon_1}}_{\lambdab}|_E ~z^{\alpha} \overline{w}^{\alpha + \epsilon_1}    \\ &+& 
\sum_{\alpha \in \mathbb N^2} P_E S^{{\mf t}^{*\alpha + \epsilon_1}}_{\lambdab}S^{{\mf t}^{\alpha}}_{\lambdab}|_E ~z^{\alpha + \epsilon_1} \overline{w}^{\alpha}
%\sum_{\underset{\alpha_2=\beta_2}{\underset{0 < |\alpha_1 - \beta_1| \leqslant k_{\mathscr T_1}}{\alpha, \beta \in \mathbb N^2}}} P_E S^{{\mf t}^{*\alpha}}_{\lambdab}S^{{\mf t}^{\beta}}_{\lambdab}|_E ~z^{\alpha} \overline{w}^{\beta} 
\quad (z, w \in \mathbb D^2_r).
\eeqn
\end{enumerate}
\end{remark}

We conclude this chapter with one application to the Cowen-Douglas theory.

 Let $\Omega$ be an open connected subset of $\mathbb C^d.$
  For a positive integer $n$, let ${\bf B}_n(\Omega)$ denote the set
  of all commuting $d$-tuples $T$ on $\mathcal H$ satisfying the following \index{${\bf B}_n(\Omega)$}
  conditions:
  \begin{enumerate}
  \item  For every point $\omega=(\omega_1, \cdots, \omega_d) \in \Omega$, we have
    \begin{enumerate}
    \item  the map $D_{T- \omega}(x) = ((T_j-\omega_j)x)$ from $\mathcal H$ into $\mathcal H^{\oplus d}$ has closed range.
    \item  $\dim{\ker({T-\omega})} = n$.
    \end{enumerate}
  \item The subspace
    $\bigvee_{\omega \in \Omega}  \ker({T-\omega})$ of $\mathcal H$ equals $\mathcal H$.
  \end{enumerate}
  We will call the set ${\bf B}_n(\Omega)$ the {\it Cowen-Douglas class
    of degree $n$ with respect to $\Omega$} (refer to \cite{CD}, \cite{CuS} for the basic theory of Cowen-Douglas class in one and several variables).
    
\begin{corollary}
Let $\mathscr T = (V,\mathcal E)$ be the directed Cartesian
product of locally finite, rooted directed trees $\mathscr T_1, \cdots, \mathscr T_d$ with finite joint branching index.  If $S_{\lambdab}$ is a toral left invertible multishift which satisfies the kernel condition $(\mf K)$, then
$S^*_{\lambdab}$ belongs to Cowen-Douglas class ${\bf B}_{\dim E}(\mathbb D^d_r)$, where $E$ denotes the joint kernel of $S^*_{\lambdab}$ and $r :=(\|S^{\mf t}_1\|^{-1},\cdots, \|S^{\mf t}_d\|^{-1})$.
\end{corollary}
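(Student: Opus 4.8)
The strategy is to invoke Theorem \ref{model} to put $S_{\lambdab}$ into Shimorin's analytic model $(\mathscr M_z, \kappa_\mathscr H)$ on a reproducing kernel Hilbert space $\mathscr H$ of $E$-valued holomorphic functions on the polydisc $\mathbb D^d_r$, and then to verify directly that $S^*_{\lambdab}$, transported through this model, satisfies the three defining conditions of the Cowen-Douglas class $\mathbf B_{\dim E}(\mathbb D^d_r)$. First I would note that the hypotheses of Theorem \ref{model} are met: $S_{\lambdab}$ is toral left invertible, $\mathscr T$ is the directed Cartesian product of locally finite rooted directed trees with finite joint branching index, and $S_{\lambdab}$ satisfies the kernel condition $(\mf K)$; one also needs the toral Cauchy dual $S^{\mf t}_{\lambdab}$ to be commuting, which follows since commutativity of $S_{\lambdab}$ together with $(\mf K)$ (or rather, the intertwining $US_j = \mathscr M_{z_j}U$ from Theorem \ref{model}, which forces the $\mathscr M_{z_j}$ to commute) transfers back. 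Moreover, since $\mathscr T$ has finite joint branching index, $E = \ker S^*_{\lambdab}$ is finite dimensional by Corollary \ref{dimE-finite}, so $\dim E < \infty$ is a genuine finite $n$. Finally, $r(S^{\mf t}_j) \leqslant \|S^{\mf t}_j\|$ gives $r(S^{\mf t}_j)^{-1} \geqslant \|S^{\mf t}_j\|^{-1}$, so the polydisc $\mathbb D^d_{r'}$ with $r' = (\|S^{\mf t}_1\|^{-1}, \cdots, \|S^{\mf t}_d\|^{-1})$ is contained in the larger polydisc $\mathbb D^d_r$ on which the model lives; it therefore suffices to check the Cowen-Douglas conditions for $\mathscr M^*_z$ on $\mathbb D^d_{r'}$.

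\textbf{Verification of the three conditions.} Working in $\mathscr H$ with $S^*_{\lambdab}$ unitarily equivalent to $\mathscr M^*_z$, fix $\omega = (\omega_1, \cdots, \omega_d) \in \mathbb D^d_{r'}$. For condition (1)(b), the eigenspace $\ker(\mathscr M^*_z - \omega) = \bigcap_{j=1}^d \ker(\mathscr M^*_{z_j} - \omega_j)$ consists exactly of the functions $\kappa_\mathscr H(\cdot, \omega) e$ for $e \in E$, by the standard reproducing-kernel identity $\mathscr M^*_{z_j}\kappa_\mathscr H(\cdot,\omega)e = \overline{\omega}_j \kappa_\mathscr H(\cdot,\omega)e$; and the map $e \mapsto \kappa_\mathscr H(\cdot,\omega)e$ is injective because evaluating at $\omega$ recovers $\kappa_\mathscr H(\omega,\omega)e$, and $\kappa_\mathscr H(\omega,\omega) = P_E \prod_i (I - \omega_i S^{\mf t^*}_i)^{-1}\prod_j(I-\overline{\omega}_j S^{\mf t}_j)^{-1}|_E$ is positive definite on $E$ since the operator products are invertible for $\omega \in \mathbb D^d_{r'}$ (indeed $\|\omega_i S^{\mf t}_i\| \leqslant |\omega_i|\,\|S^{\mf t}_i\| < 1$). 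Hence $\dim\ker(\mathscr M^*_z - \omega) = \dim E$ for all such $\omega$. Condition (1)(a), closed range of $D_{\mathscr M^*_z - \omega}$, follows from the fact that $\mathscr M^*_z - \omega$ is bounded below on the orthogonal complement of the finite-dimensional eigenspace — more precisely, $S^*_{\lambdab} - \overline{\omega}$ is bounded below modulo a finite-dimensional kernel because $S_{\lambdab}$ is toral left invertible and $E$ is finite dimensional, so each $\mathscr M^*_{z_j} - \overline{\omega}_j$ has closed range, and a routine argument (or the Koszul-complex formulation from Section \ref{Spec-th}) upgrades this to closed range of the column map; alternatively one cites that members of $\mathbf B_n$ can be detected via the spectral-theory criteria already set up. Condition (2), that $\bigvee_{\omega \in \mathbb D^d_{r'}} \ker(\mathscr M^*_z - \omega) = \mathscr H$, follows because this span contains $\bigvee_{\omega} \{\kappa_\mathscr H(\cdot,\omega)e : e \in E\}$, and any $f \in \mathscr H$ orthogonal to every $\kappa_\mathscr H(\cdot,\omega)e$ satisfies $\langle f(\omega), e\rangle = 0$ for all $\omega \in \mathbb D^d_{r'}$ and all $e \in E$, forcing $f \equiv 0$ on $\mathbb D^d_{r'}$ and hence $f = 0$ by the identity theorem for holomorphic functions.

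\textbf{Main obstacle.} The step requiring the most care is condition (1)(a), the closed-range assertion, together with establishing uniformity of $\dim\ker(\mathscr M^*_z - \omega)$ over the \emph{whole} polydisc $\mathbb D^d_{r'}$ rather than just near a point: one must ensure that the invertibility of $\prod_i(I - \omega_i S^{\mf t^*}_i)$ genuinely holds throughout $\mathbb D^d_{r'}$, which is why the radius is taken with respect to $\|S^{\mf t}_j\|^{-1}$ rather than the (possibly larger) $r(S^{\mf t}_j)^{-1}$ that governs mere convergence of $\kappa_\mathscr H$. Once the positive-definiteness of $\kappa_\mathscr H(\omega, \omega)$ on $E$ is secured on $\mathbb D^d_{r'}$, the rest is bookkeeping with reproducing kernels and the wandering subspace property already in hand. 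I would present the closed-range point either by reducing to the one-variable toral left-invertibility of each $S_j$ (so each $\mathscr M^*_{z_j} - \overline\omega_j$ has closed range) and then combining, or by quoting the equivalence between Fredholmness of the $d$-tuple and that of $\partial_T + \partial^*_T$ from \cite[Theorem 6.2]{Cu} to reduce to a single closed-range statement.
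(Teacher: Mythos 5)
Your route through Shimorin's model is workable in outline, but your verification of condition (1)(b) contains the one genuine gap. The reproducing-kernel identity $\mathscr M^*_{z_j}\kappa_{\mathscr H}(\cdot,\omega)e=\overline{\omega}_j\,\kappa_{\mathscr H}(\cdot,\omega)e$ gives only the inclusion $\{\kappa_{\mathscr H}(\cdot,\omega)e : e\in E\}\subseteq \ker(\mathscr M^*_{z}-\overline{\omega})$, which together with your positivity argument for $\kappa_{\mathscr H}(\omega,\omega)$ yields the \emph{lower} bound $\dim\ker(\mathscr M^*_{z}-\overline{\omega})\geqslant\dim E$. The assertion that the joint eigenspace ``consists exactly'' of these functions is the nontrivial direction: in a vector-valued reproducing kernel Hilbert space one has $\ker(\mathscr M^*_{z}-\overline{\omega})=\big(\sum_{j}(z_j-\omega_j)\mathscr H\big)^{\perp}$, and there is no a priori reason this orthogonal complement is no larger than $\kappa_{\mathscr H}(\cdot,\omega)E$ (this is a Gleason-type division problem, not a formal consequence of the kernel identity). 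The paper closes exactly this point by a separate argument: Theorem \ref{wandering} makes $E$ a cyclic subspace, and Corollary \ref{f-cyclic} then shows $\dim\ker(S^*_{\lambdab}-\omega)\leqslant\dim E$ for every $\omega\in\mathbb C^d$ by expanding $S^{\alpha}_{\lambdab}$ in powers of $S_{\lambdab}-\overline{\omega}$ and projecting onto the eigenspace. You must import that upper bound; without it the equality $\dim\ker(\mathscr M^*_{z}-\overline{\omega})=\dim E$ is unproved.

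Two further points. First, Theorem \ref{model} requires as a hypothesis that the toral Cauchy dual $S^{\mf t}_{\lambdab}$ is commuting, and your justification is circular: you cannot extract that hypothesis from the intertwining relation which is the theorem's conclusion. The paper sidesteps the model entirely, applying \cite[Theorem 5.4]{CC} to the wandering subspace property of $S^{\mf t}_{\lambdab}$ (Theorem \ref{wandering}) to obtain condition (2) and the lower bound in (1)(b) directly on $l^2(V)$; your reproducing-kernel argument for condition (2) is a correct alternative, but only once the model is legitimately in place. Second, for condition (1)(a) the reason each $S^*_j-\overline{\omega}_j$ has closed range is not ``toral left invertibility plus $\dim E<\infty$'' but the quantitative estimate $\|(S_j-\omega_j)h\|\geqslant\big(\|S^{\mf t}_j\|^{-1}-|\omega_j|\big)\|h\|$, i.e. $\sigma_l(S_j)\cap\mathbb D_{\frac{1}{\|S^{\mf t}_j\|}}=\emptyset$, which is precisely why the polyradius in the statement is $\|S^{\mf t}_j\|^{-1}$ rather than $r(S^{\mf t}_j)^{-1}$ --- a point you anticipated correctly but then justified with the wrong mechanism.
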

\begin{proof} Suppose that $S_{\lambdab}$ is a toral left invertible multishift satisfying the kernel condition $(\mf K)$.
By Corollary \ref{dimE-finite}, $\dim E$ is finite.
Also, by Theorem \ref{wandering}, the toral Cauchy dual $S^{\mathfrak t}_{\lambdab}$ possesses wandering subspace property. It may now be concluded from \cite[Theorem 5.4]{CC} that for any $s \leqslant r$,
\beqn
l^2(V) = \bigvee_{\omega \in \mathbb D^d_s} \ker (S^*_{\lambdab}-\omega)~\mbox{and}~\dim \ker(S^*_{\lambdab}-\omega) \Ge \dim E.
\eeqn 
However, by Corollary \ref{f-cyclic}, we get $\dim \ker(S^*_{\lambdab}-\omega) = \dim E.$
Thus it remains only to check (1)(a) of the definition of the Cowen-Douglas class. 
Also, $\sigma_{l}(S_j) \cap \mathbb D_{\frac{1}{\|S^{\mf t}_j\|}} = \emptyset$ for $j=1, \cdots, d$. Thus for any $\omega \in \mathbb D^d_r,$ $S_j - \omega_j$ has closed range, and hence range of $S^*_j - \overline{\omega}_j$ is closed for $j=1, \cdots, d.$ This shows that the range of $D_{S^*_{\lambdab}-{\omega}}$ is also closed for every $\omega \in \mathbb D^d_r.$
%we obtain that $\sigma_{ap}(S_{\lambdab})$ is contained in $\prod_{j=1}^d \mathbb C \setminus \mathbb D_{\frac{1}{\|S^{\mf t}_j\|}}.$  In particular, $\mathbb D^d_r$ is disjoint from $\sigma_{ap}(S_{\lambdab}),$ and hence $D_{S_{\lambdab}-w}$ is bounded from below. 
\end{proof}

\chapter{Special Classes of Multishifts}
%\chapter{Balanced Multishifts}

In this chapter, we discuss two classes of so-called balanced multishifts, namely torally balanced multishifts and spherically balanced multishifts (cf. \cite{CK}, \cite{K}). These generalize largely the classes of toral and spherical isometries (\cite{At}, \cite{DE}, \cite{EP}, \cite{AKM}). In particular, we introduce tree analogs of the classical multishifts $S_{{\bf w}, a}$ as discussed in Example \ref{1.1.1}. 
We show that these multishifts are unitarily equivalent to multiplication tuples acting on reproducing kernel Hilbert spaces of vector valued holomorphic functions defined on the unit ball. We also provide a compact formula for the associated reproducing kernels involving finitely many hypergeometric functions.
We further investigate some known classes of multishifts which include
mainly the well-studied class of joint subnormal tuples (\cite{CS-1} \cite{At-0}, \cite{At}, \cite{St}, \cite{Co-1}, \cite{At-1}, \cite{EP}, \cite{AZ}, \cite{GHX}), and comparatively less understood class of joint hyponormal tuples \cite{At-00}, \cite{CMX}, \cite{CP}, \cite{Cu-1}, \cite{CL}. We emphasize, in particular, on characterizations of these classes within spherically balanced multishifts.

\section{Torally Balanced Multishifts}

Before we introduce the class of torally balanced multishifts, let us understand somewhat related class of multishifts with commuting toral Cauchy dual. The later class admits a polar decomposition in the following sense. 
\begin{proposition} \label{t-p-decom} 
Let $\mathscr T = (V,\mathcal E)$ be the directed Cartesian product of rooted directed trees $\mathscr T_1, \cdots, \mathscr T_d$.
Let $S_{\lambdab}=(S_1, \cdots, S_d)$ be a toral left invertible multishift on $\mathscr T$ with toral Cauchy dual $S^{\mf t}_{\lambdab}$.  Then $S^{\mf t}_{\lambdab}$ is commuting if and only if there exist a toral isometry multishift $U_{\thetab}=(U_1, \cdots, U_d)$ on $\mathscr T$ and a commuting $d$-tuple $D=(D_1, \cdots, D_d)$ of diagonal, positive, invertible bounded linear operators on $l^2(V)$ such that
\beq \label{t-p-d}
S_j = U_jD_j,~j=1, \cdots, d.
\eeq
Further, this decomposition is unique.
\end{proposition}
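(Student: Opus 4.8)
\textbf{Proof plan for Proposition \ref{t-p-decom}.}

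The plan is to prove the two implications separately, with the forward direction being the substantial one. For the easy direction, suppose $S_j = U_jD_j$ with $U_{\thetab}$ a toral isometry multishift and $D_j$ diagonal, positive, invertible. Since each $U_j$ is a toral isometry, $U_j^*U_j = I$, so $Q_{S_j}(I) = S_j^*S_j = D_j U_j^* U_j D_j = D_j^2$. Hence $S_j^{\mf t} = S_j(Q_{S_j}(I))^{-1} = U_j D_j D_j^{-2} = U_j D_j^{-1}$. Because $U_{\thetab}$ is commuting and $D$ is a commuting tuple of diagonal operators, and because each $U_i$ maps $e_v$ to a combination of $e_w$'s with $D_j$ acting as a scalar on each such $e_w$, a direct computation on basis vectors shows $U_iD_i^{-1}U_jD_j^{-1} = U_jD_j^{-1}U_iD_i^{-1}$; I would verify this by evaluating both sides on $e_v$ using \eqref{Si} and the fact that $D_j$ acts diagonally. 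Thus $S^{\mf t}_{\lambdab}$ is commuting.

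For the forward direction, assume $S^{\mf t}_{\lambdab}$ is commuting. The natural candidate is the pointwise polar decomposition: define $D_j$ by $D_j e_v := \|S_j e_v\| \, e_v = \big(\sum_{w \in \childi{j}{v}} |\lambda^{(j)}_w|^2\big)^{1/2} e_v$, which is diagonal, positive, and (by toral left invertibility, via Proposition \ref{shift-prop}(vii)) bounded and invertible. Then set $U_j := S_j D_j^{-1}$, so $U_j e_v = \frac{1}{\|S_j e_v\|}\sum_{w \in \childi{j}{v}} \lambda^{(j)}_w e_w$; one checks $U_j^*U_j = I$ from Lemma \ref{disjoint}(ii) (orthogonality of $\{e_w\}_{w \in \childi{j}{v}}$), so each $U_j$ is a toral isometry on $\mathscr T$ with weights $\theta^{(j)}_w := \lambda^{(j)}_w / \|S_j e_{\parenti{j}{w}}\|$. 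The content is to show that $U_{\thetab}=(U_1, \cdots, U_d)$ is commuting and that $D = (D_1, \cdots, D_d)$ is commuting; the latter is automatic since all $D_j$ are diagonal in the basis $\{e_v\}$. For commutativity of $U_{\thetab}$, observe that $U_j = S_j D_j^{-1}$ and, from Lemma \ref{Cauchy-dual}, $S^{\mf t}_j e_v = \frac{1}{\|S_j e_v\|^2}\sum_{w \in \childi{j}{v}}\lambda^{(j)}_w e_w = D_j^{-2} S_j e_v$, i.e.\ $S^{\mf t}_j = S_j D_j^{-2} = U_j D_j^{-1}$. Hence $U_j = S_j^{\mf t} D_j$, and equivalently $U_j = (S_j S_j^{\mf t})^{1/2}\,(\text{phase})$; more usefully, from $S_j^{\mf t} = U_j D_j^{-1}$ and $S_j = U_j D_j$ we get $U_j^2 = S_j (S_j^{\mf t})^* {}$... rather, the clean route is: $U_i U_j = S_i D_i^{-1} S_j D_j^{-1}$ and I want to compare with $S_j^{\mf t} S_i^{\mf t} D_i D_j$ type expressions. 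I will instead argue on basis vectors directly: using \eqref{Si} and Proposition \ref{sib-id} together with the commutativity relation \eqref{commuting} for $S_{\lambdab}$ and the commutativity of $S^{\mf t}_{\lambdab}$ (which gives the analogous relation \eqref{commuting} for the weights $\lambda^{(j)}_w/\|S_j e_{\parenti{j}{w}}\|^2$ of $S^{\mf t}_{\lambdab}$), one deduces the relation \eqref{commuting} for the weights $\theta^{(j)}_w$ of $U_{\thetab}$: indeed $\theta^{(j)}_w = \sqrt{\lambda^{(j)}_w \cdot (\lambda^{(j)}_w/\|S_j e_{\parenti{j}{w}}\|^2)}$ is the geometric mean of the $S_{\lambdab}$-weight and the $S^{\mf t}_{\lambdab}$-weight, and the multiplicative identity \eqref{commuting} for both factors passes to the square root. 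This establishes that $U_{\thetab}$ is commuting.

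Finally, uniqueness: if $S_j = U_j D_j = \tilde U_j \tilde D_j$ are two such decompositions, then $D_j^2 = D_j U_j^* U_j D_j = S_j^* S_j = \tilde D_j^2$, and since $D_j, \tilde D_j$ are positive, $D_j = \tilde D_j$; then $U_j = S_j D_j^{-1} = \tilde U_j$ on the dense span of $\{e_v\}$, hence everywhere. I expect the main obstacle to be the verification that $U_{\thetab}$ is commuting --- specifically, cleanly showing that the geometric-mean weights $\theta^{(j)}_w$ inherit the product identity \eqref{commuting} from those of $S_{\lambdab}$ and $S^{\mf t}_{\lambdab}$. This requires care with which weights actually appear (the caution after Definition \ref{multishift-dfn}) and repeated use of Proposition \ref{sib-id}; a cleaner alternative, which I would fall back on, is to note $U_j = S_j^{\mf t} D_j = D_j^{-1} S_j^{\mf t} D_j^2$ is not obviously symmetric, so the basis-vector computation seems unavoidable, but it is routine once organized around Lemma \ref{disjoint}(i).
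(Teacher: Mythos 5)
Your proposal is correct and follows essentially the same route as the paper: the same diagonal factor $D_j e_v = \|S_j e_v\| e_v$, the same toral isometry $U_j = S_j D_j^{-1}$ with weights $\lambda^{(j)}_w/\|S_j e_{\parenti{j}{w}}\|$, the same uniqueness argument via $D_j^2 = S_j^*S_j$, and the same key reduction — that, given \eqref{commuting} for $S_{\lambdab}$, commutativity of $S^{\mf t}_{\lambdab}$ and of $U_{\thetab}$ are both equivalent to the multiplicative identity $\|S_j e_{\parenti{j}{v}}\|\,\|S_i e_{\parenti{i}{\parenti{j}{v}}}\| = \|S_i e_{\parenti{i}{v}}\|\,\|S_j e_{\parenti{j}{\parenti{i}{v}}}\|$ (your geometric-mean phrasing is just a repackaging of this, and the step you flagged as the main obstacle is in fact routine since \eqref{commuting} is multiplicative in positive weights). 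The only cosmetic caveat is that in the easy direction the identity $U_iD_i^{-1}U_jD_j^{-1}=U_jD_j^{-1}U_iD_i^{-1}$ is not automatic from commutativity of $U_{\thetab}$ and diagonality of $D$ alone, but the basis-vector verification you describe does close it via the same weight identity.
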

\begin{proof} 
Let us first see the uniqueness of the above decomposition.
Indeed, if \eqref{t-p-d} holds then for $j=1, \cdots, d,$
\beqn
S^*_j S_j =  D_jU^*_jU_j D_j = D^2_j, 
\eeqn
and hence $D_j$ must be the positive square root of $S^*_j S_j.$
Also, since $D_j$ is invertible, $U_j=S_j D^{-1}_j$ for $j=1, \cdots, d$. 

%Let us now see the sufficiency part.
By Proposition \ref{shift-prop}(i) and Lemma \ref{Cauchy-dual}, 
$S^{\mf t}_{\lambdab}$ is commuting if and only if
\beq \label{t-commuting} \|S_j e_{\parenti{j}{v}}\| \|S_i e_{\mathsf{par}_i{\parenti{j}{v}}}\|=\|S_i e_{\parenti{i}{v}}\| \|S_j e_{\mathsf{par}_j{\parenti{i}{v}}}\| \eeq
for all $v \in V$ and $i, j=1, \cdots, d.$
Consider now the multishift $U_{\thetab}=(U_1, \cdots, U_d)$ with weights given by
\beq \label{t-theta} \theta^{(j)}_w := \frac{\lambda^{(j)}_w}{\|S_j e_{{v}}\|}, ~\mbox{for~} w \in \childi{j}{v}, ~v \in V~\mbox{and~} j=1, \cdots, d.\eeq
Since $S_iS_j=S_jS_i~(i, j=1, \cdots, d)$, by an application of Proposition \ref{shift-prop}(i), $S^{\mf t}_{\lambdab}$ is commuting if and only if $U_iU_j=U_jU_i~(i, j=1, \cdots, d)$. 

To see the sufficiency part, assume that $S^{\mf t}_{\lambdab}$ is commuting. Thus $U_{\thetab}$ is commuting.
Also, since $U^*_jU_j=I$ for $j=1, \cdots, d,$
%If $S^{\mf t}_{\lambdab}$ is commuting, then 
$U_{\thetab}$ is a toral isometry. Thus $S_{\lambdab}$ admits the decomposition \eqref{t-p-d}, 
where $D_j~(j=1, \cdots, d)$ is given by
\beqn
D_j e_v :=\|S_j e_{v}\| e_v,~v \in V.
\eeqn
Further, by Proposition \ref{shift-prop}(vii), $D_1, \cdots, D_d$ are diagonal, positive, invertible bounded linear operators. 

Finally, since $S^{\mf t}_{\lambdab}$ is commuting if and only if $U_{\thetab}$ is commuting, the necessary part follows from the
uniqueness of \eqref{t-p-d}.
%This proves the sufficiency part. 
%Conversely, 
%if we assume \eqref{t-p-d} then 
%$U_{\thetab}$ is commuting, and hence by the commutativity of $S_{\lambdab},$ the toral Cauchy dual $S^{\mf t}_{\lambdab}$ must be commuting. 
\end{proof}

For the sake of convenience, we refer to $U_{\thetab}$ and $D=(D_1, \cdots, D_d)$ 
\index{$U_{\thetab}$} 
as {\it toral isometry part} and {\it diagonal part} 
%\index{$D=(D_1, \cdots, D_d)$}
of the multishift $S_{\lambdab}$ respectively.
\begin{remark}
Let $S_{\bf w}$ be a toral left invertible classical multishift.
Then the operator tuple $S^{\mathfrak{t}}_{\bf w}$ toral Cauchy dual of $S_{\bf w}$ is given
by 
\beqn 
S^{\mathfrak{t}}_j e_\alpha = \frac{1}{w^{(j)}_\alpha} e_{\alpha + \epsilon_j}~(1 \leqslant j \leqslant
d). 
\eeqn 
Note that $S^{\mathfrak{t}}_{\bf w}$ is also a commuting $d$-variable weighted shift with
weight  multisequence 
\begin{equation*}
\Big\{\frac{1}{w^{(j)}_\alpha} : 1 \leqslant j  \leqslant d, ~\alpha \in {\mathbb{N}}^d\Big\}.
\end{equation*}
Thus the toral Cauchy dual of a classical multishift $S_{\bf w}$ always commutes. Indeed, \eqref{t-commuting} is equivalent to the commutativity of $S_{\bf w}$ in this case. Moreover, the toral isometry part can be identified with the multiplication tuple $\mathscr M_z$ of the Hardy space of the unit polydisc (commonly known as {\it Cauchy $d$-shift}). Indeed, $\theta^{(j)}_v = 1$ for all $v \in V^{\circ}$ and $j=1, \cdots, d.$ 
\end{remark}

Since toral isometry part and diagonal part in the
above decomposition need not commute, prima facie the relation between the moments $\{\|S^{\alpha}_{\lambdab}e_v\|^2\}_{\alpha \in \mathbb N^d}$ of $S_{\lambdab}$ and that of toral isometry part $U_{\thetab}$ is not visible. 
Nevertheless, there is a subclass of multishifts in which we are facilitated to get a nice formula for $\{\|S^{\alpha}_{\lambdab}e_v\|^2\}_{\alpha \in \mathbb N^d}$ (see \eqref{moment-t-1}).

\begin{definition}
Let $\mathscr T = (V,\mathcal E)$ be the directed Cartesian product of rooted directed trees $\mathscr T_1, \cdots, \mathscr T_d$ and
let $S_{\lambdab}=(S_1, \cdots, S_d)$ be a commuting multishift on $\mathscr T$. 
%with commuting toral Cauchy dual $S^{\mf t}_{\lambdab}$. 
For $j=1, \cdots, d,$ define $\mf C_j : V \rar (0, \infty)$ by \beqn \mf C_j(v) :=\|S_j e_{v}\| ~(v \in V, ~ i, j=1, \cdots, d). \eeqn
We say that $S_{\lambdab}$ is {\it torally balanced} if for each $j=1, \cdots, d$, $\mf C_j$ is constant on \index{$\mf C_j$}
every generation $\mathcal G_t$, $t \in \mathbb N$.
%\beq \label{generation} \mathcal G_t := \big\{v \in V : v \in \childnt{\alpha}{\rootb}, \alpha \in \mathbb N^d, |\alpha|=t\big\}, ~t \in \mathbb N.\eeq
We denote the constant value of $\mf C_j(v)$ by 
\index{$c^{(j)}_{|\alpha_v|}$}
$c^{(j)}_{|\alpha_v|}$, where $\alpha_v$ is the depth of $v$ in $\mathscr T$ (see Definition \ref{depth-gen}). 
%\uwam{can be made weaker- constant on child}
\end{definition}
\begin{remark}
Note that $S_{\lambdab}$ is a toral isometry if and only if 
\beqn \sum_{w \in \childi{j}{v}} (\lambda_{w}^{(j)})^2 = 1~\mbox{for all~}v \in V~\mbox{and~}j=1, \cdots, d.\eeqn
Clearly, any toral isometry multishift is torally balanced with $\mf C_j$ being constant function with value $1$ for every $j=1, \cdots, d$.
\end{remark}

The following proposition leads to an interesting family of torally balanced multishifts. In particular, any directed Cartesian product of locally finite rooted directed trees supports a toral isometry (cf. \cite[Proposition 8.1.3]{JJS}).

\begin{proposition}
Let $\mathscr T = (V,\mathcal E)$ be the directed Cartesian product of locally finite rooted directed trees $\mathscr T_1, \cdots, \mathscr T_d$.
For $i=1, \cdots, d,$ let $\{c(t, i)\}_{t \in \mathbb N}$ be a bounded sequence of positive real numbers such that
\beq \label{c(t, i)}
c(t, i)c(t-1, j) = c(t, j) c(t-1, i)~\mbox{for all integers~} t \geqslant  1~\mbox{and~} i, j=1, \cdots, d.
\eeq
%e.g. $c(t, i)=1/i(t+1)$
Consider the multishift $S_{\lambdab_{\bf c}}=(S_1, \cdots, S_d)$  with weights \index{$S_{\lambdab_{\bf c}}$}
\beqn
\lambda^{(j)}_w =\sqrt{\frac{c(|\alpha_v|, j)}{\mbox{card}(\childi{j}{v})}}~ \mbox{for~} w \in \childi{j}{v}, ~v \in V~\mbox{and~} j=1, \cdots, d.
\eeqn
Then $S_{\lambdab_{\bf c}}$ defines a torally balanced multishift.
In case $c(t, j) = 1$ for all $j=1, \cdots, d$ and $t \in \mathbb N$, $S_{\lambdab_{\bf c}}$ is a toral isometry.
\end{proposition}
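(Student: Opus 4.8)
The plan is to verify directly that the weights $\lambda^{(j)}_w$ satisfy the commutativity condition \eqref{commuting} of Proposition \ref{shift-prop}(i), then compute $\mf C_j(v)$ and show it depends only on the generation $|\alpha_v|$. First I would check commutativity. Fix $i,j$ and $v\in V$, and take $u\in\mathsf{Chi}_j\childi{i}{v}$. Writing $u$ as a child of $\parenti{j}{u}$ in the $j$-th coordinate, the depth satisfies $|\alpha_{\parenti{j}{u}}| = |\alpha_u|-1$, and similarly $|\alpha_{\parenti{i}{u}}| = |\alpha_u|-1$; moreover $|\alpha_{\parenti{i}{\parenti{j}{u}}}| = |\alpha_u|-2$. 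Hence
\[
\lambda^{(j)}_u\,\lambda^{(i)}_{\parenti{j}{u}}
= \sqrt{\frac{c(|\alpha_u|-1,j)}{\mbox{card}(\childi{j}{\parenti{j}{u}})}}\,
\sqrt{\frac{c(|\alpha_u|-2,i)}{\mbox{card}(\childi{i}{\parenti{i}{\parenti{j}{u}}})}},
\]
and symmetrically for $\lambda^{(i)}_u\,\lambda^{(j)}_{\parenti{i}{u}}$. The cardinality factors agree because $\mbox{card}(\childi{j}{\parenti{j}{u}}) = \mbox{card}(\mathsf{sib}_j(u))$ and Proposition \ref{sib-id} (together with Remark \ref{child-par}, which gives $\parenti{i}{\parenti{j}{u}}=\parenti{j}{\parenti{i}{u}}$) reconciles the two sibling-count products. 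The scalar factors agree precisely by hypothesis \eqref{c(t, i)} applied with $t=|\alpha_u|-1$: $c(|\alpha_u|-1,j)\,c(|\alpha_u|-2,i) = c(|\alpha_u|-1,i)\,c(|\alpha_u|-2,j)$. This establishes \eqref{commuting}, so $S_{\lambdab_{\bf c}}$ is a commuting multishift.

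Next I would compute $\mf C_j(v) = \|S_j e_v\|$. By \eqref{Si} and the orthogonality of $\{e_w\}_{w\in\childi{j}{v}}$ (Lemma \ref{disjoint}(ii)),
\[
\mf C_j(v)^2 = \sum_{w\in\childi{j}{v}} (\lambda^{(j)}_w)^2
= \sum_{w\in\childi{j}{v}} \frac{c(|\alpha_v|,j)}{\mbox{card}(\childi{j}{v})} = c(|\alpha_v|,j).
\]
Since each $\{c(t,j)\}_{t\in\mathbb N}$ is bounded, Lemma \ref{bddness}(i) shows every $S_j\in B(l^2(V))$, and the displayed formula shows $\mf C_j$ is constant on each generation $\mathcal G_t = \{v : |\alpha_v|=t\}$ (Definition \ref{depth-gen}), with value $c^{(j)}_t = \sqrt{c(t,j)}$. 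Therefore $S_{\lambdab_{\bf c}}$ is torally balanced.

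Finally, for the last sentence, if $c(t,j)=1$ for all $j$ and $t$, then \eqref{c(t, i)} holds trivially and the above computation gives $\sum_{w\in\childi{j}{v}}(\lambda^{(j)}_w)^2 = c(|\alpha_v|,j) = 1$ for every $v$ and $j$, which by the Remark preceding the proposition is exactly the condition that $S_{\lambdab_{\bf c}}$ be a toral isometry. I do not anticipate a serious obstacle here: the only mildly delicate point is bookkeeping the sibling-cardinality factors in the commutativity check, which is precisely what Proposition \ref{sib-id} was recorded for, so invoking it cleanly is the key step.
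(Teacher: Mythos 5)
Your proposal is correct and follows essentially the same route as the paper's own proof: verify the commutativity condition \eqref{commuting} by combining the hypothesis \eqref{c(t, i)} at $t=|\alpha_u|-1$ with the sibling-cardinality identity of Proposition \ref{sib-id}, then compute $\|S_je_v\|^2=c(|\alpha_v|,j)$ to get toral balancedness and read off the isometry case. No gaps.
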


\begin{proof}
Since $\{c(t, j)\}_{t \in \mathbb N}$ is a bounded sequence, by Lemma \ref{bddness}(i), $S_j$ defines a bounded linear operator on $l^2(V)$ for every $j=1, \cdots, d.$
Let $w \in V$ and $i, j = 1, \cdots, d.$
By Proposition \ref{sib-id}, for every $v \in \childi{i}{\childi{j}{w}}$,
we get
\beq \label{sib-1}
\mbox{card}(\mathsf{sib}_i(v)) \mbox{card}(\mathsf{sib}_j(\parenti{i}{v})) =  \mbox{card}(\mathsf{sib}_j(v)) \mbox{card}(\mathsf{sib}_i(\parenti{j}{v})).
\eeq
We now check the commutativity of $S_{\lambdab_{\bf c}}$. 
Note that for $w \in \childi{j}{v}$, $\lambda^{(j)}_w$ can be rewritten as $\lambda^{(j)}_w = \sqrt{\frac{c(|\alpha_w|-1, j)}{\mbox{card}(\mathsf{sib}_j(w))}}$. Now for $u \in \mathsf{Chi}_i \childi{j}{v}$, we have
\beqn
\lambda^{(i)}_u \lambda^{(j)}_{\parenti{i}{u}} &=& \sqrt{\frac{c(|\alpha_u|-1, i)}{\mbox{card}(\mathsf{sib}_i(u))}} \sqrt{\frac{c(|\alpha_{\parenti{i}{u}}|-1, j)}{\mbox{card}(\mathsf{sib}_j(\parenti{i}{u}))}} \\ &=& \sqrt{\frac{c(|\alpha_u|-1, i)}{\mbox{card}(\mathsf{sib}_i(u))}} \sqrt{\frac{c(|\alpha_{{u}}|-2, j)}{\mbox{card}(\mathsf{sib}_j(\parenti{i}{u}))}},\\
\lambda^{(j)}_u \lambda^{(i)}_{\parenti{j}{u}} &=& \sqrt{\frac{c(|\alpha_u|-1, j)}{\mbox{card}(\mathsf{sib}_j(u))}} \sqrt{\frac{c(|\alpha_{\parenti{j}{u}}|-1, i)}{\mbox{card}(\mathsf{sib}_i(\parenti{j}{u}))}}\\ &=& \sqrt{\frac{c(|\alpha_u|-1, j)}{\mbox{card}(\mathsf{sib}_j(u))}} \sqrt{\frac{c(|\alpha_{{u}}|-2, i)}{\mbox{card}(\mathsf{sib}_i(\parenti{j}{u}))}}.
\eeqn
This together with Proposition \ref{shift-prop}(i), \eqref{c(t, i)} and \eqref{sib-1} shows that $S_{\lambdab_{\bf c}}$ is commuting. Further,  
\beqn
\|S_j e_v \|^2 = \sum_{w \in \childi{j}{v}} (\lambda^{(j)}_w)^2 = \sum_{w \in \childi{j}{v}} \frac{c(|\alpha_v|, j)}{\mbox{card}(\childi{j}{v})} = c(|\alpha_v|, j),
\eeqn
which is a function of $|\alpha_v|$. This shows that $S_{\lambdab_{\bf c}}$ is torally balanced. The last assertion is immediate from the above equality.
\end{proof}
\begin{remark} \label{S-L-C-comm}
Assume that $S_{\lambdab_{\bf c}}$ is toral left invertible.
Then  $S^{\mf t}_{\lambdab_{\bf c}}$ is commuting. Indeed, the weights of $S^{\mf t}_{\lambdab_{\bf c}}$ are $$\Big\{\frac{1}{\sqrt{c(|\alpha_v|, j)}}{\frac{1}{\sqrt{\mbox{card}(\mathsf{Chi}_j(v))}}} : w \in \childi{j}{v}, ~v \in V~\mbox{and~} j=1, \cdots, d \Big\}.$$ By arguing as above, it can be seen that $S^{\mf t}_{\lambdab_{\bf c}}$ is commuting.
\end{remark}

\begin{example}
Let $\mathscr T = (V,\mathcal E)$ be the directed Cartesian product of locally finite rooted directed trees $\mathscr T_1, \cdots, \mathscr T_d$.
For positive numbers $a, b \in \mathbb R,$ let
\beqn
c_{a, b}(t, j) = \frac{t + b}{t+a}~(t \in \mathbb N, ~j=1, \cdots, d).
\eeqn
Then $S_{\lambdab_{\bf c}}$ is a torally balanced multishift on $\mathscr T$. In case $\mathscr T$ is the $d$-fold directed Cartesian product of $\mathscr T_{1, 0}$ with itself,  the choice $a=1=b$ yields the Cauchy $d$-shift. In case $b=1,$ we denote $c_{a, b}$ by a simpler notation $c_a.$
\end{example}
%\begin{remark}
%It can be seen that the multiplication $d$-tuple $\mathscr M_z$ on the Bergman space of the unit polydisc is not torally balanced. 
%\end{remark}

In dimension $d=1,$ the multishifts $S_{\lambdab_{\bf c}}$ with $b=1$ can be realized as multiplication operators on reproducing kernel Hilbert spaces. Indeed, these shifts can be looked upon as tree counter-part of Agler-type shifts \cite{Ag}.  
This is made precise in the following proposition.

\begin{proposition} \label{S-c-a-kernel-dim1}
Let $\mathscr T = (V, \mathcal E)$ be a locally finite rooted directed tree of finite branching index. For a positive integer $a,$ let $S_{\lambda_{c_a}}$ denote the weighted shift on $\mathscr T$ with weights given by \index{$S_{\lambda_{c_a}}$}
\beqn
\lambda_u = \sqrt{\frac{\alpha_v+1}{\alpha_v + a}} \frac{1}{\sqrt{\mbox{card}(\child{v})}}~\mbox{for~}u \in \child{v},~v \in V,
\eeqn 
where $\alpha_v$ denotes the depth of $v$ in $\mathscr T$. Then $S_{ \lambda_{c_a}}$ is unitarily equivalent to the multiplication operator $\mathscr M_{z, a}$ on a reproducing kernel \index{$\mathscr H_a$}
Hilbert space $\mathscr H_a$ of $E$-valued holomorphic functions on unit disc $\mathbb D$, where $E:=\ker S^*_{\lambda_{c_a}}$ (see \eqref{formula-k}). Moreover, the reproducing kernel $\kappa_{\mathscr H_a}$ associated with $\mathscr H_a$ 
\index{$\kappa_{\mathscr H_a}$}
is given by 
\beqn
\kappa_{\mathscr H_a}(z, w) &=& \sum_{n=0}^{\infty}{n+a-1 \choose n}~ {z^n \overline{w}^n} ~P_{[e_\rootb]} \\  &+& 
\sum_{v \in V_{\prec}} \sum_{n=0}^{\infty}  \frac{(\alpha_v + n + a)! (\alpha_v +1)!}{(\alpha_v +a)! (\alpha_v + n+1)!} {z^n \overline{w}^n}~P_{l^2(\child{v}) \ominus [\Gamma_v]}~(z, w \in \mathbb D),
\eeqn
where $P_{\mathcal M}$ denotes the orthogonal projection of $\mathcal H$ onto a subspace $\mathcal M$ of $\mathcal H$.
%where $\omega$ is arbitrarily chosen but fixed child of $v \in V_\prec$ (see \eqref{formula-k}).
%\beqn
%\kappa_{\mathscr{H}_a}(z,w) = \sum_{k=0}^\infty D_{k, a} z^k \overline{w}^k  \quad  (z, w \in \mathbb{D}),
%\eeqn
%where $D_{k, a}$ is the block diagonal operator on $E :=\ker S^*_{\lambdab, c_a}$ 
%with diagonal entries ${k+a-1 \choose k}$ (corresponding to $\rootb$), and $\beta_k(v, a)$ (corresponding to $l^2(\child{v}) \ominus [\lambda^v]$, $v \in V_{\prec}$) given by
%\beq \label{beta-k-v-a}
%\beta_{k}(v,a) :=
%\frac{(\alpha_v + k + a)! (\alpha_v +1)!}{(\alpha_v +a)! (\alpha_v + k+1)!}~\mbox{for every~}k \in \mathbb N.
%\eeq
%\beqn
%\kappa_{a}(z,w) = \sum_{k=0}^\infty C_{k, a}\, z^k \overline{w}^k  \quad  (z, w \in \mathbb{D}_{r_\lambda})
%\eeqn
%%\beq \label{repkernel-1-dim} 
%%\kappa_{\mathscr{H}}(z,w) = A\sum_{k=0}^\infty  z^k \overline{w}^k + B \sum_{k=0}^\infty(k+1)  z^k \overline{w}^k  \quad  (z, w \in \mathbb{D}_{r_\lambda}),
%%\eeq
%where $C_{k, a}$ is a bounded linear operator on $E=\ker S^*_{\lambda, c_a}$ given by
%\beqn
%C_{k, a}f = 
%\eeqn
%\beqn
%\beta_{k,k}(v,a) = \frac{(\alpha_v + k + a -1)! \alpha_v !}{(\alpha_v +a -1)! (\alpha_v + k)!}.
%\eeqn
%where \beqn A\Big(a\, e_{\mathsf{root}} \bigoplus_{v \in  V_{\prec}} f_v \Big) &=& \bigoplus_{v \in  V_{\prec}} \frac{\alpha_v + 1}{\alpha_v + 2} f_v, \\     B\Big(a\, e_{\mathsf{root}} \bigoplus_{v \in  V_{\prec}} f_v\Big) &=& a\, e_{\mathsf{root}} \bigoplus_{v \in  V_{\prec}} \frac{1}{\alpha_v + 2} f_v,
%\eeqn
%$a \in \mathbb{C}$ and $f_v = \sum_{w \in \child{v}} f(w)e_w.$
\end{proposition}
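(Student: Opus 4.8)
The plan is to realize $S_{\lambda_{c_a}}$ via Shimorin's analytic model, which in the one-variable case (where the kernel condition $(\mathfrak K)$ holds automatically, see Remark \ref{ex-k-c}(i)) reduces to the classical construction of \cite[Theorem 2.2]{CT} already invoked in Theorem \ref{model}. First I would record that $S_{\lambda_{c_a}}$ is bounded and left invertible: boundedness follows from local finiteness together with $\lambda_u^2 = \frac{\alpha_v+1}{\alpha_v+a}\cdot\frac{1}{\mathrm{card}(\mathsf{Chi}(v))} \le 1$ for $u\in\mathsf{Chi}(v)$, and left invertibility follows from $\sum_{u\in\mathsf{Chi}(v)}\lambda_u^2 = \frac{\alpha_v+1}{\alpha_v+a} \ge \frac{1}{a} > 0$ for all $v$ (the single-variable form of Proposition \ref{shift-prop}(vii)). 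Since the branching index is finite, Corollary \ref{dimE-bound} gives $\dim E < \infty$, and Theorem \ref{model} (with $d=1$) produces a reproducing kernel Hilbert space $\mathscr H_a$ of $E$-valued holomorphic functions on a disc $\mathbb D_{r}$ with $r = r(S_{\lambda_{c_a}}^{\mathfrak t})^{-1}$, together with a unitary $U$ intertwining $S_{\lambda_{c_a}}$ with $\mathscr M_{z,a}$, and with kernel $\kappa_{\mathscr H_a}(z,w) = \sum_{m,n} P_E S_{\lambda_{c_a}}^{\mathfrak t *m} S_{\lambda_{c_a}}^{\mathfrak t\, n}|_E\, z^m\overline w^n$. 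I will also need $r \ge 1$, i.e.\ the kernel converges on all of $\mathbb D$; this should follow by estimating $\|S_{\lambda_{c_a}}^{\mathfrak t\, n} e_v\|^2$, which grows only polynomially in $n$ since $c_a(t,1) = \frac{t+1}{t+a} \to 1$.

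Next I would compute the diagonal ($m=n$) and off-diagonal blocks of $\kappa_{\mathscr H_a}$ explicitly against the orthonormal-type basis of $E$ coming from \eqref{formula-k}, namely $E = [e_\rootb] \oplus \bigoplus_{v\in V_\prec}\big(l^2(\mathsf{Chi}(v))\ominus[\Gamma_v]\big)$. For $f \in E$ supported on $\mathsf{Chi}(v)$ with $\alpha_{\mathsf{Chi}(v)} = \alpha_v+1$, the vector $S_{\lambda_{c_a}}^{\mathfrak t\, n} f$ is supported on $\mathsf{Chi}^{\langle n+1\rangle}(v)$, and $P_E S_{\lambda_{c_a}}^{\mathfrak t *m}S_{\lambda_{c_a}}^{\mathfrak t\, n}f = 0$ unless $m = n$ because $\mathsf{par}^{\langle m\rangle}$ applied to vertices of depth $\alpha_v+1+n$ lands back in $\mathsf{Chi}(V_\prec)$ only when $m=n$ (here the single-branching-vertex structure between consecutive branchings, as in the proof of Theorem \ref{model} using Proposition \ref{joint-k}, forces the off-diagonal terms to vanish — this is exactly why $k_{\mathscr T_j}$ does not enter and the kernel is diagonal). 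Then $\|S_{\lambda_{c_a}}^{\mathfrak t\, n} f\|^2 / \|f\|^2$ is a product of reciprocals of the weights along a path, which telescopes: using $\lambda_u^2 = \frac{\alpha_v+1}{\alpha_v+a}\cdot\frac{1}{\mathrm{card}(\mathsf{Chi}(v))}$ and the fact that $\mathsf{Chi}^{\langle k\rangle}(v)$ is a singleton beyond a branching for the relevant vertices, the cardinality factors cancel against the decomposition of $l^2(\mathsf{Chi}^{\langle n+1\rangle}(v))$ and one is left with $\prod_{k=1}^{n} \frac{\alpha_v + k + a}{\alpha_v + k + 1} = \frac{(\alpha_v+n+a)!\,(\alpha_v+1)!}{(\alpha_v+a)!\,(\alpha_v+n+1)!}$. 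For the $[e_\rootb]$ block the same computation with $\alpha_\rootb = 0$ gives $\prod_{k=0}^{n-1}\frac{k+a}{k+1} = \binom{n+a-1}{n}$.

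The main obstacle I anticipate is the bookkeeping in the telescoping computation of $\|S_{\lambda_{c_a}}^{\mathfrak t\, n} f\|^2$: one must track how the norm of a unit vector in $l^2(\mathsf{Chi}(v))\ominus[\Gamma_v]$ spreads under iterated application of $S_{\lambda_{c_a}}^{\mathfrak t}$ over the (possibly branching) subtree $\mathsf{Des}(v)$, and verify that the multiplicities $\mathrm{card}(\mathsf{Chi}_j)$ indeed cancel so that only the depth-dependent factor $c_a$ survives — this uses that the contribution is really a sum over all descendants at a given generation, weighted so as to reconstitute the $c_a$ product independently of the branching pattern. A clean way to handle this is to observe that $S_{\lambda_{c_a}}^{\mathfrak t}$ restricted to any $l^2(\mathsf{Chi}^{\langle k\rangle}(v))$ acts as a scalar multiple (depending only on depth) of an isometry onto $l^2(\mathsf{Chi}^{\langle k+1\rangle}(v))$ composed with the natural inclusion, which follows from the weights of $S_{\lambda_{c_a}}^{\mathfrak t}$ recorded in Lemma \ref{Cauchy-dual} and Remark \ref{S-L-C-comm}; granting this, the moment formula is immediate. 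Finally, having matched $\kappa_{\mathscr H_a}$ with the claimed expression, uniqueness of the reproducing kernel Hilbert space \cite{Aro} and the intertwining $U S_{\lambda_{c_a}} = \mathscr M_{z,a} U$ from Theorem \ref{model} complete the proof; one should also note in passing that the hypergeometric reformulation ${}_2F_1(\alpha_v+a+1,1,\alpha_v+2,t)$ claimed in the introduction is a direct consequence of the ratio test on the coefficients $\frac{(\alpha_v+n+a)!\,(\alpha_v+1)!}{(\alpha_v+a)!\,(\alpha_v+n+1)!}$.
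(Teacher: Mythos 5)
Your overall strategy — invoke Theorem \ref{model} with $d=1$ (where the kernel condition $(\mathfrak K)$ is automatic), then compute the blocks $P_E S_{\lambda_{c_a}}^{\mathfrak t *m}S_{\lambda_{c_a}}^{\mathfrak t\, n}\vert_E$ against the decomposition \eqref{formula-k} — is exactly the paper's, and your diagonal computation is correct: the identity you need, that $S_{\lambda_{c_a}}^{\mathfrak t}$ spreads norm over $\childn{k}{v}$ with the cardinality factors cancelling (the paper proves this as $\sum_{u \in \childn{k}{v}} \prod_{l=0}^{k-1} \mbox{card}(\sib{\parentn{l}{u}})^{-1} = 1$ by induction), yields the telescoping product $\prod_{k=1}^{n}\frac{\alpha_v+k+a}{\alpha_v+k+1}$ as you state.

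However, your justification for the vanishing of the off-diagonal terms is wrong, and this is the one genuinely delicate point of the proof. You claim $P_E S^{\mathfrak t *m}S^{\mathfrak t\, n}f=0$ for $m\neq n$ "because $\mathsf{par}^{\langle m\rangle}$ applied to vertices of depth $\alpha_v+1+n$ lands back in $\child{V_\prec}$ only when $m=n$." That is false whenever $\mathscr T$ has branching vertices at more than one depth: if $w\in V_\prec$ has $\alpha_w=\alpha_v+n-m$, the vector $S^{\mathfrak t *m}S^{\mathfrak t\, n}f$ is supported at depth $\alpha_w+1$ and its projection onto $l^2(\child{w})\ominus[\Gamma_w]\subseteq E$ is not ruled out by support considerations. (This is precisely why Theorem \ref{model} only discards the terms with $|m-n|>k_{\mathscr T}$ in general, leaving an off-diagonal band.) The actual mechanism is a cancellation specific to these weights: since $\lambda_u$ is constant on each $\child{v}$, every $f\in l^2(\child{v})\ominus[\Gamma_v]$ satisfies $\sum_{u\in\child{v}}f(u)=0$; moreover for $m>n$ one has $S^{\mathfrak t *m}S^{\mathfrak t\, n}e_u=\beta_{m,n}(u,a)\,e_{\parentn{m-n}{u}}$ where both the scalar $\beta_{m,n}(u,a)$ and the vertex $\parentn{m-n}{u}=\parentn{m-n-1}{v}$ are \emph{independent of} $u\in\child{v}$, so $S^{\mathfrak t *m}S^{\mathfrak t\, n}f=\beta_{m,n}\bigl(\sum_u f(u)\bigr)e_{\parentn{m-n-1}{v}}=0$ identically; the case $m<n$ then follows by taking adjoints of $P_ES^{\mathfrak t*n}S^{\mathfrak t\,m}\vert_E$. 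Without this cancellation argument your proof does not establish that the kernel is diagonal, so this step needs to be repaired before the claimed formula for $\kappa_{\mathscr H_a}$ follows.
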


\begin{proof}
Note that
\beqn
 \inf_{v \in V}\sum_{u \in \child{v}} \lambda^2_u = \inf_{v \in V}\frac{\alpha_v+1}{\alpha_v + a} =\frac{1}{a}.
\eeqn
It now follows from Proposition \ref{shift-prop}(vii) that $S_{\lambda_{c_a}}$ is left-invertible. Hence the first part follows from Remark \ref{ex-k-c} and Theorem \ref{model}. To see the remaining part, we need the following identity:
\beq \label{card-ide}
\sum_{u \in \childn{k}{v}} \prod_{l=0}^{k-1} \frac{1}{\mbox{card} (\sib{\parentn{l}{u}})} = 1~\mbox{for~}v \in V~\mbox{and~}k \Ge 1.
\eeq
We prove this by induction on integers $k \Ge 1$. For $k = 1$, the identity \eqref{card-ide} follows from the fact that $\mbox{card}(\child{v}) = \mbox{card}(\sib{u})$, where $u \in \child{v}$. Suppose that \eqref{card-ide} holds for some integer $k \Ge 1$. Then
\beqn
&&\sum_{u \in \childn{k+1}{v}} \prod_{l=0}^{k} \frac{1}{\mbox{card} (\sib{\parentn{l}{u}})} \\ &=& \sum_{\eta \in \childn{k}{v}} \sum_{u \in \child{\eta}}  \prod_{l=0}^{k} \frac{1}{\mbox{card} (\sib{\parentn{l}{u}})} \\ 
\\ &=& \sum_{\eta \in \childn{k}{v}} \sum_{u \in \child{\eta}}  \frac{1}{\mbox{card} (\sib{{u}})} \prod_{l=1}^{k} \frac{1}{\mbox{card} (\sib{\parentn{l-1}{\parent{u}}})} \\ 
&=&  \sum_{\eta \in \childn{k}{v}} \left( \sum_{u \in \child{\eta}}  \frac{1}{\mbox{card} (\sib{u})} \right) \prod_{l=0}^{k-1}  \frac{1}{\mbox{card} (\sib{\parentn{l}{\eta}})}\\  
&=&\sum_{\eta \in \childn{k}{v}} \prod_{l=0}^{k-1} \frac{1}{\mbox{card}(\sib{\parentn{l}{\eta}})} =1,
\eeqn
where the last equality  follows from the induction hypothesis.

Note that the weights of $S_{\lambda_{c_a}}$ can be rewritten as\beqn 
\lambda_v = \sqrt{\frac{\alpha_v}{\alpha_v + a-1}} \frac{1}{\sqrt{\mbox{card}(\sib{v})}}~\mbox{for~}v \in V^{\circ}.
\eeqn 
Let $S_{\lambda'_{c_a}}$ denote the Cauchy dual of $S_{\lambda_{c_a}}$. By Lemma \ref{Cauchy-dual}, the weights $\lambda'$ of $S_{\lambda'_{c_a}}$ are given by
\beqn \label{weights-dual}
\lambda'_v = \sqrt{\frac{\alpha_v + a - 1}{\alpha_v}} \frac{1}{\sqrt{\mbox{card}(\sib{v})}} ~ \mbox{for all}~ v \in V^\circ.
\eeqn
Now for $v \in V$ and $j,k \Ge 1$, an application of parts (iv) and (v) of Proposition \ref{shift-prop} yields
\beqn \label{power-S-lambda}
S^k_{\lambda'_{c_a}} e_v &=& \displaystyle \sqrt{\frac{(\alpha_v + k + a -1)! \alpha_v !}{(\alpha_v +a -1)! (\alpha_v + k)!}} \sum_{u \in \childn{k}{v}}  \prod_{l=0}^{k-1} \frac{1}{\sqrt{\mbox{card} (\sib{\parentn{l}{u}})}} e_u, \\
S^{*j}_{\lambda'_{c_a}} e_v  &=& \sqrt{\frac{(\alpha_v +a-1)!(\alpha_v -j)!}{(\alpha_v +a - j -1)! \alpha_v !}} ~\prod_{i=0}^{j-1} \frac{1}{\sqrt{\mbox{card} (\sib{\parentn{i}{v}})}} e_{\parentn{j}{v}}.\quad
\eeqn
For a positive integer $i$ and $v \in V,$ set $s_{i, v}:=\mbox{card} (\sib{\parentn{i}{v}})$. Also, for positive integers $j, k$ and $v \in V$ such that $\parentn{j-k}{v}$ is nonempty, set
\beqn
\beta_{j, k}(v, a) &:=& \prod_{i=0}^{j-k-1} \frac{1}{\sqrt{s_{i, v}}} \sqrt{\frac{(\alpha_v + k + a -1)! \alpha_v !}{(\alpha_v +a -1)! (\alpha_v + k)!}} \\ &\times& \sqrt{\frac{(\alpha_v +k +a-1)!(\alpha_v +k -j)!}{(\alpha_v +k +a - j -1)! (\alpha_v +k) !}}.
\eeqn
%Note that for every $v \in V,$ $\beta_{j, k}(\cdot, a)$ is constant on $\child{v}.$
Let $v \in V$ and $j \Ge k$. It is easily seen that if $\parentn{j-k}{v}$ is empty, then
$S^{*j}_{\lambda'_{c_a}} S^k_{\lambda'_{c_a}} e_v =0.$ Otherwise
\beqn
S^{*j}_{\lambda'_{c_a}} S^k_{\lambda'_{c_a}} e_v &=& \sqrt{\frac{(\alpha_v + k + a -1)! \alpha_v !}{(\alpha_v +a -1)! (\alpha_v + k)!}} \sum_{u \in \childn{k}{v}}  \prod_{l=0}^{k-1} \frac{1}{\sqrt{s_{l, u}}} S^{*j}_{\lambda'_{c_a}}e_u\\
&=& \sqrt{\frac{(\alpha_v + k + a -1)! \alpha_v !}{(\alpha_v +a -1)! (\alpha_v + k)!}} \sum_{u \in \childn{k}{v}}  \prod_{l=0}^{k-1} \frac{1}{\sqrt{s_{l, u}}}\\ &&\sqrt{\frac{(\alpha_u +a-1)!(\alpha_u -j)!}{(\alpha_u +a - j -1)! \alpha_u !}} ~\prod_{i=0}^{j-1} \frac{1}{\sqrt{s_{i, u}}} e_{\parentn{j}{u}}\\
&=& \beta_{j, k}(v, a) \sum_{u \in \childn{k}{v}} \prod_{l=0}^{k-1} \frac{1}{s_{l, u}} e_{\parentn{j-k}{v}}\\
&=&  \beta_{j,k}(v,a) e_{\parentn{j-k}{v}},
\eeqn
where the last equality follows from \eqref{card-ide}.

Let $E$ be the kernel of $S^*_{\lambda, c_a}$ and let
$f \in E$. Note that $f$ takes the form
$f = f_{\rootb} + \sum_{v \in  V_{\prec}} f_v $, 
where $f_{\rootb} = \gamma e_{\mathsf{root}}$ for some $\gamma \in \mathbb C$ and $f_v = \sum_{u \in \child{v}} f(u)e_u$ such that $\sum_{u \in \child{v}} f(u) \lambda_u = 0~\mbox{for~} v \in V_\prec.$
Since $\lambda_u$ is constant on $\child{v}$, we obtain that \beq\label{sum-0}  \sum_{u \in \child{v}} f(u)  = 0~\mbox{for all~} v \in V_\prec. \eeq 
%Also, note that $\sum_{w \in \child{v}} f(w)\lambda_w = 0$ for all $v \in V_\prec$. Since $\lambda_w$ is constant on siblings, it follows that $\sum_{w \in \child{v}} f(w) = 0$. 
It follows that for $v \in V_\prec$ and $j >k$, 
\beqn
S^{*j}_{\lambda'_{c_a}} S^k_{\lambda'_{c_a}} \sum_{u \in \child{v}} f(u) e_u &=& \sum_{u \in \child{v}} f(u) S^{*j}_{\lambda'_{c_a}} S^k_{ \lambda', c_a} e_u \\
&=& \sum_{u\in \child{v}} f(u) \beta_{j,k}(u, a) e_{\parentn{j-k}{u}}\\
%&=& \beta_{j,k}(v,a) \Big( \sum_{w \in \child{v}} f(w) \Big) e_{\parentn{j-k}{w}} 
&=& 0,
\eeqn
where we used \eqref{sum-0} and the fact that $\beta_{j,k}(u,a)$ is constant on $\child{v}$.
Almost the same calculations show that 
\beqn \label{beta-k-v-a}
S^{*k}_{\lambda'_{c_a}} S^k_{\lambda'_{c_a}} f_v =
\frac{(\alpha_v + k + a)! (\alpha_v +1)!}{(\alpha_v +a)! (\alpha_v + k+1)!}~f_v \quad \mbox{for every~}k \in \mathbb N.
\eeqn
It may now be concluded from \eqref{kappa} that the reproducing kernel $\kappa_{\mathscr H_a}$ takes the required form. Finally, since $E$ is finite dimensional (Corollary \ref{dimE-finite}), we note that for every $w \in \mathbb D$, $\kappa_{\mathscr H_a}(\cdot, w)$ is a sum of finitely many power series in $z$ converging on the unit disc.
\end{proof}

We now classify all torally balanced multishifts. For this, we must understand their moments.
\begin{lemma}
Let $\mathscr T = (V,\mathcal E)$ be the directed Cartesian product of rooted directed trees $\mathscr T_1, \cdots, \mathscr T_d$. Let $S_{\lambdab}$ be a toral left invertible multishift on $\mathscr T$ with commuting toral Cauchy dual $S^{\mf t}_{\lambdab}$  
and let $U_{\thetab}$ be the toral isometry part of $S_{\lambdab}$ governed by \eqref{t-p-d}. If
$S_{\lambdab}$ is torally balanced, 
then, for any $v \in V$ and $\alpha \in \mathbb N^d,$
\beq \label{moment-t}
S^{\alpha}_{\lambdab} e_v = \prod_{j=1}^d \Big(\prod_{k=0}^{\alpha_j-1}c^{(j)}_{|\alpha_v| + \sum_{i=1}^{j-1}\alpha_i + k}\Big)  U^{\alpha}_{\thetab}e_v,
\eeq
where $c^{(j)}_{|\alpha_v|}$ denotes the constant value of $\mf C_j(v)$ for $v \in V$ and $j=1, \cdots, d.$
In this case,
\beq \label{moment-t-1}
\|S^{\alpha}_{\lambdab} e_v\|^2 = \prod_{j=1}^d \Big(\prod_{k=0}^{\alpha_j-1}c^{(j)}_{|\alpha_v| + \sum_{i=1}^{j-1}\alpha_i + k}\Big)^2 ~(v \in V, ~\alpha \in \mathbb N^d).
\eeq
%where the inner most product is understood to be $1$ if $\alpha_j=0$ and the sum $\sum_{i=1}^{j-1}\alpha_i$ is understood to be $0$ if $j=1.$
\end{lemma}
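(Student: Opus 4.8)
The plan is to establish \eqref{moment-t} by induction on $|\alpha|$, exploiting the factorized form $S_j = U_j D_j$ from Proposition \ref{t-p-decom} together with the torally balanced hypothesis; equation \eqref{moment-t-1} then follows at once by taking norms, since each $U^\alpha_{\thetab}$ preserves the norm of basis vectors. The base case $|\alpha|=0$ is trivial: both sides of \eqref{moment-t} equal $e_v$ (the product over the empty index set being $1$, per the conventions fixed in the introduction). For the inductive step, write $\alpha = \gamma + \epsilon_j$ for a suitable $j$ with $\gamma \in \mathbb N^d$, $|\gamma| = |\alpha| - 1$, and use $S^\alpha_{\lambdab} = S_j S^\gamma_{\lambdab}$.

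The key computational input is the interaction of $D_j$ with $U^\gamma_{\thetab} e_v$. First I would observe that $D_j$ is the diagonal operator with $D_j e_w = \mf C_j(w) e_w = c^{(j)}_{|\alpha_w|} e_w$, and that $U^\gamma_{\thetab}e_v$ is (up to a scalar) supported on $\childnt{\gamma}{v}$, so every vertex $w$ in its support has $|\alpha_w| = |\alpha_v| + |\gamma|$. Hence $D_j$ acts as the scalar $c^{(j)}_{|\alpha_v| + |\gamma|}$ on $U^\gamma_{\thetab}e_v$. Therefore
\beqn
S^\alpha_{\lambdab}e_v = S_j S^\gamma_{\lambdab} e_v = U_j D_j \Big(\prod_{i=1}^d \prod_{k=0}^{\gamma_i - 1} c^{(i)}_{|\alpha_v| + \sum_{l=1}^{i-1}\gamma_l + k}\Big) U^\gamma_{\thetab} e_v = \Big(\prod_{i=1}^d \prod_{k=0}^{\gamma_i - 1} c^{(i)}_{\cdots}\Big)\, c^{(j)}_{|\alpha_v| + |\gamma|}\, U_j U^\gamma_{\thetab}e_v,
\eeqn
using the induction hypothesis and the scalar action of $D_j$. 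Since $U_{\thetab}$ is commuting, $U_j U^\gamma_{\thetab} = U^{\gamma + \epsilon_j}_{\thetab} = U^\alpha_{\thetab}$. It then remains to check the bookkeeping: that the old product times the extra factor $c^{(j)}_{|\alpha_v| + |\gamma|}$ re-assembles into the product over $\alpha$ displayed in \eqref{moment-t}. This is a matter of verifying that the newly inserted exponent $|\alpha_v| + |\gamma|$ matches $|\alpha_v| + \sum_{l=1}^{j-1}\alpha_l + (\alpha_j - 1)$ when $\gamma = \alpha - \epsilon_j$; one must be slightly careful about the choice of $j$ since the product in \eqref{moment-t} treats the coordinates in increasing order, so I would take $j$ to be the \emph{largest} index with $\alpha_j > 0$ — then $\sum_{l=1}^{j-1}\alpha_l = \sum_{l \neq j}\alpha_l$ for the relevant terms and the indices line up cleanly, while the inner products for $i < j$ are unaffected and the inner products for $i > j$ are empty (as $\alpha_i = 0$).

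The main obstacle I anticipate is precisely this index-tracking in the nested product: one must confirm that appending the factor $c^{(j)}_{|\alpha_v|+|\gamma|}$ corresponds exactly to extending the inner product $\prod_{k=0}^{\gamma_j - 1}$ to $\prod_{k=0}^{\alpha_j - 1}$ (i.e., adding the single term $k = \alpha_j - 1$) without disturbing the shifts $\sum_{l=1}^{i-1}\alpha_i$ appearing in the other coordinate blocks — which forces the choice of $j$ as the top nonzero coordinate and requires observing that $\sum_{l=1}^{j-1}\gamma_l = \sum_{l=1}^{j-1}\alpha_l$ for that $j$. Everything else — the diagonal action of $D_j$, the norm-preservation of $U^\alpha_{\thetab}$ on $e_v$, the commutativity of $U_{\thetab}$ inherited from that of $S^{\mf t}_{\lambdab}$ via Proposition \ref{t-p-decom} — is routine. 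Taking squared norms of \eqref{moment-t} and using $\|U^\alpha_{\thetab}e_v\| = 1$ (a consequence of $U_{\thetab}$ being a toral isometry, hence each $U_j$ an isometry, so $U^\alpha_{\thetab}$ is an isometry) yields \eqref{moment-t-1}.
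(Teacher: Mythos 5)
Your proposal is correct and follows essentially the same route as the paper: the paper likewise peels off one factor of $S_j$ at a time, using the computation $S_je_w = \mf C_j(w)\,U_je_w = c^{(j)}_{|\alpha_w|}U_je_w$ on each generation together with the commutativity of $U_{\thetab}$, and then deduces \eqref{moment-t-1} from the fact that $U_{\thetab}$ is a toral isometry. The only organizational difference is that the paper runs a nested induction (first on powers of a single coordinate $S_j$, then coordinate by coordinate in increasing order, which makes the nested product assemble automatically), whereas you run a single induction on $|\alpha|$ and handle the same bookkeeping by peeling off the largest nonzero coordinate — both are valid and rest on the identical key facts.
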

\begin{proof}
Assume that $S_{\lambdab}$ is torally balanced.
Let us first verify $$S^n_je_v = \Big(\prod_{k=0}^{n-1}c^{(j)}_{|\alpha_v| + k}\Big)U^n_je_v~(n \geqslant  1, ~j=1, \cdots, d)$$  by induction on integers $n \geqslant  1.$ Note that
\beq \label{eq-t-b}
S_j e_v &=& \sum_{w \in \childi{j}{v}}\lambda^{(j)}_w e_w = \sum_{w \in \childi{j}{v}}\frac{\lambda^{(j)}_w}{\mf C_{j}({\parenti{j}{w}})}\mf C_{j}({\parenti{j}{w}})  e_w \nonumber \\ 
&\overset{\eqref{t-theta}}=& \mf C_{j}(v) \sum_{w \in \childi{j}{v}}{\theta^{(j)}_w}  e_w = 
c^{(j)}_{|\alpha_v|} U_j e_v.
\eeq
This verifies the induction statement for $n=1.$ Assume the induction hypothesis for $n \geqslant  1,$ and consider
\beqn
S^{n+1}_je_v &=& \prod_{k=0}^{n-1}c^{(j)}_{|\alpha_v| + k}S_jU^n_je_v = \prod_{k=0}^{n-1}c^{(j)}_{|\alpha_v| + k }S_j \sum_{w \in \childki{j}{n}{v}}{\theta^{(j)}_w \cdots \theta^{(j)}_{\parentki{j}{n-1}{w}}}  e_w \\ &\overset{\eqref{eq-t-b}}=& \prod_{k=0}^{n-1}c^{(j)}_{|\alpha_v| + k} \sum_{w \in \childki{j}{n}{v}}{\theta^{(j)}_w \cdots \theta^{(j)}_{\parentki{j}{n-1}{w}}}  c^{(j)}_{|\alpha_w|} U_j e_w \\
&=& \prod_{k=0}^{n}c^{(j)}_{|\alpha_v| + k} \sum_{w \in \childki{j}{n}{v}}{\theta^{(j)}_w \cdots \theta^{(j)}_{\parentki{j}{n-1}{w}}}  U_j e_w ~ (\mbox{since}~\alpha_w = \alpha_v + n \epsilon_j) \\
&=& \Big(\prod_{k=0}^{n}c^{(j)}_{|\alpha_v| + k}\Big)U^{n+1}_je_v.
\eeqn 
This completes the inductive argument. A similar inductive argument on $m \geqslant  1$ together with the commutativity of $S^{\mf t}_{\lambdab}$ yields 
\beqn
S^m_iS^n_je_v = \prod_{k=0}^{m-1}c^{(i)}_{|\alpha_v| + k} \prod_{k=0}^{n-1}c^{(j)}_{|\alpha_v| + m + k}U^m_iU^n_je_v
\eeqn
for $m, n \in \mathbb N$ and $1 \leqslant i, j \leqslant d$.
The desired conclusion may now be deduced from the above identity by a finite inductive argument. Finally, we note that \eqref{moment-t-1} follows from \eqref{moment-t} and the fact that $U_{\thetab}$ is a toral isometry.
\end{proof}

\begin{definition}
Let ${\bf c}:=\{c(t, j) : t \in \mathbb N, ~j=1, \cdots, d\}$ be a bounded multisequence of positive real numbers such that \eqref{c(t, i)} holds.
For $s \in \mathbb N$, set
\beqn
\gamma_{\alpha, s} :=\prod_{j=1}^d 
\prod_{k=0}^{\alpha_j-1}c\big({s + \alpha_1 + \cdots + \alpha_{j-1} + k }, j\big)~(\alpha \in \mathbb N^d).
\eeqn
We refer to the Hilbert space of formal series $H^2(\gamma_s)$ as {\it the Hilbert space associated with $\bf c$}. 
\end{definition}
\begin{remark}
Note that the multiplication $d$-tuple on $H^2(\gamma_s)$ is a commuting $d$-tuple of bounded linear operators $M_{z_1}, \cdots, M_{z_d}$.
\end{remark}

We are now in a position to present the main result of this section.
\begin{theorem}
Let $\mathscr T = (V,\mathcal E)$ be the directed Cartesian product of rooted directed trees $\mathscr T_1, \cdots, \mathscr T_d$. For $v \in V$, 
let $$f = \sum_{\beta \in \mathbb N^d}a_\beta S^{\beta}_{\lambdab} e_v \in l^2(V), \quad \tilde{f}(w)=\sum_{\beta \in \mathbb N^d}a_\beta w^{\beta}.$$
Let $S_{\lambdab}$ be a toral left invertible multishift on $\mathscr T$ with commuting toral Cauchy dual $S^{\mf t}_{\lambdab}$. 
Then $S_{\lambdab}$ is a torally balanced multishift on $\mathscr T$ if and only if for every $v \in V,$ there exists a Hilbert space $H^2(\gamma_{|\alpha_v|})$ of formal power series in the variables $w_1, \cdots, w_d$ associated with a bounded multisequence $\bf c$ such that $$\|f\|_{l^2(V)} = \|\tilde{f}\|_{H^2(\gamma_{|\alpha_v|})}.$$ 
%where
%\beqn
%\gamma_{\alpha, v} :=\prod_{j=1}^d 
%\Big(\prod_{k=0}^{\alpha_j-1}c^{(j)}_{|\alpha_v|+ \sum_{i=1}^{j-1}\alpha_i + k }\Big)
%\eeqn
%for a bounded multi-sequence $\{c^{(j)}_t : {t \in \mathbb N}, ~j=1, \cdots, d\}$.
\end{theorem}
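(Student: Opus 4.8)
The plan is to establish the two directions using the moment formula \eqref{moment-t-1} together with the orthogonality relations from Proposition \ref{shift-prop}(ix)--(x). First I would fix $v \in V$ and set $s := |\alpha_v|$. For the forward implication, assume $S_{\lambdab}$ is torally balanced. Since $\mf C_j$ is constant on generations, the values $c^{(j)}_t := \mf C_j(v)^2$ for $|\alpha_v| = t$ are well-defined; I would first verify that $\{(c^{(j)}_t)^{1/2}\}$ — or rather the squares $c(t,j) := (c^{(j)}_t)^2$, after renormalizing to match the definition of $\lambdab_{\bf c}$ — satisfy the compatibility condition \eqref{c(t, i)}. This follows from the commutativity of $S^{\mf t}_{\lambdab}$: by Proposition \ref{shift-prop}(i) applied to $S^{\mf t}_{\lambdab}$ (equivalently, equation \eqref{t-commuting} in Proposition \ref{t-p-decom}), the quantities $\|S_j e_v\|$ and $\|S_i e_v\|$ at adjacent generations satisfy exactly the multiplicative cocycle relation \eqref{c(t, i)} once we pass to values depending only on $|\alpha_v|$; boundedness of the multisequence is immediate from Lemma \ref{bddness}(i). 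With $\bf c$ in hand, I would invoke the preceding lemma's formula \eqref{moment-t-1}, which reads
\beqn
\|S^{\alpha}_{\lambdab} e_v\|^2 = \prod_{j=1}^d \Big(\prod_{k=0}^{\alpha_j-1}c^{(j)}_{s + \alpha_1 + \cdots + \alpha_{j-1} + k}\Big)^2,
\eeqn
and observe this is precisely $\gamma_{\alpha, s}^2$ for the multisequence $\bf c$ (up to the bookkeeping of which power of $c$ enters — this is the one routine point to check against the two definitions, since the lemma uses $c^{(j)}$ while the definition of $\gamma_{\alpha,s}$ uses $c(\cdot,j)$; they must be reconciled by taking $c(t,j) = c^{(j)}_t$ or its square consistently). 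Then for $f = \sum_{\beta} a_\beta S^{\beta}_{\lambdab}e_v$, by Proposition \ref{shift-prop}(ix) the vectors $\{S^{\beta}_{\lambdab}e_v\}_{\beta \in \mathbb N^d}$ are mutually orthogonal, so
\beqn
\|f\|^2_{l^2(V)} = \sum_{\beta \in \mathbb N^d}|a_\beta|^2 \|S^{\beta}_{\lambdab}e_v\|^2 = \sum_{\beta \in \mathbb N^d}|a_\beta|^2 \gamma^2_{\beta, s} = \|\tilde f\|^2_{H^2(\gamma_s)},
\eeqn
which is the desired isometry with $H^2(\gamma_{|\alpha_v|})$.

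For the converse, suppose for each $v$ there is such a Hilbert space $H^2(\gamma_{|\alpha_v|})$ with $\|f\|_{l^2(V)} = \|\tilde f\|_{H^2(\gamma_{|\alpha_v|})}$. Taking $f = S^{\beta}_{\lambdab}e_v$ (so $\tilde f(w) = w^\beta$) gives $\|S^{\beta}_{\lambdab}e_v\|^2 = \gamma^2_{\beta, |\alpha_v|}$ for all $\beta$. Specializing $\beta = \epsilon_j$ yields $\|S_j e_v\|^2 = \gamma^2_{\epsilon_j, |\alpha_v|} = c(|\alpha_v|, j)$, which depends only on $|\alpha_v|$. Hence $\mf C_j$ is constant on each generation $\mathcal G_t$, i.e. $S_{\lambdab}$ is torally balanced. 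One should note here that for this direction we do not even need to know $\bf c$ satisfies \eqref{c(t,i)} a priori — it is forced, since the consistency of the family $\{H^2(\gamma_{|\alpha_v|})\}_v$ across different base vertices (a vertex $v$ and a child $w \in \childi{i}{v}$ both give compatible moment data) propagates the cocycle identity; but strictly for the stated equivalence only the generation-constancy is needed.

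The main obstacle I anticipate is purely notational bookkeeping rather than conceptual: carefully matching the index conventions between \eqref{moment-t-1} (which stacks the $c^{(j)}$ factors in the order $j = 1, \dots, d$ with shifts $\sum_{i<j}\alpha_i$) and the definition of $\gamma_{\alpha,s}$ (same stacking but written in terms of $c(\cdot, j)$), and making sure the normalization in the family $S_{\lambdab_{\bf c}}$ — where $\lambda^{(j)}_w = \sqrt{c(|\alpha_v|,j)/\mathrm{card}(\childi{j}{v})}$ — produces moments that agree on the nose with $\gamma_{\alpha,s}^2$. Once the dictionary $c(t,j) \leftrightarrow \|S_j e_v\|^2$ for $|\alpha_v| = t$ is pinned down, both directions are immediate from \eqref{moment-t-1} and the orthogonality in Proposition \ref{shift-prop}(ix). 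I would also remark that the role of $v$ varying over $V$ is essential: unlike the classical case (a single base point $e_0$ suffices), here each vertex generates its own formal-power-series model $H^2(\gamma_{|\alpha_v|})$, and these differ precisely by the shift $s = |\alpha_v|$ in the multisequence $\gamma_{\cdot, s}$, reflecting that the cyclic subspace structure of $S_{\lambdab}$ is richer.
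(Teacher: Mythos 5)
Your proposal is correct and follows essentially the same route as the paper: the forward direction via the moment formula \eqref{moment-t-1} combined with the orthogonality of $\{S^{\beta}_{\lambdab}e_v\}_{\beta}$ from Proposition \ref{shift-prop}(ix), and the converse by specializing to $f=S_je_v$, $\tilde f = w_j$. The only thing to pin down is the normalization you flag: the paper takes $c(t,j):=c^{(j)}_t$ (no squares), so that $\|S^{\alpha}_{\lambdab}e_v\|=\gamma_{\alpha,|\alpha_v|}$ on the nose, and your cocycle observation that \eqref{c(t, i)} follows from \eqref{t-commuting} is a point the paper leaves implicit.
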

\begin{proof}
Suppose that $S_{\lambdab}$ is a torally balanced multishift.
Set $c(t, j):=c^{(j)}_t$ for $t \in \mathbb N$ and $j=1, \cdots, d,$
where $c^{(j)}_{t}$ denotes the constant value of $\mf C_j(v)$ on the generation $\mathcal G_t$.
Consider the Hilbert space $H^2(\gamma_{|\alpha_v|})$ of formal power series in $w_1, \cdots, w_d$.
By taking norms on both sides of \eqref{moment-t}, we obtain for every $\alpha \in \mathbb N^d,$
\beq \label{t-b-mono}
\|S^{\alpha}_{\lambdab} e_v\|_{l^2(V)} &=& \prod_{j=1}^d \Big(\prod_{k=0}^{\alpha_j-1}c^{(j)}_{|\alpha_v| + \sum_{i=1}^{j-1}\alpha_i + k}\Big)  \|U^{\alpha}_{\thetab}e_v\|_{l^2(V)} \notag \\ &=& \gamma_{\alpha, v} = \|w^{\alpha}\|_{H^2(\gamma_{|\alpha_v|})},
\eeq 
where we used that $U_{\theta}$ is a toral isometry.
By orthogonality of $\{S^{\alpha}_{\lambdab}e_v\}_{\alpha \in \mathbb N^d}$ (Proposition \ref{shift-prop}(ix)), the above formula holds for all pairs $f$ and $\tilde{f}.$ To see the converse, 
let $f=S_je_v$ and $\tilde{f}=w_j$ in $\|f\|_{l^2(V)} = \|\tilde{f}\|_{H^2(\gamma_{|\alpha_v|})}$ to obtain
 $\|S_je_v\|= c({|\alpha_v|}, j)$, which is clearly constant on $\mathcal G_{|\alpha_v|}.$ 
\end{proof}

%Recall that a bounded linear operator $T$ on $\mathcal H$ is a { contraction} if $T^*T \leqslant I.$

Here we present a local analog of von Neumann's inequality for torally balanced multishifts. 
\begin{corollary}
Let $\mathscr T = (V,\mathcal E)$ be the directed Cartesian product of rooted directed trees $\mathscr T_1, \cdots, \mathscr T_d$ and let $S_{\lambdab}$ be a toral left invertible, torally balanced multishift with commuting toral Cauchy $S^{\mf t}_{\lambdab}$. If $S_{\lambdab}$ is a toral contraction, then for any positive integer $k$ and scalars $a_{\beta}, ~|\beta| \leqslant k$, 
\beqn
\sup_{v \in V}\Big \|\sum_{|\beta| \leqslant k}a_\beta S^{\beta}_{\lambdab}e_v\Big\| \leqslant 
\sup_{z \in \mathbb D^d}\Big|\sum_{|\beta| \leqslant k}a_\beta z^{\beta}\Big|.
\eeqn
\end{corollary}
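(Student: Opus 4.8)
The plan is to reduce the statement for $S_{\lambdab}$ to a classical von Neumann-type inequality on the Hardy space of the polydisc, using the machinery built up in the preceding results. First I would invoke the previous theorem, which shows that for a toral left invertible torally balanced multishift $S_{\lambdab}$ with commuting toral Cauchy dual, there is for every $v \in V$ an isometric identification between the cyclic subspace $\bigvee_{\beta \in \mathbb N^d} S^{\beta}_{\lambdab} e_v$ and a Hilbert space $H^2(\gamma_{|\alpha_v|})$ of formal power series, under which $\sum_{\beta} a_{\beta} S^{\beta}_{\lambdab} e_v \longmapsto \sum_{\beta} a_{\beta} w^{\beta}$. Hence
\beqn
\Big\|\sum_{|\beta| \leqslant k}a_\beta S^{\beta}_{\lambdab}e_v\Big\|_{l^2(V)} = \Big\|\sum_{|\beta| \leqslant k}a_\beta w^{\beta}\Big\|_{H^2(\gamma_{|\alpha_v|})},
\eeqn
so it suffices to bound the right-hand side, uniformly in $v$, by $\sup_{z \in \mathbb D^d}\big|\sum_{|\beta| \leqslant k}a_\beta z^{\beta}\big|$.

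Next I would analyze the moments $\gamma_{\alpha, s}$. Writing $c_{a,b}$-style sequences aside, the key point is that the toral contraction hypothesis $Q_{S_j}(I) \leqslant I$ translates, via the formulas of Proposition \ref{shift-prop}, into $\mf C_j(v) = \|S_j e_v\| \leqslant 1$ for all $v$ and $j$, i.e. $c(t,j) \leqslant 1$ for all $t \in \mathbb N$ and $j = 1, \cdots, d$. Consequently each factor $\prod_{k=0}^{\alpha_j - 1} c(s + \alpha_1 + \cdots + \alpha_{j-1} + k, j)$ defining $\gamma_{\alpha, s}$ lies in $(0,1]$, so $\gamma_{\alpha, s} \leqslant 1 = \|w^{\alpha}\|_{H^2(\mathbb D^d)}$ for every $\alpha \in \mathbb N^d$ and every $s \in \mathbb N$; here $H^2(\mathbb D^d)$ denotes the Hardy space of the polydisc, whose monomials form an orthonormal basis. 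Thus the inclusion $H^2(\mathbb D^d) \hookrightarrow H^2(\gamma_s)$, acting as the identity on power series, is a contraction for every $s$, and in fact is a contractive intertwiner of the coordinate multiplication tuples $M_z$.

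From there I would finish by the standard argument: the polynomial $p(z) = \sum_{|\beta| \leqslant k} a_{\beta} z^{\beta}$, viewed as an element of $H^2(\gamma_s)$, equals $p(M_z)\, 1$, and since the inclusion is a contractive $M_z$-module map sending the constant function $1$ of $H^2(\mathbb D^d)$ to the constant function $1$ of $H^2(\gamma_s)$, we get $\|p\|_{H^2(\gamma_s)} = \|p(M_z)\, 1\|_{H^2(\gamma_s)} \leqslant \|p(M_z)\, 1\|_{H^2(\mathbb D^d)} = \|p\|_{H^2(\mathbb D^d)} \leqslant \|p\|_{\infty, \mathbb D^d}$, the last step being the classical von Neumann inequality for the polydisc (the coordinate multiplications on $H^2(\mathbb D^d)$ form a commuting tuple of contractions with a nice functional calculus, or one simply uses $\|p\|_{H^2(\mathbb D^d)} \leqslant \sup_{z \in \mathbb T^d}|p(z)| = \sup_{z \in \mathbb D^d}|p(z)|$ directly via the $L^2$-norm on the torus). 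Taking the supremum over $v \in V$ — equivalently over $s = |\alpha_v| \in \mathbb N$ — on the left and noting the right-hand bound is independent of $v$ yields the claim.

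The only genuinely delicate point, and the step I would spell out most carefully, is the comparison of norms $\gamma_{\alpha, s} \leqslant 1$: one must be sure that \emph{toral} contractivity (not joint contractivity) is exactly what forces every factor $c(t,j) \leqslant 1$, and that the telescoping product structure of $\gamma_{\alpha, s}$ means no cancellation or growth can occur. Everything else is a routine application of the isometric model already established and the classical polydisc inequality; there is no new combinatorics on the tree beyond what Proposition \ref{shift-prop} and the torally balanced moment formula \eqref{moment-t-1} supply.
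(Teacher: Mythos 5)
Your proof is correct, and it takes a genuinely different route from the paper's at the final step. Both arguments begin identically, by invoking the preceding classification theorem to write $\big\|\sum_{|\beta|\leqslant k}a_\beta S^{\beta}_{\lambdab}e_v\big\|_{l^2(V)}=\big\|\sum_{|\beta|\leqslant k}a_\beta w^{\beta}\big\|_{H^2(\gamma_{|\alpha_v|})}$. The paper then estimates this as $\big\|\sum a_\beta M_w^{\beta}1\big\|\leqslant \big\|\sum a_\beta M_w^{\beta}\big\|\,\|1\|$ and must control the \emph{operator} norm of the polynomial in the contractive classical multishift $M_w$; since von Neumann's inequality fails for general commuting contractions in $d\geqslant 3$ variables, this forces an appeal to Hartz's theorem (von Neumann's inequality for commuting contractive weighted shifts). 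You instead bound only the \emph{vector} norm directly: the toral contraction hypothesis gives $c(t,j)=\|S_je_v\|\leqslant 1$, hence $\gamma_{\beta,s}\leqslant 1$ by the telescoping product structure, hence $\big\|\sum a_\beta w^{\beta}\big\|^2_{H^2(\gamma_s)}=\sum|a_\beta|^2\gamma_{\beta,s}^2\leqslant\sum|a_\beta|^2=\big\|\sum a_\beta z^{\beta}\big\|^2_{L^2(\mathbb T^d)}\leqslant\sup_{\mathbb T^d}\big|\sum a_\beta z^{\beta}\big|^2$, and the maximum modulus principle finishes. This is strictly more elementary and self-contained — it bypasses the deep external input entirely, at the cost of not producing the (stronger, but unneeded) operator-norm estimate that the paper's route passes through. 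The one point you flag as delicate is indeed the crux and you handle it correctly: $Q_{S_j}(I)=S_j^*S_j\leqslant I$ yields $\mathfrak C_j(v)=\|S_je_v\|\leqslant 1$ for every $j$ and $v$ separately, which is exactly what makes every factor of $\gamma_{\beta,s}$ at most $1$.
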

\begin{proof}
Assume that $S_{\lambdab}$ is a toral contraction.
Fix $v \in V.$ By the preceding theorem, 
there exists a Hilbert space $H^2(\gamma_{|\alpha_v|})$ of formal power series in the variables $w_1, \cdots, w_d$ such that $$\Big\|\sum_{|\beta| \leqslant k}a_\beta S^{\beta}_{\lambdab}e_v\Big\|_{l^2(V)} = \Big\|\sum_{|\beta| \leqslant k}a_\beta w^{\beta}\Big\|_{H^2(\gamma_{|\alpha_v|})}=\Big\|\sum_{|\beta| \leqslant k}a_\beta M^{\beta}_w1\Big\|_{H^2(\gamma_{|\alpha_v|})},$$
where $M_w$ denotes the $d$-tuple of operators of multiplication by the coordinate functions $w_1, \cdots, w_d$. However, since $S_{\lambdab}$ is a toral contraction, by \eqref{t-b-mono}, so is $M_{w}.$
It follows from $\|1\|_{H^2(\gamma_{|\alpha_v|})}=1$ that 
$$\Big\|\sum_{|\beta| \leqslant k}a_\beta S^{\beta}_{\lambdab}e_v\Big\|_{l^2(V)} \leqslant \Big\|\sum_{|\beta| \leqslant k}a_\beta M^{\beta}_w\Big\|\|1\|_{H^2(\gamma_{|\alpha_v|})}=\Big\|\sum_{|\beta| \leqslant k}a_\beta M^{\beta}_w\Big\|.$$ By a result of M. Hartz \cite[Theorem 1.1]{Hz},
von Neumann's inequality holds for any torally contractive classical multishift. Hence $$\Big\|\sum_{|\beta| \leqslant k}a_\beta S^{\beta}_{\lambdab}e_v\Big\|_{l^2(V)} \leqslant \sup_{z \in \mathbb D^d}\Big|\sum_{|\beta| \leqslant k}a_\beta z^{\beta}\Big|.$$ Taking supremum over $v \in V$ on the left hand side,  we get the desired inequality.
\end{proof}

\section{Spherically Balanced Multishifts}

\begin{definition}
Let $\mathscr T = (V,\mathcal E)$ be the directed Cartesian product of rooted directed trees $\mathscr T_1, \cdots, \mathscr T_d$ and
let $S_{\lambdab}=(S_1, \cdots, S_d)$ be a commuting multishift on $\mathscr T$. 
Define the function $\mf C : V \rar (0, \infty)$ by
\beq \label{constant-gen} \mf C(v) := \sum_{j=1}^d \|S_j e_v\|^2~\mbox{for~}v \in V. \eeq
We say that $S_{\lambdab}$ is {\it spherically balanced} if 
$\mf C$ is \index{$\mf C$}
constant on every generation $\mathcal G_t$, $t \in  \mathbb N$. 
We denote the constant value of $\mf C(v)$ 
\index{$\mf C_{|\alpha_v|}$}
by $\mf C_{|\alpha_v|}$, where $\alpha_v$ is the depth of $v$ in $\mathscr T$.
%where
%$$\mathcal G_t := \big\{v \in V : v \in \childn{t}{\rootb}\big\}.$$
\end{definition}
\begin{remark}
Note that $S_{\lambdab}$ is a joint isometry if and only if 
\beqn \sum_{j=1}^d \|S_je_v\|^2 = \sum_{j=1}^d \sum_{w \in \childi{j}{v}} (\lambda_{w}^{(j)})^2 = 1~\mbox{for all~}v \in V.\eeqn
It is now clear that every joint isometry multishift is spherically balanced with $\mf C$ being the constant function $1.$ 
\end{remark}

The following proposition yields examples of spherically balanced multishifts apart from joint isometries.

\begin{proposition} 
Let $\mathscr T = (V,\mathcal E)$ be the directed Cartesian product of locally finite rooted directed trees $\mathscr T_1, \cdots, \mathscr T_d$ and let $\{c_{t}\}_{t \in \mathbb N}$ be a bounded sequence of positive real numbers. 
Consider the multishift \index{$S_{\lambdab_{\mf C}}$}
$S_{\lambdab_{\mf C}}=(S_1, \cdots, S_d)$ with weights
\beqn
\lambda^{(i)}_w = \sqrt{\frac{c_{|\alpha_v|}}{{\mbox{card}(\childi{i}{v})}}} \sqrt{\frac{\alpha_{v_i}  + 1}{|\alpha_v| + d}}~\mbox{for~}w \in \childi{i}{v},~v \in V\mbox{~and~}i=1, \cdots, d.
\eeqn
Then $S_{\lambdab_{\mf C}}$ defines a spherically balanced multishift. In case $c_t=1$ for all $t \in \mathbb N,$ $S_{\lambdab_{\mf C}}$ is a joint isometry.
\end{proposition}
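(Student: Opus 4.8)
The plan is to verify two things: first, that the prescribed weights $\lambda^{(i)}_w$ actually give a commuting $d$-tuple of bounded operators, and second, that the quantity $\mf C(v) = \sum_{j=1}^d \|S_j e_v\|^2$ depends only on $|\alpha_v|$. Boundedness is immediate from Lemma \ref{bddness}(i), since $\{c_t\}$ is bounded and each factor $\frac{\alpha_{v_i}+1}{|\alpha_v|+d}$ lies in $(0,1]$, so $\sup_{v \in V}\sum_{w \in \childi{i}{v}}|\lambda^{(i)}_w|^2 = \sup_{v \in V} c_{|\alpha_v|}\frac{\alpha_{v_i}+1}{|\alpha_v|+d} < \infty$. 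For commutativity I would apply Proposition \ref{shift-prop}(i): one must check $\lambda^{(j)}_u \lambda^{(i)}_{\parenti{j}{u}} = \lambda^{(i)}_u \lambda^{(j)}_{\parenti{i}{u}}$ for all $u \in \mathsf{Chi}_j\childi{i}{v}$. Writing out both sides, the depth bookkeeping gives $\alpha_u = \alpha_v + \epsilon_i + \epsilon_j$ (for $i \neq j$), and the cardinality factors match by Proposition \ref{sib-id} exactly as in the proof for $S_{\lambdab_{\bf c}}$; the remaining scalar factors reduce to the symmetric identity $(\alpha_{v_i}+2)(\alpha_{v_j}+1) = (\alpha_{v_j}+2)(\alpha_{v_i}+1)$ up to the common denominators, wait — more precisely one compares $\sqrt{\tfrac{\alpha_{u_j}+1}{|\alpha_u|+d}}\sqrt{\tfrac{\alpha_{(\parenti{j}{u})_i}+1}{|\alpha_{\parenti{j}{u}}|+d}}$ against the $i\leftrightarrow j$ swapped product, and since $\alpha_{u_j} = \alpha_{v_j}+1$, $\alpha_{(\parenti{j}{u})_i} = \alpha_{v_i}+1$, $|\alpha_{\parenti{j}{u}}| = |\alpha_v|+1$, both sides equal $\sqrt{\tfrac{(\alpha_{v_j}+1)(\alpha_{v_i}+1)}{(|\alpha_v|+d+1)(|\alpha_v|+d+2)}}$ (using $c_{|\alpha_u|-1}c_{|\alpha_u|-2}$ on each side symmetrically). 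So commutativity holds. The case $i=j$ is trivial.

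Next I would compute $\mf C(v)$. By definition and the orthogonality of $\{e_w\}_{w \in \childi{i}{v}}$ (Lemma \ref{disjoint}(ii)),
\[
\|S_i e_v\|^2 = \sum_{w \in \childi{i}{v}} (\lambda^{(i)}_w)^2 = \sum_{w \in \childi{i}{v}} \frac{c_{|\alpha_v|}}{\mbox{card}(\childi{i}{v})}\cdot \frac{\alpha_{v_i}+1}{|\alpha_v|+d} = c_{|\alpha_v|}\cdot \frac{\alpha_{v_i}+1}{|\alpha_v|+d}.
\]
Summing over $i = 1, \dots, d$ and using $\sum_{i=1}^d (\alpha_{v_i}+1) = |\alpha_v| + d$ (recall $\alpha_v = (\alpha_{v_1},\dots,\alpha_{v_d})$ and $|\alpha_v| = \sum_i \alpha_{v_i}$), one gets
\[
\mf C(v) = \sum_{i=1}^d c_{|\alpha_v|}\cdot \frac{\alpha_{v_i}+1}{|\alpha_v|+d} = c_{|\alpha_v|}\cdot \frac{|\alpha_v|+d}{|\alpha_v|+d} = c_{|\alpha_v|}.
\]
Since $c_{|\alpha_v|}$ depends only on $|\alpha_v|$, the function $\mf C$ is constant on each generation $\mathcal G_t$ (with value $c_t$), so $S_{\lambdab_{\mf C}}$ is spherically balanced, with $\mf C_t = c_t$.

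Finally, for the last assertion: if $c_t = 1$ for all $t$, then $\mf C(v) = 1$ for every $v \in V$, i.e. $\sum_{j=1}^d \|S_j e_v\|^2 = 1$ for all $v$, which by the remark preceding the proposition (and the computation $Q_{S_{\lambdab_{\mf C}}}(I)e_v = \sum_j S_j^* S_j e_v = \mf C(v) e_v$, using orthonormality of $\{e_v\}$ and Proposition \ref{shift-prop}) means $Q_{S_{\lambdab_{\mf C}}}(I) = I$, i.e. $S_{\lambdab_{\mf C}}$ is a joint isometry.

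I do not expect any genuine obstacle here; the only mildly delicate point is the commutativity verification, where one must carefully track that the cardinality factors $\mbox{card}(\childi{i}{v})$ transform correctly under $\parenti{j}{\cdot}$ — this is precisely what Proposition \ref{sib-id} is designed to handle, so invoking it cleanly (rather than recomputing) is the sensible route. Everything else is a direct substitution.
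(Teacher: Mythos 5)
Your proposal is correct and follows essentially the same route as the paper: boundedness from Lemma \ref{bddness}(i), commutativity via Proposition \ref{shift-prop}(i) with the cardinality factors handled by Proposition \ref{sib-id}, and the direct computation $\mf C(v)=c_{|\alpha_v|}\sum_{i}\frac{\alpha_{v_i}+1}{|\alpha_v|+d}=c_{|\alpha_v|}$. One cosmetic slip: the common value of the two products in the commutativity check has denominator $(|\alpha_v|+d)(|\alpha_v|+d+1)$ rather than $(|\alpha_v|+d+1)(|\alpha_v|+d+2)$, but since your argument only uses that the expression is symmetric in $i$ and $j$, nothing is affected.
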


\begin{proof}
One may conclude from Lemma \ref{bddness}(i) that $S_1, \cdots, S_d$ are bounded linear operators on $l^2(V)$ whenever the sequence $\{c_t\}_{t \in \mathbb N}$ is bounded.
Let $v \in V$ and $i, j =1, \cdots, d.$
First we check the commutativity of $S_{\lambdab_{\mf C}}$. 
Note that for $w \in \childi{i}{v}$, $\lambda^{(i)}_w$ can be rewritten as $$\lambda^{(i)}_w = \sqrt{\frac{c_{|\alpha_w|-1}}{{\mbox{card}(\mathsf{sib}_i(w))}}} \sqrt{\frac{\alpha_{w_i} }{|\alpha_w| + d-1}}.$$ Now for $u \in \mathsf{Chi}_i \childi{j}{v}$, we have
\beqn
\lambda^{(i)}_u \lambda^{(j)}_{\parenti{i}{u}} &=& \sqrt{\frac{c_{|\alpha_u|-1}}{{\mbox{card}(\mathsf{sib}_i(u))}}} \sqrt{\frac{\alpha_{u_i} }{|\alpha_u| + d-1}} \\ & \times & \sqrt{\frac{c_{|\alpha_u|-2}}{{\mbox{card}(\mathsf{sib}_j(\parenti{i}{u}))}}} \sqrt{\frac{\alpha_{\parenti{i}{u}_j} }{|\alpha_{\parenti{i}{u}}| + d-1}},\\
\lambda^{(j)}_u \lambda^{(i)}_{\parenti{j}{u}} &=& \sqrt{\frac{c_{|\alpha_u|-1}}{{\mbox{card}(\mathsf{sib}_j(u))}}} \sqrt{\frac{\alpha_{u_j}}{|\alpha_u| + d-1}} \\ & \times & \sqrt{\frac{c_{|\alpha_u|-2}}{{\mbox{card}(\mathsf{sib}_i(\parenti{j}{u}))}}} \sqrt{\frac{\alpha_{\parenti{j}{u}_i}}{|\alpha_{\parenti{j}{u}}| + d-1}}.
\eeqn
Since $\alpha_{u_i} =\alpha_{\parenti{j}{u}_i}$ for $i \neq j$, by \eqref{sib-1}, $S_{\lambdab_{\mf C}}$ is commuting. Note further that
\beqn
\sum_{j=1}^d \|S_j e_v\|^2 &=& \sum_{j=1}^d \sum_{w \in \childi{j}{v}} (\lambda^{(j)}_w)^2 = \sum_{j=1}^d \sum_{w \in \childi{j}{v}} \frac{c_{|\alpha_v|}}{{\mbox{card}(\childi{j}{v})}} {\frac{\alpha_{v_j}  + 1}{|\alpha_v| + d}} \\
&=& c_{|\alpha_v|}\sum_{j=1}^d {\frac{\alpha_{v_j}  + 1}{|\alpha_v| + d}} = c_{|\alpha_v|}, 
\eeqn
which is a function of $|\alpha_v|$. Thus $S_{\lambdab_{\mf C}}$ is spherically balanced. The above calculation also shows that $S_{\lambdab_{\mf C}}$ is joint isometry if and only if $c_t=1$ for all $t \in \mathbb N$.
\end{proof}
\begin{remark}
The choice $c_{t}=\frac{t+d}{t+1}\frac{t+2}{t+1}~(t \in \mathbb N)$ with $\mathscr T$ being the $d$-fold directed Cartesian product of $\mathscr T_{1, 0}$ with itself yields the Dirichlet $d$-shift on the unit ball \cite[Example 1]{GHX}. 
\end{remark}

We discuss below a family of examples of spherically balanced multishifts, which is a tree analog of the multiplication $d$-tuples on the reproducing kernel Hilbert spaces associated with the reproducing kernels $\frac{1}{(1 -\inp{z}{w})^a}$ defined on the unit ball in $\mathbb C^d,$ where $a$ is a positive number.
%(cf. \cite[Theorem 2.1]{CY}). 

\begin{example} \label{ex-sp-bal}
Let $\mathscr T = (V,\mathcal E)$ be the directed Cartesian product of locally finite rooted directed trees $\mathscr T_1, \cdots, \mathscr T_d$.
For a positive real number $a,$ consider the sequence $\{c_{a, t}\}_{t \in \mathbb N}$ given by $$c_{a, t}=\frac{t+d}{t+a}~(t \in \mathbb N).$$ 
Then the multishift $S_{\lambdab_{\mf C_a}}$ 
\index{$S_{\lambdab_{\mf C_a}}$}
on $\mathscr T$ is spherically balanced. In case $\mathscr T$ is the $d$-fold directed Cartesian product of $\mathscr T_{1, 0}$ with itself then the choices $a=d$, $a=d+1$, $a=1$ yield Szeg$\ddot{\mbox{o}}$ $d$-shift, Bergman $d$-shift, Drury-Arveson $d$-shift respectively on the unit ball (see Example \ref{1.1.1}). 
\end{example}

We refer to the multishifts $S_{\lambdab_{\mf C_a}}$ on $\mathscr T$ as the {\it tree analog of  Szeg$\ddot{\mbox{o}}$ $d$-shift, Bergman $d$-shift, Drury-Arveson $d$-shift} respectively in case $a=d, a=d+1, a=1$. It is worth noting that  $S_{\lambdab_{\mf C_d}}$ and $S_{\lambdab_{\mf C_{d+1}}}$ are joint contractions while $S_{\lambdab_{\mf C_1}}$ is a {\it row contraction} (that is, $S^*_{\lambdab_{\mf C_1}}$ is a joint contraction).

Although there is no known satisfactory counter-part of Shimorin's analytic model for joint left invertible analytic tuples, we are able to show,
as in the classical case, that the multishifts $S_{\lambdab_{\mf C_a}}$ on $\mathscr T$ can be realized as multiplication tuples $\mathscr M_z$ on reproducing kernel Hilbert spaces at least in case the joint kernel $E$ of $S^*_{\lambdab_{\mf C_a}}$ is finite dimensional. Before we make  this precise, we recall from \eqref{j-kernel} that $E$ is given by
\beq \label{j-kernel-1}
E = [e_\rootb] \oplus \bigoplus_{\underset{F \neq \emptyset}{F  \in \mathscr{P}}} \bigoplus_{u \in \Omega_{F}} \mathcal L_{u, F}
\eeq
(refer to Section 4.1 for the definitions of $\Omega_{F}$ and $L_{u, F}$). We record that $\Omega_F$ is finite whenever $V_{\prec}$ is finite. For instance, this happens whenever $E$ is finite dimensional.

\begin{theorem} \label{S-c-a-kernel}
Let $\mathscr T = (V,\mathcal E)$ be the directed Cartesian product of locally finite rooted directed trees $\mathscr T_1, \cdots, \mathscr T_d$ of finite joint branching index. Let $S_{\lambdab_{\mf C_a}}$ be as introduced in Example \ref{ex-sp-bal} and let $E$ denote the joint kernel of $S^*_{\lambdab_{\mf C_a}}$. Then $S_{\lambdab_{\mf C_a}}$ is unitarily equivalent to the multiplication $d$-tuple $\mathscr M_{z, a}=(\mathscr M_{z_1}, \cdots, \mathscr M_{z_d})$ on a reproducing kernel Hilbert space $\mathscr H_{a, d}$ 
\index{$\mathscr H_{a, d}$}
of $E$-valued holomorphic functions defined on the open unit ball $\mathbb B^d$ in $\mathbb C^d$. Further, the reproducing kernel 
\index{$\kappa_{\mathscr H_{a, d}}(z, w)$}
$\kappa_{\mathscr H_{a, d}} : \mathbb B^d \times \mathbb B^d \rar B(E)$ associated with $\mathscr H_{a, d}$ is given by 
\beqn
 \kappa_{\mathscr H_{a, d}}(z, w) = \sum_{\alpha \in \mathbb N^d}  \frac{a(a+1) \cdots (a+|\alpha|-1)}{\alpha!}  z^{\alpha} \overline{w}^{\alpha} ~P_{[e_\rootb]} + \sum_{\underset{F \neq \emptyset}{F  \in \mathscr{P}}} \sum_{u \in \Omega_F} \kappa_{u, F}(z, w),
\eeqn
where $\kappa_{u, F}(z, w)$ is given by
\beq \label{kappa-u-F}
\kappa_{u, F}(z, w)=\sum_{\alpha \in \mathbb N^d}  \frac{\alpha_{u}!}{(\alpha_{u}+\alpha)!}\, {\prod_{j=0}^{|\alpha|-1}(|\alpha_u|+a + j)}\,   z^{\alpha} \overline{w}^{\alpha}~P_{\mathcal L_{u, F}},
\eeq
with $P_{\mathcal M}$ being the orthogonal projection of $\mathcal H$ onto a subspace $\mathcal M$ of $\mathcal H$.
\end{theorem}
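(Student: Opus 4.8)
The strategy is to exhibit the unitary equivalence by computing the moments $\|S^{\alpha}_{\lambdab_{\mf C_a}} e_v\|^2$ on the joint kernel $E$ and then reading off the reproducing kernel from these moments, exactly as in the one-dimensional case treated in Proposition \ref{S-c-a-kernel-dim1}. Since $S_{\lambdab_{\mf C_a}}$ is a row contraction when $a=1$ and more generally has joint kernel $E$ which is finite dimensional (by Corollary \ref{dimE-finite}, using that $\mathscr T$ is locally finite of finite joint branching index), Theorem \ref{wandering} gives the wandering subspace property $l^2(V) = \bigvee_{\alpha \in \mathbb N^d} S^{\alpha}_{\lambdab_{\mf C_a}}(E)$, and Proposition \ref{shift-prop}(ix),(x) gives the orthogonality $\inp{S^{\alpha}_{\lambdab_{\mf C_a}} e_v}{S^{\beta}_{\lambdab_{\mf C_a}} e_w}=0$ whenever $(\alpha,v)\neq(\beta,w)$. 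So it suffices to define $U : l^2(V) \rar \mathscr H_{a,d}$ by $U(S^\alpha_{\lambdab_{\mf C_a}}f) := z^\alpha f$ for $f$ ranging over an orthonormal basis of the pieces $[e_\rootb]$ and $\mathcal L_{u,F}$, normalize so that $U$ is isometric, and verify $U S_j = \mathscr M_{z_j} U$.

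\textbf{Key steps.} First I would record the basic moment computation: since $S_{\lambdab_{\mf C_a}}$ is spherically balanced with $\mf C(v) = c_{a,|\alpha_v|} = \frac{|\alpha_v|+d}{|\alpha_v|+a}$, an induction on $|\alpha|$ using $Q^n_{S}(I)=\sum_{|\alpha|=n}\frac{n!}{\alpha!}S^{*\alpha}S^{\alpha}$ (see \eqref{sp-gen-powers}) together with Proposition \ref{shift-prop}(vi) yields, for $f\in E$ supported in a single sibling class, a formula of the shape $\|S^\alpha_{\lambdab_{\mf C_a}} f\|^2 = \frac{\alpha_u!}{(\alpha_u+\alpha)!}\prod_{j=0}^{|\alpha|-1}(|\alpha_u|+a+j)\,\|f\|^2$ when $f\in\mathcal L_{u,F}$, and $\|S^\alpha_{\lambdab_{\mf C_a}} e_\rootb\|^2 = \frac{a(a+1)\cdots(a+|\alpha|-1)}{\alpha!}$ when $f = e_\rootb$. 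The $d=1$ case of this is essentially \eqref{beta-k-v-a}; the multivariable version requires tracking how the Drury--Arveson-type weights $\sqrt{\frac{\alpha_{v_j}+1}{|\alpha_v|+a}}$ and the branching factors $\frac{1}{\mbox{card}(\childi{j}{v})}$ interact. The branching factors collapse via the identity \eqref{card-ide} (suitably generalized to the directed Cartesian product using Lemma \ref{disjoint}(iv) and Proposition \ref{sib-id}): the key point is that the constancy of $\lambda^{(j)}_w$ over $w\in\childi{j}{v}$ together with the defining relation $\sum_{w\in\mathsf{sib}_i(v_G|u_i)}f(w)\lambda^{(i)}_w=0$ of $\mathcal L_{u,F}$ forces the ``weighted sum'' across siblings to vanish, so that applying $S^{*\alpha}S^\alpha$ to $f\in\mathcal L_{u,F}$ reproduces a scalar multiple of $f$ with no cross terms leaking into neighboring sibling classes. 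Next, using $E = [e_\rootb] \oplus \bigoplus_{F\neq\emptyset}\bigoplus_{u\in\Omega_F}\mathcal L_{u,F}$ from \eqref{j-kernel-1} and the fact that these pieces are mutually orthogonal and each $S^\alpha_{\lambdab_{\mf C_a}}$ maps them to mutually orthogonal subspaces, I would assemble the unitary $U$ piece-by-piece and check the intertwining $US_j = \mathscr M_{z_j}U$ directly on $S^\alpha_{\lambdab_{\mf C_a}}f$. Finally, the reproducing kernel is $\kappa_{\mathscr H_{a,d}}(z,w) = \sum_{\alpha} \big(\sum_{\text{pieces}} \frac{1}{\|S^\alpha f\|^2}\, P_{[f]}\big) z^\alpha\overline{w}^\alpha$ in the usual way, which after substituting the moment formulas gives exactly the stated expression with $\kappa_{u,F}$ as in \eqref{kappa-u-F}.

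\textbf{Main obstacle.} The hard part will be the moment computation for $f\in\mathcal L_{u,F}$: unlike the $d=1$ case, a vector in $\mathcal L_{u,F}$ is supported on $\mathsf{sib}_F(u)$, a product of sibling sets in several coordinates, and the action of $S^{*\alpha}_{\lambdab_{\mf C_a}}S^\alpha_{\lambdab_{\mf C_a}}$ shuffles these coordinates in a way governed by the commutativity relations \eqref{commuting} and \eqref{beta-property}. One must show that the result is still a scalar multiple of $f$ (that is, that $\mathcal L_{u,F}$ is an eigenspace for each $S^{*\alpha}_{\lambdab_{\mf C_a}}S^\alpha_{\lambdab_{\mf C_a}}$), and compute the eigenvalue. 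The natural route is to factor $S^\alpha_{\lambdab_{\mf C_a}}$ coordinate-by-coordinate via Proposition \ref{shift-prop}(iv) and apply the $d=1$ analysis along each tree $\mathscr T_j$ in turn; the spherical weights conveniently telescope because the product $\prod_j \prod_{k=0}^{\alpha_j-1}\frac{\alpha_{v_j}+1+k}{|\alpha_v|+\alpha_1+\cdots+\alpha_{j-1}+k+a}$ simplifies to $\frac{\alpha_v!}{(\alpha_v+\alpha)!}\prod_{j=0}^{|\alpha|-1}(|\alpha_v|+a+j)$ after collecting the numerator and denominator. I expect this bookkeeping, together with verifying that $\mathscr H_{a,d}$ consists genuinely of holomorphic functions on $\mathbb B^d$ (which follows from the moment growth $\sim |\alpha|^{a-1}/\alpha!$ and the finite-dimensionality of $E$, so that $\kappa_{\mathscr H_{a,d}}(\cdot,w)$ is a finite sum of convergent power series), to be the only substantial content; the intertwining relation and the passage to the kernel are then routine.
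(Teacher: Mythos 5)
Your overall architecture matches the paper's: wandering subspace property plus an orthogonal decomposition $l^2(V)=\bigoplus_{\alpha}S^{\alpha}_{\lambdab_{\mf C_a}}(E)$, diagonal moments constant on sibling classes, and a diagonal kernel obtained by inverting the moments. However, there is a genuine gap at the single most delicate step. You assert that Proposition \ref{shift-prop}(ix),(x) gives $\inp{S^{\alpha}_{\lambdab_{\mf C_a}}e_v}{S^{\beta}_{\lambdab_{\mf C_a}}e_w}=0$ whenever $(\alpha,v)\neq(\beta,w)$. Those two parts only cover the cases $v=w,\ \alpha\neq\beta$ and $v\neq w,\ \alpha=\beta$; the mixed case is not covered and is in fact false: take $w\in\childi{1}{v}$, $\alpha=\epsilon_1$, $\beta=0$, and then $\inp{S_1e_v}{e_w}=\lambda^{(1)}_w>0$. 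Consequently the mutual orthogonality of the family $\{S^{\alpha}_{\lambdab_{\mf C_a}}E\}_{\alpha\in\mathbb N^d}$ — which is what you need for $U$ to be isometric and for the kernel to be diagonal in $z^{\alpha}\overline{w}^{\alpha}$ — is not a formal consequence of basepoint orthogonality. It holds only because of cancellations forced by the defining equations \eqref{system-main} of $\mathcal L_{u,F}$ together with the fact that the weights $\lambda^{(i)}_w$ of $S_{\lambdab_{\mf C_a}}$ are constant on each sibling class $\mathsf{sib}_i(v_G|u_i)$. The paper isolates exactly this as Lemma \ref{ortho-powers}(ii) and proves it by showing $S_j^{*\beta_j+1}S^{\beta}_{\lambdab_{\mf C_a}}f=0$ for every $f\in E$, a computation of about a page in which the inner sum $\sum_{v_d\in\mathsf{sib}(u_d)}f(v_G|v_d)$ vanishes by \eqref{system-main-sp}. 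Your proposal omits this step entirely, and without it the construction of $U$ and the diagonal form of $\kappa_{\mathscr H_{a,d}}$ do not go through. (Note that mutual orthogonality of $\{S^{\alpha}E\}$ fails for general multishifts even when $E$ is a wandering subspace; it is special to this class of weights.)

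Two smaller points. First, your displayed moment formulas are the reciprocals of the correct ones: e.g.\ $\|S^{\alpha}_{\lambdab_{\mf C_a}}e_{\rootb}\|^2=\frac{\alpha!}{a(a+1)\cdots(a+|\alpha|-1)}$, not its inverse (compare \eqref{s-ca-moment-f-2}); since the kernel coefficients $D_{\alpha}$ are the inverses of the moments, as written your recipe would produce the reciprocal of the stated kernel. This is a bookkeeping slip rather than a conceptual error, but it should be fixed. Second, your remark that each $\mathcal L_{u,F}$ is an eigenspace of $S^{*\alpha}_{\lambdab_{\mf C_a}}S^{\alpha}_{\lambdab_{\mf C_a}}$ is easier than you suggest: by Proposition \ref{shift-prop}(vi) this operator is already diagonal in the basis $\{e_v\}$, so one only needs that $\|S^{\alpha}_{\lambdab_{\mf C_a}}e_v\|^2$ depends on $v$ only through $\alpha_v$, which is immediate from the explicit formula; no interaction with the linear system is needed for this part.
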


The proof of the above theorem, as presented below, turns out to be more involved as compared to that of Proposition \ref{S-c-a-kernel-dim1}.
Perhaps one reason may be that no counter-part of Shimorin's model is known for joint left invertible analytic tuples.
This proof relies on the description of joint kernel $E$ as provided in Chapter 4. 
A key observation required is the following lemma.

\begin{lemma} \label{ortho-powers}
Let $\mathscr T = (V,\mathcal E)$ be the directed Cartesian product of locally finite rooted directed trees $\mathscr T_1, \cdots, \mathscr T_d$ of finite joint branching index. Let $S_{\lambdab_{\mf C_a}}$ be as introduced in Example \ref{ex-sp-bal} and let $E$ denote the joint kernel of $S^*_{\lambdab_{\mf C_a}}$. Then the following are true:
\begin{enumerate}
\item[(i)] $E$ is invariant under $S^{*\alpha}_{\lambdab_{\mf C_a}} S^{\alpha}_{\lambdab_{\mf C_a}}$ and $S^{*\alpha}_{\lambdab_{\mf C_a}} S^{\alpha}_{\lambdab_{\mf C_a}}|_E$ is boundedly invertible for every $\alpha \in \mathbb N^d$. 
\item[(ii)] The multisequence $\{S^{\alpha}_{\lambdab_{\mf C_a}}E\}_{\alpha \in \mathbb N^d}$ of subspaces of $l^2(V)$ is mutually orthogonal.
\end{enumerate}
\end{lemma}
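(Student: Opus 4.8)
\textbf{Proof proposal for Lemma \ref{ortho-powers}.}

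The plan is to reduce both assertions to the explicit action of $S_{\lambdab_{\mf C_a}}$ on the building blocks $[e_\rootb]$ and $\mathcal L_{u,F}$ appearing in the decomposition \eqref{j-kernel-1} of $E$. First I would record, using Proposition \ref{shift-prop}(vi) together with the specific weights $\lambda^{(i)}_w = \sqrt{c_{a,|\alpha_v|}/\mbox{card}(\childi{i}{v})}\,\sqrt{(\alpha_{v_i}+1)/(|\alpha_v|+d)}$, a closed formula for $S^{*\alpha}_{\lambdab_{\mf C_a}}S^{\alpha}_{\lambdab_{\mf C_a}}e_v$ in terms of $\alpha_v$. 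Because $S_{\lambdab_{\mf C_a}}$ is spherically balanced and in fact the weights telescope nicely (mirroring the computation of $\beta_{j,k}(v,a)$ and the identity \eqref{card-ide} used in the proof of Proposition \ref{S-c-a-kernel-dim1}), this inner sum should collapse to a scalar multiple of $e_v$; the scalar will be exactly $\dfrac{\alpha_v!}{(\alpha_v+\alpha)!}\prod_{j=0}^{|\alpha|-1}(|\alpha_v|+a+j)$ for $v$ in a generation with depth-sum $|\alpha_v|$, which is manifestly positive. This immediately gives that $S^{*\alpha}_{\lambdab_{\mf C_a}}S^{\alpha}_{\lambdab_{\mf C_a}}$ acts diagonally on the orthonormal basis $\{e_v\}$, hence preserves any subspace spanned by a subset of basis vectors — in particular each $l^2(\mathsf{sib}_F(u))$ and hence $\mathcal L_{u,F}\subseteq l^2(\mathsf{sib}_F(u))$ — and that on such a subspace it is bounded below by $\min_v (\cdots)>0$ (finiteness of the joint branching index guarantees only finitely many distinct depths are relevant for the generators of $E$, but even without that the infimum over the relevant depths is positive since the scalar is bounded away from $0$). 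That settles (i).

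For (ii), I would argue that the mutual orthogonality $S^{\alpha}_{\lambdab_{\mf C_a}}E \perp S^{\beta}_{\lambdab_{\mf C_a}}E$ for $\alpha\neq\beta$ follows from two facts already available: Proposition \ref{shift-prop}(ix) gives $\inp{S^{\alpha}_{\lambdab}e_v}{S^{\beta}_{\lambdab}e_v}=0$ when $\alpha\neq\beta$, and Proposition \ref{shift-prop}(x) gives $\inp{S^{\alpha}_{\lambdab}e_v}{S^{\alpha}_{\lambdab}e_w}=0$ when $v\neq w$. Writing $f=\sum_v f(v)e_v$ and $g=\sum_w g(w)e_w$ with $f,g\in E$, one expands $\inp{S^{\alpha}_{\lambdab_{\mf C_a}}f}{S^{\beta}_{\lambdab_{\mf C_a}}g}=\sum_{v,w}\overline{f(v)}g(w)\inp{S^{\alpha}_{\lambdab_{\mf C_a}}e_v}{S^{\beta}_{\lambdab_{\mf C_a}}e_w}$; by part (iv) of Proposition \ref{shift-prop}, $S^{\alpha}_{\lambdab_{\mf C_a}}e_v$ is supported on $\childnt{\alpha}{v}$ and $S^{\beta}_{\lambdab_{\mf C_a}}e_w$ on $\childnt{\beta}{w}$, and by Lemma \ref{disjoint}(ii),(iii) these index sets are disjoint unless $v=w$ and $\alpha=\beta$. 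Hence every cross term vanishes when $\alpha\neq\beta$, proving orthogonality of the subspaces. (That $S^{\alpha}_{\lambdab_{\mf C_a}}E$ is genuinely a closed subspace, not merely a linear manifold, follows from (i): $S^{\alpha}_{\lambdab_{\mf C_a}}$ restricted to $E$ is bounded below.)

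The only genuinely delicate point I anticipate is the telescoping computation in the first step — one must verify that the product of weights along a chain $\parentnt{\alpha^{(j-1)}}{w}$ reassembles, after summing over $w\in\childnt{\alpha}{v}$ and invoking the combinatorial identity $\sum_{u\in\childn{k}{v}}\prod_{l=0}^{k-1}\mbox{card}(\mathsf{sib}(\parentn{l}{u}))^{-1}=1$ (the multivariable analog of \eqref{card-ide}, provable by the same induction on $|\alpha|$ using Lemma \ref{disjoint}(iv)), into precisely the claimed ratio of factorials and Pochhammer-type product. This is bookkeeping rather than conceptual, but it is where care is needed; once it is in place, the diagonalization of $S^{*\alpha}_{\lambdab_{\mf C_a}}S^{\alpha}_{\lambdab_{\mf C_a}}$ makes (i) immediate and the support/disjointness argument makes (ii) immediate, and both feed directly into the proof of Theorem \ref{S-c-a-kernel} via the operator-valued multishift model.
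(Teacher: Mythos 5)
Your argument for part (ii) has a genuine gap. You assert that the supports $\childnt{\alpha}{v}$ and $\childnt{\beta}{w}$ are disjoint ``unless $v=w$ and $\alpha=\beta$,'' citing Lemma \ref{disjoint}(ii),(iii). Those two parts only cover the cases (same $\alpha$, different vertices) and (same vertex, different exponents); when both differ the sets can certainly meet. Indeed, if $w\in\childnt{\gamma}{v}$ with $\gamma\neq 0$, then $\childnt{\beta}{w}\subseteq\childnt{\beta+\gamma}{v}$, so $\childnt{\alpha}{v}\cap\childnt{\beta}{w}\neq\emptyset$ for $\alpha=\beta+\gamma\neq\beta$. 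This situation is unavoidable for elements of $E$: taking $f=e_{\rootb}$ and $g\in\mathcal L_{u,F}$ with $u$ at depth $\gamma\neq 0$, the vectors $S^{\beta+\gamma}_{\lambdab_{\mf C_a}}e_{\rootb}$ and $S^{\beta}_{\lambdab_{\mf C_a}}g$ live on overlapping index sets, and the cross terms in your expansion do not vanish individually. The orthogonality in (ii) is true, but it is not a support phenomenon; it comes from the linear relations \eqref{system-main-sp} that define $\mathcal L_{u,F}$ (for the weights of $S_{\lambdab_{\mf C_a}}$ these reduce to $\sum_{w\in\mathsf{sib}_i(v_G|u_i)}f(w)=0$), and the paper's proof consists precisely of reducing (ii) to $S_j^{*\beta_j+1}S^{\beta}_{\lambdab_{\mf C_a}}|_E=0$ and then carrying out a computation whose final inner sum vanishes by those relations. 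Without an argument of this kind, (ii) is unproved.

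Part (i) is essentially the right approach, but the justification of invariance is a non sequitur: $\mathcal L_{u,F}$ is the solution space of the system \eqref{system-main}, not a span of basis vectors, and a diagonal operator need not preserve an arbitrary subspace of an invariant coordinate subspace $l^2(\mathsf{sib}_F(u))$. What actually saves the argument is contained in your own formula: the eigenvalue $\|S^{\alpha}_{\lambdab_{\mf C_a}}e_v\|^2$ depends only on the depth $\alpha_v$, which is constant on $\mathsf{sib}_F(u)$, so the restriction of $S^{*\alpha}_{\lambdab_{\mf C_a}}S^{\alpha}_{\lambdab_{\mf C_a}}$ to $l^2(\mathsf{sib}_F(u))$ is a scalar and therefore preserves $\mathcal L_{u,F}$; this is exactly the route the paper takes. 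Two further corrections: the displayed scalar is the reciprocal of $\|S^{\alpha}_{\lambdab_{\mf C_a}}e_v\|^2$ (harmless), and the parenthetical claim that the eigenvalues are bounded away from $0$ over all depths is false --- e.g.\ $\|S_1e_v\|^2=\frac{\alpha_{v_1}+1}{|\alpha_v|+a}\to 0$ along vertices with $\alpha_{v_1}$ bounded and $|\alpha_v|\to\infty$ --- so the bounded invertibility genuinely requires that only finitely many depths occur among the generators of $E$, i.e.\ the finite-dimensionality of $E$ from Corollary \ref{dimE-finite}.
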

\begin{proof}
(i)
In view of \eqref{j-kernel-1} and $S^{*\alpha}_{\lambdab_{\mf C_a}} S^{\alpha}_{\lambdab_{\mf C_a}}e_v = \|S^{\alpha}_{\lambdab_{\mf C_a}}e_v\|^2e_v~(v \in V)$, it suffices to check that for every $F \in \mathscr P$ and $u \in \Omega_F,$ the function $v \longmapsto \|S^{\alpha}_{\lambdab_{\mf C_a}}e_v\|^2$ is constant on ${\mathsf{sib}_F(u)}$ with value $\|S^{\alpha}_{\lambdab_{\mf C_a}}e_u\|^2$.
For $j=1, \cdots, d$, $w \in V$,  let $\beta (j, w, 0)=1$ and 
\beqn \beta (j, w, n) = \lambda_w^{(j)} \lambda_{\parenti{j}{w}}^{(j)} \cdots \lambda_{\parentki{j}{n-1}{w}}^{(j)}~(n \geqslant  1).\eeqn 
It is easy to see using $\lambda^{(i)}_w= {\frac{1}{\sqrt{\mbox{card}(\mathsf{sib}_{i}(w))}}} \sqrt{\frac{\alpha_{w_i} }{|\alpha_w| + a-1}}$ that
\beq \label{beta-sp-bal} 
\beta(j, w, n)^2 = \left(\prod_{k=0}^{n-1} \frac{1}{\mbox{card}(\mathsf{sib}_j{\parentki{j}{k}{w}})}\right) \frac{\alpha_{w_j}!}{(\alpha_{w_j}-n)!} \frac{(|\alpha_w|-n+a-1)!}{(|\alpha_w|+a-1)!}
\eeq
for $n \Ge 1$ and $j=1, \cdots, d.$
Fix $\alpha \in \mathbb N^d$ and let $1 \Le i_1 < i_2 < \cdots < i_k \Le d$ be positive integers such that 
$\alpha_{i_1}, \alpha_{i_2}, \cdots, \alpha_{i_k}$ are the only nonzero entries in $\alpha.$
A routine verification using Proposition \ref{shift-prop}(vi) and \eqref{beta-sp-bal} shows that
\beqn
S^{*\alpha}_{\lambdab_{\mf C_a}} S^{\alpha}_{\lambdab_{\mf C_a}}e_v &=& \sum_{w \in \childnt{\alpha}{v}} ~\prod_{j=1}^k \big|\beta \big(i_j, \parentnt{\alpha^{(i_j-1)}}{w}, \alpha_{i_j}\big)\big|^2e_v \\
%&=& \sum_{w \in \childnt{\alpha}{v}} ~ \prod_{j=1}^k \frac{\alpha_{w_{i_j}}!}{(\alpha_{w_{i_j}}-\alpha_{i_j})!} \frac{(|\alpha_w|-(\alpha_{i_1} + \cdots +\alpha_{i_j})+a-1)!}{(|\alpha_w| - \sum_{m=1}^{i_{j-1}}\alpha_m + \alpha_{i_j}-1)!} \\ 
&=& \sum_{w \in \childnt{\alpha}{v}} ~\Big( \prod_{j=1}^k \frac{\alpha_{w_{i_j}}!}{(\alpha_{w_{i_j}}-\alpha_{i_j})!} \frac{(|\alpha_{\parentnt{\alpha^{(i_j-1)}}{w}}|-\alpha_{i_j} + a-1)!}{(|\alpha_{\parentnt{\alpha^{(i_j-1)}}{w}}| +a-1)!} \\
&\times & \prod_{j=1}^k \prod_{l=0}^{\alpha_{i_j}-1} \frac{1}{\mbox{card}(\mathsf{sib}_{i_j}(\mathsf{par}^{\langle l \rangle }_{i_j} \mathsf{par}^{\langle \alpha_{i_{j-1}} \rangle}_{i_{j-1}} \cdots \mathsf{par}^{\langle \alpha_{i_1}\rangle}_{i_1}(w)))}\Big)~e_v \\ &=& 
\prod_{j=1}^k \frac{(\alpha_{v_{i_j}}+\alpha_{i_j})!}{\alpha_{v_{i_j}}!} \frac{(|\alpha_v| + \sum_{m=j+1}^k \alpha_{i_{m}} +a-1)!}{(|\alpha_v| + \sum_{m=j}^{k}\alpha_{i_m} + a -1)!} \\ &\times & \sum_{w \in \childnt{\alpha}{v}} ~\prod_{j=1}^k \prod_{l=0}^{\alpha_{i_{j}}-1} \frac{1}{\mbox{card}(\mathsf{sib}_{i_j}(\mathsf{par}^{\langle l \rangle}_{i_j} \mathsf{par}^{\langle \alpha_{i_{j-1}} \rangle}_{i_{j-1}} \cdots \mathsf{par}^{\langle \alpha_{i_1} \rangle}_{i_1}(w)))}~e_v,
\eeqn
where we used that $|\alpha_{\parentnt{\alpha^{(i_j-1)}}{w}}|=|\alpha_w|-\sum_{l=1}^{j-1}\alpha_{i_l},$ $\alpha_w = \alpha_v + \alpha$ for any
$w \in \childnt{\alpha}{v}.$
As in the proof of Proposition \ref{S-c-a-kernel-dim1}, one can verify by induction on $|\alpha| \Ge1$ that
\beqn
\sum_{w \in \childnt{\alpha}{v}} ~\prod_{j=1}^d \prod_{l=0}^{\alpha_j-1} \frac{1}{\mbox{card}(\mathsf{sib}_j(\mathsf{par}^{\langle l \rangle}_j \mathsf{par}^{\langle \alpha_{j-1} \rangle}_{j-1} \cdots \mathsf{par}^{\langle \alpha_1 \rangle}_1(w)))} = 1.
\eeqn
This yields the formula
\beq \label{s-ca-moment-f}
S^{*\alpha}_{\lambdab_{\mf C_a}} S^{\alpha}_{\lambdab_{\mf C_a}}e_v = \prod_{j=1}^k \frac{(\alpha_{v_{i_j}}+\alpha_{i_j})!}{\alpha_{v_{i_j}}!} \frac{(|\alpha_v| + \sum_{m=j+1}^k \alpha_{i_{m}} +a-1)!}{(|\alpha_v| + \sum_{m=j}^{k}\alpha_{i_m} + a -1)!} ~e_v~(v \in V). \quad
\eeq
Since depth is constant on $\mathsf{sib}_F(u)$ and $E$ is finite dimensional (Corollary \ref{dimE-finite}), the desired conclusion in (i) is immediate from this formula. For future reference, we also note the following expression for moments of $S_{\lambdab_{\mf C_a}}$ deduced from \eqref{s-ca-moment-f}:
\beq \label{s-ca-moment-f-2}
\|S^{\alpha}_{\lambdab_{\mf C_a}}e_v\|^2 = \displaystyle \prod_{j=1}^d \frac{(\alpha_{v_{j}}+\alpha_{j})!}{\alpha_{v_{j}}!} \frac{(|\alpha_v| + \sum_{i=j+1}^d \alpha_{i} +a-1)!}{(|\alpha_v| + \sum_{i=j}^{d}\alpha_{i} + a -1)!} \notag \\ 
= \Big(\displaystyle \prod_{j=1}^d \frac{(\alpha_{v_{j}}+\alpha_{j})!}{\alpha_{v_{j}}!} \Big)\frac{1}{(|\alpha_v|+a)(|\alpha_v|+a+1) \cdots (|\alpha_v| + a + |\alpha|-1)}
\eeq
 for all $v \in V.$ 

(ii) It suffices to check that $S^{*\beta_j+1}_jS^{\beta}_{\lambdab_{\mf C_a}}|_E=0$ for $j=1, \cdots, d.$ We will verify this only for $j=d.$ The verification for the rest of the coordinates is invariably the same. Let $f \in E.$ 
Clearly, for $f =e_\rootb$,  we have $S^{*\beta_d+1}_dS^{\beta}_{\lambdab_{\mf C_a}}f=0$. Let $F \in \mathscr P$ such that 
$F \neq \emptyset$ and let $f \in \mathcal L_{u, F}$ for $u \in \Omega_F \subseteq \Phi_F$ (see \eqref{phi-F-eqn} and \eqref{phi-F}). If $d \notin F$ then once again $S^{*\beta_d+1}_dS^{\beta}_{\lambdab_{\mf C_a}}f=0$ since $f \in l^2(\mathsf{sib}_F(u))$ is supported on a subset of $V_1 \times \cdots V_{d-1} \times \{\rootb_d\}.$
Hence we may assume that $d \in F$. Since $f \in \mathcal L_{u, F}$, by \eqref{system-main}, $f=\sum_{v \in \mathsf{sib}_F(u)}f(v)e_v$ satisfies
\beqn
\sum_{w \in \mathsf{sib}_i(v_G | u_i)} f(w) \lambda^{(i)}_w =0,~i \in F~\mbox{and~}v_G \in \mathsf{sib}_{F, G}(u).
\eeqn 
However, since $\lambda^{(i)}_w= {\frac{1}{\sqrt{\mbox{card}(\mathsf{sib}_{i}(w))}}} \sqrt{\frac{\alpha_{w_i} }{|\alpha_w| + a-1}}$ is constant on $w \in \mathsf{sib}_i(v_G | u_i)$, we obtain
\beq \label{system-main-sp}
\sum_{w \in \mathsf{sib}_i(v_G | u_i)} f(w) =0,~i \in F~\mbox{and~}v_G \in \mathsf{sib}_{F, G}(u).
\eeq 
Let $\alpha=\beta - \beta_d \epsilon_d$.
One may now argue as in (i) to see that 
\beq \label{comp-kernel-sb}
&& S^{*\beta_d+1}_dS^{\beta}_{\lambdab_{\mf C_a}}f = S^{*}_d\sum_{v \in \mathsf{sib}_F(u)}f(v)S^{*\beta_d}_d S^{\beta_d}_d S^{\alpha}_{\lambdab_{\mf C_a}}e_v \notag \\ 
&&= \sum_{v \in \mathsf{sib}_F(u)} \sqrt{\prod_{j=1}^d \frac{(\alpha_{v_{j}}+\alpha_{j})!}{\alpha_{v_{j}}!} } \frac{f(v)}{\sqrt{(|\alpha_v|+a)(|\alpha_v|+a+1) \cdots (|\alpha_v| + a + |\alpha|-1)}} \notag \\  && \times  \sum_{w \in \childnt{\alpha}{v}} ~ \prod_{j=1}^d \prod_{l=0}^{\alpha_j-1}{\frac{S^{*}_d \big(S^{*\beta_d}_d S^{\beta_d}_d e_w\big)}{\sqrt{\mbox{card}(\mathsf{sib}_j(\mathsf{par}^{\langle l \rangle}_j \mathsf{par}^{\langle \alpha_{j-1} \rangle}_{j-1} \cdots \mathsf{par}^{\langle \alpha_1 \rangle}_1(w)))}}} \notag \\
&& \overset{\eqref{s-ca-moment-f-2}} =  \sum_{v \in \mathsf{sib}_F(u)}f(v)  \|S^{\alpha}_{\lambdab_{\mf C_a}}e_v\| \sum_{w \in \childnt{\alpha}{v}} ~ \gamma(w, \alpha)~S^{*}_d \big(S^{*\beta_d}_d S^{\beta_d}_d e_w\big), 
\eeq
where $\gamma(w, \alpha)$ is given by
\beq \label{gamma-w-alpha}
\gamma(w, \alpha):=\prod_{j=1}^d \prod_{l=0}^{\alpha_j-1}{\frac{1}{\sqrt{\mbox{card}(\mathsf{sib}_j(\mathsf{par}^{\langle l \rangle}_j \mathsf{par}^{\langle \alpha_{j-1} \rangle}_{j-1} \cdots \mathsf{par}^{\langle \alpha_1 \rangle}_1(w)))}}}.
\eeq
However, by \eqref{s-ca-moment-f} and $\alpha_{w_d}=\alpha_{v_d}$, 
\beqn
S^{*\beta_d}_d S^{\beta_d}_d e_w &=& \frac{(\alpha_{w_d} + \beta_d)!}{\alpha_{w_d}!} \frac{(|\alpha_w| + a-1)!}{(|\alpha_w|+\beta_d + a-1)!} ~e_w\\ &=& \frac{(\alpha_{v_d} + \beta_d)!}{\alpha_{v_d}!} \frac{(|\alpha_v| +|\alpha| + a-1)!}{(|\alpha_v|+ | \alpha| + \beta_d + a-1)!}~e_w.
\eeqn
This combined with \eqref{comp-kernel-sb} yields
\beqn
S^{*\beta_d+1}_dS^{\beta}_{\lambdab_{\mf C_a}}f 
&=& \sum_{v \in \mathsf{sib}_F(u)}f(v)  ~\|S^{\alpha}_{\lambdab_{\mf C_a}}e_v\|~  \frac{(\alpha_{v_d} + \beta_d)!}{\alpha_{v_d}!} \frac{(|\alpha_v| +|\alpha| + a-1)!}{(|\alpha_v|+ | \alpha| + \beta_d + a-1)!} \\ & \times & \sum_{w \in \childnt{\alpha}{v}} ~\gamma(w, \alpha)~S^{*}_d e_w.
\eeqn
Since $w_d=v_d$ and $\alpha_w = \alpha_v + \alpha$, we have
\beqn S^{*}_d e_w &=& {\frac{1}{\sqrt{\mbox{card}(\mathsf{sib}_{d}(w))}}} \sqrt{\frac{\alpha_{w_d} }{|\alpha_w| + a-1}} e_{\mathsf{par}_d(w)} \\ &=& {\frac{1}{\sqrt{\mbox{card}(\mathsf{sib}_{d}(v))}}} \sqrt{\frac{\alpha_{v_d} }{|\alpha_v| + |\alpha| + a-1}} e_{\mathsf{par}_d({w})}. \eeqn
%where $\tilde{F}:= \{1, \cdots, d-1\}$ and $\tilde{w}:=(w_{\tilde{F}}|v_d)$
%(see Definition \ref{dfn4.1.2} for the definition of $(w_{\tilde{F}}|v_d)$).
This gives
\beqn
S^{*\beta_d+1}_dS^{\beta}_{\lambdab_{\mf C_a}}f 
&=& \sum_{v \in \mathsf{sib}_F(u)}f(v)  ~\|S^{\alpha}_{\lambdab_{\mf C_a}}e_v\|~  \frac{(\alpha_{v_d} + \beta_d)!}{\alpha_{v_d}!} \frac{(|\alpha_v| +|\alpha| + a-1)!}{(|\alpha_v|+ | \alpha| + \beta_d + a-1)!} \\ 
& \times & {\frac{1}{\sqrt{\mbox{card}(\mathsf{sib}_{d}(v))}}} \sqrt{\frac{\alpha_{v_d} }{|\alpha_v| + |\alpha| + a-1}} \\ 
& \times & \sum_{w \in \childnt{\alpha}{v}} ~\gamma(w, \alpha)~ e_{\mathsf{par}_d({w})}.
\eeqn
It is clear from the definition of depth and siblings that the expression $\Gamma(v, \alpha)$ below is independent of $v \in \mathsf{sib}_F(u)$: \beqn \Gamma(v, \alpha) & := & \|S^{\alpha}_{\lambdab_{\mf C_a}}e_v\|~  \frac{(\alpha_{v_d} + \beta_d)!}{\alpha_{v_d}!} \frac{(|\alpha_v| +|\alpha| + a-1)!}{(|\alpha_v|+ | \alpha| + \beta_d + a-1)!} \\ &\times &
  {\frac{1}{\sqrt{\mbox{card}(\mathsf{sib}_{d}(v))}}} \sqrt{\frac{\alpha_{v_d} }{|\alpha_v| + |\alpha| + a-1}}.\eeqn 
 Further, since $\alpha_d=0$, we conclude from \eqref{gamma-w-alpha} that $\gamma(w, \alpha)$ is independent of $w_d=v_d.$ 
Also, 
 since $v=v_G|v_d$ for $G=F \setminus \{d\}$ and $v \in \mathsf{sib}_F(u)$, 
 it follows that
  \beqn
&& \frac{1}{\Gamma(u, \alpha)} S^{*\beta_d+1}_dS^{\beta}_{\lambdab_{\mf C_a}}f 
=   \sum_{v \in \mathsf{sib}_F(u)}f(v)  \sum_{w \in \childnt{\alpha}{v}} \gamma(w, \alpha) e_{\mathsf{par}_d({w})} \\ &=& \sum_{v_G|v_d \in \mathsf{sib}_F(u)}  \left(\sum_{{w} \in \childnt{\alpha}{v}} f(v_G|v_d) \gamma({w}, \alpha) e_{\mathsf{par}_d({w})} \right)\\
&=& \sum_{v_G \in \mathsf{sib}_{F, G}(u)}  \sum_{ v_d \in \mathsf{sib}(u_d)} \left(\sum_{{w} \in \childnt{\alpha}{v}} f(v_G|v_d) \gamma({w}, \alpha) e_{\mathsf{par}_d({w})} \right)\\
&=& \sum_{v_G \in \mathsf{sib}_{F, G}(u)}  \sum_{{w} \in \childnt{\alpha}{v}} \left(\sum_{ v_d \in \mathsf{sib}(u_d)} f(v_G|v_d) \gamma({w}, \alpha) e_{\mathsf{par}_d({w})}  \right) \\
&=& \sum_{v_G \in \mathsf{sib}_{F, G}(u)}  \sum_{{w} \in \childnt{\alpha}{v}} \left(\sum_{ v_d \in \mathsf{sib}(u_d)} f(v_G|v_d)  \right) \gamma({w}, \alpha) e_{\mathsf{par}_d({w})} 
\eeqn
where the sum in the inner bracket is $0$ in view of \eqref{system-main-sp}. 
\end{proof}

We are now in a position to complete the proof of Theorem \ref{S-c-a-kernel}.
\begin{proof}[Proof of Theorem \ref{S-c-a-kernel}] 
We divide the proof into several steps.

\vskip.2cm

{\bf Step I.} In this step, we prove that $S_{\lambdab_{\mf C_a}}$ can be modeled as a multiplication tuple on a Hilbert space of $E$-valued formal power series. Note that by Theorem \ref{wandering} and Lemma \ref{ortho-powers}(ii), we have
\beqn
l^2(V) = \bigoplus_{\alpha \in \mathbb N^d}S^{\alpha}_{\lambdab_{\mf C_a}}E.
\eeqn
Thus for any $f \in l^2(V),$ there exists a multisequence $\{f_{\alpha}\}_{\alpha \in \mathbb N^d}$ in $E$ such that 
\beqn
f = \sum_{\alpha \in \mathbb N^d} S^{\alpha}_{\lambdab_{\mf C_a}}f_{\alpha}.
\eeqn
Also, since $S_1, \cdots, S_d$ are injective (Corollary \ref{p-spectrum}), the multisequence $\{f_{\alpha}\}_{\alpha \in \mathbb N^d}$ with the above property is unique. 
This unique representation allows us to form the inner product  space $\mathscr H_{a, d}$ of $E$-valued formal power series by
\beqn
\mathscr H_{a, d} := \Big\{F(z)=\sum_{\alpha \in \mathbb N^d}f_{\alpha} z^{\alpha} : f_{\alpha} \in E~(\alpha \in \mathbb N^d),~\sum_{\alpha \in \mathbb N^d}\|S^{\alpha}_{\lambdab_{\mf C_a}} f_{\alpha}\|^2 < \infty \Big\}
\eeqn
endowed with the inner product 
\beqn
\inp{F(z)}{G(z)} := \sum_{\alpha \in \mathbb N^d}\inp{S^{\alpha}_{\lambdab_{\mf C_a}} f_{\alpha}}{S^{\alpha}_{\lambdab_{\mf C_a}} g_{\alpha}},
\eeqn
where $G(z)=\sum_{\alpha \in \mathbb N^d}g_{\alpha} z^{\alpha}$. 
Since $S^{*\alpha}_{\lambdab_{\mf C_a}} S^{\alpha}_{\lambdab_{\mf C_a}}$ is bounded below on $E$ (Lemma \ref{ortho-powers}(i)), $\mathscr H_{a, d}$ is a Hilbert space. 
We now define unitary $U : l^2(V) \rar \mathscr H_{a, d}$ by $U(f) = F.$ Further, if $\mathscr M_{z, a}$ denotes the $d$-tuple of (densely defined) multiplication operators $\mathscr M_{z_1}, \cdots, \mathscr M_{z_d}$ in $\mathscr H_{a, d}$ then 
\beqn
US_j(S^{\alpha}_{\lambdab_{\mf C_a}} f_{\alpha})=US^{\alpha + \epsilon_j}_{\lambdab_{\mf C_a}} f_{\alpha} = f_{\alpha} z^{\alpha + \epsilon_j} = \mathscr M_{z_j}f_{\alpha} z^{\alpha} =\mathscr M_{z_j}U(S^{\alpha}_{\lambdab_{\mf C_a}} f_{\alpha})~(j=1, \cdots, d).
\eeqn
Note that $US_j=\mathscr M_{z_j}U$ holds on a dense set. Since $S_j$ is bounded, it follows that $\mathscr M_{z_j}$ is bounded for every $j=1, \cdots, d.$ 

\vskip.2cm

{\bf Step II.} In this step, we check that $\mathscr H_{a, d}$ is a reproducing kernel Hilbert space associated with the reproducing kernel 
\beqn
\kappa_{\mathscr H_{a, d}}(z, w) = \sum_{\alpha \in \mathbb N^d} D_{\alpha} z^{\alpha} \overline{w}^{\alpha}~(z, w \in \Omega),
\eeqn
where $D_{\alpha}$ is the inverse of $S^{*\alpha}_{\lambdab_{\mf C_a}} S^{\alpha}_{\lambdab_{\mf C_a}}|_E$ on $E$ as ensured by Lemma \ref{ortho-powers}(i), and
%given by
%$D_{\alpha}f = \|S^{\alpha}_{c_a, \lambdab}f\|^{-2}f$ for $\alpha \in \mathbb N^d.$ 
$\Omega$ denotes the domain of convergence of $\kappa_{\mathscr H_{a, d}}$ (possibly $\{0\}$). 
Note that for $F(z)=\sum_{\alpha \in \mathbb N^d}f_{\alpha} z^{\alpha}  \in \mathscr H_{a, d}$, $g \in E$ and $w \in \Omega,$
\beqn
\inp{F}{\kappa_{\mathscr H_{a, d}}(\cdot, w)g} &=& \sum_{\alpha \in \mathbb N^d}\inp{S^{\alpha}_{\lambdab_{\mf C_a}} f_{\alpha}}{S^{\alpha}_{\lambdab_{\mf C_a}} (D_{\alpha}g) \overline{w}^{\alpha}} \\ &=& \sum_{\alpha \in \mathbb N^d}
%\frac{1}{\|S^{\alpha}_{c_a, \lambdab}g\|^{2}} 
\inp{f_{\alpha} w^{\alpha}}{S^{*\alpha}_{\lambdab_{\mf C_a}} S^{\alpha}_{\lambdab_{\mf C_a}} D_{\alpha}g} \\ &=& \inp{F(w)}{g}_E,
\eeqn
where we used that $S^{*\alpha}_{\lambdab_{\mf C_a}} S^{\alpha}_{\lambdab_{\mf C_a}} D_{\alpha}=I|_E$ for every $\alpha \in \mathbb N^d.$
This completes the verification of {\bf Step II}.

\vskip.2cm

{\bf Step III.} We note that $D_{\alpha}$ is a block diagonal operator on $E$ with diagonal entries $\frac{a(a+1) \cdots (a+|\alpha|-1)}{\alpha!}$  (corresponding to $e_{\rootb}$) and 
$$\Big(\prod_{j=1}^d \frac{\alpha_{u_{j}}!}{(\alpha_{u_{j}}+\alpha_{j})!} \Big){(|\alpha_u|+a)(|\alpha_u|+a+1) \cdots (|\alpha_u| + a + |\alpha|-1)}$$ (corresponding to the component of $E$ from $l^2(\mathsf{sib}_F(u))$).
This is immediate from \eqref{s-ca-moment-f-2} and the definition of $D_{\alpha}$.

\vskip.2cm

{\bf Step IV.} We verify that the domain of convergence of $\kappa_{\mathscr H_{a, d}}$ equals the open unit ball $\mathbb B^d$ in $\mathbb C^d.$ Indeed, by the preceding two steps, $\kappa_{\mathscr H_{a, d}}$ takes the form
\beqn
\kappa_{\mathscr H_{a, d}}(z, w) = \sum_{\alpha \in \mathbb N^d}  \frac{a(a+1) \cdots (a+|\alpha|-1)}{\alpha!}  z^{\alpha} \overline{w}^{\alpha} ~P_{[e_\rootb]} + \sum_{\underset{F \neq \emptyset}{F  \in \mathscr{P}}} \sum_{u \in \Omega_F} \kappa_{u, F}(z, w),
\eeqn
where $\kappa_{u, F}(z, w)$ is given by \eqref{kappa-u-F}.
Since the first series in the expression for $\kappa_{\mathscr H_{a, d}}(z, w)$ is precisely $\frac{P_{[e_\rootb]}}{(1-\inp{z}{w})^a}$, it suffices to check that the domain of convergence of $\kappa_{u, F}(\cdot, w)$ equals $\mathbb B^d$ for every $w \in \mathbb B^d.$ However, the coefficients in this series are obtained (modulo some scalars) by adding the constant $d$-tuple $\alpha_u$ to the coefficients of  $\frac{1}{(1-\inp{z}{w})^a}$, and hence the domain of convergence is $\mathbb B^d$.
\end{proof}
\begin{remark}
In case $\mathscr T_j=\mathscr T_{1, 0}$ for $j=1, \cdots, d$, then the reproducing kernel spaces $\mathscr H_{a, d}$ are precisely the spaces appearing in \cite[(1.11)]{BV} (see Table 1 below).
\end{remark}

{\small
\begin{table}[H]
\caption{Tree analogs of reproducing kernels $\kappa_{\mathcal H_{a, d}}(z, w)$}
% title of Table
\begin{center}
% used for centering table
\begin{tabular}{| l | l |}
% centered columns (4 columns)
\hline
%inserts double horizontal lines
Kernel $\kappa_{\mathscr H_{a, d}}(z, w)$ & $\mathscr T_1 \times \mathscr T_2$ 
 \\ \hline
% inserts table
%heading
% inserts single horizontal line
$\displaystyle \frac{1}{(1-\inp{z}{w})^a} P_{[e_{(0, 0)}]}$ & $\mathscr T_{1, 0} \times \mathscr T_{1, 0}$ \\ \hline $\displaystyle \frac{1}{(1-\inp{z}{w})^a} P_{{[e_{(0, 0)}]}} $ $+  \displaystyle \sum_{\alpha \in \mathbb N^d}  \frac{1}{(\alpha + \epsilon_1)!} {\prod_{j=0}^{|\alpha|-1}(a + 1 + j)}  z^{\alpha} \overline{w}^{\alpha}\,P_{\mathcal L_{(1, 0), \{1\}}} $ & $\mathscr T_{2, 0} \times \mathscr T_{1, 0}$ \\ \hline $\displaystyle \frac{1}{(1-\inp{z}{w})^a} P_{[e_{(0, 0)}]} + \sum_{\alpha \in \mathbb N^d}  \frac{1}{(\alpha + \epsilon_1)!} {\prod_{j=0}^{|\alpha|-1}(a +1 + j)} z^{\alpha} \overline{w}^{\alpha}\, P_{\mathcal L_{(1, 0), \{1\}}} $
&  \\  $+ \displaystyle \sum_{\alpha \in \mathbb N^d}  \frac{1}{(\alpha + \epsilon_2)!} {\prod_{j=0}^{|\alpha|-1}(a + 1 + j)} z^{\alpha} \overline{w}^{\alpha}\, P_{\mathcal L_{(0, 1), \{2\}}}$
& $ \mathscr T_{2, 0} \times \mathscr T_{2, 0}$ \\ $+ \displaystyle \sum_{\alpha \in \mathbb N^d}  \frac{1}{(\alpha+\epsilon_1 + \epsilon_2)!} {\prod_{j=0}^{|\alpha|-1}(a + 2+ j)} z^{\alpha} \overline{w}^{\alpha}\, P_{\mathcal L_{(1, 1), \{1, 2\}}} $
&  \\ \hline
\end{tabular}
\end{center}

%\label{table:nonlin}
% is used to refer this table in the text
\end{table}
}

\begin{corollary} \label{p-spec-S-c-a}
Let $\mathscr T = (V,\mathcal E)$ be the directed Cartesian product of locally finite rooted directed trees $\mathscr T_1, \cdots, \mathscr T_d$ of finite joint branching index and let $S_{\lambdab_{\mf C_a}}$ be as introduced in Example \ref{ex-sp-bal}. Then the point spectrum of $S^*_{\lambdab_{\mf C_a}}$ contains the open unit ball $\mathbb B^d$ in $\mathbb C^d.$
\end{corollary}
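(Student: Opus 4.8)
The plan is to exploit the unitary equivalence established in Theorem \ref{S-c-a-kernel}: since $S_{\lambdab_{\mf C_a}}$ is unitarily equivalent to the multiplication tuple $\mathscr M_{z, a}=(\mathscr M_{z_1}, \cdots, \mathscr M_{z_d})$ acting on the reproducing kernel Hilbert space $\mathscr H_{a, d}$ of $E$-valued holomorphic functions on $\mathbb B^d$, it suffices to show that every $w \in \mathbb B^d$ is an eigenvalue of $\mathscr M^*_{z, a}$. The natural candidate eigenvector is, as always for multiplication tuples on reproducing kernel Hilbert spaces, the kernel section $\kappa_{\mathscr H_{a, d}}(\cdot, w)\,g$ for a fixed nonzero $g \in E$. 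First I would recall the reproducing property, which in the present vector-valued setting reads $\inp{F}{\kappa_{\mathscr H_{a, d}}(\cdot, w)g}_{\mathscr H_{a, d}}=\inp{F(w)}{g}_E$ for all $F \in \mathscr H_{a, d}$, $g \in E$, $w \in \mathbb B^d$ (this was verified in Step II of the proof of Theorem \ref{S-c-a-kernel}).

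The key computation is then the standard one: for any $F \in \mathscr H_{a, d}$, $g \in E$ and $j \in \{1, \cdots, d\}$,
\beqn
\inp{F}{\mathscr M^*_{z_j}\kappa_{\mathscr H_{a, d}}(\cdot, w)g}_{\mathscr H_{a, d}} = \inp{\mathscr M_{z_j}F}{\kappa_{\mathscr H_{a, d}}(\cdot, w)g}_{\mathscr H_{a, d}} = \inp{(\mathscr M_{z_j}F)(w)}{g}_E = \overline{w}_j \cdot \overline{\inp{g}{F(w)}_E},
\eeqn
so that $\mathscr M^*_{z_j}\kappa_{\mathscr H_{a, d}}(\cdot, w)g = \overline{w}_j\,\kappa_{\mathscr H_{a, d}}(\cdot, w)g$; hence $\kappa_{\mathscr H_{a, d}}(\cdot, w)g$ lies in $\ker(\mathscr M^*_{z, a}-\overline{w})$, where $\overline{w}=(\overline{w}_1, \cdots, \overline{w}_d)$. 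To conclude that $w \in \sigma_p(S^*_{\lambdab_{\mf C_a}})$, I must check that this eigenvector is nonzero for a suitable choice of $g$. Since $\kappa_{\mathscr H_{a, d}}(w, w)$ is, by the explicit formula in Theorem \ref{S-c-a-kernel}, a sum of a strictly positive scalar multiple of $P_{[e_{\rootb}]}$ and positive operators, it is a strictly positive operator on $E$ (at the very least $\inp{\kappa_{\mathscr H_{a, d}}(w, w)e_{\rootb}}{e_{\rootb}}_E = (1-\|w\|_2^2)^{-a} > 0$), so picking $g=e_{\rootb}$ gives $\inp{\kappa_{\mathscr H_{a, d}}(\cdot, w)e_{\rootb}}{\kappa_{\mathscr H_{a, d}}(\cdot, w)e_{\rootb}}_{\mathscr H_{a, d}} = \inp{\kappa_{\mathscr H_{a, d}}(w, w)e_{\rootb}}{e_{\rootb}}_E > 0$, hence $\kappa_{\mathscr H_{a, d}}(\cdot, w)e_{\rootb}\neq 0$. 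Transporting back through the unitary $U^{-1}$ of Theorem \ref{S-c-a-kernel}, we obtain a nonzero vector in $\ker(S^*_{\lambdab_{\mf C_a}}-\overline{w})$; since $\overline{w}$ ranges over $\mathbb B^d$ as $w$ does, and $\mathbb B^d$ is invariant under coordinatewise conjugation, we conclude $\mathbb B^d \subseteq \sigma_p(S^*_{\lambdab_{\mf C_a}})$.

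I do not anticipate a genuine obstacle here; the only mildly delicate point is ensuring that the domain of convergence of $\kappa_{\mathscr H_{a, d}}$ is exactly $\mathbb B^d$ so that $\kappa_{\mathscr H_{a, d}}(\cdot, w)g$ is an honest element of $\mathscr H_{a, d}$ for every $w \in \mathbb B^d$ — but this is precisely Step IV of the proof of Theorem \ref{S-c-a-kernel}, so it may be invoked directly. (One could alternatively bypass the kernel-section argument entirely and write down the eigenvector explicitly as $\sum_{\alpha \in \mathbb N^d}\frac{1}{\|S^{\alpha}_{\lambdab_{\mf C_a}}e_{\rootb}\|^2}\overline{w}^{\alpha}S^{\alpha}_{\lambdab_{\mf C_a}}e_{\rootb}$ and check directly, using Proposition \ref{shift-prop}(v) together with the moment formula \eqref{s-ca-moment-f-2}, that $S^*_j$ applied to it returns $\overline{w}_j$ times itself and that the $\ell^2$-norm converges precisely for $\|w\|_2 < 1$; this is essentially the same computation unwound.)
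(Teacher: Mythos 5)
Your proposal is correct and follows essentially the same route as the paper: invoke the unitary equivalence with $\mathscr M_{z,a}$ on $\mathscr H_{a,d}$ from Theorem \ref{S-c-a-kernel} and observe that the kernel sections $\kappa_{\mathscr H_{a,d}}(\cdot,w)g$ are joint eigenvectors of $\mathscr M^*_{z,a}$. Your additional care in verifying that the eigenvector is nonzero (via $\inp{\kappa_{\mathscr H_{a,d}}(w,w)e_{\rootb}}{e_{\rootb}}_E>0$) and in noting the conjugation of the eigenvalue are details the paper leaves implicit, but they do not change the argument.
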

\begin{proof}
By Theorem \ref{S-c-a-kernel}, $S_{\lambdab}$ is unitarily equivalent to multiplication $d$-tuple $\mathscr M_z=(\mathscr M_{z_1}, \cdots, \mathscr M_{z_d})$ acting on the reproducing kernel Hilbert space $\mathscr H_{a, d}$ of $E$-valued holomorphic functions defined on unit ball $\mathbb B^d.$
The desired conclusion now follows from the fact that $\mathscr M^*_{z_j}(\kappa_{\mathscr H_{a, d}}(\cdot, w)f)=\overline{w}_j \kappa_{\mathscr H_{a, d}}(\cdot, w)f$ for any $f \in E$, $w \in \mathbb B^d$ and $j=1, \cdots, d,$ where $\kappa_{\mathscr H_{a, d}}$ denotes the reproducing kernel associated with $\mathscr H_{a, d}.$
\end{proof}

%We will revisit the last example in the context joint subnormal and other subclasses of multishifts later in this chapter.  

We now turn our attention to the classification of spherically balanced multishifts.
The notion of spherically balanced multishifts is closely related to that of spherical Cauchy dual tuple. Before we make this precise, note that by \eqref{spherical-dual}, $S_i^{\mf s} = S_i \big(\sum_{i=1}^d S_i^* S_i \big)^{-1}$. It follows that
the spherical Cauchy dual $S^{\mathfrak s}_{\lambdab} = (S_1^{\mathfrak s}, \cdots, S_d^{\mathfrak s})$ of a joint left invertible multishift $S_{\lambdab}$ is given by
$$S_i^{\mf s} e_v = \Big(\sum_{i=1}^d \|S_i e_v\|^2 \Big)^{-1} \sum_{w \in \childi{i}{v}} \lambda_w^{(i)} e_w\ \text{for all}\ v \in V,~i=1, \cdots, d.$$ 
Note that $S^{\mathfrak s}_{\lambdab}$ is also a multishift on $\mathscr T$ with weights $${\lambda_w^{(i)}}\Big({\sum_{i=1}^d \|S_i e_{\parenti{i}{w}}\|^2}\Big)^{-1}, w \in V^{\circ}, ~i=1, \cdots, d.$$

In the next proposition, we show that every joint left invertible spherically balanced multishift admits a polar decomposition in the following sense.
%, the spherical Cauchy dual turns out to be commuting.
\begin{proposition}\label{sphericallybal}
Let $\mathscr T = (V,\mathcal E)$ be the directed Cartesian product of rooted directed trees $\mathscr T_1, \cdots, \mathscr T_d$.
Let $S_{\lambdab}=(S_1, \cdots, S_d)$ be a joint left invertible multishift on $\mathscr T$ and let $S^{\mathfrak s}_{\lambdab}$ denote the spherical Cauchy dual of $S_{\lambdab}$. 
Then 
the following statements are equivalent:
\begin{enumerate}
\item[(i)] $S^{\mf s}_{\lambdab}$ is commuting. 
\item[(ii)] For every $v \in V^{\circ}$, 
$\mf C$ is constant on $\mathsf{Par}(v)$, where $\mf C$ is as defined in \eqref{constant-gen}.
\item[(iii)] There exists a joint isometry multishift $T_{\lambdab}=(T_1, \cdots, T_d)$ and a block diagonal, positive, invertible bounded linear operator $D_c$ on $l^2(V)$
such that $$S_j = T_jD_c,~j=1, \cdots, d.$$
\end{enumerate}
In this case, the above decomposition is unique.
\end{proposition}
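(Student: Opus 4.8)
\textbf{Proof proposal for Proposition \ref{sphericallybal}.}

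The plan is to prove the cycle $(i)\Rightarrow(ii)\Rightarrow(iii)\Rightarrow(i)$, followed by a short uniqueness argument, mimicking the structure of the proof of Proposition \ref{t-p-decom} (the toral analog) but working with the single block-diagonal operator $D_c$ built from $\mf C$ rather than the $d$-tuple $D$. Throughout, recall from \eqref{spherical-dual} and the discussion preceding the proposition that $S^{\mf s}_j e_v = \mf C(v)^{-1}\sum_{w\in\childi{j}{v}}\lambda^{(j)}_w e_w$, where $\mf C(v)=\sum_{i=1}^d\|S_i e_v\|^2$, and that $\mf C$ is strictly positive and bounded since $S_{\lambdab}$ is bounded and joint left invertible (Proposition \ref{shift-prop}(viii)).

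For $(i)\Rightarrow(ii)$: I would compute $S^{\mf s}_j S^{\mf s}_i e_v$ and $S^{\mf s}_i S^{\mf s}_j e_v$ exactly as in \eqref{SjSi}--\eqref{SiSj} but with the extra factors $\mf C(\cdot)^{-1}$ attached. Since $S_{\lambdab}$ itself is commuting (it is joint left invertible, hence a fortiori a commuting multishift by hypothesis), the weight-products already satisfy \eqref{commuting}; evaluating the commutator of $S^{\mf s}_i, S^{\mf s}_j$ at a vertex $u\in\mathsf{Chi}_j\childi{i}{v}=\mathsf{Chi}_i\childi{j}{v}$ (Lemma \ref{disjoint}(i)) and cancelling the common nonzero weight factor then forces $\mf C(\parenti{j}{u})=\mf C(\parenti{i}{u})$ whenever both coordinate parents exist; here $\parenti{j}{u}$ and $\parenti{i}{u}$ are exactly the two elements of $\mathsf{Par}(u)$ coming from the $j$-th and $i$-th directions (see \eqref{semi-parent}). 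Ranging over all such $u$ and all $i,j$ gives that $\mf C$ is constant on $\mathsf{Par}(v)$ for every $v\in V^\circ$. For $(ii)\Rightarrow(iii)$, I define $D_c$ on the orthonormal basis by $D_c e_v := \sqrt{\mf C(\parent{}(v))}\,e_v$—more precisely, since $\mathsf{Par}(v)$ may have more than one element I use the common value of $\mf C$ on $\mathsf{Par}(v)$ for $v\neq\rootb$, and set $D_c e_{\rootb}:=\sqrt{\mf C(\rootb)}\,e_{\rootb}$ (or any convenient positive constant on the root's fiber—I will fix the convention so the formula $S_j=T_jD_c$ holds). This $D_c$ is block-diagonal with respect to the generation decomposition $l^2(V)=\bigoplus_t l^2(\mathcal G_t)$ by Lemma \ref{disjoint}(vi), and it is positive and invertible because $\mf C$ is bounded above and below. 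I then define $T_{\lambdab}$ as the multishift with weights $\theta^{(j)}_w := \lambda^{(j)}_w / \sqrt{\mf C(\parenti{j}{w})}$ for $w\in\childi{j}{v}$, and check by the same computation as in \eqref{eq-t-b} that $S_j e_v = \sqrt{\mf C(v)}\sum_{w\in\childi{j}{v}}\theta^{(j)}_w e_w = T_j D_c e_v$ (using that $\mf C(v)$ equals the common value of $\mf C$ on $\mathsf{Par}(w)$ for $w\in\childi{j}{v}$). One computes $\sum_{j=1}^d\|T_j e_v\|^2 = \mf C(v)^{-1}\sum_{j=1}^d\sum_{w\in\childi{j}{v}}(\lambda^{(j)}_w)^2 = 1$, so $T_{\lambdab}$ is a joint isometry; commutativity of $T_{\lambdab}$ follows from that of $S_{\lambdab}$ together with $(ii)$, via Proposition \ref{shift-prop}(i) applied to the rescaled weights.

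For $(iii)\Rightarrow(i)$: if $S_j=T_jD_c$ with $T_{\lambdab}$ a joint isometry and $D_c$ block-diagonal positive invertible, then $\sum_{i=1}^d S_i^*S_i = D_c\big(\sum_{i=1}^d T_i^*T_i\big)D_c = D_c^2$, so $S^{\mf s}_j = S_j D_c^{-2} = T_j D_c^{-1}$; commutativity of $T_{\lambdab}$ (which, being a joint isometry multishift on $\mathscr T$, must be commuting by hypothesis in $(iii)$) combined with the fact that $D_c^{-1}$ is diagonal in the basis $\{e_v\}$ and constant on each $\mathsf{Par}(w)$ then yields $S^{\mf s}_iS^{\mf s}_j=S^{\mf s}_jS^{\mf s}_i$ by another application of Proposition \ref{shift-prop}(i). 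Finally, for uniqueness: from $S_j=T_jD_c$ we get $\sum_j S_j^*S_j = D_c^2$, so $D_c$ is forced to be the positive square root of $\sum_j S_j^*S_j$ (which is block-diagonal, positive, invertible), and then $T_j = S_j D_c^{-1}$ is determined. I expect the main obstacle to be bookkeeping at the root and the careful verification that $\mf C(v)$, for $v\in\mathcal G_t$, genuinely coincides with the common value of $\mf C$ on $\mathsf{Par}(w)$ for all $w\in\child v$—i.e. that hypothesis $(ii)$ is exactly the compatibility condition making $D_c$ well-defined as a single scalar-per-generation-block operator; this is the place where the equivalence $(ii)\Leftrightarrow$ (commuting spherical dual) is really used, and where one must take care that the multishift being ``spherically balanced'' in the sense of the next section (constant $\mf C$ on entire generations) is a strictly stronger condition than $(ii)$, which only demands constancy on each parent-set.
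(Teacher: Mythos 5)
Your overall architecture matches the paper's proof — (i)$\Leftrightarrow$(ii) via the commutator computation for $S^{\mf s}_{\lambdab}$, (ii)$\Leftrightarrow$(iii) via the rescaled weights $\lambda^{(j)}_w/\sqrt{\mf C(\parenti{j}{w})}$, and uniqueness from $\sum_j S_j^*S_j = D_c^2$ — but there is a concrete error in your construction of $D_c$. You set $D_ce_v:=\sqrt{c_v}\,e_v$ where $c_v$ is the common value of $\mf C$ on $\mathsf{Par}(v)$. With that choice, for $w\in\childi{j}{v}$ one has $c_w=\mf C(v)$ and $\theta^{(j)}_w=\lambda^{(j)}_w/\sqrt{c_w}$, so what you actually obtain is $S_je_v=\sum_{w\in\childi{j}{v}}\sqrt{c_w}\,\theta^{(j)}_w e_w=D_cT_je_v$, i.e.\ the factorization $S_j=D_cT_j$ in the \emph{wrong order}. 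The statement asks for $S_j=T_jD_c$, and since $D_c$ and $T_j$ do not commute these are different. Your displayed verification ``$T_jD_ce_v=\sqrt{\mf C(v)}\,T_je_v$'' silently replaces the value $\sqrt{c_v}=\sqrt{\mf C(\mathsf{Par}(v))}$ that your $D_c$ actually produces at $e_v$ by $\sqrt{\mf C(v)}$; condition (ii) says nothing about $\mf C(v)$ versus $\mf C(\mathsf{Par}(v))$, so this substitution is unjustified. The fix is simply to define $D_ce_v:=\sqrt{\mf C(v)}\,e_v$ (as the paper does in \eqref{b-diagonal}); then $S_j=T_jD_c$ holds identically, with no appeal to (ii) at all — and indeed your own uniqueness computation $\sum_jS_j^*S_j=D_c(\sum_jT_j^*T_j)D_c=D_c^2$ forces exactly this $D_c$, so it is inconsistent with the operator you constructed.

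Two smaller points. First, your closing remark that (ii) makes $D_c$ ``a single scalar-per-generation-block operator'' is false: (ii) only forces $\mf C$ to be constant on each set $\mathsf{Par}(v)$, not on each generation $\mathcal G_t$ (you correctly note elsewhere that spherical balancedness is strictly stronger). With the corrected $D_c$, well-definedness is never an issue; the sole role of (ii) in (ii)$\Rightarrow$(iii) is to make $T_{\lambdab}$ \emph{commuting}, via Proposition \ref{shift-prop}(i). Second, in (iii)$\Rightarrow$(i) you invoke ``$D_c^{-1}$ is constant on each $\mathsf{Par}(w)$,'' which is not part of hypothesis (iii); the paper instead first pins down $T_{\lambdab}$ and $D_c$ by uniqueness and then reads off (ii) from the commutativity of $T_{\lambdab}$. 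Your route can be repaired the same way, but as written it assumes what it needs to prove.
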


\begin{proof} 
We do not include the verification of the uniqueness part as it is similar to that of Proposition \ref{t-p-decom}.

(i) $\Longleftrightarrow$ (ii):
Fix $v \in V.$
By the discussion prior to the proposition, we have 
$$S_i^{\mf s} e_v = \mf C(v)^{-1} \sum_{w \in \childi{i}{v}} \lambda_w^{(i)} e_w.$$
Therefore,
\beqn
S_j^{\mf s} S_i^{\mf s} e_v &=& \mf C(v)^{-1} \sum_{w \in \childi{i}{v}} \lambda_w^{(i)} S^{\mf s}_j e_w\\
&=& \mf C(v)^{-1} \sum_{w \in \childi{i}{v}} \lambda_w^{(i)} \mf C(w)^{-1} \sum_{u \in \childi{j}{w}} \lambda_u^{(j)} e_u\\
&=& \mf C(v)^{-1} \sum_{u \in \mathsf{Chi}_j \childi{i}{v}} \lambda_{\parenti{j}{u}}^{(i)} \lambda_u^{(j)} 
\mf C(\parenti{j}{u})^{-1} e_u.
\eeqn
Similarly,
$$S_i^{\mf s} S_j^{\mf s} e_v = \mf C(v)^{-1} \sum_{u \in \mathsf{Chi}_i \childi{j}{v}} \lambda_{\parenti{i}{u}}^{(j)} \lambda_u^{(i)} \mf C(\parenti{i}{u})^{-1} e_u.$$
Since $S_{\lambdab}$ is commuting, by Proposition \ref{shift-prop}(i), $S_i^{\mf s} S_j^{\mf s} e_v = S_j^{\mf s} S_i^{\mf s} e_v$ for all $i,j =1, \cdots, d$ if and only if \beqn \label{C-constant} \mf C(\parenti{i}{u}) = \mf C(\parenti{j}{u})~\mbox{for all~} u \in \mathsf{Chi}_i \childi{j}{v}~\mbox{and~} i,j =1, \cdots, d.\eeqn 
This yields the desired equivalence of (i) and (ii).
%Since $\mathsf{par}_i \mathsf{Chi}_i \childi{j}{v}=\childi{j}{v},$ \eqref{C-constant} is equivalent to 
 %$\mf C$ being constant on $\childi{i}{v}$ for all $i=1, \cdots, d$, that is, $\mf C$ being constant on $\child{v}$. 
%% Hence $\mf C$ is constant on $\{w \in \childnt{\alpha}{v} : \alpha \in \mathbb N^d, |\alpha|=1\}$. Since this holds for all $v$, it follows that $\mf C$ is constant on each set  $\mathcal G_t$, $t \geqslant  1$.

(ii) $\Longleftrightarrow$ (iii):
We introduce a multishift $T_{\lambdab} = (T_1, \cdots, T_d)$ on $\mathscr T$ with weights
\beqn
\frac{{\lambda_w^{(i)}}}{\sqrt{\mf C({v})}}, ~w \in \childi{i}{v}, ~v \in V, ~\mbox{and~}i=1, \cdots, d.
\eeqn
Note that for $v \in V$,
\beqn
\sum_{j=1}^d \|T_j e_v\|^2 = \mf C(v)^{-1} \sum_{j=1}^d \| S_j e_v \|^2 =1.
\eeqn
One may argue as in the previous paragraph to see that $T_{\lambdab}$ is commuting if and only (ii) holds.
We now define a block diagonal operator $D_c$ on $l^2(V)$ as follows:
\beq \label{b-diagonal}
D_c e_v := \sqrt{\mf C({v})}\, e_v~\mbox{for~any}~v \in V.
\eeq
Since $S_{\lambdab}$ is joint left invertible, by Proposition \ref{shift-prop}(viii), $D_c$ is a block diagonal, positive, invertible bounded linear operator. If (ii) holds then by the above argument $S_{\lambdab}$ has the decomposition given in (iii). Conversely, if (iii) holds then by the uniqueness of the decomposition, $T_{\lambdab}$ and $D_c$ must be of the form as defined above. The desired conclusion in (ii) now follows from the commutativity of $T_{\lambdab}.$  
\end{proof}
\begin{remark}
The spherical Cauchy dual $S^{\mathfrak s}_{\bf w}$ of a joint left invertible classical multishift $S_{\bf w}$
 is the $d$-variable weighted
shift given by \beqn S^{\mathfrak{s}}_je_\alpha =
\frac{w^{(j)}_\alpha}{\delta_{\alpha, S_{\bf w}}} \;\, e_{\alpha + \epsilon_j}~(1 \leqslant j
\leqslant d), \eeqn where
\beqn
\label{delta_n} \delta_{\alpha, S_{\bf w}} := {\sum_{j=1}^d
\left(w^{(j)}_{\alpha}\right)^2} \enspace ~(\alpha \in {\mathbb{N}}^d).
\eeqn
It is easily seen that that $S^{\mathfrak{s}}_{\bf w}$ is commuting if and
only if $\delta_{\alpha + \epsilon_j, S_{\bf w}} = \delta_{\alpha + \epsilon_k, S_{\bf w}}$
for all $1 \leqslant j, k \leqslant d$ \cite[Section 6]{CC}.
In this case, the function $\mf C$ turns out to be a function of $|\alpha|$, $\alpha \in \mathbb N^d$ \cite[Lemma 3.1]{CK}. This may also be deduced from the result above once we note that for any integer $t \Ge 1 ,$ one can order $\mathcal G_t$ as $\{u_1, \cdots, u_{\tiny \mbox{card}(\tiny \mathcal G_t)}\}$ such that $\mathsf{Par}(u_i) \cap \mathsf{Par}(u_{i+1})  \neq \emptyset$ for every $i=1, \cdots, \mbox{card}(\mathcal G_t)-1.$
\end{remark}

We refer to $T_{\lambdab}$ \index{$T_{\lambdab}$} and $D_c$ as {\it joint isometry part} and {\it diagonal part} of the 
%\index{$D_c$}
multishift $S_{\lambdab}$ respectively.

\begin{corollary} \label{p-decom}
Let $\mathscr T = (V,\mathcal E)$ be the directed Cartesian product of rooted directed trees $\mathscr T_1, \cdots, \mathscr T_d$ and
let $S_{\lambdab}$ be a joint left invertible multishift on $\mathscr T$. 
%and let $S^{\mathfrak s}_{\lambdab}$ denote the spherical Cauchy dual of $S_{\lambdab}$. If $S^{\mathfrak s}_{\lambdab}$ is commuting, then
If $S_{\lambdab}$ is spherically balanced, 
%if and only if
%there exists a joint isometry multishift $T_{\lambdab}$ and a block diagonal, positive, invertible bounded linear operator $D_c$ on $l^2(V)$, as given by \eqref{b-diagonal}, 
%such that $$S_j = T_jD_c,~j=1, \cdots, d.$$ 
then
for any $\beta \in \mathbb N^d$ and any $v \in V$, we have
\begin{enumerate}
\item[(i)] $T^\beta_{\lambdab} e_v = \Big(\prod_{p=0}^{|\beta|-1} \frac{1}{\sqrt{\mf C_{|\alpha_v|+p}}}\Big) S^\beta_{\lambdab} e_v,$
\item[(ii)] $\| T^\beta_{\lambdab} e_v \|^2 = \Big(\prod_{p=0}^{|\beta|-1} \frac{1}{\mf C_{|\alpha_v|+p}} \Big)\| S^\beta_{\lambdab} e_v \|^2$,
\end{enumerate}
where $\mf C_t$ denotes the constant value of $\mf C$ on the generation $\mathcal G_t$ and $T_{\lambdab}$ is the joint isometry part of $S_{\lambdab}.$
\end{corollary}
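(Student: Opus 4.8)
The plan is to establish the formula in part (i) by induction on $|\beta|$, and then obtain (ii) simply by taking norms and invoking the fact (from Proposition \ref{sphericallybal}) that $T_{\lambdab}$ is a joint isometry. The crucial ingredient supplied by the hypothesis and Proposition \ref{sphericallybal} is the polar decomposition $S_j = T_j D_c$, where $D_c$ is the block diagonal operator $D_c e_v = \sqrt{\mf C(v)}\,e_v = \sqrt{\mf C_{|\alpha_v|}}\,e_v$ (using that $S_{\lambdab}$ is spherically balanced, so $\mf C$ depends only on $|\alpha_v|$). Since each $T_j$ is a weighted shift on $\mathscr T$ in its own right, $T_j e_v$ is supported on $\childi{j}{v}$, all of whose vertices have depth $\alpha_v + \epsilon_j$ and hence generation $|\alpha_v| + 1$. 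This ``depth shift by one generation'' is what makes the scalar factors telescope.

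\textbf{Key steps.} First I would record the base case $|\beta| = 0$, which is trivial since both sides equal $e_v$ (empty product is $1$). For the inductive step, write $\beta = \gamma + \epsilon_j$ with $|\gamma| = |\beta| - 1$. Then
\beqn
S^{\beta}_{\lambdab} e_v = S_j S^{\gamma}_{\lambdab} e_v \overset{(\text{IH})}{=} S_j \Big(\prod_{p=0}^{|\gamma|-1}\sqrt{\mf C_{|\alpha_v|+p}}\Big) T^{\gamma}_{\lambdab} e_v = \Big(\prod_{p=0}^{|\gamma|-1}\sqrt{\mf C_{|\alpha_v|+p}}\Big) S_j T^{\gamma}_{\lambdab} e_v.
\eeqn
Now $T^{\gamma}_{\lambdab} e_v$ is a linear combination of basis vectors $e_w$ with $w \in \childnt{\gamma}{v}$ (by the analog of Proposition \ref{shift-prop}(iv) for $T_{\lambdab}$), and every such $w$ has $|\alpha_w| = |\alpha_v| + |\gamma|$. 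Applying $S_j = T_j D_c$ to each such $e_w$ gives $S_j e_w = \sqrt{\mf C_{|\alpha_v|+|\gamma|}}\, T_j e_w$, since $D_c e_w = \sqrt{\mf C_{|\alpha_w|}}\, e_w = \sqrt{\mf C_{|\alpha_v|+|\gamma|}}\, e_w$ and the scalar is the same for all $w$ appearing. Pulling this common scalar out yields $S_j T^{\gamma}_{\lambdab} e_v = \sqrt{\mf C_{|\alpha_v|+|\gamma|}}\, T_j T^{\gamma}_{\lambdab} e_v = \sqrt{\mf C_{|\alpha_v|+|\beta|-1}}\, T^{\beta}_{\lambdab} e_v$ (using that $T_{\lambdab}$ is commuting, so $T_j T^{\gamma}_{\lambdab} = T^{\gamma+\epsilon_j}_{\lambdab} = T^{\beta}_{\lambdab}$). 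Substituting back and absorbing the new factor into the product gives $S^{\beta}_{\lambdab} e_v = \big(\prod_{p=0}^{|\beta|-1}\sqrt{\mf C_{|\alpha_v|+p}}\big) T^{\beta}_{\lambdab} e_v$, which rearranges to the claimed identity in (i).

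\textbf{Finishing and the main obstacle.} For (ii), take squared norms on both sides of (i); the scalar factor comes out as $\prod_{p=0}^{|\beta|-1}\mf C_{|\alpha_v|+p}$, and since $T_{\lambdab}$ is a joint isometry one has $\|T^{\beta}_{\lambdab} e_v\|^2 = 1$... well, more carefully, one does not even need that: rearranging (i) as $T^{\beta}_{\lambdab} e_v = \big(\prod_{p=0}^{|\beta|-1}\mf C_{|\alpha_v|+p}^{-1/2}\big) S^{\beta}_{\lambdab} e_v$ and squaring norms gives (ii) directly without any appeal to the isometry property. I expect the main (mild) obstacle to be the bookkeeping in the inductive step: one must be careful that the ``common scalar'' $\sqrt{\mf C_{|\alpha_v|+|\gamma|}}$ genuinely does not depend on which $w \in \childnt{\gamma}{v}$ is involved — this is exactly where spherical balancedness (constancy of $\mf C$ on each generation $\mathcal G_t$) is used, together with the fact, from Lemma \ref{disjoint}, that all vertices in $\childnt{\gamma}{v}$ share the depth $\alpha_v + \gamma$. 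Everything else is routine manipulation of the polar decomposition and the power formula for multishifts.
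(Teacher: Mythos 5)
Your proof is correct and follows essentially the same route as the paper's: an induction exploiting the polar decomposition $S_j = T_jD_c$ together with the fact that $D_c$ acts as the scalar $\sqrt{\mf C_{|\alpha_v|+|\gamma|}}$ on the generation supporting $T^{\gamma}_{\lambdab}e_v$. The only (cosmetic) difference is that you run a single induction on $|\beta|$, whereas the paper first proves the single-variable case $T_i^k e_v$ and then iterates coordinate by coordinate; your organization is slightly cleaner, and your observation that (ii) needs no appeal to the isometry property is also accurate.
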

\begin{proof} Assume that $S_{\lambdab}$ is spherically balanced.
Note that the weights of $T_{\lambdab}$ takes the form
\beqn
\frac{{\lambda_w^{(i)}}}{\sqrt{\mf C_{|\alpha_v|}}}, ~w \in \childi{i}{v}, ~v \in V, ~\mbox{and~}i=1, \cdots, d.
\eeqn
To see (i), let $v \in V.$ 
Using induction on $k \in \mathbb N$, one can verify that for $i=1, \cdots, d$, 
$$T_i^{k} e_v = \prod_{p=0}^{k-1} \frac{1}{\sqrt{\mf C_{|\alpha_v|+p}}} S_i^{k} e_v.$$
Further, in a similar fashion one can get
$$T_j^{l} T_i^{k} e_v = \prod_{p=0}^{k+l-1} \frac{1}{\sqrt{\mf C_{|\alpha_v|+p}}} S_j^{l} S_i^{k} e_v.$$
Continuing this, we obtain (i). 
Finally, (ii) is immediate from (i). 
\end{proof}

Note that part (ii) above gives precise relation between the moments of $S_{\lambdab}$ and that of $T_{\lambdab}$. The multiplicative factor $\Big(\prod_{p=0}^{|\beta|-1} \frac{1}{\mf C_{|\alpha_v|+p}} \Big)$ appearing in (ii) suggests one to introduce a classical unilateral weighted shift $S_{\theta}$ on some Hilbert space $H^2(\gamma)$ of formal power series, so that $\| T^\beta_{\lambdab} e_v \| = \|S^{|\beta|}_{\theta}f_{0}\|\| S^\beta_{\lambdab} e_v \|$ for some orthonormal basis $\{f_{k}\}_{k \in \mathbb N}$ of $H^2(\gamma)$.
Unfortunately, in this formulation, the tree like structure of $\mathscr T$ does not reflect in the construction of $S_{\theta}$ on $H^2(\gamma).$ However, there is an alternate way to construct a shift on a directed tree arising naturally from $\mathscr T$ which at the same time gives the above relation between moments of  $S_{\lambdab}$ and that of $T_{\lambdab}$.
\begin{definition} \label{shift-tensor-dfn}
Let $\mathscr T = (V,\mathcal E)$ be the directed Cartesian product of locally finite, rooted directed trees $\mathscr T_1, \cdots, \mathscr T_d$. Consider the component $\mathscr T^{\otimes}_{\rootb}=(V^{\otimes}, \mathcal F)$ of the tensor product $\mathscr T^{\otimes}$ of $\mathscr T_1, \cdots, \mathscr T_d,$ which contains $\rootb$ (see Theorem \ref{tensor-prop}).  
%Let $S_{\lambdab}$ be a commuting multishift on $\mathscr T$ and let $c_t$ be as appearing in \eqref{C-v}. 
For a bounded sequence $\{c_t\}_{t \in \mathbb N}$ of positive real numbers, consider the (one variable) weighted shift $S_{\theta}$ 
\index{$S_{\theta}$}
on the rooted directed tree $\mathscr T^{\otimes}_{\rootb}$ with weights given by
\beq \label{theta}
\theta_{\mf w} := \frac{{\sqrt{c_{\alpha_{\mf w}-1}}}}{\sqrt{\mbox{card}(\mathsf{sib}(\mf w))}}~(\mf w \in V^{\otimes} \setminus \rootb),
\eeq
where $\alpha_{\mf w}$ is the depth of $\mf w$ in $\mathscr T^{\otimes}_{\rootb}$ and $\mathsf{sib}(\mf w)$ is the set of siblings of $\mf w$ in the directed tree $\mathscr T^{\otimes}_{\rootb}$.
We refer to the weighted shift $S_{\theta}$ on $\mathscr T^{\otimes}_{\rootb}$ as the {\it balanced weighted shift associated with $\{c_t\}_{t \in \mathbb N}$}.
\end{definition}

\begin{example}
If $\mathscr T_1 = \mathscr T_{1, 0} = \mathscr T_2$, then as seen in Example \ref{tensor-c}, $\mathscr T^{\otimes}_{\rootb}$ is (isomorphic to) $\mathscr T_{1, 0}$, and hence $S_{\theta}$ is (unitarily equivalent to) classical unilateral shift on $l^2(\mathbb N).$

If $\mathscr T_1 = \mathscr T_{2, 0}, \mathscr T_2 = \mathscr T_{1, 0}$, then as seen in Example \ref{tensor-m}, $\mathscr T^{\otimes}_{\rootb}$ is $\mathscr T_{2, 0}$, and hence $S_{\theta}$ is a weighted shift on the rooted directed tree $\mathscr T_{2, 0}$ (see Figure 5.1).

Finally, if $\mathscr T_1 = \mathscr T_{2, 0}=\mathscr T_2$ then as seen in Example \ref{tensor-T1-T1}, $\mathscr T^{\otimes}_{\rootb}$ is $\mathscr T_{4, 0}$, and hence $S_{\theta}$ is a weighted shift on the rooted directed tree $\mathscr T_{4, 0}$ (see Figure 5.2).
\end{example}

\begin{figure}
\includegraphics[scale=.5]{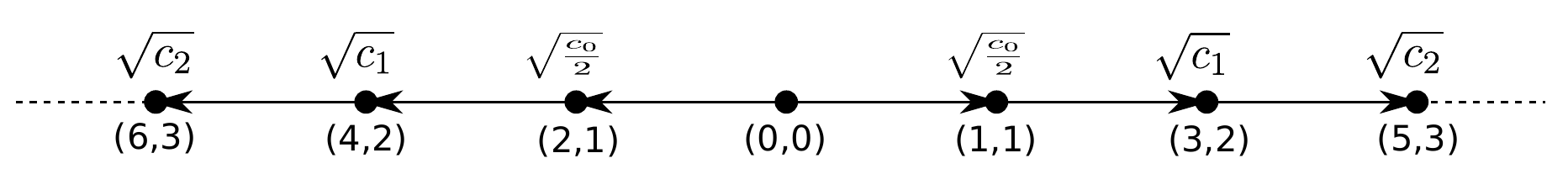} \caption{The weights of $S_{\theta}$ on $\mathscr T^{\otimes}_{\rootb}=\mathscr T_{2, 0}$}
\end{figure}

\begin{figure}
\includegraphics[scale=.45]{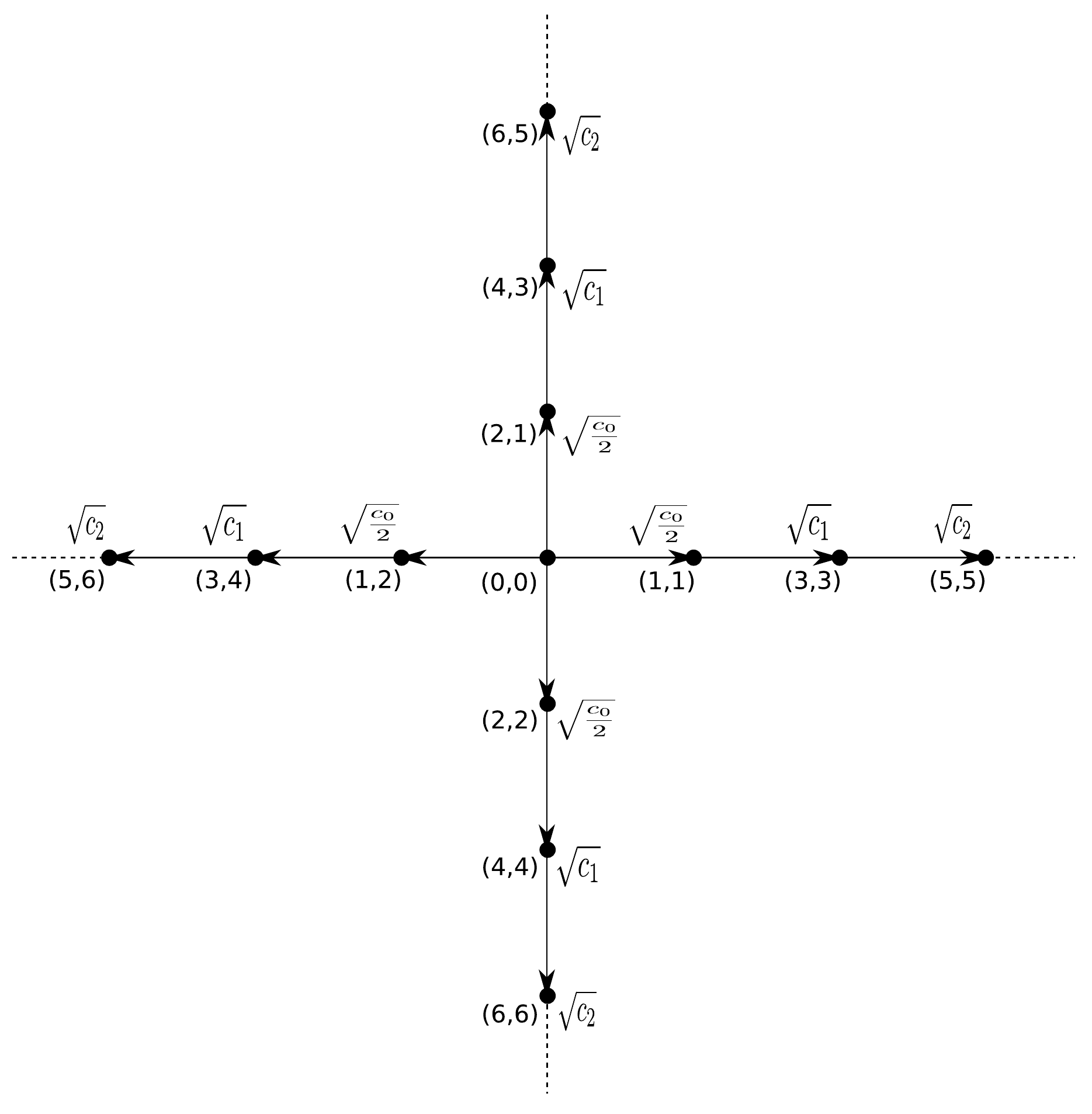} \caption{The weights of $S_{\theta}$ on $\mathscr T^{\otimes}_{\rootb}=\mathscr T_{4, 0}$}
\end{figure}

Before we present the main result of this section, recall that a finite positive
Borel measure $\mu$ supported in the unit sphere $\partial \mathbb
B^d$ in $\mathbb C^d$ is said to be the $\mathbb T^d$-{\it invariant} if for any Borel
measurable subset $\Delta$ of $\partial \mathbb B^d,$
$$\mu(\zeta \cdot \Delta)=\mu(\Delta)~\mbox{for~all}~\zeta \in \mathbb T^d,$$ where $\zeta \cdot \Delta = \{\zeta \cdot z : z
\in \Delta\}$ with $\zeta \cdot z$ denoting the dot product of $\zeta$ and $z.$ A {\it Reinhardt measure} is a $\mathbb
T^d$-invariant probability measure on the unit sphere.

%Recall further that
%a commuting $d$-tuple $T=(T_1, \cdots, T_d)$ on a
%Hilbert space ${\mathcal H}$ is said to be {\it joint subnormal}
%if there exist a Hilbert space ${\mathcal K} \supseteq \mathcal H$
%and a commuting $d$-tuple $N=(N_1, \cdots, N_d)$ of normal
%operators $N_i$ in ${\mathcal B}({\mathcal K})$ such that
%\begin{eqnarray*}
%N_ih = T_ih~\mbox{for~ every~}h \in {\mathcal H}~ \mbox{and}~1\leqslant
%i\leqslant d.
%\end{eqnarray*}

The following generalizes \cite[Theorem 1.10]{CK}, \cite[Theorem 1.3]{K} (the case in which $\mathscr T=\mathscr T^d_{1, 0}$, as described in Example \ref{classical}, with $v=\rootb$).
\begin{theorem}   \label{sp-balanced} 
Let $\mathscr T = (V,\mathcal E)$ be the directed Cartesian product of locally finite, rooted directed trees $\mathscr T_1, \cdots, \mathscr T_d$ and let $\mathscr T^{\otimes}_{\rootb}=(V^{\otimes}, \mathcal F)$ be the component of $\mathscr T^{\otimes}$ containing $\rootb.$
For $v \in V$, choose $\mf v \in V^{\otimes}$ so that $|\alpha_v| = \alpha_{\mf v}$ (see the last part of Theorem \ref{tensor-prop}).
If $S_{\lambdab}$ is a joint left invertible multishift on $\mathscr T$, 
%and let $S_{\theta}$ be the weighted shift on the rooted directed tree $\mathscr T^{\otimes}_{\rootb}=(V^{\otimes}, \mathcal F)$ associated with $S_{\lambdab}.$ 
then 
the following statements are equivalent:
\begin{enumerate}
\item[(i)] The multishift $S_{\lambdab}$ is spherically balanced.
%\item[(ii)] The spherical Cauchy dual $S^{\mf s}_{\lambdab}$ of $S_{\lambdab}$ is commuting.
%\item[(ii)]
%The multishift $S_{\lambdab}$ admits the decomposition $S_j = T_jD_c,~j=1, \cdots, d$, where $T_j$ and $D_c$ are given by \eqref{sp-iso-part} and \eqref{b-diagonal} respectively. 
\item[(ii)] 
For every $v \in V$, there exists a Reinhardt measure $\mu_v$ supported in the unit sphere $\partial \mathbb B^d$ and a balanced weighted shift $S_{\theta}$ on $\mathscr T^{\otimes}_{\rootb}$ associated with a bounded sequence $\{c_t\}_{t \in \mathbb N}$
such that 
\beq \label{int-rep-main-1}
\|f_k\|^2_{l^2(V)} =\int_{\partial \mathbb B^d}\|f_{\theta, k}(z)\|^2_{l^2(V^{\otimes})}d\mu_v(z),
\eeq
where
$$f_k = \sum_{\underset{|\beta| \leqslant k}{\beta \in \mathbb N^d}}a_\beta S^{\beta}_{\lambdab} e_v \in l^2(V), \quad f_{\theta, k}(z)=\sum_{\underset{|\beta| \leqslant k}{\beta \in \mathbb N^d}}a_\beta z^{\beta} S^{|\beta|}_{\theta} e_{\mf v} \in l^2(V^{\otimes})~(k \in \mathbb N, z \in \mathbb C^d).$$
%Let $$f = \sum_{\beta \in \mathbb N^d}a_\beta S^{\beta}_{\lambdab} e_v \in l^2(V), \quad f_{\theta}(z)=\sum_{\beta \in \mathbb N^d}a_\beta (z \cdot S_{\theta})^{\beta} e_{\mf v},$$ 
%and $z \cdot S_{\theta}$ denotes the $d$-tuple $(z_1S_{\theta}, \cdots, z_d S_{\theta})$ for $z \in \mathbb C^d$. 
\end{enumerate}
If any of the above equivalent statements holds, then we have the following:
\begin{enumerate}
\item[(a)] If $\{f_k\}_{k \in \mathbb N}$ converges to $f$ in $l^2(V)$, then $\{f_{\theta, k}(z)\}_{k \in \mathbb N}$ converges to some $f_{\theta}(z)$ in $l^2(V^{\otimes})$ for $\mu_v$-\mbox{a.e.} $z \in \partial \mathbb B^d$, and
\beq \label{int-rep-main}
\|f\|^2_{l^2(V)} =\int_{\partial \mathbb B^d}\|f_{\theta}(z)\|^2_{l^2(V^{\otimes})}d\mu_v(z). 
\eeq
\item[(b)] If $Q^n_{T}(I)$ is as defined in \eqref{sp-gen-powers}, then for $n \in \mathbb N,$
\beq \label{norm-sp-gen}
\inp{Q^n_{S_{\lambdab}}(I)e_v}{e_v} &=& \big\|S^n_{\theta}e_{\mf v}\big\|^2, \\
 \label{norm-Q(I)}
\|Q^n_{S_{\lambdab}}(I)\| &=& \|S^n_{\theta}\|^2.
\eeq
\end{enumerate}
\end{theorem}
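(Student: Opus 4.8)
\textbf{Proof plan for Theorem \ref{sp-balanced}.}

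The plan is to establish the equivalence (i) $\Longleftrightarrow$ (ii) by exploiting the polar decomposition $S_j = T_j D_c$ from Proposition \ref{sphericallybal} together with the moment formula in Corollary \ref{p-decom}(ii). First I would recall that if $S_{\lambdab}$ is spherically balanced, then by Corollary \ref{p-decom}(ii) we have $\|S^{\beta}_{\lambdab}e_v\|^2 = \big(\prod_{p=0}^{|\beta|-1}\mf C_{|\alpha_v|+p}\big)\|T^{\beta}_{\lambdab}e_v\|^2$, where $T_{\lambdab}$ is the joint isometry part. Since $T_{\lambdab}$ is a joint isometry multishift, its moments $\{\|T^{\beta}_{\lambdab}e_v\|^2\}_{\beta \in \mathbb N^d}$ are those of a joint isometry; the standard fact (used already in the classical case, cf. \cite[Theorem 1.10]{CK}) is that such moments, being the moments of a spherical isometry at the fixed vector $e_v$, admit an integral representation $\|T^{\beta}_{\lambdab}e_v\|^2 = \int_{\partial \mathbb B^d}|z^{\beta}|^2\, d\mu_v(z)$ for a unique probability measure $\mu_v$ on $\partial\mathbb B^d$; the $\mathbb T^d$-invariance of $\mu_v$ follows because, by Proposition \ref{shift-prop}(ix), the monomials $\{S^{\beta}_{\lambdab}e_v\}_{\beta}$ are mutually orthogonal, hence so are $\{T^{\beta}_{\lambdab}e_v\}_{\beta}$, which forces $\mu_v$ to have vanishing mixed moments $\int z^{\beta}\overline{z}^{\gamma}d\mu_v = 0$ for $\beta \neq \gamma$ — equivalently $\mu_v$ is Reinhardt. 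On the other side, with $S_{\theta}$ the balanced weighted shift on $\mathscr T^{\otimes}_{\rootb}$ associated with the sequence $\{c_t\}_{t\in\mathbb N}$, $c_t := \mf C_t$, one computes $\|S^{n}_{\theta}e_{\mf v}\|^2 = \prod_{p=0}^{n-1}\mf C_{\alpha_{\mf v}+p} = \prod_{p=0}^{n-1}\mf C_{|\alpha_v|+p}$ using \eqref{theta}, parts (iv),(v) of Proposition \ref{shift-prop} applied to the directed tree $\mathscr T^{\otimes}_{\rootb}$, and an identity of the form \eqref{card-ide} for the tree $\mathscr T^{\otimes}_{\rootb}$ (the sum over $\childn{n}{\mf v}$ of the product of reciprocals of sibling-cardinalities equals $1$).

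With these two computations in hand, the identity \eqref{int-rep-main-1} follows for a single monomial $f_k = S^{\beta}_{\lambdab}e_v$: the left side is $\prod_{p=0}^{|\beta|-1}\mf C_{|\alpha_v|+p}\cdot\|T^{\beta}_{\lambdab}e_v\|^2 = \prod_{p=0}^{|\beta|-1}\mf C_{|\alpha_v|+p}\cdot\int|z^{\beta}|^2 d\mu_v$, while the right side is $\int \|z^{\beta}S^{|\beta|}_{\theta}e_{\mf v}\|^2 d\mu_v = \|S^{|\beta|}_{\theta}e_{\mf v}\|^2\int|z^{\beta}|^2 d\mu_v = \prod_{p=0}^{|\beta|-1}\mf C_{|\alpha_v|+p}\cdot\int|z^{\beta}|^2 d\mu_v$. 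For a general $f_k = \sum_{|\beta|\leqslant k}a_{\beta}S^{\beta}_{\lambdab}e_v$ I would expand both sides; the cross terms on the left vanish by orthogonality (Proposition \ref{shift-prop}(ix)), and the cross terms on the right vanish because $\mu_v$ is Reinhardt (so $\int z^{\beta}\overline{z}^{\gamma}d\mu_v = 0$ for $|\beta|=|\gamma|$ but $\beta\neq\gamma$ — and there is an additional subtlety when $|\beta|\neq|\gamma|$, but then $S^{|\beta|}_{\theta}e_{\mf v} \perp S^{|\gamma|}_{\theta}e_{\mf v}$ in $l^2(V^{\otimes})$ since $S_{\theta}$ is a shift on a rooted directed tree, so those cross terms also drop). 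This matches the diagonal terms computed above, giving \eqref{int-rep-main-1}. For the converse (ii) $\Longrightarrow$ (i), I would take $f_k = S_j e_v$ (a single first-order monomial) in \eqref{int-rep-main-1} to read off $\|S_j e_v\|^2 = \int|z_j|^2 d\mu_v \cdot \|S_{\theta}e_{\mf v}\|^2$, and then sum over $j$: since $\mu_v$ is supported on $\partial\mathbb B^d$ we get $\sum_{j=1}^d\|S_j e_v\|^2 = \|S_{\theta}e_{\mf v}\|^2 = c_{\alpha_{\mf v}} = c_{|\alpha_v|}$, which depends only on $|\alpha_v|$ — this is precisely the definition of spherically balanced.

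For part (a), given $f_k \to f$ in $l^2(V)$, the isometry \eqref{int-rep-main-1} shows $\{f_{\theta,k}\}$ is Cauchy in $L^2(\partial\mathbb B^d, \mu_v; l^2(V^{\otimes}))$, hence converges to some $f_{\theta}$ there and, passing to a subsequence, $\mu_v$-a.e.\ pointwise; taking limits in \eqref{int-rep-main-1} gives \eqref{int-rep-main}. For part (b), I would combine \eqref{sp-gen-powers}, which gives $\inp{Q^n_{S_{\lambdab}}(I)e_v}{e_v} = \sum_{|\beta|=n}\frac{n!}{\beta!}\|S^{\beta}_{\lambdab}e_v\|^2$, with the analogous expansion $\|S^n_{\theta}e_{\mf v}\|^2 = \sum_{u\in\childn{n}{\mf v}}\|\,\cdot\,\|^2$; applying \eqref{int-rep-main-1} with $f_n = \sum_{|\beta|=n}\sqrt{\frac{n!}{\beta!}}\,S^{\beta}_{\lambdab}e_v$ (whose $f_{\theta,n}(z) = \sum_{|\beta|=n}\sqrt{\frac{n!}{\beta!}}z^{\beta}S^n_{\theta}e_{\mf v} = (z_1+\cdots+z_d)^n\,S^n_{\theta}e_{\mf v}/\text{(multinomial normalization)}$ — more precisely $\sum_{|\beta|=n}\frac{n!}{\beta!}|z^{\beta}|^2 = (\|z\|_2^2)^n = 1$ on $\partial\mathbb B^d$), one obtains $\inp{Q^n_{S_{\lambdab}}(I)e_v}{e_v} = \|S^n_{\theta}e_{\mf v}\|^2\int_{\partial\mathbb B^d}\sum_{|\beta|=n}\frac{n!}{\beta!}|z^{\beta}|^2 d\mu_v(z) = \|S^n_{\theta}e_{\mf v}\|^2$, which is \eqref{norm-sp-gen}. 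Then \eqref{norm-Q(I)} follows by taking the supremum over $v\in V$ of $\inp{Q^n_{S_{\lambdab}}(I)e_v}{e_v}$ (noting $Q^n_{S_{\lambdab}}(I)$ is a positive diagonal operator in the basis $\{e_v\}$, so its norm is this supremum) together with the fact, via Theorem \ref{tensor-prop}(x), that every $\alpha_{\mf v}$ is realized as some $|\alpha_v|$ and conversely, so $\sup_{v}\|S^n_{\theta}e_{\mf v}\|^2 = \|S^n_{\theta}\|^2$ (the norm of the diagonal-weighted tree shift $S^n_{\theta}$ being the supremum of its diagonal values $\prod_{p=0}^{n-1}\mf C_{\alpha_{\mf v}+p}$). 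The main obstacle I anticipate is the bookkeeping in part (b): matching $\sup_{v}\inp{Q^n_{S_{\lambdab}}(I)e_v}{e_v}$ with $\|S^n_{\theta}\|^2$ requires carefully checking that $S^{*n}_{\theta}S^n_{\theta}$ acts on each basis vector $e_{\mf w}$ of $l^2(V^{\otimes})$ as multiplication by $\prod_{p=0}^{n-1}\mf C_{\alpha_{\mf w}+p}$ (using the tree version of \eqref{card-ide} once more), and that the depths $\alpha_{\mf w}$ range over exactly $\mathbb N$, which is where Theorem \ref{tensor-prop}(x) and the leaflessness of $\mathscr T^{\otimes}_{\rootb}$ (Theorem \ref{tensor-prop}(viii)) enter.
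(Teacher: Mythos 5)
Your proposal follows essentially the same route as the paper's proof: the polar decomposition and Corollary \ref{p-decom}(ii) reduce the integral representation to the joint isometry part (whose joint subnormality supplies the Reinhardt measure, with $\mathbb T^d$-invariance coming from the orthogonality of $\{S^{\beta}_{\lambdab}e_v\}_{\beta}$), the product formula $\|S^{n}_{\theta}e_{\mf v}\|^2=\prod_{p=0}^{n-1}\mf C_{\alpha_{\mf v}+p}$ is obtained from the sibling-cardinality identity on $\mathscr T^{\otimes}_{\rootb}$, the converse and part (b) use the same test functions and the identity $\sum_{|\beta|=n}\frac{n!}{\beta!}|z^{\beta}|^2=\|z\|_2^{2n}=1$ on $\partial\mathbb B^d$. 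The one point to tighten is part (a): your $L^2$-Cauchy argument gives $\mu_v$-a.e.\ convergence only along a subsequence, whereas the asserted a.e.\ convergence of the full sequence follows (as in the paper, via monotone convergence) from the observation that the graded pieces $\big(\sum_{|\beta|=l}a_{\beta}z^{\beta}\big)S^{l}_{\theta}e_{\mf v}$ of $f_{\theta,k}(z)$ are mutually orthogonal in $l^2(V^{\otimes})$, so the full sequence of partial sums converges at every $z$ where the increasing sequence of norms stays bounded, which is a.e.\ by the isometry.
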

%\begin{remark}
%If $f \in l^2(V)$, then $f_{\theta}(z) \in l^2(V^{\otimes})$ for $\mu_v$-\mbox{a.e.} $z \in \partial \mathbb B^d$.
%\end{remark}
\begin{proof}
%We have already seen the equivalence of (i) and (ii) in Lemma \ref{p-decom}.
Suppose that $S_{\lambdab}$ is a joint left invertible multishift on $\mathscr T$.
To see the implication (i) $\Longrightarrow$ (ii), 
assume that $S_{\lambdab}$ is spherically balanced and let $c_t:=\mf C_t,$ the constant value of $\mf C$ on the generation $\mathcal G_t$ for $t \in \mathbb N$. Consider the balanced weighted shift $S_{\theta}$ on $\mathscr T^{\otimes}_{\rootb}$ associated with $\{c_t\}_{t \in \mathbb N}$.
We first prove the formula 
\beq \label{moment-S-theta} \|S^{k}_\theta e_{\mf v}\|^2 = \prod_{p=0}^{k-1} {c_{\alpha_{\mf v}+p}}~(k \Ge 1, ~\mf v \in V^{\otimes}) \eeq 
by induction on integers $k \Ge 1.$
If $k=1,$ then \beqn \|S_\theta e_{\mf v}\|^2 &=& \sum_{\mf u \in \child{\mf v}}\theta^2_{\mf u} \overset{\eqref{theta}} = \sum_{\mf u \in \child{\mf v}}\frac{{c_{\alpha_{\mf u}-1}}}{\mbox{card}(\mathsf{sib}(\mf u))} \\ &=& {c_{\alpha_{\mf v}}}\sum_{\mf u \in \child{\mf v}}\frac{1}{\mbox{card}(\mathsf{sib}(\mf u))} ={c_{\alpha_{\mf v}}}.\eeqn
Assume the formula for some integer $k \geqslant  1$. 
%Note that if $\mf u \in \child{\mf v}$ then $$|\alpha_u|=n_u =n_{\mf v} + 1 =  |\alpha_v| + 1.$$
By the induction hypothesis, we obtain
\beqn
\|S^{k+1}_\theta e_{\mf v}\|^2 &=& \Big\|\sum_{\mf u \in \child{\mf v}} \theta_{\mf u} S^{k}_\theta e_{\mf u}\Big\|^2 = \sum_{\mf u \in \child{\mf v}} \theta^2_{\mf u} \|S^{k}_\theta e_{\mf u}\|^2 \\ &=& \sum_{\mf u \in \child{\mf v}} \frac{{c_{\alpha_{\mf u}-1}}}{{\mbox{card}(\mathsf{sib}(\mf u))}} \Big(\prod_{p=0}^{k-1} {c_{\alpha_{\mf u}+p}} \Big) \\ &=& {c_{\alpha_{\mf v}}} \Big(\prod_{p=0}^{k-1} {c_{\alpha_{\mf v}+1+p}} \Big) \sum_{\mf u \in \child{\mf v}} \frac{1}{{\mbox{card}(\mathsf{sib}(\mf u))}} \\ &=& \prod_{p=0}^{k} {c_{\alpha_{\mf v}+p}}.
\eeqn
This completes the induction.
We now verify that there exists a Reinhardt measure $\mu_v$ supported in the unit sphere $\partial \mathbb B^d$ 
such that 
\beq \label{int-rep}
\|S^{\beta}_{\lambdab}e_v\|^2_{l^2(V)} = \|S^{|\beta|}_\theta e_{\mf v}\|^2_{l^2(V^{\otimes})} \int_{\partial \mathbb B^d} |z^{\beta}|^2d \mu_v,~\beta \in \mathbb N^d.
\eeq
In view of Corollary \ref{p-decom}(ii), it suffices to find a Reinhardt measure $\mu_v$ supported in the unit sphere $\partial \mathbb B^d$ such that $\| T^\beta_{\lambdab} e_v \|^2=\int_{\partial \mathbb B^d} |z^{\beta}|^2d \mu_v,$ 
where $T_{\lambdab}$ is the joint isometry part of $S_{\lambdab}$.
Consider the $T_{\lambdab}$-invariant subspace $\mathcal M:=\bigvee \{T^{\beta}_{\lambdab}e_v : {\beta \in \mathbb N^d}\}$ of $l^2(V)$. 
By Proposition \ref{shift-prop}(ix), $\inp{T^{\beta}_{\lambdab} e_v}{T^{\gamma}_{\lambdab}e_v}=0$ if $\beta \neq \gamma.$ It follows that
${T_{\lambdab}}|_{\mathcal M}$ is a joint isometry classical multishift (up to unitary equivalence). The existence of the desired measure is now
a consequence of the well-known fact that any joint isometry is joint subnormal \cite[Proposition 2]{At} (see the proof of \cite[Theorem 1.10]{CK} for more details).

We now check the integral representation appearing in \eqref{int-rep-main-1}.
%Consider
%$$f_k = \sum_{\underset{|\beta| \leqslant k}{\beta \in \mathbb N^d}}a_\beta S^{\beta}_{\lambdab} e_v, \quad f_{\theta, k}(z)=\sum_{\underset{|\beta| \leqslant k}{\beta \in \mathbb N^d}}a_\beta (z \cdot S_{\theta})^{\beta} e_{\mf v} \in l^2(V^{\otimes})~(k \in \mathbb N).$$
Note that $\inp{S^{\beta}_{\lambdab} e_v}{S^{\gamma}_{\lambdab}e_v}=0$ if $\beta \neq \gamma$ and
$\inp{S^{k}_{\theta}e_{\mf v}}{S^{l}_{\theta}e_{\mf v}}=0$ if $k \neq l$ (see Proposition \ref{shift-prop}(ix)).
It follows that
\beq \label{eq-proof}
\|f_k\|^2_{l^2(V)} &=& \sum_{\underset{|\beta| \leqslant k}{\beta \in \mathbb N^d}}|a_\beta|^2
 \|S^\beta_{\lambdab} e_v \|^2_{l^2(V)}  \nonumber \\ &\overset{\eqref{int-rep}}=& \sum_{\underset{|\beta| \leqslant k}{\beta \in \mathbb N^d}}|a_\beta|^2 \|S^{|\beta|}_\theta e_{\mf v}\|^2_{l^2(V^{\otimes})} \int_{\partial \mathbb B^d} |z^{\beta}|^2d \mu_v. 
\eeq
Since $\mu_v$ is a Reinhardt measure, by \cite[Lemma 2.3]{CK}, the monomials $\{z^{\alpha}\}_{\alpha \in \mathbb N^d}$ are orthogonal in $L^2(\partial \mathbb B^d, \mu_v)$. Thus
\beqn
&& \int_{\partial \mathbb B^d}\|f_{\theta, k}(z)\|^2_{l^2(V^{\otimes})}d\mu_v(z)  \\ &=& \int_{\partial \mathbb B^d} \sum_{\underset{|\beta|, |\gamma| \leqslant k}{\beta, \gamma \in \mathbb N^d}} a_{\beta}\overline{a}_{\gamma} \inp{S^{|\beta|}_{\theta}e_{\mf v}}{S^{|\gamma|}_{\theta}e_{\mf v}} z^{\beta}\overline{z}^{\gamma} d\mu_v(z) \\ &=& \sum_{\underset{|\beta|\leqslant k}{\beta \in \mathbb N^d}} |a_{\beta}|^2 \|{S^{|\beta|}_{\theta}e_{\mf v}}\|^2_{l^2(V^{\otimes})} \int_{\partial \mathbb B^d}  |z^{\beta}|^2 d\mu_v(z) \\ &\overset{\eqref{eq-proof}}=& \|f_k\|^2_{l^2(V)}.
\eeqn
This proves that (i) implies (ii). To see that (ii) implies (i), let 
$f_k=S_je_v$ and $f_{\theta, k}=z_jS_{\theta}e_{\mf v}$ in \ref{int-rep-main-1}
and sum over $j=1, \cdots, d$ to see that \beqn \sum_{j=1}^d\|S_je_v\|^2_{l^2(V)} &=& \|S_{\theta}e_{\mf v}\|^2_{l^2(V^{\otimes})}\sum_{j=1}^d\int_{\partial \mathbb B^d}|z_j|^2d\mu_v \\ &=& \|S_{\theta}e_{\mf v}\|^2_{l^2(V^{\otimes})} = c_{\alpha_{\mf v}}=c_{|\alpha_v|},\eeqn
which is constant on $\mathcal G_{|\alpha_v|}.$

To see the remaining part of the proof, assume that $S_{\lambdab}$ is spherically balanced.

(a) Note that $\|f_k\|_{l^2(V)} \uparrow \|f\|_{l^2(V)}$ and $\|f_{\theta, k}(z)\|_{l^2(V^{\otimes})} \uparrow g(z)$ (possibly in the extended real line) as $k \rar \infty,$ where
\beqn
%\|f_{\theta}(z)\|_{l^2(V^{\otimes})}
g(z):=\Big(\sum_{{\beta \in \mathbb N^d}}|a_\beta|^2 |z^{\beta}|^2 \|S^{|\beta|}_{\theta} e_{\mf v}\|^2 \Big)^{\frac{1}{2}}~(z \in \partial \mathbb B^d).
\eeqn
Applying monotone convergence theorem to \eqref{int-rep-main-1}, we obtain 
\beqn
\|f\|^2_{l^2(V)} =\int_{\partial \mathbb B^d}g(z)^2 d\mu_v(z).
\eeqn
Since the left hand side is a finite positive number, $0 \Le g(z) < \infty$ except for $z$ in a set of $\mu_v$ measure $0.$ 
It follows that 
 $f_{\theta}(z):=\sum_{\beta \in \mathbb N^d} a_\beta z^{\beta}  S^{|\beta|}_{\theta} e_{\mf v} \in l^2(V^{\otimes})$ except for $z$ in a set of $\mu_v$ measure $0$ and $g(z)=\|f_{\theta}(z)\|_{l^2(V^{\otimes})}.$ This also yields \eqref{int-rep-main}.
 
(b) Note that
\beqn \sum_{\underset{|\alpha| = n}{\alpha \in \mathbb N^d}} \frac{n!}{\alpha!} \big \|S^{\alpha}_{\lambdab}e_v\big \|^2
&\overset{\eqref{int-rep}}=&  \sum_{\underset{|\alpha| = n}{\alpha \in \mathbb N^d}} \frac{n!}{\alpha!} \big\|S^{|\alpha|}_\theta e_{\mf v}\big\|^2_{l^2(V^{\otimes})} \int_{\partial \mathbb B^d} |z^{\alpha}|^2d \mu_v \\
&=& \big\|S^{n}_\theta e_{\mf v}\big\|^2_{l^2(V^{\otimes})}   \int_{\partial \mathbb B^d} \sum_{\underset{|\alpha| = n}{\alpha \in \mathbb N^d}} \frac{n!}{\alpha!}|z^{\alpha}|^2d \mu_v, 
\eeqn
which is same as $\big\|S^{n}_\theta e_{\mf v}\big\|^2_{l^2(V^{\otimes})}$ since $$\sum_{\underset{|\alpha| = n}{\alpha \in \mathbb N^d}} \frac{n!}{\alpha!}|z^{\alpha}|^2 = \|z\|^{2n}_2~(z \in \mathbb C^d)$$ and $\mu$ is a probability measure supported in the unit sphere. It follows that 
$\inp{Q^n_{S_{\lambdab}}(I)e_v}{e_v} = \big\|S^n_{\theta}e_{\mf v}\big\|^2.$
Note further that
\beqn
\|Q^n_{S_{\lambdab}}(I)\| =\sup_{v \in V} \inp{Q^n_{S_{\lambdab}}(I)e_v}{e_v} = \sup_{\mf v \in V^{\otimes}} \big\|S^n_{\theta}e_{\mf v}\big\|^2=\|S^n_{\theta}\|^2.
\eeqn
This completes the verification of (b).
\end{proof}

For convenience, we refer to the balanced weighted shift $S_{\theta}$ on $\mathscr T^{\otimes}_{\rootb}$ as the {\it shift associated with the multishift $S_{\lambdab}$ on $\mathscr T$} (cf. \cite[Definition 2.3]{CY}).

Here we discuss some consequences of the preceding theorem.
The first of which is a local spherical analog of von Neumann's inequality (cf. \cite[Proposition 2.5]{K} and \cite[Theorem 7.6]{Ka}).
\begin{corollary}
Let $\mathscr T = (V,\mathcal E)$ be the directed Cartesian product of locally finite, rooted directed trees $\mathscr T_1, \cdots, \mathscr T_d$.
Let $S_{\lambdab}$ be a joint left invertible, spherically balanced multishift on $\mathscr T$. If $S_{\lambdab}$ is a joint contraction, then for any positive integer $k$ and finite sequence $\{a_{\beta} : {\beta \in \mathbb N^d, ~|\beta| \Le k}\}$, we have
\beqn
\sup_{v \in V}\Big\|\sum_{\underset{|\beta| \leqslant k}{\beta \in \mathbb N^d}}a_\beta S^{\beta}_{\lambdab} e_v\Big\| \leqslant \sup_{z \in \mathbb B^d} \Big|\sum_{\underset{|\beta| \leqslant k}{\beta \in \mathbb N^d}}a_\beta z^{\beta}\Big|.
\eeqn
%\beqn
%\Big\|\sum_{k=0}^n a_k Q^k_{S_{\lambdab}}(I)e_v\Big\| \leqslant \sup_{w \in \mathbb T} \Big|\sum_{k=0}^n a_k w^k\Big|,
%\eeqn
%where $Q^n_{T}(I)$ is as defined in \eqref{sp-gen-powers}.
\end{corollary}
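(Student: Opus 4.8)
The plan is to reduce the asserted inequality, vertex by vertex, to the classical spherical von Neumann inequality for a joint contraction acting on a reproducing kernel Hilbert space on $\mathbb B^d$, via the integral representation of Theorem \ref{sp-balanced}. First I would fix a vertex $v\in V$ and apply Theorem \ref{sp-balanced}: since $S_{\lambdab}$ is joint left invertible and spherically balanced, there is a Reinhardt measure $\mu_v$ on $\partial\mathbb B^d$ and a balanced weighted shift $S_{\theta}$ on $\mathscr T^{\otimes}_{\rootb}$ with associated bounded sequence $\{c_t\}_{t\in\mathbb N}$ (where $c_t$ is the constant value $\mf C_t$ of $\mf C$ on $\mathcal G_t$) such that, for $f_k=\sum_{|\beta|\le k}a_\beta S^{\beta}_{\lambdab}e_v$ and $f_{\theta,k}(z)=\sum_{|\beta|\le k}a_\beta z^{\beta}S^{|\beta|}_{\theta}e_{\mf v}$, one has $\|f_k\|^2_{l^2(V)}=\int_{\partial\mathbb B^d}\|f_{\theta,k}(z)\|^2_{l^2(V^{\otimes})}\,d\mu_v(z)$. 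So it suffices to bound the right-hand side.

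Next I would observe that $S_{\lambdab}$ being a joint contraction forces, via \eqref{int-rep-main-1} with $f_k=S_je_v$ summed over $j$ (as in the last lines of the proof of Theorem \ref{sp-balanced}), that $c_{|\alpha_v|}=\sum_{j=1}^d\|S_je_v\|^2\le 1$ for all $v$; hence every $c_t\le 1$, which by the moment formula \eqref{moment-S-theta}, $\|S^k_\theta e_{\mf v}\|^2=\prod_{p=0}^{k-1}c_{\alpha_{\mf v}+p}$, gives $\|S^{|\beta|}_\theta e_{\mf v}\|\le 1$ for every $\mf v\in V^{\otimes}$ and every $\beta$. Using the orthogonality $\inp{S^{|\beta|}_\theta e_{\mf v}}{S^{|\gamma|}_\theta e_{\mf v}}=0$ for $|\beta|\neq|\gamma|$ (Proposition \ref{shift-prop}(ix)) and the orthogonality of monomials in $L^2(\partial\mathbb B^d,\mu_v)$ (\cite[Lemma 2.3]{CK}), one computes
\beqn
\int_{\partial\mathbb B^d}\|f_{\theta,k}(z)\|^2_{l^2(V^{\otimes})}\,d\mu_v(z)
=\sum_{\underset{|\beta|\le k}{\beta\in\mathbb N^d}}|a_\beta|^2\,\|S^{|\beta|}_\theta e_{\mf v}\|^2\int_{\partial\mathbb B^d}|z^{\beta}|^2\,d\mu_v
\le \sum_{\underset{|\beta|\le k}{\beta\in\mathbb N^d}}|a_\beta|^2\int_{\partial\mathbb B^d}|z^{\beta}|^2\,d\mu_v.
\eeqn
The last sum is exactly $\big\|\sum_{|\beta|\le k}a_\beta z^{\beta}\big\|^2_{L^2(\partial\mathbb B^d,\mu_v)}$, again by orthogonality of monomials; since $\mu_v$ is a probability measure supported on $\partial\mathbb B^d$, this is at most $\sup_{z\in\partial\mathbb B^d}\big|\sum_{|\beta|\le k}a_\beta z^{\beta}\big|^2$, and by the maximum principle this supremum equals $\sup_{z\in\mathbb B^d}\big|\sum_{|\beta|\le k}a_\beta z^{\beta}\big|^2$ (the polynomial is continuous on $\mathrm{cl}(\mathbb B^d)$). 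Combining, $\|f_k\|_{l^2(V)}\le\sup_{z\in\mathbb B^d}|\sum_{|\beta|\le k}a_\beta z^{\beta}|$ for each $v$, and taking the supremum over $v\in V$ on the left gives the claimed inequality.

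I do not expect a serious obstacle here: the only mild point is making sure the inequality $c_t\le 1$ really follows from the joint contraction hypothesis (it does, by evaluating \eqref{constant-gen} and using that $\mf C$ is constant on generations) and that the replacement of $\partial\mathbb B^d$ by $\mathbb B^d$ in the final supremum is harmless (maximum modulus principle). Everything else is a direct transcription of the computation already carried out in the proof of Theorem \ref{sp-balanced}, specialized to the contractive case, so the argument is essentially a corollary-level deduction with the estimate $\|S^{|\beta|}_\theta e_{\mf v}\|\le 1$ inserted at the right place.
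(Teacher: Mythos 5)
Your proposal is correct and follows essentially the same route as the paper: fix $v$, invoke the integral representation of Theorem \ref{sp-balanced}, use the orthogonality of the vectors $S^{l}_{\theta}e_{\mf v}$ and of the monomials in $L^2(\partial\mathbb B^d,\mu_v)$, insert the bound $\|S^{|\beta|}_{\theta}e_{\mf v}\|\leqslant 1$, and finish with the maximum modulus principle. The only cosmetic difference is that you derive $\|S^{|\beta|}_{\theta}e_{\mf v}\|\leqslant 1$ from $c_t=\mf C_t\leqslant 1$ and the moment formula \eqref{moment-S-theta}, whereas the paper gets contractivity of $S_{\theta}$ directly from \eqref{norm-Q(I)}; these are equivalent.
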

\begin{proof}
Suppose that $S_{\lambdab}$ is a joint contraction. Let $S_{\theta}$ be the balanced weighted shift associated with $S_{\lambdab}$ and fix $v \in V.$ Note that by \eqref{norm-Q(I)},
$S_{\theta}$ is a contraction. 
%and hence by von Neumann's inequality \cite{SF}, 
%\beqn
%\Big\|\sum_{k=0}^n a_k S^k_{\theta}\Big\| \leqslant \sup_{w \in \mathbb T} \Big|\sum_{k=0}^n a_k w^k\Big|.
%\eeqn
On the other hand, by the preceding theorem, there exists a Reinhardt measure $\mu_v$ supported in $\partial \mathbb B^d$ such that 
%by Proposition \ref{shift-prop}(ix),
\beqn
\Big\|\sum_{\underset{|\beta| \leqslant k}{\beta \in \mathbb N^d}}a_\beta S^{\beta}_{\lambdab} e_v\Big\|^2_{l^2(V)}  &\overset{\eqref{int-rep-main-1}}=& 
\int_{\partial \mathbb B^d}\Big\|\sum_{\underset{|\beta| \leqslant k}{\beta \in \mathbb N^d}}a_\beta z^{\beta} S^{|\beta|}_{\theta} e_{\mf v}\Big\|^2_{l^2(V^{\otimes})}d\mu_v(z) \\
&=& \int_{\partial \mathbb B^d}\Big\|\sum_{l=0}^k \sum_{\underset{|\beta| = l}{\beta \in \mathbb N^d}}a_\beta z^{\beta} S^{l}_{\theta} e_{\mf v}\Big\|^2_{l^2(V^{\otimes})}d\mu_v(z) \\
&=& \int_{\partial \mathbb B^d}\sum_{l=0}^k \Big|\sum_{\underset{|\beta| = l}{\beta \in \mathbb N^d}}a_\beta z^{\beta}\Big|^2 \Big\|  S^{l}_{\theta} e_{\mf v}\Big\|^2_{l^2(V^{\otimes})}d\mu_v(z)
 \\
 &\Le& \int_{\partial \mathbb B^d}\sum_{l=0}^k \Big|\sum_{\underset{|\beta| = l}{\beta \in \mathbb N^d}}a_\beta z^{\beta}\Big|^2 d\mu_v(z) \\ 
&=& \int_{\partial \mathbb B^d}|\sum_{\underset{|\beta| \leqslant k}{\beta \in \mathbb N^d}} a_\beta z^{\beta}|^2d\mu_v(z) 
\\ & \Le & \sup_{z \in \partial \mathbb B^d} \Big|\sum_{\underset{|\beta| \leqslant k}{\beta \in \mathbb N^d}}a_\beta z^{\beta}\Big|,
\eeqn
where we used that monomials are orthogonal in $L^2(\partial \mathbb B^d, \mu_v)$. 
After taking supremum over $v \in V,$ the desired conclusion now follows from the maximum modulus principle in several complex variables \cite[Pg 5]{Ru}.
\end{proof}
%where $Q^n_{T}(I)$ is as defined in the discussion following \eqref{sp-gen}.

\begin{corollary}  \label{sp-rd}
Let $\mathscr T = (V,\mathcal E)$ be the directed Cartesian product of locally finite, rooted directed trees $\mathscr T_1, \cdots, \mathscr T_d$.
Let $S_{\lambdab}=(S_1, \cdots, S_d)$ be a joint left invertible, spherically balanced multishift on $\mathscr T$ and let $S_{\theta}$ be the weighted shift on the rooted directed tree $\mathscr T^{\otimes}_{\rootb}=(V^{\otimes}, \mathcal F)$ associated with $S_{\lambdab}.$ 
Let $\mf C_t$ be the constant value of $\sum_{j=1}^d \|S_j e_v\|^2$ on the generation $\mathcal G_t$ of $\mathscr T.$
Then
we have the following:
\begin{enumerate}
\item[(i)] $r(S_{\lambdab})=\lim_{n \rar \infty}\sup_{k \in \mathbb N} \Big(\prod_{p=0}^{n-1} {\mf C_{k+p}}\Big)^{\frac{1}{2n}}=r(S_{\theta}),$ where $r(T)$ denotes the spectral radius of any commuting $d$-tuple $T$. In particular, the Taylor spectrum of $S_{\lambdab}$ is a Reinhardt set containing $0$ and contained in the closed ball centered at the origin and of radius $r(S_{\theta})$.
\item[(ii)] $m_{\infty}(S_{\lambdab})=\sup_{n \geqslant  1}\inf_{k \in \mathbb N}\Big(\prod_{p=0}^{n-1} {\mf C_{k+p}}\Big)^{\frac{1}{2n}}=m_{\infty}(S_{\theta}).$ In particular, the left spectrum $\sigma_l(S_{\lambdab})$ of $S_{\lambdab}$ is contained in the closed ball shell centered at the origin with inner radius $m_{\infty}(S_{\theta})$ and outer radius $r(S_{\theta})$.
\end{enumerate}
\end{corollary}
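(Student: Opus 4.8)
The plan is to exploit the two identities established in Theorem \ref{sp-balanced}(b), namely $\inp{Q^n_{S_{\lambdab}}(I)e_v}{e_v} = \|S^n_{\theta}e_{\mf v}\|^2$ and $\|Q^n_{S_{\lambdab}}(I)\| = \|S^n_{\theta}\|^2$, together with the spectral radius formula \eqref{sp-rad} and the inner radius formula \eqref{l-sp-rad} from Section \ref{Spec-th}. First I would compute $\|S^n_\theta e_{\mf w}\|^2$ explicitly. By formula \eqref{moment-S-theta} (proved inside the proof of Theorem \ref{sp-balanced}), $\|S^n_\theta e_{\mf w}\|^2 = \prod_{p=0}^{n-1} c_{\alpha_{\mf w}+p}$ for every $\mf w \in V^{\otimes}$, where $c_t = \mf C_t$. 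Taking the supremum over $\mf w \in V^{\otimes}$, and noting that the depth $\alpha_{\mf w}$ ranges over all of $\mathbb N$ (since $\mathscr T^{\otimes}_{\rootb}$ is leafless by Theorem \ref{tensor-prop}(viii)), we get $\|S^n_\theta\|^2 = \sup_{k \in \mathbb N}\prod_{p=0}^{n-1} c_{k+p}$. Similarly, $\inf_{\|h\|=1}\inp{Q^n_{S_{\lambdab}}(I)h}{h}$: since $Q^n_{S_{\lambdab}}(I)$ is the diagonal operator with diagonal entries $\|S^n_\theta e_{\mf v}\|^2$ (via \eqref{norm-sp-gen} and the fact that $\{e_v\}_{v\in V}$ diagonalizes $Q^n_{S_{\lambdab}}(I)$, being a positive diagonal operator by Proposition \ref{shift-prop}(vi) and the spherically balanced hypothesis), its smallest approximate eigenvalue is $\inf_{k \in \mathbb N}\prod_{p=0}^{n-1} c_{k+p}$.

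For part (i): by \eqref{sp-rad}, $r(S_{\lambdab}) = \lim_{n\to\infty}\|Q^n_{S_{\lambdab}}(I)\|^{1/2n} = \lim_{n\to\infty}\bigl(\sup_{k\in\mathbb N}\prod_{p=0}^{n-1}\mf C_{k+p}\bigr)^{1/2n}$, which is exactly the middle expression claimed. Applying the same spectral radius formula to the single operator $S_\theta$ (the $d=1$ case of \eqref{sp-rad}, or simply the classical spectral radius formula) gives $r(S_\theta) = \lim_{n\to\infty}\|S^n_\theta\|^{1/n} = \lim_{n\to\infty}\bigl(\sup_{k\in\mathbb N}\prod_{p=0}^{n-1}\mf C_{k+p}\bigr)^{1/2n}$, so the three quantities agree. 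The final sentence of (i) then follows from the inclusion $\sigma(T)\subseteq\{w:\|w\|_2\le r(T)\}$ recorded after \eqref{sp-rad}, combined with the already-established fact (Corollary following Proposition \ref{connected}) that $\sigma(S_{\lambdab})$ is Reinhardt and contains $0$ (the latter by Remark \ref{0-eigen}).

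For part (ii): by \eqref{l-sp-rad}, $m_{\infty}(S_{\lambdab}) = \sup_{n\ge 1}\inf_{\|h\|=1}\inp{Q^n_{S_{\lambdab}}(I)h}{h}^{1/2n} = \sup_{n\ge 1}\bigl(\inf_{k\in\mathbb N}\prod_{p=0}^{n-1}\mf C_{k+p}\bigr)^{1/2n}$. Applying \eqref{l-sp-rad} to $S_\theta$ — here one needs that $Q^n_{S_\theta}(I) = S^{*n}_\theta S^n_\theta$ is the diagonal operator with entries $\|S^n_\theta e_{\mf w}\|^2$, so $\inf_{\|h\|=1}\inp{Q^n_{S_\theta}(I)h}{h} = \inf_{k\in\mathbb N}\prod_{p=0}^{n-1}\mf C_{k+p}$ — yields $m_{\infty}(S_\theta) = \sup_{n\ge 1}\bigl(\inf_{k\in\mathbb N}\prod_{p=0}^{n-1}\mf C_{k+p}\bigr)^{1/2n}$, giving the claimed triple equality. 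The statement about $\sigma_l(S_{\lambdab})$ being contained in the shell then follows from the definition of $m_\infty(\cdot)$ as the largest $r$ with $\sigma_l(T)\subseteq\{w: r\le\|w\|_2\le r(T)\}$, together with part (i).

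The routine but slightly delicate point — and the main thing to get right — is the identification $\|S^n_\theta\|^2 = \sup_{\mf v\in V^{\otimes}}\|S^n_\theta e_{\mf v}\|^2$ and the corresponding equality for the infimum: one must verify that for the positive diagonal operator $Q^n_{S_{\lambdab}}(I)$ (diagonalized by the orthonormal basis $\{e_v\}$), the operator norm equals the supremum of the diagonal entries and the bottom of the spectrum (equivalently $\inf_{\|h\|=1}\inp{\cdot h}{h}$) equals the infimum of the diagonal entries. This is standard for diagonal operators, but it is the step where one genuinely uses that $S_{\lambdab}$ is spherically balanced, so that $Q^n_{S_{\lambdab}}(I)e_v$ has coefficient depending only on $|\alpha_v|$ and hence (by the depth-matching from Theorem \ref{tensor-prop}(x) and the fact that $\mathscr T^{\otimes}_{\rootb}$ realizes every depth) matches $\|S^n_\theta e_{\mf v}\|^2$ across the whole basis.
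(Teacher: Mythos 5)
Your proof is correct and follows essentially the same route as the paper: both rest on the identities $\|Q^n_{S_{\lambdab}}(I)\| = \|S^n_{\theta}\|^2$ and $\inp{Q^n_{S_{\lambdab}}(I)e_v}{e_v} = \|S^n_{\theta}e_{\mf v}\|^2$ from Theorem \ref{sp-balanced}(b), the moment formula \eqref{moment-S-theta}, and the formulas \eqref{sp-rad} and \eqref{l-sp-rad}, with the same observation that the diagonal structure of $Q^n_{S_{\lambdab}}(I)$ (via Proposition \ref{shift-prop}) reduces the supremum and infimum over unit vectors to those over basis vectors.
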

\begin{proof}
The first part follows from
\eqref{norm-Q(I)}, \eqref{moment-S-theta}, and 
the  spectral radius formula \eqref{sp-rad} for the Taylor spectrum: \beqn r(S_{\lambdab}) = \lim_{n \rar \infty} \|Q^n_{S_{\lambdab}}(I)\|^{1/2n} = \lim_{n \rar \infty} \|S^n_{\theta}\|^{1/n}=r(S_{\theta}). \eeqn
%The remaining part in (i) follows from the fact that the Taylor spectrum of $S_{\lambdab}$ is contained in the ball $\overline{\mathbb B}^d_{r(S_{\lambdab})}.$
To see (ii), note that by \eqref{l-sp-rad},
$$m_{\infty}(S_{\lambdab}) \leqslant \sup_{n \geqslant  1} \inf_{v \in V} \inp{Q^n_{S_{\lambdab}}(I)e_v}{e_v}^{\frac{1}{2n}}.$$ Let $M_n:= \inf_{v \in V} \inp{Q^n_{S_{\lambdab}}(I)e_v}{e_v}^{\frac{1}{2n}}$ for $n \geqslant  1$. Then for any $f=\sum_{v \in V}f(v)e_v \in l^2(V)$ of unit norm, by parts (vi) and (x) of Proposition \ref{shift-prop},
\beqn
\inp{Q^n_{S_{\lambdab}}(I)f}{f}^{\frac{1}{2n}} = \Big(\sum_{v \in V}|f(v)|^2\inp{Q^n_{S_{\lambdab}}(I)e_v}{e_v}\Big)^{\frac{1}{2n}} \geqslant  M_n,
\eeqn
and hence $m_{\infty}(S_{\lambdab}) = \sup_{n \geqslant  1} \inf_{v \in V} \inp{Q^n_{S_{\lambdab}}(I)e_v}{e_v}^{\frac{1}{2n}}.$ Similar observation holds for $S_{\theta}.$ The desired conclusion in (ii) may now be drawn from \eqref{norm-sp-gen} and \eqref{moment-S-theta}.
\end{proof}

\begin{example}
Consider the multishift $S_{\lambdab_{\mf C_a}}$ as discussed in Example 
\ref{ex-sp-bal}. 
Recall that $\{c_{a, t}\}_{t \in \mathbb N}$ is given by $$c_{a, t}=\frac{t+d}{t+a}~(t \in \mathbb N).$$ 
Thus weights of $S_{\lambdab_{\mf C_a}}=(S_1, \cdots, S_d)$ are given by 
\beq \label{weights-S-c-lambdab}
\lambda^{(j)}_w=\frac{1}{\sqrt{{\mbox{card}(\childi{j}{v})}}} \sqrt{\frac{\alpha_{v_j}  + 1}{|\alpha_v| + a}}~\mbox{for~}w \in \childi{j}{v}~\mbox{and~}j=1, \cdots, d.
\eeq
By the preceding corollary,
\beqn
r(S_{\lambdab_{\mf C_a}})=\lim_{n \rar \infty}\sup_{k \in \mathbb N}\Big(\prod_{p=0}^{n-1} {c_{a, k+p}}\Big)^{\frac{1}{2n}}=\lim_{n \rar \infty}\sup_{k \in \mathbb N}\Big(\prod_{p=0}^{n-1} {\frac{k+p+d}{k+p+a}}\Big)^{\frac{1}{2n}}.
\eeqn
We follow the argument in \cite[Lemma 3.9]{CY} to see that $r(S_{\lambdab_{\mf C_a}})=1.$
Let $F(n, k)=\prod_{p=0}^{n-1} {\frac{k+p+d}{k+p+a}}~(k \in \mathbb N, n \geqslant  1)$, and note that $F(n, k)$ is increasing (resp. decreasing) in $k$ if and only if $a \geqslant  d$ (resp. $a \leqslant d$). Thus the following possibilities occur:
\beqn \prod_{p=0}^{n-1} \Big({\frac{p+d}{p+a}}\Big)^{\frac{1}{2n}}=F(n, 0)^{\frac{1}{2n}} \leqslant  F(n, k)^{\frac{1}{2n}} \leqslant 1 \quad \mbox{or}\\ 1 \leqslant F(n, k)^{\frac{1}{2n}} \leqslant  F(n, 0)^{\frac{1}{2n}}=\prod_{p=0}^{n-1} \Big({\frac{p+d}{p+a}}\Big)^{\frac{1}{2n}}.\eeqn 
In either case, $\lim_{n \rar \infty}\sup_{k \in \mathbb N}\Big(\prod_{p=0}^{n-1} {\frac{k+p+d}{k+p+a}}\Big)^{\frac{1}{2n}}=1.$ This shows that $r(S_{\lambdab_{\mf C_a}})=1.$ One can argue similarly to see that $m_{\infty}(S_{\lambdab_{\mf C_a}})=1.$ In particular, 
\beq \label{sp-inclusion}
\sigma(S_{\lambdab_{\mf C_a}}) \subseteq \mbox{cl}({\mathbb B}^d), \quad 
\sigma_l(S_{\lambdab_{\mf C_a}}) \subseteq \partial {\mathbb B}^d.
\eeq
Finally, by Corollary \ref{p-spec-S-c-a}, we must have $\sigma(S_{\lambdab_{\mf C_a}}) = \mbox{cl}({\mathbb B}^d).$
\end{example}

We conclude this section with a brief discussion on the essential spectrum of the multishift $S_{\lambdab_{\mf C_a}}.$ Recall first that a commuting $d$-tuple $T$ is {\it essentially normal} if $[T^*_i, T_j]=T^*_iT_j-T_jT^*_i$ is compact for every $i, j=1, \cdots, d$.

\begin{proposition}
Let $S_{\lambdab_{\mf C_a}}$ be multishift as discussed in Example 
\ref{ex-sp-bal}.
If $\mathscr T$ is of finite joint branching index $k_{\mathscr T}$, then
$S_{\lambdab_{\mf C_a}}=(S_1, \cdots, S_d)$ is essentially normal.
\end{proposition}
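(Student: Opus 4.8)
The plan is to show that each commutator $[S_i^*, S_j]$ is compact by exploiting the matrix decomposition of $S_{\lambdab_{\mf C_a}}$ together with the fact, established in Lemma \ref{ortho-powers}, that $S^{*\alpha}_{\lambdab_{\mf C_a}} S^{\alpha}_{\lambdab_{\mf C_a}}$ acts as a diagonal operator whose diagonal entry at $e_v$ depends only on $\alpha$ and the depth $\alpha_v$ via the explicit formula \eqref{s-ca-moment-f-2}. First I would reduce to showing that each $[S_i^*, S_j]$ maps $e_v$ to a vector whose norm tends to $0$ as $|\alpha_v| \to \infty$; since $\{e_v\}_{v \in V}$ is an orthonormal basis and the operators are bounded, this asymptotic vanishing on basis vectors forces compactness (a standard fact: an operator that is a norm-limit of finite-rank truncations is compact, and the tails here are controlled by $\sup_{|\alpha_v| \ge N}\|[S_i^*,S_j]e_v\|$).

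The key computation is to write down $[S_i^*, S_j]e_v = S_i^* S_j e_v - S_j S_i^* e_v$ explicitly using \eqref{Si}, the adjoint formula for $S_j^*$, and the weight formula \eqref{weights-S-c-lambdab}. For $i \ne j$ one gets, after applying the commutation relation \eqref{commuting} and Remark \ref{child-par}, that the two terms nearly cancel and the discrepancy is governed by the difference of moment-type factors $\frac{\alpha_{v_i}+1}{|\alpha_v|+a}$ evaluated at $v$ versus at a neighbouring vertex; since $\alpha_{v_i} \le k_{\mathscr T_i}$ stays bounded while $|\alpha_v| \to \infty$ on the complement of the finitely many generations meeting $F_1 \times \cdots \times F_d$, each such factor is $O(1/|\alpha_v|)$, so the commutator norm on $e_v$ is $O(1/|\alpha_v|) \to 0$. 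For $i = j$ the operator $[S_i^*, S_i]e_v = \|S_i e_v\|^2 e_v - S_i S_i^* e_v$ is handled the same way: $\|S_i e_v\|^2 = \frac{\alpha_{v_i}+1}{|\alpha_v|+a}$ and $S_iS_i^*e_v$ is a combination over $\childi{i}{\parenti{i}{v}}$ with coefficients that differ from $\|S_i e_v\|^2$ by $O(1/|\alpha_v|)$, using again that the branching data and the cardinalities $\mbox{card}(\childi{i}{\cdot})$ are locally constant once we are past the (finite) branching region guaranteed by Proposition \ref{Vprec} and the finiteness of $k_{\mathscr T}$.

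I expect the main obstacle to be the bookkeeping in the $i \ne j$ case: one must track the double sum over $\childi{i}{\childi{j}{\cdot}}$ and verify that after using commutativity the surviving terms are exactly of the form (difference of bounded-numerator/diverging-denominator fractions) $\times$ (unit vectors), with no residual $O(1)$ contribution. Here the cleanest route is probably to pass through the joint isometry part $T_{\lambdab_{\mf C_a}}$ of Proposition \ref{sphericallybal} (since $S_{\lambdab_{\mf C_a}}$ is spherically balanced by Example \ref{ex-sp-bal}): write $S_j = T_j D_c$ with $D_c e_v = \sqrt{\mf C_{|\alpha_v|}}\,e_v$, note $\mf C_{|\alpha_v|} = \frac{|\alpha_v|+d}{|\alpha_v|+a} \to 1$, so $D_c \to I$ in a sense compatible with compact perturbation, and reduce the problem to essential normality of the joint isometry $T_{\lambdab_{\mf C_a}}$ — but a joint isometry need not be essentially normal, so this reduction alone does not suffice and one still needs the explicit $O(1/|\alpha_v|)$ estimate on the remaining commutators $[T_i^*,T_j]$, which again follows from the weight formula for $T_{\lambdab_{\mf C_a}}$ and the boundedness of the $\alpha_{v_i}$. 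Either way, the proof closes once the asymptotic estimate $\|[S_i^*,S_j]e_v\| = O(1/|\alpha_v|)$ is in hand.
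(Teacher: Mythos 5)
Your overall strategy (show the commutators have asymptotically vanishing action as $|\alpha_v|\to\infty$, using the finite joint branching index to bound $\alpha_{v_j}$ and the sibling cardinalities) is close to the paper's, but your compactness criterion is stated incorrectly, and this is a genuine gap. It is \emph{not} true that a bounded operator $T$ with $\|Te_v\|\to 0$ along an orthonormal basis is compact, nor is the operator norm of the ``tail'' controlled by $\sup_{|\alpha_v|\ge N}\|Te_v\|$: for instance, if one groups an orthonormal basis into blocks of size $k$ and lets $T$ act on the $k$-th block as the rank-one projection onto the normalized sum of its basis vectors, then $\|Te_n\|\to 0$ while $T$ is a non-compact infinite-rank projection. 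What rescues the argument here is extra structure that you never invoke: the self-commutator $[S_j^*,S_j]$ sends $e_v$ into $l^2(\mathsf{sib}_j(v))$, so it is block diagonal with respect to the orthogonal decomposition of $l^2(V)$ into sibling classes, and by local finiteness together with the finite joint branching index these blocks have uniformly bounded dimension (the paper's bound $M_j=\mbox{card}(\childn{k_{\mathscr T_j}}{\mathsf{root}_j})$). One must then estimate the \emph{operator norms} of these finite-rank blocks and show they tend to $0$; this is exactly what the paper does after splitting $[S_j^*,S_j]=A_j\oplus B_j$ on $l^2(W_j)\oplus l^2(V\setminus W_j)$ and writing $B_j=\bigoplus_{v}B_{jv}$ over sibling classes. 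Without identifying the reducing subspaces and bounding their dimensions, the pointwise estimate $\|[S_i^*,S_j]e_v\|=O(1/|\alpha_v|)$ does not close the proof.

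A second divergence: the paper does not touch the cross-commutators at all. It first applies the Putnam--Fuglede theorem (in the Calkin algebra), so that compactness of the self-commutators $[S_j^*,S_j]$ for all $j$ already yields compactness of every $[S_i^*,S_j]$. Adopting that reduction would spare you the $i\neq j$ bookkeeping you flag as the main obstacle, and would also remove the need to analyze the depth-shifting blocks from $l^2(\childnt{\beta}{\rootb})$ to $l^2(\childnt{\beta-\epsilon_i+\epsilon_j}{\rootb})$, whose reducing structure is less transparent than that of the sibling classes. Your observation that passing to the joint isometry part $T_{\lambdab}$ cannot suffice by itself is correct.
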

\begin{proof}
Assume that $\mathscr T$ is of finite joint branching index $k_{\mathscr T}$. By the Putnam-Fuglede Theorem \cite{Co-0}, it suffices to check that $[S^*_j, S_j]$ is compact for every $j=1, \cdots, d$. For fixed $j=1, \cdots, d$, define
$$W_j :=\{v \in V : \mbox{card}(\childi{j}{v})=1~\mbox{and~}\mbox{card}(\mathsf{sib}_j(v))=1 \}.$$ 
Note that $[S^*_j, S_j]$ decomposes into $A_j \oplus B_j$ on $l^2(V)=l^2(W_j) \oplus l^2(V \setminus W_j),$ where $A_j, B_j$ are block diagonal operators given by
\beqn
A_je_v = ((\lambda^{(j)}_w)^2 - (\lambda^{(j)}_v)^2) e_v~(v \in W_j, \childi{j}{v}=\{w\}),
\eeqn
\beqn 
B_je_v = \sum_{w \in \mathsf{Chi}_j(v)}(\lambda^{(j)}_w)^2 e_v - \sum_{{u \in \mathsf{sib}_j(v)}}\lambda^{(j)}_v \lambda^{(j)}_u e_u~(v \in V \setminus W_j).
\eeqn
By \eqref{weights-S-c-lambdab}, $A_j$ is a diagonal operator with diagonal entries $$(\lambda^{(j)}_w)^2 - (\lambda^{(j)}_v)^2=\frac{|\alpha_v|-\alpha_{v_j} + a-1}{(|\alpha_v|+a)(|\alpha_v|+a-1)},$$ which tends to $0$ as $|\alpha_v| \rar \infty$. This shows that $A_j$ is a compact operator. To see that $B_j$ is a compact operator, note first that
\beq \label{Bj}
B_je_v &\overset{\eqref{weights-S-c-lambdab}}=& \sum_{u \in \mathsf{Chi}_j(v)}  \frac{1}{{{\mbox{card}(\childi{j}{v})}}} {\frac{\alpha_{v_j}  + 1}{|\alpha_v| + a}}\,e_v \nonumber \\ &-& \sum_{{w \in \mathsf{sib}_j(v)}} \frac{1}{{{\mbox{card}(\mathsf{sib}_j(v))}}} {\frac{\alpha_{v_j}}{|\alpha_v| + a-1}}\,e_w \nonumber \\
&=& {\frac{\alpha_{v_j}  + 1}{|\alpha_v| + a}}\,e_v - \frac{1}{{{\mbox{card}(\mathsf{sib}_j(v))}}} {\frac{\alpha_{v_j}}{|\alpha_v| + a-1}} \sum_{{w \in \mathsf{sib}_j(v)}} e_w.
\eeq
We next decompose $V \setminus W_j$ as
$\sqcup_{v \in \Omega} \mathsf{sib}_j(v),$ where $\Omega$ is formed by picking up only one element (as ensured by axiom of choice) from every $\mathsf{sib}_j(v)$ for $j=1, \cdots, d.$ Note that 
$l^2(\mathsf{sib}_j(v))$ is reducing for $B_j$ for every $v \in \Omega,$.  This immediately yields the decomposition \beqn B_j=\bigoplus_{v \in \Omega} B_{jv}~\mbox{on}~ l^2(V \setminus W_j)=\bigoplus_{v \in \Omega} l^2(\mathsf{sib}_j(v)),\eeqn where $B_{jv}$ is a finite rank operator (since $\mathscr T_j$ is locally finite) for $j=1, \cdots, d$.
It now suffices to check that $\|B_{jv}\| \rar 0$ as $|\alpha_v| \rar \infty$ (see Remark \ref{cgn-net}).
Before proceeding to this end, 
observe that \beq \label{Mj} \sup_{v \in V \setminus W_j}\mbox{card}(\mathsf{sib}_j(v)) &\leqslant & M_j:=\mbox{card}(\childn{k_{\mathscr T_j}}{\mathsf{root}_j}) < \infty,\\
\label{alphaj} \sup_{v \in V \setminus W_j}\alpha_{v_j} &\leqslant & k_{\mathscr T_j}.  
\eeq
Let $f=\sum_{u \in \mathsf{sib}_j(v)}f(u)e_u \in l^2(\mathsf{sib}_j(v))$ and let $\Upsilon_v:=\sum_{u \in \mathsf{sib}_j(v)} f(u).$
Note that
\beqn
B_{jv}f &\overset{\eqref{Bj}}=& \sum_{u \in \mathsf{sib}_j(v)} f(u) \Big( {\frac{(\alpha_u)_j  + 1}{|\alpha_u| + a}}\,e_u - \frac{1}{{{\mbox{card}(\mathsf{sib}_j(u))}}} {\frac{(\alpha_u)_j}{|\alpha_u| + a-1}} \sum_{{w \in \mathsf{sib}_j(u)}} e_w\Big)\\
&=& {\frac{\alpha_{v_j}  + 1}{|\alpha_v| + a}} \sum_{u \in \mathsf{sib}_j(v)} f(u)e_u  - \frac{\Upsilon_v}{{{\mbox{card}(\mathsf{sib}_j(v))}}} {\frac{\alpha_{v_j}}{|\alpha_v| + a-1}} \Big(\sum_{{w \in \mathsf{sib}_j(v)}} e_w\Big) \\
&=& \sum_{{u \in \mathsf{sib}_j(v)}} \beta_u e_u,
\eeqn
where $\beta_u$ is given by
\beqn
\beta_u = {\frac{\alpha_{v_j}  + 1}{|\alpha_v| + a}}f(u) - \frac{\Upsilon_v}{{{\mbox{card}(\mathsf{sib}_j(v))}}} {\frac{\alpha_{v_j}}{|\alpha_v| + a-1}}.
\eeqn
Since $|\Upsilon_v| \Le\|f\|M_j,$ by \eqref{alphaj} and Cauchy-Schwarz inequality, 
\beqn |\beta_u| &\leqslant & {\frac{(k_{\mathscr T_j}+1)|f(u)|}{|\alpha_v| + a}} + \frac{|\Upsilon_v|}{{{\mbox{card}(\mathsf{sib}_j(v))}}} {\frac{k_{\mathscr T_j}}{|\alpha_v| + a-1}} \\ &\Le& \frac{k_{\mathscr T_j}+1}{|\alpha_v| + a-1} \Big(1 + \frac{M_j}{{{\mbox{card}(\mathsf{sib}_j(v))}}} \Big)\|f\|.
\eeqn
It follows from \eqref{Mj} and  that 
\beqn
\|B_{jv}f\|^2 =\sum_{{u \in \mathsf{sib}_j(v)}} |\beta_u|^2 \leqslant \frac{(k_{\mathscr T_j}+1)^2(1+M_j)^3}{(|\alpha_v| + a-1)^2}\|f\|^2.
\eeqn
This shows that $\|B_{jv}\| \rar 0$ as $|\alpha_v| \rar \infty$.
\end{proof}

The conclusion of the preceding proposition no more holds true if we relax the assumption of finite joint branching index. 
\begin{example} \label{B-S-ph}
Consider the {\it $n$-ary tree $\mathscr T^{(n)}$} given by
\beqn
V^{(n)}=\{v_{k,l} : k \in \mathbb N, ~l=1, \cdots, 2^k\}, ~ \child{v_{k,l}} = \{v_{k+1, j} : n(l-1) + 1 \leqslant j \leqslant nl\}.  
\eeqn 
Let $\mathscr T=(V, \mathcal E)$ denote the directed product of $\mathscr T^{(n)}$ with itself. 
Note that for $v_{k, l}, v_{p, q} \in V^{(n)},$
\beqn
\childi{1}{(v_{k, l}, v_{p, q})} &=& \{(v_{k+1, j}, v_{p, q}) \in V : n(l-1) + 1 \leqslant j \leqslant nl\}, \\
\childi{2}{(v_{k, l}, v_{p, q})} &=& \{(v_{k, l}, v_{p+1, j}) \in V : n(q-1) + 1 \leqslant j \leqslant nq\},
\eeqn
so that $\mbox{card}(\childi{j}{(v_{k, l}, v_{p, q})})=n$ for $j=1, 2.$
Let $S_{\lambdab_{\mf C_a}}$ be as discussed in Example \ref{ex-sp-bal} with $d=2$.
Note that the system $\lambdab$ is given by
%\beqn
%\lambda^{(j)}_w = \frac{1}{\sqrt{2n}},~w \in \childi{j}{(v_{k, l}, v_{p, q})},~j=1, 2.
%\eeqn
\beqn
\lambda^{(1)}_w = \frac{1}{\sqrt{n}}\sqrt{\frac{k+1}{k+p+a}},~w \in \childi{1}{(v_{k, l}, v_{p, q})}.
\eeqn
\beqn
\lambda^{(2)}_w = \frac{1}{\sqrt{n}}\sqrt{\frac{p+1}{k+p+a}},~w \in \childi{2}{(v_{k, l}, v_{p, q})}.
\eeqn
%Then $S^{(n, r)}_{\lambdab}$ is commuting. Indeed, for $k, p \geqslant  1,$ \beqn  \lambda^{(2)}_{(v_{k, l}, v_{p, q})} \lambda^{(1)}_{\parenti{2}{(v_{k, l}, v_{p, q})}} &=& \frac{1}{\sqrt{n}}\sqrt{\frac{p}{k+p+r-1}}
%\frac{1}{\sqrt{n}}\sqrt{\frac{k}{k+p + r-2}} \\
%\lambda^{(1)}_{(v_{k, l}, v_{p, q})} \lambda^{(2)}_{\parenti{1}{(v_{k, l}, v_{p, q})}} &=& \frac{1}{\sqrt{n}}\sqrt{\frac{k}{k+p+r-1}} \frac{1}{\sqrt{n}}\sqrt{\frac{p}{k+p+r-2}}
% \eeqn
% (see \eqref{commuting}).
We claim that $S_{\lambdab_{\mf C_a}}$ on $\mathscr T^{(n)}$ is essentially normal if and only if $n=1$. In case $n=1,$ $S_{\lambdab_{\mf C_a}}$ are classical multishifts.
The essential normality in this case is well-known (see, for example, \cite{CY}). To see the converse, assume that $n \Ge2.$ 
%We use the notations used in the proof of the preceding proposition. Since 
%$\mbox{card}(\childi{j}{(v_{k, l}, v_{p, q})})=n$,
%$W_j=\emptyset,$ and hence
%$S^*_jS_j-S_jS^*_j=B_j$  for $j=1, 2.$ 
Let $B_j:=[S^*_j, S_j]$ for $j=1, 2.$
It suffices to check that $\|B_{1}e_{(v_{k, l}, v_{p, q})}\| \nrightarrow 0$ as $k =p \rar \infty.$ For $v= (v_{k, l}, v_{p, q}),$ note that
\beqn
\inp{B_{1}e_{v}}{e_v} &=& \sum_{u \in \mathsf{Chi}_1(v)}  \frac{1}{n} \frac{k + 1}{k+p + a} - \sum_{w \in \mathsf{sib}_j(v)} \frac{1}{n} \frac{k}{k+p + a-1}\,\inp{e_w}{e_v} \\
&=& \frac{k + 1}{k+p + a} - \frac{1}{n} \frac{k}{k+p + a-1},
\eeqn
which converges to $\frac{1}{2}\Big(1-\frac{1}{n}\Big)$ as $k=p \rar \infty.$
%Moreover, $S^{(n, 2)}_{\lambdab}$ is a joint isometry if and only if $r=2$. 
%In fact, for every $v=(v_{k, l}, v_{p, q}) \in V,$
%\beqn \sum_{j=1}^2\|S_je_{v}\|^2 &=& \sum_{w \in \childi{1}{v}} \frac{1}{{n}}{\frac{k+1}{k+p+r}} ~+ \sum_{w \in \childi{2}{v}}  \frac{1}{{n}}{\frac{p+1}{k+p+r}} \\ &=& {\frac{k+1}{k+p+r}} + {\frac{p+1}{k+p+r}} = \frac{k+p+2}{k+p+r},\eeqn
%which is $1$ if and only if $r=2.$
\end{example}
%The preceding example shows that unlike the case classical multishifts, the Berger-Shaw phenomenon may fail for the tree analog of $S_{{\bf w}, a}$.

\begin{corollary}
Let $S_{\lambdab_{\mf C_a}}$ be multishift as discussed in Example 
\ref{ex-sp-bal}.
Assume that $\mathscr T$ is of finite joint branching index $k_{\mathscr T}$. Then
\beqn \label{e-sp-inclusion} \sigma_e(S_{\lambdab_{\mf C_a}}) \subseteq \partial \mathbb B^d.\eeqn
\end{corollary}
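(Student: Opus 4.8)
The plan is to reduce the essential spectrum of $S_{\lambdab_{\mf C_a}}$ to the left spectrum via the essential normality already established. First I would recall the standing hypothesis: $\mathscr T$ is the directed Cartesian product of locally finite rooted directed trees of finite joint branching index $k_{\mathscr T}$, and $S_{\lambdab_{\mf C_a}}$ is the spherically balanced multishift from Example \ref{ex-sp-bal}. By the preceding proposition, $S_{\lambdab_{\mf C_a}}=(S_1,\cdots,S_d)$ is essentially normal, i.e.\ $[S_i^*,S_j]$ is compact for all $i,j$. The crucial general fact I would invoke is that for an essentially normal commuting $d$-tuple $T$, the essential spectrum $\sigma_e(T)=\sigma(\pi(T))$ (Atkinson--Curto Theorem) equals the joint spectrum of a commuting $d$-tuple of \emph{normal} elements in the Calkin algebra, and for a commuting tuple of normals the Taylor spectrum coincides with the ordinary joint (Harte) spectrum. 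In particular, the essential Taylor spectrum of an essentially normal tuple agrees with its essential left spectrum: $\sigma_e(T)\subseteq\sigma_{le}(T)\subseteq\sigma_l(T)$, where the last inclusion is always true.

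Second I would locate $\sigma_l(S_{\lambdab_{\mf C_a}})$. From the example following Corollary \ref{sp-rd} we already have the computation $r(S_{\lambdab_{\mf C_a}})=1$ and $m_\infty(S_{\lambdab_{\mf C_a}})=1$, yielding the inclusion $\sigma_l(S_{\lambdab_{\mf C_a}})\subseteq\partial\mathbb B^d$ recorded in \eqref{sp-inclusion}. Here the finite joint branching index of $\mathscr T$ guarantees (via Theorem \ref{tensor-prop}(x) and the construction in Theorem \ref{sp-balanced}) that the associated balanced weighted shift $S_\theta$ on $\mathscr T^{\otimes}_{\rootb}$ has a well-behaved generation structure, so that Corollary \ref{sp-rd} applies and both spectral radii equal $1$. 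Combining: $\sigma_e(S_{\lambdab_{\mf C_a}})\subseteq\sigma_l(S_{\lambdab_{\mf C_a}})\subseteq\partial\mathbb B^d$.

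The only point needing a little care is the intermediate inclusion $\sigma_e(T)\subseteq\sigma_l(T)$ for an essentially normal tuple, which I would justify as follows. By Corollary \ref{c-symmetry} and the Remark after it, $\sigma_e(S_{\lambdab_{\mf C_a}})$ has polycircular symmetry, and by essential normality $\pi(S_{\lambdab_{\mf C_a}})$ is a commuting tuple of normal elements of the Calkin algebra; for such a tuple the Taylor spectrum, the left spectrum and the ordinary joint spectrum all coincide (this is the analog of the single-operator fact that for a normal operator $\sigma=\sigma_l=\sigma_{ap}$, and follows from \cite{Cu} since normality forces the Koszul complex to be exact precisely when the tuple is bounded below). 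Hence $\sigma_e(S_{\lambdab_{\mf C_a}})=\sigma_{le}(S_{\lambdab_{\mf C_a}})$, and since the essential left spectrum is contained in the left spectrum, we conclude $\sigma_e(S_{\lambdab_{\mf C_a}})\subseteq\sigma_l(S_{\lambdab_{\mf C_a}})\subseteq\partial\mathbb B^d$, which is the assertion.

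I expect the main obstacle to be bookkeeping rather than conceptual: one must be sure that the general normal-tuple spectral identity is correctly imported from \cite{Cu} and that the hypothesis ``finite joint branching index'' is exactly what is needed to invoke both the essential normality proposition and the spectral radius computation of Corollary \ref{sp-rd} simultaneously. A cleaner alternative, if one wants to avoid the abstract normal-tuple argument, is to observe directly that $\sigma_e(S_{\lambdab_{\mf C_a}})$ is the essential spectrum of a tuple whose components have the same moment behaviour as those of the classical multishift $S_{{\bf w},a}$ on each of the finitely many ``classical'' blocks appearing in the matrix decomposition of Section 3.4, so that Proposition \ref{sp-th-classical} gives $\partial\mathbb B^d$ on each block; but the essentially-normal route above is shorter and uses only results already in hand.
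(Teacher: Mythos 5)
Your argument is correct, but it reaches the conclusion by a different route than the paper. Both proofs begin identically, by invoking the preceding proposition to get essential normality of $S_{\lambdab_{\mf C_a}}$ (this is where the hypothesis of finite joint branching index enters, in both cases). From there the paper quotes the inclusion $\sigma_e(T)\subseteq\{w\in\mathbb C^d:\|w\|_2^2\in\sigma_e(Q_T(I))\}$ for essentially normal tuples (drawn from the proof of Lemma 3.5 of \cite{CY}) and then finishes by a direct computation: $Q_{S_{\lambdab_{\mf C_a}}}(I)$ is a diagonal operator with eigenvalues $\frac{|\alpha_v|+d}{|\alpha_v|+a}$ whose only limit point is $1$, so $\sigma_e(Q_{S_{\lambdab_{\mf C_a}}}(I))=\{1\}$ and the essential spectrum lands in the unit sphere. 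You instead pass through the left spectrum: essential normality makes $\pi(S_{\lambdab_{\mf C_a}})$ a commuting normal tuple in the Calkin algebra, for which the Taylor and left spectra coincide, whence $\sigma_e(S_{\lambdab_{\mf C_a}})\subseteq\sigma_l(S_{\lambdab_{\mf C_a}})$, and then you cite the inclusion $\sigma_l(S_{\lambdab_{\mf C_a}})\subseteq\partial\mathbb B^d$ already recorded in \eqref{sp-inclusion}. Your chain $\sigma_e(T)=\sigma_l(\pi(T))\subseteq\sigma_l(T)$ is sound: the last inclusion holds because invertibility of $\sum_j(T_j-\lambda_j)^*(T_j-\lambda_j)$ passes to the Calkin algebra, and the first equality is the standard coincidence of Taylor and left spectra for commuting normal tuples realized in a faithful representation. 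What your route buys is economy, since it reuses the spectral-radius computation $r=m_\infty=1$ instead of recomputing $\sigma_e(Q_T(I))$; what it costs is reliance on the abstract normal-tuple spectral identity, which the paper deliberately sidesteps in favour of a concrete diagonal-operator calculation. Your closing alternative via the matrix decomposition of Section 3.4 would need more care (that decomposition is only developed for $d=2$ and under extra compactness hypotheses), but your main argument does not depend on it.
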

\begin{proof}
It may be concluded from the proof of \cite[Lemma 3.5]{CY} that for any {essentially normal} $d$-tuple $T$, the essential spectrum $\sigma_e(T)$ is contained in $$\{w \in \mathbb C^d : \|w\|^2_2 \in \sigma_e(Q_T(I))\},$$
where $Q_T(\cdot)$ is as defined in \eqref{sp-gen-powers}.
In view of the last result, it now suffices to check that $\sigma_e(Q_{S_{\lambdab_{\mf C_a}}}(I))$ is equal to $\{1\}.$ However, $Q_{S_{\lambdab_{\mf C_a}}}(I)$ is diagonal operator with diagonal entries  $\frac{|\alpha_v|+d}{|\alpha_v|+a}$ (repeated ${\mbox{card}(\childnt{\alpha_v}{\rootb})}$ times) for $v \in V$. Since the only limit point of these eigenvalues of $Q_{S_{\lambdab_{\mf C_a}}}(I)$ is $1,$ the essential spectrum of $Q_{S_{\lambdab_{\mf C_a}}}(I)$ must be $\{1\}$ \cite{Co-0}.
\end{proof}
\begin{remark} 
%Assume that $S_{\lambdab}-\omega$ is Fredholm.
Since the point spectrum of $S_{\lambdab}$ is empty (Corollary \ref{p-spectrum}), in dimension $d=2,$ the dimension of the cohomology group at the middle stage in the Koszul complex of $S^*_{\lambdab_{\mf C_a}}-\omega$ is same for every $\omega \in \mathbb B^d$ (see \eqref{index}).
\end{remark}
%We do not know whether equality holds, in general, in \eqref{sp-inclusion} and \eqref{e-sp-inclusion}. 

%\chapter{Special Classes of Multishifts}

\section{Joint Subnormal Multishifts}

We begin this section with a simple characterization of joint subnormal multishift in terms of complete monotonicity of its moments.

\begin{proposition}
Let $\mathscr T = (V,\mathcal E)$ be the directed Cartesian product of rooted directed trees $\mathscr T_1, \cdots, \mathscr T_d$. 
Let $S_{\lambdab}$ be a toral contractive multishift on $\mathscr T$. Then $S_{\lambdab}$ is  joint subnormal if and only if
for every $v \in V,$ the multisequence $\{\|S^{\alpha}_{\lambdab}e_v\|^2\}_{\alpha \in \mathbb N^d}$ is completely monotone. 
\end{proposition}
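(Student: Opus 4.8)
The statement asks for a characterization of joint subnormality of a toral contractive multishift $S_{\lambdab}$ in terms of complete monotonicity of the moment multisequences $\phi_v(\alpha):=\|S^{\alpha}_{\lambdab}e_v\|^2$. The natural route is to reduce the problem to the known characterization recorded in the first Remark after the definition of completely monotone functions, namely that a toral contractive $d$-tuple $T$ is joint subnormal if and only if $\alpha\mapsto\|T^{\alpha}h\|^2$ is completely monotone for every $h\in\mathcal H$ (\cite[Theorem 4.4]{At-0}). So the whole content is to bridge ``completely monotone for every $h$'' and ``completely monotone at each basis vector $e_v$''. The forward implication is then immediate: if $S_{\lambdab}$ is joint subnormal then in particular $\alpha\mapsto\|S^{\alpha}_{\lambdab}e_v\|^2$ is completely monotone for every $v\in V$.

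For the converse, I would exploit the strong orthogonality built into Proposition \ref{shift-prop}: parts (ix) and (x) say that $\inp{S^{\alpha}_{\lambdab}e_v}{S^{\beta}_{\lambdab}e_w}=0$ unless $\alpha=\beta$ and $v=w$. Consequently, for a general $h=\sum_{v\in V}h(v)e_v\in l^2(V)$ one gets $\|S^{\alpha}_{\lambdab}h\|^2=\sum_{v\in V}|h(v)|^2\,\|S^{\alpha}_{\lambdab}e_v\|^2$, i.e. $\phi_h=\sum_v|h(v)|^2\phi_v$ pointwise on $\mathbb N^d$. Since $\nabla^\beta$ is a (finite) linear operator on real functions on $\mathbb N^d$, we have $(\nabla^\beta\phi_h)(\alpha)=\sum_v|h(v)|^2(\nabla^\beta\phi_v)(\alpha)$; the interchange of $\nabla^\beta$ with the infinite sum is justified because $\nabla^\beta$ is a finite linear combination of shifts and, for each fixed $\alpha,\beta$, the series $\sum_v|h(v)|^2\phi_v(\alpha+\gamma)$ converges absolutely for each of the finitely many $\gamma$ involved (each $\phi_v(\alpha+\gamma)\le\|S_{\lambdab}\|_{\mathrm{op}}^{2|\alpha+\gamma|}$ is bounded uniformly in $v$ because $S_1,\dots,S_d$ are bounded, so the sum is dominated by $\|S_{\lambdab}\|^{2|\alpha+\gamma|}\|h\|^2$). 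Hence if every $\phi_v$ is completely monotone, so is $\phi_h$ for every $h\in l^2(V)$, and the cited \cite[Theorem 4.4]{At-0} (applicable since $S_{\lambdab}$ is by hypothesis a toral contraction) yields joint subnormality.

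\textbf{Main obstacle.} The only genuinely delicate point is making sure the hypotheses of \cite[Theorem 4.4]{At-0} are met — specifically that toral contractivity (rather than joint contractivity) is exactly what is assumed there, which is why the statement explicitly assumes $S_{\lambdab}$ is toral contractive; with that in hand the rest is bookkeeping with the orthogonality relations and a dominated-convergence/termwise-linearity argument. I do not anticipate any other difficulty: the boundedness of each $S_j$ gives the uniform bound $\phi_v(\gamma)\le\prod_{j}\|S_j\|^{2\gamma_j}\le(\max_j\|S_j\|)^{2|\gamma|}$ needed to justify moving $\nabla^\beta$ inside the sum, and Proposition \ref{shift-prop}(ix),(x) gives the decomposition $\phi_h=\sum_v|h(v)|^2\phi_v$ cleanly. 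I would write the argument in two short paragraphs: one disposing of the easy direction, and one carrying out the orthogonal decomposition and the interchange of $\nabla^\beta$ with the sum, then quoting \cite[Theorem 4.4]{At-0}.
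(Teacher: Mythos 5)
Your proposal is correct and follows essentially the same route as the paper: both reduce to the Athavale characterization \cite[Theorem 4.4]{At-0} and use the orthogonality of $\{S^{\alpha}_{\lambdab}e_v\}_{v\in V}$ from Proposition \ref{shift-prop} to get $\|S^{\alpha}_{\lambdab}f\|^2=\sum_{v}|f(v)|^2\|S^{\alpha}_{\lambdab}e_v\|^2$. The only cosmetic difference is that the paper invokes the general theory of completely monotone functions (\cite[Chapter 4]{BCR}, i.e.\ that they form a closed convex cone) where you justify the interchange of $\nabla^{\beta}$ with the sum by hand via the uniform bound $\phi_v(\gamma)\leqslant\|S_{\lambdab}\|^{2|\gamma|}$; both are fine.
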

\begin{proof}
As recorded earlier, by \cite[Theorem 4.4]{At-0}, a toral contractive $d$-tuple $T$ on a complex Hilbert space $\mathcal H$ is joint subnormal if and only if for every $h \in \mathcal H,$ the multisequence $\{\|T^{\alpha}h\|^2\}_{\alpha \in \mathbb N^d}$ is completely monotone. 
Since $\{S^{\alpha}_{\lambdab}e_v\}_{v \in V}$ is mutually orthogonal (Proposition \ref{shift-prop}(x)), for $f=\sum_{v \in V}f(v)e_v,$ 
\beqn \|S^{\alpha}_{\lambdab}f\|^2 = \sum_{v \in V}|f(v)|^2\|S^{\alpha}_{\lambdab}e_v\|^2.
\eeqn
By the general theory \cite[Chapter 4]{BCR}, we conclude that
$S_{\lambdab}$ is joint subnormal if and only if
for every $v \in V,$ $\{\|S^{\alpha}_{\lambdab}e_v\|^2\}_{\alpha \in \mathbb N^d}$ is completely monotone. 
%The remaining part follows from the Hausdorff's solution to multidimensional Hausdorff moment problem (refer to \cite{BCR}).  
\end{proof}

Although the preceding result characterizes all joint subnormal contractive multishifts on $\mathscr T$, the necessary and sufficient conditions include information at all vertices. On the other hand, information at single vertex (namely, $\{\|S^{\alpha}_{\lambdab}e_{\rootb}\|^2\}_{\alpha \in \mathbb N^d}$ is completely monotone) 
is sufficient to 
ensure joint subnormality in the context of classical multishifts.
Thus a natural question arises whether joint subnormality of $S_{\lambdab}$ can be recovered from complete monotonicity at finitely many vertices. This question has an affirmative answer in case each $\mathscr T_j$ is locally finite with finite branching index.

\begin{theorem} \label{subnormal-minimal}
Let $\mathscr T = (V,\mathcal E)$ be the directed Cartesian product of rooted directed trees $\mathscr T_1, \cdots, \mathscr T_d$. 
Let $S_{\lambdab}$ be a toral contractive multishift on $\mathscr T$. 
 Let $$W :=\bigcup_{\underset{\alpha \leqslant k_{\mathscr T}}{\alpha \in \mathbb N^d}}\childnt{\alpha}{\rootb}$$ and let $\tilde{W}$ be the set $\tilde{W} := W_1 \times \cdots \times W_d$, where $$W_j := \child{V^{(j)}_\prec} \cup \{\mathsf{root}_j\},~j=1, \cdots, d.$$ 
Then the following statements are equivalent:
\begin{enumerate}
\item[(i)] $S_{\lambdab}$ is joint subnormal. 
\item[(ii)] For every $v \in W,$ $\{\|S^{\alpha}_{\lambdab}e_v\|^2\}_{\alpha \in \mathbb N^d}$ is completely monotone. 
\item[(iii)] For every $v \in \tilde{W},$ $\{\|S^{\alpha}_{\lambdab}e_v\|^2\}_{\alpha \in \mathbb N^d}$ is completely monotone. 
\end{enumerate}
\end{theorem}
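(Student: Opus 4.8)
The equivalence (i) $\Rightarrow$ (ii) is the trivial direction: joint subnormality of $S_{\lambdab}$ forces $\{\|S^{\alpha}_{\lambdab}e_v\|^2\}_{\alpha \in \mathbb N^d}$ to be completely monotone for \emph{every} $v \in V$ (since $e_v$ is a single vector in the Hilbert space, and the restriction of the normal extension to the cyclic subspace generated by $e_v$ yields a joint subnormal tuple whose moment multisequence is $\{\|S^{\alpha}_{\lambdab}e_v\|^2\}$; invoke the criterion recalled after the definition of completely monotone functions, i.e. \cite[Theorem 4.4]{At-0}). In particular (ii) holds, and similarly (iii) holds, because $W$ and $\tilde W$ are finite subsets of $V$ under the standing hypotheses. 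Next, (ii) $\Rightarrow$ (iii) should be handled by showing $\tilde W \subseteq [E]$-reachable data, or more directly by observing that every vertex of $\tilde W$ lies in $\childnt{\alpha}{\rootb}$ for some $\alpha \leqslant k_{\mathscr T}$: indeed $W_j = \child{V^{(j)}_\prec} \cup \{\mathsf{root}_j\}$ and, by definition of the branching index $k_{\mathscr T_j}$, every vertex in $\child{V^{(j)}_\prec}$ has depth at most $k_{\mathscr T_j}$ in $\mathscr T_j$, whence $\tilde W \subseteq W$. So (ii) $\Rightarrow$ (iii) is immediate once this containment is verified; the real content is in proving (iii) $\Rightarrow$ (i), and also (ii) $\Rightarrow$ (i) would then follow by first going through (iii), or directly since $\tilde W \subseteq W$ gives (ii) $\Rightarrow$ (iii) $\Rightarrow$ (i).

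\textbf{The heart of the matter: (iii) $\Rightarrow$ (i).} The strategy is to propagate complete monotonicity from the vertices of $\tilde W$ to all vertices of $V$, using the algebraic relations between the moments $\|S^{\alpha}_{\lambdab}e_v\|^2$ at a vertex and those at its children, as encoded by Proposition \ref{shift-prop}(vi). The key point is that for $w \in \childi{j}{v}$ the difference operator $\nabla_j$ applied to the moment multisequence relates $\{\|S^{\alpha}_{\lambdab}e_v\|^2\}$ to a convex combination of $\{\|S^{\alpha}_{\lambdab}e_w\|^2\}$ over $w \in \childi{j}{v}$ (with weights $|\lambda^{(j)}_w|^2$, summing to $\|S_je_v\|^2 \leqslant 1$ by toral contractivity), plus a remainder. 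More precisely, one should establish an identity of the form
\beqn
(\nabla_j \phi_v)(\alpha) = \sum_{w \in \childi{j}{v}} |\lambda^{(j)}_w|^2\, \phi_w(\alpha) \quad\text{where}\quad \phi_v(\alpha) := \|S^{\alpha}_{\lambdab}e_v\|^2,
\eeqn
valid for all $v \in V$, $j=1,\dots,d$, $\alpha \in \mathbb N^d$ (this follows by a direct computation from Proposition \ref{shift-prop}(vi) together with $S^*_j S^{\alpha}_{\lambdab} e_v$ expressed via the $\beta(j,\cdot,\cdot)$-products and the disjointness of $\childnt{\alpha}{v}$). Iterating, $(\nabla^{\beta}\phi_v)(\alpha)$ is a nonnegative-coefficient combination of $\{\phi_w(\alpha + \gamma)\}$ ranging over descendants $w$ of $v$ and over $\gamma \in \mathbb N^d$. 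Consequently, if $\phi_w$ is completely monotone for all $w$ in some ``frontier'' set that dominates $v$ in the descendant order (within distance $k_{\mathscr T}$), then $\phi_v$ is completely monotone. The finite-branching-index hypothesis guarantees that once one passes beyond depth $k_{\mathscr T}$, the relevant structure stabilizes: vertices outside $\child{V^{(j)}_\prec}$ in each coordinate have a unique child (card $\childi{j}{\cdot}=1$), so the moment relation there becomes essentially a one-dimensional Hausdorff-type recursion whose complete monotonicity propagates automatically. The precise bookkeeping is to show: (a) complete monotonicity of $\phi_v$ for all $v \in \tilde W$ implies it for all $v \in W$ (using the tree structure inside the ``branching zone'' of depth $\leqslant k_{\mathscr T}$, where the system \eqref{system-main} governs the branching and $\tilde W$ collects exactly the ``output'' vertices $\child{V^{(j)}_\prec}$ plus roots), and (b) complete monotonicity on $W$ propagates outward to all of $V$ via the single-child relations. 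Both (a) and (b) are induction arguments on depth, using the displayed $\nabla_j$-identity and the closure of completely monotone multisequences under nonnegative combinations, shifts, and the pointwise limits guaranteed by the Hausdorff-moment machinery \cite[Chapter 4]{BCR}.

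\textbf{Anticipated main obstacle.} The delicate step is (a): showing that complete monotonicity of the moment multisequences at the vertices of $\tilde W = \prod_j (\child{V^{(j)}_\prec} \cup \{\mathsf{root}_j\})$ suffices to recover it at every vertex of the branching zone $W$. The difficulty is that, unlike the classical multishift case (where one single vertex $\rootb$ suffices), the moment relation going \emph{up} the tree — expressing $\phi_v$ in terms of $\{\phi_w : w \in \childi{j}{v}\}$ — involves summing over several children with positive weights $|\lambda^{(j)}_w|^2$ that need not be equal, and inverting this (i.e. deducing complete monotonicity of a child from that of the parent and its siblings) is not automatic. The resolution I would pursue is to go in the \emph{downward} direction instead: rather than inverting, note that every vertex $v \in W$ lies on a path from $\rootb$, and $\phi_{\rootb}$ is expressible as a finite nonnegative combination (via repeated $\nabla$-relations and the inclusion $\tilde W \subseteq W$, together with the fact that past depth $k_{\mathscr T}$ each coordinate tree is ``unweighted-like'' in the branching sense) of the $\phi_w$ for $w \in \tilde W$; but crucially we need complete monotonicity of $\phi_v$ for the \emph{given} $v$, so the argument must instead show each $\phi_v$ ($v \in V$) is itself such a combination of the $\phi_w$, $w \in \tilde W$. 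This requires a careful analysis of the linear-equations framework of Section 4.1 — in particular that $\mathsf{sib}_F$-classes and the sets $\Phi_F$ organize $V$ so that moments at arbitrary vertices are governed, modulo the $k_{\mathscr T}$-depth branching data captured by $\tilde W$, by one-dimensional recursions. Handling the interaction of the $d$ coordinate directions in this decomposition (so that mixed differences $\nabla^{\beta}$ with $\beta$ having several nonzero entries remain nonnegative) is where the bulk of the technical work will lie; the commutativity identities of Proposition \ref{shift-prop}(iii) and Lemma \ref{disjoint} will be the main tools for keeping the cross-terms under control.
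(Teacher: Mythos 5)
Your treatment of the easy implications is fine: (i) $\Rightarrow$ (ii) follows from the vertex-wise complete-monotonicity criterion together with the mutual orthogonality of $\{S^{\alpha}_{\lambdab}e_v\}_{v\in V}$, and the containment $\tilde W\subseteq W$ (a child of a branching vertex of $\mathscr T_j$ has depth at most $k_{\mathscr T_j}$) gives (ii) $\Rightarrow$ (iii). But the engine you propose for the hard direction does not work as stated, and what it leaves open is precisely the content of the theorem. First, your displayed identity is wrong: by Proposition \ref{shift-prop}(x) one has $\phi_v(\alpha+\epsilon_j)=\sum_{w\in\childi{j}{v}}|\lambda^{(j)}_w|^2\,\phi_w(\alpha)$, hence $(\nabla_j\phi_v)(\alpha)=\phi_v(\alpha)-\sum_{w\in\childi{j}{v}}|\lambda^{(j)}_w|^2\phi_w(\alpha)$, not the pure sum you wrote. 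More importantly, this recursion expresses the moments of a \emph{parent} in terms of those of its \emph{children}, whereas the theorem requires transport in the opposite direction: the test sets $W$ and $\tilde W$ sit near the root, and complete monotonicity must be propagated outward to arbitrarily deep vertices. You correctly flag this inversion problem in your final paragraph, but the resolution you sketch (``show each $\phi_v$ is a nonnegative combination of the $\phi_w$, $w\in\tilde W$'') is left entirely as a promissory note, and the $\nabla_j$-machinery you set up provides no handle on it.

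The missing idea is more elementary and uses no difference operators at all. If $v\notin W$, let $i_1,\cdots,i_k$ be the coordinates with $\alpha_{v_{i_j}}>k_{\mathscr T_{i_j}}$, put $l_j=\alpha_{v_{i_j}}-k_{\mathscr T_{i_j}}$, $w:=\mathsf{par}_{i_1}^{\langle l_1\rangle}\cdots\mathsf{par}_{i_k}^{\langle l_k\rangle}(v)\in W$ and $\tilde\alpha:=\sum_{j}l_j\epsilon_{i_j}$. Since no branching occurs in coordinate $i_j$ beyond depth $k_{\mathscr T_{i_j}}$, the set $\childnt{\tilde\alpha}{w}$ is the \emph{singleton} $\{v\}$, so $S^{\tilde\alpha}_{\lambdab}e_w$ is a nonzero scalar multiple of $e_v$ and therefore $\phi_v(\alpha)=|\gamma|^2\phi_w(\alpha+\tilde\alpha)$ is a translate of a completely monotone multisequence --- no combination over siblings and no inversion is ever needed. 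This yields (ii) $\Rightarrow$ (i). The implication (iii) $\Rightarrow$ (ii) needs a separate argument of the same flavour with one extra twist: for $v\in W\setminus\tilde W$ one must pick $w\in\tilde W$ of \emph{maximal} depth among those from which $v$ is reachable, and show that maximality forces $\childnt{\alpha^{(w)}}{w}=\{v\}$ (a second vertex in that set would produce a branching vertex on the path and hence a deeper element of $\tilde W$ reaching $v$, a contradiction). Your proposal contains neither of these two singleton arguments, and they are the entire substance of the proof.
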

\begin{proof}
The implication that (i) implies (ii) is clear from the previous result while that (ii) implies (iii) is obvious in view of the inclusion $\tilde{W} \subseteq W$.
Let us check that (ii) implies (i).
Assume that
for every $v \in W,$ the multisequence $\{\|S^{\alpha}_{\lambdab}e_v\|^2\}_{\alpha \in \mathbb N^d}$ is completely monotone. Fix $v \in V \setminus W$. We contend that there exist $w \in W$, $\tilde \alpha \in \mathbb N^d$ and $\gamma \in \mathbb C$ such that \beq \label{ev-gamma} e_v =\gamma S^{\tilde \alpha}_{\lambdab}e_w.\eeq
Note that there exists a subset $\{i_1, \cdots, i_k\}$ of $\{1, \cdots, d\}$ such that $\alpha_{v_{i_j}} > k_{\mathscr T_{i_j}}$ for every $j=1, \cdots, k$. Let $l_j = \alpha_{v_{i_j}} - k_{\mathscr T_{i_j}}~(j=1, \cdots,  k)$ and set $$w := \mathsf{par}_{i_1}^{\langle l_1 \rangle} \cdots \parentki{i_k}{l_k}{v}.$$ Then $\alpha_{w_j} \Le k_{\mathscr T_j}$ for every $j = 1, \cdots, d$, and hence $w \in W$. Now put $$\tilde{\alpha} := l_1 \epsilon_{i_1} + \cdots + l_k \epsilon_{i_k}.$$ Since $\alpha_{w_{i_j}} = k_{\mathscr T_{i_j}}$ for $j=1, \cdots, k$, we must have $\gamma S_{\lambdab}^{\tilde{\alpha}} e_w = e_v$ for some scalar $\gamma \in \mathbb C$. This completes the verification of \eqref{ev-gamma}.
It follows that for any $\beta \in \mathbb N^d,$
\beqn
\sum_{\underset{|\alpha|\leqslant n}{\alpha \in \mathbb N^d}} (-1)^{|\alpha|} {n \choose \alpha} \|S^{\alpha + \beta}_{\lambdab}e_v\|^2 = \sum_{\underset{|\alpha|\leqslant n}{\alpha \in \mathbb N^d}} (-1)^{|\alpha|} {n \choose \alpha} |\gamma|^2 \|S^{\alpha + \beta + \tilde \alpha}_{\lambdab}e_w\|^2 \geqslant  0,
\eeqn
since $\{\|S^{\alpha}_{\lambdab}e_w\|^2\}_{\alpha \in \mathbb N^d}$ is given to be completely monotone. Now apply the preceding proposition to complete the verification of (ii) implies (i). 

Finally, we check the implication that (iii) implies (ii). Let $v \in W \setminus \tilde{W}$. Define 
\beqn
\mathscr F_v := \big\{w \in \tilde{W} : \childnt{\alpha^{(w)}}{w}~\mbox{contains $v$ for some~} \alpha^{(w)} \in \mathbb N^d \big\}.
\eeqn
Then $\mathscr F_v$ is nonempty as $\rootb \in \mathscr F_v$. Now consider the set
\beqn
\mathscr G_v := \big\{w \in \mathscr F_v : |\alpha_w| \geqslant  |\alpha_u|~\mbox{for all~} u \in \mathscr F_v \big\}.
\eeqn
%Note that $\mathscr G_v$ is well defined as $|\alpha_w| \leqslant |k_\mathscr T|$ for all $w \in W$. 
We claim that for all $w \in \mathscr G_v$, there exists $\alpha^{(w)}=\big(\alpha^{(w)}_1, \cdots, \alpha^{(w)}_d\big) \in \mathbb N^d$ such that $\childnt{\alpha^{(w)}}{w} = \{v\}$. If possible, suppose that there are distinct vertices $v, v' \in \childnt{\alpha^{(w)}}{w}$ for some $w \in \mathscr G_v$. Without loss of generality, assume that $v_1 \neq v'_1$. As $v_1, v'_1 \in \childn{\alpha^{(w)}_1}{w_1}$, there exists a positive integer $k$,  $1 \leqslant k \leqslant \alpha^{(w)}_1$, such that $u_1:= \parentn{k}{v_1} \in V^{(1)}_\prec$. Let $\dot{u_1} \in \child{u_1}$ be such that $v_1 \in \childn{k-1}{\dot{u_1}}$. Note that $\dot{u_1} \in W_1$. Consider $w':= (\dot{u_1}, w_2, \cdots, w_d)$. Then $w' \in \tilde{W}$ and $v \in \childnt{\beta}{w'}$, where $\beta = (k-1, \alpha^{(w)}_2, \cdots, \alpha^{(w)}_d)$. Thus $w' \in \mathscr F_v$, and hence $|\alpha_{w}| \geqslant  |\alpha_{w'}|$. On the other hand, $|\alpha_{w'}| = |\alpha_w|+\alpha^{(w)}_1-k+1 >|\alpha_w|$, which is a contradiction. This proves the claim that $\childnt{\alpha^{(w)}}{w} = \{v\}$ for all $w \in \mathscr G_v$. Now one may argue as in the preceding paragraph to see that for every $v \in W \setminus \tilde{W}$, there exist $w \in \tilde{W}$, $\tilde \alpha \in \mathbb N^d$ and $\gamma \in \mathbb C$ such that $e_v =\gamma S^{\tilde \alpha}_{\lambdab}e_w.$ This gives immediately the complete monotonicity of $\{\|S^{\alpha}_{\lambdab}e_v\|^2\}_{\alpha \in \mathbb N^d}$.
\end{proof}
\begin{remark}
If $\mathscr T$ is locally finite with finite joint branching index, then $W$ (and hence $\tilde{W}$) is finite. 
\end{remark}

It is well-known that there is a class of tuples antithetical to joint subnormal tuples commonly known as {\it  (toral or joint) completely hyperexpansive tuples} (refer to \cite{At-2} and \cite{CC} for definitions and basic properties). A characterization similar to one given above can be obtained for toral completely hyperexpansive multishifts as well, where, as expected, the moments being completely monotone is replaced by moments being completely alternating (refer to \cite{BCR} for the definition of completely alternating functions). Similar characterization can be obtained for the class of joint $q$-isometries as introduced and studied in \cite{GR}.

The class of joint subnormal multishifts within the class of spherically balanced multishifts admits a handy characterization (cf. \cite[Theorem 5.3(1)]{CY}).
\begin{proposition}
Let $\mathscr T = (V,\mathcal E)$ be the directed Cartesian product of rooted directed trees $\mathscr T_1, \cdots, \mathscr T_d$.
Let $S_{\lambdab}=(S_1, \cdots, S_d)$ be a joint left invertible, spherically balanced multishift on $\mathscr T$ and let $S_{\theta}$ be the weighted shift on the rooted directed tree $\mathscr T^{\otimes}_{\rootb}=(V^{\otimes}, \mathcal F)$ associated with $S_{\lambdab}.$ If $S_{\lambdab}$ is a joint contraction, then the following statements are equivalent:
\begin{enumerate}
\item[(i)] $S_{\lambdab}$ is joint subnormal. 
\item[(ii)] $\{\prod_{p=0}^{n} {\mf C_{p}}\}_{n \in \mathbb N}$ is completely monotone, where $\mf C_t$ is the constant value of $\sum_{j=1}^d \|S_j e_v\|^2$ on the generation $\mathcal G_t$ of $\mathscr T.$
\item[(iii)] $S_{\theta}$ is subnormal.
\end{enumerate}
\end{proposition}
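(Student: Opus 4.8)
The plan is to establish the chain of implications (iii) $\Longrightarrow$ (i) $\Longrightarrow$ (ii) $\Longrightarrow$ (iii), using Theorem \ref{sp-balanced} to transfer information between $S_{\lambdab}$ on $\mathscr T$ and the balanced weighted shift $S_{\theta}$ on $\mathscr T^{\otimes}_{\rootb}$, together with the moment formulas \eqref{moment-S-theta} and \eqref{int-rep}. First I would record the key numerical dictionary: since $S_{\lambdab}$ is spherically balanced, $\mf C_t$ is the constant value of $\sum_{j=1}^d\|S_je_v\|^2$ on $\mathcal G_t$, and by \eqref{moment-S-theta} one has $\|S^n_{\theta}e_{\mf v}\|^2=\prod_{p=0}^{n-1}\mf C_{\alpha_{\mf v}+p}$; in particular, taking $\mf v=\rootb$, $\|S^n_{\theta}e_{\rootb}\|^2=\prod_{p=0}^{n-1}\mf C_p$. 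Observe also that the normalization $\prod_{p=0}^{n}\mf C_p$ in (ii) equals $\|S^{n+1}_{\theta}e_{\rootb}\|^2$ up to the harmless reindexing, so (ii) is literally the assertion that the moment sequence of $S_{\theta}$ at the root is a Hausdorff (completely monotone) sequence.

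For (i) $\Longrightarrow$ (ii): assuming $S_{\lambdab}$ is joint subnormal and a joint contraction, the previous proposition (complete monotonicity of $\{\|S^{\alpha}_{\lambdab}e_v\|^2\}_{\alpha}$ for every $v$) applies. Specialize to $v=\rootb$ and use \eqref{int-rep}, which gives $\|S^{\alpha}_{\lambdab}e_{\rootb}\|^2=\|S^{|\alpha|}_{\theta}e_{\rootb}\|^2\int_{\partial\mathbb B^d}|z^{\alpha}|^2d\mu_{\rootb}=\bigl(\prod_{p=0}^{|\alpha|-1}\mf C_p\bigr)\int_{\partial\mathbb B^d}|z^{\alpha}|^2d\mu_{\rootb}$; the one-variable slice (choosing $\alpha=n\epsilon_1$, say, and using that for a Reinhardt measure the moments $\int|z_1|^{2n}d\mu$ form a completely monotone sequence bounded by $1$) lets one extract complete monotonicity of $\{\prod_{p=0}^{n-1}\mf C_p\}_n$ from that of $\{\|S^{n\epsilon_1}_{\lambdab}e_{\rootb}\|^2\}_n$ — this is where I would be careful, since in principle one only gets that the product of the two completely monotone pieces is completely monotone; the clean way is instead to run the implication (i) $\Rightarrow$ (iii) first (subnormality of the joint-isometry/diagonal structure forces subnormality of $S_{\theta}$ via Corollary \ref{p-decom} and the moment identity \eqref{norm-sp-gen}: $\inp{Q^n_{S_{\lambdab}}(I)e_{\rootb}}{e_{\rootb}}=\|S^n_{\theta}e_{\rootb}\|^2$), and then deduce (ii) from (iii) by the classical characterization of subnormal weighted shifts on rooted directed trees in terms of complete monotonicity of moments at the root (this is the one-variable theory, cf. \cite{JJS}; a bounded weighted shift on a rooted directed tree is subnormal iff its moment sequences at the root, along each branch, are Stieltjes/Hausdorff moment sequences, and for $S_{\theta}$ the moments depend only on the depth so a single such condition suffices).

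For (ii) $\Longrightarrow$ (iii): complete monotonicity of $\{\prod_{p=0}^{n}\mf C_p\}_n$ says exactly that $\|S^n_{\theta}e_{\rootb}\|^2$ is a Hausdorff moment sequence, i.e.\ there is a probability measure $\nu$ on $[0,1]$ with $\|S^n_{\theta}e_{\rootb}\|^2=\int t^n d\nu(t)$; since on $\mathscr T^{\otimes}_{\rootb}$ all siblings carry equal weights and the moments are depth-only, the same $\nu$ controls $\|S^n_{\theta}e_{\mf v}\|^2=\prod_{p=0}^{n-1}\mf C_{\alpha_{\mf v}+p}$ for every vertex $\mf v$ (each is a tail product, hence $\int t^n d\nu_{\mf v}$ for the corresponding shifted/normalized measure $\nu_{\mf v}$, which is again a probability measure because $S_{\theta}$ is a contraction — here we use that the joint contractivity of $S_{\lambdab}$ forces $\mf C_t\le 1$ via \eqref{norm-Q(I)}). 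Invoking the subnormality criterion for weighted shifts on rooted directed trees from \cite{JJS} then yields that $S_{\theta}$ is subnormal. Finally (iii) $\Longrightarrow$ (i): if $S_{\theta}$ is subnormal then $\{\|S^n_{\theta}e_{\mf v}\|^2\}_n$ is completely monotone for each $\mf v$; by \eqref{int-rep} and the fact that a product of a completely monotone sequence with the Reinhardt-moment kernel $\int|z^{\alpha}|^2d\mu_v$ — which is completely monotone in $\alpha$ since $\mu_v$ is a Reinhardt probability measure (cf.\ \cite[Lemma 2.3]{CK} and the moment theory in \cite{BCR}) — is again completely monotone, we get that $\{\|S^{\alpha}_{\lambdab}e_v\|^2\}_{\alpha}$ is completely monotone for every $v\in V$; the preceding proposition then gives joint subnormality of $S_{\lambdab}$.

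The main obstacle I anticipate is the precise bookkeeping in going between the depth-indexed moments of $S_{\theta}$ and the multi-indexed moments of $S_{\lambdab}$: one must verify that the Reinhardt measures $\mu_v$ produced by Theorem \ref{sp-balanced} interact correctly with the one-variable measures witnessing subnormality of $S_{\theta}$, and in particular that ``completely monotone in the single variable $n$'' upgrades to ``completely monotone in the multi-index $\alpha$'' after tensoring against $\int|z^{\alpha}|^2d\mu_v$ — this is a known but slightly delicate fact about products of moment sequences (it is exactly the content exploited in \cite{CK} and \cite{CY}), and citing \cite[Chapter 4]{BCR} for the closure of completely monotone functions under pointwise products is the cleanest route. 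Everything else is a routine assembly of results already in the excerpt.
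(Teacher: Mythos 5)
Your proposal is correct in substance and establishes all the equivalences, but it takes a genuinely different (and longer) route than the paper for the crucial direction (iii) $\Rightarrow$ (i). The paper's proof is essentially two lines: it invokes Athavale's spherical characterization of joint subnormality (\cite[Theorem 5.2]{At-1}: a joint contraction $T$ is joint subnormal iff $\{\inp{Q^n_T(I)h}{h}\}_{n}$ is completely monotone for every $h$), applies it simultaneously to $S_{\lambdab}$ and to the one-variable shift $S_{\theta}$, and identifies the two moment sequences via the single identity \eqref{norm-sp-gen}; this gives (i) $\Leftrightarrow$ (iii) in both directions at once, and (ii) $\Leftrightarrow$ (iii) is then read off from \eqref{moment-S-theta}. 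You instead route (iii) $\Rightarrow$ (i) through the integral representation \eqref{int-rep}, the fact that $\{\int|z^{\alpha}|^2d\mu_v\}_{\alpha}$ and $\{\|S^{|\alpha|}_{\theta}e_{\mf v}\|^2\}_{\alpha}$ are multivariate Hausdorff moment multisequences whose product is again completely monotone (\cite[Chapter 4]{BCR}), and the \emph{toral} characterization from the first proposition of this section (which applies because a joint contraction is automatically a toral contraction). That argument is valid and is essentially the mechanism of \cite{CK} and \cite[Theorem 5.3]{CY}, but it costs you the extra bookkeeping you yourself flag; the $Q^n$-characterization makes all of it unnecessary. Two soft spots to tighten: your (i) $\Rightarrow$ (iii) is only gestured at ("via Corollary \ref{p-decom} and the moment identity") — what is actually needed is the forward half of the spherical moment characterization, i.e., that joint subnormality of the joint contraction $S_{\lambdab}$ forces $\{\inp{Q^n_{S_{\lambdab}}(I)e_v}{e_v}\}_n$ to be completely monotone, which is not a consequence of Corollary \ref{p-decom} alone; and the "harmless reindexing" between $\{\prod_{p=0}^{n}\mf C_p\}_n$ and the root moment sequence $\{1,\mf C_0,\mf C_0\mf C_1,\dots\}$ of $S_{\theta}$ is harmless in one direction only — prepending the initial term $1$ to a completely monotone sequence is \emph{not} automatic (it amounts to a backward extension of a Hausdorff moment sequence), so the passage from (ii) to (iii) deserves a word of justification, though the paper elides this point as well.
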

\begin{proof} 
Assume that $S_{\lambdab}$ is a joint contraction.
We proved the following formula in \eqref{norm-sp-gen}: \beqn 
\inp{Q^n_{S_{\lambdab}}(I)e_v}{e_v}  = \big\|S^n_{\theta}e_{\mf v}\big\|^2~(n \in \mathbb N),
\eeqn
where $Q^n_T(\cdot)$ is as given by \eqref{sp-gen-powers}.
By \cite[Theorem 5.2]{At-1}, $S_{\lambdab}$ is joint subnormal if and only if
$\big\{\inp{Q^n_{S_{\lambdab}}(I)e_v}{e_v}\big\}_{n \in \mathbb N}$ is completely monotone for every $v \in V$, and hence by the formula above, this is equivalent to the complete monotonicity of $\{\big\|S^n_{\theta}e_{\mf v}\big\|^2\}_{n \in \mathbb N}$ for every $\mf v \in V^{\otimes}.$ 
This yields the equivalence of (i) and (iii).
The equivalence of (ii) and (iii) is immediate from \eqref{moment-S-theta}. This completes the proof.
\end{proof}

Let us illustrate the previous result with the help of the family of multishifts discussed in Example \ref{ex-sp-bal}.
\begin{example} \label{j-s-bal}
Let $S_{\lambdab_{\mf C_a}}$ be as introduced in Example \ref{ex-sp-bal}.
Note that $S_{\lambdab_{\mf C_a}}$ is a joint contraction if and only if 
$d \leqslant a.$ Assume that $a$ is an integer such that $d \leqslant a.$ By the preceding proposition, $S_{\lambdab_{\mf C_a}}$ is joint subnormal if and only if $\{\prod_{p=0}^{n} {c_{a, p}}\}_{n \in \mathbb N}$ is completely monotone, where $$c_{a, t}=\frac{t+d}{t+a}~(t \in \mathbb N).$$
Let us verify the last statement.
Recall the fact that the product of completely monotone sequences ($\{\frac{i}{i+n}\}_{n \in \mathbb N},$ $i=d, \cdots, d+k$) is completely monotone.
Since for $k=a-d \in \mathbb N,$
\beqn
\prod_{p=0}^{n} {c_{a, p}} = \begin{cases} 1~&\mbox{if~}k=0, \\
\frac{d(d+1) \cdots (d+k-1)}{(d+n+1)(d+n+2) \cdots (d + k + n)}~& \mbox{otherwise,}
\end{cases}  
\eeqn
$\{\prod_{p=0}^{n} {c_{a, p}}\}_{n \in \mathbb N}$ is completely monotone.
\end{example}

\section{Joint Hyponormal Multishifts}

In this short section, we discuss the class of joint hyponormal tuples.

\begin{proposition}
\label{j-h-gen}
Let $\mathscr T = (V,\mathcal E)$ be the directed Cartesian product of rooted directed trees $\mathscr T_1, \cdots, \mathscr T_d$.
Let $S_{\lambdab}=(S_1, \cdots, S_d)$ be a multishift on $\mathscr T$. Then $S_{\lambdab}$ is joint hyponormal if and only if for every $t \in \mathbb N$ and for every $f_1, \cdots, f_d \in l^2(V)$ supported on generation $\mathcal G_t$, 
\beqn
\sum_{i, j=1}^d \inp{[S^*_j, S_i]f_j}{f_i} \geqslant  0.
\eeqn
\end{proposition}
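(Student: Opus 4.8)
The plan is to exploit the block structure of $l^2(V)$ coming from the generation decomposition $V = \bigsqcup_{t \in \mathbb N} \mathcal G_t$ together with the ``shift-like'' behaviour of $S_{\lambdab}$ across generations. Recall that by definition $S_{\lambdab}$ is joint hyponormal if and only if the $d \times d$ operator matrix $([S_j^*, S_i])_{1 \le i,j \le d}$ is positive semidefinite on $(l^2(V))^{\oplus d}$, which is equivalent to saying that $\sum_{i,j=1}^d \inp{[S_j^*, S_i] f_j}{f_i} \ge 0$ for \emph{all} choices of $f_1, \dots, f_d \in l^2(V)$. So one implication is trivial, and the content of the proposition is that it suffices to test this inequality on tuples $(f_1, \dots, f_d)$ each of whose entries is supported on a single common generation $\mathcal G_t$. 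I would reduce the general $f_j$ to this case by decomposing each $f_j = \sum_{t \in \mathbb N} f_j^{(t)}$ with $f_j^{(t)} := P_{l^2(\mathcal G_t)} f_j$ and showing the cross terms between different generations drop out.

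\textbf{Key steps.} First I would record the action of the commutators on basis vectors and, more importantly, how $[S_j^*, S_i]$ interacts with the generation grading. From \eqref{Si} and the adjoint formula, $S_i$ maps $l^2(\mathcal G_t)$ into $l^2(\mathcal G_{t+1})$ and $S_j^*$ maps $l^2(\mathcal G_t)$ into $l^2(\mathcal G_{t-1})$; hence each composite $S_j^* S_i$ and $S_i S_j^*$ maps $l^2(\mathcal G_t)$ into itself, so $[S_j^*, S_i]$ preserves each $l^2(\mathcal G_t)$ and, in particular, $\inp{[S_j^*, S_i] g}{h} = 0$ whenever $g \in l^2(\mathcal G_s)$, $h \in l^2(\mathcal G_t)$ with $s \ne t$. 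Second, using this orthogonality I would expand
\beqn
\sum_{i,j=1}^d \inp{[S_j^*, S_i] f_j}{f_i} = \sum_{i,j=1}^d \sum_{s, t \in \mathbb N} \inp{[S_j^*, S_i] f_j^{(s)}}{f_i^{(t)}} = \sum_{t \in \mathbb N} \sum_{i,j=1}^d \inp{[S_j^*, S_i] f_j^{(t)}}{f_i^{(t)}},
\eeqn
the last equality because all $s \ne t$ terms vanish by Step~1. Third, if the hypothesis holds — that the quadratic form is nonnegative on tuples supported on a fixed generation — then each inner sum over $i,j$ for fixed $t$ is $\ge 0$, so the total is $\ge 0$; since $f_1, \dots, f_d \in l^2(V)$ were arbitrary, joint hyponormality follows. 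The converse direction is immediate by restriction.

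\textbf{Main obstacle.} The only genuinely delicate point is the justification of interchanging the (generally infinite) sums over $i, j, s, t$ and the inner products — i.e., the convergence bookkeeping needed to split $\inp{[S_j^*, S_i]f_j}{f_i}$ into the diagonal-in-$t$ sum. Here I would use that $[S_j^*, S_i]$ is a bounded operator (since each $S_j \in B(l^2(V))$), that $\{l^2(\mathcal G_t)\}_{t}$ is an orthogonal decomposition of $l^2(V)$ with $f_j = \sum_t f_j^{(t)}$ converging in norm, and that $[S_j^*, S_i]$ is block-diagonal with respect to this decomposition (Step~1); continuity of the bounded sesquilinear form $(g,h) \mapsto \inp{[S_j^*,S_i]g}{h}$ then legitimizes passing to the limit, and the double sum collapses to a single sum over $t$ of absolutely convergent finite sums over $i,j$. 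Everything else is routine, so I expect the write-up to be short once the block-diagonality observation is in place.
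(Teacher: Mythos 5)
Your proposal is correct and follows essentially the same route as the paper: the paper's proof likewise observes that $\inp{[S^*_j,S_i]e_v}{e_w}=0$ whenever $|\alpha_v|\neq|\alpha_w|$ (i.e.\ the commutators are block-diagonal with respect to the generation decomposition), writes $f_j=\sum_t f_{j,t}$, and collapses the quadratic form to a sum over generations. Your justification via the grading $S_i\colon l^2(\mathcal G_t)\to l^2(\mathcal G_{t+1})$, $S_j^*\colon l^2(\mathcal G_t)\to l^2(\mathcal G_{t-1})$ is just a restatement of the same orthogonality fact, and the convergence bookkeeping you flag is routine, as you say.
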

\begin{proof}
Note that $\inp{[S^*_j, S_i]e_v}{e_w}=0~(i, j = 1, \cdots, d)$ for any $v, w \in V$ such that $|\alpha_v| \neq |\alpha_w|$ (see Lemma \ref{disjoint}(vi)). 
It follows that for $f_t, g_s \in l^2(V)$ with supports on $\mathcal G_t$ and $\mathcal G_s$ respectively with $s \neq t$,
\beq \label{gen-ortho}
\inp{[S^*_j, S_i]f_t}{g_s}=0~\mbox{for every}~i, j=1, \cdots, d.
\eeq
For $j=1, \cdots, d,$ let $f_j\in l^2(V)$ and write $f_j = \sum_{t \in \mathbb N}f_{j, t},$ where $f_{j, t}$ is supported on $\mathcal G_t.$ Then
\beqn \sum_{i, j=1}^d \inp{[S^*_j, S_i]f_j}{f_i} &=& \sum_{t, s \in \mathbb N} \sum_{i, j=1}^d \inp{[S^*_j, S_i]f_{j, t}}{f_{i, s}} \\ &\overset{\eqref{gen-ortho}}=& \sum_{t \in \mathbb N} \sum_{i, j=1}^d \inp{[S^*_j, S_i]f_{j, t}}{f_{i, t}}. \eeqn
The desired equivalence is now immediate. 
\end{proof}

In case of spherically balanced multishifts, the preceding characterization can be made more explicit (cf. \cite[Theorem 5.3(5)]{CY}).
\begin{theorem}
Let $\mathscr T = (V,\mathcal E)$ be the directed Cartesian product of rooted directed trees $\mathscr T_1, \cdots, \mathscr T_d$.
Let $S_{\lambdab}=(S_1, \cdots, S_d)$ be a joint left invertible, spherically balanced multishift on $\mathscr T$ and let $S_{\theta}$ be the weighted shift on the rooted directed tree $\mathscr T^{\otimes}_{\rootb}=(V^{\otimes}, \mathcal F)$ associated with $S_{\lambdab}.$ Then the following statements are equivalent:
\begin{enumerate}
\item[(i)] $S_{\lambdab}$ is joint hyponormal. 
\item[(ii)] $\{{\mf C_{t}}\}_{t \in \mathbb N}$ is monotonically increasing, where $\mf C_t$ is the constant value of $\sum_{j=1}^d \|S_j e_v\|^2$ on the generation $\mathcal G_t$ of $\mathscr T.$
\item[(iii)] $S_{\theta}$ is hyponormal.
\end{enumerate}
\end{theorem}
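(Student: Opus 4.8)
The strategy is to reduce everything to the corresponding statement for the single-variable shift $S_\theta$ on the directed tree $\mathscr T^\otimes_{\rootb}$, exactly in the spirit of Corollary \ref{sp-rd} and the preceding subnormality proposition. First I would recall the two facts already available. From Corollary \ref{p-decom}(ii), the moments of $S_{\lambdab}$ and of its joint isometry part $T_{\lambdab}$ are related by $\|T^\beta_{\lambdab}e_v\|^2 = \bigl(\prod_{p=0}^{|\beta|-1}\mf C_{|\alpha_v|+p}^{-1}\bigr)\|S^\beta_{\lambdab}e_v\|^2$; and from \eqref{moment-S-theta}, $\|S^k_\theta e_{\mf v}\|^2 = \prod_{p=0}^{k-1}c_{\alpha_{\mf v}+p}$ with $c_t=\mf C_t$. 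Combined with \eqref{norm-sp-gen}, these give a clean dictionary between $Q^n_{S_{\lambdab}}(I)$-data and $S_\theta$-data.

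For the equivalence (i) $\Leftrightarrow$ (iii): by Proposition \ref{j-h-gen}, $S_{\lambdab}$ is joint hyponormal iff $\sum_{i,j}\inp{[S_j^*,S_i]f_j}{f_i}\ge 0$ for all $d$-tuples $(f_1,\dots,f_d)$ of vectors supported on a single generation $\mathcal G_t$. The plan is to expand this sum using $S_ie_v=\sum_{w\in\childi{i}{v}}\lambda^{(i)}_we_w$ and $S_i^*e_v=\lambda^{(i)}_ve_{\parenti{i}{v}}$, which yields $\inp{Q_{S_{\lambdab}}(I)e_v}{e_v}=\mf C_{|\alpha_v|}=\mf C_t$ for $v\in\mathcal G_t$ together with cross terms $\inp{S_iS_j^*e_v}{e_w}$. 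The key point — mirroring \cite[Theorem 5.3(5)]{CY} — is that the quadratic form $\sum_{i,j}\inp{[S_j^*,S_i]f_j}{f_i}$, after using the commutativity relations from Proposition \ref{shift-prop}(i), collapses to a sum of local quadratic forms each of which is, up to the positive scalars $\mf C_t,\mf C_{t+1}$, the one-variable hyponormality form $\|S_\theta h\|^2\|g\|^2$-type expression for $S_\theta$ at a vertex of depth $t$ in $\mathscr T^\otimes_{\rootb}$. Concretely, I would show that nonnegativity of that form for every $t$ is equivalent to $\mf C_{t+1}\ge \mf C_t$ for every $t$, i.e. to (ii), and that $\mf C_{t+1}\ge\mf C_t$ for all $t$ is in turn (by the single-variable analogue of the same computation applied to $S_\theta$, using $\|S_\theta e_{\mf v}\|^2=c_{\alpha_{\mf v}}=\mf C_{\alpha_{\mf v}}$) precisely hyponormality of $S_\theta$. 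The already-established moment identity \eqref{norm-sp-gen} with $n=1$, namely $\inp{Q_{S_{\lambdab}}(I)e_v}{e_v}=\|S_\theta e_{\mf v}\|^2$, is what makes the two one-variable conditions literally the same condition.

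For the equivalence (ii) $\Leftrightarrow$ (iii): this is purely about the one-variable weighted shift $S_\theta$ on the rooted directed tree $\mathscr T^\otimes_{\rootb}$. Using the formula $\|S_\theta e_{\mf v}\|^2=\mf C_{\alpha_{\mf v}}$ and the fact (Theorem \ref{tensor-prop}(x)) that for every depth $t$ there is a vertex $\mf v$ of $\mathscr T^\otimes_{\rootb}$ with $\alpha_{\mf v}=t$, I would invoke the standard characterization that a bounded weighted shift on a rooted directed tree is hyponormal iff $\|S_\theta^*e_{\mf w}\|^2 \le \|S_\theta^*e_{\mf u}\|^2$ along edges in a suitable averaged sense — but since here $\|S_\theta e_{\mf v}\|^2$ depends only on the depth $\alpha_{\mf v}$, hyponormality reduces exactly to $\{\mf C_t\}_{t\in\mathbb N}$ being monotonically increasing. (Here I would cite the one-variable computation already implicit in the proof of the preceding subnormality proposition, which used \cite[Theorem 5.2]{At-1}; for hyponormality the analogous elementary inequality $\langle (S_\theta^*S_\theta - S_\theta S_\theta^*)f,f\rangle\ge0$ tested on generation-supported $f$ gives monotonicity of $\{\mf C_t\}$ directly.)

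\textbf{Expected main obstacle.} The delicate step is the collapse of the full $d\times d$ joint-hyponormality quadratic form for $S_{\lambdab}$ into a single-variable form — i.e., verifying that the off-diagonal commutators $[S_j^*,S_i]$ ($i\ne j$) contribute exactly what is needed so that the whole form is governed by the scalar sequence $\{\mf C_t\}$ and nothing finer about the individual $\|S_je_v\|^2$. This requires careful bookkeeping with the commutativity identities \eqref{commuting} and the sibling decompositions from Chapter 4 (Lemma \ref{sib_F}, Proposition \ref{decom}), analogous to, but more involved than, the argument for the Drury–Arveson-type case in \cite{CY}. I expect the cleanest route is to pass through the polar decomposition $S_j=T_jD_c$ of Proposition \ref{sphericallybal}: since $D_c$ is block-diagonal with value $\sqrt{\mf C_t}$ on $\mathcal G_t$ and $T_{\lambdab}$ is a joint isometry, the commutator $[S_j^*,S_i]$ restricted to generation $\mathcal G_t$ can be written in terms of $D_c|_{\mathcal G_t}$, $D_c|_{\mathcal G_{t+1}}$ and the isometric $T_j$, at which point the positivity of $\sum_{i,j}\inp{[S_j^*,S_i]f_j}{f_i}$ manifestly reduces to comparing $\mf C_{t+1}$ with $\mf C_t$. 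Once that reduction is in hand, both remaining equivalences are immediate from the moment dictionary already proved in Theorem \ref{sp-balanced} and Corollary \ref{p-decom}.
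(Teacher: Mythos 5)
Your overall route coincides with the paper's: reduce to generation-supported vectors via Proposition \ref{j-h-gen}, pass through the polar decomposition $S_j=T_jD_c$ of Proposition \ref{sphericallybal}, and settle (ii)$\Leftrightarrow$(iii) by the one-variable hyponormality criterion for weighted shifts on directed trees (the paper uses \cite[Theorem 5.1.2]{JJS}) together with $\|S_\theta e_{\mf v}\|^2=\mf C_{\alpha_{\mf v}}$. The paper's computation produces exactly the identity you anticipate, namely $[S_j^*,S_i]e_v=\mf C_{|\alpha_v|}\,[T_j^*,T_i]e_v+(\mf C_{|\alpha_v|}-\mf C_{|\alpha_v|-1})\,T_iT_j^*e_v$, so that for $f_1,\dots,f_d$ supported on $\mathcal G_t$ the joint-hyponormality form equals $\mf C_t\sum_{i,j}\inp{[T_j^*,T_i]f_j}{f_i}+(\mf C_t-\mf C_{t-1})\big\|\sum_jT_j^*f_j\big\|^2$.

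The gap is in the direction (i)$\Rightarrow$(ii), which your phrase ``manifestly reduces to comparing $\mf C_{t+1}$ with $\mf C_t$'' glosses over. The displayed decomposition is a sum of two terms: the first is \emph{always} nonnegative, because the joint isometry $T_{\lambdab}$ is joint subnormal and hence joint hyponormal, and only the second carries the sign of $\mf C_t-\mf C_{t-1}$. Nonnegativity of the sum therefore yields (ii)$\Rightarrow$(i) at once, but it does not by itself force the second summand to be nonnegative, so necessity of the monotonicity does not follow from this decomposition alone. The paper closes this direction by a separate device: the operator inequality $Q^2_T(I)\Ge Q_T(I)^2$, valid for every joint hyponormal tuple (citing \cite{CS}), which at $e_v$ reads $\mf C_{|\alpha_v|}\mf C_{|\alpha_v|+1}=\inp{Q^2_{S_{\lambdab}}(I)e_v}{e_v}\Ge\|Q_{S_{\lambdab}}(I)e_v\|^2=\mf C_{|\alpha_v|}^2$ because $Q_{S_{\lambdab}}(I)=D_c^2$. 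If you prefer to stay inside your quadratic-form framework, you must produce test vectors annihilating the first summand; the choice $f_j:=T_je_v$ with $v\in\mathcal G_{t-1}$ works, since then $\sum_jT_j^*f_j=Q_{T_{\lambdab}}(I)e_v=e_v$ while $\sum_{i,j}\inp{[T_j^*,T_i]T_je_v}{T_ie_v}=\inp{Q^2_{T_{\lambdab}}(I)e_v}{e_v}-\inp{Q_{T_{\lambdab}}(I)e_v}{e_v}=0$, so the form collapses to $\mf C_t-\mf C_{t-1}$ — but neither this computation nor the paper's substitute for it appears in your sketch, and without one of them the equivalence is not established.
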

\begin{proof}
We first verify the implication that (i) implies (ii). Recall from \cite[Lemma 4.10]{CS} that any joint hyponormal $d$-tuple $T$ satisfies the inequality
\beqn
Q^2_T(I) \geqslant  Q_T(I)^2,
\eeqn
where $Q^n_T(\cdot)$ is as given in \eqref{sp-gen-powers}. In particular, for any $v \in V,$
\beqn
\inp{Q^2_{S_{\lambdab}}(I)e_v}{e_v} \geqslant  \|Q_{S_{\lambdab}}(I)e_v\|^2.
\eeqn
However, by the polar decomposition obtained in Proposition \ref{sphericallybal},
\beq \label{p-d-proof}
S_j = T_jD_c,~j=1, \cdots, d,
\eeq
where $T_{\lambdab}=(T_1, \cdots, T_d)$ and $D_c$ are joint isometry and diagonal part of $S_{\lambdab}$ respectively. Since $T_{\lambdab}$ is a joint isometry,  
$Q_{S_{\lambdab}}(I)=D^2_c$. It is now easy to see that
\beqn
\mf C_{|\alpha_v|}\mf C_{|\alpha_v|+1} &=& \inp{Q_{S_{\lambdab}}(D^2_c)e_v}{e_v} = \inp{Q^2_{S_{\lambdab}}(I)e_v}{e_v} \\ & \geqslant & \|Q_{S_{\lambdab}}(I)e_v\|^2 = \|D^2_ce_v\|^2=\mf C^2_{|\alpha_v|},
\eeqn
which yields that $\{{\mf C_{t}}\}_{t \in \mathbb N}$ is monotonically increasing. 

We next verify the implication that (ii) implies (i). In view of Proposition \ref{j-h-gen}, it suffices to check that for every $t \in \mathbb N$ and for every $f_1, \cdots, f_d \in l^2(V)$ supported on $\mathcal G_t$, 
\beqn
\sum_{i, j=1}^d \inp{[S^*_j, S_i]f_j}{f_i} \geqslant  0.
\eeqn
To see this, let $f_1, \cdots, f_d \in l^2(V)$ be supported on $\mathcal G_t$ for some $t \in \mathbb N.$ A routine verification using \eqref{p-d-proof} shows that 
\beqn
[S^*_j, S_i]e_v = \mf C_{|\alpha_v|} [T^*_j, T_i]e_v + (\mf C_{|\alpha_v|}-\mf C_{|\alpha_v|-1})T_iT^*_je_v~\mbox{for any~}v \in V.
\eeqn
Hence
\beq \label{(b)}
\sum_{i, j=1}^d \inp{[S^*_j, S_i]f_j}{f_i} &=& \mf C_{t} \sum_{i, j=1}^d \inp{[T^*_j, T_i]f_j}{f_i} \notag \\ &+& (\mf C_{t}-\mf C_{t-1})\sum_{i, j=1}^d \inp{T_iT^*_jf_j}{f_i},
\eeq
where we used the convention that $\mf C_{-1}=0$.  
However, $$\sum_{i, j=1}^d \inp{[T^*_j, T_i]f_{j}}{f_{i}} \geqslant  0$$ since $T_{\lambdab}$, being joint subnormal, is joint hyponormal, and $$\sum_{i, j=1}^d \inp{T_iT^*_jf_{j}}{f_{i}}=\Big\|\sum_{j=1}^dT^*_jf_{j}\Big\|^2.$$ Since $\{{\mf C_{t}}\}_{t \in \mathbb N}$ is monotonically increasing, we conclude from \eqref{(b)} that $$\sum_{i, j=1}^d \inp{[S^*_j, S_i]f_j}{f_i} \geqslant  0.$$

We finally check the equivalence of (ii) and (iii). In view of \cite[Theorem 5.1.2]{JJS}, it suffices to check that 
\beqn
\sum_{\mf w \in \child{\mf v}} \frac{\theta^2_{\mf w}}{\|S_{\theta}e_{\mf w}\|^2} \leqslant 1~\mbox{for every}~\mf v \in V^{\otimes}
\eeqn
if and only if $\{{\mf C_{t}}\}_{t \in \mathbb N}$ is monotonically increasing.
Since $\|S_\theta e_{\mf w}\|^2 =  \mf C_{\alpha_{\mf w}}$
and $\theta_{\mf w} = \frac{{\sqrt{\mf C_{\alpha_{\mf w}-1}}}}{\sqrt{\mbox{card}(\mathsf{sib}(\mf w))}}~(\mf w \in V^{\otimes} \setminus \rootb)$ (see \eqref{moment-S-theta} and \eqref{theta}), we obtain 
\beqn
\sum_{\mf w \in \child{\mf v}} \frac{\theta^2_{\mf w}}{\|S_{\theta}e_{\mf w}\|^2} &=& \sum_{\mf w \in \child{\mf v}} \frac{1}{\mbox{card}(\mathsf{sib}(\mf w))}\frac{\mf C_{\alpha_{\mf w}-1}}{\mf C_{\alpha_{\mf w}}}\\ &=& \frac{\mf C_{\alpha_{\mf v}}}{\mf C_{\alpha_{\mf v}+1}}\sum_{\mf w \in \child{\mf v}} \frac{1}{\mbox{card}(\mathsf{sib}(\mf w))}=\frac{\mf C_{\alpha_{\mf v}}}{\mf C_{\alpha_{\mf v}+1}}.
\eeqn
The equivalence of (ii) and (iii) is now clear.
This completes the proof.
\end{proof}
\begin{remark}
Assume that $S_{\lambdab}$ is a joint hyponormal multishift. 
It is well-known that the spectral radius and norm of a hyponormal operator coincide \cite{Co}. 
One may now conclude from Corollary \ref{sp-rd}(i) and \eqref{norm-Q(I)} that the spectral radius of $S_{\lambdab}$ equals $\|S^*_1S_1 + \cdots + S^*_dS_d\|^{\frac{1}{2}}$.
\end{remark}

\begin{example}
Let $S_{\lambdab_{\mf C_a}}$ be as introduced in Example \ref{ex-sp-bal}.
Note that $S_{\lambdab_{\mf C_a}}$ is a joint hyponormal if and only if 
$\{{c_{a, t}}\}_{t \in \mathbb N}$ is monotonically increasing, where $$c_{a, t}=\frac{t+d}{t+a}~(t \in \mathbb N).$$
This holds if and only if $d \leqslant a.$ In view of Example \ref{j-s-bal}, we have the following equivalent statements (cf. \cite[Lemma 3.3]{AZ}):
\begin{enumerate}
\item $S_{\lambdab_{\mf C_a}}$ is joint subnormal.
\item $S_{\lambdab_{\mf C_a}}$ is joint hyponormal.
\item $S_{\lambdab_{\mf C_a}}$ is joint contraction.
\end{enumerate}
\end{example}

\section*{Afterword}

Needless to say, the work presented in this paper provides a framework to unify the theories of classical multishifts and weighted shifts on rooted directed trees. 
This framework also enables one to pose and peruse a diverse range of problems.
More importantly, this framework allows the rich interplay of graph theory, complex function theory, and operator theory.
One of the important outcomes of these investigations is perhaps the tree analogs $S_{\lambdab_{\mf C_a}}$ of extensively studied classical multishifts like Szeg$\ddot{\mbox{o}}$, Bergman, and Drury-Arveson $d$-shifts.
On one hand, the tree analogs of these $d$-shifts share many important properties of their classical counterparts like $S_{\lambdab_{\mf C_d}}$ (Szeg$\ddot{\mbox{o}}$ $d$-shift) is a joint isometry, $S_{\lambdab_{\mf C_{d+1}}}$ (Bergman $d$-shift) is joint subnormal, and $S_{\lambdab_{\mf C_1}}$ (Drury-Arveson $d$-shift) is a row contraction. On the other hand, due to abundance of directed tree structures, various complicacies may arise. For instance, unlike their classical counterparts, for suitable choice of $\mathscr T,$ the defect operator $\displaystyle \sum_{k=0}^{a} (-1)^k {a \choose k}Q^k_{S_{\lambdab_{\mf C_a}}}(I)$ fails to be an orthogonal projection.
Further,  in the matrix decomposition of  $S^*_{\lambdab_{\mf C_a}}$,  non-diagonal tuples of infinite rank operators appear naturally.  We believe that the class of multishifts $S_{\lambdab_{\mf C_a}}$ warrants further attention as they may play the role of building blocks in the classification of $\mathcal G$-homogeneous tuples associated with the action of various linear groups $\mathcal G$ such as diagonal unitaries and unitaries.

%\section*{Acknowledgment}
%\begin{acknowledgement}
\chapter*{Acknowledgement}
%\medskip \textit{Acknowledgment}. \
We express our gratitude to Dmitry Yakubovich for a careful reading and several useful suggestions. 
We also convey our sincere thanks to H. Turgay Kaptano$\check{\mbox{g}}$lu, especially, for drawing our attention to \cite{AV}, where certain Hardy-type spaces are associated with some infinite acyclic, undirected, connected graphs. Further, we are thankful to Akash Anand for indicating a way to embed three dimensional directed graphs into the plane making the presentation of diagrams more accessible. 
Finally, we  thank the faculty and the administrative unit of Department of Mathematics and Statistics, IIT Kanpur and School of Mathematics, HRI, Allahabad for their warm hospitality during the preparation of the paper.

%\section{Joint $q$-isometry Multishifts}
%
%Fix an integer $q \geqslant  1$ and put
%\beq
%\label{5.26}
%B_q(Q_T)
%\mathrel{\mathop:}=
%\sum_{s=0}^q
%(-1)^{s} {q \choose s} Q^{s}_T(I).
%\eeq
%We say that $T$ is a
%{\it joint $q$-contraction} (respectively,
%{\it joint $q$-expansion}) if
%$B_q(Q_T) \ge 0$ (respectively,
%$B_q(Q_T) \leqslant 0$).
%
%We say that $T$ is a {\it joint $q$-hyperexpansion} if $T$ is a
%joint $k$-expansion for all $k=1, \cdots, q.$ Also, $T$ is said
%to be a {\it joint complete hyperexpansion} if $T$ is a
%joint $q$-hyperexpansion for all $q \geqslant  1.$
%If $B_q(Q_T)=0$, then $T$ is a {\it joint $q$-isometry}.
%If $m=1$ then we drop the prefix $1$- and term joint in all the above definitions.
%
%\section{Joint Complete Hyperexpansive Multishifts}

%\printglossaries

\chapter*{Appendix}

We are thankful to V. M. Sholapurkar for kindly providing a multivariable analog of the identity given in 
\eqref{S-formula} (along with proof). 
\begin{nono-lemma}  Let $n$ be a positive integer and let 
$X=(x_1, \cdots , x_d), Y=(y_1,\cdots , y_d)$ be $d$-tuples such that the variables $x_i, y_i~(i=1, \cdots, d)$ belong to a unital complex algebra.
Then $$  1- \sum_{\underset{|\alpha |=n}{\alpha \in \mathbb N^d}}  {|\alpha| \choose \alpha}X^{\alpha } Y^{\alpha }= \sum_{\underset{|\beta|\leqslant n-1}{\beta \in \mathbb N^d}} {|\beta|\choose  \beta }X^{\beta}(1-x_1y_1-x_2y_2-\cdots -x_dy_d)Y^{\beta},$$
where ${\displaystyle \ {|\alpha| \choose \alpha}= \frac{|\alpha|!}{\alpha_1 !\cdots \alpha_m !}}~(\alpha \in \mathbb N^d).$
\end{nono-lemma}
\begin{proof} We prove the result by induction on $n.$ For $n=1,$ both the sides of the identity reduce to $(1-x_1y_1-x_2y_2-\cdots -x_dy_d) $ and hence the result holds. Suppose the result holds for some $n \geqslant  1.$ We now prove the identity for $n+1.$ Starting with the right hand side, we split the sum $$A :=\sum_{\underset{|\beta|\leqslant n-1}{\beta \in \mathbb N^d}} {|\beta|\choose  \beta}X^\beta(1-x_1y_1-x_2y_2-\cdots -x_dy_d)Y^\beta$$ as $A_1+A_2$, where  \beqn A_1 &:=& \sum_{\underset{|\beta|\leqslant n-1}{\beta \in \mathbb N^d}} {|\beta|\choose  \beta}X^\beta(1-x_1y_1-x_2y_2-\cdots -x_dy_d)Y^\beta, \\ A_2 &:=& \sum_{\underset{|\beta|= n}{\beta \in \mathbb N^d}} {|\beta|\choose  \beta}X^\beta(1-x_1y_1-x_2y_2-\cdots -x_dy_d)Y^\beta.\eeqn
Now by induction hypothesis, $A_1= 1- \sum_{|\beta|=n}  {n\choose \beta}X^{\beta} Y^{\beta}.$
Observe also that $$A_2= \sum_{\underset{|\beta|= n}{\beta \in \mathbb N^d}}  {n\choose \beta}X^\beta  Y^\beta- \sum_{\underset{|\beta|= n}{\beta \in \mathbb N^d}}  {n\choose \beta}\left[ \sum_{i=1}^d X^{\beta+\epsilon_i}Y^{\beta+\epsilon_i}\right].$$
Thus
\begin{eqnarray*}
 A_1 + A_2&=& 1-\sum_{\underset{|\beta| = n}{\beta \in \mathbb N^d}}  {n\choose \beta}\left[\sum_{i=1}^d X^{\beta+\epsilon_i}Y^{\beta+\epsilon_i} \right]\\
  &=& 1- \sum_{\underset{|\beta| = n+1}{\beta \in \mathbb N^d}} \left[ \sum_{i=1}^d  {n \choose \beta - \epsilon_i}\right]X^\beta Y^\beta\\
    &=& 1-\sum_{\underset{|\beta| = n+1}{\beta \in \mathbb N^d}}  {n+1 \choose \beta }X^\beta Y^\beta.
   \end{eqnarray*}
   This is the left hand side of the identity with $n$ replaced by $n+1$.
Thus the result holds for all integers $n \Ge 1.$
\end{proof}

Let us discuss some of the difficulties in obtaining appropriate analog of Shimorin's formula \eqref{d-formula} in several variables.
The very first difficulty which arises is the appropriate notion of Cauchy dual $S'$ in several variables, where $S=(S_1, \cdots, S_d)$ is the given $d$-tuple of bounded linear operators on $\mathcal H$. To see what a correct choice would be,  note that \eqref{S-formula} may be rewritten as
\beqn
I - T^n{T'^{*}}^n = \sum_{k=0}^{n-1}T^k P_E {T'^{*}}^k,
\eeqn  
where $T$ is a left invertible operator on $\mathcal H$ and $P_E = I-TT'^{*}$ is the orthogonal projection onto the kernel of $T^*.$ Hence, in view of the preceding lemma, the choice of Cauchy dual in several variables should necessarily ensure that $I - \sum_{i=1}^d S_iS'^*_i$ an orthogonal projection.
Examples show that none of the notions of Cauchy dual (toral and spherical) apply with success in this context.  
Keeping this aside and assuming this condition for a moment, what we obtain is the following formula:
\beqn
\bigcap_{|\alpha|=n} \ker S'^{*\alpha} \subseteq \bigvee \{S^{\beta}f : f \in \ker S^*, |\beta| \leqslant n-1\},
\eeqn 
where it is not clear whether equality holds.
In case of single operator, we obtain equality, which can be then used to derive the duality formula \eqref{d-formula} crucial in obtaining the wandering subspace property for $T$.  

%\begin{proposition} Let $T=(T_1, \cdots, T_m)$ be a commuting
%$m$-tuple such that $\sum_{i=1}^mT_iT^*_i$ is an orthogonal
%projection. If $T$ is analytic then $T$ admits the wandering
%subspace property.
%\end{proposition}

%\newpage
%
%\pagenumbering{gobble}
%
%\thispagestyle{empty}
%
%\begin{table*}[!t]   

%\begin{framed}

%\index{$abbreviation$}{explanation for the abbreviation}

%\printnomenclature
\printindex

%\end{framed}

%\end{table*}

%\chapter*{index}

%\printnomenclature

%\printglossaries

%\newpage

%$\pagestyle{empty}

%
%{\small
%\address{\noindent Sameer Chavan and Deepak Kumar Pradhan \\ Department of Mathematics and Statistics\\ 
%Indian Institute of Technology Kanpur, India \\}
%   \email{chavan@iitk.ac.in \\ dpradhan@iitk.ac.in}
%   \\ 
%   
%   
% % \author[S. Trivedi]{Shailesh Trivedi}
%   \address{\noindent Shailesh Trivedi \\ School of Mathematics \\ Harish-Chandra Research Institute\\ 
%Chhatnag Road, Jhunsi, Allahabad 211019, India \\}
%\email{shaileshtrivedi@hri.res.in}   
%}

\end{document}